\providecommand{\U}[1]{\protect\rule{.1in}{.1in}}
\theoremstyle{definition}
\newtheorem{theo}{Theorem}[section]
\newenvironment{theorem}[1][]
{\begin{theo}[#1]\begin{leftbar}}
{\end{leftbar}\end{theo}}
\newtheorem{lem}[theo]{Lemma}
\newenvironment{lemma}[1][]
{\begin{lem}[#1]\begin{leftbar}}
{\end{leftbar}\end{lem}}
\newtheorem{prop}[theo]{Proposition}
\newenvironment{proposition}[1][]
{\begin{prop}[#1]\begin{leftbar}}
{\end{leftbar}\end{prop}}
\newtheorem{defi}[theo]{Definition}
\newenvironment{definition}[1][]
{\begin{defi}[#1]\begin{leftbar}}
{\end{leftbar}\end{defi}}
\newtheorem{remk}[theo]{Remark}
\newenvironment{remark}[1][]
{\begin{remk}[#1]\begin{leftbar}}
{\end{leftbar}\end{remk}}
\newtheorem{coro}[theo]{Corollary}
\newenvironment{corollary}[1][]
{\begin{coro}[#1]\begin{leftbar}}
{\end{leftbar}\end{coro}}
\newtheorem{quest}[theo]{Question}
\newtheorem{warn}[theo]{Warning}
\newtheorem{conj}[theo]{Conjecture}
\newtheorem{exam}[theo]{Example}
\newenvironment{example}[1][]
{\begin{exam}[#1]\begin{leftbar}}
{\end{leftbar}\end{exam}}
\newenvironment{statement}{\begin{quote}}{\end{quote}}
\let\sumnonlimits\sum
\let\prodnonlimits\prod
\let\cupnonlimits\bigcup
\let\capnonlimits\bigcap
\renewcommand{\sum}{\sumnonlimits\limits}
\renewcommand{\prod}{\prodnonlimits\limits}
\renewcommand{\bigcup}{\cupnonlimits\limits}
\renewcommand{\bigcap}{\capnonlimits\limits}
\newenvironment{verlong}{}{}
\newenvironment{vershort}{}{}
\newenvironment{noncompile}{}{}
\begin{document}

\title{The trace Cayley-Hamilton theorem}
\author{Darij Grinberg}
\date{April 14, 2026}
\maketitle

\begin{abstract}
\textbf{Abstract.} In this expository paper, various properties of matrix
traces, determinants and adjugate matrices are proved, including the
\textit{trace Cayley-Hamilton theorem}, which says that%
\[
kc_{k}+\sum_{i=1}^{k}\operatorname*{Tr}\left(  A^{i}\right)  c_{k-i}%
=0\ \ \ \ \ \ \ \ \ \ \text{for every }k\in\mathbb{N}%
\]
whenever $A$ is an $n\times n$-matrix with characteristic polynomial
$\det\left(  tI_{n}-A\right)  =\sum_{i=0}^{n}c_{n-i}t^{i}$ over a commutative
ring $\mathbb{K}$. While the results are not new, some of the proofs are. The
proofs illustrate some general techniques in linear algebra over commutative rings.

\end{abstract}
\tableofcontents

\section{\label{sec.intro}Introduction}

Let $\mathbb{K}$ be a commutative ring. The famous Cayley-Hamilton theorem
says that if $\chi_{A}=\det\left(  tI_{n}-A\right)  \in\mathbb{K}\left[
t\right]  $ is the characteristic polynomial of an $n\times n$-matrix
$A\in\mathbb{K}^{n\times n}$, then $\chi_{A}\left(  A\right)  =0$. Speaking
more explicitly, it means that if we write this polynomial $\chi_{A}$ in the
form $\chi_{A}=\sum_{i=0}^{n}c_{n-i}t^{i}$ (with $c_{n-i}\in\mathbb{K}$), then
$\sum_{i=0}^{n}c_{n-i}A^{i}=0$. Various proofs of this theorem are well-known
(we will present one in this paper, but it could not be any farther from being
new). A less standard fact, which I call the \textit{trace Cayley-Hamilton
theorem}, states that
\begin{equation}
kc_{k}+\sum_{i=1}^{k}\operatorname*{Tr}\left(  A^{i}\right)  c_{k-i}%
=0\ \ \ \ \ \ \ \ \ \ \text{for every }k\in\mathbb{N} \label{eq.trach.intro}%
\end{equation}
(where $\sum_{i=0}^{n}c_{n-i}t^{i}$ is $\chi_{A}$ as before, and where we set
$c_{n-i}=0$ for every $i<0$). In the case of $k\geq n$, this can easily be
obtained from the Cayley-Hamilton theorem $\sum_{i=0}^{n}c_{n-i}A^{i}=0$ by
multiplying by $A^{k-n}$ and taking traces\footnote{The details are left to
the interested reader. The $kc_{k}$ term on the left hand side appears off,
but it actually is harmless: In the $k=n$ case, it can be rewritten as
$\operatorname*{Tr}\left(  A^{0}\right)  c_{n}$ and incorporated into the sum,
whereas in the $k>n$ case, it simply vanishes.}; no such simple proof exists
in the general case, however. The result itself is not new (the $k\leq n$
case, for example, is \cite[Chapter III, Exercise 14]{LomQui16}), and is
well-known e.g. to algebraic combinatorialists and representation theorists
(indeed, the underlying idea goes back to Frobenius); however, it is hard to
find an expository treatment.

When the ground ring $\mathbb{K}$ is a field, it is possible to prove the
trace Cayley-Hamilton theorem by expressing both $\operatorname*{Tr}\left(
A^{i}\right)  $ and the $c_{j}$ through the eigenvalues of $A$ (indeed,
$\operatorname*{Tr}\left(  A^{i}\right)  $ is the sum of the $i$-th powers of
these eigenvalues, whereas $c_{j}$ is $\left(  -1\right)  ^{j}$ times their
$j$-th elementary symmetric function); the identity (\ref{eq.trach.intro})
then boils down to the Newton identities for said eigenvalues. This proof goes
back to Gantmacher \cite[\S IV.5, (44)]{Gantma77}. However, of course, the use
of eigenvalues in this proof requires $\mathbb{K}$ to be a field. There are
ways to adapt this proof to the case when $\mathbb{K}$ is a commutative ring.
One is to apply the \textquotedblleft method of universal
identities\textquotedblright\ (see, e.g., \cite[Chapter III, Exercise
14]{LomQui16}; the method is also explained in \cite{Conrad09}) to reduce the
general case to the case when $\mathbb{K}$ is a field\footnote{This relies on
the observation that (\ref{eq.trach.intro}) (for a given $k$) is a polynomial
identity in the entries of $A$.}. Another is to build up the theory of
eigenvalues for square matrices over an arbitrary commutative ring
$\mathbb{K}$; this is not as simple as for fields, but doable (see
\cite{Laksov13}).

In this paper (specifically, in Section \ref{sec.proofs}), I shall prove both
the Cayley-Hamilton and the trace Cayley-Hamilton theorems without the use of
eigenvalues. Instead, I will use a trick that involves expanding the adjugate
matrix $\operatorname*{adj}\left(  tI_{n}-A\right)  $ as $D_{0}t^{0}%
+D_{1}t^{1}+\cdots+D_{n-1}t^{n-1}$ for some $n$ matrices $D_{0},D_{1}%
,\ldots,D_{n-1}\in\mathbb{K}^{n\times n}$. This way of proving the
Cayley-Hamilton theorem is well-known, and is in fact the oldest known proof
of the theorem, proposed by Buchheim in 1884 \cite{Buchhe84}. What is not
well-known, however, is that the same trick can be used for the trace
Cayley-Hamilton theorem, although it requires more work; in particular, an
intermediate step is necessary, establishing that the derivative of the
characteristic polynomial $\chi_{A}=\det\left(  tI_{n}-A\right)  $ is
$\operatorname*{Tr}\left(  \operatorname*{adj}\left(  tI_{n}-A\right)
\right)  $. I hope that this writeup will have two uses: making the trace
Cayley-Hamilton theorem more accessible, and demonstrating that the trick just
mentioned can serve more than one purpose. Next, I shall show an application
of the trace Cayley-Hamilton theorem, answering a question from
\cite{m.se1798703} (Section \ref{sec.nilp}).

In the second half of this paper (Section \ref{sec.adj}), I shall discuss
several other properties of the adjugate matrix as well as further
applications of polynomial matrices in proving determinant identities. The
proofs of these properties rely on the \textquotedblleft$tI_{n}+A$
trick\textquotedblright, which consists in replacing the matrix $A\in
\mathbb{K}^{n\times n}$ by the polynomial matrix $tI_{n}+A\in\left(
\mathbb{K}\left[  t\right]  \right)  ^{n\times n}$. The latter matrix has the
advantage that its determinant $\det\left(  tI_{n}+A\right)  $ is regular
(i.e., not a zero-divisor) even when $\det A$ is not. This allows us to cancel
$\det\left(  tI_{n}+A\right)  $ from equalities, something that could not
generally be done with $\det A$. This trick is an instance of a strategy
deeply familiar to algebraic geometers (\textquotedblleft deforming away from
a singularity\textquotedblright) and even to elementary geometers (e.g.
viewing a tangent of a circle as a limiting case of a secant), but is rarely
seen in texts on linear algebra, particularly in the purely algebraic form
presented here. (Several authors perform an analogous analytic procedure,
which replaces a real matrix $A\in\mathbb{R}^{n\times n}$ by a
\textquotedblleft slightly wiggled\textquotedblright\ version $\varepsilon
I_{n}+A\in\mathbb{R}^{n\times n}$ for small $\varepsilon>0$. This often has
the same purpose as our replacement of $A$ by $tI_{n}+A$, but has the
disadvantage of working only for real or complex matrices.)

In a short final section (Section \ref{sec.further}), I will discuss a few
more properties of traces, including Almkvist's theorem on traces of nilpotent
matrices over commutative rings. The proofs are only outlined.

\section{\label{sec.thm}Notations and theorems}

\subsection{Notations}

Before we state the theorems that we will be occupying ourselves with, let us
agree on the notations.

\begin{definition}
\label{def.notations}Throughout this paper, the word \textquotedblleft
ring\textquotedblright\ will mean \textquotedblleft associative ring with
unity\textquotedblright. We will always let $\mathbb{K}$ denote a commutative
ring with unity. The word \textquotedblleft matrix\textquotedblright\ shall
always mean \textquotedblleft matrix over $\mathbb{K}$\textquotedblright,
unless explicitly stated otherwise.

As usual, we let $\mathbb{K}\left[  t\right]  $ denote the polynomial ring in
the indeterminate $t$ over $\mathbb{K}$.

If $f\in\mathbb{K}\left[  t\right]  $ is a polynomial and $n$ is an integer,
then $\left[  t^{n}\right]  f$ will denote the coefficient of $t^{n}$ in $f$.
(If $n$ is negative or greater than the degree of $f$, then this coefficient
is understood to be $0$.)

Let $\mathbb{N}$ denote the set $\left\{  0,1,2,\ldots\right\}  $.

If $n\in\mathbb{N}$ and $m\in\mathbb{N}$, and if we are given an element
$a_{i,j}\in\mathbb{K}$ for every $\left(  i,j\right)  \in\left\{
1,2,\ldots,n\right\}  \times\left\{  1,2,\ldots,m\right\}  $, then we use the
notation $\left(  a_{i,j}\right)  _{1\leq i\leq n,\ 1\leq j\leq m}$ for the
$n\times m$-matrix whose $\left(  i,j\right)  $-th entry is $a_{i,j}$ for all
$\left(  i,j\right)  \in\left\{  1,2,\ldots,n\right\}  \times\left\{
1,2,\ldots,m\right\}  $.

For every $n\in\mathbb{N}$, we denote the $n\times n$ identity matrix by
$I_{n}$.

For every $n\in\mathbb{N}$ and $m\in\mathbb{N}$, we denote the $n\times m$
zero matrix by $0_{n\times m}$.

If $A$ is any $n\times n$-matrix, then we let $\det A$ denote the determinant
of $A$, and we let $\operatorname*{Tr}A$ denote the trace of $A$. (Recall that
the trace of $A$ is defined to be the sum of the diagonal entries of $A$.)

We consider $\mathbb{K}$ as a subring of $\mathbb{K}\left[  t\right]  $. Thus,
for every $n\in\mathbb{N}$, every $n\times n$-matrix in $\mathbb{K}^{n\times
n}$ can be considered as a matrix in $\left(  \mathbb{K}\left[  t\right]
\right)  ^{n\times n}$.
\end{definition}

\subsection{The main claims}

We shall now state the results that we will prove further below. We begin with
a basic fact:

\begin{proposition}
\label{prop.ta+b}Let $n\in\mathbb{N}$. Let $A\in\mathbb{K}^{n\times n}$ and
$B\in\mathbb{K}^{n\times n}$ be two $n\times n$-matrices. Consider the matrix
$tA+B\in\left(  \mathbb{K}\left[  t\right]  \right)  ^{n\times n}$. \medskip

\textbf{(a)} Then, $\det\left(  tA+B\right)  \in\mathbb{K}\left[  t\right]  $
is a polynomial of degree $\leq n$ in $t$. \medskip

\textbf{(b)} We have $\left[  t^{0}\right]  \left(  \det\left(  tA+B\right)
\right)  =\det B$. \medskip

\textbf{(c)} We have $\left[  t^{n}\right]  \left(  \det\left(  tA+B\right)
\right)  =\det A$.
\end{proposition}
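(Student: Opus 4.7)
The plan is to attack all three parts of Proposition~\ref{prop.ta+b} through the Leibniz formula for the determinant. Writing $A = (a_{i,j})_{1 \leq i,j \leq n}$ and $B = (b_{i,j})_{1 \leq i,j \leq n}$, the $(i,j)$-th entry of $tA + B$ is $t a_{i,j} + b_{i,j} \in \mathbb{K}[t]$, so Leibniz yields
\[
\det(tA+B) \;=\; \sum_{\sigma \in S_n} \operatorname{sgn}(\sigma) \prod_{i=1}^{n} \bigl( t a_{i,\sigma(i)} + b_{i,\sigma(i)} \bigr).
\]
This single expression is the anchor for all three parts.

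For part \textbf{(a)}, each of the $n$ factors in each product is a polynomial in $t$ of degree at most $1$, so each product is a polynomial of degree at most $n$, and hence so is the whole sum. For part \textbf{(b)}, the cleanest route is to invoke the fact that for any $p \in \mathbb{K}[t]$, one has $[t^{0}] p = p(0)$, and that the evaluation map $\mathbb{K}[t] \to \mathbb{K}$, $t \mapsto 0$, is a ring homomorphism which therefore commutes with taking determinants of matrices (applied entrywise). Since substituting $t = 0$ into $tA + B$ yields $B$, we immediately obtain $[t^{0}] \det(tA + B) = \det B$. Alternatively, one could read off the constant term directly from the Leibniz expansion: it comes from selecting the $b_{i,\sigma(i)}$ summand in every factor.

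For part \textbf{(c)}, I would extract the coefficient of $t^n$ from the Leibniz expansion term by term. Inside $\prod_{i=1}^{n} ( t a_{i,\sigma(i)} + b_{i,\sigma(i)} )$, the only way to accumulate $n$ factors of $t$ is to pick the $t a_{i,\sigma(i)}$ summand from every single factor, contributing $\prod_{i=1}^{n} a_{i,\sigma(i)}$ to the coefficient of $t^n$. Summing with signs gives
\[
[t^{n}]\det(tA+B) \;=\; \sum_{\sigma \in S_n} \operatorname{sgn}(\sigma) \prod_{i=1}^{n} a_{i,\sigma(i)} \;=\; \det A,
\]
again by Leibniz.

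I do not expect any genuine obstacle here; the proposition is essentially a direct reading of the Leibniz formula. The only mild subtlety is the justification in part (b) that evaluation $t \mapsto 0$ commutes with $\det$, which is a general fact about ring homomorphisms and polynomial expressions in matrix entries, and which can alternatively be bypassed by arguing directly from the Leibniz expansion as in part (c).
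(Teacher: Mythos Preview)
Your proposal is correct and follows essentially the same approach as the paper: both expand $\det(tA+B)$ via the Leibniz formula and read off the degree bound and the coefficients of $t^0$ and $t^n$ from the products $\prod_{i=1}^n (t a_{i,\sigma(i)} + b_{i,\sigma(i)})$. The paper merely isolates the fact about such products (degree $\leq n$, $[t^n]$ and $[t^0]$ equal the products of the $a$'s and $b$'s respectively) into a separate preparatory lemma, whereas you argue it inline; your alternative evaluation-at-$0$ route for part \textbf{(b)} is a minor variation that the paper does not use here but does exploit later (Proposition~\ref{prop.tInA.ev}).
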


\begin{definition}
\label{def.chiA}Let $n\in\mathbb{N}$. Let $A\in\mathbb{K}^{n\times n}$ be an
$n\times n$-matrix. Then, we consider $A$ as a matrix in $\left(
\mathbb{K}\left[  t\right]  \right)  ^{n\times n}$ as well (as explained
above); thus, a matrix $tI_{n}-A\in\left(  \mathbb{K}\left[  t\right]
\right)  ^{n\times n}$ is defined. We let $\chi_{A}$ denote the polynomial
$\det\left(  tI_{n}-A\right)  \in\mathbb{K}\left[  t\right]  $; we call
$\chi_{A}$ the \textit{characteristic polynomial} of $A$.
\end{definition}

We notice that the notion of the characteristic polynomial is not standardized
across the literature. Our definition of $\chi_{A}$ is identical with the
definition in \cite[\S V.3]{Knapp2016} (except that we use $t$ instead of $X$
as the indeterminate), but the definition in \cite[Chapter Five, Section II,
Definition 3.9]{Heffer14} is different (it defines $\chi_{A}$ to be
$\det\left(  A-tI_{n}\right)  $ instead). The two definitions differ merely in
a sign (namely, one version of the characteristic polynomial is $\left(
-1\right)  ^{n}$ times the other), whence any statement about one of them can
easily be translated into a statement about the other; nevertheless this
discrepancy creates some occasions for confusion. I shall, of course, use
Definition \ref{def.chiA} throughout this paper.

\begin{corollary}
\label{cor.chiA}Let $n\in\mathbb{N}$. Let $A\in\mathbb{K}^{n\times n}$.
\medskip

\textbf{(a)} Then, $\chi_{A}\in\mathbb{K}\left[  t\right]  $ is a polynomial
of degree $\leq n$ in $t$. \medskip

\textbf{(b)} We have $\left[  t^{0}\right]  \chi_{A}=\left(  -1\right)
^{n}\det A$. \medskip

\textbf{(c)} We have $\left[  t^{n}\right]  \chi_{A}=1$.
\end{corollary}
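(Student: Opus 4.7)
The plan is to deduce Corollary \ref{cor.chiA} as a direct specialization of Proposition \ref{prop.ta+b}. Recall that $\chi_{A} = \det(tI_{n} - A)$ by Definition \ref{def.chiA}, and $tI_{n} - A$ can be rewritten as $tI_{n} + (-A)$, which is exactly the shape $tA' + B'$ treated in Proposition \ref{prop.ta+b} with $A' := I_{n}$ and $B' := -A$. So the whole proof boils down to invoking that proposition with this substitution and then simplifying the three resulting determinants.

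Concretely, I would proceed as follows. First, observe $tI_{n} - A = tI_{n} + (-A) = tA' + B'$ for $A' := I_{n}$ and $B' := -A$, both in $\mathbb{K}^{n \times n}$. For part \textbf{(a)}, Proposition \ref{prop.ta+b}\textbf{(a)} immediately tells us that $\det(tA' + B') \in \mathbb{K}[t]$ has degree $\leq n$, i.e., $\chi_{A}$ has degree $\leq n$. For part \textbf{(b)}, Proposition \ref{prop.ta+b}\textbf{(b)} gives
\[
[t^{0}] \chi_{A} = [t^{0}] \det(tA' + B') = \det B' = \det(-A),
\]
and it remains only to note the standard fact $\det(-A) = (-1)^{n} \det A$ (which follows from multilinearity of the determinant in its rows, since $-A$ is obtained from $A$ by scaling each of the $n$ rows by $-1$). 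For part \textbf{(c)}, Proposition \ref{prop.ta+b}\textbf{(c)} yields
\[
[t^{n}] \chi_{A} = [t^{n}] \det(tA' + B') = \det A' = \det(I_{n}) = 1.
\]

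There is essentially no obstacle here beyond picking the right substitution; the only tiny point worth spelling out is the scalar-extraction identity $\det(-A) = (-1)^{n} \det A$, which I would cite as a standard consequence of the multilinearity of $\det$. The corollary is, in effect, the $A = I_{n}$, $B = -A$ instance of the preceding proposition, together with $\det I_{n} = 1$.
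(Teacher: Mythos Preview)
Your proposal is correct and is essentially identical to the paper's own proof: the paper likewise applies Proposition \ref{prop.ta+b} with $I_{n}$ and $-A$ in place of $A$ and $B$, and notes the additional fact $\det(-A) = (-1)^{n}\det A$ for part \textbf{(b)}.
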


Of course, combining parts \textbf{(a)} and \textbf{(c)} of Corollary
\ref{cor.chiA} shows that, for every $n\in\mathbb{N}$ and $A\in\mathbb{K}%
^{n\times n}$, the characteristic polynomial $\chi_{A}$ is a monic polynomial
of degree $n$.

Let me now state the main two theorems of this section:

\begin{theorem}
[Cayley-Hamilton theorem]\label{thm.CH}Let $n\in\mathbb{N}$. Let
$A\in\mathbb{K}^{n\times n}$. Then, $\chi_{A}\left(  A\right)  =0_{n\times n}%
$. (Here, $\chi_{A}\left(  A\right)  $ denotes the result of substituting $A$
for $t$ in the polynomial $\chi_{A}$. It does \textbf{not} denote the result
of substituting $A$ for $t$ in the expression $\det\left(  tI_{n}-A\right)  $;
in particular, $\chi_{A}\left(  A\right)  $ is an $n\times n$-matrix, not a determinant!)
\end{theorem}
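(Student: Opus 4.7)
The plan is to follow Buchheim's approach via the adjugate of $tI_n - A$. The starting point is the universal identity $\operatorname{adj}(M)\cdot M = \det(M)\cdot I_n$, which holds for every square matrix $M$ over any commutative ring. Applied inside the ring $\left(\mathbb{K}[t]\right)^{n\times n}$ to the polynomial matrix $M = tI_n - A$, it yields
\[
\operatorname{adj}(tI_n - A)\cdot (tI_n - A) \;=\; \chi_A(t)\, I_n.
\]
Each entry of $\operatorname{adj}(tI_n - A)$ is, up to a sign, an $(n-1)\times(n-1)$ minor of $tI_n - A$, whose entries are polynomials in $t$ of degree $\leq 1$; by Proposition \ref{prop.ta+b}(a) applied to the appropriate submatrices, each such minor is a polynomial in $t$ of degree $\leq n-1$. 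Hence there exist matrices $D_0, D_1, \ldots, D_{n-1} \in \mathbb{K}^{n\times n}$ with
\[
\operatorname{adj}(tI_n - A) \;=\; \sum_{i=0}^{n-1} D_i\, t^i.
\]

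Next I would expand the product and compare coefficients of $t^k$ on both sides of the displayed identity. Writing $\chi_A = \sum_{i=0}^n c_{n-i} t^i$ (so the coefficient of $t^k$ on the right is $c_{n-k} I_n$, and recalling $c_0 = 1$ by Corollary \ref{cor.chiA}(c)) and adopting the convention $D_{-1} = D_n = 0_{n\times n}$, one obtains for every $k \in \{0,1,\ldots,n\}$ the matrix identity
\[
D_{k-1} - D_k A \;=\; c_{n-k}\, I_n.
\]
These $n+1$ equalities live in the ordinary matrix ring $\mathbb{K}^{n\times n}$, so from this point on no polynomials are involved.

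The final step is a telescoping argument. I would right-multiply the $k$-th equation by $A^k$, obtaining $D_{k-1}A^k - D_k A^{k+1} = c_{n-k}A^k$, and then sum over $k$ from $0$ to $n$. The left-hand side telescopes to $D_{-1}A^0 - D_n A^{n+1} = 0$, while the right-hand side is exactly $\sum_{k=0}^n c_{n-k}A^k = \chi_A(A)$. This gives $\chi_A(A) = 0_{n\times n}$, completing the proof. I do not foresee any real obstacle here; the only point that deserves care is conceptual rather than computational, namely emphasizing that we never literally substitute the matrix $A$ into a polynomial with matrix coefficients (which would be ambiguous since $A$ need not commute with the $D_i$). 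Instead, we work with polynomial identities in $\left(\mathbb{K}[t]\right)^{n\times n}$, extract coefficientwise identities in $\mathbb{K}^{n\times n}$, and only then right-multiply by powers of $A$.
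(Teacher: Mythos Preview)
Your proof is correct and follows essentially the same route as the paper's: Buchheim's adjugate argument. The only cosmetic difference is that you use $\operatorname{adj}(tI_n-A)\cdot(tI_n-A)=\chi_A I_n$ and right-multiply the resulting relations $D_{k-1}-D_kA=c_{n-k}I_n$ by $A^k$, whereas the paper uses $(tI_n-A)\cdot\operatorname{adj}(tI_n-A)=\chi_A I_n$, obtains $D_{k-1}-AD_k=c_{n-k}I_n$, and left-multiplies by powers of $A$; both telescoping sums collapse identically. The paper packages the intermediate identities into a separate lemma (Lemma~\ref{lem.CH.D}) so they can be reused for the trace Cayley--Hamilton theorem, but for Theorem~\ref{thm.CH} itself your direct version is equivalent.
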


\begin{theorem}
[trace Cayley-Hamilton theorem]\label{thm.TCH}Let $n\in\mathbb{N}$. Let
$A\in\mathbb{K}^{n\times n}$. For every $j\in\mathbb{Z}$, define an element
$c_{j}\in\mathbb{K}$ by $c_{j}=\left[  t^{n-j}\right]  \chi_{A}$. Then,%
\[
kc_{k}+\sum_{i=1}^{k}\operatorname*{Tr}\left(  A^{i}\right)  c_{k-i}%
=0\ \ \ \ \ \ \ \ \ \ \text{for every }k\in\mathbb{N}.
\]

\end{theorem}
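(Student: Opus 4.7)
The plan is to apply the ``adjugate expansion trick'' previewed in the introduction, combined with a key intermediate identity that computes the derivative of $\chi_A$ as a trace. First I view $tI_n-A$ as a matrix over $\mathbb{K}[t]$. Each entry of $\operatorname{adj}(tI_n-A)$ is a signed $(n-1)\times(n-1)$ minor of $tI_n-A$ and hence a polynomial in $t$ of degree $\leq n-1$, so I may write
\[
\operatorname{adj}(tI_n-A)=\sum_{j=0}^{n-1}D_j t^j
\]
for unique matrices $D_0,\ldots,D_{n-1}\in\mathbb{K}^{n\times n}$. Substituting this into the defining relation $\operatorname{adj}(tI_n-A)\cdot(tI_n-A)=\chi_A(t)I_n$ and comparing coefficients of $t^i$ yields the recurrence $D_{i-1}-D_iA=c_{n-i}I_n$ (with the conventions $D_{-1}=D_n=0$). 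Solving from the top using $c_0=1$ gives the explicit formula $D_{n-1-k}=\sum_{i=0}^{k}c_iA^{k-i}$ for $0\leq k\leq n-1$, so in particular $\operatorname{Tr}(D_{n-1-k})=\sum_{i=0}^{k}c_i\operatorname{Tr}(A^{k-i})$.

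The main obstacle, and the crux of the argument, is the intermediate identity
\[
\chi_A'(t)=\operatorname{Tr}(\operatorname{adj}(tI_n-A)).
\]
I would derive this from Jacobi's formula $\tfrac{d}{dt}\det M(t)=\operatorname{Tr}(\operatorname{adj}(M(t))\cdot M'(t))$ specialized to $M(t)=tI_n-A$, whose derivative is simply $I_n$; Jacobi's formula itself can be proven by differentiating the Leibniz expansion of the determinant row by row and reassembling. This is where the trace genuinely enters the picture. Once the identity is in hand, writing $\chi_A'(t)=\sum_{j=1}^{n}jc_{n-j}t^{j-1}$ and extracting the coefficient of $t^{n-1-k}$ on both sides gives the second expression $\operatorname{Tr}(D_{n-1-k})=(n-k)c_k$.

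Equating the two expressions for $\operatorname{Tr}(D_{n-1-k})$ from Steps 1 and 2 yields
\[
(n-k)c_k=\sum_{i=0}^{k}c_i\operatorname{Tr}(A^{k-i})=nc_k+\sum_{i=1}^{k}\operatorname{Tr}(A^i)c_{k-i},
\]
where I separated off the $i=k$ term (using $\operatorname{Tr}(A^0)=n$) and reindexed the remainder. Cancelling $nc_k$ and rearranging gives the desired identity $kc_k+\sum_{i=1}^{k}\operatorname{Tr}(A^i)c_{k-i}=0$ for every $k\in\{0,1,\ldots,n-1\}$.

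Finally, for $k\geq n$ I fall back on the classical Cayley-Hamilton theorem (Theorem~\ref{thm.CH}), which the same adjugate expansion yields as an independent byproduct. Multiplying $\chi_A(A)=0_{n\times n}$ by $A^{k-n}$ and taking traces produces $\sum_{i=0}^{n}c_{n-i}\operatorname{Tr}(A^{k-n+i})=0$; after reindexing and using $c_j=0$ for $j>n$, this rearranges to precisely $kc_k+\sum_{i=1}^{k}\operatorname{Tr}(A^i)c_{k-i}=0$ (in the $k=n$ case the leading term $kc_k=nc_n$ is recovered from the $\operatorname{Tr}(A^0)c_n$ contribution in the sum, while for $k>n$ it simply vanishes since $c_k=0$).
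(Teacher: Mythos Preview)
Your proof is correct and follows essentially the same route as the paper: expand $\operatorname{adj}(tI_n-A)$ in powers of $t$, derive the recurrence and the closed form $D_{n-1-k}=\sum_{i=0}^{k}c_iA^{k-i}$, prove $\chi_A'(t)=\operatorname{Tr}(\operatorname{adj}(tI_n-A))$ via Jacobi's formula, and equate the two resulting expressions for $\operatorname{Tr}(D_{n-1-k})$. The only organizational difference is that the paper extends the family $(D_k)$ by zero to all $k\in\mathbb{Z}$ and verifies that both the closed form and the trace identity $\operatorname{Tr}(D_k)=(k+1)c_{n-k-1}$ continue to hold for all integers $k$, thereby covering every $k\in\mathbb{N}$ in one stroke; you instead split off the range $k\geq n$ and handle it separately via Cayley--Hamilton (which is itself a byproduct of the same adjugate expansion). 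Both are valid; the paper's bookkeeping is a bit more uniform, while your case split is perhaps more transparent about where Cayley--Hamilton enters.
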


Theorem \ref{thm.CH} is (as has already been said) well-known and a
cornerstone of linear algebra. It appears (with proofs) in \cite{Bernha11},
\cite[Theorem 7.23]{Brown93}, \cite{Buchhe84}, \cite[Theorem 2.16]{Camero08},
\cite[Theorem 23.1]{Climen13}, \cite[Theorem 4.7.12]{Ford24}, \cite[\S 28.10]%
{Garrett09}, \cite[Chapter Five, Section IV, Lemma 1.9]{Heffer14},
\cite[Theorem 5.9]{Knapp2016}, \cite[\S 5.15]{Loehr14}, \cite[Corollary
14.21]{Macduf56}, \cite[\S 4, Theorem 1]{Mate16}, \cite[Theorem I.8]%
{McDona84}, \cite[\S 6.1, Theorem 1.1]{Moore68}, \cite[Seconde m\'{e}thode
(\S 3)]{Sage08}, \cite{Shurma15}, \cite[Theorem 3.1]{Stoll17}, \cite{Straub83}%
, \cite[Theorem 7.10]{Willia89}, \cite[\S 3]{Zeilbe} and in many other
sources\footnote{All the sources we are citing (with the possible exception of
\cite[\S 28.10]{Garrett09}) prove Theorem \ref{thm.CH} in full generality,
although some of them do not \textbf{state} Theorem \ref{thm.CH} in full
generality (indeed, they often state it under the additional requirement that
$\mathbb{K}$ be a field). There are other sources which only prove Theorem
\ref{thm.CH} in the case when $\mathbb{K}$ is a field. The note \cite{Sage08}
gives four proofs of Theorem \ref{thm.CH} for the case when $\mathbb{K}%
=\mathbb{C}$; the first of these proofs works for every field $\mathbb{K}$,
whereas the second works for any commutative ring $\mathbb{K}$, and the third
and the fourth actually require $\mathbb{K}=\mathbb{C}$.
\par
Note that some authors decline to call Theorem \ref{thm.CH} the
Cayley-Hamilton theorem; they instead use this name for some related result.
For instance, Hefferon, in \cite{Heffer14}, uses the name \textquotedblleft
Cayley-Hamilton theorem\textquotedblright\ for a corollary.}. The proof we
will give below will essentially repeat the proof in \cite[Chapter Five,
Section IV, Lemma 1.9]{Heffer14}; this proof goes back to Buchheim's 1884 note
\cite{Buchhe84}, and may be the historically first proof of the theorem.

Theorem \ref{thm.TCH} is a less known result. It appears in \cite[Proposition
7.7.1]{Greub78} (with a proof using tensors\footnote{\cite[Chapter 7]{Greub78}
assumes $\mathbb{K}$ to be a field of characteristic $0$, but I hope the proof
does not rely on this assumption.}), in \cite[Chapter III, Exercise
14]{LomQui16} (with a sketch of a proof), in \cite[($C-H$)]{Zeilbe93} (with a
beautiful short proof using exterior algebra) and in \cite[Exercise 5]{Zeilbe}
(without proof); its particular case when $\mathbb{K}$ is a field also tends
to appear in representation-theoretical literature (mostly left as an exercise
to the reader\footnote{In the case when $\mathbb{K}$ is a field, a proof using
eigenvalues is outlined in \cite[\S IV.5, (44)]{Gantma77}. This proof can be
adapted to arbitrary commutative rings $\mathbb{K}$ using universal splitting
algebras (see \cite[\S III.4]{LomQui16}).
\par
Another proof when $\mathbb{K}$ is a field is presented in \cite{Kalman00}.}).
We will prove it similarly to Theorem \ref{thm.CH}; this proof, to my
knowledge, is new.

The names we use for Theorem \ref{thm.CH} and Theorem \ref{thm.TCH} should not
be misunderstood as attribution. Indeed, neither Cayley nor Hamilton seem to
have proved Theorem \ref{thm.CH}. Cayley, in his 1858 memoir \cite{Cayley58}
that introduced the theorem, verified it for $n\leq3$ and subsequently claimed
to \textquotedblleft have not thought it necessary to undertake the labour of
a formal proof of the theorem in the general case of a matrix of any
degree\textquotedblright. Hamilton seems to have done even less. Buchheim gave
what might be the first proof in 1884 \cite{Buchhe84} (in fact, the very proof
we shall give below), crediting (for unclear reasons) Tait's work on
quaternions. See \cite{Feldma62} and \cite[\S 14]{Macduf56} for more
historical references. The origins of Theorem \ref{thm.TCH} are even more
elusive: While it appears in Gantmacher's \cite[\S IV.5, (44)]{Gantma77}
(already in the 1960 edition) without much fanfare, it is not easy to find an
earlier source, perhaps in part because of the surprisingly late introduction
of the notion of the trace of a matrix (by Dedekind in 1882). Something along
the lines of Theorem \ref{thm.TCH} (in a very special case) appeared in
Frobenius's 1899 paper \cite[(4.) and (6.)]{Froben99} (one of the foundational
works of group representation theory). I would not be too surprised to find
some precursors in the vast literature on determinants and invariant theory.

\section{\label{sec.proofs}The proofs}

\subsection{Proposition \ref{prop.ta+b} and Corollary \ref{cor.chiA}}

Let us now begin proving the results stated above. As a warmup, we will prove
the (rather trivial) Proposition \ref{prop.ta+b}.

We first recall how the determinant of a matrix is defined: For any
$n\in\mathbb{N}$, let $S_{n}$ denote the $n$-th symmetric group (i.e., the
group of all permutations of $\left\{  1,2,\ldots,n\right\}  $). If
$n\in\mathbb{N}$ and $\sigma\in S_{n}$, then $\left(  -1\right)  ^{\sigma}$
denotes the sign of the permutation $\sigma$. If $n\in\mathbb{N}$, and if
$A=\left(  a_{i,j}\right)  _{1\leq i\leq n,\ 1\leq j\leq n}$ is an $n\times
n$-matrix, then%
\begin{equation}
\det A=\sum_{\sigma\in S_{n}}\left(  -1\right)  ^{\sigma}\prod_{i=1}%
^{n}a_{i,\sigma\left(  i\right)  }. \label{eq.det}%
\end{equation}

We prepare for the proof of Proposition \ref{prop.ta+b} by stating a simple lemma:

\begin{lemma}
\label{lem.ta+b.prod}Let $n\in\mathbb{N}$. Let $x_{1},x_{2},\ldots,x_{n}$ be
$n$ elements of $\mathbb{K}$. Let $y_{1},y_{2},\ldots,y_{n}$ be $n$ elements
of $\mathbb{K}$. Define a polynomial $f\in\mathbb{K}\left[  t\right]  $ by
$f=\prod_{i=1}^{n}\left(  tx_{i}+y_{i}\right)  $. \medskip

\textbf{(a)} Then, $f$ is a polynomial of degree $\leq n$. \medskip

\textbf{(b)} We have $\left[  t^{n}\right]  f=\prod_{i=1}^{n}x_{i}$. \medskip

\textbf{(c)} We have $\left[  t^{0}\right]  f=\prod_{i=1}^{n}y_{i}$.
\end{lemma}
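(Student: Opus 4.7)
My plan is to prove all three parts of Lemma \ref{lem.ta+b.prod} simultaneously by induction on $n$, since that is more economical than treating them separately.

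For the base case $n=0$, the product $f$ is empty and so equals the constant polynomial $1$. Its degree is $0 \leq 0$, and both the coefficient of $t^0$ and the coefficient of $t^n = t^0$ are $1$, which matches the empty products $\prod_{i=1}^{0} x_i = 1$ and $\prod_{i=1}^{0} y_i = 1$.

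For the inductive step, assume the lemma holds for $n-1$. Factor
\[
f = \left(\prod_{i=1}^{n-1}(tx_i+y_i)\right)(tx_n+y_n) = g \cdot (tx_n + y_n) = tx_n \, g + y_n \, g,
\]
where $g := \prod_{i=1}^{n-1}(tx_i+y_i)$. By the induction hypothesis, $g$ has degree $\leq n-1$, with $[t^{n-1}]g = \prod_{i=1}^{n-1} x_i$ and $[t^0]g = \prod_{i=1}^{n-1} y_i$. Then $tx_n g$ has degree $\leq n$ and $y_n g$ has degree $\leq n-1$, so $f$ has degree $\leq n$, proving part \textbf{(a)}. For part \textbf{(b)}, only $tx_n g$ can contribute to the coefficient of $t^n$ in $f$, and that contribution is $x_n \cdot [t^{n-1}]g = x_n \prod_{i=1}^{n-1} x_i = \prod_{i=1}^{n} x_i$. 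For part \textbf{(c)}, only $y_n g$ can contribute to the coefficient of $t^0$ in $f$ (since $tx_n g$ has zero constant term), and that contribution is $y_n \cdot [t^0]g = y_n \prod_{i=1}^{n-1} y_i = \prod_{i=1}^{n} y_i$.

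There is essentially no obstacle here: the result is a purely formal manipulation of polynomials, and the induction goes through because the splitting $(tx_n+y_n)g = tx_n g + y_n g$ cleanly separates the degree-$n$ contribution from the constant-term contribution. An alternative would be to expand the product directly as a sum over subsets $S \subseteq \{1,\ldots,n\}$, giving $f = \sum_{S} t^{|S|} \prod_{i\in S} x_i \prod_{i\notin S} y_i$, from which all three claims are immediate by inspecting $S = \{1,\ldots,n\}$ and $S = \varnothing$; but the induction is more in keeping with the elementary spirit of this warm-up section.
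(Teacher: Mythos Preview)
Your proof is correct and follows essentially the same approach as the paper: induction on $n$, with the empty product handling the base case and the inductive step factoring off $(tx_n+y_n)$ from $f$ to reduce to a polynomial $g$ covered by the induction hypothesis. The paper's detailed version packages the coefficient extraction into a small auxiliary lemma about $[t^{p+q}](gh)$ and $[t^0](gh)$, whereas you argue directly from the split $f = tx_n g + y_n g$; both are equivalent, and the subset-expansion alternative you mention matches the paper's ``obvious by multiplying out'' remark.
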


\begin{vershort}
\begin{proof}
[Proof of Lemma \ref{lem.ta+b.prod}.]Obvious by multiplying out the product
$\prod_{i=1}^{n}\left(  tx_{i}+y_{i}\right)  $ (or, if one desires a formal
proof, by a straightforward induction over $n$).
\end{proof}
\end{vershort}

\begin{verlong}
Before we come to the proof of this lemma, let us recall the following
well-known facts about polynomials:

\begin{lemma}
\label{lem.poly.1}Let $g\in\mathbb{K}\left[  t\right]  $ and $h\in
\mathbb{K}\left[  t\right]  $ be two polynomials. \medskip

\textbf{(a)} We have%
\begin{equation}
\left[  t^{k}\right]  \left(  gh\right)  =\sum_{i=0}^{k}\left(  \left[
t^{i}\right]  g\right)  \left(  \left[  t^{k-i}\right]  h\right)
\ \ \ \ \ \ \ \ \ \ \text{for every }k\in\mathbb{N}. \label{eq.tkgh}%
\end{equation}
(This is just the definition of the product of two polynomials.) \medskip

\textbf{(b)} We have $\left[  t^{0}\right]  \left(  gh\right)  =\left(
\left[  t^{0}\right]  g\right)  \left(  \left[  t^{0}\right]  h\right)  $.
\medskip

\textbf{(c)} Let $p$ and $q$ be two nonnegative integers. Assume that $g$ is a
polynomial of degree $\leq p$. Assume that $h$ is a polynomial of degree $\leq
q$. Then, $gh$ is a polynomial of degree $\leq p+q$ and satisfies $\left[
t^{p+q}\right]  \left(  gh\right)  =\left(  \left[  t^{p}\right]  g\right)
\cdot\left(  \left[  t^{q}\right]  h\right)  $.
\end{lemma}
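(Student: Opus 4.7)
The plan is to derive all three parts directly from the definition of polynomial multiplication, with part (a) doing all the real work. I would fix notation by writing $g_i := [t^i]g$ and $h_j := [t^j]h$ for all $i, j \in \mathbb{N}$, so that $g = \sum_{i \geq 0} g_i t^i$ and $h = \sum_{j \geq 0} h_j t^j$ are finite sums (with $g_i = 0$ for all sufficiently large $i$, and similarly for $h_j$). By the very definition of multiplication in $\mathbb{K}[t]$, we have $gh = \sum_{i,j \geq 0} g_i h_j t^{i+j}$; grouping terms of common total degree $k$ yields
\[
[t^k](gh) = \sum_{\substack{i,j \geq 0 \\ i+j=k}} g_i h_j = \sum_{i=0}^{k} g_i h_{k-i},
\]
which is exactly part (a). Part (b) follows instantly as the $k=0$ specialization, since then the only summand is $i=0$, giving $[t^0](gh) = g_0 h_0$.

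For part (c), I would combine part (a) with the degree hypotheses, which say $g_i = 0$ for $i > p$ and $h_j = 0$ for $j > q$. For any integer $k > p+q$ and any index $i \in \{0,1,\ldots,k\}$, either $i > p$ (so $g_i = 0$), or $i \leq p$ and then $k - i \geq k - p > q$ (so $h_{k-i} = 0$). In either case $g_i h_{k-i} = 0$, and hence $[t^k](gh) = 0$ by part (a), proving $\deg(gh) \leq p+q$. The same dichotomy applied at $k = p+q$ kills every summand with $i \neq p$, leaving $[t^{p+q}](gh) = g_p h_q = ([t^p]g)([t^q]h)$, as claimed.

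There is really no substantive obstacle; the only pitfall to watch for is the convention that $[t^m]f = 0$ for $m$ outside the range $\{0, 1, \ldots, \deg f\}$. Keeping this in mind is what lets the sum in part (a) be written with clean fixed bounds $0 \leq i \leq k$, and what makes the two-case analysis in part (c) cover every index $i$ uniformly. If desired, one could alternatively rephrase the whole argument as a straightforward induction on $n$, where $n$ is the number of nonzero coefficients of (say) $g$, but the direct definitional route above is cleaner.
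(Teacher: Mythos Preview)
Your proof is correct and follows essentially the same approach as the paper: part (a) is taken as definitional, part (b) is the $k=0$ specialization, and part (c) is handled by the dichotomy that for each index $i$ either $g_i$ or $h_{k-i}$ vanishes. The paper's version of (c) is organized slightly differently (it first proves $[t^k](gh) = ([t^p]g)([t^{k-p}]h)$ for all $k \geq p+q$ by explicitly splitting the sum, then specializes), but the content is the same.
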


\begin{proof}
[Proof of Lemma \ref{lem.poly.1}.]\textbf{(a)} Lemma \ref{lem.poly.1} follows
immediately from the definition of the product $gh$ of the two polynomials $g$
and $h$. \medskip

\textbf{(b)} The equality (\ref{eq.tkgh}) (applied to $k=0$) yields%
\[
\left[  t^{0}\right]  \left(  gh\right)  =\sum_{i=0}^{0}\left(  \left[
t^{i}\right]  g\right)  \left(  \left[  t^{0-i}\right]  h\right)  =\left(
\left[  t^{0}\right]  g\right)  \left(  \underbrace{\left[  t^{0-0}\right]
}_{=\left[  t^{0}\right]  }h\right)  =\left(  \left[  t^{0}\right]  g\right)
\left(  \left[  t^{0}\right]  h\right)  .
\]
This proves Lemma \ref{lem.poly.1} \textbf{(b)}. \medskip

\textbf{(c)} The polynomial $g$ has degree $\leq p$. In other words,%
\begin{equation}
\left[  t^{i}\right]  g=0\ \ \ \ \ \ \ \ \ \ \text{for every integer }i>p.
\label{pf.lem.poly.1.c.1}%
\end{equation}

The polynomial $h$ has degree $\leq q$. In other words,%
\begin{equation}
\left[  t^{i}\right]  h=0\ \ \ \ \ \ \ \ \ \ \text{for every integer }i>q.
\label{pf.lem.poly.1.c.2}%
\end{equation}

Now, let $k$ be an integer such that $k\geq p+q$. From $q\geq0$, we obtain
$p+\underbrace{q}_{\geq0}\geq p\geq0$. Thus, $k\geq p+q\geq p$, so that $p\leq
k$. Also, $p\geq0$, so that $0\leq p$. Every $i\in\left\{  0,1,\ldots
,p-1\right\}  $ satisfies%
\begin{equation}
\left[  t^{k-i}\right]  h=0 \label{pf.lem.poly.1.c.2u}%
\end{equation}
\footnote{\textit{Proof of (\ref{pf.lem.poly.1.c.2u}):} Let $i\in\left\{
0,1,\ldots,p-1\right\}  $. Then, $i\leq p-1<p$ and thus $k-\underbrace{i}%
_{<p}>k-p\geq q$ (since $k\geq p+q$). Hence, (\ref{pf.lem.poly.1.c.2})
(applied to $k-i$ instead of $i$) yields $\left[  t^{k-i}\right]  h=0$. This
proves (\ref{pf.lem.poly.1.c.2u}).}. Finally, $k\geq p\geq0$, so that
$k\in\mathbb{N}$. Hence, (\ref{eq.tkgh}) yields%
\begin{align}
\left[  t^{k}\right]  \left(  gh\right)   &  =\sum_{i=0}^{k}\left(  \left[
t^{i}\right]  g\right)  \left(  \left[  t^{k-i}\right]  h\right) \nonumber\\
&  =\sum_{i=0}^{p}\left(  \left[  t^{i}\right]  g\right)  \left(  \left[
t^{k-i}\right]  h\right)  +\sum_{i=p+1}^{k}\underbrace{\left(  \left[
t^{i}\right]  g\right)  }_{\substack{=0\\\text{(by (\ref{pf.lem.poly.1.c.1}%
)}\\\text{(since }i\geq p+1>p\text{))}}}\left(  \left[  t^{k-i}\right]
h\right) \nonumber\\
&  \ \ \ \ \ \ \ \ \ \ \left(  \text{since }0\leq p\leq k\right) \nonumber\\
&  =\sum_{i=0}^{p}\left(  \left[  t^{i}\right]  g\right)  \left(  \left[
t^{k-i}\right]  h\right)  +\underbrace{\sum_{i=p+1}^{k}0\left(  \left[
t^{k-i}\right]  h\right)  }_{=0}=\sum_{i=0}^{p}\left(  \left[  t^{i}\right]
g\right)  \left(  \left[  t^{k-i}\right]  h\right) \nonumber\\
&  =\sum_{i=0}^{p-1}\left(  \left[  t^{i}\right]  g\right)
\underbrace{\left(  \left[  t^{k-i}\right]  h\right)  }%
_{\substack{=0\\\text{(by (\ref{pf.lem.poly.1.c.2u}))}}}+\left(  \left[
t^{p}\right]  g\right)  \left(  \left[  t^{k-p}\right]  h\right) \nonumber\\
&  \ \ \ \ \ \ \ \ \ \ \left(  \text{here, we have split off the addend for
}i=p\text{ from the sum}\right) \nonumber\\
&  =\underbrace{\sum_{i=0}^{p-1}\left(  \left[  t^{i}\right]  g\right)
0}_{=0}+\left(  \left[  t^{p}\right]  g\right)  \left(  \left[  t^{k-p}%
\right]  h\right)  =\left(  \left[  t^{p}\right]  g\right)  \left(  \left[
t^{k-p}\right]  h\right)  . \label{pf.lem.poly.1.c.5}%
\end{align}

Now, forget that we fixed $k$. We thus have shown that every integer $k$
satisfying $k\geq p+q$ satisfies (\ref{pf.lem.poly.1.c.5}).

Now, let $k$ be an integer satisfying $k>p+q$. From $k>p+q$, we obtain $k-p>q$
and thus $\left[  t^{k-p}\right]  h=0$ (by (\ref{pf.lem.poly.1.c.2}) (applied
to $i=k-p$)). But from $k>p+q$, we also obtain $k\geq p+q$. Hence,
(\ref{pf.lem.poly.1.c.5}) yields%
\[
\left[  t^{k}\right]  \left(  gh\right)  =\left(  \left[  t^{p}\right]
g\right)  \underbrace{\left(  \left[  t^{k-p}\right]  h\right)  }_{=0}=0.
\]

Now, forget that we fixed $k$. We thus have shown that $\left[  t^{k}\right]
\left(  gh\right)  =0$ for every integer $k>p+q$. In other words, $gh$ is a
polynomial of degree $\leq p+q$.

It remains to prove that $\left[  t^{p+q}\right]  \left(  gh\right)  =\left(
\left[  t^{p}\right]  g\right)  \cdot\left(  \left[  t^{q}\right]  h\right)
$. But (\ref{pf.lem.poly.1.c.5}) (applied to $k=p+q$) yields
\[
\left[  t^{p+q}\right]  \left(  gh\right)  =\left(  \left[  t^{p}\right]
g\right)  \left(  \underbrace{\left[  t^{p+q-p}\right]  }_{=\left[
t^{q}\right]  }h\right)  =\left(  \left[  t^{p}\right]  g\right)  \left(
\left[  t^{q}\right]  h\right)  =\left(  \left[  t^{p}\right]  g\right)
\cdot\left(  \left[  t^{q}\right]  h\right)  .
\]
This completes the proof of Lemma \ref{lem.poly.1} \textbf{(c)}.
\end{proof}

\begin{proof}
[Proof of Lemma \ref{lem.ta+b.prod}.]We shall prove Lemma \ref{lem.ta+b.prod}
by induction over $n$:

\textit{Induction base:} Lemma \ref{lem.ta+b.prod} holds in the case when
$n=0$\ \ \ \ \footnote{\textit{Proof.} Let $n$, $\left(  x_{1},x_{2}%
,\ldots,x_{n}\right)  $, $\left(  y_{1},y_{2},\ldots,y_{n}\right)  $ and $f$
be as in Lemma \ref{lem.ta+b.prod}. Assume that $n=0$. We must prove that
Lemma \ref{lem.ta+b.prod} holds in this case.
\par
We have
\begin{align*}
f  &  =\prod_{i=1}^{n}\left(  tx_{i}+y_{i}\right)  =\prod_{i=1}^{0}\left(
tx_{i}+y_{i}\right)  \ \ \ \ \ \ \ \ \ \ \left(  \text{since }n=0\right) \\
&  =\left(  \text{empty product}\right)  =1.
\end{align*}
Hence, $f$ is a polynomial of degree $\leq0$ (since $1$ is clearly a
polynomial of degree $\leq0$). In other words, $f$ is a polynomial of degree
$\leq n$ (since $n=0$). Thus, Lemma \ref{lem.ta+b.prod} \textbf{(a)} holds.
\par
Since $n=0$ and $f=1$, we have $\left[  t^{n}\right]  f=\left[  t^{0}\right]
1=1$. Comparing this with%
\begin{align*}
\prod_{i=1}^{n}x_{i}  &  =\prod_{i=1}^{0}x_{i}\ \ \ \ \ \ \ \ \ \ \left(
\text{since }n=0\right) \\
&  =\left(  \text{empty product}\right)  =1,
\end{align*}
we obtain $\left[  t^{n}\right]  f=\prod_{i=1}^{n}x_{i}$. Thus, Lemma
\ref{lem.ta+b.prod} \textbf{(b)} holds.
\par
Since $f=1$, we have $\left[  t^{0}\right]  f=\left[  t^{0}\right]  1=1$.
Comparing this with%
\begin{align*}
\prod_{i=1}^{n}y_{i}  &  =\prod_{i=1}^{0}y_{i}\ \ \ \ \ \ \ \ \ \ \left(
\text{since }n=0\right) \\
&  =\left(  \text{empty product}\right)  =1,
\end{align*}
we obtain $\left[  t^{0}\right]  f=\prod_{i=1}^{n}y_{i}$. Thus, Lemma
\ref{lem.ta+b.prod} \textbf{(c)} holds.
\par
Now, we have shown that all three parts \textbf{(a)}, \textbf{(b)} and
\textbf{(c)} of Lemma \ref{lem.ta+b.prod} hold. In other words, Lemma
\ref{lem.ta+b.prod} holds. Qed.}. This completes the induction base.

\textit{Induction step:} Let $N$ be a positive integer. Assume that Lemma
\ref{lem.ta+b.prod} holds in the case when $n=N-1$. We now must prove that
Lemma \ref{lem.ta+b.prod} holds in the case when $n=N$. In other words, we
must prove the following claim:

\begin{statement}
\textit{Claim 1:} Let $x_{1},x_{2},\ldots,x_{N}$ be $N$ elements of
$\mathbb{K}$. Let $y_{1},y_{2},\ldots,y_{N}$ be $N$ elements of $\mathbb{K}$.
Define a polynomial $f\in\mathbb{K}\left[  t\right]  $ by $f=\prod_{i=1}%
^{N}\left(  tx_{i}+y_{i}\right)  $. \medskip

\textbf{(a)} Then, $f$ is a polynomial of degree $\leq N$. \medskip

\textbf{(b)} We have $\left[  t^{N}\right]  f=\prod_{i=1}^{N}x_{i}$. \medskip

\textbf{(c)} We have $\left[  t^{0}\right]  f=\prod_{i=1}^{N}y_{i}$.
\end{statement}

But we have assumed that Lemma \ref{lem.ta+b.prod} holds in the case when
$n=N-1$. In other words, the following fact holds:

\begin{statement}
\textit{Fact 2:} Let $x_{1},x_{2},\ldots,x_{N-1}$ be $N-1$ elements of
$\mathbb{K}$. Let $y_{1},y_{2},\ldots,y_{N-1}$ be $N-1$ elements of
$\mathbb{K}$. Define a polynomial $f\in\mathbb{K}\left[  t\right]  $ by
$f=\prod_{i=1}^{N-1}\left(  tx_{i}+y_{i}\right)  $. \medskip

\textbf{(a)} Then, $f$ is a polynomial of degree $\leq N-1$. \medskip

\textbf{(b)} We have $\left[  t^{N-1}\right]  f=\prod_{i=1}^{N-1}x_{i}$.
\medskip

\textbf{(c)} We have $\left[  t^{0}\right]  f=\prod_{i=1}^{N-1}y_{i}$.
\end{statement}

Let us now prove Claim 1:

[\textit{Proof of Claim 1:} Define a polynomial $g\in\mathbb{K}\left[
t\right]  $ by $g=\prod_{i=1}^{N-1}\left(  tx_{i}+y_{i}\right)  $. Then,%
\begin{align*}
f  &  =\prod_{i=1}^{N}\left(  tx_{i}+y_{i}\right)  =\underbrace{\left(
\prod_{i=1}^{N-1}\left(  tx_{i}+y_{i}\right)  \right)  }_{=g}\cdot\left(
tx_{N}+y_{N}\right) \\
&  \ \ \ \ \ \ \ \ \ \ \left(
\begin{array}
[c]{c}%
\text{here, we have split off the factor}\\
\text{for }i=N\text{ from the product}%
\end{array}
\right) \\
&  =g\cdot\left(  tx_{N}+y_{N}\right)  .
\end{align*}

But recall that $g=\prod_{i=1}^{N-1}\left(  tx_{i}+y_{i}\right)  $. Hence, we
can apply Fact 2 to $g$ instead of $f$.

From Fact 2 \textbf{(a)} (applied to $g$ instead of $f$), we conclude that $g$
is a polynomial of degree $\leq N-1$.

The polynomial $tx_{N}+y_{N}\in\mathbb{K}\left[  t\right]  $ clearly has
degree $\leq1$ and satisfies \newline$\left[  t^{1}\right]  \left(
tx_{N}+y_{N}\right)  =x_{N}$ and $\left[  t^{0}\right]  \left(  tx_{N}%
+y_{N}\right)  =y_{N}$. Now, Lemma \ref{lem.poly.1} \textbf{(c)} (applied to
$h=tx_{N}+y_{N}$, $p=N-1$ and $q=1$) yields that $g\cdot\left(  tx_{N}%
+y_{N}\right)  $ is a polynomial of degree $\leq\left(  N-1\right)  +1$ and
satisfies $\left[  t^{\left(  N-1\right)  +1}\right]  \left(  g\cdot\left(
tx_{N}+y_{N}\right)  \right)  =\left(  \left[  t^{N-1}\right]  g\right)
\cdot\left(  \left[  t^{1}\right]  \left(  tx_{N}+y_{N}\right)  \right)  $.
Since $g\cdot\left(  tx_{N}+y_{N}\right)  =f$, $\left(  N-1\right)  +1=N$ and
$\left[  t^{1}\right]  \left(  tx_{N}+y_{N}\right)  =x_{N}$, this rewrites as
follows: The polynomial $f$ is a polynomial of degree $\leq N$ and satisfies
$\left[  t^{N}\right]  f=\left(  \left[  t^{N-1}\right]  g\right)  \cdot
x_{N}$. In particular, $f$ is a polynomial of degree $\leq N$. This proves
Claim 1 \textbf{(a)}.

Fact 2 \textbf{(b)} (applied to $g$ instead of $f$) shows that $\left[
t^{N-1}\right]  g=\prod_{i=1}^{N-1}x_{i}$. But
\begin{align*}
\prod_{i=1}^{N}x_{i}  &  =\underbrace{\left(  \prod_{i=1}^{N-1}x_{i}\right)
}_{=\left[  t^{N-1}\right]  g}\cdot x_{N}\ \ \ \ \ \ \ \ \ \ \left(
\begin{array}
[c]{c}%
\text{here, we have split off the factor}\\
\text{for }i=N\text{ from the product}%
\end{array}
\right) \\
&  =\left(  \left[  t^{N-1}\right]  g\right)  \cdot x_{N}.
\end{align*}
Comparing this with $\left[  t^{N}\right]  f=\left(  \left[  t^{N-1}\right]
g\right)  \cdot x_{N}$, we obtain $\left[  t^{N}\right]  f=\prod_{i=1}%
^{N}x_{i}$. Thus, Claim 1 \textbf{(b)} holds.

Fact 2 \textbf{(c)} (applied to $g$ instead of $f$) shows that $\left[
t^{0}\right]  g=\prod_{i=1}^{N-1}y_{i}$. But Lemma \ref{lem.poly.1}
\textbf{(b)} (applied to $h=tx_{N}+y_{N}$) yields $\left[  t^{0}\right]
\left(  g\cdot\left(  tx_{N}+y_{N}\right)  \right)  =\left(  \left[
t^{0}\right]  g\right)  \left(  \left[  t^{0}\right]  \left(  tx_{N}%
+y_{N}\right)  \right)  $. Since $g\cdot\left(  tx_{N}+y_{N}\right)  =f$ and
$\left[  t^{0}\right]  \left(  tx_{N}+y_{N}\right)  =y_{N}$, this rewrites as
$\left[  t^{0}\right]  f=\left(  \left[  t^{0}\right]  g\right)  y_{N}$.
Comparing this with%
\begin{align*}
\prod_{i=1}^{N}y_{i}  &  =\underbrace{\left(  \prod_{i=1}^{N-1}y_{i}\right)
}_{=\left[  t^{0}\right]  g}\cdot y_{N}\ \ \ \ \ \ \ \ \ \ \left(
\begin{array}
[c]{c}%
\text{here, we have split off the factor}\\
\text{for }i=N\text{ from the product}%
\end{array}
\right) \\
&  =\left(  \left[  t^{0}\right]  g\right)  y_{N},
\end{align*}
we obtain $\left[  t^{0}\right]  f=\prod_{i=1}^{N}y_{i}$. Thus, Claim 1
\textbf{(c)} is proven.

We now have proven all three parts \textbf{(a)}, \textbf{(b)} and \textbf{(c)}
of Claim 1. Thus, Claim 1 holds.]

We have now proven Claim 1. As we have explained, this completes the induction
step. Thus, Lemma \ref{lem.ta+b.prod} is proven by induction.
\end{proof}
\end{verlong}

\begin{proof}
[Proof of Proposition \ref{prop.ta+b}.]Write the $n\times n$-matrix $A$ in the
form $A=\left(  a_{i,j}\right)  _{1\leq i\leq n,\ 1\leq j\leq n}$. Thus,
$a_{i,j}\in\mathbb{K}$ for every $\left(  i,j\right)  \in\left\{
1,2,\ldots,n\right\}  ^{2}$ (since $A\in\mathbb{K}^{n\times n}$).

Write the $n\times n$-matrix $B$ in the form $B=\left(  b_{i,j}\right)
_{1\leq i\leq n,\ 1\leq j\leq n}$. Thus, $b_{i,j}\in\mathbb{K}$ for every
$\left(  i,j\right)  \in\left\{  1,2,\ldots,n\right\}  ^{2}$ (since
$B\in\mathbb{K}^{n\times n}$).

For every $\sigma\in S_{n}$, define a polynomial $f_{\sigma}\in\mathbb{K}%
\left[  t\right]  $ by%
\begin{equation}
f_{\sigma}=\prod_{i=1}^{n}\left(  ta_{i,\sigma\left(  i\right)  }%
+b_{i,\sigma\left(  i\right)  }\right)  . \label{pf.prop.ta+b.fsigma=}%
\end{equation}
The following holds:

\begin{statement}
\textit{Fact 1:} For every $\sigma\in S_{n}$, the polynomial $f_{\sigma}$ is a
polynomial of degree $\leq n$.
\end{statement}

[\textit{Proof of Fact 1:} Let $\sigma\in S_{n}$. Then, Lemma
\ref{lem.ta+b.prod} \textbf{(a)} (applied to $a_{i,\sigma\left(  i\right)  }$,
$b_{i,\sigma\left(  i\right)  }$ and $f_{\sigma}$ instead of $x_{i}$, $y_{i}$
and $f$) shows that $f_{\sigma}$ is a polynomial of degree $\leq n$. This
proves Fact 1.]

\begin{vershort}
From $A=\left(  a_{i,j}\right)  _{1\leq i\leq n,\ 1\leq j\leq n}$ and
$B=\left(  b_{i,j}\right)  _{1\leq i\leq n,\ 1\leq j\leq n}$, we obtain
$tA+B=\left(  ta_{i,j}+b_{i,j}\right)  _{1\leq i\leq n,\ 1\leq j\leq n}$.
Hence,%
\begin{align*}
\det\left(  tA+B\right)   &  =\sum_{\sigma\in S_{n}}\left(  -1\right)
^{\sigma}\underbrace{\prod_{i=1}^{n}\left(  ta_{i,\sigma\left(  i\right)
}+b_{i,\sigma\left(  i\right)  }\right)  }_{\substack{=f_{\sigma}\\\text{(by
(\ref{pf.prop.ta+b.fsigma=}))}}}\\
&  \ \ \ \ \ \ \ \ \ \ \left(
\begin{array}
[c]{c}%
\text{by (\ref{eq.det}), applied to }\mathbb{K}\left[  t\right]  \text{,
}tA+B\text{ and }ta_{i,j}+b_{i,j}\\
\text{instead of }\mathbb{K}\text{, }A\text{ and }a_{i,j}%
\end{array}
\right) \\
&  =\sum_{\sigma\in S_{n}}\left(  -1\right)  ^{\sigma}f_{\sigma}.
\end{align*}
Hence, $\det\left(  tA+B\right)  $ is a $\mathbb{K}$-linear combination of the
polynomials $f_{\sigma}$ for $\sigma\in S_{n}$. Since all of these polynomials
are polynomials of degree $\leq n$ (by Fact 1), we thus conclude that
$\det\left(  tA+B\right)  $ is a $\mathbb{K}$-linear combination of
polynomials of degree $\leq n$. Thus, $\det\left(  tA+B\right)  $ is itself a
polynomial of degree $\leq n$. This proves Proposition \ref{prop.ta+b}
\textbf{(a)}. \medskip
\end{vershort}

\begin{verlong}
We have%
\begin{align*}
t\underbrace{A}_{=\left(  a_{i,j}\right)  _{1\leq i\leq n,\ 1\leq j\leq n}%
}+\underbrace{B}_{=\left(  b_{i,j}\right)  _{1\leq i\leq n,\ 1\leq j\leq n}}
&  =\underbrace{t\left(  a_{i,j}\right)  _{1\leq i\leq n,\ 1\leq j\leq n}%
}_{=\left(  ta_{i,j}\right)  _{1\leq i\leq n,\ 1\leq j\leq n}}+\left(
b_{i,j}\right)  _{1\leq i\leq n,\ 1\leq j\leq n}\\
&  =\left(  ta_{i,j}\right)  _{1\leq i\leq n,\ 1\leq j\leq n}+\left(
b_{i,j}\right)  _{1\leq i\leq n,\ 1\leq j\leq n}\\
&  =\left(  ta_{i,j}+b_{i,j}\right)  _{1\leq i\leq n,\ 1\leq j\leq n}.
\end{align*}
Hence, (\ref{eq.det}) (applied to $\mathbb{K}\left[  t\right]  $, $tA+B$ and
$ta_{i,j}+b_{i,j}$ instead of $\mathbb{K}$, $A$ and $a_{i,j}$) shows that%
\[
\det\left(  tA+B\right)  =\sum_{\sigma\in S_{n}}\left(  -1\right)  ^{\sigma
}\underbrace{\prod_{i=1}^{n}\left(  ta_{i,\sigma\left(  i\right)
}+b_{i,\sigma\left(  i\right)  }\right)  }_{\substack{=f_{\sigma}\\\text{(by
(\ref{pf.prop.ta+b.fsigma=}))}}}=\sum_{\sigma\in S_{n}}\left(  -1\right)
^{\sigma}f_{\sigma}.
\]
Hence, $\det\left(  tA+B\right)  $ is a $\mathbb{K}$-linear combination of the
polynomials $f_{\sigma}$ for $\sigma\in S_{n}$. Since all of these polynomials
are polynomials of degree $\leq n$ (by Fact 1), we thus conclude that
$\det\left(  tA+B\right)  $ is a $\mathbb{K}$-linear combination of
polynomials of degree $\leq n$. Thus, $\det\left(  tA+B\right)  $ is itself a
polynomial of degree $\leq n$ (since any $\mathbb{K}$-linear combination of
polynomials of degree $\leq n$ is itself a polynomial of degree $\leq n$).
This proves Proposition \ref{prop.ta+b} \textbf{(a)}. \medskip
\end{verlong}

\textbf{(b)} We have%
\begin{align*}
\left[  t^{0}\right]  \underbrace{\left(  \det\left(  tA+B\right)  \right)
}_{=\sum_{\sigma\in S_{n}}\left(  -1\right)  ^{\sigma}f_{\sigma}}  &  =\left[
t^{0}\right]  \left(  \sum_{\sigma\in S_{n}}\left(  -1\right)  ^{\sigma
}f_{\sigma}\right)  =\sum_{\sigma\in S_{n}}\left(  -1\right)  ^{\sigma
}\underbrace{\left[  t^{0}\right]  f_{\sigma}}_{\substack{=\prod_{i=1}%
^{n}b_{i,\sigma\left(  i\right)  }\\\text{(by Lemma \ref{lem.ta+b.prod}
\textbf{(c)}}\\\text{(applied to }a_{i,\sigma\left(  i\right)  }\text{,
}b_{i,\sigma\left(  i\right)  }\text{ and }f_{\sigma}\\\text{instead of }%
x_{i}\text{, }y_{i}\text{ and }f\text{))}}}\\
&  =\sum_{\sigma\in S_{n}}\left(  -1\right)  ^{\sigma}\prod_{i=1}%
^{n}b_{i,\sigma\left(  i\right)  }.
\end{align*}
Comparing this with%
\[
\det B=\sum_{\sigma\in S_{n}}\left(  -1\right)  ^{\sigma}\prod_{i=1}%
^{n}b_{i,\sigma\left(  i\right)  }\ \ \ \ \ \ \ \ \ \ \left(
\begin{array}
[c]{c}%
\text{by (\ref{eq.det}), applied to }B\text{ and }b_{i,j}\\
\text{instead of }A\text{ and }a_{i,j}%
\end{array}
\right)  ,
\]
we obtain $\left[  t^{0}\right]  \left(  \det\left(  tA+B\right)  \right)
=\det B$. This proves Proposition \ref{prop.ta+b} \textbf{(b)}. \medskip

\textbf{(c)} We have%
\begin{align*}
\left[  t^{n}\right]  \underbrace{\left(  \det\left(  tA+B\right)  \right)
}_{=\sum_{\sigma\in S_{n}}\left(  -1\right)  ^{\sigma}f_{\sigma}}  &  =\left[
t^{n}\right]  \left(  \sum_{\sigma\in S_{n}}\left(  -1\right)  ^{\sigma
}f_{\sigma}\right)  =\sum_{\sigma\in S_{n}}\left(  -1\right)  ^{\sigma
}\underbrace{\left[  t^{n}\right]  f_{\sigma}}_{\substack{=\prod_{i=1}%
^{n}a_{i,\sigma\left(  i\right)  }\\\text{(by Lemma \ref{lem.ta+b.prod}
\textbf{(b)}}\\\text{(applied to }a_{i,\sigma\left(  i\right)  }\text{,
}b_{i,\sigma\left(  i\right)  }\text{ and }f_{\sigma}\\\text{instead of }%
x_{i}\text{, }y_{i}\text{ and }f\text{))}}}\\
&  =\sum_{\sigma\in S_{n}}\left(  -1\right)  ^{\sigma}\prod_{i=1}%
^{n}a_{i,\sigma\left(  i\right)  }.
\end{align*}
Comparing this with (\ref{eq.det}), we obtain $\left[  t^{n}\right]  \left(
\det\left(  tA+B\right)  \right)  =\det A$. This proves Proposition
\ref{prop.ta+b} \textbf{(c)}.
\end{proof}

\begin{vershort}
\begin{proof}
[Proof of Corollary \ref{cor.chiA}.]The definition of $\chi_{A}$ yields
\newline$\chi_{A}=\det\left(  \underbrace{tI_{n}-A}_{=tI_{n}+\left(
-A\right)  }\right)  =\det\left(  tI_{n}+\left(  -A\right)  \right)  $. Hence,
Corollary \ref{cor.chiA} follows from Proposition \ref{prop.ta+b} (applied to
$I_{n}$ and $-A$ instead of $A$ and $B$). (For part \textbf{(b)}, we need the
additional observation that $\det\left(  -A\right)  =\left(  -1\right)
^{n}\det A$.)
\end{proof}
\end{vershort}

\begin{verlong}
\begin{proof}
[Proof of Corollary \ref{cor.chiA}.]The definition of $\chi_{A}$ yields%
\[
\chi_{A}=\det\left(  \underbrace{tI_{n}-A}_{=tI_{n}+\left(  -A\right)
}\right)  =\det\left(  tI_{n}+\left(  -A\right)  \right)  .
\]

\textbf{(a)} Proposition \ref{prop.ta+b} \textbf{(a)} (applied to $I_{n}$ and
$-A$ instead of $A$ and $B$) yields that $\det\left(  tI_{n}+\left(
-A\right)  \right)  \in\mathbb{K}\left[  t\right]  $ is a polynomial of degree
$\leq n$ in $t$. In other words, $\chi_{A}$ is a polynomial of degree $\leq n$
in $t$ (since $\chi_{A}=\det\left(  tI_{n}+\left(  -A\right)  \right)  $).
This proves Corollary \ref{cor.chiA} \textbf{(a)}. \medskip

\textbf{(b)} Since $A$ is an $n\times n$-matrix, we have $\det\left(
-A\right)  =\left(  -1\right)  ^{n}\det A$\ \ \ \ \footnote{This is a
well-known fact, but let me give a \textit{proof} for the sake of
completeness:
\par
From \cite[Proposition 6.12]{detnotes}, we obtain that $\det\left(  \lambda
A\right)  =\lambda^{n}\det A$ for every $\lambda\in\mathbb{K}$. Applying this
to $\lambda=-1$, we obtain $\det\left(  \left(  -1\right)  A\right)  =\left(
-1\right)  ^{n}\det A$. Since $\left(  -1\right)  A=-A$, this rewrites as
$\det\left(  -A\right)  =\left(  -1\right)  ^{n}\det A$. Qed.}.

Proposition \ref{prop.ta+b} \textbf{(b)} (applied to $I_{n}$ and $-A$ instead
of $A$ and $B$) yields that \newline$\left[  t^{0}\right]  \left(  \det\left(
tI_{n}+\left(  -A\right)  \right)  \right)  =\det\left(  -A\right)  $. In
other words, $\left[  t^{0}\right]  \chi_{A}=\left(  -1\right)  ^{n}\det A$
(since $\chi_{A}=\det\left(  tI_{n}+\left(  -A\right)  \right)  $). Comparing
this with $\det\left(  -A\right)  =\left(  -1\right)  ^{n}\det A$, we obtain
$\left[  t^{0}\right]  \chi_{A}=\left(  -1\right)  ^{n}\det A$. This proves
Corollary \ref{cor.chiA} \textbf{(b)}. \medskip

\textbf{(c)} Proposition \ref{prop.ta+b} \textbf{(c)} (applied to $I_{n}$ and
$-A$ instead of $A$ and $B$) yields that $\left[  t^{n}\right]  \left(
\det\left(  tI_{n}+\left(  -A\right)  \right)  \right)  =\det\left(
I_{n}\right)  =1$. In other words, $\left[  t^{n}\right]  \chi_{A}=1$ (since
$\chi_{A}=\det\left(  tI_{n}+\left(  -A\right)  \right)  $). This proves
Corollary \ref{cor.chiA} \textbf{(c)}.
\end{proof}
\end{verlong}

Let me state one more trivial observation as a corollary:

\begin{corollary}
\label{cor.CH.chiA}Let $n\in\mathbb{N}$. Let $A\in\mathbb{K}^{n\times n}$. For
every $j\in\mathbb{Z}$, define an element $c_{j}\in\mathbb{K}$ by
$c_{j}=\left[  t^{n-j}\right]  \chi_{A}$. Then, $\chi_{A}=\sum_{k=0}%
^{n}c_{n-k}t^{k}$.
\end{corollary}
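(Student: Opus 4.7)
The plan is to observe that Corollary \ref{cor.CH.chiA} is essentially a repackaging of Corollary \ref{cor.chiA} \textbf{(a)}: the content is that a polynomial of degree $\leq n$ is determined by its first $n+1$ coefficients, and the indexing in the statement is just $c_{n-k} = [t^k]\chi_A$.

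First I would invoke Corollary \ref{cor.chiA} \textbf{(a)} to conclude that $\chi_A \in \mathbb{K}[t]$ is a polynomial of degree $\leq n$. By the general fact that any polynomial $f \in \mathbb{K}[t]$ of degree $\leq n$ satisfies $f = \sum_{k=0}^{n} \left([t^k]f\right) t^k$ (since $[t^k]f = 0$ for $k > n$), I get
\[
\chi_A = \sum_{k=0}^{n} \left([t^k]\chi_A\right) t^k.
\]

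Next I would rewrite each coefficient using the definition of $c_j$. For every $k \in \{0,1,\ldots,n\}$, setting $j = n-k$ in the definition $c_j = [t^{n-j}]\chi_A$ yields $c_{n-k} = [t^{n-(n-k)}]\chi_A = [t^k]\chi_A$. Substituting this back into the display above gives $\chi_A = \sum_{k=0}^{n} c_{n-k} t^k$, as desired.

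There is no real obstacle here; the only thing to be careful about is the index shift $j \leftrightarrow n-k$ and the fact that we may freely truncate the sum at $k=n$ because $\chi_A$ has degree $\leq n$. I would not introduce any new machinery for this corollary.
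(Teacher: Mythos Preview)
Your proof is correct and follows essentially the same approach as the paper: invoke Corollary~\ref{cor.chiA}~\textbf{(a)} to bound the degree, write $\chi_A = \sum_{k=0}^n \left([t^k]\chi_A\right) t^k$, and recognize each coefficient as $c_{n-k}$ via the index shift $c_{n-k} = [t^{n-(n-k)}]\chi_A = [t^k]\chi_A$. The only cosmetic difference is the order in which you present the two observations.
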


\begin{vershort}
\begin{proof}
[Proof of Corollary \ref{cor.CH.chiA}.]For every $k\in\mathbb{Z}$, the
definition of $c_{n-k}$ yields%
\begin{equation}
c_{n-k}=\left[  t^{n-\left(  n-k\right)  }\right]  \chi_{A}=\left[
t^{k}\right]  \chi_{A}. \label{pf.cor.CH.chiA.1}%
\end{equation}

We know that $\chi_{A}\in\mathbb{K}\left[  t\right]  $ is a polynomial of
degree $\leq n$ in $t$ (by Corollary \ref{cor.chiA} \textbf{(a)}). Hence,%
\[
\chi_{A}=\sum_{k=0}^{n}\underbrace{\left(  \left[  t^{k}\right]  \chi
_{A}\right)  }_{\substack{=c_{n-k}\\\text{(by (\ref{pf.cor.CH.chiA.1}))}%
}}t^{k}=\sum_{k=0}^{n}c_{n-k}t^{k}.
\]
This proves Corollary \ref{cor.CH.chiA}.
\end{proof}
\end{vershort}

\begin{verlong}
\begin{proof}
[Proof of Corollary \ref{cor.CH.chiA}.]For every $k\in\mathbb{Z}$, we have%
\begin{align}
c_{n-k}  &  =\underbrace{\left[  t^{n-\left(  n-k\right)  }\right]
}_{=\left[  t^{k}\right]  }\chi_{A}\ \ \ \ \ \ \ \ \ \ \left(  \text{by the
definition of }c_{n-k}\right) \nonumber\\
&  =\left[  t^{k}\right]  \chi_{A}. \label{pf.lem.CH.D.a.cn-k}%
\end{align}

We know that $\chi_{A}\in\mathbb{K}\left[  t\right]  $ is a polynomial of
degree $\leq n$ in $t$ (by Corollary \ref{cor.chiA} \textbf{(a)}). Hence,
\begin{equation}
\left[  t^{k}\right]  \chi_{A}=0\ \ \ \ \ \ \ \ \ \ \text{for every integer
}k>n. \label{pf.lem.CH.D.a.1}%
\end{equation}

Now, recall that every polynomial $f\in\mathbb{K}\left[  t\right]  $ satisfies
$f=\sum_{k\in\mathbb{N}}\left(  \left[  t^{k}\right]  f\right)  t^{k}$.
Applying this to $f=\chi_{A}$, we obtain%
\begin{align*}
\chi_{A}  &  =\sum_{k\in\mathbb{N}}\left(  \left[  t^{k}\right]  \chi
_{A}\right)  t^{k}=\sum_{k=0}^{n}\left(  \left[  t^{k}\right]  \chi
_{A}\right)  t^{k}+\sum_{k=n+1}^{\infty}\underbrace{\left(  \left[
t^{k}\right]  \chi_{A}\right)  }_{\substack{=0\\\text{(by
(\ref{pf.lem.CH.D.a.1})}\\\text{(since }k\geq n+1>n\text{))}}}t^{k}\\
&  =\sum_{k=0}^{n}\left(  \left[  t^{k}\right]  \chi_{A}\right)
t^{k}+\underbrace{\sum_{k=n+1}^{\infty}0t^{k}}_{=0}=\sum_{k=0}^{n}%
\underbrace{\left(  \left[  t^{k}\right]  \chi_{A}\right)  }%
_{\substack{=c_{n-k}\\\text{(by (\ref{pf.lem.CH.D.a.cn-k}))}}}t^{k}=\sum
_{k=0}^{n}c_{n-k}t^{k}.
\end{align*}
This proves Corollary \ref{cor.CH.chiA}.
\end{proof}
\end{verlong}

\subsection{Reminders on the adjugate}

Let us now briefly introduce the adjugate of a matrix and state some of its properties.

We first recall the definitions (mostly quoting them from \cite[Chapter
6]{detnotes}):

\begin{definition}
\label{def.submatrix}Let $n\in\mathbb{N}$ and $m\in\mathbb{N}$. Let $A=\left(
a_{i,j}\right)  _{1\leq i\leq n,\ 1\leq j\leq m}$ be an $n\times m$-matrix.
Let $i_{1},i_{2},\ldots,i_{u}$ be some elements of $\left\{  1,2,\ldots
,n\right\}  $; let $j_{1},j_{2},\ldots,j_{v}$ be some elements of $\left\{
1,2,\ldots,m\right\}  $. Then, we define $\operatorname*{sub}\nolimits_{i_{1}%
,i_{2},\ldots,i_{u}}^{j_{1},j_{2},\ldots,j_{v}}A$ to be the $u\times v$-matrix
$\left(  a_{i_{x},j_{y}}\right)  _{1\leq x\leq u,\ 1\leq y\leq v}$.
\end{definition}

\begin{definition}
\label{def.hat-omit}Let $n\in\mathbb{N}$. Let $a_{1},a_{2},\ldots,a_{n}$ be
$n$ objects. Let $i\in\left\{  1,2,\ldots,n\right\}  $. Then, $\left(
a_{1},a_{2},\ldots,\widehat{a_{i}},\ldots,a_{n}\right)  $ shall mean the list
$\left(  a_{1},a_{2},\ldots,a_{i-1},a_{i+1},a_{i+2},\ldots,a_{n}\right)  $
(that is, the list $\left(  a_{1},a_{2},\ldots,a_{n}\right)  $ with its $i$-th
entry removed). (Thus, the \textquotedblleft hat\textquotedblright\ over the
$a_{i}$ means that this $a_{i}$ is being omitted from the list.)

For example, $\left(  1^{2},2^{2},\ldots,\widehat{5}^{2},\ldots,8^{2}\right)
=\left(  1^{2},2^{2},3^{2},4^{2},6^{2},7^{2},8^{2}\right)  $.
\end{definition}

\begin{definition}
\label{def.submatrix.minor}Let $n\in\mathbb{N}$ and $m\in\mathbb{N}$. Let $A$
be an $n\times m$-matrix. For every $i\in\left\{  1,2,\ldots,n\right\}  $ and
$j\in\left\{  1,2,\ldots,m\right\}  $, we let $A_{\sim i,\sim j}$ be the
$\left(  n-1\right)  \times\left(  m-1\right)  $-matrix $\operatorname*{sub}%
\nolimits_{1,2,\ldots,\widehat{i},\ldots,n}^{1,2,\ldots,\widehat{j},\ldots
,m}A$. (Thus, $A_{\sim i,\sim j}$ is the matrix obtained from $A$ by crossing
out the $i$-th row and the $j$-th column.)
\end{definition}

\begin{definition}
\label{def.adj}Let $n\in\mathbb{N}$. Let $A$ be an $n\times n$-matrix. We
define a new $n\times n$-matrix $\operatorname*{adj}A$ by%
\[
\operatorname*{adj}A=\left(  \left(  -1\right)  ^{i+j}\det\left(  A_{\sim
j,\sim i}\right)  \right)  _{1\leq i\leq n,\ 1\leq j\leq n}.
\]
This matrix $\operatorname*{adj}A$ is called the \textit{adjugate} of the
matrix $A$.
\end{definition}

The main property of the adjugate is the following fact:

\begin{theorem}
\label{thm.adj.inverse}Let $n\in\mathbb{N}$. Let $A$ be an $n\times n$-matrix.
Then,%
\[
A\cdot\operatorname*{adj}A=\operatorname*{adj}A\cdot A=\det A\cdot I_{n}.
\]
(Recall that $I_{n}$ denotes the $n\times n$ identity matrix. Expressions such
as $\operatorname*{adj}A\cdot A$ and $\det A\cdot I_{n}$ have to be understood
as $\left(  \operatorname*{adj}A\right)  \cdot A$ and $\left(  \det A\right)
\cdot I_{n}$, respectively.)
\end{theorem}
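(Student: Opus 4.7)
The plan is to prove both identities $A\cdot\operatorname{adj}A = \det A\cdot I_n$ and $\operatorname{adj}A\cdot A = \det A\cdot I_n$ by computing a general entry of each product and recognizing the resulting sum as a Laplace (cofactor) expansion of a determinant, which I would either quote from a standard reference or derive on the spot from the permutation definition (\ref{eq.det}).

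Write $A = (a_{i,j})_{1\leq i\leq n,\ 1\leq j\leq n}$. Definition \ref{def.adj} gives $(\operatorname{adj} A)_{j,k} = (-1)^{j+k}\det(A_{\sim k,\sim j})$, so for fixed $i,k\in\{1,2,\ldots,n\}$ the definition of matrix multiplication yields
\[
(A\cdot\operatorname{adj}A)_{i,k} \;=\; \sum_{j=1}^{n} a_{i,j}\,(-1)^{j+k}\det\bigl(A_{\sim k,\sim j}\bigr).
\]
I would then introduce the auxiliary matrix $B^{(i,k)}$ obtained from $A$ by replacing its $k$-th row with a copy of its $i$-th row. The crucial observation is that the minor $A_{\sim k,\sim j}$ involves only the rows of $A$ other than the $k$-th, and on those rows $B^{(i,k)}$ agrees with $A$; therefore $A_{\sim k,\sim j} = B^{(i,k)}_{\sim k,\sim j}$. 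Combined with the fact that the $(k,j)$-entry of $B^{(i,k)}$ is $a_{i,j}$, the displayed sum is exactly the Laplace expansion of $\det B^{(i,k)}$ along its $k$-th row. For $i=k$ we have $B^{(i,k)}=A$, giving $\det A$; for $i\neq k$ the matrix $B^{(i,k)}$ has two equal rows and therefore $\det B^{(i,k)}=0$. This shows $(A\cdot\operatorname{adj}A)_{i,k} = \delta_{i,k}\det A$, which is the desired first equality.

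For the second equality $\operatorname{adj}A\cdot A = \det A\cdot I_n$, I would run the symmetric argument using column expansion, or, more economically, reduce to the first equality by transposition: one checks from Definition \ref{def.adj} that $\operatorname{adj}(A^T) = (\operatorname{adj}A)^T$, and combining this with $\det(A^T)=\det A$ and $(XY)^T = Y^T X^T$ converts the already-proved identity $A^T\cdot\operatorname{adj}(A^T) = \det(A^T)\cdot I_n$ into $\operatorname{adj}A\cdot A = \det A\cdot I_n$.

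The genuinely nontrivial ingredient is the Laplace expansion itself, along with the vanishing of a determinant with two equal rows; both follow from the permutation formula (\ref{eq.det}) by standard manipulations (grouping terms in the $S_n$-sum according to the value $\sigma(k)$, and the sign change under a transposition of two rows). The main bookkeeping obstacle is keeping the sign $(-1)^{i+j}$ and the indices of the crossed-out row and column straight — in particular, verifying that the swap between the roles of $(i,j)$ in $(\operatorname{adj}A)_{i,j} = (-1)^{i+j}\det(A_{\sim j,\sim i})$ correctly yields the cofactor used in the row expansion after the multiplication is carried out. Once this is checked, the remainder of the argument is purely mechanical.
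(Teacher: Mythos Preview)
Your argument is correct and is precisely the standard cofactor-expansion proof. Note, however, that the paper does not give its own proof of Theorem \ref{thm.adj.inverse}; it simply quotes the result and refers to standard sources (in particular \cite[Theorem 6.100]{detnotes}). The proof you outline is exactly the one found in those references, and in fact the paper later states the key ingredient you need as Lemma \ref{lem.laplace.Apq} (also cited rather than proved). So there is nothing to compare: your proposal supplies the standard argument that the paper chose to outsource.
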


Theorem \ref{thm.adj.inverse} appears in almost any text on linear algebra
that considers the adjugate; for example, it appears in \cite[Chapter Four,
Section III, Theorem 1.9]{Heffer14}, in \cite[Proposition 2.38]{Knapp2016}, in
\cite[Theorem 4.11]{Willia89} and in \cite[Theorem 6.100]{detnotes}. (Again,
most of these sources only state it in the case when $\mathbb{K}$ is a field,
but the proofs given apply in all generality. Different texts use different
notations. The source that is closest to my notations here is \cite{detnotes},
since Theorem \ref{thm.adj.inverse} above is a verbatim copy of \cite[Theorem
6.100]{detnotes}.)

Let us state a simple fact:

\begin{lemma}
\label{lem.adj.linear}Let $n\in\mathbb{N}$. Let $u$ and $v$ be two elements of
$\left\{  1,2,\ldots,n\right\}  $. Let $\lambda$ and $\mu$ be two elements of
$\mathbb{K}$. Let $A$ and $B$ be two $n\times n$-matrices. Then,%
\[
\left(  \lambda A+\mu B\right)  _{\sim u,\sim v}=\lambda A_{\sim u,\sim v}+\mu
B_{\sim u,\sim v}.
\]

\end{lemma}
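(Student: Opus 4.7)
The plan is to verify the identity entrywise, using only the definitions of the submatrix operations. This is a routine calculation — I expect no real obstacle beyond bookkeeping of indices.

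First I would write $A = (a_{i,j})_{1 \leq i \leq n,\ 1 \leq j \leq n}$ and $B = (b_{i,j})_{1 \leq i \leq n,\ 1 \leq j \leq n}$. Then by the definition of matrix addition and scalar multiplication,
\[
\lambda A + \mu B = (\lambda a_{i,j} + \mu b_{i,j})_{1 \leq i \leq n,\ 1 \leq j \leq n}.
\]
Next I would unwind Definition \ref{def.submatrix.minor}: for any $n \times n$-matrix $M = (m_{i,j})$, the matrix $M_{\sim u, \sim v}$ equals $\operatorname{sub}_{1,2,\ldots,\widehat{u},\ldots,n}^{1,2,\ldots,\widehat{v},\ldots,n} M$, which by Definition \ref{def.submatrix} is an $(n-1) \times (n-1)$-matrix whose $(x,y)$-th entry is $m_{i_x, j_y}$, where $(i_1, \ldots, i_{n-1}) = (1,2,\ldots,\widehat{u},\ldots,n)$ and similarly for the $j_y$.

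Applying this three times — once to $\lambda A + \mu B$, once to $A$, and once to $B$ — I would compare the $(x,y)$-th entries of both sides of the claimed equality. The left-hand side has $(x,y)$-th entry $\lambda a_{i_x, j_y} + \mu b_{i_x, j_y}$, and the right-hand side is $\lambda (a_{i_x, j_y})_{x,y} + \mu (b_{i_x, j_y})_{x,y}$, whose $(x,y)$-th entry is also $\lambda a_{i_x, j_y} + \mu b_{i_x, j_y}$. Since the two $(n-1) \times (n-1)$-matrices agree entrywise, they are equal.

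The only minor care point is ensuring the indexing lists $(i_1, \ldots, i_{n-1})$ and $(j_1, \ldots, j_{n-1})$ are the \emph{same} on both sides — but this is immediate since $u$ and $v$ are the same on both sides. So the lemma reduces to the trivial fact that $\mathbb{K}$-linear combinations of matrices are computed entrywise.
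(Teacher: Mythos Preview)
Your proof is correct and follows essentially the same approach as the paper's: both arguments unwind the definition of $M_{\sim u,\sim v}$ via the index lists $(1,2,\ldots,\widehat{u},\ldots,n)$ and $(1,2,\ldots,\widehat{v},\ldots,n)$ and then compare entries on each side.
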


\begin{vershort}
\begin{proof}
[Proof of Lemma \ref{lem.adj.linear}.]Obvious.
\end{proof}
\end{vershort}

\begin{verlong}
\begin{proof}
[Proof of Lemma \ref{lem.adj.linear}.]Let $\left(  p_{1},p_{2},\ldots
,p_{n-1}\right)  $ be the $\left(  n-1\right)  $-tuple $\left(  1,2,\ldots
,\widehat{u},\ldots,n\right)  $. Thus, $\left(  p_{1},p_{2},\ldots
,p_{n-1}\right)  =\left(  1,2,\ldots,\widehat{u},\ldots,n\right)  $.

Let $\left(  q_{1},q_{2},\ldots,q_{n-1}\right)  $ be the $\left(  n-1\right)
$-tuple $\left(  1,2,\ldots,\widehat{v},\ldots,n\right)  $. Thus, $\left(
q_{1},q_{2},\ldots,q_{n-1}\right)  =\left(  1,2,\ldots,\widehat{v}%
,\ldots,n\right)  $.

If $C=\left(  c_{i,j}\right)  _{1\leq i\leq n,\ 1\leq j\leq n}$ is any
$n\times n$-matrix, then
\begin{equation}
C_{\sim u,\sim v}=\left(  c_{p_{i},q_{j}}\right)  _{1\leq i\leq n-1,\ 1\leq
j\leq n-1} \label{pf.lem.adj.linear.1}%
\end{equation}
\footnote{\textit{Proof of (\ref{pf.lem.adj.linear.1}):} Let $C=\left(
c_{i,j}\right)  _{1\leq i\leq n,\ 1\leq j\leq n}$ be any $n\times n$-matrix.
The definition of $C_{\sim u,\sim v}$ yields
\begin{align*}
C_{\sim u,\sim v}  &  =\operatorname*{sub}\nolimits_{1,2,\ldots,\widehat{u}%
,\ldots,n}^{1,2,\ldots,\widehat{v},\ldots,n}C=\operatorname*{sub}%
\nolimits_{p_{1},p_{2},\ldots,p_{n-1}}^{1,2,\ldots,\widehat{v},\ldots,n}C\\
&  \ \ \ \ \ \ \ \ \ \ \left(  \text{since }\left(  1,2,\ldots,\widehat{u}%
,\ldots,n\right)  =\left(  p_{1},p_{2},\ldots,p_{n-1}\right)  \right) \\
&  =\operatorname*{sub}\nolimits_{p_{1},p_{2},\ldots,p_{n-1}}^{q_{1}%
,q_{2},\ldots,q_{n-1}}C\ \ \ \ \ \ \ \ \ \ \left(  \text{since }\left(
1,2,\ldots,\widehat{v},\ldots,n\right)  =\left(  q_{1},q_{2},\ldots
,q_{n-1}\right)  \right) \\
&  =\left(  c_{p_{x},q_{y}}\right)  _{1\leq x\leq n-1,\ 1\leq y\leq n-1}\\
&  \ \ \ \ \ \ \ \ \ \ \left(  \text{by the definition of }\operatorname*{sub}%
\nolimits_{p_{1},p_{2},\ldots,p_{n-1}}^{q_{1},q_{2},\ldots,q_{n-1}}C\text{,
since }C=\left(  c_{i,j}\right)  _{1\leq i\leq n,\ 1\leq j\leq n}\right) \\
&  =\left(  c_{p_{i},q_{j}}\right)  _{1\leq i\leq n-1,\ 1\leq j\leq n-1}%
\end{align*}
(here, we have renamed the index $\left(  x,y\right)  $ as $\left(
i,j\right)  $). This proves (\ref{pf.lem.adj.linear.1}).}.

Write the $n\times n$-matrix $A$ in the form $A=\left(  a_{i,j}\right)
_{1\leq i\leq n,\ 1\leq j\leq n}$. Write the $n\times n$-matrix $B$ in the
form $B=\left(  b_{i,j}\right)  _{1\leq i\leq n,\ 1\leq j\leq n}$.

From (\ref{pf.lem.adj.linear.1}) (applied to $A$ and $a_{i,j}$ instead of $C$
and $c_{i,j}$), we obtain $A_{\sim u,\sim v}=\left(  a_{p_{i},q_{j}}\right)
_{1\leq i\leq n-1,\ 1\leq j\leq n-1}$. From (\ref{pf.lem.adj.linear.1})
(applied to $B$ and $b_{i,j}$ instead of $C$ and $c_{i,j}$), we obtain
$B_{\sim u,\sim v}=\left(  b_{p_{i},q_{j}}\right)  _{1\leq i\leq n-1,\ 1\leq
j\leq n-1}$.

But
\begin{align*}
&  \lambda\underbrace{A}_{=\left(  a_{i,j}\right)  _{1\leq i\leq n,\ 1\leq
j\leq n}}+\,\mu\underbrace{B}_{=\left(  b_{i,j}\right)  _{1\leq i\leq
n,\ 1\leq j\leq n}}\\
&  =\underbrace{\lambda\left(  a_{i,j}\right)  _{1\leq i\leq n,\ 1\leq j\leq
n}}_{=\left(  \lambda a_{i,j}\right)  _{1\leq i\leq n,\ 1\leq j\leq n}%
}+\underbrace{\mu\left(  b_{i,j}\right)  _{1\leq i\leq n,\ 1\leq j\leq n}%
}_{=\left(  \mu b_{i,j}\right)  _{1\leq i\leq n,\ 1\leq j\leq n}}=\left(
\lambda a_{i,j}\right)  _{1\leq i\leq n,\ 1\leq j\leq n}+\left(  \mu
b_{i,j}\right)  _{1\leq i\leq n,\ 1\leq j\leq n}\\
&  =\left(  \lambda a_{i,j}+\mu b_{i,j}\right)  _{1\leq i\leq n,\ 1\leq j\leq
n}.
\end{align*}
Hence, (\ref{pf.lem.adj.linear.1}) (applied to $\lambda A+\mu B$ and $\lambda
a_{i,j}+\mu b_{i,j}$ instead of $C$ and $c_{i,j}$) shows that%
\[
\left(  \lambda A+\mu B\right)  _{\sim u,\sim v}=\left(  \lambda
a_{p_{i},q_{j}}+\mu b_{p_{i},q_{j}}\right)  _{1\leq i\leq n-1,\ 1\leq j\leq
n-1}.
\]
Comparing this with%
\begin{align*}
&  \lambda\underbrace{A_{\sim u,\sim v}}_{=\left(  a_{p_{i},q_{j}}\right)
_{1\leq i\leq n-1,\ 1\leq j\leq n-1}}+\,\mu\underbrace{B_{\sim u,\sim v}%
}_{=\left(  b_{p_{i},q_{j}}\right)  _{1\leq i\leq n-1,\ 1\leq j\leq n-1}}\\
&  =\underbrace{\lambda\left(  a_{p_{i},q_{j}}\right)  _{1\leq i\leq
n-1,\ 1\leq j\leq n-1}}_{=\left(  \lambda a_{p_{i},q_{j}}\right)  _{1\leq
i\leq n-1,\ 1\leq j\leq n-1}}+\underbrace{\mu\left(  b_{p_{i},q_{j}}\right)
_{1\leq i\leq n-1,\ 1\leq j\leq n-1}}_{=\left(  \mu b_{p_{i},q_{j}}\right)
_{1\leq i\leq n-1,\ 1\leq j\leq n-1}}\\
&  =\left(  \lambda a_{p_{i},q_{j}}\right)  _{1\leq i\leq n-1,\ 1\leq j\leq
n-1}+\left(  \mu b_{p_{i},q_{j}}\right)  _{1\leq i\leq n-1,\ 1\leq j\leq
n-1}\\
&  =\left(  \lambda a_{p_{i},q_{j}}+\mu b_{p_{i},q_{j}}\right)  _{1\leq i\leq
n-1,\ 1\leq j\leq n-1},
\end{align*}
we obtain $\left(  \lambda A+\mu B\right)  _{\sim u,\sim v}=\lambda A_{\sim
u,\sim v}+\mu B_{\sim u,\sim v}$. This proves Lemma \ref{lem.adj.linear}.
\end{proof}
\end{verlong}

Next, we prove a crucial, if simple, result:

\begin{proposition}
\label{prop.adj.poly}Let $n\in\mathbb{N}$. Let $A\in\mathbb{K}^{n\times n}$ be
an $n\times n$-matrix. Then, there exist $n$ matrices $D_{0},D_{1}%
,\ldots,D_{n-1}$ in $\mathbb{K}^{n\times n}$ such that%
\[
\operatorname*{adj}\left(  tI_{n}-A\right)  =\sum_{k=0}^{n-1}t^{k}%
D_{k}\ \ \ \ \ \ \ \ \ \ \text{in }\left(  \mathbb{K}\left[  t\right]
\right)  ^{n\times n}.
\]
(Here, of course, the matrix $D_{k}$ on the right hand side is understood as
an element of $\left(  \mathbb{K}\left[  t\right]  \right)  ^{n\times n}$.)
\end{proposition}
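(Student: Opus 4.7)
The plan is to reduce the claim to a coefficient-extraction statement about each entry of $\operatorname{adj}\left(tI_{n}-A\right)$, and then show entrywise that each such entry is a polynomial in $t$ of degree $\leq n-1$ over $\mathbb{K}$. Once this is known, the matrices $D_{0},D_{1},\ldots,D_{n-1}$ can simply be read off as the \emph{coefficient matrices} in the obvious way.

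First I would fix $i,j\in\{1,2,\ldots,n\}$ and look at the $(i,j)$-th entry of $\operatorname{adj}\left(tI_{n}-A\right)$, which by Definition \ref{def.adj} equals $(-1)^{i+j}\det\bigl((tI_{n}-A)_{\sim j,\sim i}\bigr)$. The key observation is that Lemma \ref{lem.adj.linear} (applied with $\lambda=t$, $\mu=-1$, and the matrices $I_{n}$ and $A$, viewed in $(\mathbb{K}[t])^{n\times n}$) lets me rewrite
\[
(tI_{n}-A)_{\sim j,\sim i}=t\cdot(I_{n})_{\sim j,\sim i}-A_{\sim j,\sim i}=tC_{i,j}+E_{i,j},
\]
where $C_{i,j}:=(I_{n})_{\sim j,\sim i}$ and $E_{i,j}:=-A_{\sim j,\sim i}$ both lie in $\mathbb{K}^{(n-1)\times(n-1)}$. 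Applying Proposition \ref{prop.ta+b}\textbf{(a)} with $n-1$ in place of $n$ and with $A,B$ replaced by $C_{i,j},E_{i,j}$ then shows that $\det\bigl((tI_{n}-A)_{\sim j,\sim i}\bigr)$ is a polynomial in $t$ of degree $\leq n-1$. Multiplying by the scalar $(-1)^{i+j}\in\mathbb{K}$ preserves this, so each entry of $\operatorname{adj}\left(tI_{n}-A\right)$ is a polynomial in $t$ of degree $\leq n-1$ with coefficients in $\mathbb{K}$.

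Finally I would assemble the $D_{k}$: for every $k\in\{0,1,\ldots,n-1\}$, define $D_{k}\in\mathbb{K}^{n\times n}$ to be the matrix whose $(i,j)$-th entry is $\left[t^{k}\right]\Bigl((-1)^{i+j}\det\bigl((tI_{n}-A)_{\sim j,\sim i}\bigr)\Bigr)\in\mathbb{K}$. Since each entry $p_{i,j}(t)$ of $\operatorname{adj}\left(tI_{n}-A\right)$ has degree $\leq n-1$, it satisfies $p_{i,j}(t)=\sum_{k=0}^{n-1}\bigl(\left[t^{k}\right]p_{i,j}\bigr)t^{k}$, and collecting entrywise yields $\operatorname{adj}\left(tI_{n}-A\right)=\sum_{k=0}^{n-1}t^{k}D_{k}$ in $(\mathbb{K}[t])^{n\times n}$, as required.

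There is no serious obstacle here; the only place that requires a small check is the rewriting $(tI_{n}-A)_{\sim j,\sim i}=t(I_{n})_{\sim j,\sim i}-A_{\sim j,\sim i}$, which is exactly the content of Lemma \ref{lem.adj.linear}, and is what makes Proposition \ref{prop.ta+b} directly applicable to an $(n-1)\times(n-1)$ minor of $tI_{n}-A$. Everything else is routine bookkeeping with coefficient extraction.
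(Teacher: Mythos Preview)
Your proposal is correct and follows essentially the same approach as the paper: use Lemma \ref{lem.adj.linear} to write each minor $(tI_{n}-A)_{\sim j,\sim i}$ as $t(I_{n})_{\sim j,\sim i}+(-A)_{\sim j,\sim i}$, apply Proposition \ref{prop.ta+b}\textbf{(a)} in size $n-1$ to bound the degree of its determinant by $n-1$, and then assemble the coefficient matrices $D_{k}$ entrywise. The only cosmetic difference is that the paper names the coefficients $d_{u,v,k}$ before packaging them, whereas you define $D_{k}$ directly via $[t^{k}]$; the arguments are otherwise identical.
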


\begin{vershort}
\begin{proof}
[Proof of Proposition \ref{prop.adj.poly}.]Fix $\left(  u,v\right)
\in\left\{  1,2,\ldots,n\right\}  ^{2}$. Then, Proposition \ref{prop.ta+b}
\textbf{(a)} (applied to $n-1$, $\left(  I_{n}\right)  _{\sim u,\sim v}$ and
$\left(  -A\right)  _{\sim u,\sim v}$ instead of $n$, $A$ and $B$) shows that
\newline$\det\left(  t\left(  I_{n}\right)  _{\sim u,\sim v}+\left(
-A\right)  _{\sim u,\sim v}\right)  \in\mathbb{K}\left[  t\right]  $ is a
polynomial of degree $\leq n-1$ in $t$. In other words, there exists an
$n$-tuple $\left(  d_{u,v,0},d_{u,v,1},\ldots,d_{u,v,n-1}\right)
\in\mathbb{K}^{n}$ such that
\[
\det\left(  t\left(  I_{n}\right)  _{\sim u,\sim v}+\left(  -A\right)  _{\sim
u,\sim v}\right)  =\sum_{k=0}^{n-1}d_{u,v,k}t^{k}.
\]
Consider this $\left(  d_{u,v,0},d_{u,v,1},\ldots,d_{u,v,n-1}\right)  $. But
Lemma \ref{lem.adj.linear} (applied to $\mathbb{K}\left[  t\right]  $, $t$,
$1$, $I_{n}$ and $-A$ instead of $\mathbb{K}$, $\lambda$, $\mu$, $A$ and $B$)
yields $\left(  tI_{n}-A\right)  _{\sim u,\sim v}=t\left(  I_{n}\right)
_{\sim u,\sim v}+\left(  -A\right)  _{\sim u,\sim v}$ (after some
simplifications). Thus,%
\begin{align}
\det\left(  \left(  tI_{n}-A\right)  _{\sim u,\sim v}\right)   &  =\det\left(
t\left(  I_{n}\right)  _{\sim u,\sim v}+\left(  -A\right)  _{\sim u,\sim
v}\right) \nonumber\\
&  =\sum_{k=0}^{n-1}d_{u,v,k}t^{k}. \label{pf.prop.adj.poly.short.2}%
\end{align}

Now, forget that we fixed $\left(  u,v\right)  $. Thus, for every $\left(
u,v\right)  \in\left\{  1,2,\ldots,n\right\}  ^{2}$, we have constructed an
$n$-tuple $\left(  d_{u,v,0},d_{u,v,1},\ldots,d_{u,v,n-1}\right)
\in\mathbb{K}^{n}$ satisfying (\ref{pf.prop.adj.poly.short.2}).

Now, the definition of $\operatorname*{adj}\left(  tI_{n}-A\right)  $ yields
\begin{align*}
\operatorname*{adj}\left(  tI_{n}-A\right)   &  =\left(  \left(  -1\right)
^{i+j}\underbrace{\det\left(  \left(  tI_{n}-A\right)  _{\sim j,\sim
i}\right)  }_{\substack{=\sum_{k=0}^{n-1}d_{j,i,k}t^{k}\\\text{(by
(\ref{pf.prop.adj.poly.short.2}), applied to }\left(  u,v\right)  =\left(
j,i\right)  \text{)}}}\right)  _{1\leq i\leq n,\ 1\leq j\leq n}\\
&  =\left(  \underbrace{\left(  -1\right)  ^{i+j}\sum_{k=0}^{n-1}%
d_{j,i,k}t^{k}}_{=\sum_{k=0}^{n-1}t^{k}\left(  -1\right)  ^{i+j}d_{j,i,k}%
}\right)  _{1\leq i\leq n,\ 1\leq j\leq n}=\left(  \sum_{k=0}^{n-1}%
t^{k}\left(  -1\right)  ^{i+j}d_{j,i,k}\right)  _{1\leq i\leq n,\ 1\leq j\leq
n}.
\end{align*}
Comparing this with%
\[
\sum_{k=0}^{n-1}t^{k}\left(  \left(  -1\right)  ^{i+j}d_{j,i,k}\right)
_{1\leq i\leq n,\ 1\leq j\leq n}=\left(  \sum_{k=0}^{n-1}t^{k}\left(
-1\right)  ^{i+j}d_{j,i,k}\right)  _{1\leq i\leq n,\ 1\leq j\leq n},
\]
we obtain $\operatorname*{adj}\left(  tI_{n}-A\right)  =\sum_{k=0}^{n-1}%
t^{k}\left(  \left(  -1\right)  ^{i+j}d_{j,i,k}\right)  _{1\leq i\leq
n,\ 1\leq j\leq n}$. Hence, there exist $n$ matrices $D_{0},D_{1}%
,\ldots,D_{n-1}$ in $\mathbb{K}^{n\times n}$ such that%
\[
\operatorname*{adj}\left(  tI_{n}-A\right)  =\sum_{k=0}^{n-1}t^{k}%
D_{k}\ \ \ \ \ \ \ \ \ \ \text{in }\left(  \mathbb{K}\left[  t\right]
\right)  ^{n\times n}%
\]
(namely, $D_{k}=\left(  \left(  -1\right)  ^{i+j}d_{j,i,k}\right)  _{1\leq
i\leq n,\ 1\leq j\leq n}$ for every $k\in\left\{  0,1,\ldots,n-1\right\}  $).
This proves Proposition \ref{prop.adj.poly}.
\end{proof}
\end{vershort}

\begin{verlong}
\begin{proof}
[Proof of Proposition \ref{prop.adj.poly}.]Fix $\left(  u,v\right)
\in\left\{  1,2,\ldots,n\right\}  ^{2}$. We shall define an $n$-tuple
\newline$\left(  d_{u,v,0},d_{u,v,1},\ldots,d_{u,v,n-1}\right)  \in
\mathbb{K}^{n}$ satisfying%
\begin{equation}
\det\left(  \left(  tI_{n}-A\right)  _{\sim u,\sim v}\right)  =\sum
_{k=0}^{n-1}d_{u,v,k}t^{k}. \label{pf.prop.adj.poly.d-goal}%
\end{equation}

Indeed, $u$ and $v$ are two elements of $\left\{  1,2,\ldots,n\right\}  $
(since $\left(  u,v\right)  \in\left\{  1,2,\ldots,n\right\}  ^{2}$). Hence,
Lemma \ref{lem.adj.linear} (applied to $\mathbb{K}\left[  t\right]  $, $t$,
$1$, $I_{n}$ and $-A$ instead of $\mathbb{K}$, $\lambda$, $\mu$, $A$ and $B$)
yields
\[
\left(  tI_{n}+1\left(  -A\right)  \right)  _{\sim u,\sim v}=t\left(
I_{n}\right)  _{\sim u,\sim v}+1\left(  -A\right)  _{\sim u,\sim v}=t\left(
I_{n}\right)  _{\sim u,\sim v}+\left(  -A\right)  _{\sim u,\sim v}.
\]
Since $tI_{n}+1\left(  -A\right)  =tI_{n}+\left(  -A\right)  =tI_{n}-A$, this
rewrites as
\begin{equation}
\left(  tI_{n}-A\right)  _{\sim u,\sim v}=t\left(  I_{n}\right)  _{\sim u,\sim
v}+\left(  -A\right)  _{\sim u,\sim v}. \label{pf.prop.adj.poly.3}%
\end{equation}

But $u\in\left\{  1,2,\ldots,n\right\}  $, so that $1\leq u\leq n$ and thus
$n\geq1$. Hence, $n-1\in\mathbb{N}$. Thus, Proposition \ref{prop.ta+b}
\textbf{(a)} (applied to $n-1$, $\left(  I_{n}\right)  _{\sim u,\sim v}$ and
$\left(  -A\right)  _{\sim u,\sim v}$ instead of $n$, $A$ and $B$) shows that
$\det\left(  t\left(  I_{n}\right)  _{\sim u,\sim v}+\left(  -A\right)  _{\sim
u,\sim v}\right)  \in\mathbb{K}\left[  t\right]  $ is a polynomial of degree
$\leq n-1$ in $t$. In other words, there exists some $n$-tuple $\left(
b_{0},b_{1},\ldots,b_{n-1}\right)  \in\mathbb{K}^{n}$ such that $\det\left(
t\left(  I_{n}\right)  _{\sim u,\sim v}+\left(  -A\right)  _{\sim u,\sim
v}\right)  =\sum_{k=0}^{n-1}b_{k}t^{k}$. Consider this $\left(  b_{0}%
,b_{1},\ldots,b_{n-1}\right)  $.

Taking determinants on both sides of (\ref{pf.prop.adj.poly.3}), we obtain%
\begin{align}
\det\left(  \left(  tI_{n}-A\right)  _{\sim u,\sim v}\right)   &  =\det\left(
t\left(  I_{n}\right)  _{\sim u,\sim v}+\left(  -A\right)  _{\sim u,\sim
v}\right) \nonumber\\
&  =\sum_{k=0}^{n-1}b_{k}t^{k}. \label{pf.prop.adj.poly.5}%
\end{align}
Now, define an $n$-tuple $\left(  d_{u,v,0},d_{u,v,1},\ldots,d_{u,v,n-1}%
\right)  \in\mathbb{K}^{n}$ by $\left(  d_{u,v,0},d_{u,v,1},\ldots
,d_{u,v,n-1}\right)  =\left(  b_{0},b_{1},\ldots,b_{n-1}\right)  $. Thus,
$d_{u,v,k}=b_{k}$ for every $k\in\left\{  0,1,\ldots,n-1\right\}  $. Hence,
(\ref{pf.prop.adj.poly.5}) becomes%
\[
\det\left(  \left(  tI_{n}-A\right)  _{\sim u,\sim v}\right)  =\sum
_{k=0}^{n-1}\underbrace{b_{k}}_{=d_{u,v,k}}t^{k}=\sum_{k=0}^{n-1}%
d_{u,v,k}t^{k}.
\]
In other words, (\ref{pf.prop.adj.poly.d-goal}) is satisfied.

Now, forget that we have fixed $\left(  u,v\right)  $. Thus, for every
$\left(  u,v\right)  \in\left\{  1,2,\ldots,n\right\}  ^{2}$, we have defined
an $n$-tuple $\left(  d_{u,v,0},d_{u,v,1},\ldots,d_{u,v,n-1}\right)
\in\mathbb{K}^{n}$ satisfying (\ref{pf.prop.adj.poly.d-goal}).

Now, the definition of $\operatorname*{adj}\left(  tI_{n}-A\right)  $ yields
\begin{align*}
\operatorname*{adj}\left(  tI_{n}-A\right)   &  =\left(  \left(  -1\right)
^{i+j}\underbrace{\det\left(  \left(  tI_{n}-A\right)  _{\sim j,\sim
i}\right)  }_{\substack{=\sum_{k=0}^{n-1}d_{j,i,k}t^{k}\\\text{(by
(\ref{pf.prop.adj.poly.d-goal}), applied to }\left(  u,v\right)  =\left(
j,i\right)  \text{)}}}\right)  _{1\leq i\leq n,\ 1\leq j\leq n}\\
&  =\left(  \underbrace{\left(  -1\right)  ^{i+j}\sum_{k=0}^{n-1}%
d_{j,i,k}t^{k}}_{\substack{=\sum_{k=0}^{n-1}\left(  -1\right)  ^{i+j}%
d_{j,i,k}t^{k}\\=\sum_{k=0}^{n-1}t^{k}\left(  -1\right)  ^{i+j}d_{j,i,k}%
}}\right)  _{1\leq i\leq n,\ 1\leq j\leq n}=\left(  \sum_{k=0}^{n-1}%
t^{k}\left(  -1\right)  ^{i+j}d_{j,i,k}\right)  _{1\leq i\leq n,\ 1\leq j\leq
n}.
\end{align*}
Comparing this with%
\begin{align*}
\sum_{k=0}^{n-1}\underbrace{t^{k}\left(  \left(  -1\right)  ^{i+j}%
d_{j,i,k}\right)  _{1\leq i\leq n,\ 1\leq j\leq n}}_{=\left(  t^{k}\left(
-1\right)  ^{i+j}d_{j,i,k}\right)  _{1\leq i\leq n,\ 1\leq j\leq n}}  &
=\sum_{k=0}^{n-1}\left(  t^{k}\left(  -1\right)  ^{i+j}d_{j,i,k}\right)
_{1\leq i\leq n,\ 1\leq j\leq n}\\
&  =\left(  \sum_{k=0}^{n-1}t^{k}\left(  -1\right)  ^{i+j}d_{j,i,k}\right)
_{1\leq i\leq n,\ 1\leq j\leq n},
\end{align*}
we obtain $\operatorname*{adj}\left(  tI_{n}-A\right)  =\sum_{k=0}^{n-1}%
t^{k}\left(  \left(  -1\right)  ^{i+j}d_{j,i,k}\right)  _{1\leq i\leq
n,\ 1\leq j\leq n}$. Hence, there exist $n$ matrices $D_{0},D_{1}%
,\ldots,D_{n-1}$ in $\mathbb{K}^{n\times n}$ such that%
\[
\operatorname*{adj}\left(  tI_{n}-A\right)  =\sum_{k=0}^{n-1}t^{k}%
D_{k}\ \ \ \ \ \ \ \ \ \ \text{in }\left(  \mathbb{K}\left[  t\right]
\right)  ^{n\times n}%
\]
(namely, $D_{k}=\left(  \left(  -1\right)  ^{i+j}d_{j,i,k}\right)  _{1\leq
i\leq n,\ 1\leq j\leq n}$ for every $k\in\left\{  0,1,\ldots,n-1\right\}  $).
This proves Proposition \ref{prop.adj.poly}.
\end{proof}
\end{verlong}

\begin{noncompile}
For any two objects $i$ and $j$, define an element $\delta_{i,j}\in\mathbb{K}$
by $\delta_{i,j}=%
\begin{cases}
1, & \text{if }i=j;\\
0, & \text{if }i\neq j
\end{cases}
$. Then, $I_{n}=\left(  \delta_{i,j}\right)  _{1\leq i\leq n,\ 1\leq j\leq n}$
(by the definition of the identity matrix $I_{n}$). Hence, $t\underbrace{I_{n}%
}_{=\left(  \delta_{i,j}\right)  _{1\leq i\leq n,\ 1\leq j\leq n}}=t\left(
\delta_{i,j}\right)  _{1\leq i\leq n,\ 1\leq j\leq n}=\left(  t\delta
_{i,j}\right)  _{1\leq i\leq n,\ 1\leq j\leq n}$.
\end{noncompile}

\subsection{Polynomials with matrix entries: a trivial lemma}

\begin{lemma}
\label{lem.polymat.=}Let $n\in\mathbb{N}$ and $m\in\mathbb{N}$. Let $\left(
B_{0},B_{1},\ldots,B_{m}\right)  \in\left(  \mathbb{K}^{n\times n}\right)
^{m+1}$ and $\left(  C_{0},C_{1},\ldots,C_{m}\right)  \in\left(
\mathbb{K}^{n\times n}\right)  ^{m+1}$ be two $\left(  m+1\right)  $-tuples of
matrices in $\mathbb{K}^{n\times n}$. Assume that%
\[
\sum_{k=0}^{m}t^{k}B_{k}=\sum_{k=0}^{m}t^{k}C_{k}\ \ \ \ \ \ \ \ \ \ \text{in
}\left(  \mathbb{K}\left[  t\right]  \right)  ^{n\times n}.
\]
Then, $B_{k}=C_{k}$ for every $k\in\left\{  0,1,\ldots,m\right\}  $.
\end{lemma}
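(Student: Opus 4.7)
The plan is to reduce the matrix-polynomial equality to a family of scalar-polynomial equalities, one for each entry, and then invoke the standard fact that two polynomials in $\mathbb{K}[t]$ that agree as polynomials must have the same coefficients.

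First I would write out the matrices entry-wise: for each $k \in \{0,1,\ldots,m\}$, denote by $b_{i,j}^{(k)}$ and $c_{i,j}^{(k)}$ the $(i,j)$-th entries of $B_k$ and $C_k$, respectively. The sum $\sum_{k=0}^m t^k B_k$ is then the $n \times n$-matrix over $\mathbb{K}[t]$ whose $(i,j)$-th entry is the polynomial $\sum_{k=0}^m b_{i,j}^{(k)} t^k$, and analogously for $\sum_{k=0}^m t^k C_k$. The hypothesis that these two matrices in $(\mathbb{K}[t])^{n \times n}$ are equal therefore means exactly that
\[
\sum_{k=0}^{m} b_{i,j}^{(k)} t^{k} \;=\; \sum_{k=0}^{m} c_{i,j}^{(k)} t^{k} \qquad \text{in } \mathbb{K}[t]
\]
for every $(i,j) \in \{1,2,\ldots,n\}^2$.

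Next I would apply the basic fact that if two polynomials in $\mathbb{K}[t]$ are equal, then their coefficients agree (this is just the definition of a polynomial as a sequence of coefficients, or equivalently of the polynomial ring as a free module on the monomials $t^k$). This gives $b_{i,j}^{(k)} = c_{i,j}^{(k)}$ for every $k \in \{0,1,\ldots,m\}$ and every $(i,j) \in \{1,2,\ldots,n\}^2$. Since $B_k$ and $C_k$ have equal entries everywhere, we conclude $B_k = C_k$ for every such $k$, which is exactly the claim.

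There is essentially no obstacle here: the lemma is a triviality once one identifies $(\mathbb{K}[t])^{n \times n}$ with the module of $n \times n$-matrices of polynomials (equivalently, of polynomials with matrix coefficients), and the only thing to be careful about is the bookkeeping that turns the matrix identity into $n^2$ scalar identities. If one wanted to be completely pedantic, one could formalize the identification $(\mathbb{K}[t])^{n \times n} \cong (\mathbb{K}^{n\times n})[t]$ as an isomorphism of $\mathbb{K}[t]$-modules (or even rings) and phrase the proof as an application of that isomorphism; but for the purposes of this paper it suffices to argue entry-wise as above.
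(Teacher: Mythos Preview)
Your proof is correct and is essentially the same as the paper's own proof: both write out the matrix entries, reduce the matrix-polynomial equality to $n^2$ scalar-polynomial equalities, and then compare coefficients. The only difference is notational (the paper writes $b_{k,i,j}$ where you write $b_{i,j}^{(k)}$).
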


\begin{vershort}
\begin{proof}
[Proof of Lemma \ref{lem.polymat.=}.]For every $k\in\left\{  0,1,\ldots
,m\right\}  $, write the matrix $B_{k}\in\mathbb{K}^{n\times n}$ in the form
$B_{k}=\left(  b_{k,i,j}\right)  _{1\leq i\leq n,\ 1\leq j\leq n}$, and write
the matrix $C_{k}\in\mathbb{K}^{n\times n}$ in the form $C_{k}=\left(
c_{k,i,j}\right)  _{1\leq i\leq n,\ 1\leq j\leq n}$.

Now, $\sum_{k=0}^{m}t^{k}B_{k}=\left(  \sum_{k=0}^{m}t^{k}b_{k,i,j}\right)
_{1\leq i\leq n,\ 1\leq j\leq n}$ (since $B_{k}=\left(  b_{k,i,j}\right)
_{1\leq i\leq n,\ 1\leq j\leq n}$ for every $k\in\left\{  0,1,\ldots
,m\right\}  $). Similarly, $\sum_{k=0}^{m}t^{k}C_{k}=\left(  \sum_{k=0}%
^{m}t^{k}c_{k,i,j}\right)  _{1\leq i\leq n,\ 1\leq j\leq n}$. Thus,%
\[
\left(  \sum_{k=0}^{m}t^{k}b_{k,i,j}\right)  _{1\leq i\leq n,\ 1\leq j\leq
n}=\sum_{k=0}^{m}t^{k}B_{k}=\sum_{k=0}^{m}t^{k}C_{k}=\left(  \sum_{k=0}%
^{m}t^{k}c_{k,i,j}\right)  _{1\leq i\leq n,\ 1\leq j\leq n}.
\]
In other words,%
\[
\sum_{k=0}^{m}t^{k}b_{k,i,j}=\sum_{k=0}^{m}t^{k}c_{k,i,j}%
\]
for every $\left(  i,j\right)  \in\left\{  1,2,\ldots,n\right\}  ^{2}$.
Comparing coefficients on both sides of this equality, we obtain%
\[
b_{k,i,j}=c_{k,i,j}%
\]
for every $k\in\left\{  0,1,\ldots,m\right\}  $ for every $\left(  i,j\right)
\in\left\{  1,2,\ldots,n\right\}  ^{2}$. Now, every $k\in\left\{
0,1,\ldots,m\right\}  $ satisfies%
\[
B_{k}=\left(  \underbrace{b_{k,i,j}}_{=c_{k,i,j}}\right)  _{1\leq i\leq
n,\ 1\leq j\leq n}=\left(  c_{k,i,j}\right)  _{1\leq i\leq n,\ 1\leq j\leq
n}=C_{k}.
\]
This proves Lemma \ref{lem.polymat.=}.
\end{proof}
\end{vershort}

\begin{verlong}
\begin{proof}
[Proof of Lemma \ref{lem.polymat.=}.]For every $k\in\left\{  0,1,\ldots
,m\right\}  $, write the matrix $B_{k}\in\mathbb{K}^{n\times n}$ in the form
$B_{k}=\left(  b_{k,i,j}\right)  _{1\leq i\leq n,\ 1\leq j\leq n}$. For every
$k\in\left\{  0,1,\ldots,m\right\}  $, write the matrix $C_{k}\in
\mathbb{K}^{n\times n}$ in the form $C_{k}=\left(  c_{k,i,j}\right)  _{1\leq
i\leq n,\ 1\leq j\leq n}$.

We have%
\begin{align*}
\sum_{k=0}^{m}t^{k}\underbrace{B_{k}}_{=\left(  b_{k,i,j}\right)  _{1\leq
i\leq n,\ 1\leq j\leq n}}  &  =\sum_{k=0}^{m}\underbrace{t^{k}\left(
b_{k,i,j}\right)  _{1\leq i\leq n,\ 1\leq j\leq n}}_{=\left(  t^{k}%
b_{k,i,j}\right)  _{1\leq i\leq n,\ 1\leq j\leq n}}=\sum_{k=0}^{m}\left(
t^{k}b_{k,i,j}\right)  _{1\leq i\leq n,\ 1\leq j\leq n}\\
&  =\left(  \sum_{k=0}^{m}t^{k}b_{k,i,j}\right)  _{1\leq i\leq n,\ 1\leq j\leq
n}.
\end{align*}
Comparing this with%
\begin{align*}
\sum_{k=0}^{m}t^{k}B_{k}  &  =\sum_{k=0}^{m}t^{k}\underbrace{C_{k}}_{=\left(
c_{k,i,j}\right)  _{1\leq i\leq n,\ 1\leq j\leq n}}=\sum_{k=0}^{m}%
\underbrace{t^{k}\left(  c_{k,i,j}\right)  _{1\leq i\leq n,\ 1\leq j\leq n}%
}_{=\left(  t^{k}c_{k,i,j}\right)  _{1\leq i\leq n,\ 1\leq j\leq n}}\\
&  =\sum_{k=0}^{m}\left(  t^{k}c_{k,i,j}\right)  _{1\leq i\leq n,\ 1\leq j\leq
n}=\left(  \sum_{k=0}^{m}t^{k}c_{k,i,j}\right)  _{1\leq i\leq n,\ 1\leq j\leq
n},
\end{align*}
we obtain%
\[
\left(  \sum_{k=0}^{m}t^{k}b_{k,i,j}\right)  _{1\leq i\leq n,\ 1\leq j\leq
n}=\left(  \sum_{k=0}^{m}t^{k}c_{k,i,j}\right)  _{1\leq i\leq n,\ 1\leq j\leq
n}.
\]
In other words,%
\begin{equation}
\sum_{k=0}^{m}t^{k}b_{k,i,j}=\sum_{k=0}^{m}t^{k}c_{k,i,j}
\label{pf.lem.polymat.=.1}%
\end{equation}
for every $\left(  i,j\right)  \in\left\{  1,2,\ldots,n\right\}  ^{2}$.

Now, fix $\left(  i,j\right)  \in\left\{  1,2,\ldots,n\right\}  ^{2}$. Then,%
\begin{align*}
\sum_{k=0}^{m}\underbrace{b_{k,i,j}t^{k}}_{=t^{k}b_{k,i,j}}  &  =\sum
_{k=0}^{m}t^{k}b_{k,i,j}=\sum_{k=0}^{m}\underbrace{t^{k}c_{k,i,j}}%
_{=c_{k,i,j}t^{k}}\ \ \ \ \ \ \ \ \ \ \left(  \text{by
(\ref{pf.lem.polymat.=.1})}\right) \\
&  =\sum_{k=0}^{m}c_{k,i,j}t^{k}%
\end{align*}
in $\mathbb{K}\left[  t\right]  $. Comparing coefficients on both sides of
this equality, we obtain%
\begin{equation}
b_{k,i,j}=c_{k,i,j}\ \ \ \ \ \ \ \ \ \ \text{for every }k\in\left\{
0,1,\ldots,m\right\}  . \label{pf.lem.polymat.=.2}%
\end{equation}

Now, forget that we fixed $\left(  i,j\right)  $. We thus have proven
(\ref{pf.lem.polymat.=.2}) for every $\left(  i,j\right)  \in\left\{
1,2,\ldots,n\right\}  ^{2}$ and every $k\in\left\{  0,1,\ldots,m\right\}  $.

Now, every $k\in\left\{  0,1,\ldots,m\right\}  $ satisfies%
\[
B_{k}=\left(  \underbrace{b_{k,i,j}}_{\substack{=c_{k,i,j}\\\text{(by
(\ref{pf.lem.polymat.=.2}))}}}\right)  _{1\leq i\leq n,\ 1\leq j\leq
n}=\left(  c_{k,i,j}\right)  _{1\leq i\leq n,\ 1\leq j\leq n}=C_{k}%
\]
(since $C_{k}=\left(  c_{k,i,j}\right)  _{1\leq i\leq n,\ 1\leq j\leq n}$).
This proves Lemma \ref{lem.polymat.=}.
\end{proof}
\end{verlong}

\subsection{Proof of the Cayley-Hamilton theorem}

We are now fully prepared for the proof of the Cayley-Hamilton theorem.
However, we are going to organize the crucial part of this proof as a lemma,
so that we can use it later in our proof of the trace Cayley-Hamilton theorem.

\begin{lemma}
\label{lem.CH.D}Let $n\in\mathbb{N}$. Let $A\in\mathbb{K}^{n\times n}$. For
every $j\in\mathbb{Z}$, define an element $c_{j}\in\mathbb{K}$ by
$c_{j}=\left[  t^{n-j}\right]  \chi_{A}$.

Let $D_{0},D_{1},\ldots,D_{n-1}$ be $n$ matrices in $\mathbb{K}^{n\times n}$
such that%
\begin{equation}
\operatorname*{adj}\left(  tI_{n}-A\right)  =\sum_{k=0}^{n-1}t^{k}%
D_{k}\ \ \ \ \ \ \ \ \ \ \text{in }\left(  \mathbb{K}\left[  t\right]
\right)  ^{n\times n}. \label{eq.lem.CH.D.adj}%
\end{equation}
Thus, an $n$-tuple $\left(  D_{0},D_{1},\ldots,D_{n-1}\right)  $ of matrices
in $\mathbb{K}^{n\times n}$ is defined. Extend this $n$-tuple to a family
$\left(  D_{k}\right)  _{k\in\mathbb{Z}}$ of matrices in $\mathbb{K}^{n\times
n}$ by setting%
\begin{equation}
\left(  D_{k}=0_{n\times n}\ \ \ \ \ \ \ \ \ \ \text{for every }k\in
\mathbb{Z}\setminus\left\{  0,1,\ldots,n-1\right\}  \right)  .
\label{eq.lem.CH.D.0}%
\end{equation}
Then: \medskip

\textbf{(a)} We have $\chi_{A}=\sum_{k=0}^{n}c_{n-k}t^{k}$. \medskip

\textbf{(b)} For every integer $k$, we have $c_{n-k}I_{n}=D_{k-1}-AD_{k}$.
\medskip

\textbf{(c)} Every $k\in\mathbb{N}$ satisfies%
\[
\sum_{i=0}^{k}c_{k-i}A^{i}=D_{n-1-k}.
\]

\end{lemma}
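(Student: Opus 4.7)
Part (a) is a direct restatement of Corollary \ref{cor.CH.chiA}, so nothing new is needed there. For (b) and (c), the driving idea is to combine the hypothesis (\ref{eq.lem.CH.D.adj}) with the fundamental identity $(tI_{n}-A)\operatorname*{adj}(tI_{n}-A)=\chi_{A}I_{n}$ furnished by Theorem \ref{thm.adj.inverse}, and then to read off the resulting relations by comparing coefficients of $t^{k}$ via Lemma \ref{lem.polymat.=}.

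For (b), I would multiply the expansion $\operatorname*{adj}(tI_{n}-A)=\sum_{k=0}^{n-1}t^{k}D_{k}$ on the left by $tI_{n}-A$. Using part (a) on the right, this produces
\[
\sum_{k=0}^{n-1} t^{k+1} D_{k} \;-\; \sum_{k=0}^{n-1} t^{k} A D_{k} \;=\; \chi_{A} I_{n} \;=\; \sum_{k=0}^{n} t^{k} c_{n-k} I_{n}.
\]
After reindexing the first sum as $\sum_{k=1}^{n} t^{k} D_{k-1}$, both sides become polynomials in $t$ with coefficients in $\mathbb{K}^{n\times n}$, and Lemma \ref{lem.polymat.=} lets me equate the coefficients to obtain $c_{n-k} I_{n} = D_{k-1} - A D_{k}$ for each $k \in \{0, 1, \ldots, n\}$. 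For integers $k$ outside this range, both sides vanish automatically: the matrices $D_{k-1}$ and $D_{k}$ are zero by the extension convention (\ref{eq.lem.CH.D.0}), while $c_{n-k} = [t^{k}]\chi_{A} = 0$ because $\chi_{A}$ has degree $\leq n$ by Corollary \ref{cor.chiA} (a).

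For (c), I would apply (b) after the substitution $k \mapsto n - k + i$, which yields $c_{k-i} I_{n} = D_{n-k+i-1} - A D_{n-k+i}$. Multiplying on the left by $A^{i}$ (legitimate because the scalar matrix $c_{k-i}I_n$ commutes with every matrix), I get
\[
c_{k-i} A^{i} \;=\; A^{i} D_{n-k+i-1} \;-\; A^{i+1} D_{n-k+i}.
\]
Summing over $i \in \{0, 1, \ldots, k\}$ telescopes to
\[
\sum_{i=0}^{k} c_{k-i} A^{i} \;=\; D_{n-k-1} \;-\; A^{k+1} D_{n} \;=\; D_{n-1-k},
\]
where the last step uses $D_{n} = 0$ from the extension convention. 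The main technical care lies in (b): one must verify that the identity $c_{n-k} I_{n} = D_{k-1} - A D_{k}$ genuinely extends to all $k \in \mathbb{Z}$, not merely $k \in \{0, 1, \ldots, n\}$, so that the telescope in (c) picks up the correct boundary values (in particular $D_{n} = 0$). No commutativity between $A$ and the individual $D_{j}$'s is required, since $c_{k-i} I_{n}$ already commutes with $A^{i}$; this is what makes the telescope go through cleanly, and it is also what makes (c) specialize, at $k = n$, into the Cayley-Hamilton theorem $\chi_{A}(A) = 0$.
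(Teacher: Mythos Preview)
Your proposal is correct and follows essentially the same approach as the paper: both use the identity $(tI_n-A)\operatorname*{adj}(tI_n-A)=\chi_A I_n$ from Theorem~\ref{thm.adj.inverse}, expand via (\ref{eq.lem.CH.D.adj}) and part~(a), apply Lemma~\ref{lem.polymat.=} to extract the recursion in (b), and then telescope for (c). The only cosmetic differences are that the paper explicitly pads the sums to run from $0$ to $n$ using $D_{-1}=D_n=0$ before invoking Lemma~\ref{lem.polymat.=}, and uses the substitution $i\mapsto n-i$ in (c) where you use $k\mapsto n-k+i$; the resulting telescope is identical.
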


\begin{vershort}
\begin{proof}
[Proof of Lemma \ref{lem.CH.D}.]\textbf{(a)} Lemma \ref{lem.CH.D} \textbf{(a)}
is just Corollary \ref{cor.CH.chiA}. \medskip

\textbf{(b)} We have%
\begin{align}
\sum_{k=0}^{n}t^{k}D_{k-1}  &  =t^{0}\underbrace{D_{0-1}}_{\substack{=D_{-1}%
=0_{n\times n}\\\text{(by (\ref{eq.lem.CH.D.0}))}}}+\sum_{k=1}^{n}t^{k}%
D_{k-1}=\sum_{k=1}^{n}t^{k}D_{k-1}=\sum_{k=0}^{n-1}\underbrace{t^{k+1}%
}_{=tt^{k}}\underbrace{D_{\left(  k+1\right)  -1}}_{=D_{k}}\nonumber\\
&  \ \ \ \ \ \ \ \ \ \ \left(  \text{here, we have substituted }k+1\text{ for
}k\text{ in the sum}\right) \nonumber\\
&  =\sum_{k=0}^{n-1}tt^{k}D_{k}=t\underbrace{\sum_{k=0}^{n-1}t^{k}D_{k}%
}_{\substack{=\operatorname*{adj}\left(  tI_{n}-A\right)  \\\text{(by
(\ref{eq.lem.CH.D.adj}))}}}=t\operatorname*{adj}\left(  tI_{n}-A\right)
\label{pf.lem.CH.D.short.1}%
\end{align}
and%
\begin{align}
\sum_{k=0}^{n}t^{k}D_{k}  &  =t^{n}\underbrace{D_{n}}_{\substack{=0_{n\times
n}\\\text{(by (\ref{eq.lem.CH.D.0}))}}}+\sum_{k=0}^{n-1}t^{k}D_{k}=\sum
_{k=0}^{n-1}t^{k}D_{k}\nonumber\\
&  =\operatorname*{adj}\left(  tI_{n}-A\right)  \ \ \ \ \ \ \ \ \ \ \left(
\text{by (\ref{eq.lem.CH.D.adj})}\right)  . \label{pf.lem.CH.D.short.2}%
\end{align}

But Theorem \ref{thm.adj.inverse} (applied to $\mathbb{K}\left[  t\right]  $
and $tI_{n}-A$ instead of $\mathbb{K}$ and $A$) shows that%
\[
\left(  tI_{n}-A\right)  \cdot\operatorname*{adj}\left(  tI_{n}-A\right)
=\operatorname*{adj}\left(  tI_{n}-A\right)  \cdot\left(  tI_{n}-A\right)
=\det\left(  tI_{n}-A\right)  \cdot I_{n}.
\]
Thus, in particular,%
\[
\left(  tI_{n}-A\right)  \cdot\operatorname*{adj}\left(  tI_{n}-A\right)
=\underbrace{\det\left(  tI_{n}-A\right)  }_{\substack{=\chi_{A}\\\text{(by
the definition of }\chi_{A}\text{)}}}\cdot\,I_{n}=\chi_{A}\cdot I_{n},
\]
so that%
\begin{align*}
\chi_{A}\cdot I_{n}  &  =\left(  tI_{n}-A\right)  \cdot\operatorname*{adj}%
\left(  tI_{n}-A\right) \\
&  =\underbrace{t\operatorname*{adj}\left(  tI_{n}-A\right)  }%
_{\substack{=\sum_{k=0}^{n}t^{k}D_{k-1}\\\text{(by (\ref{pf.lem.CH.D.short.1}%
))}}}-\,A\underbrace{\operatorname*{adj}\left(  tI_{n}-A\right)
}_{\substack{=\sum_{k=0}^{n}t^{k}D_{k}\\\text{(by (\ref{pf.lem.CH.D.short.2}%
))}}}=\sum_{k=0}^{n}t^{k}D_{k-1}-\underbrace{A\sum_{k=0}^{n}t^{k}D_{k}}%
_{=\sum_{k=0}^{n}t^{k}AD_{k}}\\
&  =\sum_{k=0}^{n}t^{k}D_{k-1}-\sum_{k=0}^{n}t^{k}AD_{k}=\sum_{k=0}^{n}%
t^{k}\left(  D_{k-1}-AD_{k}\right)  .
\end{align*}
Thus,%
\[
\sum_{k=0}^{n}t^{k}\left(  D_{k-1}-AD_{k}\right)  =\underbrace{\chi_{A}%
}_{\substack{=\sum_{k=0}^{n}c_{n-k}t^{k}\\\text{(by Lemma \ref{lem.CH.D}
\textbf{(a)})}}}\cdot\,I_{n}=\left(  \sum_{k=0}^{n}c_{n-k}t^{k}\right)  \cdot
I_{n}=\sum_{k=0}^{n}t^{k}c_{n-k}I_{n}.
\]
Lemma \ref{lem.polymat.=} (applied to $m=n$, $B_{k}=D_{k-1}-AD_{k}$ and
$C_{k}=c_{n-k}I_{n}$) thus shows that%
\begin{equation}
D_{k-1}-AD_{k}=c_{n-k}I_{n}\ \ \ \ \ \ \ \ \ \ \text{for every }k\in\left\{
0,1,\ldots,n\right\}  . \label{pf.lem.CH.D.short.b.5}%
\end{equation}

Now, let $k$ be an integer. We must prove that $c_{n-k}I_{n}=D_{k-1}-AD_{k}$.

If $k\in\left\{  0,1,\ldots,n\right\}  $, then this follows from
(\ref{pf.lem.CH.D.short.b.5}). Thus, we WLOG assume that $k\notin\left\{
0,1,\ldots,n\right\}  $. Hence, $k-1\in\mathbb{Z}\setminus\left\{
0,1,\ldots,n-1\right\}  $, so that (\ref{eq.lem.CH.D.0}) (applied to $k-1$
instead of $k$) yields $D_{k-1}=0_{n\times n}$. Also, $k\notin\left\{
0,1,\ldots,n\right\}  $ leads to $k\in\mathbb{Z}\setminus\left\{
0,1,\ldots,n-1\right\}  $; therefore, (\ref{eq.lem.CH.D.0}) yields
$D_{k}=0_{n\times n}$. Now, $\underbrace{D_{k-1}}_{=0_{n\times n}%
}-\,A\underbrace{D_{k}}_{=0_{n\times n}}=0_{n\times n}-0_{n\times
n}=0_{n\times n}$.

On the other hand, $c_{n-k}=0$\ \ \ \ \footnote{\textit{Proof.} Recall that
$\chi_{A}$ is a polynomial of degree $\leq n$ (by Corollary \ref{cor.chiA}
\textbf{(a)}). Hence, $\left[  t^{k}\right]  \chi_{A}=0$ (since $k\notin%
\left\{  0,1,\ldots,n\right\}  $). Now, (\ref{pf.cor.CH.chiA.1}) yields
$c_{n-k}=\left[  t^{k}\right]  \chi_{A}=0$.}. Hence, $\underbrace{c_{n-k}%
}_{=0}I_{n}=0_{n\times n}$. Compared with $D_{k-1}-AD_{k}=0_{n\times n}$, this
yields $c_{n-k}I_{n}=D_{k-1}-AD_{k}$.

Hence, $c_{n-k}I_{n}=D_{k-1}-AD_{k}$ is proven. In other words, Lemma
\ref{lem.CH.D} \textbf{(b)} is proven. \medskip

\textbf{(c)} Let $k\in\mathbb{N}$. Then,%
\begin{align*}
\sum_{i=0}^{k}c_{k-i}A^{i}  &  =\sum_{i=n-k}^{n}\underbrace{c_{k-\left(
k-n+i\right)  }}_{=c_{n-i}}A^{k-n+i}\\
&  \ \ \ \ \ \ \ \ \ \ \left(  \text{here, we have substituted }k-n+i\text{
for }i\text{ in the sum}\right) \\
&  =\sum_{i=n-k}^{n}\underbrace{c_{n-i}A^{k-n+i}}_{=A^{k-n+i}c_{n-i}I_{n}%
}=\sum_{i=n-k}^{n}A^{k-n+i}\underbrace{c_{n-i}I_{n}}_{\substack{=D_{i-1}%
-AD_{i}\\\text{(by Lemma \ref{lem.CH.D} \textbf{(b)},}\\\text{applied to
}i\text{ instead of }k\text{)}}}\\
&  =\sum_{i=n-k}^{n}\underbrace{A^{k-n+i}\left(  D_{i-1}-AD_{i}\right)
}_{=A^{k-n+i}D_{i-1}-A^{k-n+i}AD_{i}}\\
&  =\sum_{i=n-k}^{n}\left(  A^{k-n+i}D_{i-1}-\underbrace{A^{k-n+i}%
A}_{=A^{k-n+i+1}=A^{k-n+\left(  i+1\right)  }}\ \ \underbrace{D_{i}%
}_{=D_{\left(  i+1\right)  -1}}\right) \\
&  =\sum_{i=n-k}^{n}\left(  A^{k-n+i}D_{i-1}-A^{k-n+\left(  i+1\right)
}D_{\left(  i+1\right)  -1}\right) \\
&  =\underbrace{A^{k-n+\left(  n-k\right)  }}_{=A^{0}=I_{n}}D_{n-k-1}%
-A^{k-n+\left(  n+1\right)  }\underbrace{D_{\left(  n+1\right)  -1}%
}_{\substack{=D_{n}=0_{n\times n}\\\text{(by (\ref{eq.lem.CH.D.0}))}}}\\
&  \ \ \ \ \ \ \ \ \ \ \left(  \text{by the telescope principle}\right) \\
&  =D_{n-k-1}=D_{n-1-k}.
\end{align*}
This proves Lemma \ref{lem.CH.D} \textbf{(c)}.
\end{proof}
\end{vershort}

\begin{verlong}
\begin{proof}
[Proof of Lemma \ref{lem.CH.D}.]Combining $n\in\mathbb{Z}$ with $n\notin%
\left\{  0,1,\ldots,n-1\right\}  $ (since $n>n-1$), we obtain $n\in
\mathbb{Z}\setminus\left\{  0,1,\ldots,n-1\right\}  $. Hence, $D_{n}%
=0_{n\times n}$ (by (\ref{eq.lem.CH.D.0}), applied to $k=n$).

Combining $-1\in\mathbb{Z}$ with $-1\notin\left\{  0,1,\ldots,n-1\right\}  $
(since $-1<0$), we obtain $-1\in\mathbb{Z}\setminus\left\{  0,1,\ldots
,n-1\right\}  $. Hence, $D_{-1}=0_{n\times n}$ (by (\ref{eq.lem.CH.D.0}),
applied to $k=-1$). \medskip

\textbf{(a)} Lemma \ref{lem.CH.D} \textbf{(a)} is identical with Corollary
\ref{cor.CH.chiA}, and therefore has already been proven. \medskip

\textbf{(b)} We have $n\in\mathbb{N}$, so that $0\in\left\{  0,1,\ldots
,n\right\}  $. Hence, we can split off the addend for $k=0$ from the sum
$\sum_{k=0}^{n}t^{k}D_{k-1}$. We thus obtain%
\begin{align}
\sum_{k=0}^{n}t^{k}D_{k-1}  &  =t^{0}\underbrace{D_{0-1}}_{=D_{-1}=0_{n\times
n}}+\sum_{k=1}^{n}t^{k}D_{k-1}=\underbrace{t^{0}0_{n\times n}}_{=0_{n\times
n}}+\sum_{k=1}^{n}t^{k}D_{k-1}\nonumber\\
&  =\sum_{k=1}^{n}t^{k}D_{k-1}=\sum_{k=0}^{n-1}\underbrace{t^{k+1}}_{=tt^{k}%
}\underbrace{D_{\left(  k+1\right)  -1}}_{=D_{k}}\nonumber\\
&  \ \ \ \ \ \ \ \ \ \ \left(  \text{here, we have substituted }k+1\text{ for
}k\text{ in the sum}\right) \nonumber\\
&  =\sum_{k=0}^{n-1}tt^{k}D_{k}=t\underbrace{\sum_{k=0}^{n-1}t^{k}D_{k}%
}_{\substack{=\operatorname*{adj}\left(  tI_{n}-A\right)  \\\text{(by
(\ref{eq.lem.CH.D.adj}))}}}=t\operatorname*{adj}\left(  tI_{n}-A\right)  .
\label{pf.lem.CH.D.1}%
\end{align}

Also, $n\in\mathbb{N}$, so that $n\in\left\{  0,1,\ldots,n\right\}  $. Thus,
we can split off the addend for $k=n$ from the sum $\sum_{k=0}^{n}t^{k}D_{k}$.
We thus obtain%
\begin{align}
\sum_{k=0}^{n}t^{k}D_{k}  &  =t^{n}\underbrace{D_{n}}_{=0_{n\times n}}%
+\sum_{k=0}^{n-1}t^{k}D_{k}=\underbrace{t^{n}0_{n\times n}}_{=0_{n\times n}%
}+\sum_{k=0}^{n-1}t^{k}D_{k}\nonumber\\
&  =\sum_{k=0}^{n-1}t^{k}D_{k}=\operatorname*{adj}\left(  tI_{n}-A\right)
\ \ \ \ \ \ \ \ \ \ \left(  \text{by (\ref{eq.lem.CH.D.adj})}\right)  .
\label{pf.lem.CH.D.2}%
\end{align}

But Theorem \ref{thm.adj.inverse} (applied to $\mathbb{K}\left[  t\right]  $
and $tI_{n}-A$ instead of $\mathbb{K}$ and $A$) shows that%
\[
\left(  tI_{n}-A\right)  \cdot\operatorname*{adj}\left(  tI_{n}-A\right)
=\operatorname*{adj}\left(  tI_{n}-A\right)  \cdot\left(  tI_{n}-A\right)
=\det\left(  tI_{n}-A\right)  \cdot I_{n}.
\]
Thus, in particular,%
\[
\left(  tI_{n}-A\right)  \cdot\operatorname*{adj}\left(  tI_{n}-A\right)
=\underbrace{\det\left(  tI_{n}-A\right)  }_{\substack{=\chi_{A}\\\text{(since
}\chi_{A}=\det\left(  tI_{n}-A\right)  \\\text{(by the definition of }\chi
_{A}\text{))}}}\cdot\,I_{n}=\chi_{A}\cdot I_{n},
\]
so that%
\begin{align*}
\chi_{A}\cdot I_{n}  &  =\left(  tI_{n}-A\right)  \cdot\operatorname*{adj}%
\left(  tI_{n}-A\right)  =t\underbrace{I_{n}\operatorname*{adj}\left(
tI_{n}-A\right)  }_{=\operatorname*{adj}\left(  tI_{n}-A\right)
}-A\operatorname*{adj}\left(  tI_{n}-A\right) \\
&  =\underbrace{t\operatorname*{adj}\left(  tI_{n}-A\right)  }%
_{\substack{=\sum_{k=0}^{n}t^{k}D_{k-1}\\\text{(by (\ref{pf.lem.CH.D.1}))}%
}}-\,A\underbrace{\operatorname*{adj}\left(  tI_{n}-A\right)  }%
_{\substack{=\sum_{k=0}^{n}t^{k}D_{k}\\\text{(by (\ref{pf.lem.CH.D.2}))}}}\\
&  =\sum_{k=0}^{n}t^{k}D_{k-1}-\underbrace{A\sum_{k=0}^{n}t^{k}D_{k}}%
_{=\sum_{k=0}^{n}At^{k}D_{k}=\sum_{k=0}^{n}t^{k}AD_{k}}=\sum_{k=0}^{n}%
t^{k}D_{k-1}-\sum_{k=0}^{n}t^{k}AD_{k}\\
&  =\sum_{k=0}^{n}t^{k}\left(  D_{k-1}-AD_{k}\right)  .
\end{align*}
Thus,%
\begin{align*}
\sum_{k=0}^{n}t^{k}\left(  D_{k-1}-AD_{k}\right)   &  =\underbrace{\chi_{A}%
}_{\substack{=\sum_{k=0}^{n}c_{n-k}t^{k}\\\text{(by Lemma \ref{lem.CH.D}
\textbf{(a)})}}}\cdot\,I_{n}=\left(  \sum_{k=0}^{n}c_{n-k}t^{k}\right)  \cdot
I_{n}\\
&  =\sum_{k=0}^{n}\underbrace{c_{n-k}t^{k}}_{=t^{k}c_{n-k}}I_{n}=\sum
_{k=0}^{n}t^{k}c_{n-k}I_{n}.
\end{align*}
Lemma \ref{lem.polymat.=} (applied to $m=n$, $B_{k}=D_{k-1}-AD_{k}$ and
$C_{k}=c_{n-k}I_{n}$) thus shows that%
\begin{equation}
D_{k-1}-AD_{k}=c_{n-k}I_{n}\ \ \ \ \ \ \ \ \ \ \text{for every }k\in\left\{
0,1,\ldots,n\right\}  . \label{pf.lem.CH.D.b.5}%
\end{equation}

Now, let $k$ be an integer. We must prove that $c_{n-k}I_{n}=D_{k-1}-AD_{k}$.

If $k\in\left\{  0,1,\ldots,n\right\}  $, then this follows from
(\ref{pf.lem.CH.D.b.5}). Thus, we can WLOG assume that we don't have
$k\in\left\{  0,1,\ldots,n\right\}  $. Assume this.

We have $k\notin\left\{  0,1,\ldots,n\right\}  $ (since we don't have
$k\in\left\{  0,1,\ldots,n\right\}  $). We have $k-1\notin\left\{
0,1,\ldots,n-1\right\}  $\ \ \ \ \footnote{\textit{Proof.} Assume the
contrary. Thus, $k-1\in\left\{  0,1,\ldots,n-1\right\}  $, so that $\left(
k-1\right)  +1\in\left\{  1,2,\ldots,n\right\}  $. Now, $k=\left(  k-1\right)
+1\in\left\{  1,2,\ldots,n\right\}  \subseteq\left\{  0,1,\ldots,n\right\}  $.
This contradicts $k\notin\left\{  0,1,\ldots,n\right\}  $. This contradiction
proves that our assumption was wrong, qed.}. Combining $k-1\in\mathbb{Z}$ with
$k-1\notin\left\{  0,1,\ldots,n-1\right\}  $, we obtain $k-1\in\mathbb{Z}%
\setminus\left\{  0,1,\ldots,n-1\right\}  $. Thus, (\ref{eq.lem.CH.D.0})
(applied to $k-1$ instead of $k$) yields $D_{k-1}=0_{n\times n}$.

We have $k\notin\left\{  0,1,\ldots,n-1\right\}  $%
\ \ \ \ \footnote{\textit{Proof.} Assume the contrary. Thus, $k\in\left\{
0,1,\ldots,n-1\right\}  $, so that $k\in\left\{  0,1,\ldots,n-1\right\}
\subseteq\left\{  0,1,\ldots,n\right\}  $. This contradicts $k\notin\left\{
0,1,\ldots,n\right\}  $. This contradiction proves that our assumption was
wrong, qed.}. Combining $k\in\mathbb{Z}$ with $k\notin\left\{  0,1,\ldots
,n-1\right\}  $, we obtain $k\in\mathbb{Z}\setminus\left\{  0,1,\ldots
,n-1\right\}  $. Thus, (\ref{eq.lem.CH.D.0}) yields $D_{k}=0_{n\times n}$.

Now, $\underbrace{D_{k-1}}_{=0_{n\times n}}-\,A\underbrace{D_{k}}_{=0_{n\times
n}}=0_{n\times n}-0_{n\times n}=0_{n\times n}$.

On the other hand, $c_{n-k}=0$\ \ \ \ \footnote{\textit{Proof.} We are in one
of the following two cases:
\par
\textit{Case 1:} We have $k\geq0$.
\par
\textit{Case 2:} We have $k<0$.
\par
Let us consider Case 1 first. In this case, we have $k\geq0$. Hence,
$k\in\mathbb{N}$. Combined with $k\notin\left\{  0,1,\ldots,n\right\}  $, this
yields $k\in\mathbb{N}\setminus\left\{  0,1,\ldots,n\right\}  =\left\{
n+1,n+2,n+3,\ldots\right\}  $, so that $k>n$. Thus, $\left[  t^{k}\right]
\chi_{A}=0$ (by (\ref{pf.lem.CH.D.a.1})). Now, (\ref{pf.lem.CH.D.a.cn-k})
yields $c_{n-k}=\left[  t^{k}\right]  \chi_{A}=0$. Thus, $c_{n-k}=0$ is proven
in Case 1.
\par
Let us now consider Case 2. In this case, we have $k<0$. Thus, $k$ is a
negative integer. Now, every $f\in\mathbb{K}\left[  t\right]  $ and every
negative integer $m$ satisfy $\left[  t^{m}\right]  f=0$ (since polynomials
have no monomials $t^{m}$ with negative $m$). Applying this to $f=\chi_{A}$
and $m=k$, we obtain $\left[  t^{k}\right]  \chi_{A}=0$. Now,
(\ref{pf.lem.CH.D.a.cn-k}) yields $c_{n-k}=\left[  t^{k}\right]  \chi_{A}=0$.
Thus, $c_{n-k}=0$ is proven in Case 2.
\par
Now, we have proven $c_{n-k}=0$ in each of the two Cases 1 and 2. Since these
two Cases cover all possibilities, this shows that $c_{n-k}=0$ always holds.
Qed.}. Hence, $\underbrace{c_{n-k}}_{=0}I_{n}=0I_{n}=0_{n\times n}$. Compared
with $D_{k-1}-AD_{k}=0_{n\times n}$, this yields $c_{n-k}I_{n}=D_{k-1}-AD_{k}$.

Hence, $c_{n-k}I_{n}=D_{k-1}-AD_{k}$ is proven. In other words, Lemma
\ref{lem.CH.D} \textbf{(b)} is proven. \medskip

\textbf{(c)} Let $k\in\mathbb{N}$. We have $k\geq0$ (since $k\in\mathbb{N}$)
and thus $n-\underbrace{k}_{\geq0}\leq n$. Hence, we can split off the addend
for $i=n$ from the sum $\sum_{i=n-k}^{n}A^{\left(  k-\left(  n-i\right)
\right)  +1}D_{i}$. We thus obtain%
\begin{align}
\sum_{i=n-k}^{n}A^{\left(  k-\left(  n-i\right)  \right)  +1}D_{i}  &
=A^{\left(  k-\left(  n-n\right)  \right)  +1}\underbrace{D_{n}}_{=0_{n\times
n}}+\sum_{i=n-k}^{n-1}A^{\left(  k-\left(  n-i\right)  \right)  +1}%
D_{i}\nonumber\\
&  =\underbrace{A^{\left(  k-\left(  n-n\right)  \right)  +1}0_{n\times n}%
}_{=0_{n\times n}}+\sum_{i=n-k}^{n-1}A^{\left(  k-\left(  n-i\right)  \right)
+1}D_{i}\nonumber\\
&  =\sum_{i=n-k}^{n-1}A^{\left(  k-\left(  n-i\right)  \right)  +1}D_{i}%
=\sum_{i=n-k+1}^{n}\underbrace{A^{\left(  k-\left(  n-\left(  i-1\right)
\right)  \right)  +1}}_{\substack{=A^{k-n+i}\\\text{(since }\left(  k-\left(
n-\left(  i-1\right)  \right)  \right)  +1=k-n+i\text{)}}}D_{i-1}\nonumber\\
&  \ \ \ \ \ \ \ \ \ \ \left(  \text{here, we have substituted }i-1\text{ for
}i\text{ in the sum}\right) \nonumber\\
&  =\sum_{i=n-k+1}^{n}A^{k-n+i}D_{i-1}. \label{pf.lem.CH.D.c.1}%
\end{align}

Also, $n-k\leq n$. Hence, we can split off the addend for $i=n-k$ from the sum
$\sum_{i=n-k}^{n}A^{k-\left(  n-i\right)  }D_{i-1}$. We thus obtain%
\begin{align}
\sum_{i=n-k}^{n}A^{k-\left(  n-i\right)  }D_{i-1}  &
=\underbrace{A^{k-\left(  n-\left(  n-k\right)  \right)  }}_{\substack{=A^{0}%
\\\text{(since }k-\left(  n-\left(  n-k\right)  \right)  =0\text{)}}%
}D_{n-k-1}+\sum_{i=n-k+1}^{n}\underbrace{A^{k-\left(  n-i\right)  }%
}_{\substack{=A^{k-n+i}\\\text{(since }k-\left(  n-i\right)  =k-n+i\text{)}%
}}D_{i-1}\nonumber\\
&  =\underbrace{A^{0}}_{=I_{n}}D_{n-k-1}+\sum_{i=n-k+1}^{n}A^{k-n+i}%
D_{i-1}\nonumber\\
&  =D_{n-k-1}+\sum_{i=n-k+1}^{n}A^{k-n+i}D_{i-1}. \label{pf.lem.CH.D.c.2}%
\end{align}

Now,%
\begin{align*}
\sum_{i=0}^{k}c_{k-i}A^{i}  &  =\sum_{i=0}^{k}\underbrace{c_{k-\left(
k-i\right)  }}_{=c_{i}}A^{k-i}\ \ \ \ \ \ \ \ \ \ \left(  \text{here, we have
substituted }k-i\text{ for }i\text{ in the sum}\right) \\
&  =\sum_{i=0}^{k}c_{i}A^{k-i}=\sum_{i=n-k}^{n}c_{n-i}\underbrace{A^{k-\left(
n-i\right)  }}_{=A^{k-\left(  n-i\right)  }I_{n}}\\
&  \ \ \ \ \ \ \ \ \ \ \left(  \text{here, we have substituted }n-i\text{ for
}i\text{ in the sum}\right) \\
&  =\sum_{i=n-k}^{n}\underbrace{c_{n-i}A^{k-\left(  n-i\right)  }I_{n}%
}_{=A^{k-\left(  n-i\right)  }c_{n-i}I_{n}}=\sum_{i=n-k}^{n}A^{k-\left(
n-i\right)  }\underbrace{c_{n-i}I_{n}}_{\substack{=D_{i-1}-AD_{i}\\\text{(by
Lemma \ref{lem.CH.D} \textbf{(b)},}\\\text{applied to }i\text{ instead of
}k\text{)}}}\\
&  =\sum_{i=n-k}^{n}\underbrace{A^{k-\left(  n-i\right)  }\left(
D_{i-1}-AD_{i}\right)  }_{=A^{k-\left(  n-i\right)  }D_{i-1}-A^{k-\left(
n-i\right)  }AD_{i}}\\
&  =\sum_{i=n-k}^{n}\left(  A^{k-\left(  n-i\right)  }D_{i-1}-A^{k-\left(
n-i\right)  }AD_{i}\right) \\
&  =\sum_{i=n-k}^{n}A^{k-\left(  n-i\right)  }D_{i-1}-\sum_{i=n-k}%
^{n}\underbrace{A^{k-\left(  n-i\right)  }A}_{=A^{\left(  k-\left(
n-i\right)  \right)  +1}}D_{i}\\
&  =\underbrace{\sum_{i=n-k}^{n}A^{k-\left(  n-i\right)  }D_{i-1}%
}_{\substack{=D_{n-k-1}+\sum_{i=n-k+1}^{n}A^{k-n+i}D_{i-1}\\\text{(by
(\ref{pf.lem.CH.D.c.2}))}}}-\underbrace{\sum_{i=n-k}^{n}A^{\left(  k-\left(
n-i\right)  \right)  +1}D_{i}}_{\substack{=\sum_{i=n-k+1}^{n}A^{k-n+i}%
D_{i-1}\\\text{(by (\ref{pf.lem.CH.D.c.1}))}}}\\
&  =\left(  D_{n-k-1}+\sum_{i=n-k+1}^{n}A^{k-n+i}D_{i-1}\right)  -\left(
\sum_{i=n-k+1}^{n}A^{k-n+i}D_{i-1}\right) \\
&  =D_{n-k-1}=D_{n-1-k}%
\end{align*}
(since $n-k-1=n-1-k$). This proves Lemma \ref{lem.CH.D} \textbf{(c)}.
\end{proof}
\end{verlong}

\begin{vershort}
\begin{proof}
[Proof of Theorem \ref{thm.CH}.]For every $j\in\mathbb{Z}$, define an element
$c_{j}\in\mathbb{K}$ by $c_{j}=\left[  t^{n-j}\right]  \chi_{A}$.

Proposition \ref{prop.adj.poly} shows that there exist $n$ matrices
$D_{0},D_{1},\ldots,D_{n-1}$ in $\mathbb{K}^{n\times n}$ such that%
\[
\operatorname*{adj}\left(  tI_{n}-A\right)  =\sum_{k=0}^{n-1}t^{k}%
D_{k}\ \ \ \ \ \ \ \ \ \ \text{in }\left(  \mathbb{K}\left[  t\right]
\right)  ^{n\times n}.
\]
Consider these $D_{0},D_{1},\ldots,D_{n-1}$. Thus, an $n$-tuple $\left(
D_{0},D_{1},\ldots,D_{n-1}\right)  $ of matrices in $\mathbb{K}^{n\times n}$
is defined. Extend this $n$-tuple to a family $\left(  D_{k}\right)
_{k\in\mathbb{Z}}$ of matrices in $\mathbb{K}^{n\times n}$ by setting%
\[
D_{k}=0_{n\times n}\ \ \ \ \ \ \ \ \ \ \text{for every }k\in\mathbb{Z}%
\setminus\left\{  0,1,\ldots,n-1\right\}  .
\]
Thus, in particular, $D_{-1}=0_{n\times n}$.

Lemma \ref{lem.CH.D} \textbf{(a)} shows that $\chi_{A}=\sum_{k=0}^{n}%
c_{n-k}t^{k}=\sum_{i=0}^{n}c_{n-i}t^{i}$. Substituting $A$ for $t$ in this
equality, we obtain%
\begin{align*}
\chi_{A}\left(  A\right)   &  =\sum_{i=0}^{n}c_{n-i}A^{i}=D_{n-1-n}%
\ \ \ \ \ \ \ \ \ \ \left(  \text{by Lemma \ref{lem.CH.D} \textbf{(c)},
applied to }k=n\right) \\
&  =D_{-1}=0_{n\times n}.
\end{align*}
This proves Theorem \ref{thm.CH}.
\end{proof}
\end{vershort}

\begin{verlong}
\begin{proof}
[Proof of Theorem \ref{thm.CH}.]For every $j\in\mathbb{Z}$, define an element
$c_{j}\in\mathbb{K}$ by $c_{j}=\left[  t^{n-j}\right]  \chi_{A}$.

Proposition \ref{prop.adj.poly} shows that there exist $n$ matrices
$D_{0},D_{1},\ldots,D_{n-1}$ in $\mathbb{K}^{n\times n}$ such that%
\[
\operatorname*{adj}\left(  tI_{n}-A\right)  =\sum_{k=0}^{n-1}t^{k}%
D_{k}\ \ \ \ \ \ \ \ \ \ \text{in }\left(  \mathbb{K}\left[  t\right]
\right)  ^{n\times n}.
\]
Consider these $D_{0},D_{1},\ldots,D_{n-1}$. Thus, an $n$-tuple $\left(
D_{0},D_{1},\ldots,D_{n-1}\right)  $ of matrices in $\mathbb{K}^{n\times n}$
is defined. Extend this $n$-tuple to a family $\left(  D_{k}\right)
_{k\in\mathbb{Z}}$ of matrices in $\mathbb{K}^{n\times n}$ by setting%
\begin{equation}
\left(  D_{k}=0_{n\times n}\ \ \ \ \ \ \ \ \ \ \text{for every }k\in
\mathbb{Z}\setminus\left\{  0,1,\ldots,n-1\right\}  \right)  .
\label{pf.thm.CH.0}%
\end{equation}

Combining $-1\in\mathbb{Z}$ with $-1\notin\left\{  0,1,\ldots,n-1\right\}  $
(since $-1<0$), we obtain $-1\in\mathbb{Z}\setminus\left\{  0,1,\ldots
,n-1\right\}  $. Applying (\ref{pf.thm.CH.0}) to $k=-1$ therefore yields
$D_{-1}=0_{n\times n}$.

Lemma \ref{lem.CH.D} \textbf{(a)} shows that $\chi_{A}=\sum_{k=0}^{n}%
c_{n-k}t^{k}$. Substituting $A$ for $t$ in this equality, we obtain%
\begin{align*}
\chi_{A}\left(  A\right)   &  =\sum_{k=0}^{n}c_{n-k}A^{k}=\sum_{i=0}%
^{n}c_{n-i}A^{i}\ \ \ \ \ \ \ \ \ \ \left(
\begin{array}
[c]{c}%
\text{here, we have renamed the summation}\\
\text{index }k\text{ as }i
\end{array}
\right) \\
&  =D_{n-1-n}\ \ \ \ \ \ \ \ \ \ \left(  \text{by Lemma \ref{lem.CH.D}
\textbf{(c)}, applied to }k=n\right) \\
&  =D_{-1}\ \ \ \ \ \ \ \ \ \ \left(  \text{since }n-1-n=-1\right) \\
&  =0_{n\times n}.
\end{align*}
This proves Theorem \ref{thm.CH}.
\end{proof}
\end{verlong}

\subsection{Derivations and determinants}

Now, let us make what seems to be a detour, and define $\mathbb{K}%
$-derivations of a $\mathbb{K}$-algebra\footnote{See \cite[Convention
1.1]{Grinbe16} for what we mean by a \textquotedblleft$\mathbb{K}%
$-algebra\textquotedblright. In a nutshell, we require $\mathbb{K}$-algebras
to be associative and unital, and we require the multiplication map on a
$\mathbb{K}$-algebra to be $\mathbb{K}$-bilinear.}:

\begin{definition}
\label{def.deriv}Let $\mathbb{L}$ be a $\mathbb{K}$-algebra. A $\mathbb{K}%
$-linear map $f:\mathbb{L}\rightarrow\mathbb{L}$ is said to be a $\mathbb{K}%
$\textit{-derivation} if it satisfies%
\begin{equation}
\left(  f\left(  ab\right)  =af\left(  b\right)  +f\left(  a\right)
b\ \ \ \ \ \ \ \ \ \ \text{for every }a\in\mathbb{L}\text{ and }b\in
\mathbb{L}\right)  . \label{eq.def.deriv.axiom}%
\end{equation}

\end{definition}

The notion of a \textquotedblleft$\mathbb{K}$-derivation\textquotedblright\ is
a particular case of the notion of a \textquotedblleft$\mathbf{k}%
$-derivation\textquotedblright\ defined in \cite[Definition 1.5]{Grinbe16};
specifically, it is obtained from the latter when setting $\mathbf{k}%
=\mathbb{K}$, $A=\mathbb{L}$ and $M=\mathbb{L}$. This particular case will
suffice for us. Examples of $\mathbb{K}$-derivations abound (there are several
in \cite{Grinbe16}), but the only one we will need is the following:

\begin{proposition}
\label{prop.deriv.del}Let $\partial:\mathbb{K}\left[  t\right]  \rightarrow
\mathbb{K}\left[  t\right]  $ be the differentiation operator (i.e., the map
that sends every polynomial $f\in\mathbb{K}\left[  t\right]  $ to the
derivative of $f$). Then, $\partial:\mathbb{K}\left[  t\right]  \rightarrow
\mathbb{K}\left[  t\right]  $ is a $\mathbb{K}$-derivation.
\end{proposition}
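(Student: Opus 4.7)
The plan is to verify the two defining properties of a $\mathbb{K}$-derivation: $\mathbb{K}$-linearity and the Leibniz rule (\ref{eq.def.deriv.axiom}). The $\mathbb{K}$-linearity of $\partial$ follows immediately from the coefficient-wise definition of the derivative: if $f = \sum_{k\geq 0} a_k t^k$, then $\partial(f) = \sum_{k\geq 1} k a_k t^{k-1}$, and this operation manifestly respects addition and multiplication by scalars from $\mathbb{K}$. So the real content of the proposition is the Leibniz rule.

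For the Leibniz rule, the plan is to prove it first on the monomial basis $\left(t^m\right)_{m\in\mathbb{N}}$ of $\mathbb{K}[t]$, and then extend to arbitrary polynomials by $\mathbb{K}$-bilinearity. On monomials, a direct computation gives
\[
\partial\left(t^m \cdot t^n\right) = \partial\left(t^{m+n}\right) = (m+n)\, t^{m+n-1},
\]
while
\[
t^m \partial\left(t^n\right) + \partial\left(t^m\right) t^n = t^m \cdot n t^{n-1} + m t^{m-1} \cdot t^n = (m+n)\, t^{m+n-1},
\]
so the two sides agree. The edge cases where $m=0$ or $n=0$ are handled by $\partial(1) = 0$ and the convention $0 \cdot t^{-1} = 0$.

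To lift this from monomials to all polynomials, I would take arbitrary $a, b \in \mathbb{K}[t]$, write $a = \sum_m \alpha_m t^m$ and $b = \sum_n \beta_n t^n$, and use the $\mathbb{K}$-linearity of $\partial$ together with distributivity of multiplication to expand both sides of $\partial(ab) = a\partial(b) + \partial(a) b$. Both sides then reduce to the double sum $\sum_{m,n} \alpha_m \beta_n (m+n)\, t^{m+n-1}$ after the monomial case is applied termwise.

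The only real obstacle is bookkeeping: one has to be careful with indexing (the constant term contributes $0$ to $\partial$, so the sum starts at $k = 1$), and with justifying the interchange of $\partial$ with finite sums and products of a scalar. None of this is deep, and a formally complete proof would largely consist of writing out the double-sum manipulation explicitly, invoking $\mathbb{K}$-bilinearity of polynomial multiplication.
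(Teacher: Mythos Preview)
Your proof is correct and follows the same route as the paper: verify $\mathbb{K}$-linearity and the Leibniz rule (\ref{eq.def.deriv.axiom}). The only difference is that the paper simply invokes the Leibniz law $\partial(ab) = a\partial(b) + \partial(a)b$ as a ``well-known'' fact about polynomial derivatives, whereas you actually prove it from scratch by checking it on monomials and extending by $\mathbb{K}$-bilinearity. Your version is thus more self-contained; the paper's is a one-line appeal to standard calculus/algebra.
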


\begin{vershort}
\begin{proof}
[Proof of Proposition \ref{prop.deriv.del}.]This follows from the fact that
$\partial\left(  ab\right)  =a\partial\left(  b\right)  +\partial\left(
a\right)  b$ for any two polynomials $a$ and $b$ (the well-known Leibniz law).
\end{proof}
\end{vershort}

\begin{verlong}
\begin{proof}
[Proof of Proposition \ref{prop.deriv.del}.]The well-known Leibniz law for the
derivative of a product yields that%
\begin{align*}
\partial\left(  ab\right)   &  =\partial\left(  a\right)  b+a\partial\left(
b\right) \\
&  =a\partial\left(  b\right)  +\partial\left(  a\right)
b\ \ \ \ \ \ \ \ \ \ \text{for every }a\in\mathbb{K}\left[  t\right]  \text{
and }b\in\mathbb{K}\left[  t\right]  .
\end{align*}
But the map $\partial:\mathbb{K}\left[  t\right]  \rightarrow\mathbb{K}\left[
t\right]  $ is $\mathbb{K}$-linear. Hence, $\partial$ is a $\mathbb{K}%
$-derivation if and only if it satisfies%
\begin{equation}
\left(  \partial\left(  ab\right)  =a\partial\left(  b\right)  +\partial
\left(  a\right)  b\ \ \ \ \ \ \ \ \ \ \text{for every }a\in\mathbb{K}\left[
t\right]  \text{ and }b\in\mathbb{K}\left[  t\right]  \right)  .
\label{pf.prop.deriv.del.1}%
\end{equation}
Hence, $\partial$ is a $\mathbb{K}$-derivation (since $\partial$ satisfies
(\ref{pf.prop.deriv.del.1})). This proves Proposition \ref{prop.deriv.del}.
\end{proof}
\end{verlong}

A fundamental fact about $\mathbb{K}$-derivations is the following:

\begin{proposition}
\label{prop.deriv.leibniz}Let $\mathbb{L}$ be a $\mathbb{K}$-algebra. Let
$f:\mathbb{L}\rightarrow\mathbb{L}$ be a $\mathbb{K}$-derivation. Let
$n\in\mathbb{N}$, and let $a_{1},a_{2},\ldots,a_{n}\in\mathbb{L}$. Then,%
\[
f\left(  a_{1}a_{2}\cdots a_{n}\right)  =\sum_{i=1}^{n}a_{1}a_{2}\cdots
a_{i-1}f\left(  a_{i}\right)  a_{i+1}a_{i+2}\cdots a_{n}.
\]

\end{proposition}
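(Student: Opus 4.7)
The plan is a straightforward induction on $n$, exploiting the Leibniz-type axiom (\ref{eq.def.deriv.axiom}) at each step to peel off a single factor.

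For the base case, I would treat $n=0$ (although starting at $n=1$ is equally natural). The empty product $a_1 a_2 \cdots a_n$ equals $1_{\mathbb{L}}$, and the right-hand side is an empty sum, hence $0$. So the base case reduces to checking $f(1_{\mathbb{L}}) = 0$. This follows from applying (\ref{eq.def.deriv.axiom}) with $a = b = 1_{\mathbb{L}}$: we obtain $f(1_{\mathbb{L}}) = 1_{\mathbb{L}} \cdot f(1_{\mathbb{L}}) + f(1_{\mathbb{L}}) \cdot 1_{\mathbb{L}} = 2 f(1_{\mathbb{L}})$, whence $f(1_{\mathbb{L}}) = 0$.

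For the inductive step, suppose the claim holds for some $n \in \mathbb{N}$, and let $a_1, a_2, \ldots, a_{n+1} \in \mathbb{L}$. Set $p = a_1 a_2 \cdots a_n$. Then $a_1 a_2 \cdots a_{n+1} = p \cdot a_{n+1}$, so (\ref{eq.def.deriv.axiom}) (applied to $a = p$ and $b = a_{n+1}$) yields
\[
f\left(a_1 a_2 \cdots a_{n+1}\right) = p \cdot f(a_{n+1}) + f(p) \cdot a_{n+1}.
\]
The first summand is exactly the $i = n+1$ term of the target sum. For the second summand, I would apply the induction hypothesis to $f(p) = f(a_1 a_2 \cdots a_n)$ and then right-multiply the resulting sum by $a_{n+1}$; this yields
\[
f(p) \cdot a_{n+1} = \sum_{i=1}^{n} a_1 a_2 \cdots a_{i-1} \, f(a_i) \, a_{i+1} a_{i+2} \cdots a_n \cdot a_{n+1} = \sum_{i=1}^{n} a_1 a_2 \cdots a_{i-1} \, f(a_i) \, a_{i+1} a_{i+2} \cdots a_{n+1},
\]
which furnishes the missing $i \in \{1, 2, \ldots, n\}$ terms. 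Adding the two parts completes the induction.

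I do not anticipate any real obstacle: the only mildly delicate points are the careful handling of the empty product in the base case (which requires the auxiliary observation $f(1_{\mathbb{L}}) = 0$), and being precise about the associativity used in rewriting $a_1 a_2 \cdots a_{n+1}$ as $p \cdot a_{n+1}$ so that the derivation axiom applies cleanly. Both are routine in an associative unital $\mathbb{K}$-algebra.
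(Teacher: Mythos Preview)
Your proposal is correct and follows essentially the same approach as the paper, which only sketches the proof in a footnote: first establish $f(1)=0$ by applying (\ref{eq.def.deriv.axiom}) to $a=b=1$, then proceed by straightforward induction on $n$. Your execution of both steps matches this outline exactly.
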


This proposition is a particular case of \cite[Theorem 1.14]{Grinbe16}
(obtained by setting $\mathbf{k}=\mathbb{K}$, $A=\mathbb{L}$ and
$M=\mathbb{L}$); it is also easy to prove\footnote{First one should show that
$f\left(  1\right)  =0$ (by applying (\ref{eq.def.deriv.axiom}) to $a=1$ and
$b=1$). Then, one can prove Proposition \ref{prop.deriv.leibniz} by
straightforward induction on $n$.}.

What we are going to need is a formula for how a derivation acts on the
determinant of a matrix. We first introduce a notation:

\begin{definition}
\label{def.f-mat}Let $n\in\mathbb{N}$ and $m\in\mathbb{N}$. Let $\mathbb{L}$
and $\mathbb{M}$ be rings. Let $f:\mathbb{L}\rightarrow\mathbb{M}$ be any map.
Then, $f^{n\times m}$ will denote the map from $\mathbb{L}^{n\times m}$ to
$\mathbb{M}^{n\times m}$ which sends every matrix $\left(  a_{i,j}\right)
_{1\leq i\leq n,\ 1\leq j\leq m}\in\mathbb{L}^{n\times m}$ to the matrix
$\left(  f\left(  a_{i,j}\right)  \right)  _{1\leq i\leq n,\ 1\leq j\leq m}%
\in\mathbb{M}^{n\times m}$. (In other words, $f^{n\times m}$ is the map which
takes an $n\times m$-matrix in $\mathbb{L}^{n\times m}$, and applies $f$ to
each entry of this matrix.)
\end{definition}

\begin{theorem}
\label{thm.deriv.ddet}Let $\mathbb{L}$ be a commutative $\mathbb{K}$-algebra.
Let $f:\mathbb{L}\rightarrow\mathbb{L}$ be a $\mathbb{K}$-derivation. Let
$n\in\mathbb{N}$. Let $A\in\mathbb{L}^{n\times n}$. Then,%
\[
f\left(  \det A\right)  =\operatorname*{Tr}\left(  f^{n\times n}\left(
A\right)  \cdot\operatorname*{adj}A\right)  .
\]

\end{theorem}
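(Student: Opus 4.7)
The plan is to expand both sides of the claimed identity using the Leibniz formula (\ref{eq.det}) for the determinant, and to show that each side equals $\sum_{k=1}^{n}\det A^{(k)}$, where $A^{(k)}$ denotes the matrix that agrees with $A$ except that its $k$-th row is replaced by the $f$-image of the corresponding row of $A$.

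Write $A=(a_{i,j})_{1\leq i\leq n,\ 1\leq j\leq n}$. Starting from (\ref{eq.det}), I would apply the $\mathbb{K}$-linear map $f$ and then Proposition \ref{prop.deriv.leibniz} to each product $\prod_{i=1}^{n}a_{i,\sigma(i)}$, using commutativity of $\mathbb{L}$ to place the factor $f(a_{k,\sigma(k)})$ in front of the remaining factors. This yields
\[
f(\det A) \;=\; \sum_{k=1}^{n}\sum_{\sigma\in S_{n}}(-1)^{\sigma}\, f(a_{k,\sigma(k)})\prod_{\substack{i=1 \\ i\neq k}}^{n}a_{i,\sigma(i)}.
\]
For each fixed $k$, the inner $\sigma$-sum is itself an instance of (\ref{eq.det}): it is $\det A^{(k)}$, where $A^{(k)}\in\mathbb{L}^{n\times n}$ is the matrix whose $k$-th row is $\bigl(f(a_{k,1}),\ldots,f(a_{k,n})\bigr)$ and whose other rows match those of $A$. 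So $f(\det A)=\sum_{k=1}^{n}\det A^{(k)}$.

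On the other side, unraveling the definitions of $f^{n\times n}(A)$, $\operatorname{adj}A$, and the trace directly gives
\[
\operatorname{Tr}\bigl(f^{n\times n}(A)\cdot\operatorname{adj}A\bigr) \;=\; \sum_{k=1}^{n}\sum_{j=1}^{n}(-1)^{j+k}\, f(a_{k,j})\det\bigl(A_{\sim k,\sim j}\bigr).
\]
The key observation is that $\bigl(A^{(k)}\bigr)_{\sim k,\sim j}=A_{\sim k,\sim j}$, because deleting row $k$ removes the only entries in which $A$ and $A^{(k)}$ differ. Hence the inner $j$-sum is precisely the $(k,k)$-entry of $A^{(k)}\cdot\operatorname{adj}\bigl(A^{(k)}\bigr)$; by Theorem \ref{thm.adj.inverse} applied to $A^{(k)}$, this $(k,k)$-entry is $\det A^{(k)}$. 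Summing over $k$ produces $\sum_{k=1}^{n}\det A^{(k)}$ again, matching the expression found for $f(\det A)$.

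The main obstacle is purely index-bookkeeping: one has to verify carefully that the inner $\sigma$-sum coming from the derivation rule is genuinely the Leibniz formula for $A^{(k)}$, and that the inner $j$-sum coming from $\operatorname{adj}A$ is genuinely the $(k,k)$-entry read off from Theorem \ref{thm.adj.inverse} applied to $A^{(k)}$. No ingredients are needed beyond (\ref{eq.det}), Proposition \ref{prop.deriv.leibniz}, and Theorem \ref{thm.adj.inverse}.
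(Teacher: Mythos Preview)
Your proof is correct and follows the same overall strategy as the paper: expand $f(\det A)$ via the Leibniz formula (\ref{eq.det}) together with the Leibniz rule for derivations (Proposition \ref{prop.deriv.leibniz}), and expand $\operatorname{Tr}\bigl(f^{n\times n}(A)\cdot\operatorname*{adj}A\bigr)$ directly from the definitions, then match the two double sums.

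The one genuine difference is in how you close the gap between the two sides. The paper splits the inner $\sigma$-sum according to the value $j=\sigma(k)$ and invokes Lemma \ref{lem.laplace.Apq} to identify $\sum_{\sigma:\sigma(k)=j}(-1)^{\sigma}\prod_{i\neq k}a_{i,\sigma(i)}$ with $(-1)^{k+j}\det(A_{\sim k,\sim j})$ directly. You instead introduce the auxiliary matrices $A^{(k)}$, recognize the inner $\sigma$-sum as $\det A^{(k)}$, and then read off the inner $j$-sum on the trace side as the $(k,k)$-entry of $A^{(k)}\cdot\operatorname*{adj}(A^{(k)})$ via Theorem \ref{thm.adj.inverse}. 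Your route has the pleasant side effect of proving the identity $f(\det A)=\sum_{k=1}^{n}\det A^{(k)}$ along the way (this is exactly the paper's later Theorem \ref{thm.deriv.ddet2}), and it avoids the need for Lemma \ref{lem.laplace.Apq}, trading it for an application of Theorem \ref{thm.adj.inverse} to each $A^{(k)}$. Both arguments rest on the same underlying Laplace-expansion identity, so the difference is one of packaging rather than substance.
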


Proving Theorem \ref{thm.deriv.ddet} will take us a while. Let us begin by
stating three lemmas:

\begin{lemma}
\label{lem.Tr(AB)}Let $n\in\mathbb{N}$ and $m\in\mathbb{N}$. Let $A=\left(
a_{i,j}\right)  _{1\leq i\leq n,\ 1\leq j\leq m}\in\mathbb{K}^{n\times m}$ and
$B=\left(  b_{i,j}\right)  _{1\leq i\leq m,\ 1\leq j\leq n}\in\mathbb{K}%
^{m\times n}$. Then,%
\[
\operatorname*{Tr}\left(  AB\right)  =\sum_{i=1}^{n}\sum_{j=1}^{m}%
a_{i,j}b_{j,i}.
\]

\end{lemma}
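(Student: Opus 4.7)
The plan is to unwind the definitions of matrix multiplication and trace, and observe that the claimed formula drops out immediately. No clever idea is required; this is essentially a bookkeeping exercise.

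First I would compute the product $AB$ explicitly. Since $A\in\mathbb{K}^{n\times m}$ and $B\in\mathbb{K}^{m\times n}$, the product $AB$ is an $n\times n$-matrix. By the definition of the product of two matrices, the $(i,k)$-entry of $AB$ equals $\sum_{j=1}^{m}a_{i,j}b_{j,k}$ for all $i,k\in\left\{1,2,\ldots,n\right\}$. Hence we may write
\[
AB=\left(\sum_{j=1}^{m}a_{i,j}b_{j,k}\right)_{1\leq i\leq n,\ 1\leq k\leq n}.
\]

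Next I would invoke the definition of the trace, which says that the trace of an $n\times n$-matrix is the sum of its diagonal entries. Applying this to $AB$, we get
\[
\operatorname*{Tr}\left(AB\right)=\sum_{i=1}^{n}\left(\text{the }(i,i)\text{-entry of }AB\right)=\sum_{i=1}^{n}\sum_{j=1}^{m}a_{i,j}b_{j,i},
\]
which is exactly the claim.

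There is no genuine obstacle here; the only thing to be mindful of is to state the definitions in the form given in Definition \ref{def.notations} (diagonal sum for the trace, and the standard row-times-column formula for the matrix product), and then just read off the identity. The proof is therefore essentially a single display line once the two definitions are unpacked.
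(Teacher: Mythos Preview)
Your proof is correct and is essentially identical to the paper's own argument: compute the entries of $AB$ from the definition of matrix multiplication, then sum the diagonal entries. The only cosmetic difference is the naming of dummy indices (the paper uses $k$ for the inner summation and then renames it to $j$, whereas you use $j$ from the start).
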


\begin{vershort}
\begin{proof}
[Proof of Lemma \ref{lem.Tr(AB)}.]The definition of $AB$ yields $AB=\left(
\sum_{k=1}^{m}a_{i,k}b_{k,j}\right)  _{1\leq i\leq n,\ 1\leq j\leq n}$ (since
$A=\left(  a_{i,j}\right)  _{1\leq i\leq n,\ 1\leq j\leq m}$ and $B=\left(
b_{i,j}\right)  _{1\leq i\leq m,\ 1\leq j\leq n}$). Hence,%
\[
\operatorname*{Tr}\left(  AB\right)  =\sum_{i=1}^{n}\sum_{k=1}^{m}%
a_{i,k}b_{k,i}=\sum_{i=1}^{n}\sum_{j=1}^{m}a_{i,j}b_{j,i}%
\]
(here, we have renamed the summation index $k$ as $j$ in the second sum). This
proves Lemma \ref{lem.Tr(AB)}.
\end{proof}
\end{vershort}

\begin{verlong}
\begin{proof}
[Proof of Lemma \ref{lem.Tr(AB)}.]Recall that the trace $\operatorname*{Tr}C$
of an $n\times n$-matrix $C$ is the sum of all diagonal entries of $C$
(because this is how $\operatorname*{Tr}C$ is defined). In other words, if
$C=\left(  c_{i,j}\right)  _{1\leq i\leq n,\ 1\leq j\leq n}$ is an $n\times
n$-matrix, then%
\begin{equation}
\operatorname*{Tr}C=\sum_{i=1}^{n}c_{i,i}. \label{pf.lem.Tr(AB).TrC=}%
\end{equation}

From $A=\left(  a_{i,j}\right)  _{1\leq i\leq n,\ 1\leq j\leq m}$ and
$B=\left(  b_{i,j}\right)  _{1\leq i\leq m,\ 1\leq j\leq n}$, we obtain
\[
AB=\left(  \sum_{k=1}^{m}a_{i,k}b_{k,j}\right)  _{1\leq i\leq n,\ 1\leq j\leq
n}%
\]
(by the definition of the matrix $AB$). Hence, (\ref{pf.lem.Tr(AB).TrC=})
(applied to $AB$ and $\sum_{k=1}^{m}a_{i,k}b_{k,j}$ instead of $C$ and
$c_{i,j}$) yields%
\[
\operatorname*{Tr}\left(  AB\right)  =\sum_{i=1}^{n}\underbrace{\sum_{k=1}%
^{m}a_{i,k}b_{k,i}}_{\substack{=\sum_{j=1}^{m}a_{i,j}b_{j,i}\\\text{(here, we
have renamed the}\\\text{summation index }k\text{ as }j\text{)}}}=\sum
_{i=1}^{n}\sum_{j=1}^{m}a_{i,j}b_{j,i}.
\]
This proves Lemma \ref{lem.Tr(AB)}.
\end{proof}
\end{verlong}

\begin{lemma}
\label{lem.deriv.leibniz-comm}Let $\mathbb{L}$ be a commutative $\mathbb{K}%
$-algebra. Let $f:\mathbb{L}\rightarrow\mathbb{L}$ be a $\mathbb{K}%
$-derivation. Let $n\in\mathbb{N}$, and let $a_{1},a_{2},\ldots,a_{n}%
\in\mathbb{L}$. Then,%
\[
f\left(  a_{1}a_{2}\cdots a_{n}\right)  =\sum_{k=1}^{n}f\left(  a_{k}\right)
\prod_{\substack{i\in\left\{  1,2,\ldots,n\right\}  ;\\i\neq k}}a_{i}.
\]

\end{lemma}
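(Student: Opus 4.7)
The plan is to derive Lemma \ref{lem.deriv.leibniz-comm} as an immediate consequence of the general Leibniz rule (Proposition \ref{prop.deriv.leibniz}) applied in the commutative setting. Since $\mathbb{L}$ is a commutative $\mathbb{K}$-algebra and $f$ is a $\mathbb{K}$-derivation, Proposition \ref{prop.deriv.leibniz} yields
\[
f\left(  a_{1}a_{2}\cdots a_{n}\right)  =\sum_{i=1}^{n}a_{1}a_{2}\cdots
a_{i-1}f\left(  a_{i}\right)  a_{i+1}a_{i+2}\cdots a_{n}.
\]
This is the sole nontrivial input; everything beyond it is elementary manipulation permitted by commutativity.

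Next I would observe that, since $\mathbb{L}$ is commutative, we may freely rearrange the factors in each summand. In particular, for each $i \in \{1, 2, \ldots, n\}$, we may pull $f(a_i)$ to the front, obtaining
\[
a_{1}a_{2}\cdots a_{i-1}f\left(  a_{i}\right)  a_{i+1}a_{i+2}\cdots a_{n}
=f\left(  a_{i}\right)  \cdot a_{1}a_{2}\cdots a_{i-1}a_{i+1}a_{i+2}\cdots a_{n}
=f\left(  a_{i}\right)  \prod_{\substack{j\in\left\{  1,2,\ldots,n\right\}  ;\\j\neq i}}a_{j}.
\]
Summing over $i$ and renaming the index $i$ as $k$ yields the desired identity. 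There is no real obstacle here; the only subtlety is that Proposition \ref{prop.deriv.leibniz} does not itself require commutativity of $\mathbb{L}$, whereas the simplified form in Lemma \ref{lem.deriv.leibniz-comm} crucially does, so one must explicitly invoke commutativity when pulling $f(a_i)$ out to the left. Since this reduction is essentially one line, the proof in the \texttt{vershort} version can be a single sentence citing Proposition \ref{prop.deriv.leibniz}, while the \texttt{verlong} version would spell out the commutative rearrangement step.
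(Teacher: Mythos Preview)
Your proposal is correct and follows essentially the same approach as the paper: apply Proposition \ref{prop.deriv.leibniz}, then use commutativity of $\mathbb{L}$ to pull $f(a_i)$ to the front and identify the remaining factors as the product $\prod_{j\neq i}a_j$. The only cosmetic difference is that the paper renames the summation index from $i$ to $k$ before doing the commutative rearrangement, whereas you do it afterward.
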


\begin{vershort}
\begin{proof}
[Proof of Lemma \ref{lem.deriv.leibniz-comm}.]Proposition
\ref{prop.deriv.leibniz} yields%
\begin{align*}
f\left(  a_{1}a_{2}\cdots a_{n}\right)   &  =\sum_{i=1}^{n}a_{1}a_{2}\cdots
a_{i-1}f\left(  a_{i}\right)  a_{i+1}a_{i+2}\cdots a_{n}\\
&  =\sum_{k=1}^{n}\underbrace{a_{1}a_{2}\cdots a_{k-1}f\left(  a_{k}\right)
a_{k+1}a_{k+2}\cdots a_{n}}_{\substack{=f\left(  a_{k}\right)  \left(
a_{1}a_{2}\cdots a_{k-1}\right)  \left(  a_{k+1}a_{k+2}\cdots a_{n}\right)
\\\text{(since }\mathbb{L}\text{ is commutative)}}}\\
&  \ \ \ \ \ \ \ \ \ \ \left(  \text{here, we have renamed the summation index
}i\text{ as }k\right) \\
&  =\sum_{k=1}^{n}f\left(  a_{k}\right)  \underbrace{\left(  a_{1}a_{2}\cdots
a_{k-1}\right)  \left(  a_{k+1}a_{k+2}\cdots a_{n}\right)  }_{=\prod
_{\substack{i\in\left\{  1,2,\ldots,n\right\}  ;\\i\neq k}}a_{i}}=\sum
_{k=1}^{n}f\left(  a_{k}\right)  \prod_{\substack{i\in\left\{  1,2,\ldots
,n\right\}  ;\\i\neq k}}a_{i}.
\end{align*}
This proves Lemma \ref{lem.deriv.leibniz-comm}.
\end{proof}
\end{vershort}

\begin{verlong}
\begin{proof}
[Proof of Lemma \ref{lem.deriv.leibniz-comm}.]Let $k\in\left\{  1,2,\ldots
,n\right\}  $.

We recall a basic property of products (in a commutative ring): If $U$ and $V$
are two disjoint finite sets, and if $b_{i}$ is an element of $\mathbb{L}$ for
every $i\in U\cup V$, then%
\begin{equation}
\prod_{i\in U\cup V}b_{i}=\left(  \prod_{i\in U}b_{i}\right)  \left(
\prod_{i\in V}b_{i}\right)  . \label{pf.lem.deriv.leibniz-comm.1}%
\end{equation}
(This relies on the fact that the ring $\mathbb{L}$ is commutative.)

Now, the sets $\left\{  1,2,\ldots,k-1\right\}  $ and $\left\{  k+1,k+2,\ldots
,n\right\}  $ are disjoint\footnote{\textit{Proof.} Let $g\in\left\{
1,2,\ldots,k-1\right\}  \cap\left\{  k+1,k+2,\ldots,n\right\}  $. Then,
$g\in\left\{  1,2,\ldots,k-1\right\}  \cap\left\{  k+1,k+2,\ldots,n\right\}
\subseteq\left\{  1,2,\ldots,k-1\right\}  $, so that $g\leq k-1$. Also,
$g\in\left\{  1,2,\ldots,k-1\right\}  \cap\left\{  k+1,k+2,\ldots,n\right\}
\subseteq\left\{  k+1,k+2,\ldots,n\right\}  $, so that $g\geq k+1$. Combining
$g\leq k-1\leq k$ with $g\geq k+1>k$, we obtain a contradiction.
\par
Now, forget that we fixed $g$. We thus have found a contradiction for each
$g\in\left\{  1,2,\ldots,k-1\right\}  \cap\left\{  k+1,k+2,\ldots,n\right\}
$. Hence, there exists no $g\in\left\{  1,2,\ldots,k-1\right\}  \cap\left\{
k+1,k+2,\ldots,n\right\}  $. In other words, the set $\left\{  1,2,\ldots
,k-1\right\}  \cap\left\{  k+1,k+2,\ldots,n\right\}  $ is empty. In other
words, the sets $\left\{  1,2,\ldots,k-1\right\}  $ and $\left\{
k+1,k+2,\ldots,n\right\}  $ are disjoint. Qed.} and finite. Hence,
(\ref{pf.lem.deriv.leibniz-comm.1}) (applied to $U=\left\{  1,2,\ldots
,k-1\right\}  $, $V=\left\{  k+1,k+2,\ldots,n\right\}  $ and $b_{i}=a_{i}$)
yields%
\[
\prod_{i\in\left\{  1,2,\ldots,k-1\right\}  \cup\left\{  k+1,k+2,\ldots
,n\right\}  }a_{i}=\left(  \prod_{i\in\left\{  1,2,\ldots,k-1\right\}  }%
a_{i}\right)  \left(  \prod_{i\in\left\{  k+1,k+2,\ldots,n\right\}  }%
a_{i}\right)  .
\]

Now,%
\begin{align*}
\underbrace{\prod_{\substack{i\in\left\{  1,2,\ldots,n\right\}  ;\\i\neq k}%
}}_{=\prod_{i\in\left\{  1,2,\ldots,n\right\}  \setminus\left\{  k\right\}  }%
}a_{i}  &  =\underbrace{\prod_{i\in\left\{  1,2,\ldots,n\right\}
\setminus\left\{  k\right\}  }}_{\substack{=\prod_{i\in\left\{  1,2,\ldots
,k-1\right\}  \cup\left\{  k+1,k+2,\ldots,n\right\}  }\\\text{(since }\left\{
1,2,\ldots,n\right\}  \setminus\left\{  k\right\}  \\=\left\{  1,2,\ldots
,k-1,k+1,k+2,\ldots,n\right\}  \\=\left\{  1,2,\ldots,k-1\right\}
\cup\left\{  k+1,k+2,\ldots,n\right\}  \text{)}}}a_{i}\\
&  =\prod_{i\in\left\{  1,2,\ldots,k-1\right\}  \cup\left\{  k+1,k+2,\ldots
,n\right\}  }a_{i}\\
&  =\underbrace{\left(  \prod_{i\in\left\{  1,2,\ldots,k-1\right\}  }%
a_{i}\right)  }_{=a_{1}a_{2}\cdots a_{k-1}}\underbrace{\left(  \prod
_{i\in\left\{  k+1,k+2,\ldots,n\right\}  }a_{i}\right)  }_{=a_{k+1}%
a_{k+2}\cdots a_{n}}\\
&  =\left(  a_{1}a_{2}\cdots a_{k-1}\right)  \left(  a_{k+1}a_{k+2}\cdots
a_{n}\right)  .
\end{align*}
Hence,%
\begin{align}
&  f\left(  a_{k}\right)  \underbrace{\prod_{\substack{i\in\left\{
1,2,\ldots,n\right\}  ;\\i\neq k}}a_{i}}_{=\left(  a_{1}a_{2}\cdots
a_{k-1}\right)  \left(  a_{k+1}a_{k+2}\cdots a_{n}\right)  }\nonumber\\
&  =\underbrace{f\left(  a_{k}\right)  \left(  a_{1}a_{2}\cdots a_{k-1}%
\right)  }_{\substack{=\left(  a_{1}a_{2}\cdots a_{k-1}\right)  f\left(
a_{k}\right)  \\\text{(since }\mathbb{L}\text{ is commutative)}}}\left(
a_{k+1}a_{k+2}\cdots a_{n}\right) \nonumber\\
&  =\left(  a_{1}a_{2}\cdots a_{k-1}\right)  f\left(  a_{k}\right)  \left(
a_{k+1}a_{k+2}\cdots a_{n}\right) \nonumber\\
&  =a_{1}a_{2}\cdots a_{k-1}f\left(  a_{k}\right)  a_{k+1}a_{k+2}\cdots a_{n}.
\label{pf.lem.deriv.leibniz-comm.3}%
\end{align}

Now, forget that we fixed $k$. We thus have shown that
(\ref{pf.lem.deriv.leibniz-comm.3}) holds for every $k\in\left\{
1,2,\ldots,n\right\}  $.

Now, Proposition \ref{prop.deriv.leibniz} yields%
\begin{align*}
f\left(  a_{1}a_{2}\cdots a_{n}\right)   &  =\sum_{i=1}^{n}a_{1}a_{2}\cdots
a_{i-1}f\left(  a_{i}\right)  a_{i+1}a_{i+2}\cdots a_{n}\\
&  =\sum_{k=1}^{n}\underbrace{a_{1}a_{2}\cdots a_{k-1}f\left(  a_{k}\right)
a_{k+1}a_{k+2}\cdots a_{n}}_{\substack{=f\left(  a_{k}\right)  \prod
_{\substack{i\in\left\{  1,2,\ldots,n\right\}  ;\\i\neq k}}a_{i}\\\text{(by
(\ref{pf.lem.deriv.leibniz-comm.3}))}}}\\
&  \ \ \ \ \ \ \ \ \ \ \left(  \text{here, we have renamed the summation index
}i\text{ as }k\right) \\
&  =\sum_{k=1}^{n}f\left(  a_{k}\right)  \prod_{\substack{i\in\left\{
1,2,\ldots,n\right\}  ;\\i\neq k}}a_{i}.
\end{align*}
This proves Lemma \ref{lem.deriv.leibniz-comm}.
\end{proof}
\end{verlong}

\begin{lemma}
\label{lem.laplace.Apq}Let $n\in\mathbb{N}$. Let $A=\left(  a_{i,j}\right)
_{1\leq i\leq n,\ 1\leq j\leq n}$ be an $n\times n$-matrix. Let $p\in\left\{
1,2,\ldots,n\right\}  $ and $q\in\left\{  1,2,\ldots,n\right\}  $. Then,%
\[
\sum_{\substack{\sigma\in S_{n};\\\sigma\left(  p\right)  =q}}\left(
-1\right)  ^{\sigma}\prod_{\substack{i\in\left\{  1,2,\ldots,n\right\}
;\\i\neq p}}a_{i,\sigma\left(  i\right)  }=\left(  -1\right)  ^{p+q}%
\det\left(  A_{\sim p,\sim q}\right)  .
\]

\end{lemma}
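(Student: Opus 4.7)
The plan is to identify the sum on the left-hand side with a signed sum over $S_{n-1}$ via a bijection between $\{\sigma \in S_n : \sigma(p) = q\}$ and $S_{n-1}$, then recognize it as the determinant of the submatrix $A_{\sim p,\sim q}$, and finally pin down the sign factor $(-1)^{p+q}$.

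First, I would set $(i_1, i_2, \ldots, i_{n-1}) := (1,2,\ldots,\widehat{p},\ldots,n)$ and $(j_1, j_2, \ldots, j_{n-1}) := (1,2,\ldots,\widehat{q},\ldots,n)$, so that $A_{\sim p,\sim q} = (a_{i_x, j_y})_{1 \leq x \leq n-1,\ 1 \leq y \leq n-1}$. Every $\sigma \in S_n$ satisfying $\sigma(p) = q$ restricts to a bijection $\{1,\ldots,n\} \setminus \{p\} \to \{1,\ldots,n\} \setminus \{q\}$; transporting this bijection through the orderings $(i_k)$ and $(j_k)$ gives a unique $\tau \in S_{n-1}$ with $\sigma(i_k) = j_{\tau(k)}$ for all $k$. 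This assignment $\sigma \mapsto \tau$ is a bijection from $\{\sigma \in S_n : \sigma(p) = q\}$ to $S_{n-1}$.

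Next, and this is the main obstacle, I would compute the sign. Let $\rho \in S_n$ denote the specific permutation given by $\rho(p) = q$ and $\rho(i_k) = j_k$ for all $k$. A direct case check (separately for $p < q$, $p = q$, $p > q$) shows that $\rho$ is the cycle that cyclically shifts the interval between $p$ and $q$; in each case its length is $|p-q|+1$, so $(-1)^{\rho} = (-1)^{|p-q|} = (-1)^{p+q}$. Now, given any $\sigma$ with $\sigma(p) = q$, define $\pi \in S_n$ by $\pi(p) := p$ and $\pi(i_k) := i_{\tau(k)}$; then $\sigma = \rho \circ \pi$, and since $\pi$ fixes $p$ and merely permutes $(i_1, \ldots, i_{n-1})$ according to $\tau$, one has $(-1)^{\pi} = (-1)^{\tau}$ (inversions of $\pi$ in $\{1,\ldots,n\}$ biject with inversions of $\tau$ in $\{1,\ldots,n-1\}$). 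Hence $(-1)^{\sigma} = (-1)^{\rho} (-1)^{\pi} = (-1)^{p+q}(-1)^{\tau}$.

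Finally, I would combine these two steps. Reindexing the sum via $\sigma \leftrightarrow \tau$ and using $\prod_{i \neq p} a_{i,\sigma(i)} = \prod_{k=1}^{n-1} a_{i_k, j_{\tau(k)}}$ gives
\[
\sum_{\substack{\sigma \in S_n;\\ \sigma(p)=q}} (-1)^{\sigma} \prod_{\substack{i \in \{1,\ldots,n\};\\ i \neq p}} a_{i,\sigma(i)} = (-1)^{p+q} \sum_{\tau \in S_{n-1}} (-1)^{\tau} \prod_{k=1}^{n-1} a_{i_k, j_{\tau(k)}},
\]
and the inner sum is exactly $\det(A_{\sim p,\sim q})$ by the definition of the determinant (applied via \eqref{eq.det} to the $(n-1)\times(n-1)$-matrix $(a_{i_x, j_y})_{1 \leq x \leq n-1,\ 1 \leq y \leq n-1}$). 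This yields the claimed identity.
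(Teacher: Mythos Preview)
Your argument is correct in substance and follows the standard approach to this lemma. The paper itself does not prove it but merely cites \cite[Lemma 6.84]{detnotes}, so there is no in-paper proof to compare against.

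One small correction: your parenthetical justification that ``inversions of $\pi$ in $\{1,\ldots,n\}$ biject with inversions of $\tau$ in $\{1,\ldots,n-1\}$'' is not literally true. The inversions of $\pi$ that do not involve $p$ do biject with the inversions of $\tau$ (via the order-preserving map $k \mapsto i_k$), but $\pi$ can have additional inversions involving $p$. What saves the claim is that these extra inversions come in pairs: writing $A=\{1,\ldots,p-1\}$ and $B=\{p,\ldots,n-1\}$, the pair $(i_k,p)$ with $k\in A$ is an inversion of $\pi$ iff $\tau(k)\in B$, and the pair $(p,i_k)$ with $k\in B$ is an inversion of $\pi$ iff $\tau(k)\in A$; since $\tau$ is a bijection, $\left|\{k\in A:\tau(k)\in B\}\right|=\left|\{k\in B:\tau(k)\in A\}\right|$, so the total count of extra inversions is even. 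Alternatively, one can simply note that $\pi$ lies in the subgroup of $S_n$ fixing $p$, which is isomorphic (via $k\mapsto i_k$) to $S_{n-1}$ together with its sign character, giving $(-1)^\pi=(-1)^\tau$ directly. Either way, the conclusion $(-1)^\pi=(-1)^\tau$ holds and the rest of your proof goes through.
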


Lemma \ref{lem.laplace.Apq} is \cite[Lemma 6.84]{detnotes}; it is also easy to
prove (it is the main step in the proof of the Laplace expansion formula for
the determinant).

\begin{vershort}
\begin{proof}
[Proof of Theorem \ref{thm.deriv.ddet}.]Write the matrix $A\in\mathbb{L}%
^{n\times n}$ in the form $A=\left(  a_{i,j}\right)  _{1\leq i\leq n,\ 1\leq
j\leq n}$. Hence, $f^{n\times n}\left(  A\right)  =\left(  f\left(
a_{i,j}\right)  \right)  _{1\leq i\leq n,\ 1\leq j\leq n}$ (by the definition
of $f^{n\times n}$). The definition of $\operatorname*{adj}A$ shows that
$\operatorname*{adj}A=\left(  \left(  -1\right)  ^{i+j}\det\left(  A_{\sim
j,\sim i}\right)  \right)  _{1\leq i\leq n,\ 1\leq j\leq n}$. Hence, Lemma
\ref{lem.Tr(AB)} (applied to $\mathbb{L}$, $n$, $f^{n\times n}\left(
A\right)  $, $f\left(  a_{i,j}\right)  $, $\operatorname*{adj}A$ and $\left(
-1\right)  ^{i+j}\det\left(  A_{\sim j,\sim i}\right)  $ instead of
$\mathbb{K}$, $m$, $A$, $a_{i,j}$, $B$ and $b_{i,j}$) yields%
\begin{align}
\operatorname*{Tr}\left(  f^{n\times n}\left(  A\right)  \cdot
\operatorname*{adj}A\right)   &  =\sum_{i=1}^{n}\sum_{j=1}^{n}f\left(
a_{i,j}\right)  \left(  -1\right)  ^{j+i}\det\left(  A_{\sim i,\sim j}\right)
\nonumber\\
&  =\sum_{k=1}^{n}\sum_{j=1}^{n}f\left(  a_{k,j}\right)  \underbrace{\left(
-1\right)  ^{j+k}}_{=\left(  -1\right)  ^{k+j}}\det\left(  A_{\sim k,\sim
j}\right) \nonumber\\
&  \ \ \ \ \ \ \ \ \ \ \left(
\begin{array}
[c]{c}%
\text{here, we have renamed the summation index }i\\
\text{as }k\text{ in the outer sum}%
\end{array}
\right) \nonumber\\
&  =\sum_{k=1}^{n}\sum_{j=1}^{n}f\left(  a_{k,j}\right)  \left(  -1\right)
^{k+j}\det\left(  A_{\sim k,\sim j}\right)  .
\label{pf.thm.deriv.ddet.short.Tr=}%
\end{align}

But the map $f$ is a $\mathbb{K}$-derivation, and thus is $\mathbb{K}$-linear.
Now, (\ref{eq.det}) (applied to $\mathbb{L}$ instead of $\mathbb{K}$) yields
$\det A=\sum_{\sigma\in S_{n}}\left(  -1\right)  ^{\sigma}\prod_{i=1}%
^{n}a_{i,\sigma\left(  i\right)  }$. Applying $f$ to both sides of this
equality, we find%
\begin{align}
&  f\left(  \det A\right) \nonumber\\
&  =f\left(  \sum_{\sigma\in S_{n}}\left(  -1\right)  ^{\sigma}\prod_{i=1}%
^{n}a_{i,\sigma\left(  i\right)  }\right) \nonumber\\
&  =\sum_{\sigma\in S_{n}}\left(  -1\right)  ^{\sigma}f\left(
\underbrace{\prod_{i=1}^{n}a_{i,\sigma\left(  i\right)  }}_{=a_{1,\sigma
\left(  1\right)  }a_{2,\sigma\left(  2\right)  }\cdots a_{n,\sigma\left(
n\right)  }}\right)  \ \ \ \ \ \ \ \ \ \ \left(  \text{since the map }f\text{
is }\mathbb{K}\text{-linear}\right) \nonumber\\
&  =\sum_{\sigma\in S_{n}}\left(  -1\right)  ^{\sigma}\underbrace{f\left(
a_{1,\sigma\left(  1\right)  }a_{2,\sigma\left(  2\right)  }\cdots
a_{n,\sigma\left(  n\right)  }\right)  }_{\substack{=\sum_{k=1}^{n}f\left(
a_{k,\sigma\left(  k\right)  }\right)  \prod_{\substack{i\in\left\{
1,2,\ldots,n\right\}  ;\\i\neq k}}a_{i,\sigma\left(  i\right)  }\\\text{(by
Lemma \ref{lem.deriv.leibniz-comm}, applied to}\\a_{i,\sigma\left(  i\right)
}\text{ instead of }a_{i}\text{)}}}\nonumber\\
&  =\sum_{\sigma\in S_{n}}\left(  -1\right)  ^{\sigma}\sum_{k=1}^{n}f\left(
a_{k,\sigma\left(  k\right)  }\right)  \prod_{\substack{i\in\left\{
1,2,\ldots,n\right\}  ;\\i\neq k}}a_{i,\sigma\left(  i\right)  }\nonumber\\
&  =\sum_{k=1}^{n}\sum_{\sigma\in S_{n}}\left(  -1\right)  ^{\sigma}f\left(
a_{k,\sigma\left(  k\right)  }\right)  \prod_{\substack{i\in\left\{
1,2,\ldots,n\right\}  ;\\i\neq k}}a_{i,\sigma\left(  i\right)  }.
\label{pf.thm.deriv.ddet.short.1}%
\end{align}
But every $k\in\left\{  1,2,\ldots,n\right\}  $ satisfies%
\begin{align*}
&  \underbrace{\sum_{\sigma\in S_{n}}}_{\substack{=\sum_{j\in\left\{
1,2,\ldots,n\right\}  }\sum_{\substack{\sigma\in S_{n};\\\sigma\left(
k\right)  =j}}\\\text{(since }\sigma\left(  k\right)  \in\left\{
1,2,\ldots,n\right\}  \text{ for each }\sigma\in S_{n}\text{)}}}\left(
-1\right)  ^{\sigma}f\left(  a_{k,\sigma\left(  k\right)  }\right)
\prod_{\substack{i\in\left\{  1,2,\ldots,n\right\}  ;\\i\neq k}}a_{i,\sigma
\left(  i\right)  }\\
&  =\sum_{j\in\left\{  1,2,\ldots,n\right\}  }\sum_{\substack{\sigma\in
S_{n};\\\sigma\left(  k\right)  =j}}\left(  -1\right)  ^{\sigma}f\left(
\underbrace{a_{k,\sigma\left(  k\right)  }}_{\substack{=a_{k,j}\\\text{(since
}\sigma\left(  k\right)  =j\text{)}}}\right)  \prod_{\substack{i\in\left\{
1,2,\ldots,n\right\}  ;\\i\neq k}}a_{i,\sigma\left(  i\right)  }\\
&  =\sum_{j\in\left\{  1,2,\ldots,n\right\}  }\sum_{\substack{\sigma\in
S_{n};\\\sigma\left(  k\right)  =j}}\left(  -1\right)  ^{\sigma}f\left(
a_{k,j}\right)  \prod_{\substack{i\in\left\{  1,2,\ldots,n\right\}  ;\\i\neq
k}}a_{i,\sigma\left(  i\right)  }\\
&  =\underbrace{\sum_{j\in\left\{  1,2,\ldots,n\right\}  }}_{=\sum_{j=1}^{n}%
}f\left(  a_{k,j}\right)  \underbrace{\sum_{\substack{\sigma\in S_{n}%
;\\\sigma\left(  k\right)  =j}}\left(  -1\right)  ^{\sigma}\prod
_{\substack{i\in\left\{  1,2,\ldots,n\right\}  ;\\i\neq k}}a_{i,\sigma\left(
i\right)  }}_{\substack{=\left(  -1\right)  ^{k+j}\det\left(  A_{\sim k,\sim
j}\right)  \\\text{(by Lemma \ref{lem.laplace.Apq}, applied to}\\\mathbb{L}%
\text{, }k\text{ and }j\text{ instead of }\mathbb{K}\text{, }p\text{ and
}q\text{)}}}\\
&  =\sum_{j=1}^{n}f\left(  a_{k,j}\right)  \left(  -1\right)  ^{k+j}%
\det\left(  A_{\sim k,\sim j}\right)  .
\end{align*}
Hence, (\ref{pf.thm.deriv.ddet.short.1}) becomes%
\begin{align*}
f\left(  \det A\right)   &  =\sum_{k=1}^{n}\underbrace{\sum_{\sigma\in S_{n}%
}\left(  -1\right)  ^{\sigma}f\left(  a_{k,\sigma\left(  k\right)  }\right)
\prod_{\substack{i\in\left\{  1,2,\ldots,n\right\}  ;\\i\neq k}}a_{i,\sigma
\left(  i\right)  }}_{=\sum_{j=1}^{n}f\left(  a_{k,j}\right)  \left(
-1\right)  ^{k+j}\det\left(  A_{\sim k,\sim j}\right)  }\\
&  =\sum_{k=1}^{n}\sum_{j=1}^{n}f\left(  a_{k,j}\right)  \left(  -1\right)
^{k+j}\det\left(  A_{\sim k,\sim j}\right)  =\operatorname*{Tr}\left(
f^{n\times n}\left(  A\right)  \cdot\operatorname*{adj}A\right)
\end{align*}
(by (\ref{pf.thm.deriv.ddet.short.Tr=})). This proves Theorem
\ref{thm.deriv.ddet}.
\end{proof}
\end{vershort}

\begin{verlong}
\begin{proof}
[Proof of Theorem \ref{thm.deriv.ddet}.]Write the matrix $A\in\mathbb{L}%
^{n\times n}$ in the form $A=\left(  a_{i,j}\right)  _{1\leq i\leq n,\ 1\leq
j\leq n}$. Thus, $a_{i,j}\in\mathbb{L}$ for every $\left(  i,j\right)
\in\left\{  1,2,\ldots,n\right\}  ^{2}$.

From $A=\left(  a_{i,j}\right)  _{1\leq i\leq n,\ 1\leq j\leq n}$, we obtain
$f^{n\times n}\left(  A\right)  =\left(  f\left(  a_{i,j}\right)  \right)
_{1\leq i\leq n,\ 1\leq j\leq n}$ (by the definition of $f^{n\times n}$).
Recall that $\operatorname*{adj}A=\left(  \left(  -1\right)  ^{i+j}\det\left(
A_{\sim j,\sim i}\right)  \right)  _{1\leq i\leq n,\ 1\leq j\leq n}$ (by the
definition of $\operatorname*{adj}A$). Hence, Lemma \ref{lem.Tr(AB)} (applied
to $\mathbb{L}$, $n$, $f^{n\times n}\left(  A\right)  $, $f\left(
a_{i,j}\right)  $, $\operatorname*{adj}A$ and $\left(  -1\right)  ^{i+j}%
\det\left(  A_{\sim j,\sim i}\right)  $ instead of $\mathbb{K}$, $m$, $A$,
$a_{i,j}$, $B$ and $b_{i,j}$) yields%
\begin{align}
\operatorname*{Tr}\left(  f^{n\times n}\left(  A\right)  \cdot
\operatorname*{adj}A\right)   &  =\sum_{i=1}^{n}\sum_{j=1}^{n}f\left(
a_{i,j}\right)  \left(  -1\right)  ^{j+i}\det\left(  A_{\sim i,\sim j}\right)
\nonumber\\
&  =\sum_{k=1}^{n}\sum_{j=1}^{n}f\left(  a_{k,j}\right)  \underbrace{\left(
-1\right)  ^{j+k}}_{\substack{=\left(  -1\right)  ^{k+j}\\\text{(since
}j+k=k+j\text{)}}}\det\left(  A_{\sim k,\sim j}\right) \nonumber\\
&  \ \ \ \ \ \ \ \ \ \ \left(
\begin{array}
[c]{c}%
\text{here, we have renamed the summation index }i\\
\text{as }k\text{ in the outer sum}%
\end{array}
\right) \nonumber\\
&  =\sum_{k=1}^{n}\sum_{j=1}^{n}f\left(  a_{k,j}\right)  \left(  -1\right)
^{k+j}\det\left(  A_{\sim k,\sim j}\right)  . \label{pf.thm.deriv.ddet.Tr=}%
\end{align}

But the map $f$ is a $\mathbb{K}$-derivation, and thus is $\mathbb{K}$-linear.
Now, (\ref{eq.det}) (applied to $\mathbb{L}$ instead of $\mathbb{K}$) shows
that%
\[
\det A=\sum_{\sigma\in S_{n}}\left(  -1\right)  ^{\sigma}\prod_{i=1}%
^{n}a_{i,\sigma\left(  i\right)  }.
\]
Applying the map $f$ to both sides of this equation, we obtain%
\begin{align}
&  f\left(  \det A\right) \nonumber\\
&  =f\left(  \sum_{\sigma\in S_{n}}\left(  -1\right)  ^{\sigma}\prod_{i=1}%
^{n}a_{i,\sigma\left(  i\right)  }\right) \nonumber\\
&  =\sum_{\sigma\in S_{n}}\left(  -1\right)  ^{\sigma}f\left(
\underbrace{\prod_{i=1}^{n}a_{i,\sigma\left(  i\right)  }}_{=a_{1,\sigma
\left(  1\right)  }a_{2,\sigma\left(  2\right)  }\cdots a_{n,\sigma\left(
n\right)  }}\right)  \ \ \ \ \ \ \ \ \ \ \left(  \text{since the map }f\text{
is }\mathbb{K}\text{-linear}\right) \nonumber\\
&  =\sum_{\sigma\in S_{n}}\left(  -1\right)  ^{\sigma}\underbrace{f\left(
a_{1,\sigma\left(  1\right)  }a_{2,\sigma\left(  2\right)  }\cdots
a_{n,\sigma\left(  n\right)  }\right)  }_{\substack{=\sum_{k=1}^{n}f\left(
a_{k,\sigma\left(  k\right)  }\right)  \prod_{\substack{i\in\left\{
1,2,\ldots,n\right\}  ;\\i\neq k}}a_{i,\sigma\left(  i\right)  }\\\text{(by
Lemma \ref{lem.deriv.leibniz-comm},}\\\text{applied to }a_{i,\sigma\left(
i\right)  }\text{ instead of }a_{i}\text{)}}}\nonumber\\
&  =\sum_{\sigma\in S_{n}}\left(  -1\right)  ^{\sigma}\sum_{k=1}^{n}f\left(
a_{k,\sigma\left(  k\right)  }\right)  \prod_{\substack{i\in\left\{
1,2,\ldots,n\right\}  ;\\i\neq k}}a_{i,\sigma\left(  i\right)  }\nonumber\\
&  =\underbrace{\sum_{\sigma\in S_{n}}\sum_{k=1}^{n}}_{=\sum_{k=1}^{n}%
\sum_{\sigma\in S_{n}}}\left(  -1\right)  ^{\sigma}f\left(  a_{k,\sigma\left(
k\right)  }\right)  \prod_{\substack{i\in\left\{  1,2,\ldots,n\right\}
;\\i\neq k}}a_{i,\sigma\left(  i\right)  }\nonumber\\
&  =\sum_{k=1}^{n}\sum_{\sigma\in S_{n}}\left(  -1\right)  ^{\sigma}f\left(
a_{k,\sigma\left(  k\right)  }\right)  \prod_{\substack{i\in\left\{
1,2,\ldots,n\right\}  ;\\i\neq k}}a_{i,\sigma\left(  i\right)  }.
\label{pf.thm.deriv.ddet.1}%
\end{align}
But every $k\in\left\{  1,2,\ldots,n\right\}  $ satisfies%
\begin{align*}
&  \underbrace{\sum_{\sigma\in S_{n}}}_{\substack{=\sum_{j\in\left\{
1,2,\ldots,n\right\}  }\sum_{\substack{\sigma\in S_{n};\\\sigma\left(
k\right)  =j}}\\\text{(since }\sigma\left(  k\right)  \in\left\{
1,2,\ldots,n\right\}  \text{ for each }\sigma\in S_{n}\text{)}}}\left(
-1\right)  ^{\sigma}f\left(  a_{k,\sigma\left(  k\right)  }\right)
\prod_{\substack{i\in\left\{  1,2,\ldots,n\right\}  ;\\i\neq k}}a_{i,\sigma
\left(  i\right)  }\\
&  =\sum_{j\in\left\{  1,2,\ldots,n\right\}  }\sum_{\substack{\sigma\in
S_{n};\\\sigma\left(  k\right)  =j}}\left(  -1\right)  ^{\sigma}f\left(
\underbrace{a_{k,\sigma\left(  k\right)  }}_{\substack{=a_{k,j}\\\text{(since
}\sigma\left(  k\right)  =j\text{)}}}\right)  \prod_{\substack{i\in\left\{
1,2,\ldots,n\right\}  ;\\i\neq k}}a_{i,\sigma\left(  i\right)  }\\
&  =\sum_{j\in\left\{  1,2,\ldots,n\right\}  }\underbrace{\sum
_{\substack{\sigma\in S_{n};\\\sigma\left(  k\right)  =j}}\left(  -1\right)
^{\sigma}f\left(  a_{k,j}\right)  \prod_{\substack{i\in\left\{  1,2,\ldots
,n\right\}  ;\\i\neq k}}a_{i,\sigma\left(  i\right)  }}_{=f\left(
a_{k,j}\right)  \sum_{\substack{\sigma\in S_{n};\\\sigma\left(  k\right)
=j}}\left(  -1\right)  ^{\sigma}\prod_{\substack{i\in\left\{  1,2,\ldots
,n\right\}  ;\\i\neq k}}a_{i,\sigma\left(  i\right)  }}\\
&  =\underbrace{\sum_{j\in\left\{  1,2,\ldots,n\right\}  }}_{=\sum_{j=1}^{n}%
}f\left(  a_{k,j}\right)  \underbrace{\sum_{\substack{\sigma\in S_{n}%
;\\\sigma\left(  k\right)  =j}}\left(  -1\right)  ^{\sigma}\prod
_{\substack{i\in\left\{  1,2,\ldots,n\right\}  ;\\i\neq k}}a_{i,\sigma\left(
i\right)  }}_{\substack{=\left(  -1\right)  ^{k+j}\det\left(  A_{\sim k,\sim
j}\right)  \\\text{(by Lemma \ref{lem.laplace.Apq}, applied to}\\\mathbb{L}%
\text{, }k\text{ and }j\text{ instead of }\mathbb{K}\text{, }p\text{ and
}q\text{)}}}\\
&  =\sum_{j=1}^{n}f\left(  a_{k,j}\right)  \left(  -1\right)  ^{k+j}%
\det\left(  A_{\sim k,\sim j}\right)  .
\end{align*}
Hence, (\ref{pf.thm.deriv.ddet.1}) becomes%
\begin{align*}
&  f\left(  \det A\right) \\
&  =\sum_{k=1}^{n}\underbrace{\sum_{\sigma\in S_{n}}\left(  -1\right)
^{\sigma}f\left(  a_{k,\sigma\left(  k\right)  }\right)  \prod_{\substack{i\in
\left\{  1,2,\ldots,n\right\}  ;\\i\neq k}}a_{i,\sigma\left(  i\right)  }%
}_{=\sum_{j=1}^{n}f\left(  a_{k,j}\right)  \left(  -1\right)  ^{k+j}%
\det\left(  A_{\sim k,\sim j}\right)  }\\
&  =\sum_{k=1}^{n}\sum_{j=1}^{n}f\left(  a_{k,j}\right)  \left(  -1\right)
^{k+j}\det\left(  A_{\sim k,\sim j}\right)  =\operatorname*{Tr}\left(
f^{n\times n}\left(  A\right)  \cdot\operatorname*{adj}A\right)
\end{align*}
(by (\ref{pf.thm.deriv.ddet.Tr=})). This proves Theorem \ref{thm.deriv.ddet}.
\end{proof}
\end{verlong}

\subsection{The derivative of the characteristic polynomial}

The characteristic polynomial $\chi_{A}$ of a square matrix $A$ is, first of
all, a polynomial; and a polynomial has a derivative. We shall have need for a
formula for this derivative:

\begin{theorem}
\label{thm.ddet}Let $n\in\mathbb{N}$. Let $A\in\mathbb{K}^{n\times n}$. Let
$\partial:\mathbb{K}\left[  t\right]  \rightarrow\mathbb{K}\left[  t\right]  $
be the differentiation operator (i.e., the map that sends every polynomial
$f\in\mathbb{K}\left[  t\right]  $ to the derivative of $f$). Then,%
\[
\partial\chi_{A}=\operatorname*{Tr}\left(  \operatorname*{adj}\left(
tI_{n}-A\right)  \right)  .
\]

\end{theorem}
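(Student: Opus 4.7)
The plan is to apply Theorem~\ref{thm.deriv.ddet} directly, with $\mathbb{L} = \mathbb{K}[t]$ as the commutative $\mathbb{K}$-algebra, $f = \partial$ as the $\mathbb{K}$-derivation (which is legitimate by Proposition~\ref{prop.deriv.del}), and the polynomial matrix $tI_{n}-A \in (\mathbb{K}[t])^{n \times n}$ in the role of $A$. Doing so yields
\[
\partial\left(\det(tI_{n}-A)\right) = \operatorname{Tr}\left(\partial^{n\times n}(tI_{n}-A) \cdot \operatorname{adj}(tI_{n}-A)\right).
\]
Since $\chi_{A} = \det(tI_{n}-A)$ by definition, the left-hand side is exactly $\partial \chi_{A}$.

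The remaining task is to identify $\partial^{n\times n}(tI_{n}-A)$. Writing $A = (a_{i,j})_{1\leq i\leq n,\ 1\leq j\leq n}$, the $(i,j)$-entry of $tI_{n}-A$ is $t \cdot [i=j] - a_{i,j} \in \mathbb{K}[t]$, whose derivative with respect to $t$ is $[i=j]$. Hence the matrix $\partial^{n\times n}(tI_{n}-A)$ obtained by applying $\partial$ entrywise is simply $I_{n}$. Substituting this in gives
\[
\partial \chi_{A} = \operatorname{Tr}\left(I_{n} \cdot \operatorname{adj}(tI_{n}-A)\right) = \operatorname{Tr}\left(\operatorname{adj}(tI_{n}-A)\right),
\]
which is the desired identity.

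There is no real obstacle here: the entire proof is a one-line application of Theorem~\ref{thm.deriv.ddet}, once the trivial computation $\partial^{n\times n}(tI_{n}-A) = I_{n}$ is checked. The only minor subtlety worth flagging is that the theorem requires a \emph{commutative} $\mathbb{K}$-algebra, and $\mathbb{K}[t]$ is indeed commutative, so the hypothesis is satisfied.
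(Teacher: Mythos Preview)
Your proof is correct and follows essentially the same approach as the paper: both apply Theorem~\ref{thm.deriv.ddet} to $\mathbb{L}=\mathbb{K}[t]$, $f=\partial$, and the matrix $tI_{n}-A$, reducing the claim to the entrywise computation $\partial^{n\times n}(tI_{n}-A)=I_{n}$. The paper phrases this last computation slightly more generally (showing $\partial^{n\times n}(tB-A)=B$ for arbitrary $B\in\mathbb{K}^{n\times n}$ before specializing to $B=I_{n}$), but the argument is otherwise identical.
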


\begin{vershort}
\begin{proof}
[Proof of Theorem \ref{thm.ddet}.]Proposition \ref{prop.deriv.del} shows that
$\partial:\mathbb{K}\left[  t\right]  \rightarrow\mathbb{K}\left[  t\right]  $
is a $\mathbb{K}$-derivation. Now, consider the map $\partial^{n\times
n}:\left(  \mathbb{K}\left[  t\right]  \right)  ^{n\times n}\rightarrow\left(
\mathbb{K}\left[  t\right]  \right)  ^{n\times n}$ (defined according to
Definition \ref{def.f-mat}). It is easy to see that
\begin{equation}
\partial^{n\times n}\left(  tB-A\right)  =B \label{pf.thm.ddet.short.1}%
\end{equation}
for any $n\times n$-matrix $B\in\mathbb{K}^{n\times n}$%
\ \ \ \ \footnote{\textit{Proof.} Let $B\in\mathbb{K}^{n\times n}$ be an
$n\times n$-matrix. Write the matrix $B$ in the form $B=\left(  b_{i,j}%
\right)  _{1\leq i\leq n,\ 1\leq j\leq n}$. Write the matrix $A$ in the form
$A=\left(  a_{i,j}\right)  _{1\leq i\leq n,\ 1\leq j\leq n}$. Both matrices
$A$ and $B$ belong to $\mathbb{K}^{n\times n}$; thus, every $\left(
i,j\right)  \in\left\{  1,2,\ldots,n\right\}  ^{2}$ satisfies $a_{i,j}%
\in\mathbb{K}$ and $b_{i,j}\in\mathbb{K}$ and therefore $\partial\left(
tb_{i,j}-a_{i,j}\right)  =b_{i,j}$ (since $\partial$ is the differentiation
operator).
\par
Now,%
\begin{align*}
&  t\underbrace{B}_{=\left(  b_{i,j}\right)  _{1\leq i\leq n,\ 1\leq j\leq n}%
}-\underbrace{A}_{=\left(  a_{i,j}\right)  _{1\leq i\leq n,\ 1\leq j\leq n}}\\
&  =t\left(  b_{i,j}\right)  _{1\leq i\leq n,\ 1\leq j\leq n}-\left(
a_{i,j}\right)  _{1\leq i\leq n,\ 1\leq j\leq n}=\left(  tb_{i,j}%
-a_{i,j}\right)  _{1\leq i\leq n,\ 1\leq j\leq n}.
\end{align*}
Hence, the definition of the map $\partial^{n\times n}$ yields%
\[
\partial^{n\times n}\left(  tB-A\right)  =\left(  \underbrace{\partial\left(
tb_{i,j}-a_{i,j}\right)  }_{=b_{i,j}}\right)  _{1\leq i\leq n,\ 1\leq j\leq
n}=\left(  b_{i,j}\right)  _{1\leq i\leq n,\ 1\leq j\leq n}=B,
\]
qed.}. Applying this to $B=I_{n}$, we obtain $\partial^{n\times n}\left(
tI_{n}-A\right)  =I_{n}$.

The definition of $\chi_{A}$ yields $\chi_{A}=\det\left(  tI_{n}-A\right)  $.
Applying the map $\partial$ to both sides of this equality, we obtain%
\begin{align*}
\partial\chi_{A}  &  =\partial\left(  \det\left(  tI_{n}-A\right)  \right)
=\operatorname*{Tr}\left(  \underbrace{\partial^{n\times n}\left(
tI_{n}-A\right)  }_{=I_{n}}\cdot\operatorname*{adj}\left(  tI_{n}-A\right)
\right) \\
&  \ \ \ \ \ \ \ \ \ \ \left(
\begin{array}
[c]{c}%
\text{by Theorem \ref{thm.deriv.ddet} (applied to }\mathbb{K}\left[  t\right]
\text{, }\partial\text{ and }tI_{n}-A\\
\text{instead of }\mathbb{L}\text{, }f\text{ and }A\text{)}%
\end{array}
\right) \\
&  =\operatorname*{Tr}\left(  \underbrace{I_{n}\cdot\operatorname*{adj}\left(
tI_{n}-A\right)  }_{=\operatorname*{adj}\left(  tI_{n}-A\right)  }\right)
=\operatorname*{Tr}\left(  \operatorname*{adj}\left(  tI_{n}-A\right)
\right)  .
\end{align*}
This proves Theorem \ref{thm.ddet}.
\end{proof}
\end{vershort}

\begin{verlong}
\begin{proof}
[Proof of Theorem \ref{thm.ddet}.]Proposition \ref{prop.deriv.del} shows that
$\partial:\mathbb{K}\left[  t\right]  \rightarrow\mathbb{K}\left[  t\right]  $
is a $\mathbb{K}$-derivation. A map $\partial^{n\times n}:\left(
\mathbb{K}\left[  t\right]  \right)  ^{n\times n}\rightarrow\left(
\mathbb{K}\left[  t\right]  \right)  ^{n\times n}$ is defined (according to
Definition \ref{def.f-mat}).

It is easy to see that $\partial^{n\times n}\left(  tB-A\right)  =B$ for any
$n\times n$-matrix $B\in\mathbb{K}^{n\times n}$%
\ \ \ \ \footnote{\textit{Proof.} Let $B\in\mathbb{K}^{n\times n}$ be an
$n\times n$-matrix. Write the matrix $B$ in the form $B=\left(  b_{i,j}%
\right)  _{1\leq i\leq n,\ 1\leq j\leq n}$. Thus,
\begin{equation}
b_{i,j}\in\mathbb{K}\ \ \ \ \ \ \ \ \ \ \text{for every }\left(  i,j\right)
\in\left\{  1,2,\ldots,n\right\}  ^{2} \label{pf.thm.ddet.fn1.1}%
\end{equation}
(since $B\in\mathbb{K}^{n\times n}$). Also, $t\underbrace{B}_{=\left(
b_{i,j}\right)  _{1\leq i\leq n,\ 1\leq j\leq n}}=t\left(  b_{i,j}\right)
_{1\leq i\leq n,\ 1\leq j\leq n}=\left(  tb_{i,j}\right)  _{1\leq i\leq
n,\ 1\leq j\leq n}$.
\par
Write the matrix $A$ in the form $A=\left(  a_{i,j}\right)  _{1\leq i\leq
n,\ 1\leq j\leq n}$. Thus,%
\begin{equation}
a_{i,j}\in\mathbb{K}\ \ \ \ \ \ \ \ \ \ \text{for every }\left(  i,j\right)
\in\left\{  1,2,\ldots,n\right\}  ^{2} \label{pf.thm.ddet.fn1.2}%
\end{equation}
(since $A\in\mathbb{K}^{n\times n}$).
\par
Fix $\left(  i,j\right)  \in\left\{  1,2,\ldots,n\right\}  ^{2}$. Then,
$b_{i,j}\in\mathbb{K}$ (by (\ref{pf.thm.ddet.fn1.1})) and $a_{i,j}%
\in\mathbb{K}$ (by (\ref{pf.thm.ddet.fn1.2})). Hence, the polynomial
$tb_{i,j}-a_{i,j}\in\mathbb{K}\left[  t\right]  $ has the coefficient
$b_{i,j}$ in front of $t^{1}$ and the coefficient $-a_{i,j}$ in front of
$t^{0}$, and all its other coefficients are $0$. Thus,
\begin{equation}
\partial\left(  tb_{i,j}-a_{i,j}\right)  =b_{i,j} \label{pf.thm.ddet.fn1.3}%
\end{equation}
(since $\partial$ is the differentiation operator on $\mathbb{K}\left[
t\right]  $).
\par
Now, forget that we fixed $\left(  i,j\right)  $. We thus have proven
(\ref{pf.thm.ddet.fn1.3}) for every $\left(  i,j\right)  \in\left\{
1,2,\ldots,n\right\}  ^{2}$.
\par
Now,%
\[
\underbrace{tB}_{=\left(  tb_{i,j}\right)  _{1\leq i\leq n,\ 1\leq j\leq n}%
}-\underbrace{A}_{=\left(  a_{i,j}\right)  _{1\leq i\leq n,\ 1\leq j\leq n}%
}=\left(  tb_{i,j}\right)  _{1\leq i\leq n,\ 1\leq j\leq n}-\left(
a_{i,j}\right)  _{1\leq i\leq n,\ 1\leq j\leq n}=\left(  tb_{i,j}%
-a_{i,j}\right)  _{1\leq i\leq n,\ 1\leq j\leq n}.
\]
Hence, the definition of the map $\partial^{n\times n}$ yields%
\[
\partial^{n\times n}\left(  tB-A\right)  =\left(  \underbrace{\partial\left(
tb_{i,j}-a_{i,j}\right)  }_{\substack{=b_{i,j}\\\text{(by
(\ref{pf.thm.ddet.fn1.3}))}}}\right)  _{1\leq i\leq n,\ 1\leq j\leq n}=\left(
b_{i,j}\right)  _{1\leq i\leq n,\ 1\leq j\leq n}=B,
\]
qed.}. Applying this to $B=I_{n}$, we obtain $\partial^{n\times n}\left(
tI_{n}-A\right)  =I_{n}$.

The definition of $\chi_{A}$ yields $\chi_{A}=\det\left(  tI_{n}-A\right)  $.
Applying the map $\partial$ to both sides of this equality, we obtain%
\begin{align*}
\partial\chi_{A}  &  =\partial\left(  \det\left(  tI_{n}-A\right)  \right)
=\operatorname*{Tr}\left(  \underbrace{\partial^{n\times n}\left(
tI_{n}-A\right)  }_{=I_{n}}\cdot\operatorname*{adj}\left(  tI_{n}-A\right)
\right) \\
&  \ \ \ \ \ \ \ \ \ \ \left(
\begin{array}
[c]{c}%
\text{by Theorem \ref{thm.deriv.ddet} (applied to }\mathbb{K}\left[  t\right]
\text{, }\partial\text{ and }tI_{n}-A\\
\text{instead of }\mathbb{L}\text{, }f\text{ and }A\text{)}%
\end{array}
\right) \\
&  =\operatorname*{Tr}\left(  \underbrace{I_{n}\cdot\operatorname*{adj}\left(
tI_{n}-A\right)  }_{=\operatorname*{adj}\left(  tI_{n}-A\right)  }\right)
=\operatorname*{Tr}\left(  \operatorname*{adj}\left(  tI_{n}-A\right)
\right)  .
\end{align*}
This proves Theorem \ref{thm.ddet}.
\end{proof}
\end{verlong}

We can use Theorem \ref{thm.ddet} to obtain the following result:

\begin{proposition}
\label{prop.TrD}Let $n\in\mathbb{N}$. Let $A\in\mathbb{K}^{n\times n}$. For
every $j\in\mathbb{Z}$, define an element $c_{j}\in\mathbb{K}$ by
$c_{j}=\left[  t^{n-j}\right]  \chi_{A}$.

Let $D_{0},D_{1},\ldots,D_{n-1}$ be $n$ matrices in $\mathbb{K}^{n\times n}$
satisfying (\ref{eq.lem.CH.D.adj}). Thus, an $n$-tuple $\left(  D_{0}%
,D_{1},\ldots,D_{n-1}\right)  $ of matrices in $\mathbb{K}^{n\times n}$ is
defined. Extend this $n$-tuple to a family $\left(  D_{k}\right)
_{k\in\mathbb{Z}}$ of matrices in $\mathbb{K}^{n\times n}$ by setting
(\ref{eq.lem.CH.D.0}). Then, every $k\in\mathbb{Z}$ satisfies%
\begin{equation}
\operatorname*{Tr}\left(  D_{k}\right)  =\left(  k+1\right)  c_{n-\left(
k+1\right)  }. \label{eq.prop.TrD.claim}%
\end{equation}

\end{proposition}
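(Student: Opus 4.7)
The plan is to obtain the identity by comparing two expressions for the derivative $\partial \chi_A$: one coming from the explicit formula $\chi_A = \sum_{k=0}^n c_{n-k} t^k$ (Lemma \ref{lem.CH.D}(a)), and one coming from Theorem \ref{thm.ddet}, which identifies $\partial \chi_A$ with $\operatorname{Tr}(\operatorname{adj}(tI_n - A))$.

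First I would differentiate $\chi_A = \sum_{k=0}^n c_{n-k} t^k$ term by term to obtain
\[
\partial \chi_A = \sum_{k=0}^{n-1} (k+1) c_{n-(k+1)} t^k.
\]
On the other hand, using $\operatorname{adj}(tI_n - A) = \sum_{k=0}^{n-1} t^k D_k$ and the $\mathbb{K}[t]$-linearity of the trace, I get
\[
\operatorname{Tr}(\operatorname{adj}(tI_n - A)) = \sum_{k=0}^{n-1} t^k \operatorname{Tr}(D_k).
\]
Theorem \ref{thm.ddet} tells us these two polynomials are equal, so comparing the coefficient of $t^k$ on both sides gives $\operatorname{Tr}(D_k) = (k+1) c_{n-(k+1)}$ for every $k \in \{0,1,\ldots,n-1\}$, which is (\ref{eq.prop.TrD.claim}) in the range of $k$ where $D_k$ is genuinely defined by (\ref{eq.lem.CH.D.adj}).

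It then remains to handle $k \in \mathbb{Z} \setminus \{0,1,\ldots,n-1\}$. In that range $D_k = 0_{n \times n}$ by (\ref{eq.lem.CH.D.0}), so $\operatorname{Tr}(D_k) = 0$, and I need to verify that $(k+1) c_{n-(k+1)} = 0$ as well. Note that $c_{n-(k+1)} = [t^{k+1}] \chi_A$ by definition of $c_j$. If $k \geq n$, then $k+1 > n$, and Corollary \ref{cor.chiA}(a) (degree $\leq n$) gives $[t^{k+1}] \chi_A = 0$. If $k = -1$, then the factor $k+1 = 0$ kills the right-hand side. If $k \leq -2$, then $k+1 < 0$, so $[t^{k+1}] \chi_A = 0$ by the convention on negative-index coefficients stated in Definition \ref{def.notations}. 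In all three subcases we get $(k+1) c_{n-(k+1)} = 0$, matching $\operatorname{Tr}(D_k) = 0$.

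The only mildly subtle point is the coefficient-comparison step, which implicitly uses that the trace commutes with the \enquote{$t^k$} decomposition of a polynomial matrix; but since we may write each $D_k = (d^{(k)}_{i,j})_{1 \le i,j \le n}$ and compute entrywise, this is just the fact that extracting a coefficient $[t^k]$ commutes with summing the $n$ diagonal entries. The main obstacle I anticipate is purely bookkeeping: keeping the shift $k \mapsto k+1$ in the exponents straight when equating the two polynomial expressions, and then separately verifying the three boundary cases $k=-1$, $k \le -2$, and $k \ge n$ so that (\ref{eq.prop.TrD.claim}) really holds for all $k \in \mathbb{Z}$ rather than only for the \enquote{generic} indices.
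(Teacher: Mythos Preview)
Your proposal is correct and follows essentially the same approach as the paper: differentiate $\chi_A$ using Lemma \ref{lem.CH.D}(a), identify $\partial\chi_A$ with $\operatorname{Tr}(\operatorname{adj}(tI_n-A))$ via Theorem \ref{thm.ddet}, compare coefficients to handle $k\in\{0,1,\ldots,n-1\}$, and then check that both sides vanish for the remaining $k$. Your three-way split of the boundary cases ($k\ge n$, $k=-1$, $k\le -2$) is a minor reorganization of the paper's two-case split ($k<0$ with $k=-1$ treated inside, and $k\ge n$), but the content is identical.
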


\begin{vershort}
\begin{proof}
[Proof of Proposition \ref{prop.TrD}.]Let $\partial:\mathbb{K}\left[
t\right]  \rightarrow\mathbb{K}\left[  t\right]  $ be the differentiation
operator (i.e., the map that sends every polynomial $f\in\mathbb{K}\left[
t\right]  $ to the derivative of $f$).

Lemma \ref{lem.CH.D} \textbf{(a)} yields $\chi_{A}=\sum_{k=0}^{n}c_{n-k}t^{k}%
$. Applying the map $\partial$ to both sides of this equality, we obtain%
\begin{align*}
\partial\chi_{A}  &  =\partial\left(  \sum_{k=0}^{n}c_{n-k}t^{k}\right)
=\sum_{k=1}^{n}c_{n-k}kt^{k-1}\ \ \ \ \ \ \ \ \ \ \left(
\begin{array}
[c]{c}%
\text{since }\partial\text{ is the differentiation}\\
\text{operator}%
\end{array}
\right) \\
&  =\sum_{k=1}^{n}kc_{n-k}t^{k-1}=\sum_{k=0}^{n-1}\left(  k+1\right)
c_{n-\left(  k+1\right)  }t^{k}%
\end{align*}
(here, we have substituted $k+1$ for $k$ in the sum). Meanwhile, Theorem
\ref{thm.ddet} yields%
\[
\partial\chi_{A}=\operatorname*{Tr}\left(  \underbrace{\operatorname*{adj}%
\left(  tI_{n}-A\right)  }_{\substack{=\sum_{k=0}^{n-1}t^{k}D_{k}\\\text{(by
(\ref{eq.lem.CH.D.adj}))}}}\right)  =\operatorname*{Tr}\left(  \sum
_{k=0}^{n-1}t^{k}D_{k}\right)  =\sum_{k=0}^{n-1}t^{k}\operatorname*{Tr}\left(
D_{k}\right)  =\sum_{k=0}^{n-1}\operatorname*{Tr}\left(  D_{k}\right)  t^{k}.
\]
Comparing these two equalities, we obtain%
\[
\sum_{k=0}^{n-1}\operatorname*{Tr}\left(  D_{k}\right)  t^{k}=\sum_{k=0}%
^{n-1}\left(  k+1\right)  c_{n-\left(  k+1\right)  }t^{k}.
\]
This is an identity between two polynomials in $\mathbb{K}\left[  t\right]  $.
Comparing coefficients on both sides of this identity, we conclude that%
\begin{equation}
\operatorname*{Tr}\left(  D_{k}\right)  =\left(  k+1\right)  c_{n-\left(
k+1\right)  }\ \ \ \ \ \ \ \ \ \ \text{for every }k\in\left\{  0,1,\ldots
,n-1\right\}  . \label{pf.prop.TrD.short.c=T}%
\end{equation}

Now, let $k\in\mathbb{Z}$. We must prove (\ref{eq.prop.TrD.claim}).

If $k\in\left\{  0,1,\ldots,n-1\right\}  $, then (\ref{eq.prop.TrD.claim})
follows immediately from (\ref{pf.prop.TrD.short.c=T}). Hence, for the rest of
this proof, we WLOG assume that we don't have $k\in\left\{  0,1,\ldots
,n-1\right\}  $.

We don't have $k\in\left\{  0,1,\ldots,n-1\right\}  $. Thus, $k\in
\mathbb{Z}\setminus\left\{  0,1,\ldots,n-1\right\}  $. Hence,
(\ref{eq.lem.CH.D.0}) yields $D_{k}=0_{n\times n}$, so that
$\operatorname*{Tr}\left(  D_{k}\right)  =\operatorname*{Tr}\left(  0_{n\times
n}\right)  =0$.

Recall again that $k\in\mathbb{Z}\setminus\left\{  0,1,\ldots,n-1\right\}  $.
In other words, we have either $k<0$ or $k\geq n$. Thus, we are in one of the
following two cases:

\textit{Case 1:} We have $k<0$.

\textit{Case 2:} We have $k\geq n$.

Let us first consider Case 1. In this case, we have $k<0$. If $k=-1$, then
(\ref{eq.prop.TrD.claim}) holds\footnote{\textit{Proof.} Assume that $k=-1$.
Then, $k+1=0$, so that $\underbrace{\left(  k+1\right)  }_{=0}c_{n-\left(
k+1\right)  }=0$. Comparing this with $\operatorname*{Tr}\left(  D_{k}\right)
=0$, we obtain $\operatorname*{Tr}\left(  D_{k}\right)  =\left(  k+1\right)
c_{n-\left(  k+1\right)  }$; hence, (\ref{eq.prop.TrD.claim}) holds, qed.}.
Hence, for the rest of this proof, we WLOG assume that $k\neq-1$. Combining
$k<0$ with $k\neq-1$, we obtain $k<-1$. Hence, $k+1<0$.

The definition of $c_{n-\left(  k+1\right)  }$ yields $c_{n-\left(
k+1\right)  }=\left[  \underbrace{t^{n-\left(  n-\left(  k+1\right)  \right)
}}_{=t^{k+1}}\right]  \chi_{A}=\left[  t^{k+1}\right]  \chi_{A}=0$ (since
$k+1<0$, but $\chi_{A}$ is a polynomial). Hence, $\left(  k+1\right)
\underbrace{c_{n-\left(  k+1\right)  }}_{=0}=0$. Comparing this with
$\operatorname*{Tr}\left(  D_{k}\right)  =0$, we obtain $\operatorname*{Tr}%
\left(  D_{k}\right)  =\left(  k+1\right)  c_{n-\left(  k+1\right)  }$. Hence,
(\ref{eq.prop.TrD.claim}) is proven in Case 1.

Let us now consider Case 2. In this case, we have $k\geq n$. Thus, $k+1\geq
n+1>n$.

But $\chi_{A}$ is a polynomial of degree $\leq n$. Hence, $\left[
t^{m}\right]  \chi_{A}=0$ for every integer $m>n$. Applying this to $m=k+1$,
we obtain $\left[  t^{k+1}\right]  \chi_{A}=0$ (since $k+1>n$).

The definition of $c_{n-\left(  k+1\right)  }$ yields $c_{n-\left(
k+1\right)  }=\left[  \underbrace{t^{n-\left(  n-\left(  k+1\right)  \right)
}}_{=t^{k+1}}\right]  \chi_{A}=\left[  t^{k+1}\right]  \chi_{A}=0$. Hence,
$\left(  k+1\right)  \underbrace{c_{n-\left(  k+1\right)  }}_{=0}=0$.
Comparing this with $\operatorname*{Tr}\left(  D_{k}\right)  =0$, we obtain
$\operatorname*{Tr}\left(  D_{k}\right)  =\left(  k+1\right)  c_{n-\left(
k+1\right)  }$. Hence, (\ref{eq.prop.TrD.claim}) is proven in Case 2.

We have now proven (\ref{eq.prop.TrD.claim}) in each of the two Cases 1 and 2.
Thus, (\ref{eq.prop.TrD.claim}) always holds. Thus, Proposition \ref{prop.TrD}
is proven.
\end{proof}
\end{vershort}

\begin{verlong}
\begin{proof}
[Proof of Proposition \ref{prop.TrD}.]For every commutative ring $\mathbb{L}$,
we consider the map $\operatorname*{Tr}:\mathbb{L}^{n\times n}\rightarrow
\mathbb{L}$ which sends every matrix $B\in\mathbb{L}^{n\times n}$ to
$\operatorname*{Tr}B\in\mathbb{L}$. This map $\operatorname*{Tr}%
:\mathbb{L}^{n\times n}\rightarrow\mathbb{L}$ is $\mathbb{L}$-linear. Applying
this to $\mathbb{L}=\mathbb{K}\left[  t\right]  $, we conclude that the map
$\operatorname*{Tr}:\left(  \mathbb{K}\left[  t\right]  \right)  ^{n\times
n}\rightarrow\mathbb{K}\left[  t\right]  $ is $\mathbb{K}\left[  t\right]  $-linear.

Let $\partial:\mathbb{K}\left[  t\right]  \rightarrow\mathbb{K}\left[
t\right]  $ be the differentiation operator (i.e., the map that sends every
polynomial $f\in\mathbb{K}\left[  t\right]  $ to the derivative of $f$).

Lemma \ref{lem.CH.D} \textbf{(a)} yields $\chi_{A}=\sum_{k=0}^{n}c_{n-k}t^{k}%
$. Applying the map $\partial$ to both sides of this equality, we obtain%
\begin{align*}
\partial\chi_{A}  &  =\partial\left(  \sum_{k=0}^{n}c_{n-k}t^{k}\right)
=\sum_{k=1}^{n}c_{n-k}kt^{k-1}\ \ \ \ \ \ \ \ \ \ \left(
\begin{array}
[c]{c}%
\text{since }\partial\text{ is the differentiation}\\
\text{operator}%
\end{array}
\right) \\
&  =\sum_{k=0}^{n-1}\underbrace{c_{n-\left(  k+1\right)  }\left(  k+1\right)
}_{=\left(  k+1\right)  c_{n-\left(  k+1\right)  }}\underbrace{t^{\left(
k+1\right)  -1}}_{\substack{=t^{k}\\\text{(since }\left(  k+1\right)
-1=k\text{)}}}\\
&  \ \ \ \ \ \ \ \ \ \ \left(  \text{here, we have substituted }k+1\text{ for
}k\text{ in the sum}\right) \\
&  =\sum_{k=0}^{n-1}\left(  k+1\right)  c_{n-\left(  k+1\right)  }t^{k}.
\end{align*}
Thus,%
\begin{align*}
&  \sum_{k=0}^{n-1}\left(  k+1\right)  c_{n-\left(  k+1\right)  }t^{k}\\
&  =\partial\chi_{A}=\operatorname*{Tr}\left(  \underbrace{\operatorname*{adj}%
\left(  tI_{n}-A\right)  }_{\substack{=\sum_{k=0}^{n-1}t^{k}D_{k}\\\text{(by
(\ref{eq.lem.CH.D.adj}))}}}\right)  \ \ \ \ \ \ \ \ \ \ \left(  \text{by
Theorem \ref{thm.ddet}}\right) \\
&  =\operatorname*{Tr}\left(  \sum_{k=0}^{n-1}t^{k}D_{k}\right)  =\sum
_{k=0}^{n-1}\underbrace{t^{k}\operatorname*{Tr}\left(  D_{k}\right)
}_{=\operatorname*{Tr}\left(  D_{k}\right)  t^{k}}\ \ \ \ \ \ \ \ \ \ \left(
\begin{array}
[c]{c}%
\text{since the map }\operatorname*{Tr}:\left(  \mathbb{K}\left[  t\right]
\right)  ^{n\times n}\rightarrow\mathbb{K}\left[  t\right] \\
\text{is }\mathbb{K}\left[  t\right]  \text{-linear}%
\end{array}
\right) \\
&  =\sum_{k=0}^{n-1}\operatorname*{Tr}\left(  D_{k}\right)  t^{k}.
\end{align*}
This is an identity between two polynomials in $\mathbb{K}\left[  t\right]  $.
Comparing coefficients on both sides of this identity, we conclude that%
\begin{equation}
\left(  k+1\right)  c_{n-\left(  k+1\right)  }=\operatorname*{Tr}\left(
D_{k}\right)  \ \ \ \ \ \ \ \ \ \ \text{for every }k\in\left\{  0,1,\ldots
,n-1\right\}  . \label{pf.prop.TrD.c=T}%
\end{equation}

On the other hand, we have%
\begin{equation}
c_{n-k}=0\ \ \ \ \ \ \ \ \ \ \text{for every }k\in\mathbb{Z}\setminus\left\{
0,1,\ldots,n\right\}  \label{pf.prop.TrD.c=0}%
\end{equation}
\footnote{\textit{Proof of (\ref{pf.prop.TrD.c=0}):} We know that $\chi_{A}%
\in\mathbb{K}\left[  t\right]  $ is a polynomial of degree $\leq n$ in $t$ (by
Corollary \ref{cor.chiA} \textbf{(a)}). Hence,
\begin{equation}
\left[  t^{k}\right]  \chi_{A}=0\ \ \ \ \ \ \ \ \ \ \text{for every integer
}k>n. \label{pf.prop.TrD.c=0.pf.1}%
\end{equation}
Also, clearly, $\left[  t^{k}\right]  f=0$ for every negative integer $k$ and
every polynomial $f\in\mathbb{K}\left[  t\right]  $. Applying this to
$f=\chi_{A}$, we conclude that%
\begin{equation}
\left[  t^{k}\right]  \chi_{A}=0\ \ \ \ \ \ \ \ \ \ \text{for every negative
integer }k. \label{pf.prop.TrD.c=0.pf.2}%
\end{equation}
\par
Now, let $k\in\mathbb{Z}\setminus\left\{  0,1,\ldots,n\right\}  $. We must
prove that $c_{n-k}=0$. We have%
\begin{align*}
c_{n-k}  &  =\underbrace{\left[  t^{n-\left(  n-k\right)  }\right]
}_{=\left[  t^{k}\right]  }\chi_{A}\ \ \ \ \ \ \ \ \ \ \left(  \text{by the
definition of }c_{n-k}\right) \\
&  =\left[  t^{k}\right]  \chi_{A}.
\end{align*}
\par
Now, we are in one of the following two cases:
\par
\textit{Case 1:} We have $k<0$.
\par
\textit{Case 2:} We have $k\geq0$.
\par
Let us first consider Case 1. In this case, we have $k<0$. Hence, $k$ is a
negative integer. Thus, (\ref{pf.prop.TrD.c=0.pf.2}) yields $\left[
t^{k}\right]  \chi_{A}=0$. Thus, $c_{n-k}=\left[  t^{k}\right]  \chi_{A}=0$.
Hence, $c_{n-k}=0$ is proven in Case 1.
\par
Let us now consider Case 2. In this case, we have $k\geq0$. Hence,
$k\in\mathbb{N}$. But $k\in\mathbb{Z}\setminus\left\{  0,1,\ldots,n\right\}
$; in other words, $k\in\mathbb{Z}$ and $k\notin\left\{  0,1,\ldots,n\right\}
$. Combining $k\in\mathbb{N}$ with $k\notin\left\{  0,1,\ldots,n\right\}  $,
we obtain $k\in\mathbb{N}\setminus\left\{  0,1,\ldots,n\right\}  =\left\{
n+1,n+2,n+3,\ldots\right\}  $. Hence, $k\geq n+1>n$, and thus $\left[
t^{k}\right]  \chi_{A}=0$ (by (\ref{pf.prop.TrD.c=0.pf.1})). Thus,
$c_{n-k}=\left[  t^{k}\right]  \chi_{A}=0$. Hence, $c_{n-k}=0$ is proven in
Case 2.
\par
Now, we have proven $c_{n-k}=0$ in each of the two Cases 1 and 2. Since these
two Cases cover all possibilities, this shows that $c_{n-k}=0$ always holds.
This proves (\ref{pf.prop.TrD.c=0}).}.

Now, let $k\in\mathbb{Z}$. We must prove (\ref{eq.prop.TrD.claim}).

If $k\in\left\{  0,1,\ldots,n-1\right\}  $, then (\ref{eq.prop.TrD.claim})
follows immediately from (\ref{pf.prop.TrD.c=T}). Hence, for the rest of this
proof, we can WLOG assume that we don't have $k\in\left\{  0,1,\ldots
,n-1\right\}  $. Assume this.

We are in one of the following three cases:

\textit{Case 1:} We have $k+1\in\left\{  0,1,\ldots,n\right\}  $.

\textit{Case 2:} We have $k+1\notin\left\{  0,1,\ldots,n\right\}  $.

Let us first consider Case 1. In this case, we have $k+1\in\left\{
0,1,\ldots,n\right\}  $. Hence, $\left(  k+1\right)  -1\in\left\{
-1,0,\ldots,n-1\right\}  $. Thus, $k=\left(  k+1\right)  -1\in\left\{
-1,0,\ldots,n-1\right\}  $.

But $k\notin\left\{  0,1,\ldots,n-1\right\}  $ (since we don't have
$k\in\left\{  0,1,\ldots,n-1\right\}  $). Combining $k\in\left\{
-1,0,\ldots,n-1\right\}  $ with $k\notin\left\{  0,1,\ldots,n-1\right\}  $, we
obtain%
\[
k\in\left\{  -1,0,\ldots,n-1\right\}  \setminus\left\{  0,1,\ldots
,n-1\right\}  \subseteq\left\{  -1\right\}  .
\]
In other words, $k=-1$.

Now, $k=-1<0$, so that $k\notin\left\{  0,1,\ldots,n-1\right\}  $. Combining
$k\in\mathbb{Z}$ with $k\notin\left\{  0,1,\ldots,n-1\right\}  $, we obtain
$k\in\mathbb{Z}\setminus\left\{  0,1,\ldots,n-1\right\}  $. Thus,
(\ref{eq.lem.CH.D.0}) yields $D_{k}=0_{n\times n}$, so that
$\operatorname*{Tr}\left(  \underbrace{D_{k}}_{=0_{n\times n}}\right)
=\operatorname*{Tr}\left(  0_{n\times n}\right)  =0$. Comparing this with
$\underbrace{\left(  k+1\right)  }_{\substack{=0\\\text{(since }k=-1\text{)}%
}}c_{n-\left(  k+1\right)  }=0$, we obtain $\operatorname*{Tr}\left(
D_{k}\right)  =\left(  k+1\right)  c_{n-\left(  k+1\right)  }$. Thus,
(\ref{eq.prop.TrD.claim}) is proven in Case 1.

Let us now consider Case 2. In this case, we have $k+1\notin\left\{
0,1,\ldots,n\right\}  $. Combining $k+1\in\mathbb{Z}$ with $k+1\notin\left\{
0,1,\ldots,n\right\}  $, we obtain $k+1\in\mathbb{Z}\setminus\left\{
0,1,\ldots,n\right\}  $. Thus, (\ref{pf.prop.TrD.c=0}) (applied to $k+1$
instead of $k$) yields $c_{n-\left(  k+1\right)  }=0$. On the other hand,
$k\notin\left\{  0,1,\ldots,n-1\right\}  $\ \ \ \ \footnote{\textit{Proof.}
Assume the contrary. Thus, $k\in\left\{  0,1,\ldots,n-1\right\}  $. Hence,
$k+1\in\left\{  1,2,\ldots,n\right\}  \subseteq\left\{  0,1,\ldots,n\right\}
$. This contradicts $k+1\notin\left\{  0,1,\ldots,n\right\}  $. This
contradiction proves that our assumption was wrong, qed.}. Combining
$k\in\mathbb{Z}$ with $k\notin\left\{  0,1,\ldots,n-1\right\}  $, we obtain
$k\in\mathbb{Z}\setminus\left\{  0,1,\ldots,n-1\right\}  $. Hence,
(\ref{eq.lem.CH.D.0}) yields $D_{k}=0_{n\times n}$. Thus, $\operatorname*{Tr}%
\left(  \underbrace{D_{k}}_{=0_{n\times n}}\right)  =\operatorname*{Tr}\left(
0_{n\times n}\right)  =0$. Comparing this with $\left(  k+1\right)
\underbrace{c_{n-\left(  k+1\right)  }}_{=0}=0$, we obtain $\operatorname*{Tr}%
\left(  D_{k}\right)  =\left(  k+1\right)  c_{n-\left(  k+1\right)  }$. Thus,
(\ref{eq.prop.TrD.claim}) is proven in Case 2.

We have now proven (\ref{eq.prop.TrD.claim}) in each of the two Cases 1 and 2.
Since these two Cases cover all possibilities, this shows that
(\ref{eq.prop.TrD.claim}) always holds. Thus, Proposition \ref{prop.TrD} is proven.
\end{proof}
\end{verlong}

\subsection{Proof of the trace Cayley-Hamilton theorem}

Now, we can finally prove the trace Cayley-Hamilton theorem itself:

\begin{vershort}
\begin{proof}
[Proof of Theorem \ref{thm.TCH}.]Proposition \ref{prop.adj.poly} shows that
there exist $n$ matrices $D_{0},D_{1},\ldots,D_{n-1}$ in $\mathbb{K}^{n\times
n}$ such that%
\[
\operatorname*{adj}\left(  tI_{n}-A\right)  =\sum_{k=0}^{n-1}t^{k}%
D_{k}\ \ \ \ \ \ \ \ \ \ \text{in }\left(  \mathbb{K}\left[  t\right]
\right)  ^{n\times n}.
\]
Consider these $D_{0},D_{1},\ldots,D_{n-1}$. Thus, an $n$-tuple $\left(
D_{0},D_{1},\ldots,D_{n-1}\right)  $ of matrices in $\mathbb{K}^{n\times n}$
is defined. Extend this $n$-tuple to a family $\left(  D_{k}\right)
_{k\in\mathbb{Z}}$ of matrices in $\mathbb{K}^{n\times n}$ by setting%
\[
\left(  D_{k}=0_{n\times n}\ \ \ \ \ \ \ \ \ \ \text{for every }k\in
\mathbb{Z}\setminus\left\{  0,1,\ldots,n-1\right\}  \right)  .
\]

Now, let $k\in\mathbb{N}$. Then, Proposition \ref{prop.TrD} (applied to
$n-1-k$ instead of $k$) yields
\[
\operatorname*{Tr}\left(  D_{n-1-k}\right)  =\underbrace{\left(  \left(
n-1-k\right)  +1\right)  }_{=n-k}\underbrace{c_{n-\left(  \left(
n-1-k\right)  +1\right)  }}_{\substack{=c_{k}\\\text{(since }n-\left(  \left(
n-1-k\right)  +1\right)  =k\text{)}}}=\left(  n-k\right)  c_{k}.
\]
Thus,%
\begin{align*}
\left(  n-k\right)  c_{k}  &  =\operatorname*{Tr}\left(  \underbrace{D_{n-1-k}%
}_{\substack{=\sum_{i=0}^{k}c_{k-i}A^{i}\\\text{(by Lemma \ref{lem.CH.D}
\textbf{(c)})}}}\right)  =\operatorname*{Tr}\left(  \sum_{i=0}^{k}c_{k-i}%
A^{i}\right) \\
&  =\sum_{i=0}^{k}c_{k-i}\operatorname*{Tr}\left(  A^{i}\right)
=\underbrace{c_{k-0}}_{=c_{k}}\operatorname*{Tr}\left(  \underbrace{A^{0}%
}_{=I_{n}}\right)  +\sum_{i=1}^{k}c_{k-i}\operatorname*{Tr}\left(
A^{i}\right) \\
&  \ \ \ \ \ \ \ \ \ \ \left(  \text{here, we have split off the addend for
}i=0\text{ from the sum}\right) \\
&  =c_{k}\underbrace{\operatorname*{Tr}\left(  I_{n}\right)  }_{=n}+\sum
_{i=1}^{k}\underbrace{c_{k-i}\operatorname*{Tr}\left(  A^{i}\right)
}_{=\operatorname*{Tr}\left(  A^{i}\right)  c_{k-i}}=c_{k}n+\sum_{i=1}%
^{k}\operatorname*{Tr}\left(  A^{i}\right)  c_{k-i}.
\end{align*}
Solving this equation for $\sum_{i=1}^{k}\operatorname*{Tr}\left(
A^{i}\right)  c_{k-i}$, we obtain%
\[
\sum_{i=1}^{k}\operatorname*{Tr}\left(  A^{i}\right)  c_{k-i}%
=\underbrace{\left(  n-k\right)  c_{k}}_{=nc_{k}-kc_{k}}-\underbrace{c_{k}%
n}_{=nc_{k}}=nc_{k}-kc_{k}-nc_{k}=-kc_{k}.
\]
Adding $kc_{k}$ to both sides of this equation, we obtain $kc_{k}+\sum
_{i=1}^{k}\operatorname*{Tr}\left(  A^{i}\right)  c_{k-i}=0$. This proves
Theorem \ref{thm.TCH}.
\end{proof}
\end{vershort}

\begin{verlong}
\begin{proof}
[Proof of Theorem \ref{thm.TCH}.]Consider the map $\operatorname*{Tr}%
:\mathbb{K}^{n\times n}\rightarrow\mathbb{K}$ which sends every matrix
$B\in\mathbb{K}^{n\times n}$ to $\operatorname*{Tr}B\in\mathbb{K}$. This map
$\operatorname*{Tr}:\mathbb{K}^{n\times n}\rightarrow\mathbb{K}$ is
$\mathbb{K}$-linear.

The identity matrix $I_{n}$ has diagonal entries $1,1,\ldots,1$. Hence, its
trace is%
\[
\operatorname*{Tr}\left(  I_{n}\right)  =\underbrace{1+1+\cdots+1}_{n\text{
times}}=n\cdot1=n.
\]

Proposition \ref{prop.adj.poly} shows that there exist $n$ matrices
$D_{0},D_{1},\ldots,D_{n-1}$ in $\mathbb{K}^{n\times n}$ such that%
\[
\operatorname*{adj}\left(  tI_{n}-A\right)  =\sum_{k=0}^{n-1}t^{k}%
D_{k}\ \ \ \ \ \ \ \ \ \ \text{in }\left(  \mathbb{K}\left[  t\right]
\right)  ^{n\times n}.
\]
Consider these $D_{0},D_{1},\ldots,D_{n-1}$. Thus, an $n$-tuple $\left(
D_{0},D_{1},\ldots,D_{n-1}\right)  $ of matrices in $\mathbb{K}^{n\times n}$
is defined. Extend this $n$-tuple to a family $\left(  D_{k}\right)
_{k\in\mathbb{Z}}$ of matrices in $\mathbb{K}^{n\times n}$ by setting%
\[
\left(  D_{k}=0_{n\times n}\ \ \ \ \ \ \ \ \ \ \text{for every }k\in
\mathbb{Z}\setminus\left\{  0,1,\ldots,n-1\right\}  \right)  .
\]

Now, let $k\in\mathbb{N}$. Then, Proposition \ref{prop.TrD} (applied to
$n-1-k$ instead of $k$) yields
\[
\operatorname*{Tr}\left(  D_{n-1-k}\right)  =\underbrace{\left(  \left(
n-1-k\right)  +1\right)  }_{=n-k}\underbrace{c_{n-\left(  \left(
n-1-k\right)  +1\right)  }}_{\substack{=c_{k}\\\text{(since }n-\left(  \left(
n-1-k\right)  +1\right)  =k\text{)}}}=\left(  n-k\right)  c_{k}.
\]
Thus,%
\begin{align*}
\left(  n-k\right)  c_{k}  &  =\operatorname*{Tr}\left(  \underbrace{D_{n-1-k}%
}_{\substack{=\sum_{i=0}^{k}c_{k-i}A^{i}\\\text{(by Lemma \ref{lem.CH.D}
\textbf{(c)})}}}\right)  =\operatorname*{Tr}\left(  \sum_{i=0}^{k}c_{k-i}%
A^{i}\right) \\
&  =\sum_{i=0}^{k}c_{k-i}\operatorname*{Tr}\left(  A^{i}\right)
\ \ \ \ \ \ \ \ \ \ \left(  \text{since the map }\operatorname*{Tr}%
:\mathbb{K}^{n\times n}\rightarrow\mathbb{K}\text{ is }\mathbb{K}%
\text{-linear}\right) \\
&  =\underbrace{c_{k-0}}_{=c_{k}}\operatorname*{Tr}\left(  \underbrace{A^{0}%
}_{=I_{n}}\right)  +\sum_{i=1}^{k}c_{k-i}\operatorname*{Tr}\left(
A^{i}\right) \\
&  \ \ \ \ \ \ \ \ \ \ \left(
\begin{array}
[c]{c}%
\text{here, we have split off the addend for }i=0\text{ from the sum}\\
\text{(since }0\in\left\{  0,1,\ldots,k\right\}  \text{ (since }k\in
\mathbb{N}\text{))}%
\end{array}
\right) \\
&  =c_{k}\underbrace{\operatorname*{Tr}\left(  I_{n}\right)  }_{=n}+\sum
_{i=1}^{k}\underbrace{c_{k-i}\operatorname*{Tr}\left(  A^{i}\right)
}_{=\operatorname*{Tr}\left(  A^{i}\right)  c_{k-i}}=c_{k}n+\sum_{i=1}%
^{k}\operatorname*{Tr}\left(  A^{i}\right)  c_{k-i}.
\end{align*}
Solving this equation for $\sum_{i=1}^{k}\operatorname*{Tr}\left(
A^{i}\right)  c_{k-i}$, we obtain%
\[
\sum_{i=1}^{k}\operatorname*{Tr}\left(  A^{i}\right)  c_{k-i}%
=\underbrace{\left(  n-k\right)  c_{k}}_{=nc_{k}-kc_{k}}-\underbrace{c_{k}%
n}_{=nc_{k}}=nc_{k}-kc_{k}-nc_{k}=-kc_{k}.
\]
Adding $kc_{k}$ to both sides of this equation, we obtain $kc_{k}+\sum
_{i=1}^{k}\operatorname*{Tr}\left(  A^{i}\right)  c_{k-i}=0$. This proves
Theorem \ref{thm.TCH}.
\end{proof}
\end{verlong}

\subsection{A corollary}

The following fact (which can also be easily proven by other means) follows
readily from Theorem \ref{thm.TCH}:

\begin{corollary}
\label{cor.TCH.c1}Let $n\in\mathbb{N}$. Let $A\in\mathbb{K}^{n\times n}$.
Then, $\left[  t^{n-1}\right]  \chi_{A}=-\operatorname*{Tr}A$.
\end{corollary}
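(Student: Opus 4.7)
The plan is to deduce Corollary \ref{cor.TCH.c1} from Theorem \ref{thm.TCH} by specializing to the case $k = 1$. Since the theorem is stated for every $k \in \mathbb{N}$, and the left-hand side at $k=1$ involves only $c_0$, $c_1$, and $\operatorname{Tr}(A^1) = \operatorname{Tr} A$, this should immediately reduce the corollary to an identification of $c_0$ and $c_1$ with known quantities.

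First, define $c_j = [t^{n-j}]\chi_A$ for every $j \in \mathbb{Z}$, exactly as in Theorem \ref{thm.TCH}. Then $c_1 = [t^{n-1}]\chi_A$, which is the quantity we want to compute, and $c_0 = [t^n]\chi_A = 1$ by Corollary \ref{cor.chiA} \textbf{(c)}. Next, apply Theorem \ref{thm.TCH} with $k = 1$ to obtain
\[
1 \cdot c_1 + \operatorname{Tr}(A^1) \, c_{1-1} = 0,
\]
that is, $c_1 + \operatorname{Tr}(A) \cdot c_0 = 0$. Substituting $c_0 = 1$ and rearranging gives $c_1 = -\operatorname{Tr} A$, which is precisely the claim $[t^{n-1}]\chi_A = -\operatorname{Tr} A$.

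There is essentially no obstacle here; the only mild subtlety is the edge case $n = 0$, in which $[t^{n-1}]\chi_A = [t^{-1}]\chi_A = 0$ and $\operatorname{Tr} A = 0$ (the empty sum), so the equality holds trivially and is still consistent with the $k=1$ instance of Theorem \ref{thm.TCH}. Thus the entire proof amounts to specializing Theorem \ref{thm.TCH} at $k = 1$ and invoking Corollary \ref{cor.chiA} \textbf{(c)} for the value of $c_0$.
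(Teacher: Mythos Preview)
Your proof is correct and follows essentially the same approach as the paper: both specialize Theorem \ref{thm.TCH} to $k=1$, identify $c_0=1$ via Corollary \ref{cor.chiA} \textbf{(c)}, and solve for $c_1=[t^{n-1}]\chi_A$. Your explicit remark on the $n=0$ edge case is a nice addition, though the argument already covers it without special treatment.
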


\begin{proof}
[Proof of Corollary \ref{cor.TCH.c1}.]For every $j\in\mathbb{Z}$, define an
element $c_{j}\in\mathbb{K}$ by $c_{j}=\left[  t^{n-j}\right]  \chi_{A}$. The
definition of $c_{1}$ yields $c_{1}=\left[  t^{n-1}\right]  \chi_{A}$. The
definition of $c_{0}$ yields $c_{0}=\underbrace{\left[  t^{n-0}\right]
}_{=\left[  t^{n}\right]  }\chi_{A}=\left[  t^{n}\right]  \chi_{A}=1$ (by
Corollary \ref{cor.chiA} \textbf{(c)}).

Theorem \ref{thm.TCH} (applied to $k=1$) yields $1c_{1}+\sum_{i=1}%
^{1}\operatorname*{Tr}\left(  A^{i}\right)  c_{1-i}=0$. Thus,%
\[
1c_{1}=-\underbrace{\sum_{i=1}^{1}\operatorname*{Tr}\left(  A^{i}\right)
c_{1-i}}_{=\operatorname*{Tr}\left(  A^{1}\right)  c_{1-1}}%
=-\operatorname*{Tr}\left(  \underbrace{A^{1}}_{=A}\right)
\underbrace{c_{1-1}}_{=c_{0}=1}=-\operatorname*{Tr}A.
\]
Comparing this with $1c_{1}=c_{1}=\left[  t^{n-1}\right]  \chi_{A}$, we obtain
$\left[  t^{n-1}\right]  \chi_{A}=-\operatorname*{Tr}A$. This proves Corollary
\ref{cor.TCH.c1}.
\end{proof}

\section{\label{sec.nilp}Application: Nilpotency and traces}

\subsection{A nilpotency criterion}

As an application of Theorem \ref{thm.TCH}, let us now prove the following
fact (generalizing \cite{m.se1798703} and part of \cite[Corollary 1]{Robins61}):

\begin{corollary}
\label{cor.nilpcrit}Let $n\in\mathbb{N}$. Let $A\in\mathbb{K}^{n\times n}$.
Assume that
\begin{equation}
\operatorname*{Tr}\left(  A^{i}\right)  =0\ \ \ \ \ \ \ \ \ \ \text{for every
}i\in\left\{  1,2,\ldots,n\right\}  . \label{eq.cor.nilpcrit.cond}%
\end{equation}

\textbf{(a)} Then, $n!A^{n}=0_{n\times n}$. \medskip

\textbf{(b)} If $\mathbb{K}$ is a commutative $\mathbb{Q}$-algebra, then
$A^{n}=0_{n\times n}$. \medskip

\textbf{(c)} We have $n!\chi_{A}=n!t^{n}$. \medskip

\textbf{(d)} If $\mathbb{K}$ is a commutative $\mathbb{Q}$-algebra, then
$\chi_{A}=t^{n}$.
\end{corollary}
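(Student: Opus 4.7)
The plan is to extract the recursion $kc_k = 0$ directly from Theorem \ref{thm.TCH} and then read off all four parts of the corollary. Define $c_j = \left[t^{n-j}\right]\chi_A$ for $j \in \mathbb{Z}$, so that by Corollary \ref{cor.chiA} we have $c_0 = 1$, by Corollary \ref{cor.CH.chiA} we have $\chi_A = \sum_{k=0}^n c_{n-k} t^k = t^n + c_1 t^{n-1} + \cdots + c_n$, and $c_j = 0$ whenever $j < 0$ or $j > n$. Apply Theorem \ref{thm.TCH} to an arbitrary $k \in \{1,2,\ldots,n\}$: every index $i$ in the resulting sum satisfies $i \in \{1,\ldots,k\} \subseteq \{1,\ldots,n\}$, so the hypothesis (\ref{eq.cor.nilpcrit.cond}) kills every summand $\operatorname{Tr}(A^i) c_{k-i}$, leaving $kc_k = 0$.

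Part \textbf{(c)} then follows by multiplying $kc_k = 0$ by $n!/k$, which is a nonnegative integer since $1 \leq k \leq n$; this yields $n! c_k = 0$ for every $k \in \{1,2,\ldots,n\}$, and hence
\[
n!\chi_A = n!\, t^n + \sum_{k=1}^n (n!\, c_k) t^{n-k} = n!\, t^n.
\]
Part \textbf{(d)} is immediate from the same chain of reasoning: when $\mathbb{K}$ is a commutative $\mathbb{Q}$-algebra the integer $n!$ is invertible in $\mathbb{K}$, so $n! c_k = 0$ already forces $c_k = 0$ for each $k \in \{1,\ldots,n\}$, whence $\chi_A = t^n$.

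For parts \textbf{(a)} and \textbf{(b)}, I would invoke the Cayley-Hamilton theorem (Theorem \ref{thm.CH}), which gives $\chi_A(A) = 0_{n\times n}$, i.e.\
\[
A^n + c_1 A^{n-1} + c_2 A^{n-2} + \cdots + c_n I_n = 0_{n\times n}.
\]
Multiplying this identity by $n!$ and using $n! c_k = 0$ for $k \in \{1,\ldots,n\}$ collapses the sum to $n! A^n = 0_{n\times n}$, which is part \textbf{(a)}. Part \textbf{(b)} then follows by dividing by the now-invertible $n!$.

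There is no serious obstacle here; the only points requiring care are the trivial-looking bookkeeping (that $c_0 = 1$ makes $\chi_A$ monic so the $k=0$ term in the Cayley-Hamilton identity is $A^n$ itself, and that $c_j = 0$ outside $\{0,1,\ldots,n\}$ so no stray terms appear), and the fact that the divisibility $k \mid n!$ holds inside $\mathbb{Z}$ before passing into $\mathbb{K}$, which legitimately upgrades $kc_k = 0$ to $n! c_k = 0$ over any commutative ring.
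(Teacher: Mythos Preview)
Your proposal is correct and follows essentially the same approach as the paper: both extract $kc_k = 0$ from Theorem \ref{thm.TCH} using the hypothesis (\ref{eq.cor.nilpcrit.cond}), upgrade this to $n!c_k = 0$ via the integer divisibility $k \mid n!$, and then read off parts \textbf{(c)} and \textbf{(d)} from the expansion of $\chi_A$ and parts \textbf{(a)} and \textbf{(b)} from Cayley--Hamilton (Theorem \ref{thm.CH}). The only difference is cosmetic ordering: the paper treats \textbf{(a)} and \textbf{(b)} before \textbf{(c)} and \textbf{(d)}, whereas you do the reverse.
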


\begin{proof}
[Proof of Corollary \ref{cor.nilpcrit}.]For every $j\in\mathbb{Z}$, define an
element $c_{j}\in\mathbb{K}$ by $c_{j}=\left[  t^{n-j}\right]  \chi_{A}$. The
definition of $c_{0}$ yields $c_{0}=\underbrace{\left[  t^{n-0}\right]
}_{=\left[  t^{n}\right]  }\chi_{A}=\left[  t^{n}\right]  \chi_{A}=1$ (by
Corollary \ref{cor.chiA} \textbf{(c)}).

We now claim that%
\begin{equation}
kc_{k}=0\ \ \ \ \ \ \ \ \ \ \text{for every }k\in\left\{  1,2,\ldots
,n\right\}  . \label{pf.cor.nilpcrit.main}%
\end{equation}

\begin{vershort}
[\textit{Proof of (\ref{pf.cor.nilpcrit.main}):} Let $k\in\left\{
1,2,\ldots,n\right\}  $. Then, every $i\in\left\{  1,2,\ldots,k\right\}  $
satisfies $i\in\left\{  1,2,\ldots,n\right\}  $ and therefore also
\begin{equation}
\operatorname*{Tr}\left(  A^{i}\right)  =0
\label{pf.cor.nilpcrit.main.pf.short.2}%
\end{equation}
(by (\ref{eq.cor.nilpcrit.cond})). Now, Theorem \ref{thm.TCH} yields%
\[
kc_{k}+\sum_{i=1}^{k}\operatorname*{Tr}\left(  A^{i}\right)  c_{k-i}=0.
\]
Solving this equation for $kc_{k}$, we obtain%
\[
kc_{k}=-\sum_{i=1}^{k}\underbrace{\operatorname*{Tr}\left(  A^{i}\right)
}_{\substack{=0\\\text{(by (\ref{pf.cor.nilpcrit.main.pf.short.2}))}}%
}c_{k-i}=-\underbrace{\sum_{i=1}^{k}0c_{k-i}}_{=0}=-0=0.
\]
This proves (\ref{pf.cor.nilpcrit.main}).]
\end{vershort}

\begin{verlong}
[\textit{Proof of (\ref{pf.cor.nilpcrit.main}):} Let $k\in\left\{
1,2,\ldots,n\right\}  $. Then, $k\leq n$, so that $\left\{  1,2,\ldots
,k\right\}  \subseteq\left\{  1,2,\ldots,n\right\}  $. Thus, every
$i\in\left\{  1,2,\ldots,k\right\}  $ satisfies $i\in\left\{  1,2,\ldots
,k\right\}  \subseteq\left\{  1,2,\ldots,n\right\}  $ and therefore also
\begin{equation}
\operatorname*{Tr}\left(  A^{i}\right)  =0 \label{pf.cor.nilpcrit.main.pf.2}%
\end{equation}
(by (\ref{eq.cor.nilpcrit.cond})). Now, Theorem \ref{thm.TCH} yields%
\[
kc_{k}+\sum_{i=1}^{k}\operatorname*{Tr}\left(  A^{i}\right)  c_{k-i}=0.
\]
Solving this equation for $kc_{k}$, we obtain%
\[
kc_{k}=-\sum_{i=1}^{k}\underbrace{\operatorname*{Tr}\left(  A^{i}\right)
}_{\substack{=0\\\text{(by (\ref{pf.cor.nilpcrit.main.pf.2}))}}}c_{k-i}%
=-\underbrace{\sum_{i=1}^{k}0c_{k-i}}_{=0}=-0=0.
\]
This proves (\ref{pf.cor.nilpcrit.main}).]
\end{verlong}

Now, we claim that%
\begin{equation}
n!c_{k}=0\ \ \ \ \ \ \ \ \ \ \text{for every }k\in\left\{  1,2,\ldots
,n\right\}  . \label{pf.cor.nilpcrit.main-n}%
\end{equation}

\begin{vershort}
[\textit{Proof of (\ref{pf.cor.nilpcrit.main-n}):} Let $k\in\left\{
1,2,\ldots,n\right\}  $. The product $1\cdot2\cdot\cdots\cdot n$ contains $k$
as a factor, and thus is a multiple of $k$; in other words, $n!$ is a multiple
of $k$ (since $n!=1\cdot2\cdot\cdots\cdot n$). Hence, $n!c_{k}$ is a multiple
of $kc_{k}$. Thus, (\ref{pf.cor.nilpcrit.main-n}) follows from
(\ref{pf.cor.nilpcrit.main}).]
\end{vershort}

\begin{verlong}
[\textit{Proof of (\ref{pf.cor.nilpcrit.main-n}):} Let $k\in\left\{
1,2,\ldots,n\right\}  $. Recall that
\[
n!=1\cdot2\cdot\cdots\cdot n=\prod_{i\in\left\{  1,2,\ldots,n\right\}
}i=k\cdot\prod_{\substack{i\in\left\{  1,2,\ldots,n\right\}  ;\\i\neq k}}i
\]
(here, we have split off the factor for $i=k$ from the product (since
$k\in\left\{  1,2,\ldots,n\right\}  $)). Thus,%
\[
\underbrace{n!}_{=k\cdot\prod_{\substack{i\in\left\{  1,2,\ldots,n\right\}
;\\i\neq k}}i}c_{k}=k\cdot\left(  \prod_{\substack{i\in\left\{  1,2,\ldots
,n\right\}  ;\\i\neq k}}i\right)  c_{k}=\left(  \prod_{\substack{i\in\left\{
1,2,\ldots,n\right\}  ;\\i\neq k}}i\right)  \underbrace{kc_{k}}%
_{\substack{=0\\\text{(by (\ref{pf.cor.nilpcrit.main}))}}}=0.
\]
This proves (\ref{pf.cor.nilpcrit.main-n}).]
\end{verlong}

Finally, we observe that%
\begin{equation}
n!c_{n-k}=0\ \ \ \ \ \ \ \ \ \ \text{for every }k\in\left\{  0,1,\ldots
,n-1\right\}  . \label{pf.cor.nilpcrit.main-n2}%
\end{equation}

[\textit{Proof of (\ref{pf.cor.nilpcrit.main-n2}):} Let $k\in\left\{
0,1,\ldots,n-1\right\}  $. Then, $n-k\in\left\{  1,2,\ldots,n\right\}  $.
Hence, (\ref{pf.cor.nilpcrit.main-n}) (applied to $n-k$ instead of $k$) yields
$n!c_{n-k}=0$. This proves (\ref{pf.cor.nilpcrit.main-n2}).]

\begin{vershort}
Now, Corollary \ref{cor.CH.chiA} yields $\chi_{A}=\sum_{k=0}^{n}c_{n-k}t^{k}$.
Substituting $A$ for $t$ in this equality, we obtain $\chi_{A}\left(
A\right)  =\sum_{k=0}^{n}c_{n-k}A^{k}$. Multiplying both sides of the latter
equality by $n!$, we obtain%
\begin{align*}
n!\chi_{A}\left(  A\right)   &  =n!\sum_{k=0}^{n}c_{n-k}A^{k}=\sum_{k=0}%
^{n}n!c_{n-k}A^{k}=\sum_{k=0}^{n-1}\underbrace{n!c_{n-k}}%
_{\substack{=0\\\text{(by (\ref{pf.cor.nilpcrit.main-n2}))}}}A^{k}%
+n!\underbrace{c_{n-n}}_{=c_{0}=1}A^{n}\\
&  \ \ \ \ \ \ \ \ \ \ \left(  \text{here, we have split off the addend for
}k=n\text{ from the sum}\right) \\
&  =\underbrace{\sum_{k=0}^{n-1}0A^{k}}_{=0}+\,n!A^{n}=n!A^{n}.
\end{align*}
Hence,
\[
n!A^{n}=n!\underbrace{\chi_{A}\left(  A\right)  }_{\substack{=0_{n\times
n}\\\text{(by Theorem \ref{thm.CH})}}}=0_{n\times n}.
\]
This proves Corollary \ref{cor.nilpcrit} \textbf{(a)}. \medskip
\end{vershort}

\begin{verlong}
Now, Corollary \ref{cor.CH.chiA} yields $\chi_{A}=\sum_{k=0}^{n}c_{n-k}t^{k}$.
Substituting $A$ for $t$ in this equality, we obtain $\chi_{A}\left(
A\right)  =\sum_{k=0}^{n}c_{n-k}A^{k}$. Multiplying both sides of the latter
equality by $n!$, we obtain%
\begin{align*}
n!\chi_{A}\left(  A\right)   &  =n!\sum_{k=0}^{n}c_{n-k}A^{k}=\sum_{k=0}%
^{n}n!c_{n-k}A^{k}=\sum_{k=0}^{n-1}\underbrace{n!c_{n-k}}%
_{\substack{=0\\\text{(by (\ref{pf.cor.nilpcrit.main-n2}))}}}A^{k}%
+n!\underbrace{c_{n-n}}_{=c_{0}=1}A^{n}\\
&  \ \ \ \ \ \ \ \ \ \ \left(
\begin{array}
[c]{c}%
\text{here, we have split off the addend for }k=n\text{ from the sum}\\
\text{(since }n\in\left\{  0,1,\ldots,n\right\}  \text{ (since }n\in
\mathbb{N}\text{))}%
\end{array}
\right) \\
&  =\underbrace{\sum_{k=0}^{n-1}0A^{k}}_{=0}+\,n!A^{n}=n!A^{n}.
\end{align*}
Hence,
\[
n!A^{n}=n!\underbrace{\chi_{A}\left(  A\right)  }_{\substack{=0_{n\times
n}\\\text{(by Theorem \ref{thm.CH})}}}=0_{n\times n}.
\]
This proves Corollary \ref{cor.nilpcrit} \textbf{(a)}. \medskip
\end{verlong}

\textbf{(b)} Assume that $\mathbb{K}$ is a commutative $\mathbb{Q}$-algebra.
Corollary \ref{cor.nilpcrit} \textbf{(a)} yields $n!A^{n}=0_{n\times n}$. Now,
$\dfrac{1}{n!}\in\mathbb{Q}$, so that we can multiply an $n\times n$-matrix in
$\mathbb{K}^{n\times n}$ by $\dfrac{1}{n!}$ (since $\mathbb{K}$ is a
$\mathbb{Q}$-algebra). We have $\underbrace{\dfrac{1}{n!}n!}_{=1}A^{n}=A^{n}$.
Hence, $A^{n}=\dfrac{1}{n!}\underbrace{n!A^{n}}_{=0_{n\times n}}=\dfrac{1}%
{n!}0_{n\times n}=0_{n\times n}$. This proves Corollary \ref{cor.nilpcrit}
\textbf{(b)}. \medskip

\begin{vershort}
\textbf{(c)} Multiplying the equality $\chi_{A}=\sum_{k=0}^{n}c_{n-k}t^{k}$ by
$n!$, we obtain%
\begin{align*}
n!\chi_{A}  &  =n!\sum_{k=0}^{n}c_{n-k}t^{k}=\sum_{k=0}^{n}n!c_{n-k}t^{k}%
=\sum_{k=0}^{n-1}\underbrace{n!c_{n-k}}_{\substack{=0\\\text{(by
(\ref{pf.cor.nilpcrit.main-n2}))}}}t^{k}+n!\underbrace{c_{n-n}}_{=c_{0}%
=1}t^{n}\\
&  \ \ \ \ \ \ \ \ \ \ \left(  \text{here, we have split off the addend for
}k=n\text{ from the sum}\right) \\
&  =\underbrace{\sum_{k=0}^{n-1}0t^{k}}_{=0}+\,n!t^{n}=n!t^{n}.
\end{align*}
This proves Corollary \ref{cor.nilpcrit} \textbf{(c)}. \medskip
\end{vershort}

\begin{verlong}
\textbf{(c)} Multiplying both sides of the equality $\chi_{A}=\sum_{k=0}%
^{n}c_{n-k}t^{k}$ by $n!$, we obtain%
\begin{align*}
n!\chi_{A}  &  =n!\sum_{k=0}^{n}c_{n-k}t^{k}=\sum_{k=0}^{n}n!c_{n-k}t^{k}%
=\sum_{k=0}^{n-1}\underbrace{n!c_{n-k}}_{\substack{=0\\\text{(by
(\ref{pf.cor.nilpcrit.main-n2}))}}}t^{k}+n!\underbrace{c_{n-n}}_{=c_{0}%
=1}t^{n}\\
&  \ \ \ \ \ \ \ \ \ \ \left(
\begin{array}
[c]{c}%
\text{here, we have split off the addend for }k=n\text{ from the sum}\\
\text{(since }n\in\left\{  0,1,\ldots,n\right\}  \text{ (since }n\in
\mathbb{N}\text{))}%
\end{array}
\right) \\
&  =\underbrace{\sum_{k=0}^{n-1}0t^{k}}_{=0}+\,n!t^{n}=n!t^{n}.
\end{align*}
This proves Corollary \ref{cor.nilpcrit} \textbf{(c)}. \medskip
\end{verlong}

\textbf{(d)} Assume that $\mathbb{K}$ is a commutative $\mathbb{Q}$-algebra.
Corollary \ref{cor.nilpcrit} \textbf{(c)} yields $n!\chi_{A}=n!t^{n}$.

Now, $\dfrac{1}{n!}\in\mathbb{Q}$, so that we can multiply any polynomial in
$\mathbb{K}\left[  t\right]  $ by $\dfrac{1}{n!}$ (since $\mathbb{K}$ is a
$\mathbb{Q}$-algebra). We have $\underbrace{\dfrac{1}{n!}n!}_{=1}\chi_{A}%
=\chi_{A}$. Hence, $\chi_{A}=\dfrac{1}{n!}\underbrace{n!\chi_{A}}_{=n!t^{n}%
}=\dfrac{1}{n!}n!t^{n}=t^{n}$. This proves Corollary \ref{cor.nilpcrit}
\textbf{(d)}.
\end{proof}

\subsection{A converse direction}

The following result -- in a sense, a converse of Corollary \ref{cor.nilpcrit}
\textbf{(d)} -- also follows from Theorem \ref{thm.TCH}:

\begin{corollary}
\label{cor.nilpcrit-conv}Let $n\in\mathbb{N}$. Let $A\in\mathbb{K}^{n\times
n}$. Assume that $\chi_{A}=t^{n}$. Then, $\operatorname*{Tr}\left(
A^{i}\right)  =0$ for every positive integer $i$.
\end{corollary}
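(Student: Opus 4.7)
The plan is to apply Theorem \ref{thm.TCH} directly. For every $j \in \mathbb{Z}$, define $c_j = [t^{n-j}] \chi_A$ as in the statement of that theorem. The hypothesis $\chi_A = t^n$ makes these coefficients trivial to compute: we have $c_0 = [t^n] t^n = 1$, and for every $j \in \mathbb{Z}$ with $j \neq 0$ we have $c_j = [t^{n-j}] t^n = 0$ (for $j > 0$ this is because $n-j < n$, and for $j < 0$ this is because $n-j > n$ and $\chi_A$ is a polynomial of degree $n$).

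Now fix any positive integer $k$. Theorem \ref{thm.TCH} yields
\[
kc_k + \sum_{i=1}^{k} \operatorname{Tr}\left( A^i \right) c_{k-i} = 0.
\]
Since $k \geq 1$, we have $c_k = 0$, so the first term vanishes. In the remaining sum, split off the $i = k$ term: for $i \in \{1, 2, \ldots, k-1\}$ we have $k - i \in \{1, 2, \ldots, k-1\}$, hence $c_{k-i} = 0$; while for $i = k$ we get $\operatorname{Tr}(A^k) c_0 = \operatorname{Tr}(A^k)$. Thus the equation collapses to $\operatorname{Tr}(A^k) = 0$, which is exactly the claim.

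There is really no significant obstacle here; the result is a one-line consequence of Theorem \ref{thm.TCH} once one observes that $\chi_A = t^n$ reduces the $c_j$ to a Kronecker delta. The only very minor point worth checking is that the conventions on $c_j$ for negative or excessive $j$ (which Theorem \ref{thm.TCH} implicitly uses inside the sum when $k > n$) are consistent with $c_j = 0$, which they are by the polynomial-degree argument above.
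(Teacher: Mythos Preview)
Your proof is correct and uses the same core tool as the paper's proof—namely, Theorem \ref{thm.TCH} together with the observation that $\chi_A = t^n$ forces $c_j = 0$ for all $j \neq 0$ and $c_0 = 1$. The paper, however, organizes the argument as a strong induction on $k$: it kills the terms for $i \in \{1,\ldots,k-1\}$ by invoking the induction hypothesis $\operatorname{Tr}(A^i) = 0$, rather than by noting (as you do) that the coefficients $c_{k-i}$ themselves already vanish. Your observation makes the induction superfluous, so your argument is slightly more direct; the paper's version, on the other hand, would survive unchanged in a setting where only $c_k = 0$ (and not necessarily $c_{k-1},\ldots,c_1$) is known at step $k$.
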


\begin{proof}
[Proof of Corollary \ref{cor.nilpcrit-conv}.]For every $j\in\mathbb{Z}$,
define an element $c_{j}\in\mathbb{K}$ by $c_{j}=\left[  t^{n-j}\right]
\chi_{A}$. Then, each positive integer $j$ satisfies%
\begin{equation}
c_{j}=0. \label{pf.cor.nilpcrit-conv.1}%
\end{equation}

\begin{vershort}
[\textit{Proof of (\ref{pf.cor.nilpcrit-conv.1}):} Let $j$ be a positive
integer. Thus, $j\neq0$, so that $n-j\neq n$ and thus $\left[  t^{n-j}\right]
\left(  t^{n}\right)  =0$. In view of $\chi_{A}=t^{n}$, this rewrites as
$\left[  t^{n-j}\right]  \chi_{A}=0$. But the definition of $c_{j}$ yields
$c_{j}=\left[  t^{n-j}\right]  \chi_{A}=0$. This proves
(\ref{pf.cor.nilpcrit-conv.1}).]
\end{vershort}

\begin{verlong}
[\textit{Proof of (\ref{pf.cor.nilpcrit-conv.1}):} Let $j$ be a positive
integer. Thus, $j\neq0$. Hence, $n-j\neq n$.

It is thus easy to see that $\left[  t^{n-j}\right]  \left(  t^{n}\right)
=0$\ \ \ \ \footnote{\textit{Proof.} We are in one of the following two cases:
\par
\textit{Case 1:} We have $n-j\geq0$.
\par
\textit{Case 2:} We have $n-j<0$.
\par
Let us first consider Case 1. In this case, we have $n-j\geq0$. Thus, $\left[
t^{n-j}\right]  \left(  t^{n}\right)  $ is the coefficient of $t^{n-j}$ in
$t^{n}$ (by the definition of $\left[  t^{n-j}\right]  \left(  t^{n}\right)
$). Thus,%
\[
\left[  t^{n-j}\right]  \left(  t^{n}\right)  =\left(  \text{the coefficient
of }t^{n-j}\text{ in }t^{n}\right)  =0
\]
(since $t^{n-j}$ and $t^{n}$ are two distinct monomials). Thus, $\left[
t^{n-j}\right]  \left(  t^{n}\right)  =0$ is proven in Case 1.
\par
Let us now consider Case 2. In this case, we have $n-j<0$. Thus, $n-j$ is a
negative integer. Now, every $f\in\mathbb{K}\left[  t\right]  $ and every
negative integer $m$ satisfy $\left[  t^{m}\right]  f=0$ (since polynomials
have no monomials $t^{m}$ with negative $m$). Applying this to $f=t^{n}$ and
$m=n-j$, we obtain $\left[  t^{n-j}\right]  \left(  t^{n}\right)  =0$. Hence,
$\left[  t^{n-j}\right]  \left(  t^{n}\right)  =0$ is proven in Case 2.
\par
Now, we have proven that $\left[  t^{n-j}\right]  \left(  t^{n}\right)  =0$ in
each of the two Cases 1 and 2. Since these two Cases cover all possibilities,
this shows that $\left[  t^{n-j}\right]  \left(  t^{n}\right)  =0$ always
holds. Qed.}. But the definition of $c_{j}$ yields $c_{j}=\left[
t^{n-j}\right]  \underbrace{\chi_{A}}_{=t^{n}}=\left[  t^{n-j}\right]  \left(
t^{n}\right)  =0$. This proves (\ref{pf.cor.nilpcrit-conv.1}).]
\end{verlong}

The definition of $c_{0}$ yields $c_{0}=\underbrace{\left[  t^{n-0}\right]
}_{=\left[  t^{n}\right]  }\chi_{A}=\left[  t^{n}\right]  \chi_{A}=1$ (by
Corollary \ref{cor.chiA} \textbf{(c)}).

Now, we claim that%
\begin{equation}
\operatorname*{Tr}\left(  A^{p}\right)  =0\ \ \ \ \ \ \ \ \ \ \text{for every
positive integer }p. \label{pf.cor.nilpcrit-conv.2}%
\end{equation}

[\textit{Proof of (\ref{pf.cor.nilpcrit-conv.2}):} We shall prove
(\ref{pf.cor.nilpcrit-conv.2}) by strong induction on $p$:

\textit{Induction step:} Fix a positive integer $k$. Assume (as the induction
hypothesis) that (\ref{pf.cor.nilpcrit-conv.2}) holds whenever $p<k$. We must
now prove that (\ref{pf.cor.nilpcrit-conv.2}) holds for $p=k$.

From (\ref{pf.cor.nilpcrit-conv.1}) (applied to $j=k$), we obtain $c_{k}=0$.

\begin{vershort}
We have assumed that (\ref{pf.cor.nilpcrit-conv.2}) holds whenever $p<k$. In
other words,%
\begin{equation}
\operatorname*{Tr}\left(  A^{p}\right)  =0\ \ \ \ \ \ \ \ \ \ \text{for every
positive integer }p<k. \label{pf.cor.nilpcrit-conv.2.pf.short.IH}%
\end{equation}

Now, Theorem \ref{thm.TCH} yields%
\[
kc_{k}+\sum_{i=1}^{k}\operatorname*{Tr}\left(  A^{i}\right)  c_{k-i}=0.
\]
Hence,%
\begin{align*}
0  &  =k\underbrace{c_{k}}_{=0}+\sum_{i=1}^{k}\operatorname*{Tr}\left(
A^{i}\right)  c_{k-i}=\sum_{i=1}^{k}\operatorname*{Tr}\left(  A^{i}\right)
c_{k-i}=\sum_{i=1}^{k-1}\underbrace{\operatorname*{Tr}\left(  A^{i}\right)
}_{\substack{=0\\\text{(by (\ref{pf.cor.nilpcrit-conv.2.pf.short.IH}%
),}\\\text{applied}\\\text{to }p=i\text{)}}}c_{k-i}+\operatorname*{Tr}\left(
A^{k}\right)  \underbrace{c_{k-k}}_{=c_{0}=1}\\
&  \ \ \ \ \ \ \ \ \ \ \left(  \text{here, we have split off the addend for
}i=k\text{ from the sum}\right) \\
&  =\underbrace{\sum_{i=1}^{k-1}0c_{k-i}}_{=0}+\operatorname*{Tr}\left(
A^{k}\right)  =\operatorname*{Tr}\left(  A^{k}\right)  .
\end{align*}

\end{vershort}

\begin{verlong}
We have assumed that (\ref{pf.cor.nilpcrit-conv.2}) holds whenever $p<k$. In
other words,%
\begin{equation}
\operatorname*{Tr}\left(  A^{p}\right)  =0\ \ \ \ \ \ \ \ \ \ \text{for every
positive integer }p<k. \label{pf.cor.nilpcrit-conv.2.pf.IH}%
\end{equation}
Thus, for each $i\in\left\{  1,2,\ldots,k-1\right\}  $, we have%
\begin{equation}
\operatorname*{Tr}\left(  A^{i}\right)  =0 \label{pf.cor.nilpcrit-conv.2.pf.3}%
\end{equation}
\footnote{\textit{Proof of (\ref{pf.cor.nilpcrit-conv.2.pf.3}):} Let
$i\in\left\{  1,2,\ldots,k-1\right\}  $. Thus, $1\leq i\leq k-1$. Hence, $i$
is a positive integer (since $i\in\left\{  1,2,\ldots,k-1\right\}  $) and
satisfies $i\leq k-1<k$. Therefore, (\ref{pf.cor.nilpcrit-conv.2.pf.IH})
(applied to $p=i$) yields $\operatorname*{Tr}\left(  A^{i}\right)  =0$. Qed.}.

Now, Theorem \ref{thm.TCH} yields%
\[
kc_{k}+\sum_{i=1}^{k}\operatorname*{Tr}\left(  A^{i}\right)  c_{k-i}=0.
\]
Hence,%
\[
\sum_{i=1}^{k}\operatorname*{Tr}\left(  A^{i}\right)  c_{k-i}%
=-\underbrace{c_{k}}_{=0}=-0=0.
\]
Thus,%
\begin{align*}
0  &  =\sum_{i=1}^{k}\operatorname*{Tr}\left(  A^{i}\right)  c_{k-i}%
=\sum_{i=1}^{k-1}\underbrace{\operatorname*{Tr}\left(  A^{i}\right)
}_{\substack{=0\\\text{(by (\ref{pf.cor.nilpcrit-conv.2.pf.3}))}}%
}c_{k-i}+\operatorname*{Tr}\left(  A^{k}\right)  \underbrace{c_{k-k}}%
_{=c_{0}=1}\\
&  \ \ \ \ \ \ \ \ \ \ \left(
\begin{array}
[c]{c}%
\text{here, we have split off the addend for }i=k\text{ from the sum}\\
\text{(since }k\in\left\{  1,2,\ldots,k\right\}  \text{ (since }k\text{ is a
positive integer))}%
\end{array}
\right) \\
&  =\underbrace{\sum_{i=1}^{k-1}0c_{k-i}}_{=0}+\operatorname*{Tr}\left(
A^{k}\right)  =\operatorname*{Tr}\left(  A^{k}\right)  .
\end{align*}

\end{verlong}

Thus, $\operatorname*{Tr}\left(  A^{k}\right)  =0$. In other words,
(\ref{pf.cor.nilpcrit-conv.2}) holds for $p=k$. This completes the induction
step. Thus, (\ref{pf.cor.nilpcrit-conv.2}) is proven by strong induction.]

We have thus proven that $\operatorname*{Tr}\left(  A^{p}\right)  =0$ for
every positive integer $p$. Renaming the variable $p$ as $i$ in this
statement, we conclude that $\operatorname*{Tr}\left(  A^{i}\right)  =0$ for
every positive integer $i$. This proves Corollary \ref{cor.nilpcrit-conv}.
\end{proof}

\section{\label{sec.adj}More on the adjugate}

I shall now discuss various other properties of the adjugate
$\operatorname*{adj}A$ of a square matrix $A$.

\subsection{Functoriality}

For any $n\in\mathbb{N}$ and $m\in\mathbb{N}$, a homomorphism $f:\mathbb{L}%
\rightarrow\mathbb{M}$ between two rings $\mathbb{L}$ and $\mathbb{M}$ gives
rise to a map $f^{n\times m}:\mathbb{L}^{n\times m}\rightarrow\mathbb{M}%
^{n\times m}$ (as defined in Definition \ref{def.f-mat}). We recall some
classical properties of these maps $f^{n\times m}$:

\begin{proposition}
\label{prop.functor.clas}Let $\mathbb{L}$ and $\mathbb{M}$ be two commutative
rings. Let $f:\mathbb{L}\rightarrow\mathbb{M}$ be a ring homomorphism.
\medskip

\textbf{(a)} For every $n\in\mathbb{N}$ and $m\in\mathbb{N}$, the map
$f^{n\times m}:\mathbb{L}^{n\times m}\rightarrow\mathbb{M}^{n\times m}$ is a
homomorphism of additive groups. \medskip

\textbf{(b)} Every $n\in\mathbb{N}$ satisfies $f^{n\times n}\left(
I_{n}\right)  =I_{n}$. \medskip

\textbf{(c)} For every $n\in\mathbb{N}$, $m\in\mathbb{N}$, $p\in\mathbb{N}$,
$A\in\mathbb{L}^{n\times m}$ and $B\in\mathbb{L}^{m\times p}$, we have
$f^{n\times p}\left(  AB\right)  =f^{n\times m}\left(  A\right)  \cdot
f^{m\times p}\left(  B\right)  $. \medskip

\textbf{(d)} For every $n\in\mathbb{N}$ and $m\in\mathbb{N}$ and every
$A\in\mathbb{L}^{n\times m}$ and $\lambda\in\mathbb{L}$, we have $f^{n\times
m}\left(  \lambda A\right)  =f\left(  \lambda\right)  f^{n\times m}\left(
A\right)  $.
\end{proposition}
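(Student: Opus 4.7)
The plan is to unfold the definition of $f^{n\times m}$ (which simply applies $f$ entry-by-entry) and use the ring homomorphism axioms for $f$ at the level of entries. None of the four parts requires any real idea; they are all bookkeeping, and the main thing to be careful about is just writing out the matrix entries consistently. I would handle each part in the order \textbf{(a)}, \textbf{(d)}, \textbf{(b)}, \textbf{(c)}, since each later part uses the same entrywise unfolding and no part depends logically on another.

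For part \textbf{(a)}, I would take two matrices $A=(a_{i,j})_{1\leq i\leq n,\ 1\leq j\leq m}$ and $B=(b_{i,j})_{1\leq i\leq n,\ 1\leq j\leq m}$ in $\mathbb{L}^{n\times m}$, observe that $A+B=(a_{i,j}+b_{i,j})_{1\leq i\leq n,\ 1\leq j\leq m}$, and then compute
\[
f^{n\times m}(A+B) = \bigl(f(a_{i,j}+b_{i,j})\bigr)_{1\leq i\leq n,\ 1\leq j\leq m} = \bigl(f(a_{i,j})+f(b_{i,j})\bigr)_{1\leq i\leq n,\ 1\leq j\leq m},
\]
using that $f$ preserves addition. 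The right-hand side is $f^{n\times m}(A)+f^{n\times m}(B)$ by the definition of matrix addition. For part \textbf{(d)}, the same style of calculation works: $\lambda A=(\lambda a_{i,j})$, and applying $f$ entry-by-entry together with $f(\lambda a_{i,j})=f(\lambda)f(a_{i,j})$ (since $f$ is multiplicative) yields the claim.

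For part \textbf{(b)}, I would write $I_n=(\delta_{i,j})_{1\leq i\leq n,\ 1\leq j\leq n}$ where $\delta_{i,j}\in\{0,1\}$, and use that $f(0)=0$ and $f(1)=1$ to conclude $f(\delta_{i,j})=\delta_{i,j}$, whence $f^{n\times n}(I_n)=I_n$. For part \textbf{(c)}, write $A=(a_{i,j})_{1\leq i\leq n,\ 1\leq j\leq m}$ and $B=(b_{i,j})_{1\leq i\leq m,\ 1\leq j\leq p}$; the definition of matrix multiplication gives $AB=\bigl(\sum_{k=1}^{m}a_{i,k}b_{k,j}\bigr)_{1\leq i\leq n,\ 1\leq j\leq p}$, and then
\[
f^{n\times p}(AB) = \left(f\!\left(\sum_{k=1}^{m}a_{i,k}b_{k,j}\right)\right)_{1\leq i\leq n,\ 1\leq j\leq p} = \left(\sum_{k=1}^{m}f(a_{i,k})f(b_{k,j})\right)_{1\leq i\leq n,\ 1\leq j\leq p},
\]
where I use that $f$ preserves finite sums and products. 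The right-hand side is the $(i,j)$-entry formula for $f^{n\times m}(A)\cdot f^{m\times p}(B)$, so the two matrices agree.

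There is no real obstacle here; the only risk is notational clutter when juggling double-indexed families and three different sizes $n,m,p$ in part \textbf{(c)}. If the proof feels too bureaucratic I would simply collapse each part into a one-line computation as above, preceded by the phrase ``unfolding the definition of $f^{n\times m}$ and using that $f$ is a ring homomorphism.''
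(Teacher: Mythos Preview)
Your proposal is correct and is exactly the standard entrywise verification one would expect. Note that the paper itself does not give a proof of this proposition at all: it is introduced with the words ``We recall some classical properties of these maps $f^{n\times m}$'' and then simply stated, so there is nothing to compare against. Your write-up is what such a proof would look like if one were supplied.
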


Now, let me state the classical (and simple) fact which is often (somewhat
incompletely) subsumed under the slogan \textquotedblleft ring homomorphisms
preserve determinants and adjugates\textquotedblright:

\begin{proposition}
\label{prop.functor.det}Let $\mathbb{L}$ and $\mathbb{M}$ be two commutative
rings. Let $f:\mathbb{L}\rightarrow\mathbb{M}$ be a ring homomorphism. Let
$n\in\mathbb{N}$. Let $A\in\mathbb{L}^{n\times n}$. \medskip

\textbf{(a)} We have $f\left(  \det A\right)  =\det\left(  f^{n\times
n}\left(  A\right)  \right)  $. \medskip

\textbf{(b)} Any two elements $u$ and $v$ of $\left\{  1,2,\ldots,n\right\}  $
satisfy $f^{\left(  n-1\right)  \times\left(  n-1\right)  }\left(  A_{\sim
u,\sim v}\right)  =\left(  f^{n\times n}\left(  A\right)  \right)  _{\sim
u,\sim v}$. \medskip

\textbf{(c)} We have $f^{n\times n}\left(  \operatorname*{adj}A\right)
=\operatorname*{adj}\left(  f^{n\times n}\left(  A\right)  \right)  $.
\end{proposition}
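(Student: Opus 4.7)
The plan is to prove parts \textbf{(a)}, \textbf{(b)}, \textbf{(c)} in order, since part \textbf{(c)} will rest on the first two.

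For part \textbf{(a)}, I would simply apply $f$ to the Leibniz formula (\ref{eq.det}) for $\det A$. Writing $A = (a_{i,j})_{1 \leq i \leq n,\ 1 \leq j \leq n}$, so that $f^{n \times n}(A) = (f(a_{i,j}))_{1 \leq i \leq n,\ 1 \leq j \leq n}$ by the definition of $f^{n \times n}$, I would use that $f$ is a ring homomorphism (so it preserves sums, products, $1$, and hence $-1$, and therefore the signs $(-1)^{\sigma}$) to move $f$ inside the sum and product in $\det A = \sum_{\sigma \in S_n} (-1)^{\sigma} \prod_{i=1}^n a_{i,\sigma(i)}$. Comparing the result with $\det(f^{n \times n}(A)) = \sum_{\sigma \in S_n} (-1)^{\sigma} \prod_{i=1}^n f(a_{i,\sigma(i)})$ (obtained by applying (\ref{eq.det}) to $f^{n \times n}(A) \in \mathbb{M}^{n \times n}$) gives the claim. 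This is a routine calculation with no real obstacle.

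For part \textbf{(b)}, the point is that crossing out a row and a column commutes with applying $f$ entrywise. More precisely, if $A = (a_{i,j})_{1 \leq i \leq n,\ 1 \leq j \leq n}$, then both $f^{(n-1) \times (n-1)}(A_{\sim u, \sim v})$ and $(f^{n \times n}(A))_{\sim u, \sim v}$ are, by unwinding the definitions of $A_{\sim u, \sim v}$ (Definition \ref{def.submatrix.minor}) and of $f^{n \times m}$ (Definition \ref{def.f-mat}), equal to the $(n-1) \times (n-1)$-matrix whose $(x,y)$-entry is $f(a_{p_x, q_y})$, where $(p_1, \ldots, p_{n-1}) = (1, 2, \ldots, \widehat{u}, \ldots, n)$ and $(q_1, \ldots, q_{n-1}) = (1, 2, \ldots, \widehat{v}, \ldots, n)$. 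Again there is no obstacle, just bookkeeping.

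For part \textbf{(c)}, I would compute the $(i,j)$-entry of both sides. The definition of $\operatorname{adj}A$ gives
\[
\operatorname{adj} A = \left( (-1)^{i+j} \det(A_{\sim j, \sim i}) \right)_{1 \leq i \leq n,\ 1 \leq j \leq n},
\]
so by the definition of $f^{n \times n}$,
\[
f^{n \times n}(\operatorname{adj} A) = \left( f\left( (-1)^{i+j} \det(A_{\sim j, \sim i}) \right) \right)_{1 \leq i \leq n,\ 1 \leq j \leq n}.
\]
Using $f((-1)^{i+j}) = (-1)^{i+j}$ (since $f$ is a ring homomorphism), then part \textbf{(a)} (applied to $A_{\sim j, \sim i}$ and $n-1$ in place of $A$ and $n$) to rewrite $f(\det(A_{\sim j, \sim i})) = \det(f^{(n-1) \times (n-1)}(A_{\sim j, \sim i}))$, and finally part \textbf{(b)} (applied to $(j, i)$ in place of $(u, v)$) to rewrite $f^{(n-1) \times (n-1)}(A_{\sim j, \sim i}) = (f^{n \times n}(A))_{\sim j, \sim i}$, I obtain that the $(i,j)$-entry of $f^{n \times n}(\operatorname{adj} A)$ equals $(-1)^{i+j} \det((f^{n \times n}(A))_{\sim j, \sim i})$, which is precisely the $(i,j)$-entry of $\operatorname{adj}(f^{n \times n}(A))$ by the definition of the adjugate applied to $f^{n \times n}(A)$.

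There is no genuine obstacle here; the whole proposition is essentially a functoriality check, and its difficulty is purely notational. The mildly delicate points are only (i) remembering that $f$ preserves the sign $(-1)^{\sigma}$ because $f(1) = 1$ forces $f(-1) = -1$, and (ii) being careful with the swapped indices $(\sim j, \sim i)$ in the definition of $\operatorname{adj}$ when invoking part \textbf{(b)}.
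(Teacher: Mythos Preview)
Your proposal is correct and follows essentially the same approach as the paper: apply $f$ to the Leibniz formula for part \textbf{(a)}, unwind the definitions of $A_{\sim u,\sim v}$ and $f^{n\times m}$ via the index tuples $(p_1,\ldots,p_{n-1})$ and $(q_1,\ldots,q_{n-1})$ for part \textbf{(b)}, and combine both to compare $(i,j)$-entries for part \textbf{(c)}. Your noted delicate points (preservation of signs and the swapped indices in the adjugate) are exactly the only places requiring care.
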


\begin{vershort}
\begin{proof}
[Proof of Proposition \ref{prop.functor.det}.]Proving Proposition
\ref{prop.functor.det} is completely straightforward, and left to the reader.
\end{proof}
\end{vershort}

\begin{verlong}
\begin{proof}
[Proof of Proposition \ref{prop.functor.det}.]Write the $n\times n$-matrix $A$
in the form $A=\left(  a_{i,j}\right)  _{1\leq i\leq n,\ 1\leq j\leq n}$.
Then,
\[
f^{n\times n}\left(  \underbrace{A}_{=\left(  a_{i,j}\right)  _{1\leq i\leq
n,\ 1\leq j\leq n}}\right)  =f^{n\times n}\left(  \left(  a_{i,j}\right)
_{1\leq i\leq n,\ 1\leq j\leq n}\right)  =\left(  f\left(  a_{i,j}\right)
\right)  _{1\leq i\leq n,\ 1\leq j\leq n}%
\]
(by the definition of $f^{n\times n}$). Hence, (\ref{eq.det}) (applied to
$\mathbb{M}$, $f^{n\times n}\left(  A\right)  $ and $f\left(  a_{i,j}\right)
$ instead of $\mathbb{K}$, $A$ and $a_{i,j}$) yields%
\begin{equation}
\det\left(  f^{n\times n}\left(  A\right)  \right)  =\sum_{\sigma\in S_{n}%
}\left(  -1\right)  ^{\sigma}\prod_{i=1}^{n}f\left(  a_{i,\sigma\left(
i\right)  }\right)  . \label{pf.prop.functor.det.a.1}%
\end{equation}

\textbf{(a)} The equality (\ref{eq.det}) (applied to $\mathbb{L}$ instead of
$\mathbb{K}$) yields%
\[
\det A=\sum_{\sigma\in S_{n}}\left(  -1\right)  ^{\sigma}\prod_{i=1}%
^{n}a_{i,\sigma\left(  i\right)  }.
\]
Applying the map $f$ to both sides of this equality, we obtain%
\begin{align*}
f\left(  \det A\right)   &  =f\left(  \sum_{\sigma\in S_{n}}\left(  -1\right)
^{\sigma}\prod_{i=1}^{n}a_{i,\sigma\left(  i\right)  }\right)  =\sum
_{\sigma\in S_{n}}\left(  -1\right)  ^{\sigma}\prod_{i=1}^{n}f\left(
a_{i,\sigma\left(  i\right)  }\right) \\
&  \ \ \ \ \ \ \ \ \ \ \left(  \text{since }f\text{ is a ring homomorphism}%
\right) \\
&  =\det\left(  f^{n\times n}\left(  A\right)  \right)
\ \ \ \ \ \ \ \ \ \ \left(  \text{by (\ref{pf.prop.functor.det.a.1})}\right)
.
\end{align*}
This proves Proposition \ref{prop.functor.det} \textbf{(a)}. \medskip

\textbf{(b)} Let $u$ and $v$ be two elements of $\left\{  1,2,\ldots
,n\right\}  $.

Let $\left(  p_{1},p_{2},\ldots,p_{n-1}\right)  $ be the $\left(  n-1\right)
$-tuple $\left(  1,2,\ldots,\widehat{u},\ldots,n\right)  $. Thus, $\left(
p_{1},p_{2},\ldots,p_{n-1}\right)  =\left(  1,2,\ldots,\widehat{u}%
,\ldots,n\right)  $.

Let $\left(  q_{1},q_{2},\ldots,q_{n-1}\right)  $ be the $\left(  n-1\right)
$-tuple $\left(  1,2,\ldots,\widehat{v},\ldots,n\right)  $. Thus, $\left(
q_{1},q_{2},\ldots,q_{n-1}\right)  =\left(  1,2,\ldots,\widehat{v}%
,\ldots,n\right)  $.

If $C=\left(  c_{i,j}\right)  _{1\leq i\leq n,\ 1\leq j\leq n}$ is any
$n\times n$-matrix over a commutative ring $\mathbb{K}$, then
\begin{equation}
C_{\sim u,\sim v}=\left(  c_{p_{i},q_{j}}\right)  _{1\leq i\leq n-1,\ 1\leq
j\leq n-1} \label{pf.prop.functor.det.b.1}%
\end{equation}
\footnote{\textit{Proof of (\ref{pf.prop.functor.det.b.1}):} The equality
(\ref{pf.prop.functor.det.b.1}) is precisely the equality
(\ref{pf.lem.adj.linear.1}), and thus has already been proven.}.

From (\ref{pf.prop.functor.det.b.1}) (applied to $\mathbb{L}$, $A$ and
$a_{i,j}$ instead of $\mathbb{K}$, $C$ and $c_{i,j}$), we obtain $A_{\sim
u,\sim v}=\left(  a_{p_{i},q_{j}}\right)  _{1\leq i\leq n-1,\ 1\leq j\leq
n-1}$. Hence, the definition of the map $f^{\left(  n-1\right)  \times\left(
n-1\right)  }$ yields%
\begin{equation}
f^{\left(  n-1\right)  \times\left(  n-1\right)  }\left(  A_{\sim u,\sim
v}\right)  =\left(  f\left(  a_{p_{i},q_{j}}\right)  \right)  _{1\leq i\leq
n-1,\ 1\leq j\leq n-1}. \label{pf.prop.functor.det.b.2}%
\end{equation}

From (\ref{pf.prop.functor.det.b.1}) (applied to $\mathbb{M}$, $f^{n\times
n}\left(  A\right)  $ and $f\left(  a_{i,j}\right)  $ instead of $\mathbb{K}$,
$C$ and $c_{i,j}$), we obtain $\left(  f^{n\times n}\left(  A\right)  \right)
_{\sim u,\sim v}=\left(  f\left(  a_{p_{i},q_{j}}\right)  \right)  _{1\leq
i\leq n-1,\ 1\leq j\leq n-1}$ (since $f^{n\times n}\left(  A\right)  =\left(
f\left(  a_{i,j}\right)  \right)  _{1\leq i\leq n,\ 1\leq j\leq n}$).
Comparing this with (\ref{pf.prop.functor.det.b.2}), we obtain $f^{\left(
n-1\right)  \times\left(  n-1\right)  }\left(  A_{\sim u,\sim v}\right)
=\left(  f^{n\times n}\left(  A\right)  \right)  _{\sim u,\sim v}$. This
proves Proposition \ref{prop.functor.det} \textbf{(b)}. \medskip

\textbf{(c)} Let $\left(  i,j\right)  \in\left\{  1,2,\ldots,n\right\}  ^{2}$.
Then, $A_{\sim j,\sim i}\in\mathbb{L}^{\left(  n-1\right)  \times\left(
n-1\right)  }$. Hence, Proposition \ref{prop.functor.det} \textbf{(a)}
(applied to $n-1$ and $A_{\sim j,\sim i}$ instead of $n$ and $A$) yields%
\[
f\left(  \det\left(  A_{\sim j,\sim i}\right)  \right)  =\det\left(
\underbrace{f^{\left(  n-1\right)  \times\left(  n-1\right)  }\left(  A_{\sim
j,\sim i}\right)  }_{\substack{=\left(  f^{n\times n}\left(  A\right)
\right)  _{\sim j,\sim i}\\\text{(by Proposition \ref{prop.functor.det}
\textbf{(b)}}\\\text{(applied to }u=j\text{ and }v=i\text{))}}}\right)
=\det\left(  \left(  f^{n\times n}\left(  A\right)  \right)  _{\sim j,\sim
i}\right)  .
\]
But $f$ is a ring homomorphism; hence,%
\begin{align}
&  f\left(  \left(  -1\right)  ^{i+j}\det\left(  A_{\sim j,\sim i}\right)
\right) \nonumber\\
&  =\left(  -1\right)  ^{i+j}\underbrace{f\left(  \det\left(  A_{\sim j,\sim
i}\right)  \right)  }_{=\det\left(  \left(  f^{n\times n}\left(  A\right)
\right)  _{\sim j,\sim i}\right)  }=\left(  -1\right)  ^{i+j}\det\left(
\left(  f^{n\times n}\left(  A\right)  \right)  _{\sim j,\sim i}\right)  .
\label{pf.prop.functor.det.c.2}%
\end{align}

Now, forget that we fixed $\left(  i,j\right)  $. We thus have shown that
(\ref{pf.prop.functor.det.c.2}) holds for every $\left(  i,j\right)
\in\left\{  1,2,\ldots,n\right\}  ^{2}$.

The definition of $\operatorname*{adj}A$ yields $\operatorname*{adj}A=\left(
\left(  -1\right)  ^{i+j}\det\left(  A_{\sim j,\sim i}\right)  \right)
_{1\leq i\leq n,\ 1\leq j\leq n}$. Hence, the definition of the map
$f^{n\times n}$ yields%
\begin{align}
f^{n\times n}\left(  \operatorname*{adj}A\right)   &  =\left(
\underbrace{f\left(  \left(  -1\right)  ^{i+j}\det\left(  A_{\sim j,\sim
i}\right)  \right)  }_{\substack{=\left(  -1\right)  ^{i+j}\det\left(  \left(
f^{n\times n}\left(  A\right)  \right)  _{\sim j,\sim i}\right)  \\\text{(by
(\ref{pf.prop.functor.det.c.2}))}}}\right)  _{1\leq i\leq n,\ 1\leq j\leq
n}\nonumber\\
&  =\left(  \left(  -1\right)  ^{i+j}\det\left(  \left(  f^{n\times n}\left(
A\right)  \right)  _{\sim j,\sim i}\right)  \right)  _{1\leq i\leq n,\ 1\leq
j\leq n}. \label{pf.prop.functor.det.c.3}%
\end{align}

But the definition of $\operatorname*{adj}\left(  f^{n\times n}\left(
A\right)  \right)  $ yields%
\begin{align*}
\operatorname*{adj}\left(  f^{n\times n}\left(  A\right)  \right)   &
=\left(  \left(  -1\right)  ^{i+j}\det\left(  \left(  f^{n\times n}\left(
A\right)  \right)  _{\sim j,\sim i}\right)  \right)  _{1\leq i\leq n,\ 1\leq
j\leq n}\\
&  =f^{n\times n}\left(  \operatorname*{adj}A\right)
\ \ \ \ \ \ \ \ \ \ \left(  \text{by (\ref{pf.prop.functor.det.c.3})}\right)
.
\end{align*}
This proves Proposition \ref{prop.functor.det} \textbf{(c)}.
\end{proof}
\end{verlong}

\subsection{The evaluation homomorphism}

We shall apply the above to relate the determinant and the adjugate of a
matrix $A$ with those of the matrix $tI_{n}+A$:

\begin{proposition}
\label{prop.tInA.ev}Let $\varepsilon:\mathbb{K}\left[  t\right]
\rightarrow\mathbb{K}$ be the map which sends every polynomial $p\in
\mathbb{K}\left[  t\right]  $ to its value $p\left(  0\right)  $. It is
well-known that $\varepsilon$ is a $\mathbb{K}$-algebra homomorphism.

Let $n\in\mathbb{N}$. Let $A\in\mathbb{K}^{n\times n}$. Consider the matrix
$tI_{n}+A\in\left(  \mathbb{K}\left[  t\right]  \right)  ^{n\times n}$. Then:
\medskip

\textbf{(a)} We have $\varepsilon\left(  \det\left(  tI_{n}+A\right)  \right)
=\det A$. \medskip

\textbf{(b)} We have $\varepsilon^{n\times n}\left(  \operatorname*{adj}%
\left(  tI_{n}+A\right)  \right)  =\operatorname*{adj}A$. \medskip

\textbf{(c)} We have $\varepsilon^{n\times n}\left(  tI_{n}+A\right)  =A$.
\end{proposition}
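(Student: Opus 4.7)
The plan is to prove part \textbf{(c)} first by direct calculation on entries, and then derive parts \textbf{(a)} and \textbf{(b)} as immediate consequences of Proposition \ref{prop.functor.det}, applied to the ring homomorphism $\varepsilon:\mathbb{K}\left[t\right]\rightarrow\mathbb{K}$ together with the matrix $tI_{n}+A\in\left(\mathbb{K}\left[t\right]\right)^{n\times n}$.

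For part \textbf{(c)}, I would write $A=\left(a_{i,j}\right)_{1\leq i\leq n,\ 1\leq j\leq n}$ and observe that $I_{n}=\left(\delta_{i,j}\right)_{1\leq i\leq n,\ 1\leq j\leq n}$, where $\delta_{i,j}$ is the Kronecker delta. Then $tI_{n}+A=\left(t\delta_{i,j}+a_{i,j}\right)_{1\leq i\leq n,\ 1\leq j\leq n}$ in $\left(\mathbb{K}\left[t\right]\right)^{n\times n}$. Since $\varepsilon$ evaluates at $t=0$, and $a_{i,j}\in\mathbb{K}$ is a constant polynomial, we have $\varepsilon\left(t\delta_{i,j}+a_{i,j}\right)=0\cdot\delta_{i,j}+a_{i,j}=a_{i,j}$ for every $\left(i,j\right)\in\left\{1,2,\ldots,n\right\}^{2}$. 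Applying the definition of $\varepsilon^{n\times n}$ entrywise then gives $\varepsilon^{n\times n}\left(tI_{n}+A\right)=\left(a_{i,j}\right)_{1\leq i\leq n,\ 1\leq j\leq n}=A$.

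For part \textbf{(a)}, I would invoke Proposition \ref{prop.functor.det} \textbf{(a)} (applied to $\mathbb{L}=\mathbb{K}\left[t\right]$, $\mathbb{M}=\mathbb{K}$, $f=\varepsilon$, and the matrix $tI_{n}+A$ instead of $A$) to conclude that $\varepsilon\left(\det\left(tI_{n}+A\right)\right)=\det\left(\varepsilon^{n\times n}\left(tI_{n}+A\right)\right)$; substituting part \textbf{(c)} into the right-hand side yields $\det A$. For part \textbf{(b)}, the same strategy works with Proposition \ref{prop.functor.det} \textbf{(c)} in place of \textbf{(a)}: it gives $\varepsilon^{n\times n}\left(\operatorname*{adj}\left(tI_{n}+A\right)\right)=\operatorname*{adj}\left(\varepsilon^{n\times n}\left(tI_{n}+A\right)\right)=\operatorname*{adj}A$, again using \textbf{(c)} in the last step.

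There is no real obstacle here; the only thing to verify carefully is that $\varepsilon$ is indeed a $\mathbb{K}$-algebra (hence ring) homomorphism, which is standard, and that part \textbf{(c)} is written down correctly since parts \textbf{(a)} and \textbf{(b)} depend on it. Once these small points are settled, the proposition reduces to a direct application of the functoriality results already established.
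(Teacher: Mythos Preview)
Your proposal is correct and follows essentially the same approach as the paper: prove part \textbf{(c)} by an entrywise computation (the paper does this for $tB+A$ with general $B\in\mathbb{K}^{n\times n}$ and then specializes to $B=I_{n}$, whereas you work directly with $I_{n}$), and then deduce \textbf{(a)} and \textbf{(b)} from Proposition \ref{prop.functor.det} \textbf{(a)} and \textbf{(c)} respectively.
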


\begin{vershort}
\begin{proof}
[Proof of Proposition \ref{prop.tInA.ev}.]We have%
\[
\varepsilon^{n\times n}\left(  tB+A\right)  =A
\]
for every $B\in\mathbb{K}^{n\times n}$\ \ \ \ \footnote{\textit{Proof.} This
equality is similar to (\ref{pf.thm.ddet.short.1}), and is proven
analogously.}. Applying this to $B=I_{n}$, we obtain $\varepsilon^{n\times
n}\left(  tI_{n}+A\right)  =A$. This proves Proposition \ref{prop.tInA.ev}
\textbf{(c)}. \medskip

\textbf{(a)} Proposition \ref{prop.functor.det} \textbf{(a)} (applied to
$\mathbb{K}\left[  t\right]  $, $\mathbb{K}$, $\varepsilon$ and $tI_{n}+A$
instead of $\mathbb{L}$, $\mathbb{M}$, $f$ and $A$) yields%
\[
\varepsilon\left(  \det\left(  tI_{n}+A\right)  \right)  =\det\left(
\underbrace{\varepsilon^{n\times n}\left(  tI_{n}+A\right)  }_{=A}\right)
=\det A.
\]
This proves Proposition \ref{prop.tInA.ev} \textbf{(a)}. \medskip

\textbf{(b)} Proposition \ref{prop.functor.det} \textbf{(c)} (applied to
$\mathbb{K}\left[  t\right]  $, $\mathbb{K}$, $\varepsilon$ and $tI_{n}+A$
instead of $\mathbb{L}$, $\mathbb{M}$, $f$ and $A$) yields%
\[
\varepsilon^{n\times n}\left(  \operatorname*{adj}\left(  tI_{n}+A\right)
\right)  =\operatorname*{adj}\left(  \underbrace{\varepsilon^{n\times
n}\left(  tI_{n}+A\right)  }_{=A}\right)  =\operatorname*{adj}A.
\]
This proves Proposition \ref{prop.tInA.ev} \textbf{(b)}.
\end{proof}
\end{vershort}

\begin{verlong}
\begin{proof}
[Proof of Proposition \ref{prop.tInA.ev}.]We have%
\begin{equation}
\varepsilon^{n\times n}\left(  tB+A\right)  =A \label{pf.prop.tInA.ev.1}%
\end{equation}
for every $B\in\mathbb{K}^{n\times n}$\ \ \ \ \footnote{\textit{Proof of
(\ref{pf.prop.tInA.ev.1}):} Let $B\in\mathbb{K}^{n\times n}$. Write the
$n\times n$-matrix $B\in\mathbb{K}^{n\times n}$ in the form $B=\left(
b_{i,j}\right)  _{1\leq i\leq n,\ 1\leq j\leq n}$. Thus,
\begin{equation}
b_{i,j}\in\mathbb{K}\ \ \ \ \ \ \ \ \ \ \text{for every }\left(  i,j\right)
\in\left\{  1,2,\ldots,n\right\}  ^{2} \label{pf.prop.tInA.ev.1.pf.1}%
\end{equation}
(since $B\in\mathbb{K}^{n\times n}$). Also, $t\underbrace{B}_{=\left(
b_{i,j}\right)  _{1\leq i\leq n,\ 1\leq j\leq n}}=t\left(  b_{i,j}\right)
_{1\leq i\leq n,\ 1\leq j\leq n}=\left(  tb_{i,j}\right)  _{1\leq i\leq
n,\ 1\leq j\leq n}$.
\par
Write the $n\times n$-matrix $A\in\mathbb{K}^{n\times n}$ in the form
$A=\left(  a_{i,j}\right)  _{1\leq i\leq n,\ 1\leq j\leq n}$. Thus,%
\begin{equation}
a_{i,j}\in\mathbb{K}\ \ \ \ \ \ \ \ \ \ \text{for every }\left(  i,j\right)
\in\left\{  1,2,\ldots,n\right\}  ^{2} \label{pf.prop.tInA.ev.1.pf.2}%
\end{equation}
(since $A\in\mathbb{K}^{n\times n}$).
\par
Fix $\left(  i,j\right)  \in\left\{  1,2,\ldots,n\right\}  ^{2}$. Then,
$b_{i,j}\in\mathbb{K}$ (by (\ref{pf.prop.tInA.ev.1.pf.1})) and $a_{i,j}%
\in\mathbb{K}$ (by (\ref{pf.prop.tInA.ev.1.pf.2})). Hence, the value of the
polynomial $tb_{i,j}-a_{i,j}\in\mathbb{K}\left[  t\right]  $ at $0$ is
$\left(  tb_{i,j}+a_{i,j}\right)  \left(  0\right)  =a_{i,j}$. But the
definition of $\varepsilon$ yields%
\begin{equation}
\varepsilon\left(  tb_{i,j}+a_{i,j}\right)  =\left(  tb_{i,j}+a_{i,j}\right)
\left(  0\right)  =a_{i,j}. \label{pf.prop.tInA.ev.1.pf.4}%
\end{equation}
\par
Now, forget that we fixed $\left(  i,j\right)  $. We thus have proven
(\ref{pf.prop.tInA.ev.1.pf.4}) for every $\left(  i,j\right)  \in\left\{
1,2,\ldots,n\right\}  ^{2}.$
\par
Now,%
\[
\underbrace{tB}_{=\left(  tb_{i,j}\right)  _{1\leq i\leq n,\ 1\leq j\leq n}%
}+\underbrace{A}_{=\left(  a_{i,j}\right)  _{1\leq i\leq n,\ 1\leq j\leq n}%
}=\left(  tb_{i,j}\right)  _{1\leq i\leq n,\ 1\leq j\leq n}+\left(
a_{i,j}\right)  _{1\leq i\leq n,\ 1\leq j\leq n}=\left(  tb_{i,j}%
+a_{i,j}\right)  _{1\leq i\leq n,\ 1\leq j\leq n}.
\]
Hence, the definition of the map $\varepsilon^{n\times n}$ yields%
\[
\varepsilon^{n\times n}\left(  tB+A\right)  =\left(  \underbrace{\varepsilon
\left(  tb_{i,j}+a_{i,j}\right)  }_{\substack{=a_{i,j}\\\text{(by
(\ref{pf.prop.tInA.ev.1.pf.4}))}}}\right)  _{1\leq i\leq n,\ 1\leq j\leq
n}=\left(  a_{i,j}\right)  _{1\leq i\leq n,\ 1\leq j\leq n}=A.
\]
Hence, (\ref{pf.prop.tInA.ev.1}) is proven.}. Applying this to $B=I_{n}$, we
obtain $\varepsilon^{n\times n}\left(  tI_{n}+A\right)  =A$. This proves
Proposition \ref{prop.tInA.ev} \textbf{(c)}. \medskip

The map $\varepsilon$ is a $\mathbb{K}$-algebra homomorphism, thus a ring
homomorphism. \medskip

\textbf{(a)} Proposition \ref{prop.functor.det} \textbf{(a)} (applied to
$\mathbb{K}\left[  t\right]  $, $\mathbb{K}$, $\varepsilon$ and $tI_{n}+A$
instead of $\mathbb{L}$, $\mathbb{M}$, $f$ and $A$) yields%
\[
\varepsilon\left(  \det\left(  tI_{n}+A\right)  \right)  =\det\left(
\underbrace{\varepsilon^{n\times n}\left(  tI_{n}+A\right)  }_{=A}\right)
=\det A.
\]
This proves Proposition \ref{prop.tInA.ev} \textbf{(a)}. \medskip

\textbf{(b)} Proposition \ref{prop.functor.det} \textbf{(c)} (applied to
$\mathbb{K}\left[  t\right]  $, $\mathbb{K}$, $\varepsilon$ and $tI_{n}+A$
instead of $\mathbb{L}$, $\mathbb{M}$, $f$ and $A$) yields%
\[
\varepsilon^{n\times n}\left(  \operatorname*{adj}\left(  tI_{n}+A\right)
\right)  =\operatorname*{adj}\left(  \underbrace{\varepsilon^{n\times
n}\left(  tI_{n}+A\right)  }_{=A}\right)  =\operatorname*{adj}A.
\]
This proves Proposition \ref{prop.tInA.ev} \textbf{(b)}.
\end{proof}
\end{verlong}

If $A\in\mathbb{K}^{n\times n}$ is a square matrix, then the matrix
$tI_{n}+A\in\left(  \mathbb{K}\left[  t\right]  \right)  ^{n\times n}$ has a
property which the matrix $A$ might not have: namely, its determinant is
regular. Let us first define what this means:

\begin{definition}
\label{def.regular}Let $\mathbb{A}$ be a commutative ring. Let $a\in
\mathbb{A}$. The element $a$ of $\mathbb{A}$ is said to be \textit{regular} if
and only if every $x\in\mathbb{A}$ satisfying $ax=0$ satisfies $x=0$.

Instead of saying that $a$ is regular, one can also say that \textquotedblleft%
$a$ is \textit{cancellable}\textquotedblright, or that \textquotedblleft$a$ is
a \textit{non-zero-divisor}\textquotedblright.
\end{definition}

A basic property of regular elements is the following:

\begin{lemma}
\label{lem.regular.cancel}Let $\mathbb{A}$ be a commutative ring. Let $a$ be a
regular element of $\mathbb{A}$. Let $b$ and $c$ be two elements of
$\mathbb{A}$ such that $ab=ac$. Then, $b=c$.
\end{lemma}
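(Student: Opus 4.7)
The plan is to reduce the equation $ab = ac$ to an equation of the form $ax = 0$, so that the regularity hypothesis on $a$ can be applied directly via Definition \ref{def.regular}.

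First I would subtract $ac$ from both sides of $ab = ac$ to obtain $ab - ac = 0$. Then, using distributivity in the commutative ring $\mathbb{A}$, I would rewrite the left-hand side as $a(b-c)$, giving $a(b-c) = 0$. At this point, setting $x := b - c \in \mathbb{A}$, the equation becomes $ax = 0$.

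Next, since $a$ is regular, Definition \ref{def.regular} tells us that every $x \in \mathbb{A}$ satisfying $ax = 0$ must satisfy $x = 0$. Applying this to our $x = b - c$, we get $b - c = 0$, and hence $b = c$, which is the claim.

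There is no real obstacle here; the lemma is essentially an unpacking of the definition of regularity combined with the distributive law. The only thing to watch out for is to be explicit about the use of distributivity (i.e.\ $ab - ac = a(b-c)$) and about the application of Definition \ref{def.regular} to the specific element $b - c$, so that the proof is rigorous rather than merely evocative. No auxiliary lemmas are needed.
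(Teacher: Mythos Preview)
Your proposal is correct and follows essentially the same approach as the paper: both proofs rewrite $ab=ac$ as $a(b-c)=0$ via distributivity and then apply Definition~\ref{def.regular} to the element $x=b-c$ to conclude $b=c$.
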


\begin{proof}
[Proof of Lemma \ref{lem.regular.cancel}.]We have $a\left(  b-c\right)
=\underbrace{ab}_{=ac}-ac=ac-ac=0$.

Now, recall that the element $a$ of $\mathbb{A}$ is regular if and only if
every $x\in\mathbb{A}$ satisfying $ax=0$ satisfies $x=0$ (by the definition of
\textquotedblleft regular\textquotedblright). Hence, every $x\in\mathbb{A}$
satisfying $ax=0$ satisfies $x=0$ (because the element $a$ of $\mathbb{A}$ is
regular). Applying this to $x=b-c$, we obtain $b-c=0$ (since $a\left(
b-c\right)  =0$). Thus, $b=c$. This proves Lemma \ref{lem.regular.cancel}.
\end{proof}

Regular elements, of course, can also be cancelled from matrix equations:

\begin{lemma}
\label{lem.regular.mat-cancel}Let $n\in\mathbb{N}$ and $m\in\mathbb{N}$. Let
$a$ be a regular element of $\mathbb{K}$. Let $B\in\mathbb{K}^{n\times m}$ and
$C\in\mathbb{K}^{n\times m}$ be such that $aB=aC$. Then, $B=C$.
\end{lemma}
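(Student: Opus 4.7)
The plan is to reduce the matrix equation to a collection of scalar equations, one per entry, and then apply Lemma \ref{lem.regular.cancel} to each one. First I would write the matrices as $B = (b_{i,j})_{1\leq i\leq n,\ 1\leq j\leq m}$ and $C = (c_{i,j})_{1\leq i\leq n,\ 1\leq j\leq m}$, so that scalar multiplication yields $aB = (ab_{i,j})_{1\leq i\leq n,\ 1\leq j\leq m}$ and $aC = (ac_{i,j})_{1\leq i\leq n,\ 1\leq j\leq m}$.

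Then, from the hypothesis $aB = aC$, I would conclude by comparing entries that $ab_{i,j} = ac_{i,j}$ for every $(i,j) \in \{1,2,\ldots,n\} \times \{1,2,\ldots,m\}$. Fixing such a pair $(i,j)$ and applying Lemma \ref{lem.regular.cancel} (with $\mathbb{A} = \mathbb{K}$, and with $b = b_{i,j}$, $c = c_{i,j}$) yields $b_{i,j} = c_{i,j}$, where regularity of $a$ is the crucial ingredient.

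Since this holds for every $(i,j)$, the two matrices $B = (b_{i,j})_{1\leq i\leq n,\ 1\leq j\leq m}$ and $C = (c_{i,j})_{1\leq i\leq n,\ 1\leq j\leq m}$ agree entrywise, whence $B = C$. There is no real obstacle here: the lemma is just the componentwise upgrade of Lemma \ref{lem.regular.cancel}, and the only care needed is the (purely notational) bookkeeping of writing out entries and observing that scalar multiplication of a matrix by an element of $\mathbb{K}$ is performed entrywise.
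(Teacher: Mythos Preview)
Your proposal is correct and essentially identical to the paper's own proof: both write $B$ and $C$ entrywise, compare entries to obtain $ab_{i,j}=ac_{i,j}$, and then apply Lemma~\ref{lem.regular.cancel} to each entry to conclude $b_{i,j}=c_{i,j}$ and hence $B=C$.
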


\begin{vershort}
\begin{proof}
[Proof of Lemma \ref{lem.regular.mat-cancel}.]Write the $n\times m$-matrices
$B$ and $C$ in the forms $B=\left(  b_{i,j}\right)  _{1\leq i\leq n,\ 1\leq
j\leq m}$ and $C=\left(  c_{i,j}\right)  _{1\leq i\leq n,\ 1\leq j\leq m}$.
Then, $aB=\left(  ab_{i,j}\right)  _{1\leq i\leq n,\ 1\leq j\leq m}$ and
$aC=\left(  ac_{i,j}\right)  _{1\leq i\leq n,\ 1\leq j\leq m}$. Hence,
\[
\left(  ab_{i,j}\right)  _{1\leq i\leq n,\ 1\leq j\leq m}=aB=aC=\left(
ac_{i,j}\right)  _{1\leq i\leq n,\ 1\leq j\leq m}.
\]
In other words,
\[
ab_{i,j}=ac_{i,j}\ \ \ \ \ \ \ \ \ \ \text{for every }\left(  i,j\right)
\in\left\{  1,2,\ldots,n\right\}  \times\left\{  1,2,\ldots,m\right\}  .
\]
Thus,
\[
b_{i,j}=c_{i,j}\ \ \ \ \ \ \ \ \ \ \text{for every }\left(  i,j\right)
\in\left\{  1,2,\ldots,n\right\}  \times\left\{  1,2,\ldots,m\right\}
\]
(by Lemma \ref{lem.regular.cancel}, applied to $b=b_{i,j}$ and $c=c_{i,j}$).
Hence, $\left(  b_{i,j}\right)  _{1\leq i\leq n,\ 1\leq j\leq m}=\left(
c_{i,j}\right)  _{1\leq i\leq n,\ 1\leq j\leq m}$. Thus, $B=\left(
b_{i,j}\right)  _{1\leq i\leq n,\ 1\leq j\leq m}=\left(  c_{i,j}\right)
_{1\leq i\leq n,\ 1\leq j\leq m}=C$. Lemma \ref{lem.regular.mat-cancel} is proven.
\end{proof}
\end{vershort}

\begin{verlong}
\begin{proof}
[Proof of Lemma \ref{lem.regular.mat-cancel}.]Write the $n\times m$-matrix
$B\in\mathbb{K}^{n\times m}$ in the form $B=\left(  b_{i,j}\right)  _{1\leq
i\leq n,\ 1\leq j\leq m}$.

Write the $n\times m$-matrix $C\in\mathbb{K}^{n\times m}$ in the form
$C=\left(  c_{i,j}\right)  _{1\leq i\leq n,\ 1\leq j\leq m}$.

Now,%
\[
a\underbrace{B}_{=\left(  b_{i,j}\right)  _{1\leq i\leq n,\ 1\leq j\leq m}%
}=a\left(  b_{i,j}\right)  _{1\leq i\leq n,\ 1\leq j\leq m}=\left(
ab_{i,j}\right)  _{1\leq i\leq n,\ 1\leq j\leq m},
\]
so that%
\[
\left(  ab_{i,j}\right)  _{1\leq i\leq n,\ 1\leq j\leq m}=aB=a\underbrace{C}%
_{=\left(  c_{i,j}\right)  _{1\leq i\leq n,\ 1\leq j\leq m}}=a\left(
c_{i,j}\right)  _{1\leq i\leq n,\ 1\leq j\leq m}=\left(  ac_{i,j}\right)
_{1\leq i\leq n,\ 1\leq j\leq m}.
\]
In other words,
\begin{equation}
ab_{i,j}=ac_{i,j}\ \ \ \ \ \ \ \ \ \ \text{for every }\left(  i,j\right)
\in\left\{  1,2,\ldots,n\right\}  \times\left\{  1,2,\ldots,m\right\}  .
\label{pf.lem.regular.det.cancel.a.1}%
\end{equation}

Now,
\begin{equation}
b_{i,j}=c_{i,j}\ \ \ \ \ \ \ \ \ \ \text{for every }\left(  i,j\right)
\in\left\{  1,2,\ldots,n\right\}  \times\left\{  1,2,\ldots,m\right\}
\label{pf.lem.regular.det.cancel.a.2}%
\end{equation}
\footnote{\textit{Proof of (\ref{pf.lem.regular.det.cancel.a.2}):} Let
$\left(  i,j\right)  \in\left\{  1,2,\ldots,n\right\}  \times\left\{
1,2,\ldots,m\right\}  $. From (\ref{pf.lem.regular.det.cancel.a.1}), we obtain
$ab_{i,j}=ac_{i,j}$. Lemma \ref{lem.regular.cancel} (applied to $\mathbb{K}$,
$b_{i,j}$ and $c_{i,j}$ instead of $\mathbb{A}$, $b$ and $c$) thus yields
$b_{i,j}=c_{i,j}$ (since the element $a$ of $\mathbb{K}$ is regular). This
proves (\ref{pf.lem.regular.det.cancel.a.2}).}. Hence, $\left(  b_{i,j}%
\right)  _{1\leq i\leq n,\ 1\leq j\leq m}=\left(  c_{i,j}\right)  _{1\leq
i\leq n,\ 1\leq j\leq m}$. Thus, $B=\left(  b_{i,j}\right)  _{1\leq i\leq
n,\ 1\leq j\leq m}=\left(  c_{i,j}\right)  _{1\leq i\leq n,\ 1\leq j\leq m}%
=C$. This proves Lemma \ref{lem.regular.mat-cancel}.
\end{proof}
\end{verlong}

One important way to construct regular elements is the following fact:

\begin{proposition}
\label{prop.pol.monic-reg}Let $n\in\mathbb{N}$. Let $p\in\mathbb{K}\left[
t\right]  $ be a monic polynomial of degree $n$. Then, the element $p$ of
$\mathbb{K}\left[  t\right]  $ is regular.
\end{proposition}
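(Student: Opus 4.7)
The plan is to prove regularity by contradiction, comparing leading coefficients. Suppose $q \in \mathbb{K}[t]$ satisfies $pq = 0$; I want to show $q = 0$. Assume for contradiction that $q \neq 0$. Since $q$ has only finitely many nonzero coefficients but at least one, there is a largest integer $m \in \mathbb{N}$ with $[t^m]q \neq 0$; set $b := [t^m]q$, so $b \neq 0$ and $[t^k]q = 0$ for every integer $k > m$.

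Next I would examine the coefficient of $t^{n+m}$ in the product $pq$. By the definition of polynomial multiplication,
\[
[t^{n+m}](pq) = \sum_{i=0}^{n+m} \left([t^i]p\right)\left([t^{n+m-i}]q\right).
\]
Here I would split the sum into three ranges: $i < n$, $i = n$, and $i > n$. For $i < n$, we have $n+m-i > m$, so $[t^{n+m-i}]q = 0$ by the choice of $m$. For $i > n$, we have $[t^i]p = 0$ since $p$ has degree $n$. Only the $i = n$ term survives, and it equals $[t^n]p \cdot [t^m]q = 1 \cdot b = b$, using that $p$ is monic of degree $n$. Thus $[t^{n+m}](pq) = b$.

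On the other hand, $pq = 0$ forces $[t^{n+m}](pq) = 0$, so $b = 0$, contradicting $b \neq 0$. Hence $q = 0$, establishing that $p$ is regular. The only mildly delicate point is ensuring the largest index $m$ exists, which follows because a polynomial has finite support; everything else is a direct coefficient comparison. No real obstacle arises, and the argument uses nothing beyond the monicity assumption and the definitions.
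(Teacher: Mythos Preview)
Your proof is correct and is essentially the same leading-coefficient argument that the paper sketches (via the reference to \cite{Grinbe16b}): both show that if $pq=0$ with $q\neq 0$, then comparing the coefficient of $t^{n+m}$ (where $m$ is the top degree of $q$) forces the leading coefficient of $q$ to vanish, a contradiction.
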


\begin{vershort}
\begin{proof}
[Proof of Proposition \ref{prop.pol.monic-reg}.]Proposition
\ref{prop.pol.monic-reg} is precisely \cite[Corollary 3.15]{Grinbe16b}%
.\footnote{In a nutshell, the proof in \cite[Corollary 3.15]{Grinbe16b}
proceeds as follows: The polynomial $p$ is monic of degree $n$; thus,
$p=t^{n}+\left(  \text{lower order terms}\right)  $ (where \textquotedblleft
lower order terms\textquotedblright\ means a $\mathbb{K}$-linear combination
of monomials $t^{i}$ with $i<n$). On the other hand, if $x\in\mathbb{K}\left[
t\right]  $ is such that $px=0$, then we can write $x$ as $x=at^{m}+\left(
\text{lower order terms}\right)  $ for some $a\in\mathbb{K}$ and some
$m\in\mathbb{N}$ (where \textquotedblleft lower order terms\textquotedblright%
\ means a $\mathbb{K}$-linear combination of monomials $t^{i}$ with $i<m$).
Multiplying the equalities $p=t^{n}+\left(  \text{lower order terms}\right)  $
and $x=at^{m}+\left(  \text{lower order terms}\right)  $, we find%
\[
px=at^{n+m}+\left(  \text{lower order terms}\right)  .
\]
Hence, from $px=0$, we obtain $a=0$. Thus, if $x\in\mathbb{K}\left[  t\right]
$ is such that $px=0$, then the highest-degree term of $x$ is $0$. But this
clearly entails $x=0$. Thus we have shown that $p$ is regular.}
\end{proof}
\end{vershort}

\begin{verlong}
\begin{proof}
[Proof of Proposition \ref{prop.pol.monic-reg}.]Proposition
\ref{prop.pol.monic-reg} is precisely \cite[Corollary 3.15]{Grinbe16b}. (The
notations used in \cite{Grinbe16b} differ from ours as follows: Our ring
$\mathbb{K}$ and our indeterminate $t$ correspond to the ring $A$ and the
indeterminate $X$ in \cite{Grinbe16b}.)\footnote{In a nutshell, the proof in
\cite[Corollary 3.15]{Grinbe16b} proceeds as follows: The polynomial $p$ is
monic of degree $n$; thus, $p=t^{n}+\left(  \text{lower order terms}\right)  $
(where \textquotedblleft lower order terms\textquotedblright\ means a
$\mathbb{K}$-linear combination of monomials $t^{i}$ with $i<n$). On the other
hand, if $x\in\mathbb{K}\left[  t\right]  $ is such that $px=0$, then we can
write $x$ as $x=at^{m}+\left(  \text{lower order terms}\right)  $ for some
$a\in\mathbb{K}$ and some $m\in\mathbb{N}$ (where \textquotedblleft lower
order terms\textquotedblright\ means a $\mathbb{K}$-linear combination of
monomials $t^{i}$ with $i<m$). Multiplying the equalities $p=t^{n}+\left(
\text{lower order terms}\right)  $ and $x=at^{m}+\left(  \text{lower order
terms}\right)  $, we find%
\[
px=at^{n+m}+\left(  \text{lower order terms}\right)  .
\]
Hence, from $px=0$, we obtain $a=0$. Thus, if $x\in\mathbb{K}\left[  t\right]
$ is such that $px=0$, then the highest-degree term of $x$ is $0$. But this
clearly entails $x=0$. Thus we have shown that $p$ is regular.}
\end{proof}
\end{verlong}

\begin{corollary}
\label{cor.tInA.reg}Let $n\in\mathbb{N}$. Let $A\in\mathbb{K}^{n\times n}$.
Consider the matrix $tI_{n}+A\in\left(  \mathbb{K}\left[  t\right]  \right)
^{n\times n}$.

Then, the element $\det\left(  tI_{n}+A\right)  $ of $\mathbb{K}\left[
t\right]  $ is regular.
\end{corollary}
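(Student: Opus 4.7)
The plan is to combine Proposition \ref{prop.ta+b} with Proposition \ref{prop.pol.monic-reg}: the former will show that $\det\left(tI_n+A\right)$ is a monic polynomial of degree exactly $n$, and the latter then immediately gives regularity.

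More concretely, I would first apply Proposition \ref{prop.ta+b} \textbf{(a)} to the matrices $I_n$ and $A$ (in place of $A$ and $B$) to conclude that $\det\left(tI_n+A\right)\in\mathbb{K}\left[t\right]$ is a polynomial of degree $\leq n$. Then I would apply Proposition \ref{prop.ta+b} \textbf{(c)} to the same pair to obtain
\[
\left[t^{n}\right]\left(\det\left(tI_n+A\right)\right)=\det\left(I_n\right)=1.
\]
Together, these two facts say exactly that $\det\left(tI_n+A\right)$ is a monic polynomial of degree $n$.

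Finally, I would invoke Proposition \ref{prop.pol.monic-reg} (applied to the polynomial $p=\det\left(tI_n+A\right)$), which yields that $\det\left(tI_n+A\right)$ is a regular element of $\mathbb{K}\left[t\right]$, as claimed.

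No step here is an obstacle; the proof is a two-line assembly of already-established results. The only mild subtlety is making sure one appeals to \emph{both} parts \textbf{(a)} and \textbf{(c)} of Proposition \ref{prop.ta+b}, since Proposition \ref{prop.pol.monic-reg} requires the polynomial to be monic of degree \emph{exactly} $n$, not merely of degree $\leq n$.
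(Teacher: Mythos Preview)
Your proposal is correct and follows essentially the same approach as the paper: apply Proposition~\ref{prop.ta+b} \textbf{(a)} and \textbf{(c)} to conclude that $\det\left(tI_n+A\right)$ is monic of degree $n$, then invoke Proposition~\ref{prop.pol.monic-reg}.
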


\begin{proof}
[Proof of Corollary \ref{cor.tInA.reg}.]Proposition \ref{prop.ta+b}
\textbf{(a)} (applied to $I_{n}$ and $A$ instead of $A$ and $B$) yields that
$\det\left(  tI_{n}+A\right)  \in\mathbb{K}\left[  t\right]  $ is a polynomial
of degree $\leq n$ in $t$. Proposition \ref{prop.ta+b} \textbf{(c)} (applied
to $I_{n}$ and $A$ instead of $A$ and $B$) yields that $\left[  t^{n}\right]
\left(  \det\left(  tI_{n}+A\right)  \right)  =\det\left(  I_{n}\right)  =1$.

So we know that the polynomial $\det\left(  tI_{n}+A\right)  \in
\mathbb{K}\left[  t\right]  $ is a polynomial of degree $\leq n$, and that the
coefficient of $t^{n}$ in this polynomial is $\left[  t^{n}\right]  \left(
\det\left(  tI_{n}+A\right)  \right)  =1$. In other words, the polynomial
$\det\left(  tI_{n}+A\right)  \in\mathbb{K}\left[  t\right]  $ is monic of
degree $n$. Thus, Proposition \ref{prop.pol.monic-reg} (applied to
$p=\det\left(  tI_{n}+A\right)  $) shows that the element $\det\left(
tI_{n}+A\right)  $ of $\mathbb{K}\left[  t\right]  $ is regular. This proves
Corollary \ref{cor.tInA.reg}.
\end{proof}

A square matrix whose determinant is regular can be cancelled from equations,
as the following lemma shows:

\begin{lemma}
\label{lem.regular.det.cancel}Let $n\in\mathbb{N}$. Let $A\in\mathbb{K}%
^{n\times n}$. Assume that the element $\det A$ of $\mathbb{K}$ is regular.
Let $m\in\mathbb{N}$. \medskip

\textbf{(a)} If $B\in\mathbb{K}^{n\times m}$ and $C\in\mathbb{K}^{n\times m}$
are such that $AB=AC$, then $B=C$. \medskip

\textbf{(b)} If $B\in\mathbb{K}^{m\times n}$ and $C\in\mathbb{K}^{m\times n}$
are such that $BA=CA$, then $B=C$.
\end{lemma}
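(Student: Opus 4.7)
The plan is to reduce both parts of Lemma \ref{lem.regular.det.cancel} to Lemma \ref{lem.regular.mat-cancel} by using the adjugate matrix to cancel $A$ from matrix equations, effectively replacing the cancellation of $A$ by the cancellation of the scalar $\det A$. The key tool is Theorem \ref{thm.adj.inverse}, which provides the identity $A \cdot \operatorname{adj} A = \operatorname{adj} A \cdot A = \det A \cdot I_n$.

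For part \textbf{(a)}, I would start with the hypothesis $AB = AC$ and multiply both sides on the left by $\operatorname{adj} A$. This yields $(\operatorname{adj} A) \cdot A \cdot B = (\operatorname{adj} A) \cdot A \cdot C$. Using Theorem \ref{thm.adj.inverse}, the left-hand side becomes $(\det A \cdot I_n) \cdot B = (\det A) \cdot B$, and similarly the right-hand side becomes $(\det A) \cdot C$. Hence $(\det A) \cdot B = (\det A) \cdot C$. Since $\det A$ is assumed to be a regular element of $\mathbb{K}$, Lemma \ref{lem.regular.mat-cancel} (applied to $a = \det A$) gives $B = C$.

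For part \textbf{(b)}, the argument is entirely symmetric: from $BA = CA$, I would multiply both sides on the right by $\operatorname{adj} A$, obtaining $B \cdot A \cdot \operatorname{adj} A = C \cdot A \cdot \operatorname{adj} A$. Again Theorem \ref{thm.adj.inverse} rewrites both sides as $B \cdot (\det A \cdot I_n) = (\det A) \cdot B$ and $C \cdot (\det A \cdot I_n) = (\det A) \cdot C$, so that $(\det A) \cdot B = (\det A) \cdot C$, and Lemma \ref{lem.regular.mat-cancel} again concludes $B = C$.

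There is no real obstacle here: the proof is a one-line application of the adjugate identity combined with the previously established cancellation lemma. The only small subtlety is making sure that the $n \times m$-sized factors $B$ and $C$ can indeed be cancelled by a scalar via Lemma \ref{lem.regular.mat-cancel}, which was stated in exactly the required generality. No additional induction, case analysis, or auxiliary construction is needed.
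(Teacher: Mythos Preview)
Your proposal is correct and matches the paper's proof essentially verbatim: the paper too multiplies by $\operatorname*{adj}A$ (on the left for part \textbf{(a)}, on the right for part \textbf{(b)}), uses Theorem \ref{thm.adj.inverse} to turn this into $(\det A)B=(\det A)C$, and then invokes Lemma \ref{lem.regular.mat-cancel}. The only microscopic detail you might add for part \textbf{(b)} is that Lemma \ref{lem.regular.mat-cancel} is being applied with the roles of $n$ and $m$ swapped, since there $B,C\in\mathbb{K}^{m\times n}$.
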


\begin{vershort}
\begin{proof}
[Proof of Lemma \ref{lem.regular.det.cancel}.]Define an element $a$ of
$\mathbb{K}$ by $a=\det A$. Recall that the element $\det A$ of $\mathbb{K}$
is regular. In other words, the element $a$ of $\mathbb{K}$ is regular (since
$a=\det A$). Theorem \ref{thm.adj.inverse} yields $A\cdot\operatorname*{adj}%
A=\operatorname*{adj}A\cdot A=\det A\cdot I_{n}$. \medskip

\textbf{(a)} Let $B\in\mathbb{K}^{n\times m}$ and $C\in\mathbb{K}^{n\times m}$
be such that $AB=AC$. We must prove that $B=C$.

We have
\[
\underbrace{\operatorname*{adj}A\cdot A}_{=\det A\cdot I_{n}}%
B=\underbrace{\det A}_{=a}\cdot\underbrace{I_{n}B}_{=B}=aB.
\]
Thus,
\[
aB=\operatorname*{adj}A\cdot\underbrace{AB}_{=AC}%
=\underbrace{\operatorname*{adj}A\cdot A}_{=\det A\cdot I_{n}}%
C=\underbrace{\det A}_{=a}\cdot\underbrace{I_{n}C}_{=C}=aC.
\]

Lemma \ref{lem.regular.mat-cancel} thus yields $B=C$. This proves Lemma
\ref{lem.regular.det.cancel} \textbf{(a)}. \medskip

\textbf{(b)} The proof of Lemma \ref{lem.regular.det.cancel} \textbf{(b)} is
similar to the proof of Lemma \ref{lem.regular.det.cancel} \textbf{(a)} (but
now we need to work with $BA\cdot\operatorname*{adj}A$ and $CA\cdot
\operatorname*{adj}A$ instead of $\operatorname*{adj}A\cdot AB$ and
$\operatorname*{adj}A\cdot AC$). The details are left to the reader.
\end{proof}
\end{vershort}

\begin{verlong}
\begin{proof}
[Proof of Lemma \ref{lem.regular.det.cancel}.]Define an element $a$ of
$\mathbb{K}$ by $a=\det A$. Recall that the element $\det A$ of $\mathbb{K}$
is regular. In other words, the element $a$ of $\mathbb{K}$ is regular (since
$a=\det A$).

Theorem \ref{thm.adj.inverse} yields $A\cdot\operatorname*{adj}%
A=\operatorname*{adj}A\cdot A=\det A\cdot I_{n}$. \medskip

\textbf{(a)} Let $B\in\mathbb{K}^{n\times m}$ and $C\in\mathbb{K}^{n\times m}$
be such that $AB=AC$. We must prove that $B=C$.

We have
\[
\underbrace{\operatorname*{adj}A\cdot A}_{=\det A\cdot I_{n}}%
B=\underbrace{\det A}_{=a}\cdot\underbrace{I_{n}B}_{=B}=aB.
\]
Hence,%
\[
aB=\operatorname*{adj}A\cdot\underbrace{AB}_{=AC}%
=\underbrace{\operatorname*{adj}A\cdot A}_{=\det A\cdot I_{n}}%
C=\underbrace{\det A}_{=a}\cdot\underbrace{I_{n}C}_{=C}=aC.
\]

Lemma \ref{lem.regular.mat-cancel} thus yields $B=C$ (since the element $a$ of
$\mathbb{K}$ is regular). This proves Lemma \ref{lem.regular.det.cancel}
\textbf{(a)}. \medskip

\textbf{(b)} Let $B\in\mathbb{K}^{m\times n}$ and $C\in\mathbb{K}^{m\times n}$
be such that $BA=CA$. We must prove that $B=C$.

We have
\[
B\underbrace{A\cdot\operatorname*{adj}A}_{=\det A\cdot I_{n}}=B\cdot\det
A\cdot I_{n}=\underbrace{\det A}_{=a}\cdot\,B=aB.
\]
Hence,%
\[
aB=\underbrace{BA}_{=CA}\cdot\operatorname*{adj}A=C\underbrace{A\cdot
\operatorname*{adj}A}_{=\det A\cdot I_{n}}=C\cdot\det A\cdot I_{n}%
=\underbrace{\det A}_{=a}\cdot\,C=aC.
\]
Lemma \ref{lem.regular.mat-cancel} (applied to $m$ and $n$ instead of $n$ and
$m$) thus yields $B=C$ (since the element $a$ of $\mathbb{K}$ is regular).
This proves Lemma \ref{lem.regular.det.cancel} \textbf{(b)}.
\end{proof}
\end{verlong}

\subsection{The adjugate of a product}

Corollary \ref{cor.tInA.reg} can be put to use in several circumstances. Here
is a simple example:

\begin{theorem}
\label{thm.adj.adj(AB)}Let $n\in\mathbb{N}$. Let $A$ and $B$ be two $n\times
n$-matrices. Then,%
\[
\operatorname*{adj}\left(  AB\right)  =\operatorname*{adj}B\cdot
\operatorname*{adj}A.
\]

\end{theorem}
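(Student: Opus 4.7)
The plan is to use the ``$tI_n+A$ trick'' foreshadowed in the introduction: replace the matrices $A$ and $B$ with the polynomial matrices $A' := tI_n + A$ and $B' := tI_n + B$ in $\left(\mathbb{K}[t]\right)^{n\times n}$, prove the identity there (where the relevant determinant is regular), and transport the result back via the evaluation homomorphism $\varepsilon$ from Proposition \ref{prop.tInA.ev}. The reason for lifting is that if $\det A$ or $\det B$ happens to be a zero-divisor in $\mathbb{K}$, we cannot cancel $AB$ from a matrix equation, but for the lifted matrices the determinant $\det(A')\det(B')$ will be a product of two monic polynomials, hence monic, hence regular by Proposition \ref{prop.pol.monic-reg}.

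First I would establish the following easy computation valid for arbitrary $n\times n$-matrices $P,Q$ over any commutative ring, using only Theorem \ref{thm.adj.inverse}:
\[
(PQ)\cdot(\operatorname{adj} Q\cdot \operatorname{adj} P)
= P\,(Q\operatorname{adj} Q)\,\operatorname{adj} P
= P\cdot\det Q\cdot I_n\cdot\operatorname{adj} P
= \det Q\cdot\det P\cdot I_n
= \det(PQ)\cdot I_n.
\]
On the other hand, Theorem \ref{thm.adj.inverse} applied to $PQ$ gives $(PQ)\cdot\operatorname{adj}(PQ)=\det(PQ)\cdot I_n$, so
\[
(PQ)\cdot\operatorname{adj}(PQ) \;=\; (PQ)\cdot(\operatorname{adj} Q\cdot\operatorname{adj} P).
\]
This is an equation in $\mathbb{L}^{n\times n}$ (where $\mathbb{L}$ is the ground ring); if $\det(PQ)$ happens to be regular in $\mathbb{L}$, then Lemma \ref{lem.regular.det.cancel}(a) applied to the matrix $PQ$ yields $\operatorname{adj}(PQ)=\operatorname{adj} Q\cdot\operatorname{adj} P$ directly.

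Second, I would apply this observation with $\mathbb{L}=\mathbb{K}[t]$, $P=A'=tI_n+A$, and $Q=B'=tI_n+B$. By Proposition \ref{prop.ta+b} (applied to $(I_n,A)$ and $(I_n,B)$ respectively), both $\det A'$ and $\det B'$ are monic polynomials in $t$ of degree $n$, so $\det(A'B')=\det A'\cdot\det B'$ is a monic polynomial of degree $2n$. By Proposition \ref{prop.pol.monic-reg}, this element of $\mathbb{K}[t]$ is regular. Hence the cancellation argument above applies in $(\mathbb{K}[t])^{n\times n}$, and we get
\[
\operatorname{adj}(A'B') \;=\; \operatorname{adj}(B')\cdot\operatorname{adj}(A')\qquad\text{in }(\mathbb{K}[t])^{n\times n}.
\]

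Finally, apply the evaluation map $\varepsilon:\mathbb{K}[t]\to\mathbb{K}$ of Proposition \ref{prop.tInA.ev} (which is a $\mathbb{K}$-algebra homomorphism), or rather its matrix version $\varepsilon^{n\times n}$, to both sides of this equality. By Proposition \ref{prop.functor.clas}(c), $\varepsilon^{n\times n}$ is multiplicative on matrix products, and Proposition \ref{prop.tInA.ev}(c) gives $\varepsilon^{n\times n}(A')=A$ and $\varepsilon^{n\times n}(B')=B$, so $\varepsilon^{n\times n}(A'B')=AB$. Combining this with Proposition \ref{prop.functor.det}(c) yields
\[
\varepsilon^{n\times n}\!\left(\operatorname{adj}(A'B')\right)=\operatorname{adj}(AB),\qquad \varepsilon^{n\times n}\!\left(\operatorname{adj}(B')\cdot\operatorname{adj}(A')\right)=\operatorname{adj}(B)\cdot\operatorname{adj}(A),
\]
and the theorem follows. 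There is no genuine obstacle to this plan; the only subtlety worth a moment's care is verifying that $\det(A'B')$ is monic of degree exactly $2n$ (hence regular), which is what makes the entire trick go through.
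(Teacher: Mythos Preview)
Your proposal is correct and follows essentially the same ``$tI_n+A$ trick'' as the paper: lift to $\mathbb{K}[t]$, prove the identity there where determinants are regular, then push down via $\varepsilon$. The only cosmetic difference is that the paper's Lemma~\ref{lem.adj.adj(AB).reg} assumes $\det A$ and $\det B$ are each regular and cancels them one at a time, whereas you cancel the single matrix $PQ$ in one step using the regularity of $\det(PQ)=\det P\cdot\det Q$; your version is marginally slicker but not materially different.
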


Theorem \ref{thm.adj.adj(AB)} is the statement of \cite[Exercise
6.33]{detnotes}; see \cite[solution of Exercise 6.33]{detnotes} for a proof of
this theorem. We shall show a different proof of it now.

We begin by showing a particular case of Theorem \ref{thm.adj.adj(AB)}:

\begin{lemma}
\label{lem.adj.adj(AB).reg}Let $n\in\mathbb{N}$. Let $A$ and $B$ be two
$n\times n$-matrices. Assume that the elements $\det A$ and $\det B$ of
$\mathbb{K}$ are regular. Then, $\operatorname*{adj}\left(  AB\right)
=\operatorname*{adj}B\cdot\operatorname*{adj}A$.
\end{lemma}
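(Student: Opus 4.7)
The plan is to exploit Theorem \ref{thm.adj.inverse} twice, obtain two expressions for $(AB)\cdot \operatorname{adj}(AB)$ and $(AB)\cdot(\operatorname{adj}B\cdot\operatorname{adj}A)$ that coincide, and then cancel the factor $AB$ using the regularity hypothesis via Lemma \ref{lem.regular.det.cancel}.

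First I would apply Theorem \ref{thm.adj.inverse} to the matrix $AB$, obtaining
\[
(AB)\cdot\operatorname{adj}(AB)=\det(AB)\cdot I_n=\det A\cdot\det B\cdot I_n,
\]
where the last equality uses the multiplicativity of the determinant for square matrices over a commutative ring. Next I would compute, using Theorem \ref{thm.adj.inverse} applied to $A$ and to $B$ separately,
\[
(AB)\cdot(\operatorname{adj}B\cdot\operatorname{adj}A)
=A\cdot(B\cdot\operatorname{adj}B)\cdot\operatorname{adj}A
=A\cdot(\det B\cdot I_n)\cdot\operatorname{adj}A
=\det B\cdot(A\cdot\operatorname{adj}A)
=\det B\cdot\det A\cdot I_n,
\]
where I used that $\det B\in\mathbb{K}$ is a scalar and therefore commutes with matrix multiplication. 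Comparing the two displays gives
\[
(AB)\cdot\operatorname{adj}(AB)=(AB)\cdot(\operatorname{adj}B\cdot\operatorname{adj}A).
\]

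To conclude $\operatorname{adj}(AB)=\operatorname{adj}B\cdot\operatorname{adj}A$ I would cancel $AB$ from the left using Lemma \ref{lem.regular.det.cancel}\textbf{(a)} applied to the matrix $AB$; this requires the element $\det(AB)=\det A\cdot\det B$ of $\mathbb{K}$ to be regular. The only genuine issue is this regularity verification, but it is a simple observation about commutative rings: a product of two regular elements is regular. Indeed, if $(\det A\cdot\det B)x=0$ in $\mathbb{K}$, then $\det A\cdot(\det B\cdot x)=0$ forces $\det B\cdot x=0$ by regularity of $\det A$, and a second application of regularity (this time of $\det B$) gives $x=0$. I would state this as a preliminary observation (or slip it into the proof inline) and then invoke Lemma \ref{lem.regular.det.cancel}\textbf{(a)} with $A$ replaced by $AB$, $B$ replaced by $\operatorname{adj}(AB)$, $C$ replaced by $\operatorname{adj}B\cdot\operatorname{adj}A$, and $m=n$. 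Nothing in this argument is difficult, and I do not anticipate any obstacle beyond recording the regularity of $\det A\cdot\det B$ explicitly.
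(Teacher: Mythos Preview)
Your proposal is correct and follows essentially the same approach as the paper: both compute $(AB)\cdot\operatorname{adj}(AB)$ and $(AB)\cdot(\operatorname{adj}B\cdot\operatorname{adj}A)$ via Theorem~\ref{thm.adj.inverse} and the multiplicativity of $\det$, then cancel. The only cosmetic difference is that the paper cancels $A$ and $B$ in two separate invocations of Lemma~\ref{lem.regular.det.cancel}\textbf{(a)} (first $A$, then $B$), whereas you cancel $AB$ in a single step after noting that a product of regular elements is regular; both variants are equally valid.
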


\begin{proof}
[Proof of Lemma \ref{lem.adj.adj(AB).reg}.]Theorem \ref{thm.adj.inverse}
yields%
\[
A\cdot\operatorname*{adj}A=\operatorname*{adj}A\cdot A=\det A\cdot I_{n}.
\]
Theorem \ref{thm.adj.inverse} (applied to $B$ instead of $A$) yields%
\[
B\cdot\operatorname*{adj}B=\operatorname*{adj}B\cdot B=\det B\cdot I_{n}.
\]
Theorem \ref{thm.adj.inverse} (applied to $AB$ instead of $A$) yields%
\[
AB\cdot\operatorname*{adj}\left(  AB\right)  =\operatorname*{adj}\left(
AB\right)  \cdot AB=\det\left(  AB\right)  \cdot I_{n}.
\]

Now,
\begin{align*}
A\underbrace{B\cdot\operatorname*{adj}B}_{=\det B\cdot I_{n}}\cdot
\operatorname*{adj}A  &  =\underbrace{A\cdot\det B\cdot I_{n}}_{=\det B\cdot
A}\cdot\operatorname*{adj}A\\
&  =\det B\cdot\underbrace{A\cdot\operatorname*{adj}A}_{=\det A\cdot I_{n}%
}=\det B\cdot\det A\cdot I_{n}\\
&  =\det A\cdot\det B\cdot I_{n}.
\end{align*}
Comparing this with%
\[
AB\cdot\operatorname*{adj}\left(  AB\right)  =\underbrace{\det\left(
AB\right)  }_{\substack{=\det A\cdot\det B\\\text{(by \cite[Theorem
6.23]{detnotes})}}}\cdot\,I_{n}=\det A\cdot\det B\cdot I_{n},
\]
we obtain $AB\cdot\operatorname*{adj}B\cdot\operatorname*{adj}A=AB\cdot
\operatorname*{adj}\left(  AB\right)  $. Lemma \ref{lem.regular.det.cancel}
\textbf{(a)} (applied to $n$, $B\cdot\operatorname*{adj}B\cdot
\operatorname*{adj}A$ and $B\cdot\operatorname*{adj}\left(  AB\right)  $
instead of $m$, $B$ and $C$) therefore yields $B\cdot\operatorname*{adj}%
B\cdot\operatorname*{adj}A=B\cdot\operatorname*{adj}\left(  AB\right)  $
(since the element $\det A$ of $\mathbb{K}$ is regular). Thus, Lemma
\ref{lem.regular.det.cancel} \textbf{(a)} (applied to $n$, $B$,
$\operatorname*{adj}B\cdot\operatorname*{adj}A$ and $\operatorname*{adj}%
\left(  AB\right)  $ instead of $m$, $A$, $B$ and $C$) yields
$\operatorname*{adj}B\cdot\operatorname*{adj}A=\operatorname*{adj}\left(
AB\right)  $ (since the element $\det B$ of $\mathbb{K}$ is regular). This
proves Lemma \ref{lem.adj.adj(AB).reg}.
\end{proof}

We now derive Theorem \ref{thm.adj.adj(AB)} from this lemma:

\begin{vershort}
\begin{proof}
[Proof of Theorem \ref{thm.adj.adj(AB)}.]Define the $\mathbb{K}$-algebra
homomorphism $\varepsilon:\mathbb{K}\left[  t\right]  \rightarrow\mathbb{K}$
as in Proposition \ref{prop.tInA.ev}.

Define two matrices $\widetilde{A}$ and $\widetilde{B}$ in $\left(
\mathbb{K}\left[  t\right]  \right)  ^{n\times n}$ by $\widetilde{A}=tI_{n}+A$
and $\widetilde{B}=tI_{n}+B$.

From $\widetilde{A}=tI_{n}+A$, we obtain $\varepsilon^{n\times n}\left(
\operatorname*{adj}\widetilde{A}\right)  =\varepsilon^{n\times n}\left(
\operatorname*{adj}\left(  tI_{n}+A\right)  \right)  =\operatorname*{adj}A$
(by Proposition \ref{prop.tInA.ev} \textbf{(b)}). Similarly, $\varepsilon
^{n\times n}\left(  \operatorname*{adj}\widetilde{B}\right)
=\operatorname*{adj}B$.

From $\widetilde{A}=tI_{n}+A$, we obtain $\varepsilon^{n\times n}\left(
\widetilde{A}\right)  =\varepsilon^{n\times n}\left(  tI_{n}+A\right)  =A$ (by
Proposition \ref{prop.tInA.ev} \textbf{(c)}). Similarly, $\varepsilon^{n\times
n}\left(  \widetilde{B}\right)  =B$.

Corollary \ref{cor.tInA.reg} shows that the element $\det\left(
tI_{n}+A\right)  $ of $\mathbb{K}\left[  t\right]  $ is regular. In other
words, the element $\det\widetilde{A}$ of $\mathbb{K}\left[  t\right]  $ is
regular (since $\widetilde{A}=tI_{n}+A$). Similarly, the element
$\det\widetilde{B}$ of $\mathbb{K}\left[  t\right]  $ is regular. Lemma
\ref{lem.adj.adj(AB).reg} (applied to $\mathbb{K}\left[  t\right]  $,
$\widetilde{A}$ and $\widetilde{B}$ instead of $\mathbb{K}$, $A$ and $B$) thus
yields%
\[
\operatorname*{adj}\left(  \widetilde{A}\widetilde{B}\right)
=\operatorname*{adj}\widetilde{B}\cdot\operatorname*{adj}\widetilde{A}.
\]
Applying the map $\varepsilon^{n\times n}$ to both sides of this equality, we
obtain%
\begin{align*}
\varepsilon^{n\times n}\left(  \operatorname*{adj}\left(  \widetilde{A}%
\widetilde{B}\right)  \right)   &  =\varepsilon^{n\times n}\left(
\operatorname*{adj}\widetilde{B}\cdot\operatorname*{adj}\widetilde{A}\right)
=\underbrace{\varepsilon^{n\times n}\left(  \operatorname*{adj}\widetilde{B}%
\right)  }_{\substack{=\operatorname*{adj}B}}\cdot\underbrace{\varepsilon
^{n\times n}\left(  \operatorname*{adj}\widetilde{A}\right)  }%
_{=\operatorname*{adj}A}\\
&  \ \ \ \ \ \ \ \ \ \ \ \ \ \ \ \ \ \ \ \ \left(
\begin{array}
[c]{c}%
\text{by Proposition \ref{prop.functor.clas} \textbf{(c)}, applied to}\\
\mathbb{K}\left[  t\right]  \text{, }\mathbb{K}\text{, }\varepsilon\text{,
}n\text{, }n\text{, }\operatorname*{adj}\widetilde{B}\text{ and }%
\operatorname*{adj}\widetilde{A}\\
\text{instead of }\mathbb{L}\text{, }\mathbb{M}\text{, }f\text{, }m\text{,
}p\text{, }A\text{ and }B
\end{array}
\right) \\
&  =\operatorname*{adj}B\cdot\operatorname*{adj}A.
\end{align*}
Hence,%
\begin{equation}
\operatorname*{adj}B\cdot\operatorname*{adj}A=\varepsilon^{n\times n}\left(
\operatorname*{adj}\left(  \widetilde{A}\widetilde{B}\right)  \right)
=\operatorname*{adj}\left(  \varepsilon^{n\times n}\left(  \widetilde{A}%
\widetilde{B}\right)  \right)  \label{pf.thm.adj.adj(AB).short.1}%
\end{equation}
(by Proposition \ref{prop.functor.det} \textbf{(c)}, applied to $\mathbb{K}%
\left[  t\right]  $, $\mathbb{K}$, $\varepsilon$ and $\widetilde{A}%
\widetilde{B}$ instead of $\mathbb{L}$, $\mathbb{M}$, $f$ and $A$).

But Proposition \ref{prop.functor.clas} \textbf{(c)} (applied to
$\mathbb{K}\left[  t\right]  $, $\mathbb{K}$, $\varepsilon$, $n$, $n$,
$\widetilde{A}$ and $\widetilde{B}$ instead of $\mathbb{L}$, $\mathbb{M}$,
$f$, $m$, $p$, $A$ and $B$) shows that%
\[
\varepsilon^{n\times n}\left(  \widetilde{A}\widetilde{B}\right)
=\underbrace{\varepsilon^{n\times n}\left(  \widetilde{A}\right)  }_{=A}%
\cdot\underbrace{\varepsilon^{n\times n}\left(  \widetilde{B}\right)  }%
_{=B}=AB.
\]
Hence, (\ref{pf.thm.adj.adj(AB).short.1}) becomes%
\[
\operatorname*{adj}B\cdot\operatorname*{adj}A=\operatorname*{adj}\left(
\underbrace{\varepsilon^{n\times n}\left(  \widetilde{A}\widetilde{B}\right)
}_{=AB}\right)  =\operatorname*{adj}\left(  AB\right)  .
\]
This proves Theorem \ref{thm.adj.adj(AB)}.
\end{proof}
\end{vershort}

\begin{verlong}
\begin{proof}
[Proof of Theorem \ref{thm.adj.adj(AB)}.]Define the $\mathbb{K}$-algebra
homomorphism $\varepsilon:\mathbb{K}\left[  t\right]  \rightarrow\mathbb{K}$
as in Proposition \ref{prop.tInA.ev}.

Consider the matrices $tI_{n}+A$ and $tI_{n}+B$ in $\left(  \mathbb{K}\left[
t\right]  \right)  ^{n\times n}$. Corollary \ref{cor.tInA.reg} shows that the
element $\det\left(  tI_{n}+A\right)  $ of $\mathbb{K}\left[  t\right]  $ is
regular. Corollary \ref{cor.tInA.reg} (applied to $B$ instead of $A$) shows
that the element $\det\left(  tI_{n}+B\right)  $ of $\mathbb{K}\left[
t\right]  $ is regular. Lemma \ref{lem.adj.adj(AB).reg} (applied to
$\mathbb{K}\left[  t\right]  $, $tI_{n}+A$ and $tI_{n}+B$ instead of
$\mathbb{K}$, $A$ and $B$) thus yields%
\[
\operatorname*{adj}\left(  \left(  tI_{n}+A\right)  \left(  tI_{n}+B\right)
\right)  =\operatorname*{adj}\left(  tI_{n}+B\right)  \cdot\operatorname*{adj}%
\left(  tI_{n}+A\right)  .
\]
Applying the map $\varepsilon^{n\times n}$ to both sides of this equality, we
obtain%
\begin{align*}
&  \varepsilon^{n\times n}\left(  \operatorname*{adj}\left(  \left(
tI_{n}+A\right)  \left(  tI_{n}+B\right)  \right)  \right) \\
&  =\varepsilon^{n\times n}\left(  \operatorname*{adj}\left(  tI_{n}+B\right)
\cdot\operatorname*{adj}\left(  tI_{n}+A\right)  \right) \\
&  =\underbrace{\varepsilon^{n\times n}\left(  \operatorname*{adj}\left(
tI_{n}+B\right)  \right)  }_{\substack{=\operatorname*{adj}B\\\text{(by
Proposition \ref{prop.tInA.ev} \textbf{(b)}}\\\text{(applied to }B\text{
instead of }A\text{))}}}\cdot\underbrace{\varepsilon^{n\times n}\left(
\operatorname*{adj}\left(  tI_{n}+A\right)  \right)  }%
_{\substack{=\operatorname*{adj}A\\\text{(by Proposition \ref{prop.tInA.ev}
\textbf{(b)})}}}\\
&  \ \ \ \ \ \ \ \ \ \ \ \ \ \ \ \ \ \ \ \ \left(
\begin{array}
[c]{c}%
\text{by Proposition \ref{prop.functor.clas} \textbf{(c)} (applied to}\\
\mathbb{K}\left[  t\right]  \text{, }\mathbb{K}\text{, }\varepsilon\text{,
}n\text{, }n\text{, }\operatorname*{adj}\left(  tI_{n}+B\right)  \text{ and
}\operatorname*{adj}\left(  tI_{n}+A\right) \\
\text{instead of }\mathbb{L}\text{, }\mathbb{M}\text{, }f\text{, }m\text{,
}p\text{, }A\text{ and }B\text{)}%
\end{array}
\right) \\
&  =\operatorname*{adj}B\cdot\operatorname*{adj}A.
\end{align*}
Hence,%
\begin{align}
\operatorname*{adj}B\cdot\operatorname*{adj}A  &  =\varepsilon^{n\times
n}\left(  \operatorname*{adj}\left(  \left(  tI_{n}+A\right)  \left(
tI_{n}+B\right)  \right)  \right) \nonumber\\
&  =\operatorname*{adj}\left(  \varepsilon^{n\times n}\left(  \left(
tI_{n}+A\right)  \left(  tI_{n}+B\right)  \right)  \right)
\label{pf.thm.adj.adj(AB).1}%
\end{align}
(by Proposition \ref{prop.functor.det} \textbf{(c)} (applied to $\mathbb{K}%
\left[  t\right]  $, $\mathbb{K}$, $\varepsilon$ and $\left(  tI_{n}+A\right)
\left(  tI_{n}+B\right)  $ instead of $\mathbb{L}$, $\mathbb{M}$, $f$ and $A$)).

But Proposition \ref{prop.functor.clas} \textbf{(c)} (applied to
$\mathbb{K}\left[  t\right]  $, $\mathbb{K}$, $\varepsilon$, $n$, $n$,
$tI_{n}+A$ and $tI_{n}+B$ instead of $\mathbb{L}$, $\mathbb{M}$, $f$, $m$,
$p$, $A$ and $B$) shows that%
\[
\varepsilon^{n\times n}\left(  \left(  tI_{n}+A\right)  \left(  tI_{n}%
+B\right)  \right)  =\underbrace{\varepsilon^{n\times n}\left(  tI_{n}%
+A\right)  }_{\substack{=A\\\text{(by Proposition \ref{prop.tInA.ev}
\textbf{(c)})}}}\cdot\underbrace{\varepsilon^{n\times n}\left(  tI_{n}%
+B\right)  }_{\substack{=B\\\text{(by Proposition \ref{prop.tInA.ev}
\textbf{(c)}}\\\text{(applied to }B\text{ instead of }A\text{))}}}=AB.
\]
Hence, (\ref{pf.thm.adj.adj(AB).1}) becomes%
\[
\operatorname*{adj}B\cdot\operatorname*{adj}A=\operatorname*{adj}\left(
\underbrace{\varepsilon^{n\times n}\left(  \left(  tI_{n}+A\right)  \left(
tI_{n}+B\right)  \right)  }_{=AB}\right)  =\operatorname*{adj}\left(
AB\right)  .
\]
This proves Theorem \ref{thm.adj.adj(AB)}.
\end{proof}
\end{verlong}

\subsection{Determinant and adjugate of an adjugate}

Our next target is the following result:

\begin{theorem}
\label{thm.adj.adjadj}Let $n\in\mathbb{N}$. Let $A$ be an $n\times n$-matrix.
\medskip

\textbf{(a)} If $n\geq1$, then $\det\left(  \operatorname*{adj}A\right)
=\left(  \det A\right)  ^{n-1}$. \medskip

\textbf{(b)} If $n\geq2$, then $\operatorname*{adj}\left(  \operatorname*{adj}%
A\right)  =\left(  \det A\right)  ^{n-2}A$.
\end{theorem}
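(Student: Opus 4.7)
The plan is to first establish both parts under the auxiliary assumption that $\det A$ is a regular element of $\mathbb{K}$, and then remove this assumption by the \textquotedblleft$tI_{n}+A$ trick\textquotedblright, exactly as was done in the proof of Theorem \ref{thm.adj.adj(AB)}.

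For the regular case, I would start from Theorem \ref{thm.adj.inverse}, which gives $A\cdot\operatorname{adj}A=\det A\cdot I_{n}$. Taking determinants on both sides (and using that $\det(\det A\cdot I_{n})=(\det A)^{n}$) yields $\det A\cdot\det(\operatorname{adj}A)=(\det A)^{n}=\det A\cdot(\det A)^{n-1}$. Assuming $n\geq1$ and that $\det A$ is regular, Lemma \ref{lem.regular.cancel} lets me cancel $\det A$ to get part \textbf{(a)} in the regular case. For part \textbf{(b)}, I apply Theorem \ref{thm.adj.inverse} with $A$ replaced by $\operatorname{adj}A$ to obtain
\[
\operatorname{adj}A\cdot\operatorname{adj}\left(\operatorname{adj}A\right)=\det(\operatorname{adj}A)\cdot I_{n}=(\det A)^{n-1}I_{n},
\]
using part \textbf{(a)} on the right. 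Multiplying on the left by $A$ and using $A\cdot\operatorname{adj}A=\det A\cdot I_{n}$ turns the left-hand side into $\det A\cdot\operatorname{adj}(\operatorname{adj}A)$, while (since $n\geq2$) the right-hand side becomes $\det A\cdot(\det A)^{n-2}A$. Lemma \ref{lem.regular.mat-cancel}, applied with $a=\det A$, then yields part \textbf{(b)} in the regular case.

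To handle a general $A\in\mathbb{K}^{n\times n}$, I would set $\widetilde{A}:=tI_{n}+A\in(\mathbb{K}[t])^{n\times n}$. Corollary \ref{cor.tInA.reg} guarantees that $\det\widetilde{A}$ is a regular element of $\mathbb{K}[t]$, so the regular case (applied inside the commutative ring $\mathbb{K}[t]$) gives
\[
\det(\operatorname{adj}\widetilde{A})=(\det\widetilde{A})^{n-1}\qquad\text{and}\qquad\operatorname{adj}(\operatorname{adj}\widetilde{A})=(\det\widetilde{A})^{n-2}\widetilde{A}.
\]
Now apply the evaluation homomorphism $\varepsilon:\mathbb{K}[t]\to\mathbb{K}$, $p\mapsto p(0)$, and its matrix extension $\varepsilon^{n\times n}$. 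By Proposition \ref{prop.tInA.ev}, $\varepsilon(\det\widetilde{A})=\det A$, $\varepsilon^{n\times n}(\widetilde{A})=A$, and $\varepsilon^{n\times n}(\operatorname{adj}\widetilde{A})=\operatorname{adj}A$; combined with Propositions \ref{prop.functor.clas} and \ref{prop.functor.det}, applying $\varepsilon$ to the first equation gives $\det(\operatorname{adj}A)=(\det A)^{n-1}$, and applying $\varepsilon^{n\times n}$ to the second gives $\operatorname{adj}(\operatorname{adj}A)=(\det A)^{n-2}A$.

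There is no real obstacle here: the regular-case arguments are short consequences of Theorem \ref{thm.adj.inverse}, and the reduction via $tI_{n}+A$ is a verbatim copy of the pattern already used for Theorem \ref{thm.adj.adj(AB)}. The only points deserving mild attention are the degenerate boundary cases -- one should note that for $n=1$ the adjugate is the $1\times 1$ identity matrix so that part \textbf{(a)} reads $1=(\det A)^{0}$, and that for part \textbf{(b)} the hypothesis $n\geq 2$ is exactly what is needed to write $(\det\widetilde{A})^{n-1}$ as $\det\widetilde{A}\cdot(\det\widetilde{A})^{n-2}$ before cancelling.
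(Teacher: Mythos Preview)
Your proposal is correct and follows essentially the same approach as the paper: first prove the result under the assumption that $\det A$ is regular (this is the paper's Lemma \ref{lem.adj.adjadj.reg}, argued exactly as you describe via Theorem \ref{thm.adj.inverse} and cancellation), and then remove that assumption by passing to $\widetilde{A}=tI_{n}+A$ and applying the evaluation homomorphism $\varepsilon$ together with Propositions \ref{prop.tInA.ev}, \ref{prop.functor.clas} and \ref{prop.functor.det}. The only cosmetic difference is that the paper does not single out the $n=1$ boundary case separately, since the regular-case argument already covers it.
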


Again, we shall first prove it in a particular case:

\begin{lemma}
\label{lem.adj.adjadj.reg}Let $n\in\mathbb{N}$. Let $A$ be an $n\times
n$-matrix. Assume that the element $\det A$ of $\mathbb{K}$ is regular.
\medskip

\textbf{(a)} If $n\geq1$, then $\det\left(  \operatorname*{adj}A\right)
=\left(  \det A\right)  ^{n-1}$. \medskip

\textbf{(b)} If $n\geq2$, then $\operatorname*{adj}\left(  \operatorname*{adj}%
A\right)  =\left(  \det A\right)  ^{n-2}A$.
\end{lemma}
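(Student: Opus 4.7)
The plan is to derive both parts directly from Theorem \ref{thm.adj.inverse} by taking determinants (for part \textbf{(a)}) and then applying the adjugate identity a second time (for part \textbf{(b)}), using the regularity of $\det A$ at each step to cancel factors. Nothing new is needed beyond the tools already assembled.

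For part \textbf{(a)}, I would start with the identity $A\cdot\operatorname{adj}A=\det A\cdot I_n$ from Theorem \ref{thm.adj.inverse} and take determinants of both sides. Using the multiplicativity of the determinant together with the fact that $\det(\lambda I_n)=\lambda^n$ for $\lambda\in\mathbb{K}$, this yields
\[
\det A\cdot\det(\operatorname{adj}A)=(\det A)^n.
\]
Since $n\geq 1$, the right-hand side can be rewritten as $\det A\cdot(\det A)^{n-1}$, so we have $\det A\cdot\det(\operatorname{adj}A)=\det A\cdot(\det A)^{n-1}$. Now Lemma \ref{lem.regular.cancel} (with the regular element $\det A$) cancels the common factor and gives $\det(\operatorname{adj}A)=(\det A)^{n-1}$, as desired.

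For part \textbf{(b)}, I would apply Theorem \ref{thm.adj.inverse} to the matrix $\operatorname{adj}A$ in place of $A$, obtaining
\[
\operatorname{adj}A\cdot\operatorname{adj}(\operatorname{adj}A)=\det(\operatorname{adj}A)\cdot I_n=(\det A)^{n-1}\cdot I_n,
\]
where the second equality uses part \textbf{(a)} (which is available since $n\geq 2\geq 1$). Then I multiply this equation on the left by $A$ and simplify the left-hand side using $A\cdot\operatorname{adj}A=\det A\cdot I_n$, to obtain
\[
\det A\cdot\operatorname{adj}(\operatorname{adj}A)=(\det A)^{n-1}\cdot A=\det A\cdot(\det A)^{n-2}\cdot A,
\]
where the last step uses $n\geq 2$. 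Finally, Lemma \ref{lem.regular.mat-cancel} (applied with the regular scalar $\det A$) cancels the common factor $\det A$ to give $\operatorname{adj}(\operatorname{adj}A)=(\det A)^{n-2}A$.

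There is no real obstacle here, since everything is a formal manipulation of identities already available from Theorem \ref{thm.adj.inverse} and the cancellation lemmas; the only thing that has to be watched is that both rewritings of $(\det A)^n$ and $(\det A)^{n-1}$ as $\det A$ times a nonnegative power require $n\geq 1$ and $n\geq 2$, respectively, which is exactly why these hypotheses appear in the statement.
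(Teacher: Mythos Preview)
Your proof is correct and follows essentially the same approach as the paper's own proof: both parts use Theorem \ref{thm.adj.inverse}, take determinants (for part \textbf{(a)}) or multiply on the left by $A$ (for part \textbf{(b)}), and then cancel the regular factor $\det A$ using Lemma \ref{lem.regular.cancel} or Lemma \ref{lem.regular.mat-cancel}.
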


Before we start proving Lemma \ref{lem.adj.adjadj.reg}, let us first recall
the following fact: If $n\in\mathbb{N}$, $\lambda\in\mathbb{K}$ and
$C\in\mathbb{K}^{n\times n}$, then%
\begin{equation}
\det\left(  \lambda C\right)  =\lambda^{n}\det C.
\label{pf.lem.adj.adjadj.reg.lam1}%
\end{equation}
(In fact, this is precisely \cite[Proposition 6.12]{detnotes} (applied to $C$
instead of $A$).)

\begin{proof}
[Proof of Lemma \ref{lem.adj.adjadj.reg}.]Theorem \ref{thm.adj.inverse} yields%
\[
A\cdot\operatorname*{adj}A=\operatorname*{adj}A\cdot A=\det A\cdot I_{n}.
\]

\textbf{(a)} Assume that $n\geq1$. Now,%
\begin{align*}
\det\left(  \underbrace{A\cdot\operatorname*{adj}A}_{=\det A\cdot I_{n}%
}\right)   &  =\det\left(  \det A\cdot I_{n}\right)  =\left(  \det A\right)
^{n}\underbrace{\det\left(  I_{n}\right)  }_{=1}\\
&  \ \ \ \ \ \ \ \ \ \ \left(  \text{by (\ref{pf.lem.adj.adjadj.reg.lam1})
(applied to }\det A\text{ and }I_{n}\text{ instead of }\lambda\text{ and
}C\text{)}\right) \\
&  =\left(  \det A\right)  ^{n}=\det A\cdot\left(  \det A\right)  ^{n-1}.
\end{align*}
Thus,%
\[
\det A\cdot\left(  \det A\right)  ^{n-1}=\det\left(  A\cdot\operatorname*{adj}%
A\right)  =\det A\cdot\det\left(  \operatorname*{adj}A\right)  .
\]
Hence, Lemma \ref{lem.regular.cancel} (applied to $\mathbb{A}=\mathbb{K}$,
$a=\det A$, $b=\left(  \det A\right)  ^{n-1}$ and $c=\det\left(
\operatorname*{adj}A\right)  $) yields $\left(  \det A\right)  ^{n-1}%
=\det\left(  \operatorname*{adj}A\right)  $ (since $\det A$ is a regular
element of $\mathbb{K}$). This proves Lemma \ref{lem.adj.adjadj.reg}
\textbf{(a)}. \medskip

\textbf{(b)} Assume that $n\geq2$. Thus, $n-1\geq1$ and $n\geq2\geq1$. Now,
Lemma \ref{lem.adj.adjadj.reg} \textbf{(a)} yields
\begin{align*}
\det\left(  \operatorname*{adj}A\right)   &  =\left(  \det A\right)
^{n-1}=\det A\cdot\underbrace{\left(  \det A\right)  ^{\left(  n-1\right)
-1}}_{=\left(  \det A\right)  ^{n-2}}\ \ \ \ \ \ \ \ \ \ \left(  \text{since
}n-1\geq1\right) \\
&  =\det A\cdot\left(  \det A\right)  ^{n-2}.
\end{align*}

But Theorem \ref{thm.adj.inverse} (applied to $\operatorname*{adj}A$ instead
of $A$) yields%
\[
\operatorname*{adj}A\cdot\operatorname*{adj}\left(  \operatorname*{adj}%
A\right)  =\operatorname*{adj}\left(  \operatorname*{adj}A\right)
\cdot\operatorname*{adj}A=\det\left(  \operatorname*{adj}A\right)  \cdot
I_{n}.
\]
Now,%
\begin{align*}
A\cdot\underbrace{\operatorname*{adj}A\cdot\operatorname*{adj}\left(
\operatorname*{adj}A\right)  }_{=\det\left(  \operatorname*{adj}A\right)
\cdot I_{n}}  &  =A\cdot\det\left(  \operatorname*{adj}A\right)  \cdot
I_{n}=\underbrace{\det\left(  \operatorname*{adj}A\right)  }_{=\det
A\cdot\left(  \det A\right)  ^{n-2}}A\\
&  =\det A\cdot\left(  \det A\right)  ^{n-2}A.
\end{align*}
Hence,%
\begin{align*}
\det A\cdot\left(  \det A\right)  ^{n-2}A  &  =\underbrace{A\cdot
\operatorname*{adj}A}_{=\det A\cdot I_{n}}\cdot\operatorname*{adj}\left(
\operatorname*{adj}A\right)  =\det A\cdot I_{n}\cdot\operatorname*{adj}\left(
\operatorname*{adj}A\right) \\
&  =\det A\cdot\operatorname*{adj}\left(  \operatorname*{adj}A\right)  .
\end{align*}
Hence, Lemma \ref{lem.regular.mat-cancel} (applied to $n$, $\det A$, $\left(
\det A\right)  ^{n-2}A$ and $\operatorname*{adj}\left(  \operatorname*{adj}%
A\right)  $ instead of $m$, $a$, $B$ and $C$) yields $\left(  \det A\right)
^{n-2}A=\operatorname*{adj}\left(  \operatorname*{adj}A\right)  $ (since $\det
A$ is a regular element of $\mathbb{K}$). This proves Lemma
\ref{lem.adj.adjadj.reg} \textbf{(b)}.
\end{proof}

Let us now derive Theorem \ref{thm.adj.adjadj} from this lemma:

\begin{proof}
[Proof of Theorem \ref{thm.adj.adjadj}.]Define the $\mathbb{K}$-algebra
homomorphism $\varepsilon:\mathbb{K}\left[  t\right]  \rightarrow\mathbb{K}$
as in Proposition \ref{prop.tInA.ev}.

Define a matrix $\widetilde{A}\in\left(  \mathbb{K}\left[  t\right]  \right)
^{n\times n}$ by $\widetilde{A}=tI_{n}+A$. Corollary \ref{cor.tInA.reg} shows
that the element $\det\left(  tI_{n}+A\right)  $ of $\mathbb{K}\left[
t\right]  $ is regular. In other words, the element $\det\widetilde{A}$ of
$\mathbb{K}\left[  t\right]  $ is regular (since $\widetilde{A}=tI_{n}+A$).

\begin{vershort}
From $\widetilde{A}=tI_{n}+A$, we obtain $\varepsilon^{n\times n}\left(
\operatorname*{adj}\widetilde{A}\right)  =\varepsilon^{n\times n}\left(
\operatorname*{adj}\left(  tI_{n}+A\right)  \right)  =\operatorname*{adj}A$
(by Proposition \ref{prop.tInA.ev} \textbf{(b)}) and $\varepsilon\left(
\det\widetilde{A}\right)  =\varepsilon\left(  \det\left(  tI_{n}+A\right)
\right)  =\det A$ (by Proposition \ref{prop.tInA.ev} \textbf{(a)}). \medskip
\end{vershort}

\begin{verlong}
We have $\varepsilon^{n\times n}\left(  \operatorname*{adj}%
\underbrace{\widetilde{A}}_{=tI_{n}+A}\right)  =\varepsilon^{n\times n}\left(
\operatorname*{adj}\left(  tI_{n}+A\right)  \right)  =\operatorname*{adj}A$
(by Proposition \ref{prop.tInA.ev} \textbf{(b)}). Also, $\varepsilon\left(
\det\underbrace{\widetilde{A}}_{=tI_{n}+A}\right)  =\varepsilon\left(
\det\left(  tI_{n}+A\right)  \right)  =\det A$ (by Proposition
\ref{prop.tInA.ev} \textbf{(a)}). \medskip
\end{verlong}

\textbf{(a)} Assume that $n\geq1$. Lemma \ref{lem.adj.adjadj.reg} \textbf{(a)}
(applied to $\mathbb{K}\left[  t\right]  $ and $\widetilde{A}$ instead of
$\mathbb{K}$ and $A$) yields $\det\left(  \operatorname*{adj}\widetilde{A}%
\right)  =\left(  \det\widetilde{A}\right)  ^{n-1}$.

Now, Proposition \ref{prop.functor.det} \textbf{(a)} (applied to
$\mathbb{K}\left[  t\right]  $, $\mathbb{K}$, $\varepsilon$ and
$\operatorname*{adj}\widetilde{A}$ instead of $\mathbb{L}$, $\mathbb{M}$, $f$
and $A$) yields%
\[
\varepsilon\left(  \det\left(  \operatorname*{adj}\widetilde{A}\right)
\right)  =\det\left(  \underbrace{\varepsilon^{n\times n}\left(
\operatorname*{adj}\widetilde{A}\right)  }_{=\operatorname*{adj}A}\right)
=\det\left(  \operatorname*{adj}A\right)  .
\]
Hence,%
\begin{align*}
\det\left(  \operatorname*{adj}A\right)   &  =\varepsilon\left(
\underbrace{\det\left(  \operatorname*{adj}\widetilde{A}\right)  }_{=\left(
\det\widetilde{A}\right)  ^{n-1}}\right)  =\varepsilon\left(  \left(
\det\widetilde{A}\right)  ^{n-1}\right)  =\left(  \underbrace{\varepsilon
\left(  \det\widetilde{A}\right)  }_{=\det A}\right)  ^{n-1}\\
&  \ \ \ \ \ \ \ \ \ \ \ \ \ \ \ \ \ \ \ \ \left(  \text{since }%
\varepsilon\text{ is a }\mathbb{K}\text{-algebra homomorphism}\right) \\
&  =\left(  \det A\right)  ^{n-1}.
\end{align*}
This proves Theorem \ref{thm.adj.adjadj} \textbf{(a)}. \medskip

\textbf{(b)} Assume that $n\geq2$. Lemma \ref{lem.adj.adjadj.reg} \textbf{(b)}
(applied to $\mathbb{K}\left[  t\right]  $ and $\widetilde{A}$ instead of
$\mathbb{K}$ and $A$) yields $\operatorname*{adj}\left(  \operatorname*{adj}%
\widetilde{A}\right)  =\left(  \det\widetilde{A}\right)  ^{n-2}\widetilde{A}$.
We have $\varepsilon^{n\times n}\left(  \underbrace{\widetilde{A}}_{=tI_{n}%
+A}\right)  =\varepsilon^{n\times n}\left(  tI_{n}+A\right)  =A$ (by
Proposition \ref{prop.tInA.ev} \textbf{(c)}). Proposition
\ref{prop.functor.det} \textbf{(c)} (applied to $\mathbb{K}\left[  t\right]
$, $\mathbb{K}$, $\varepsilon$ and $\operatorname*{adj}\widetilde{A}$ instead
of $\mathbb{L}$, $\mathbb{M}$, $f$ and $A$) yields%
\[
\varepsilon^{n\times n}\left(  \operatorname*{adj}\left(  \operatorname*{adj}%
\widetilde{A}\right)  \right)  =\operatorname*{adj}\left(
\underbrace{\varepsilon^{n\times n}\left(  \operatorname*{adj}\widetilde{A}%
\right)  }_{=\operatorname*{adj}A}\right)  =\operatorname*{adj}\left(
\operatorname*{adj}A\right)  .
\]
Thus,%
\begin{align*}
\operatorname*{adj}\left(  \operatorname*{adj}A\right)   &  =\varepsilon
^{n\times n}\left(  \underbrace{\operatorname*{adj}\left(  \operatorname*{adj}%
\widetilde{A}\right)  }_{=\left(  \det\widetilde{A}\right)  ^{n-2}%
\widetilde{A}}\right)  =\varepsilon^{n\times n}\left(  \left(  \det
\widetilde{A}\right)  ^{n-2}\widetilde{A}\right) \\
&  =\underbrace{\varepsilon\left(  \left(  \det\widetilde{A}\right)
^{n-2}\right)  }_{\substack{=\left(  \varepsilon\left(  \det\widetilde{A}%
\right)  \right)  ^{n-2}\\\text{(since }\varepsilon\text{ is a }%
\mathbb{K}\text{-algebra}\\\text{homomorphism)}}}\underbrace{\varepsilon
^{n\times n}\left(  \widetilde{A}\right)  }_{=A}\\
&  \ \ \ \ \ \ \ \ \ \ \left(
\begin{array}
[c]{c}%
\text{by Proposition \ref{prop.functor.clas} \textbf{(d)} (applied to
}\mathbb{K}\left[  t\right]  \text{, }\mathbb{K}\text{, }\varepsilon\text{,
}n\text{, }\widetilde{A}\\
\text{and }\left(  \det\widetilde{A}\right)  ^{n-2}\text{ instead of
}\mathbb{L}\text{, }\mathbb{M}\text{, }f\text{, }m\text{, }A\text{ and
}\lambda\text{)}%
\end{array}
\right) \\
&  =\left(  \underbrace{\varepsilon\left(  \det\widetilde{A}\right)  }_{=\det
A}\right)  ^{n-2}A=\left(  \det A\right)  ^{n-2}A.
\end{align*}
This proves Theorem \ref{thm.adj.adjadj} \textbf{(b)}.
\end{proof}

\subsection{The adjugate of $A$ as a polynomial in $A$}

Next, let us show that the adjugate of a square matrix $A$ is a polynomial in
$A$ (with coefficients that depend on $A$, but are scalars -- not matrices):

\begin{theorem}
\label{thm.adj.poly}Let $n\in\mathbb{N}$. Let $A\in\mathbb{K}^{n\times n}$.
For every $j\in\mathbb{Z}$, define an element $c_{j}\in\mathbb{K}$ by
$c_{j}=\left[  t^{n-j}\right]  \chi_{A}$. Then,%
\[
\operatorname*{adj}A=\left(  -1\right)  ^{n-1}\sum_{i=0}^{n-1}c_{n-1-i}A^{i}.
\]

\end{theorem}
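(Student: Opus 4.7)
The plan is to derive Theorem \ref{thm.adj.poly} by extracting $D_0$ from the polynomial expansion of $\operatorname*{adj}\left(tI_n - A\right)$ furnished by Proposition \ref{prop.adj.poly}, and then identifying $D_0$ in two different ways: once via Lemma \ref{lem.CH.D} \textbf{(c)}, and once by evaluating at $t = 0$. The case $n = 0$ is trivial (both sides are the empty matrix / empty sum), so assume $n \geq 1$.

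First I will invoke Proposition \ref{prop.adj.poly} to obtain matrices $D_0, D_1, \ldots, D_{n-1} \in \mathbb{K}^{n\times n}$ such that
\[
\operatorname*{adj}\left(tI_n - A\right) = \sum_{k=0}^{n-1} t^k D_k \qquad \text{in } \left(\mathbb{K}[t]\right)^{n\times n},
\]
and I will extend $\left(D_k\right)_{k \in \mathbb{Z}}$ by zero outside $\left\{0,1,\ldots,n-1\right\}$ as in Lemma \ref{lem.CH.D}. Applying Lemma \ref{lem.CH.D} \textbf{(c)} with $k = n-1$ yields
\[
D_0 = D_{n-1-(n-1)} = \sum_{i=0}^{n-1} c_{(n-1)-i} A^i = \sum_{i=0}^{n-1} c_{n-1-i} A^i.
\]

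Next, I will evaluate the polynomial identity for $\operatorname*{adj}\left(tI_n - A\right)$ at $t = 0$ by applying the $\mathbb{K}$-algebra homomorphism $\varepsilon : \mathbb{K}[t] \to \mathbb{K}$ of Proposition \ref{prop.tInA.ev} entry by entry (i.e., via the map $\varepsilon^{n\times n}$ of Definition \ref{def.f-mat}). On the right-hand side this collapses to $D_0$, since $\varepsilon(t^0) = 1$ and $\varepsilon(t^k) = 0$ for $k \geq 1$. On the left-hand side, Proposition \ref{prop.functor.det} \textbf{(c)} (applied with $f = \varepsilon$ and the matrix $tI_n - A$) gives
\[
\varepsilon^{n\times n}\!\left(\operatorname*{adj}\left(tI_n - A\right)\right) = \operatorname*{adj}\!\left(\varepsilon^{n\times n}(tI_n - A)\right) = \operatorname*{adj}(-A),
\]
because $\varepsilon^{n\times n}(tI_n - A) = 0 \cdot I_n - A = -A$. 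Comparing the two evaluations yields $\operatorname*{adj}(-A) = D_0$.

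Finally, I will use the elementary identity $\operatorname*{adj}(-A) = (-1)^{n-1}\operatorname*{adj}(A)$, which follows directly from the definition of $\operatorname*{adj}$: each cofactor entry involves the determinant of an $(n-1) \times (n-1)$ submatrix of $-A$, and this determinant equals $(-1)^{n-1}$ times the corresponding determinant for $A$ by (\ref{pf.lem.adj.adjadj.reg.lam1}) applied in dimension $n-1$. Substituting this into $\operatorname*{adj}(-A) = D_0$ gives
\[
(-1)^{n-1} \operatorname*{adj} A = D_0 = \sum_{i=0}^{n-1} c_{n-1-i} A^i,
\]
and multiplying through by $(-1)^{n-1}$ (using $(-1)^{2(n-1)} = 1$) produces the desired formula $\operatorname*{adj} A = (-1)^{n-1} \sum_{i=0}^{n-1} c_{n-1-i} A^i$. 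I expect no real obstacle here: the only mildly non-trivial ingredient is the identity $\operatorname*{adj}(-A) = (-1)^{n-1}\operatorname*{adj}(A)$, and this is immediate from the definition of the adjugate together with (\ref{pf.lem.adj.adjadj.reg.lam1}).
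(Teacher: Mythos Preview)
Your proposal is correct and follows essentially the same route as the paper: the paper factors the argument through an intermediate Lemma \ref{lem.adj.poly.-A} (establishing $\operatorname*{adj}(-A) = \sum_{i=0}^{n-1} c_{n-1-i}A^i$ via the same $D_0$ computation and the evaluation map $\varepsilon$) and then invokes Proposition \ref{prop.adj.lamA} with $\lambda=-1$ for the identity $\operatorname*{adj}(-A) = (-1)^{n-1}\operatorname*{adj}A$, whereas you carry out both steps inline. The logical content is identical.
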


One consequence of Theorem \ref{thm.adj.poly} is that every $n\times n$-matrix
which commutes with a given $n\times n$-matrix $A$ must also commute with
$\operatorname*{adj}A$.

We prepare for the proof of Theorem \ref{thm.adj.poly} with two really simple facts:

\begin{lemma}
\label{lem.adj.lam}Let $n\in\mathbb{N}$. Let $u$ and $v$ be two elements of
$\left\{  1,2,\ldots,n\right\}  $. Let $\lambda\in\mathbb{K}$. Let $A$ be an
$n\times n$-matrix. Then,%
\[
\left(  \lambda A\right)  _{\sim u,\sim v}=\lambda A_{\sim u,\sim v}.
\]

\end{lemma}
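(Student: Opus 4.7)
The plan is to recognize Lemma \ref{lem.adj.lam} as a direct special case of Lemma \ref{lem.adj.linear}, which has already been proven. Recall that Lemma \ref{lem.adj.linear} states that $\left(\lambda A + \mu B\right)_{\sim u, \sim v} = \lambda A_{\sim u, \sim v} + \mu B_{\sim u, \sim v}$ for any two scalars $\lambda, \mu \in \mathbb{K}$ and any two matrices $A, B \in \mathbb{K}^{n\times n}$. By choosing $\mu = 0$ and $B$ to be any $n\times n$-matrix (for instance $B = 0_{n\times n}$), one obtains $\left(\lambda A\right)_{\sim u, \sim v} = \lambda A_{\sim u, \sim v} + 0 = \lambda A_{\sim u, \sim v}$, which is exactly the claim.

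Alternatively, a direct proof from the definitions is equally routine: write $A = \left(a_{i,j}\right)_{1 \leq i \leq n,\ 1 \leq j \leq n}$, so that $\lambda A = \left(\lambda a_{i,j}\right)_{1 \leq i \leq n,\ 1 \leq j \leq n}$. Letting $\left(p_1, p_2, \ldots, p_{n-1}\right) = \left(1, 2, \ldots, \widehat{u}, \ldots, n\right)$ and $\left(q_1, q_2, \ldots, q_{n-1}\right) = \left(1, 2, \ldots, \widehat{v}, \ldots, n\right)$, the definition of $\cdot_{\sim u, \sim v}$ gives
\[
\left(\lambda A\right)_{\sim u, \sim v} = \left(\lambda a_{p_i, q_j}\right)_{1 \leq i \leq n-1,\ 1 \leq j \leq n-1} = \lambda\left(a_{p_i, q_j}\right)_{1 \leq i \leq n-1,\ 1 \leq j \leq n-1} = \lambda A_{\sim u, \sim v}.
\]

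There is no genuine obstacle in this proof; the lemma is purely a notational consistency statement about scalar multiplication commuting with the submatrix-extraction operation. The only care needed is in making the index bookkeeping precise, and even that can be bypassed by invoking the more general Lemma \ref{lem.adj.linear}. Given the style of the paper (which dispatches analogous trivialities with a one-line ``Obvious'' in its \texttt{vershort} mode), the cleanest presentation is simply to cite Lemma \ref{lem.adj.linear} with $\mu = 0$.
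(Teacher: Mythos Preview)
Your proposal is correct and essentially identical to the paper's own proof, which also applies Lemma \ref{lem.adj.linear} with $\mu = 0$ (the paper happens to take $B = A$ rather than $B = 0_{n\times n}$, but this is immaterial). Your alternative direct computation is also fine and mirrors the bookkeeping used elsewhere in the paper.
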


\begin{vershort}
\begin{proof}
[Proof of Lemma \ref{lem.adj.lam}.]This follows from Lemma
\ref{lem.adj.linear} (applied to $\mu=0$ and $B=A$).
\end{proof}
\end{vershort}

\begin{verlong}
\begin{proof}
[Proof of Lemma \ref{lem.adj.lam}.]Lemma \ref{lem.adj.linear} (applied to
$\mu=0$ and $B=A$) yields%
\[
\left(  \lambda A+0A\right)  _{\sim u,\sim v}=\lambda A_{\sim u,\sim
v}+\underbrace{0A_{\sim u,\sim v}}_{=0_{\left(  n-1\right)  \times\left(
n-1\right)  }}=\lambda A_{\sim u,\sim v}.
\]
Since $\lambda A+\underbrace{0A}_{=0_{n\times n}}=\lambda A$, this rewrites as
$\left(  \lambda A\right)  _{\sim u,\sim v}=\lambda A_{\sim u,\sim v}$. This
proves Lemma \ref{lem.adj.lam}.
\end{proof}
\end{verlong}

\begin{proposition}
\label{prop.adj.lamA}Let $n$ be a positive integer. Let $A\in\mathbb{K}%
^{n\times n}$ and $\lambda\in\mathbb{K}$. Then, $\operatorname*{adj}\left(
\lambda A\right)  =\lambda^{n-1}\operatorname*{adj}A$.
\end{proposition}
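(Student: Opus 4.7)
The plan is a direct entry-by-entry computation from the definition of the adjugate. I would start from Definition \ref{def.adj}, which gives
\[
\operatorname*{adj}\left(  \lambda A\right)  =\left(  \left(  -1\right)
^{i+j}\det\left(  \left(  \lambda A\right)  _{\sim j,\sim i}\right)  \right)
_{1\leq i\leq n,\ 1\leq j\leq n}.
\]
Then, for each $\left(  i,j\right)  \in\left\{  1,2,\ldots,n\right\}  ^{2}$, I would first use Lemma \ref{lem.adj.lam} (applied to $j$ and $i$ instead of $u$ and $v$) to rewrite $\left(  \lambda A\right)  _{\sim j,\sim i}=\lambda A_{\sim j,\sim i}$, thus pulling $\lambda$ past the row/column deletion.

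Next, since $n$ is positive, we have $n-1\in\mathbb{N}$, and the matrix $A_{\sim j,\sim i}$ is an $\left(  n-1\right)  \times\left(  n-1\right)  $-matrix. Hence the scaling law (\ref{pf.lem.adj.adjadj.reg.lam1}) (applied to $n-1$ and $A_{\sim j,\sim i}$ instead of $n$ and $C$) gives $\det\left(  \lambda A_{\sim j,\sim i}\right)  =\lambda^{n-1}\det\left(  A_{\sim j,\sim i}\right)  $. Combining these two steps, each entry of $\operatorname*{adj}\left(  \lambda A\right)  $ equals $\lambda^{n-1}$ times the corresponding entry of $\operatorname*{adj}A$, i.e.,
\[
\operatorname*{adj}\left(  \lambda A\right)  =\left(  \lambda^{n-1}\left(
-1\right)  ^{i+j}\det\left(  A_{\sim j,\sim i}\right)  \right)  _{1\leq i\leq
n,\ 1\leq j\leq n}=\lambda^{n-1}\operatorname*{adj}A,
\]
where the last equality uses Definition \ref{def.adj} again.

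There is essentially no obstacle here; the only mild subtlety is that the exponent is $n-1$ rather than $n$, which reflects the fact that the determinants appearing in the definition of the adjugate are determinants of $\left(  n-1\right)  \times\left(  n-1\right)  $-matrices. The assumption that $n$ is positive is used precisely to ensure $n-1\in\mathbb{N}$ so that (\ref{pf.lem.adj.adjadj.reg.lam1}) is applicable (and so that the $\left(  n-1\right)  \times\left(  n-1\right)  $ submatrices make sense in the first place).
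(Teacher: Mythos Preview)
Your proof is correct and follows essentially the same approach as the paper's own proof: apply Lemma \ref{lem.adj.lam} to pull $\lambda$ past the row/column deletion, then invoke the scaling law (\ref{pf.lem.adj.adjadj.reg.lam1}) for the $(n-1)\times(n-1)$ minors, and conclude entrywise via Definition \ref{def.adj}.
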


\begin{vershort}
\begin{proof}
[Proof of Proposition \ref{prop.adj.lamA}.]Recalling the definitions of
$\operatorname*{adj}\left(  \lambda A\right)  $ and $\operatorname*{adj}A$
(and using Lemma \ref{lem.adj.lam}), the reader can easily reduce Proposition
\ref{prop.adj.lamA} to (\ref{pf.lem.adj.adjadj.reg.lam1}) (applied to $n-1$
and $A_{\sim j,\sim i}$ instead of $n$ and $C$).
\end{proof}
\end{vershort}

\begin{verlong}
\begin{proof}
[Proof of Proposition \ref{prop.adj.lamA}.]Every $\left(  i,j\right)
\in\left\{  1,2,\ldots,n\right\}  ^{2}$ satisfies
\begin{equation}
\det\left(  \left(  \lambda A\right)  _{\sim j,\sim i}\right)  =\lambda
^{n-1}\det\left(  A_{\sim j,\sim i}\right)  \label{pf.prop.adj.lamA.1}%
\end{equation}
\footnote{\textit{Proof of (\ref{pf.prop.adj.lamA.1}):} Let $\left(
i,j\right)  \in\left\{  1,2,\ldots,n\right\}  ^{2}$. Thus, $i$ and $j$ are two
elements of $\left\{  1,2,\ldots,n\right\}  $. Hence, Lemma \ref{lem.adj.lam}
(applied to $u=j$ and $v=i$) yields $\left(  \lambda A\right)  _{\sim j,\sim
i}=\lambda A_{\sim j,\sim i}$. But $i\in\left\{  1,2,\ldots,n\right\}  $, so
that $1\leq i\leq n$ and thus $n\geq1$. Hence, $n-1\in\mathbb{N}$. Also,
$A_{\sim j,\sim i}\in\mathbb{K}^{\left(  n-1\right)  \times\left(  n-1\right)
}$. Hence, (\ref{pf.lem.adj.adjadj.reg.lam1}) (applied to $n-1$ and $A_{\sim
j,\sim i}$ instead of $n$ and $C$) yields $\det\left(  \lambda A_{\sim j,\sim
i}\right)  =\lambda^{n-1}\det\left(  A_{\sim j,\sim i}\right)  $. Now,%
\[
\det\left(  \underbrace{\left(  \lambda A\right)  _{\sim j,\sim i}}_{=\lambda
A_{\sim j,\sim i}}\right)  =\det\left(  \lambda A_{\sim j,\sim i}\right)
=\lambda^{n-1}\det\left(  A_{\sim j,\sim i}\right)  .
\]
This proves (\ref{pf.prop.adj.lamA.1}).}. Now, the definition of
$\operatorname*{adj}A$ yields $\operatorname*{adj}A=\left(  \left(  -1\right)
^{i+j}\det\left(  A_{\sim j,\sim i}\right)  \right)  _{1\leq i\leq n,\ 1\leq
j\leq n}$. Multiplying both sides of this equality by $\lambda^{n-1}$, we
obtain%
\begin{align}
\lambda^{n-1}\operatorname*{adj}A  &  =\lambda^{n-1}\left(  \left(  -1\right)
^{i+j}\det\left(  A_{\sim j,\sim i}\right)  \right)  _{1\leq i\leq n,\ 1\leq
j\leq n}\nonumber\\
&  =\left(  \underbrace{\lambda^{n-1}\left(  -1\right)  ^{i+j}}_{=\left(
-1\right)  ^{i+j}\lambda^{n-1}}\det\left(  A_{\sim j,\sim i}\right)  \right)
_{1\leq i\leq n,\ 1\leq j\leq n}\nonumber\\
&  =\left(  \left(  -1\right)  ^{i+j}\lambda^{n-1}\det\left(  A_{\sim j,\sim
i}\right)  \right)  _{1\leq i\leq n,\ 1\leq j\leq n}.
\label{pf.prop.adj.lamA.4}%
\end{align}

On the other hand, the definition of $\operatorname*{adj}\left(  \lambda
A\right)  $ yields%
\begin{align*}
\operatorname*{adj}\left(  \lambda A\right)   &  =\left(  \left(  -1\right)
^{i+j}\underbrace{\det\left(  \left(  \lambda A\right)  _{\sim j,\sim
i}\right)  }_{\substack{=\lambda^{n-1}\det\left(  A_{\sim j,\sim i}\right)
\\\text{(by (\ref{pf.prop.adj.lamA.1}))}}}\right)  _{1\leq i\leq n,\ 1\leq
j\leq n}\\
&  =\left(  \left(  -1\right)  ^{i+j}\lambda^{n-1}\det\left(  A_{\sim j,\sim
i}\right)  \right)  _{1\leq i\leq n,\ 1\leq j\leq n}=\lambda^{n-1}%
\operatorname*{adj}A
\end{align*}
(by (\ref{pf.prop.adj.lamA.4})). This proves Proposition \ref{prop.adj.lamA}.
\end{proof}
\end{verlong}

Now, let me show a slightly simpler variant of Theorem \ref{thm.adj.poly}:

\begin{lemma}
\label{lem.adj.poly.-A}Let $n$ be a positive integer. Let $A\in\mathbb{K}%
^{n\times n}$. For every $j\in\mathbb{Z}$, define an element $c_{j}%
\in\mathbb{K}$ by $c_{j}=\left[  t^{n-j}\right]  \chi_{A}$. Then,%
\[
\operatorname*{adj}\left(  -A\right)  =\sum_{i=0}^{n-1}c_{n-1-i}A^{i}.
\]

\end{lemma}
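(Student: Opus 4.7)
The plan is to read off the constant coefficient of the polynomial matrix $\operatorname*{adj}\left(  tI_{n}-A\right)  $ in two different ways.

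First, Proposition \ref{prop.adj.poly} supplies $n$ matrices $D_{0},D_{1},\ldots,D_{n-1}\in\mathbb{K}^{n\times n}$ with
\[
\operatorname*{adj}\left(  tI_{n}-A\right)  =\sum_{k=0}^{n-1}t^{k}D_{k}\ \ \ \ \text{in }\left(  \mathbb{K}\left[  t\right]  \right)  ^{n\times n}.
\]
Extend the family $\left(  D_{k}\right)  $ to all $k\in\mathbb{Z}$ by setting $D_{k}=0_{n\times n}$ outside $\left\{  0,1,\ldots,n-1\right\}  $. Since $n\geq1$, we have $n-1\in\mathbb{N}$, and Lemma \ref{lem.CH.D} \textbf{(c)} (applied to $k=n-1$) gives
\[
\sum_{i=0}^{n-1}c_{n-1-i}A^{i}=D_{n-1-\left(  n-1\right)  }=D_{0}.
\]
So it remains to identify $D_{0}$ with $\operatorname*{adj}\left(  -A\right)  $.

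To do this, I would introduce the $\mathbb{K}$-algebra homomorphism $\varepsilon:\mathbb{K}\left[  t\right]  \rightarrow\mathbb{K}$ sending $p\mapsto p\left(  0\right)  $, as in Proposition \ref{prop.tInA.ev}, and apply $\varepsilon^{n\times n}$ entrywise to both sides of the expansion above. On the left, the same computation that yielded Proposition \ref{prop.tInA.ev} \textbf{(c)} (applied to $-A$ instead of $A$, which is harmless since $tI_{n}-A=tI_{n}+\left(  -A\right)  $) gives $\varepsilon^{n\times n}\left(  tI_{n}-A\right)  =-A$, and then Proposition \ref{prop.functor.det} \textbf{(c)} (with $\mathbb{L}=\mathbb{K}\left[  t\right]  $, $\mathbb{M}=\mathbb{K}$, $f=\varepsilon$, and the matrix $tI_{n}-A$) yields
\[
\varepsilon^{n\times n}\left(  \operatorname*{adj}\left(  tI_{n}-A\right)  \right)  =\operatorname*{adj}\left(  \varepsilon^{n\times n}\left(  tI_{n}-A\right)  \right)  =\operatorname*{adj}\left(  -A\right)  .
\]
On the right, each $D_{k}$ already lies in $\mathbb{K}^{n\times n}$ (constant polynomial entries), so $\varepsilon^{n\times n}\left(  t^{k}D_{k}\right)  =\varepsilon\left(  t^{k}\right)  D_{k}$, which equals $D_{0}$ when $k=0$ and $0_{n\times n}$ when $k\geq1$. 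Summing (using additivity of $\varepsilon^{n\times n}$ from Proposition \ref{prop.functor.clas} \textbf{(a)}) gives
\[
\varepsilon^{n\times n}\left(  \sum_{k=0}^{n-1}t^{k}D_{k}\right)  =D_{0}.
\]
Combining the two displays yields $\operatorname*{adj}\left(  -A\right)  =D_{0}$, which together with $D_{0}=\sum_{i=0}^{n-1}c_{n-1-i}A^{i}$ completes the proof.

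There is no serious obstacle; every ingredient has already been prepared. The only mildly delicate point is the evaluation step, which must be phrased as an application of the ring homomorphism $\varepsilon$ (rather than an informal substitution $t=0$) so that Proposition \ref{prop.functor.det} \textbf{(c)} can be invoked to commute $\varepsilon^{n\times n}$ past $\operatorname*{adj}$.
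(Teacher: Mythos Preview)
Your proof is correct and follows essentially the same approach as the paper's: expand $\operatorname*{adj}\left(  tI_{n}-A\right)  $ via Proposition~\ref{prop.adj.poly}, use Lemma~\ref{lem.CH.D}~\textbf{(c)} at $k=n-1$ to recognize the right-hand side as $D_{0}$, and then apply the evaluation homomorphism $\varepsilon$ to extract $D_{0}=\operatorname*{adj}\left(  -A\right)  $. The only cosmetic difference is that the paper cites Proposition~\ref{prop.tInA.ev}~\textbf{(b)} (applied to $-A$) for the last identification, whereas you unfold that proposition into its ingredients (Proposition~\ref{prop.tInA.ev}~\textbf{(c)} plus Proposition~\ref{prop.functor.det}~\textbf{(c)}); these are the same argument.
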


\begin{vershort}
\begin{proof}
[Proof of Lemma \ref{lem.adj.poly.-A}.]For every $j\in\mathbb{Z}$, define an
element $c_{j}\in\mathbb{K}$ by $c_{j}=\left[  t^{n-j}\right]  \chi_{A}$.

Proposition \ref{prop.adj.poly} shows that there exist $n$ matrices
$D_{0},D_{1},\ldots,D_{n-1}$ in $\mathbb{K}^{n\times n}$ such that%
\begin{equation}
\operatorname*{adj}\left(  tI_{n}-A\right)  =\sum_{k=0}^{n-1}t^{k}%
D_{k}\ \ \ \ \ \ \ \ \ \ \text{in }\left(  \mathbb{K}\left[  t\right]
\right)  ^{n\times n}. \label{pf.lem.adj.poly.-A.short.1}%
\end{equation}
Consider these $D_{0},D_{1},\ldots,D_{n-1}$. Thus, an $n$-tuple $\left(
D_{0},D_{1},\ldots,D_{n-1}\right)  $ of matrices in $\mathbb{K}^{n\times n}$
is defined. Extend this $n$-tuple to a family $\left(  D_{k}\right)
_{k\in\mathbb{Z}}$ of matrices in $\mathbb{K}^{n\times n}$ by setting
(\ref{eq.lem.CH.D.0}). Lemma \ref{lem.CH.D} \textbf{(c)} (applied to $k=n-1$)
yields%
\begin{equation}
\sum_{i=0}^{n-1}c_{n-1-i}A^{i}=D_{n-1-\left(  n-1\right)  }=D_{0}.
\label{pf.lem.adj.poly.-A.short.2}%
\end{equation}

On the other hand, define the $\mathbb{K}$-algebra homomorphism $\varepsilon
:\mathbb{K}\left[  t\right]  \rightarrow\mathbb{K}$ as in Proposition
\ref{prop.tInA.ev}. This homomorphism $\varepsilon$ satisfies $\varepsilon
\left(  t\right)  =0$. Also, it satisfies $\varepsilon\left(  u\right)  =u$
for every $u\in\mathbb{K}$. Hence, the map $\varepsilon^{n\times n}:\left(
\mathbb{K}\left[  t\right]  \right)  ^{n\times n}\rightarrow\mathbb{K}%
^{n\times n}$ (defined as in Definition \ref{def.f-mat}) satisfies%
\begin{equation}
\varepsilon^{n\times n}\left(  F\right)  =F\ \ \ \ \ \ \ \ \ \ \text{for every
}F\in\mathbb{K}^{n\times n}. \label{pf.lem.adj.poly.-A.short.eF}%
\end{equation}

But Proposition \ref{prop.functor.clas} \textbf{(a)} (applied to
$\mathbb{L}=\mathbb{K}\left[  t\right]  $, $\mathbb{M}=\mathbb{K}$,
$f=\varepsilon$ and $m=n$) yields that the map $\varepsilon^{n\times
n}:\left(  \mathbb{K}\left[  t\right]  \right)  ^{n\times n}\rightarrow
\mathbb{K}^{n\times n}$ is a homomorphism of additive groups. Hence,%
\begin{align}
\varepsilon^{n\times n}\left(  \sum_{k=0}^{n-1}t^{k}D_{k}\right)   &
=\sum_{k=0}^{n-1}\underbrace{\varepsilon^{n\times n}\left(  t^{k}D_{k}\right)
}_{\substack{=\varepsilon\left(  t^{k}\right)  \varepsilon^{n\times n}\left(
D_{k}\right)  \\\text{(by Proposition \ref{prop.functor.clas} \textbf{(d)}
(applied to}\\\mathbb{K}\left[  t\right]  \text{, }\mathbb{K}\text{,
}\varepsilon\text{, }n\text{, }D_{k}\text{ and }t^{k}\\\text{instead of
}\mathbb{L}\text{, }\mathbb{M}\text{, }f\text{, }m\text{, }A\text{ and
}\lambda\text{))}}}\nonumber\\
&  =\sum_{k=0}^{n-1}\underbrace{\varepsilon\left(  t^{k}\right)
}_{\substack{=\left(  \varepsilon\left(  t\right)  \right)  ^{k}\\\text{(since
}\varepsilon\text{ is a ring}\\\text{homomorphism)}}%
}\ \ \underbrace{\varepsilon^{n\times n}\left(  D_{k}\right)  }%
_{\substack{=D_{k}\\\text{(by (\ref{pf.lem.adj.poly.-A.short.eF}%
)}\\\text{(applied to }F=D_{k}\text{))}}}\nonumber\\
&  =\sum_{k=0}^{n-1}\left(  \underbrace{\varepsilon\left(  t\right)  }%
_{=0}\right)  ^{k}D_{k}=\sum_{k=0}^{n-1}0^{k}D_{k}=\underbrace{0^{0}}%
_{=1}D_{0}+\sum_{k=1}^{n-1}\underbrace{0^{k}}_{\substack{=0\\\text{(since
}k\geq1\text{)}}}D_{k}\nonumber\\
&  \ \ \ \ \ \ \ \ \ \ \left(
\begin{array}
[c]{c}%
\text{here, we have split off the addend for }k=0\\
\text{from the sum (since }0\in\left\{  0,1,\ldots,n-1\right\}  \text{)}%
\end{array}
\right) \nonumber\\
&  =D_{0}+\underbrace{\sum_{k=1}^{n-1}0D_{k}}_{=0_{n\times n}}=D_{0}.
\label{pf.lem.adj.poly.-A.short.7}%
\end{align}
But applying the map $\varepsilon^{n\times n}$ to both sides of the equality
(\ref{pf.lem.adj.poly.-A.short.1}), we obtain%
\[
\varepsilon^{n\times n}\left(  \operatorname*{adj}\left(  tI_{n}-A\right)
\right)  =\varepsilon^{n\times n}\left(  \sum_{k=0}^{n-1}t^{k}D_{k}\right)
=D_{0}%
\]
(by (\ref{pf.lem.adj.poly.-A.short.7})). Thus,%
\[
D_{0}=\varepsilon^{n\times n}\left(  \operatorname*{adj}\left(
\underbrace{tI_{n}-A}_{=tI_{n}+\left(  -A\right)  }\right)  \right)
=\varepsilon^{n\times n}\left(  \operatorname*{adj}\left(  tI_{n}+\left(
-A\right)  \right)  \right)  =\operatorname*{adj}\left(  -A\right)
\]
(by Proposition \ref{prop.tInA.ev} \textbf{(b)}, applied to $-A$ instead of
$A$). Hence, (\ref{pf.lem.adj.poly.-A.short.2}) becomes%
\[
\sum_{i=0}^{n-1}c_{n-1-i}A^{i}=D_{0}=\operatorname*{adj}\left(  -A\right)  .
\]
This proves Lemma \ref{lem.adj.poly.-A}.
\end{proof}
\end{vershort}

\begin{verlong}
\begin{proof}
[Proof of Lemma \ref{lem.adj.poly.-A}.]For every $j\in\mathbb{Z}$, define an
element $c_{j}\in\mathbb{K}$ by $c_{j}=\left[  t^{n-j}\right]  \chi_{A}$.

Proposition \ref{prop.adj.poly} shows that there exist $n$ matrices
$D_{0},D_{1},\ldots,D_{n-1}$ in $\mathbb{K}^{n\times n}$ such that%
\begin{equation}
\operatorname*{adj}\left(  tI_{n}-A\right)  =\sum_{k=0}^{n-1}t^{k}%
D_{k}\ \ \ \ \ \ \ \ \ \ \text{in }\left(  \mathbb{K}\left[  t\right]
\right)  ^{n\times n}. \label{pf.lem.adj.poly.-A.1}%
\end{equation}
Consider these $D_{0},D_{1},\ldots,D_{n-1}$. Thus, an $n$-tuple $\left(
D_{0},D_{1},\ldots,D_{n-1}\right)  $ of matrices in $\mathbb{K}^{n\times n}$
is defined. Extend this $n$-tuple to a family $\left(  D_{k}\right)
_{k\in\mathbb{Z}}$ of matrices in $\mathbb{K}^{n\times n}$ by setting
(\ref{eq.lem.CH.D.0}). Lemma \ref{lem.CH.D} \textbf{(c)} (applied to $k=n-1$)
yields%
\begin{equation}
\sum_{i=0}^{n-1}c_{n-1-i}A^{i}=D_{n-1-\left(  n-1\right)  }=D_{0}%
\ \ \ \ \ \ \ \ \ \ \left(  \text{since }n-1-\left(  n-1\right)  =0\right)  .
\label{pf.lem.adj.poly.-A.2}%
\end{equation}

On the other hand, define the $\mathbb{K}$-algebra homomorphism $\varepsilon
:\mathbb{K}\left[  t\right]  \rightarrow\mathbb{K}$ as in Proposition
\ref{prop.tInA.ev}. This homomorphism $\varepsilon$ satisfies $\varepsilon
\left(  t\right)  =0$. Also, it satisfies%
\begin{equation}
\varepsilon\left(  u\right)  =u\ \ \ \ \ \ \ \ \ \ \text{for every }%
u\in\mathbb{K}. \label{pf.lem.adj.poly.-A.eu}%
\end{equation}
Hence, the map $\varepsilon^{n\times n}:\left(  \mathbb{K}\left[  t\right]
\right)  ^{n\times n}\rightarrow\mathbb{K}^{n\times n}$ (defined as in
Definition \ref{def.f-mat}) satisfies%
\begin{equation}
\varepsilon^{n\times n}\left(  F\right)  =F\ \ \ \ \ \ \ \ \ \ \text{for every
}F\in\mathbb{K}^{n\times n} \label{pf.lem.adj.poly.-A.eF}%
\end{equation}
\footnote{\textit{Proof of (\ref{pf.lem.adj.poly.-A.eF}):} Let $F\in
\mathbb{K}^{n\times n}$. Write the $n\times n$-matrix $F$ in the form
$F=\left(  f_{i,j}\right)  _{1\leq i\leq n,\ 1\leq j\leq n}$. Then,
$f_{i,j}\in\mathbb{K}$ for every $\left(  i,j\right)  \in\left\{
1,2,\ldots,n\right\}  ^{2}$ (since $F\in\mathbb{K}^{n\times n}$). Hence,%
\begin{equation}
\varepsilon\left(  f_{i,j}\right)  =f_{i,j}\ \ \ \ \ \ \ \ \ \ \text{for every
}\left(  i,j\right)  \in\left\{  1,2,\ldots,n\right\}  ^{2}
\label{pf.lem.adj.poly.-A.eF.pf.1}%
\end{equation}
(by (\ref{pf.lem.adj.poly.-A.eu}) (applied to $u=f_{i,j}$)).
\par
Applying the map $\varepsilon^{n\times n}$ to both sides of the equality
$F=\left(  f_{i,j}\right)  _{1\leq i\leq n,\ 1\leq j\leq n}$, we obtain%
\begin{align*}
\varepsilon^{n\times n}\left(  F\right)   &  =\varepsilon^{n\times n}\left(
\left(  f_{i,j}\right)  _{1\leq i\leq n,\ 1\leq j\leq n}\right)  =\left(
\underbrace{\varepsilon\left(  f_{i,j}\right)  }_{\substack{=f_{i,j}%
\\\text{(by (\ref{pf.lem.adj.poly.-A.eF.pf.1}))}}}\right)  _{1\leq i\leq
n,\ 1\leq j\leq n}\ \ \ \ \ \ \ \ \ \ \left(  \text{by the definition of the
map }\varepsilon^{n\times n}\right) \\
&  =\left(  f_{i,j}\right)  _{1\leq i\leq n,\ 1\leq j\leq n}=F.
\end{align*}
This proves (\ref{pf.lem.adj.poly.-A.eF}).}.

The integer $n$ is positive. Thus, $n-1\in\mathbb{N}$, so that $0\in\left\{
0,1,\ldots,n-1\right\}  $.

But $\varepsilon$ is a ring homomorphism (since $\varepsilon$ is a
$\mathbb{K}$-algebra homomorphism). Hence, Proposition \ref{prop.functor.clas}
\textbf{(a)} (applied to $\mathbb{L}=\mathbb{K}\left[  t\right]  $,
$\mathbb{M}=\mathbb{K}$, $f=\varepsilon$ and $m=n$) yields that the map
$\varepsilon^{n\times n}:\left(  \mathbb{K}\left[  t\right]  \right)
^{n\times n}\rightarrow\mathbb{K}^{n\times n}$ is a homomorphism of additive
groups. Hence,%
\begin{align}
\varepsilon^{n\times n}\left(  \sum_{k=0}^{n-1}t^{k}D_{k}\right)   &
=\sum_{k=0}^{n-1}\underbrace{\varepsilon^{n\times n}\left(  t^{k}D_{k}\right)
}_{\substack{=\varepsilon\left(  t^{k}\right)  \varepsilon^{n\times n}\left(
D_{k}\right)  \\\text{(by Proposition \ref{prop.functor.clas} \textbf{(d)}%
}\\\text{(applied to }\mathbb{K}\left[  t\right]  \text{, }\mathbb{K}\text{,
}\varepsilon\text{, }n\text{, }D_{k}\text{ and }t^{k}\\\text{instead of
}\mathbb{L}\text{, }\mathbb{M}\text{, }f\text{, }m\text{, }A\text{ and
}\lambda\text{))}}}\nonumber\\
&  =\sum_{k=0}^{n-1}\underbrace{\varepsilon\left(  t^{k}\right)
}_{\substack{=\left(  \varepsilon\left(  t\right)  \right)  ^{k}\\\text{(since
}\varepsilon\text{ is a ring}\\\text{homomorphism)}}}\underbrace{\varepsilon
^{n\times n}\left(  D_{k}\right)  }_{\substack{=D_{k}\\\text{(by
(\ref{pf.lem.adj.poly.-A.eF})}\\\text{(applied to }F=D_{k}\text{))}%
}}\nonumber\\
&  =\sum_{k=0}^{n-1}\left(  \underbrace{\varepsilon\left(  t\right)  }%
_{=0}\right)  ^{k}D_{k}=\sum_{k=0}^{n-1}0^{k}D_{k}=\underbrace{0^{0}}%
_{=1}D_{0}+\sum_{k=1}^{n-1}\underbrace{0^{k}}_{\substack{=0\\\text{(since
}k\geq1\text{)}}}D_{k}\nonumber\\
&  \ \ \ \ \ \ \ \ \ \ \left(
\begin{array}
[c]{c}%
\text{here, we have split off the addend for }k=0\text{ from the sum}\\
\text{(since }0\in\left\{  0,1,\ldots,n-1\right\}  \text{)}%
\end{array}
\right) \nonumber\\
&  =D_{0}+\underbrace{\sum_{k=1}^{n-1}0D_{k}}_{=0_{n\times n}}=D_{0}.
\label{pf.lem.adj.poly.-A.7}%
\end{align}
But applying the map $\varepsilon^{n\times n}$ to both sides of the equality
(\ref{pf.lem.adj.poly.-A.1}), we obtain%
\[
\varepsilon^{n\times n}\left(  \operatorname*{adj}\left(  tI_{n}-A\right)
\right)  =\varepsilon^{n\times n}\left(  \sum_{k=0}^{n-1}t^{k}D_{k}\right)
=D_{0}%
\]
(by (\ref{pf.lem.adj.poly.-A.7})). Thus,%
\[
D_{0}=\varepsilon^{n\times n}\left(  \operatorname*{adj}\left(
\underbrace{tI_{n}-A}_{=tI_{n}+\left(  -A\right)  }\right)  \right)
=\varepsilon^{n\times n}\left(  \operatorname*{adj}\left(  tI_{n}+\left(
-A\right)  \right)  \right)  =\operatorname*{adj}\left(  -A\right)
\]
(by Proposition \ref{prop.tInA.ev} \textbf{(b)} (applied to $-A$ instead of
$A$)). Hence, (\ref{pf.lem.adj.poly.-A.2}) becomes%
\[
\sum_{i=0}^{n-1}c_{n-1-i}A^{i}=D_{0}=\operatorname*{adj}\left(  -A\right)  .
\]
This proves Lemma \ref{lem.adj.poly.-A}.
\end{proof}
\end{verlong}

Finally, we are ready to prove Theorem \ref{thm.adj.poly}:

\begin{vershort}
\begin{proof}
[Proof of Theorem \ref{thm.adj.poly}.]We must prove the equality
$\operatorname*{adj}A=\left(  -1\right)  ^{n-1}\sum_{i=0}^{n-1}c_{n-1-i}A^{i}%
$. This is an equality between two $n\times n$-matrices, and thus obviously
holds if $n=0$. Hence, we WLOG assume that $n\neq0$. Thus, $n$ is a positive
integer. Hence, Proposition \ref{prop.adj.lamA} (applied to $\lambda=-1$)
yields
\[
\operatorname*{adj}\left(  -A\right)  =\left(  -1\right)  ^{n-1}%
\operatorname*{adj}A.
\]
Therefore,%
\[
\operatorname*{adj}A=\left(  -1\right)  ^{n-1}\underbrace{\operatorname*{adj}%
\left(  -A\right)  }_{\substack{=\sum_{i=0}^{n-1}c_{n-1-i}A^{i}\\\text{(by
Lemma \ref{lem.adj.poly.-A})}}}=\left(  -1\right)  ^{n-1}\sum_{i=0}%
^{n-1}c_{n-1-i}A^{i}.
\]
This proves Theorem \ref{thm.adj.poly}.
\end{proof}
\end{vershort}

\begin{verlong}
\begin{proof}
[Proof of Theorem \ref{thm.adj.poly}.]We must prove the equality
$\operatorname*{adj}A=\left(  -1\right)  ^{n-1}\sum_{i=0}^{n-1}c_{n-1-i}A^{i}%
$. If $n=0$, then this equality is obviously true\footnote{\textit{Proof.}
This equality is an equality between two $n\times n$-matrices. Thus, if $n=0$,
then this equality is an equality between two $0\times0$-matrices, and
therefore is true (because any equality between two $0\times0$-matrices is
true (since there exists only one $0\times0$-matrix)). Qed.}. Hence, for the
rest of this proof, we can WLOG assume that we don't have $n=0$. Assume this.

We have $n\neq0$ (since we don't have $n=0$). Thus, $n$ is a positive integer
(since $n\in\mathbb{N}$). Hence, Proposition \ref{prop.adj.lamA} (applied to
$\lambda=-1$) yields
\[
\operatorname*{adj}\left(  \left(  -1\right)  A\right)  =\left(  -1\right)
^{n-1}\operatorname*{adj}A.
\]
Now,%
\[
\left(  -1\right)  ^{n-1}\operatorname*{adj}\left(  \underbrace{-A}_{=\left(
-1\right)  A}\right)  =\left(  -1\right)  ^{n-1}%
\underbrace{\operatorname*{adj}\left(  \left(  -1\right)  A\right)
}_{=\left(  -1\right)  ^{n-1}\operatorname*{adj}A}=\underbrace{\left(
-1\right)  ^{n-1}\left(  -1\right)  ^{n-1}}_{\substack{=\left(  -1\right)
^{\left(  n-1\right)  +\left(  n-1\right)  }=1\\\text{(since }\left(
n-1\right)  +\left(  n-1\right)  =2\left(  n-1\right)  \\\text{is even)}%
}}\operatorname*{adj}A=\operatorname*{adj}A,
\]
so that%
\[
\operatorname*{adj}A=\left(  -1\right)  ^{n-1}\underbrace{\operatorname*{adj}%
\left(  -A\right)  }_{\substack{=\sum_{i=0}^{n-1}c_{n-1-i}A^{i}\\\text{(by
Lemma \ref{lem.adj.poly.-A})}}}=\left(  -1\right)  ^{n-1}\sum_{i=0}%
^{n-1}c_{n-1-i}A^{i}.
\]
This proves Theorem \ref{thm.adj.poly}.
\end{proof}
\end{verlong}

\subsection{Minors of the adjugate: Jacobi's theorem}

A \textit{minor} of a matrix $A$ is defined to be a determinant of a square
submatrix of $A$. A theorem due to Jacobi connects the minors of
$\operatorname*{adj}A$ (for a square matrix $A$) with the minors of $A$.
Before we can state this theorem, let us introduce some notations:

\begin{definition}
\label{def.submatrix2}Let $n\in\mathbb{N}$ and $m\in\mathbb{N}$. Let
$A=\left(  a_{i,j}\right)  _{1\leq i\leq n,\ 1\leq j\leq m}$ be an $n\times
m$-matrix. Let $i_{1},i_{2},\ldots,i_{u}$ be some elements of $\left\{
1,2,\ldots,n\right\}  $; let $j_{1},j_{2},\ldots,j_{v}$ be some elements of
$\left\{  1,2,\ldots,m\right\}  $. Then, we shall use $\operatorname*{sub}%
\nolimits_{\left(  i_{1},i_{2},\ldots,i_{u}\right)  }^{\left(  j_{1}%
,j_{2},\ldots,j_{v}\right)  }A$ as a synonym for the $u\times v$-matrix
$\operatorname*{sub}\nolimits_{i_{1},i_{2},\ldots,i_{u}}^{j_{1},j_{2}%
,\ldots,j_{v}}A$. Thus, for every $\mathbf{i}\in\left\{  1,2,\ldots,n\right\}
^{u}$ and $\mathbf{j}\in\left\{  1,2,\ldots,m\right\}  ^{v}$, a $u\times
v$-matrix $\operatorname*{sub}\nolimits_{\mathbf{i}}^{\mathbf{j}}A$ is defined.
\end{definition}

\begin{definition}
If $I$ is a finite set of integers, then $\sum I$ shall denote the sum of all
elements of $I$. (Thus, $\sum I=\sum_{i\in I}i$.)
\end{definition}

\begin{definition}
If $I$ is a finite set of integers, then $w\left(  I\right)  $ shall denote
the list of all elements of $I$ in increasing order (with no repetitions).
(For example, $w\left(  \left\{  3,4,8\right\}  \right)  =\left(
3,4,8\right)  $.)
\end{definition}

The following fact is obvious:

\begin{remark}
\label{rmk.w(I).in}Let $n\in\mathbb{N}$. Let $I$ be a subset of $\left\{
1,2,\ldots,n\right\}  $. Then, $w\left(  I\right)  \in\left\{  1,2,\ldots
,n\right\}  ^{\left\vert I\right\vert }$.
\end{remark}

\begin{verlong}
\begin{proof}
[Proof of Remark \ref{rmk.w(I).in}.]We know that $w\left(  I\right)  $ is the
list of all elements of $I$ in increasing order (with no repetitions) (by the
definition of $w\left(  I\right)  $). Thus, $w\left(  I\right)  $ is a list of
$\left\vert I\right\vert $ elements.

Also, $w\left(  I\right)  $ is a list of elements of $I$ (since $w\left(
I\right)  $ is the list of all elements of $I$ in increasing order (with no
repetitions)). Thus, $w\left(  I\right)  $ is a list of $\left\vert
I\right\vert $ elements of $I$. In other words, $w\left(  I\right)  \in
I^{\left\vert I\right\vert }$. But $I\subseteq\left\{  1,2,\ldots,n\right\}  $
(since $I$ is a subset of $\left\{  1,2,\ldots,n\right\}  $), and thus
$I^{\left\vert I\right\vert }\subseteq\left\{  1,2,\ldots,n\right\}
^{\left\vert I\right\vert }$. Hence, $w\left(  I\right)  \in I^{\left\vert
I\right\vert }\subseteq\left\{  1,2,\ldots,n\right\}  ^{\left\vert
I\right\vert }$. This proves Remark \ref{rmk.w(I).in}.
\end{proof}
\end{verlong}

Now, we can state Jacobi's theorem\footnote{This is \cite[Corollary
7.256]{detnotes}. It also appears in \cite[Theorem 2.5.2]{Prasolov} (in a
different form).}:

\begin{theorem}
\label{thm.jacobi-complement}Let $n\in\mathbb{N}$. For any subset $I$ of
$\left\{  1,2,\ldots,n\right\}  $, we let $\widetilde{I}$ denote the
complement $\left\{  1,2,\ldots,n\right\}  \setminus I$ of $I$.

Let $A$ be an $n\times n$-matrix.

Let $P$ and $Q$ be two subsets of $\left\{  1,2,\ldots,n\right\}  $ such that
$\left\vert P\right\vert =\left\vert Q\right\vert \geq1$. Then,%
\[
\det\left(  \operatorname*{sub}\nolimits_{w\left(  P\right)  }^{w\left(
Q\right)  }\left(  \operatorname*{adj}A\right)  \right)  =\left(  -1\right)
^{\sum P+\sum Q}\left(  \det A\right)  ^{\left\vert Q\right\vert -1}%
\det\left(  \operatorname*{sub}\nolimits_{w\left(  \widetilde{Q}\right)
}^{w\left(  \widetilde{P}\right)  }A\right)  .
\]

\end{theorem}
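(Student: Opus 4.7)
My plan is to mirror the two-step strategy that the paper has already used repeatedly: first establish the identity under the extra hypothesis that $\det A$ is a regular element of $\mathbb{K}$, and then remove this hypothesis via the $tI_n+A$ trick (Corollary \ref{cor.tInA.reg}). Concretely, I would apply the regular case to the polynomial matrix $\widetilde{A}=tI_n+A\in(\mathbb{K}[t])^{n\times n}$ (whose determinant is regular by Corollary \ref{cor.tInA.reg}), then push the resulting identity through the evaluation homomorphism $\varepsilon:\mathbb{K}[t]\to\mathbb{K}$ of Proposition \ref{prop.tInA.ev}; Proposition \ref{prop.functor.det} \textbf{(a)}, \textbf{(b)} and \textbf{(c)} guarantee that $\varepsilon$ commutes with $\det$, with passage to submatrices, and with $\operatorname{adj}$, so the formula descends verbatim to $A$.

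The heart of the proof is therefore the regular case. Set $k=\left\vert P\right\vert =\left\vert Q\right\vert $, write $\widetilde{P}=\{1,\ldots,n\}\setminus P$ and $\widetilde{Q}=\{1,\ldots,n\}\setminus Q$, and let $\sigma:\widetilde{P}\to\widetilde{Q}$ be the unique order-preserving bijection. Define an auxiliary $n\times n$-matrix $B$ whose $i$-th row (for $i\in\{1,\ldots,n\}$) equals the $i$-th row of $\operatorname{adj}A$ when $i\in P$, and equals $e_{\sigma(i)}^{T}$ (the $\sigma(i)$-th row of $I_n$) when $i\in\widetilde{P}$. The key identity $\operatorname{adj}A\cdot A=\det A\cdot I_n$ (Theorem \ref{thm.adj.inverse}) then implies that the $i$-th row of $B\cdot A$ is $(\det A)e_i^{T}$ for $i\in P$ and is the $\sigma(i)$-th row of $A$ for $i\in\widetilde{P}$. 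Taking determinants, $\det B\cdot\det A=\det(BA)$.

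I compute both sides by generalized Laplace expansion along the $k$ rows indexed by $P$. For $\det B$, the only subset $J\subseteq\{1,\ldots,n\}$ of size $k$ that gives a nonzero contribution is $J=Q$: this is because $B_{\widetilde{P},\widetilde{J}}$ has $(i,j)$-entry $\delta_{\sigma(i),j}$, which forces $\sigma(\widetilde{P})=\widetilde{J}$, i.e.\ $J=Q$, and then $B_{\widetilde{P},\widetilde{Q}}=I_{n-k}$. This gives $\det B=(-1)^{\sum P+\sum Q}\det\bigl(\operatorname{sub}_{w(P)}^{w(Q)}\operatorname{adj}A\bigr)$. For $\det(BA)$, the analogous argument forces $J=P$ (since the entries $(\det A)\delta_{ij}$ in the rows $P$ kill all other $J$), leaving $\det(BA)=(\det A)^{k}\det\bigl(\operatorname{sub}_{w(\widetilde{Q})}^{w(\widetilde{P})}A\bigr)$ after identifying the rows $\sigma(i)$ (for $i\in\widetilde{P}$) with $\widetilde{Q}$ in natural order. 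Combining gives
\[
(-1)^{\sum P+\sum Q}\det\bigl(\operatorname{sub}_{w(P)}^{w(Q)}\operatorname{adj}A\bigr)\cdot\det A=(\det A)^{k}\det\bigl(\operatorname{sub}_{w(\widetilde{Q})}^{w(\widetilde{P})}A\bigr),
\]
and cancelling the regular element $\det A$ via Lemma \ref{lem.regular.cancel} yields the theorem in the regular case.

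The main obstacles I anticipate are bookkeeping rather than conceptual. One is correctly setting up the order-preserving bijection $\sigma$ so that the submatrix $(BA)_{\widetilde{P},\widetilde{P}}$ is literally $\operatorname{sub}_{w(\widetilde{Q})}^{w(\widetilde{P})}A$ with rows and columns listed in increasing order; this needs the observation that an order-preserving bijection maps the natural ordering of $\widetilde{P}$ to the natural ordering of $\widetilde{Q}$. The other is that the paper has not (in the excerpt) stated the generalized Laplace expansion formula, so I would need to either cite it from \cite{detnotes} or spell out the two-line argument that reduces the $k\times k$ block of $BA$ over rows $P$ to the identity-like matrix $(\det A)I_k$ and similarly for $B$. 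Once the Laplace expansion is in hand, and once the regular case is proved, the descent step is a routine application of Proposition \ref{prop.functor.det} together with the fact (used already in the proofs of Theorem \ref{thm.adj.adj(AB)} and Theorem \ref{thm.adj.adjadj}) that $\varepsilon^{n\times n}(\widetilde{A})=A$ and $\varepsilon^{n\times n}(\operatorname{adj}\widetilde{A})=\operatorname{adj}A$.
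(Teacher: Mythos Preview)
Your overall two-step strategy matches the paper's exactly: prove the statement first assuming $\det A$ is regular, then pass to the polynomial matrix $\widetilde{A}=tI_n+A$ and apply the evaluation homomorphism $\varepsilon$. Where you diverge is in the treatment of the regular case. The paper does not prove this case from scratch; instead it invokes Lemma~\ref{lem.sol.addexe.jacobi-complement.adjform} (imported from \cite[Corollary 7.255]{detnotes}), which already gives
\[
\det A\cdot\det\bigl(\operatorname{sub}_{w(P)}^{w(Q)}(\operatorname{adj}A)\bigr)=(-1)^{\sum P+\sum Q}(\det A)^{|Q|}\det\bigl(\operatorname{sub}_{w(\widetilde{Q})}^{w(\widetilde{P})}A\bigr),
\]
and then cancels $\det\widetilde{A}$ once over $\mathbb{K}[t]$. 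Your auxiliary matrix $B$ and the two Laplace expansions of $\det B$ and $\det(BA)$ amount precisely to a proof of that lemma; the construction is correct, and the bookkeeping issues you flag (the order-preserving bijection $\sigma$, the sign $(-1)^{\sum P+\sum P}=1$ in the $\det(BA)$ expansion) all work out. So your route is more self-contained at the cost of needing the generalized Laplace expansion, which the paper never states; the paper's route is shorter but relies on a result proved elsewhere.

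One small correction to your descent step: Proposition~\ref{prop.functor.det}~\textbf{(b)} only handles submatrices of the form $A_{\sim u,\sim v}$, not general $\operatorname{sub}_{\mathbf{i}}^{\mathbf{j}}$. The paper uses the separate Lemma~\ref{lem.functor.subij} to show $\varepsilon$ commutes with $\operatorname{sub}_{w(P)}^{w(Q)}$ and $\operatorname{sub}_{w(\widetilde{Q})}^{w(\widetilde{P})}$; you would need the same.
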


We shall not give a standalone proof of this theorem; instead, we will merely
derive it from results proven in \cite{detnotes}. Namely, in \cite[Corollary
7.255]{detnotes}, the following was proven:

\begin{lemma}
\label{lem.sol.addexe.jacobi-complement.adjform}Let $n\in\mathbb{N}$. For any
subset $I$ of $\left\{  1,2,\ldots,n\right\}  $, we let $\widetilde{I}$ denote
the complement $\left\{  1,2,\ldots,n\right\}  \setminus I$ of $I$.

Let $A$ be an $n\times n$-matrix.

Let $P$ and $Q$ be two subsets of $\left\{  1,2,\ldots,n\right\}  $ such that
$\left\vert P\right\vert =\left\vert Q\right\vert $. Then,%
\[
\det A\cdot\det\left(  \operatorname*{sub}\nolimits_{w\left(  P\right)
}^{w\left(  Q\right)  }\left(  \operatorname*{adj}A\right)  \right)  =\left(
-1\right)  ^{\sum P+\sum Q}\left(  \det A\right)  ^{\left\vert Q\right\vert
}\det\left(  \operatorname*{sub}\nolimits_{w\left(  \widetilde{Q}\right)
}^{w\left(  \widetilde{P}\right)  }A\right)  .
\]

\end{lemma}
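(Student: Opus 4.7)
The plan is to prove the lemma by constructing an auxiliary $n \times n$ matrix $C$ whose determinant equals $\det\bigl(\operatorname*{sub}\nolimits_{w(P)}^{w(Q)}(\operatorname*{adj} A)\bigr)$ and whose product $AC$ has determinant equal to $(-1)^{\sum P + \sum Q}(\det A)^{\left\vert Q\right\vert} \det\bigl(\operatorname*{sub}\nolimits_{w(\widetilde{Q})}^{w(\widetilde{P})} A\bigr)$; the multiplicativity $\det(AC) = \det A \cdot \det C$ then immediately yields the lemma. Write $P = \{p_1 < p_2 < \cdots < p_k\}$ and $Q = \{q_1 < q_2 < \cdots < q_k\}$ where $k = \left\vert P\right\vert = \left\vert Q\right\vert$. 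The degenerate case $k = 0$ collapses both sides to $\det A$ (interpreting the empty determinant as $1$), so assume $k \geq 1$.

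Define $C \in \mathbb{K}^{n \times n}$ column-by-column: for each $s \in \{1, 2, \ldots, k\}$, let the $p_s$-th column of $C$ be the $q_s$-th column of $\operatorname*{adj} A$; for each $j \in \widetilde{P}$, let the $j$-th column of $C$ be the $j$-th standard basis vector $e_j$. By Theorem \ref{thm.adj.inverse}, which gives $A \cdot \operatorname*{adj} A = \det A \cdot I_n$, the $p_s$-th column of $AC$ is $(\det A)\, e_{q_s}$, while for $j \in \widetilde{P}$ the $j$-th column of $AC$ coincides with the $j$-th column of $A$.

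I would then apply the general Laplace expansion formula (along a fixed column set $J$, summing signed products of complementary minors with sign $(-1)^{\sum I + \sum J}$) twice. First, expand $\det C$ along the column set $\widetilde{P}$: since each of those columns of $C$ is a basis vector, the submatrix $\operatorname*{sub}\nolimits_{w(I)}^{w(\widetilde{P})} C$ is a $0/1$ sub-permutation matrix whose determinant vanishes unless $I = \widetilde{P}$, in which case it is the identity; the complementary minor $\det\bigl(\operatorname*{sub}\nolimits_{w(P)}^{w(P)} C\bigr)$ equals $\det\bigl(\operatorname*{sub}\nolimits_{w(P)}^{w(Q)}(\operatorname*{adj} A)\bigr)$ by construction, and the sign $(-1)^{2\sum\widetilde{P}}$ is $+1$. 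Hence $\det C = \det\bigl(\operatorname*{sub}\nolimits_{w(P)}^{w(Q)}(\operatorname*{adj} A)\bigr)$. Second, expand $\det(AC)$ along the column set $P$: since the $p_s$-th column of $AC$ has a unique nonzero entry $\det A$ at row $q_s$, the only surviving row set is $I = Q$, contributing the minor $(\det A)^k$; the complementary minor is $\det\bigl(\operatorname*{sub}\nolimits_{w(\widetilde{Q})}^{w(\widetilde{P})} A\bigr)$ because the columns of $AC$ indexed by $\widetilde{P}$ coincide with the corresponding columns of $A$; and the sign is $(-1)^{\sum Q + \sum P}$. Combining these with $\det(AC) = \det A \cdot \det C$ yields exactly the desired identity.

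The main obstacle is the bookkeeping in the two Laplace expansions: I need to verify that only one row-index set $I$ contributes in each expansion (using the fact that a $0/1$ matrix whose row and column supports are both listed in increasing order becomes the identity precisely when these supports coincide) and to track the signs carefully to check that $(-1)^{2\sum\widetilde{P}} = 1$ in the first expansion and that the surviving sign in the second expansion is $(-1)^{\sum P + \sum Q}$ with no extra contribution from permutation reordering. A secondary issue is locating or reproving the general Laplace expansion formula in the cited notes \cite{detnotes}, but it is a standard result and is already implicitly used in other arguments of that source.
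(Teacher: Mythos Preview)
Your proof is correct. The paper does not actually prove this lemma itself; it simply quotes the statement as \cite[Corollary 7.255]{detnotes} and uses it as a black box. The argument you outline---building an auxiliary matrix $C$ whose $P$-indexed columns are the $Q$-indexed columns of $\operatorname*{adj} A$ and whose $\widetilde{P}$-indexed columns are standard basis vectors, then evaluating $\det C$ and $\det(AC)$ by general Laplace expansion along the column sets $\widetilde{P}$ and $P$ respectively, and combining via $\det(AC)=\det A\cdot\det C$---is the classical proof of Jacobi's complementary minor identity and is essentially the same approach taken in the cited source.

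Your bookkeeping worries are legitimate but easily resolved. In each Laplace expansion the sign attached to the pair $(I,J)$ of row and column index sets (both listed in increasing order) is exactly $(-1)^{\sum I+\sum J}$, with no additional permutation-reordering sign; so the first expansion contributes $(-1)^{2\sum\widetilde{P}}=1$ and the second contributes $(-1)^{\sum Q+\sum P}$, as you claimed. The uniqueness of the surviving row set in each case follows from your observation that a square submatrix of a diagonal matrix (here $I_n$ or $(\det A)I_n$), with both row and column indices taken in increasing order, has nonzero determinant only when the two index sets coincide. The general Laplace expansion along a set of columns is indeed stated and proved in \cite{detnotes}, so that is not an obstacle.
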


We shall also use the following obvious lemma:

\begin{lemma}
\label{lem.functor.subij}Let $\mathbb{L}$ and $\mathbb{M}$ be two commutative
rings. Let $f:\mathbb{L}\rightarrow\mathbb{M}$ be any map. Let $n\in
\mathbb{N}$ and $m\in\mathbb{N}$. Let $A\in\mathbb{L}^{n\times m}$.

Let $u\in\mathbb{N}$ and $v\in\mathbb{N}$. Let $\mathbf{i}\in\left\{
1,2,\ldots,n\right\}  ^{u}$ and $\mathbf{j}\in\left\{  1,2,\ldots,m\right\}
^{v}$. Then,%
\[
f^{u\times v}\left(  \operatorname*{sub}\nolimits_{\mathbf{i}}^{\mathbf{j}%
}A\right)  =\operatorname*{sub}\nolimits_{\mathbf{i}}^{\mathbf{j}}\left(
f^{n\times m}\left(  A\right)  \right)  .
\]

\end{lemma}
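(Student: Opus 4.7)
The plan is to unwind both sides of the claimed equality directly from the definitions of $\operatorname{sub}\nolimits_{\mathbf{i}}^{\mathbf{j}}$ (Definition \ref{def.submatrix}, extended as in Definition \ref{def.submatrix2}) and of $f^{n\times m}$ (Definition \ref{def.f-mat}), and to observe that they produce the same $u\times v$-matrix entrywise.

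First I would write $A$ in the form $A = (a_{p,q})_{1\leq p\leq n,\ 1\leq q\leq m}$, and write $\mathbf{i} = (i_1,i_2,\ldots,i_u)$ and $\mathbf{j} = (j_1,j_2,\ldots,j_v)$. By the definition of $\operatorname{sub}$, we then have $\operatorname{sub}\nolimits_{\mathbf{i}}^{\mathbf{j}} A = (a_{i_x,j_y})_{1\leq x\leq u,\ 1\leq y\leq v}$. Applying the map $f^{u\times v}$ to this $u\times v$-matrix (per Definition \ref{def.f-mat}) yields
\[
f^{u\times v}\left(\operatorname*{sub}\nolimits_{\mathbf{i}}^{\mathbf{j}}A\right) = \left(f(a_{i_x,j_y})\right)_{1\leq x\leq u,\ 1\leq y\leq v}.
\]

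Next I would compute the right-hand side. By the definition of $f^{n\times m}$, we have $f^{n\times m}(A) = (f(a_{p,q}))_{1\leq p\leq n,\ 1\leq q\leq m}$. Now applying $\operatorname{sub}\nolimits_{\mathbf{i}}^{\mathbf{j}}$ to this matrix gives
\[
\operatorname*{sub}\nolimits_{\mathbf{i}}^{\mathbf{j}}\left(f^{n\times m}(A)\right) = \left(f(a_{i_x,j_y})\right)_{1\leq x\leq u,\ 1\leq y\leq v}.
\]
The two $u\times v$-matrices agree entry by entry, which establishes the lemma.

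There is essentially no obstacle here; the only subtlety is purely notational, namely keeping straight that the outer indexing in $\operatorname{sub}\nolimits_{\mathbf{i}}^{\mathbf{j}}$ applies to the whole matrix regardless of whether $f$ has been applied, so that the two operations (entrywise $f$ and row/column selection) trivially commute. For a verbose version one could phrase this as a two-step calculation invoking the definitions literally; for a short version a single sentence remarking that both sides equal $(f(a_{i_x,j_y}))_{1\leq x\leq u,\ 1\leq y\leq v}$ suffices.
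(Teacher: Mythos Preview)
Your proposal is correct and follows essentially the same approach as the paper: write $A=(a_{p,q})$, $\mathbf{i}=(i_1,\ldots,i_u)$, $\mathbf{j}=(j_1,\ldots,j_v)$, and verify that both sides equal $\left(f(a_{i_x,j_y})\right)_{1\le x\le u,\ 1\le y\le v}$ directly from the definitions.
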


\begin{verlong}
\begin{proof}
[Proof of Lemma \ref{lem.functor.subij}.]We have $\mathbf{i}\in\left\{
1,2,\ldots,n\right\}  ^{u}$. Thus, we can write $\mathbf{i}$ in the form
$\mathbf{i}=\left(  i_{1},i_{2},\ldots,i_{u}\right)  $ for some $u$ elements
$i_{1},i_{2},\ldots,i_{u}$ of $\left\{  1,2,\ldots,n\right\}  $. Consider
these $i_{1},i_{2},\ldots,i_{u}$.

We have $\mathbf{j}\in\left\{  1,2,\ldots,m\right\}  ^{v}$. Thus, we can write
$\mathbf{j}$ in the form $\mathbf{j}=\left(  j_{1},j_{2},\ldots,j_{v}\right)
$ for some $v$ elements $j_{1},j_{2},\ldots,j_{v}$ of $\left\{  1,2,\ldots
,m\right\}  $. Consider these $j_{1},j_{2},\ldots,j_{v}$.

Write the $n\times m$-matrix $A$ in the form $A=\left(  a_{i,j}\right)
_{1\leq i\leq n,\ 1\leq j\leq m}$. Then, $f^{n\times m}\left(  A\right)
=\left(  f\left(  a_{i,j}\right)  \right)  _{1\leq i\leq n,\ 1\leq j\leq m}$
(by the definition of the map $f^{n\times m}$).

But%
\begin{align*}
\operatorname*{sub}\nolimits_{\mathbf{i}}^{\mathbf{j}}A  &
=\operatorname*{sub}\nolimits_{\left(  i_{1},i_{2},\ldots,i_{u}\right)
}^{\left(  j_{1},j_{2},\ldots,j_{v}\right)  }A\ \ \ \ \ \ \ \ \ \ \left(
\text{since }\mathbf{i}=\left(  i_{1},i_{2},\ldots,i_{u}\right)  \text{ and
}\mathbf{j}=\left(  j_{1},j_{2},\ldots,j_{v}\right)  \right) \\
&  =\operatorname*{sub}\nolimits_{i_{1},i_{2},\ldots,i_{u}}^{j_{1}%
,j_{2},\ldots,j_{v}}A=\left(  a_{i_{x},j_{y}}\right)  _{1\leq x\leq u,\ 1\leq
y\leq v}%
\end{align*}
(by the definition of $\operatorname*{sub}\nolimits_{i_{1},i_{2},\ldots,i_{u}%
}^{j_{1},j_{2},\ldots,j_{v}}A$, since $A=\left(  a_{i,j}\right)  _{1\leq i\leq
n,\ 1\leq j\leq m}$). Applying the map $f^{u\times v}$ to both sides of this
equality, we find%
\[
f^{u\times v}\left(  \operatorname*{sub}\nolimits_{\mathbf{i}}^{\mathbf{j}%
}A\right)  =f^{u\times v}\left(  \left(  a_{i_{x},j_{y}}\right)  _{1\leq x\leq
u,\ 1\leq y\leq v}\right)  =\left(  f\left(  a_{i_{x},j_{y}}\right)  \right)
_{1\leq x\leq u,\ 1\leq y\leq v}%
\]
(by the definition of $f^{u\times v}$). Comparing this with%
\begin{align*}
\operatorname*{sub}\nolimits_{\mathbf{i}}^{\mathbf{j}}\left(  f^{n\times
m}\left(  A\right)  \right)   &  =\operatorname*{sub}\nolimits_{\left(
i_{1},i_{2},\ldots,i_{u}\right)  }^{\left(  j_{1},j_{2},\ldots,j_{v}\right)
}\left(  f^{n\times m}\left(  A\right)  \right) \\
&  \ \ \ \ \ \ \ \ \ \ \left(  \text{since }\mathbf{i}=\left(  i_{1}%
,i_{2},\ldots,i_{u}\right)  \text{ and }\mathbf{j}=\left(  j_{1},j_{2}%
,\ldots,j_{v}\right)  \right) \\
&  =\operatorname*{sub}\nolimits_{i_{1},i_{2},\ldots,i_{u}}^{j_{1}%
,j_{2},\ldots,j_{v}}\left(  f^{n\times m}\left(  A\right)  \right)  =\left(
f\left(  a_{i_{x},j_{y}}\right)  \right)  _{1\leq x\leq u,\ 1\leq y\leq v}\\
&  \ \ \ \ \ \ \ \ \ \ \left(
\begin{array}
[c]{c}%
\text{by the definition of }\operatorname*{sub}\nolimits_{i_{1},i_{2}%
,\ldots,i_{u}}^{j_{1},j_{2},\ldots,j_{v}}\left(  f^{n\times m}\left(
A\right)  \right)  \text{,}\\
\text{since }f^{n\times m}\left(  A\right)  =\left(  f\left(  a_{i,j}\right)
\right)  _{1\leq i\leq n,\ 1\leq j\leq m}%
\end{array}
\right)  ,
\end{align*}
we obtain $f^{u\times v}\left(  \operatorname*{sub}\nolimits_{\mathbf{i}%
}^{\mathbf{j}}A\right)  =\operatorname*{sub}\nolimits_{\mathbf{i}}%
^{\mathbf{j}}\left(  f^{n\times m}\left(  A\right)  \right)  $. This proves
Lemma \ref{lem.functor.subij}.
\end{proof}
\end{verlong}

\begin{proof}
[Proof of Theorem \ref{thm.jacobi-complement}.]Define the $\mathbb{K}$-algebra
homomorphism $\varepsilon:\mathbb{K}\left[  t\right]  \rightarrow\mathbb{K}$
as in Proposition \ref{prop.tInA.ev}.

Define a matrix $\widetilde{A}\in\left(  \mathbb{K}\left[  t\right]  \right)
^{n\times n}$ by $\widetilde{A}=tI_{n}+A$. Corollary \ref{cor.tInA.reg} shows
that the element $\det\left(  tI_{n}+A\right)  $ of $\mathbb{K}\left[
t\right]  $ is regular. In other words, the element $\det\widetilde{A}$ of
$\mathbb{K}\left[  t\right]  $ is regular (since $\widetilde{A}=tI_{n}+A$).

We have $\left\vert Q\right\vert -1\in\mathbb{N}$ (since $\left\vert
Q\right\vert \geq1$). Lemma \ref{lem.sol.addexe.jacobi-complement.adjform}
(applied to $\mathbb{K}\left[  t\right]  $ and $\widetilde{A}$ instead of
$\mathbb{K}$ and $A$) yields%
\begin{align*}
\det\widetilde{A}\cdot\det\left(  \operatorname*{sub}\nolimits_{w\left(
P\right)  }^{w\left(  Q\right)  }\left(  \operatorname*{adj}\widetilde{A}%
\right)  \right)   &  =\left(  -1\right)  ^{\sum P+\sum Q}\underbrace{\left(
\det\widetilde{A}\right)  ^{\left\vert Q\right\vert }}_{\substack{=\left(
\det\widetilde{A}\right)  \left(  \det\widetilde{A}\right)  ^{\left\vert
Q\right\vert -1}\\\text{(since }\left\vert Q\right\vert \geq1\text{)}}%
}\det\left(  \operatorname*{sub}\nolimits_{w\left(  \widetilde{Q}\right)
}^{w\left(  \widetilde{P}\right)  }\widetilde{A}\right) \\
&  =\left(  -1\right)  ^{\sum P+\sum Q}\left(  \det\widetilde{A}\right)
\left(  \det\widetilde{A}\right)  ^{\left\vert Q\right\vert -1}\det\left(
\operatorname*{sub}\nolimits_{w\left(  \widetilde{Q}\right)  }^{w\left(
\widetilde{P}\right)  }\widetilde{A}\right) \\
&  =\det\widetilde{A}\cdot\left(  -1\right)  ^{\sum P+\sum Q}\left(
\det\widetilde{A}\right)  ^{\left\vert Q\right\vert -1}\det\left(
\operatorname*{sub}\nolimits_{w\left(  \widetilde{Q}\right)  }^{w\left(
\widetilde{P}\right)  }\widetilde{A}\right)  .
\end{align*}
Hence, Lemma \ref{lem.regular.cancel} (applied to $\mathbb{A}=\mathbb{K}%
\left[  t\right]  $, $a=\det\widetilde{A}$, $b=\det\left(  \operatorname*{sub}%
\nolimits_{w\left(  P\right)  }^{w\left(  Q\right)  }\left(
\operatorname*{adj}\widetilde{A}\right)  \right)  $ and $c=\left(  -1\right)
^{\sum P+\sum Q}\left(  \det\widetilde{A}\right)  ^{\left\vert Q\right\vert
-1}\det\left(  \operatorname*{sub}\nolimits_{w\left(  \widetilde{Q}\right)
}^{w\left(  \widetilde{P}\right)  }\widetilde{A}\right)  $) yields%
\[
\det\left(  \operatorname*{sub}\nolimits_{w\left(  P\right)  }^{w\left(
Q\right)  }\left(  \operatorname*{adj}\widetilde{A}\right)  \right)  =\left(
-1\right)  ^{\sum P+\sum Q}\left(  \det\widetilde{A}\right)  ^{\left\vert
Q\right\vert -1}\det\left(  \operatorname*{sub}\nolimits_{w\left(
\widetilde{Q}\right)  }^{w\left(  \widetilde{P}\right)  }\widetilde{A}\right)
\]
(since the element $\det\widetilde{A}$ of $\mathbb{K}\left[  t\right]  $ is
regular). Applying the map $\varepsilon$ to both sides of this equality, we
obtain%
\begin{align}
&  \varepsilon\left(  \det\left(  \operatorname*{sub}\nolimits_{w\left(
P\right)  }^{w\left(  Q\right)  }\left(  \operatorname*{adj}\widetilde{A}%
\right)  \right)  \right) \nonumber\\
&  =\varepsilon\left(  \left(  -1\right)  ^{\sum P+\sum Q}\left(
\det\widetilde{A}\right)  ^{\left\vert Q\right\vert -1}\det\left(
\operatorname*{sub}\nolimits_{w\left(  \widetilde{Q}\right)  }^{w\left(
\widetilde{P}\right)  }\widetilde{A}\right)  \right) \nonumber\\
&  =\left(  -1\right)  ^{\sum P+\sum Q}\left(  \varepsilon\left(
\det\widetilde{A}\right)  \right)  ^{\left\vert Q\right\vert -1}%
\varepsilon\left(  \det\left(  \operatorname*{sub}\nolimits_{w\left(
\widetilde{Q}\right)  }^{w\left(  \widetilde{P}\right)  }\widetilde{A}\right)
\right)  \label{pf.thm.jacobi-complement.1}%
\end{align}
(since $\varepsilon$ is a $\mathbb{K}$-algebra homomorphism).

\begin{vershort}
The definition of $\widetilde{P}$ yields $\widetilde{P}=\left\{
1,2,\ldots,n\right\}  \setminus P$. Hence,
\begin{align*}
\left\vert \widetilde{P}\right\vert  &  =\underbrace{\left\vert \left\{
1,2,\ldots,n\right\}  \right\vert }_{=n}-\left\vert P\right\vert
\ \ \ \ \ \ \ \ \ \ \left(  \text{since }P\subseteq\left\{  1,2,\ldots
,n\right\}  \right) \\
&  =n-\left\vert P\right\vert .
\end{align*}
Similarly, $\left\vert \widetilde{Q}\right\vert =n-\left\vert Q\right\vert $.
Notice that $\left\vert \widetilde{P}\right\vert =n-\underbrace{\left\vert
P\right\vert }_{=\left\vert Q\right\vert }=n-\left\vert Q\right\vert $.
\end{vershort}

\begin{verlong}
The definition of $\widetilde{P}$ yields $\widetilde{P}=\left\{
1,2,\ldots,n\right\}  \setminus P\subseteq\left\{  1,2,\ldots,n\right\}  $. In
other words, $\widetilde{P}$ is a subset of $\left\{  1,2,\ldots,n\right\}  $.
The same argument (applied to $Q$ instead of $P$) yields that $\widetilde{Q}$
is a subset of $\left\{  1,2,\ldots,n\right\}  $.

The set $P$ is a subset of $\left\{  1,2,\ldots,n\right\}  $, and thus is
finite (since $\left\{  1,2,\ldots,n\right\}  $ is finite). The set
$\widetilde{P}$ is a subset of $\left\{  1,2,\ldots,n\right\}  $, and thus is
finite (since $\left\{  1,2,\ldots,n\right\}  $ is finite).

Furthermore,%
\begin{align*}
\left\vert \underbrace{\widetilde{P}}_{=\left\{  1,2,\ldots,n\right\}
\setminus P}\right\vert  &  =\left\vert \left\{  1,2,\ldots,n\right\}
\setminus P\right\vert =\underbrace{\left\vert \left\{  1,2,\ldots,n\right\}
\right\vert }_{=n}-\left\vert P\right\vert \ \ \ \ \ \ \ \ \ \ \left(
\text{since }P\subseteq\left\{  1,2,\ldots,n\right\}  \right) \\
&  =n-\left\vert P\right\vert .
\end{align*}
The same argument (applied to $Q$ instead of $P$) yields $\left\vert
\widetilde{Q}\right\vert =n-\left\vert Q\right\vert $. Notice that $\left\vert
\widetilde{P}\right\vert =n-\underbrace{\left\vert P\right\vert }_{=\left\vert
Q\right\vert }=n-\left\vert Q\right\vert $. Thus, $n-\left\vert Q\right\vert
=\left\vert \widetilde{P}\right\vert \in\mathbb{N}$ (since $\widetilde{P}$ is
a finite set).
\end{verlong}

But Remark \ref{rmk.w(I).in} (applied to $I=P$) yields $w\left(  P\right)
\in\left\{  1,2,\ldots,n\right\}  ^{\left\vert P\right\vert }=\left\{
1,2,\ldots,n\right\}  ^{\left\vert Q\right\vert }$ (since $\left\vert
P\right\vert =\left\vert Q\right\vert $). Also, Remark \ref{rmk.w(I).in}
(applied to $I=Q$) yields $w\left(  Q\right)  \in\left\{  1,2,\ldots
,n\right\}  ^{\left\vert Q\right\vert }$. Furthermore, Remark
\ref{rmk.w(I).in} (applied to $I=\widetilde{P}$) yields $w\left(
\widetilde{P}\right)  \in\left\{  1,2,\ldots,n\right\}  ^{\left\vert
\widetilde{P}\right\vert }=\left\{  1,2,\ldots,n\right\}  ^{n-\left\vert
Q\right\vert }$ (since $\left\vert \widetilde{P}\right\vert =n-\left\vert
Q\right\vert $). Finally, Remark \ref{rmk.w(I).in} (applied to
$I=\widetilde{Q}$) yields $w\left(  \widetilde{Q}\right)  \in\left\{
1,2,\ldots,n\right\}  ^{\left\vert \widetilde{Q}\right\vert }=\left\{
1,2,\ldots,n\right\}  ^{n-\left\vert Q\right\vert }$ (since $\left\vert
\widetilde{Q}\right\vert =n-\left\vert Q\right\vert $).

Recall that $\operatorname*{adj}\widetilde{A}\in\left(  \mathbb{K}\left[
t\right]  \right)  ^{n\times n}$. Furthermore,%
\[
\varepsilon^{n\times n}\left(  \operatorname*{adj}\underbrace{\widetilde{A}%
}_{=tI_{n}+A}\right)  =\varepsilon^{n\times n}\left(  \operatorname*{adj}%
\left(  tI_{n}+A\right)  \right)  =\operatorname*{adj}A
\]
(by Proposition \ref{prop.tInA.ev} \textbf{(b)}).

We have $w\left(  P\right)  \in\left\{  1,2,\ldots,n\right\}  ^{\left\vert
Q\right\vert }$ and $w\left(  Q\right)  \in\left\{  1,2,\ldots,n\right\}
^{\left\vert Q\right\vert }$. Hence,%
\[
\operatorname*{sub}\nolimits_{w\left(  P\right)  }^{w\left(  Q\right)
}\left(  \operatorname*{adj}\widetilde{A}\right)  \in\left(  \mathbb{K}\left[
t\right]  \right)  ^{\left\vert Q\right\vert \times\left\vert Q\right\vert }.
\]
Thus, Proposition \ref{prop.functor.det} \textbf{(a)} (applied to
$\mathbb{K}\left[  t\right]  $, $\mathbb{K}$, $\varepsilon$, $\left\vert
Q\right\vert $ and $\operatorname*{sub}\nolimits_{w\left(  P\right)
}^{w\left(  Q\right)  }\left(  \operatorname*{adj}\widetilde{A}\right)  $
instead of $\mathbb{L}$, $\mathbb{M}$, $f$, $n$ and $A$) yields%
\begin{align}
\varepsilon\left(  \det\left(  \operatorname*{sub}\nolimits_{w\left(
P\right)  }^{w\left(  Q\right)  }\left(  \operatorname*{adj}\widetilde{A}%
\right)  \right)  \right)   &  =\det\left(  \underbrace{\varepsilon
^{\left\vert Q\right\vert \times\left\vert Q\right\vert }\left(
\operatorname*{sub}\nolimits_{w\left(  P\right)  }^{w\left(  Q\right)
}\left(  \operatorname*{adj}\widetilde{A}\right)  \right)  }%
_{\substack{=\operatorname*{sub}\nolimits_{w\left(  P\right)  }^{w\left(
Q\right)  }\left(  \varepsilon^{n\times n}\left(  \operatorname*{adj}%
\widetilde{A}\right)  \right)  \\\text{(by Lemma \ref{lem.functor.subij}
(applied to }\mathbb{K}\left[  t\right]  \text{, }\mathbb{K}\text{,
}\varepsilon\text{, }n\text{, }\operatorname*{adj}\widetilde{A}\text{,
}\left\vert Q\right\vert \text{, }\left\vert Q\right\vert \text{,}\\w\left(
P\right)  \text{ and }w\left(  Q\right)  \text{ instead of }\mathbb{L}\text{,
}\mathbb{M}\text{, }f\text{, }m\text{, }A\text{, }u\text{, }v\text{,
}\mathbf{i}\text{ and }\mathbf{j}\text{))}}}\right) \nonumber\\
&  =\det\left(  \operatorname*{sub}\nolimits_{w\left(  P\right)  }^{w\left(
Q\right)  }\left(  \underbrace{\varepsilon^{n\times n}\left(
\operatorname*{adj}\widetilde{A}\right)  }_{=\operatorname*{adj}A}\right)
\right) \nonumber\\
&  =\det\left(  \operatorname*{sub}\nolimits_{w\left(  P\right)  }^{w\left(
Q\right)  }\left(  \operatorname*{adj}A\right)  \right)  .
\label{pf.thm.jacobi-complement.3}%
\end{align}
Comparing this with (\ref{pf.thm.jacobi-complement.1}), we obtain%
\begin{align}
&  \det\left(  \operatorname*{sub}\nolimits_{w\left(  P\right)  }^{w\left(
Q\right)  }\left(  \operatorname*{adj}A\right)  \right) \nonumber\\
&  =\left(  -1\right)  ^{\sum P+\sum Q}\left(  \varepsilon\left(
\det\widetilde{A}\right)  \right)  ^{\left\vert Q\right\vert -1}%
\varepsilon\left(  \det\left(  \operatorname*{sub}\nolimits_{w\left(
\widetilde{Q}\right)  }^{w\left(  \widetilde{P}\right)  }\widetilde{A}\right)
\right)  . \label{pf.thm.jacobi-complement.4}%
\end{align}

Recall that $\varepsilon^{n\times n}\left(  \underbrace{\widetilde{A}%
}_{=tI_{n}+A}\right)  =\varepsilon^{n\times n}\left(  tI_{n}+A\right)  =A$ (by
Proposition \ref{prop.tInA.ev} \textbf{(c)}).

On the other hand, $w\left(  \widetilde{P}\right)  \in\left\{  1,2,\ldots
,n\right\}  ^{n-\left\vert Q\right\vert }$ and $w\left(  \widetilde{Q}\right)
\in\left\{  1,2,\ldots,n\right\}  ^{n-\left\vert Q\right\vert }$. Hence,
\[
\operatorname*{sub}\nolimits_{w\left(  \widetilde{Q}\right)  }^{w\left(
\widetilde{P}\right)  }\widetilde{A}\in\left(  \mathbb{K}\left[  t\right]
\right)  ^{\left(  n-\left\vert Q\right\vert \right)  \times\left(
n-\left\vert Q\right\vert \right)  }.
\]
Hence, Proposition \ref{prop.functor.det} \textbf{(a)} (applied to
$\mathbb{K}\left[  t\right]  $, $\mathbb{K}$, $\varepsilon$, $n-\left\vert
Q\right\vert $ and $\operatorname*{sub}\nolimits_{w\left(  \widetilde{Q}%
\right)  }^{w\left(  \widetilde{P}\right)  }\widetilde{A}$ instead of
$\mathbb{L}$, $\mathbb{M}$, $f$, $n$ and $A$) yields%
\begin{align}
\varepsilon\left(  \det\left(  \operatorname*{sub}\nolimits_{w\left(
\widetilde{Q}\right)  }^{w\left(  \widetilde{P}\right)  }\widetilde{A}\right)
\right)   &  =\det\left(  \underbrace{\varepsilon^{\left(  n-\left\vert
Q\right\vert \right)  \times\left(  n-\left\vert Q\right\vert \right)
}\left(  \operatorname*{sub}\nolimits_{w\left(  \widetilde{Q}\right)
}^{w\left(  \widetilde{P}\right)  }\widetilde{A}\right)  }%
_{\substack{=\operatorname*{sub}\nolimits_{w\left(  \widetilde{Q}\right)
}^{w\left(  \widetilde{P}\right)  }\left(  \varepsilon^{n\times n}\left(
\widetilde{A}\right)  \right)  \\\text{(by Lemma \ref{lem.functor.subij}
(applied to }\mathbb{K}\left[  t\right]  \text{, }\mathbb{K}\text{,
}\varepsilon\text{, }n\text{, }\widetilde{A}\text{, }n-\left\vert Q\right\vert
\text{, }n-\left\vert Q\right\vert \text{,}\\w\left(  \widetilde{Q}\right)
\text{ and }w\left(  \widetilde{P}\right)  \text{ instead of }\mathbb{L}%
\text{, }\mathbb{M}\text{, }f\text{, }m\text{, }A\text{, }u\text{, }v\text{,
}\mathbf{i}\text{ and }\mathbf{j}\text{))}}}\right) \nonumber\\
&  =\det\left(  \operatorname*{sub}\nolimits_{w\left(  \widetilde{Q}\right)
}^{w\left(  \widetilde{P}\right)  }\left(  \underbrace{\varepsilon^{n\times
n}\left(  \widetilde{A}\right)  }_{=A}\right)  \right) \nonumber\\
&  =\det\left(  \operatorname*{sub}\nolimits_{w\left(  \widetilde{Q}\right)
}^{w\left(  \widetilde{P}\right)  }A\right)  .
\label{pf.thm.jacobi-complement.6}%
\end{align}
Also, $\varepsilon\left(  \det\underbrace{\widetilde{A}}_{=tI_{n}+A}\right)
=\varepsilon\left(  \det\left(  tI_{n}+A\right)  \right)  =\det A$ (by
Proposition \ref{prop.tInA.ev} \textbf{(a)}).

Now, (\ref{pf.thm.jacobi-complement.4}) becomes%
\begin{align*}
&  \det\left(  \operatorname*{sub}\nolimits_{w\left(  P\right)  }^{w\left(
Q\right)  }\left(  \operatorname*{adj}A\right)  \right) \\
&  =\left(  -1\right)  ^{\sum P+\sum Q}\left(  \underbrace{\varepsilon\left(
\det\widetilde{A}\right)  }_{=\det A}\right)  ^{\left\vert Q\right\vert
-1}\underbrace{\varepsilon\left(  \det\left(  \operatorname*{sub}%
\nolimits_{w\left(  \widetilde{Q}\right)  }^{w\left(  \widetilde{P}\right)
}\widetilde{A}\right)  \right)  }_{\substack{=\det\left(  \operatorname*{sub}%
\nolimits_{w\left(  \widetilde{Q}\right)  }^{w\left(  \widetilde{P}\right)
}A\right)  \\\text{(by (\ref{pf.thm.jacobi-complement.6}))}}}\\
&  =\left(  -1\right)  ^{\sum P+\sum Q}\left(  \det A\right)  ^{\left\vert
Q\right\vert -1}\det\left(  \operatorname*{sub}\nolimits_{w\left(
\widetilde{Q}\right)  }^{w\left(  \widetilde{P}\right)  }A\right)  .
\end{align*}
This proves Theorem \ref{thm.jacobi-complement}.
\end{proof}

\subsection{Another application of the $tI_{n}+A$ strategy}

The strategy that we have used to prove Theorem \ref{thm.adj.adj(AB)}, Theorem
\ref{thm.adj.adjadj} and Theorem \ref{thm.jacobi-complement} (namely,
replacing a matrix $A\in\mathbb{K}^{n\times n}$ by the matrix $tI_{n}%
+A\in\left(  \mathbb{K}\left[  t\right]  \right)  ^{n\times n}$, whose
determinant is a regular element of $\mathbb{K}\left[  t\right]  $; and then
applying the homomorphism $\varepsilon$ to get back to $A$) has many
applications; not all of them concern the adjugate of a matrix. As an example
of such an application, let us prove a neat property of commuting matrices:

\begin{theorem}
\label{thm.det.AS+B}Let $n\in\mathbb{N}$. Let $A$, $B$ and $S$ be three
$n\times n$-matrices such that $AB=BA$. Then,%
\[
\det\left(  AS+B\right)  =\det\left(  SA+B\right)  .
\]

\end{theorem}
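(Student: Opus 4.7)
The key observation is the identity
\[
(AS+B)A = ASA + BA = ASA + AB = A(SA+B),
\]
which uses only the commutation hypothesis $AB=BA$ at the middle step. Taking determinants on both sides immediately yields
\[
\det(AS+B) \cdot \det(A) = \det(A) \cdot \det(SA+B).
\]
If $\det A$ were regular in $\mathbb{K}$, we could cancel and be done; however, $\det A$ need not be regular in general (it could even be zero). This is where the ``$tI_n + A$ trick'' enters.

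I plan to pass to the polynomial ring $\mathbb{K}[t]$ and replace $A$ by the matrix $\widetilde{A} := tI_n + A \in (\mathbb{K}[t])^{n\times n}$. First I will note that $\widetilde{A}B = B\widetilde{A}$, because $\widetilde{A}B = tB + AB$ and $B\widetilde{A} = tB + BA$, and these agree by hypothesis. So the identity above, applied to $\widetilde{A}$ instead of $A$, gives
\[
(\widetilde{A}S+B)\widetilde{A} = \widetilde{A}(S\widetilde{A}+B) \qquad \text{in } (\mathbb{K}[t])^{n\times n},
\]
and therefore
\[
\det(\widetilde{A}S+B) \cdot \det(\widetilde{A}) = \det(\widetilde{A}) \cdot \det(S\widetilde{A}+B) \qquad \text{in } \mathbb{K}[t].
\]
Now Corollary \ref{cor.tInA.reg} ensures that $\det(\widetilde{A}) = \det(tI_n+A)$ is a regular element of $\mathbb{K}[t]$, so Lemma \ref{lem.regular.cancel} lets me cancel it to obtain
\[
\det(\widetilde{A}S+B) = \det(S\widetilde{A}+B) \qquad \text{in } \mathbb{K}[t].
\]

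Finally, I will specialize $t = 0$ by applying the $\mathbb{K}$-algebra homomorphism $\varepsilon : \mathbb{K}[t] \to \mathbb{K}$ from Proposition \ref{prop.tInA.ev}. Since $\varepsilon^{n\times n}(\widetilde{A}) = A$ by Proposition \ref{prop.tInA.ev}~\textbf{(c)}, and since $\varepsilon^{n\times n}$ fixes the matrices $S$ and $B$ (whose entries lie in $\mathbb{K}$), Proposition \ref{prop.functor.clas} gives
\[
\varepsilon^{n\times n}(\widetilde{A}S+B) = AS+B
\qquad\text{and}\qquad
\varepsilon^{n\times n}(S\widetilde{A}+B) = SA+B.
\]
Applying $\varepsilon$ to both sides of the equation in $\mathbb{K}[t]$ and using Proposition \ref{prop.functor.det}~\textbf{(a)} to pull $\varepsilon$ through the determinant yields the desired equality $\det(AS+B) = \det(SA+B)$ in $\mathbb{K}$.

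There is no substantial obstacle here: the algebraic identity $(AS+B)A = A(SA+B)$ is one line of manipulation, and all the remaining ingredients (regularity of $\det(tI_n+A)$, cancellation of regular elements from matrix equations, and functoriality of determinants under $\varepsilon$) have already been established in this section. The only subtle point worth flagging is the role of the $tI_n+A$ trick itself: it is precisely what upgrades the cheap identity $\det(C)\det(A) = \det(A)\det(D)$ (which fails to imply $\det C = \det D$ in general) into the full-strength statement, by routing the argument through a polynomial ring where the relevant determinant is forced to be regular.
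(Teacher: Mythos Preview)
Your proof is correct and follows essentially the same route as the paper: the paper also uses the identity $(AS+B)A = A(SA+B)$ (written there as $XA = AY$ with $X = AS+B$, $Y = SA+B$), cancels $\det\widetilde{A}$ after passing to $\widetilde{A} = tI_n + A$, and then specializes via $\varepsilon$. The only difference is organizational—the paper isolates the regular-$\det A$ case as a separate lemma (Lemma~\ref{lem.det.AS+B.reg}) before applying the $tI_n + A$ trick, whereas you inline that step.
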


Again, we start by showing a particular case of this theorem:

\begin{lemma}
\label{lem.det.AS+B.reg}Let $n\in\mathbb{N}$. Let $A$, $B$ and $S$ be three
$n\times n$-matrices such that $AB=BA$. Assume that the element $\det A$ of
$\mathbb{K}$ is regular. Then,%
\[
\det\left(  AS+B\right)  =\det\left(  SA+B\right)  .
\]

\end{lemma}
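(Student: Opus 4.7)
The plan is to exploit the commutativity $AB = BA$ to produce a matrix identity that links $AS+B$ and $SA+B$ via multiplication by $A$, and then use regularity of $\det A$ to cancel $\det A$ from the resulting determinantal identity.

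First, I would compute $(AS+B)A$ and $A(SA+B)$ directly. We have
\[
(AS+B)A = ASA + BA \qquad \text{and} \qquad A(SA+B) = ASA + AB.
\]
The hypothesis $AB = BA$ makes these two expressions equal, so
\[
(AS+B)\cdot A = A\cdot(SA+B).
\]
This is the key algebraic observation; everything else will be routine.

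Next, I would take determinants on both sides of this equality. Using multiplicativity of the determinant (which holds over any commutative ring; see \cite[Theorem 6.23]{detnotes}) together with commutativity of multiplication in $\mathbb{K}$, we get
\[
\det(AS+B)\cdot \det A \;=\; \det A \cdot \det(SA+B).
\]

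Finally, since $\det A$ is by hypothesis a regular element of $\mathbb{K}$, Lemma \ref{lem.regular.cancel} (applied with $\mathbb{A} = \mathbb{K}$, $a = \det A$, $b = \det(AS+B)$, $c = \det(SA+B)$) allows us to cancel $\det A$ from both sides and conclude $\det(AS+B) = \det(SA+B)$, as desired. The only non-mechanical step is spotting the identity $(AS+B)A = A(SA+B)$, and this is immediate from $AB = BA$; so I do not anticipate any real obstacle in this lemma. (The interest of the result will be in the subsequent step, where Corollary \ref{cor.tInA.reg} and the evaluation homomorphism $\varepsilon$ are presumably used to remove the regularity hypothesis and deduce Theorem \ref{thm.det.AS+B}.)
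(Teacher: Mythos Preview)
Your proposal is correct and matches the paper's proof essentially line for line: the paper also sets up the identity $(AS+B)A = A(SA+B)$ from $AB=BA$, takes determinants, and cancels the regular element $\det A$ via Lemma \ref{lem.regular.cancel}. The only cosmetic difference is that the paper introduces the abbreviations $X=AS+B$ and $Y=SA+B$ before computing $XA=AY$.
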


\begin{proof}
[Proof of Lemma \ref{lem.det.AS+B.reg}.]Define two $n\times n$-matrices $X$
and $Y$ by $X=AS+B$ and $Y=SA+B$. Comparing%
\[
\underbrace{X}_{=AS+B}A=\left(  AS+B\right)  A=ASA+BA
\]
with%
\[
A\underbrace{Y}_{=SA+B}=A\left(  SA+B\right)  =ASA+\underbrace{AB}%
_{=BA}=ASA+BA,
\]
we obtain $XA=AY$. Now, comparing%
\[
\det\left(  \underbrace{XA}_{=AY}\right)  =\det\left(  AY\right)  =\det
A\cdot\det Y
\]
with%
\[
\det\left(  XA\right)  =\det X\cdot\det A=\det A\cdot\det X,
\]
we obtain $\det A\cdot\det X=\det A\cdot\det Y$. Lemma
\ref{lem.regular.cancel} (applied to $\mathbb{K}$, $\det A$, $\det X$ and
$\det Y$ instead of $\mathbb{A}$, $a$, $b$ and $c$) thus yields $\det X=\det
Y$ (since the element $\det A$ of $\mathbb{K}$ is regular). In view of
$X=AS+B$ and $Y=SA+B$, this rewrites as $\det\left(  AS+B\right)  =\det\left(
SA+B\right)  $. This proves Lemma \ref{lem.det.AS+B.reg}.
\end{proof}

\begin{proof}
[Proof of Theorem \ref{thm.det.AS+B}.]Define the $\mathbb{K}$-algebra
homomorphism $\varepsilon:\mathbb{K}\left[  t\right]  \rightarrow\mathbb{K}$
as in Proposition \ref{prop.tInA.ev}. Thus, $\varepsilon$ is a ring
homomorphism. Hence, Proposition \ref{prop.functor.clas} \textbf{(a)} (applied
to $\mathbb{L}=\mathbb{K}\left[  t\right]  $, $\mathbb{M}=\mathbb{K}$ and
$m=n$) shows that the map $\varepsilon^{n\times n}:\left(  \mathbb{K}\left[
t\right]  \right)  ^{n\times n}\rightarrow\mathbb{K}^{n\times n}$ is a
homomorphism of additive groups.

Recall that every $n\times n$-matrix in $\mathbb{K}^{n\times n}$ can be
considered as a matrix in $\left(  \mathbb{K}\left[  t\right]  \right)
^{n\times n}$. In other words, for each $F\in\mathbb{K}^{n\times n}$, we can
consider $F$ as a matrix in $\left(  \mathbb{K}\left[  t\right]  \right)
^{n\times n}$; therefore, $\varepsilon^{n\times n}\left(  F\right)  $ is
well-defined. We have
\begin{equation}
\varepsilon^{n\times n}\left(  F\right)  =F\ \ \ \ \ \ \ \ \ \ \text{for every
}F\in\mathbb{K}^{n\times n}. \label{pf.thm.det.AS+B.FF}%
\end{equation}

\begin{vershort}
\noindent(In fact, the proof of (\ref{pf.thm.det.AS+B.FF}) is identical with
the proof of (\ref{pf.lem.adj.poly.-A.short.eF}) we gave above.)
\end{vershort}

\begin{verlong}
\noindent(In fact, the proof of (\ref{pf.thm.det.AS+B.FF}) is identical with
the proof of (\ref{pf.lem.adj.poly.-A.eF}) we gave above.)
\end{verlong}

Let $\widetilde{A}$ be the matrix $tI_{n}+A\in\left(  \mathbb{K}\left[
t\right]  \right)  ^{n\times n}$. Thus, $\widetilde{A}=tI_{n}+A$. Applying the
map $\varepsilon^{n\times n}$ to both sides of this equality, we find
$\varepsilon^{n\times n}\left(  \widetilde{A}\right)  =\varepsilon^{n\times
n}\left(  tI_{n}+A\right)  =A$ (by Proposition \ref{prop.tInA.ev} \textbf{(c)}).

Corollary \ref{cor.tInA.reg} shows that the element $\det\left(
tI_{n}+A\right)  $ of $\mathbb{K}\left[  t\right]  $ is regular. In other
words, the element $\det\widetilde{A}$ of $\mathbb{K}\left[  t\right]  $ is
regular (since $\widetilde{A}=tI_{n}+A$).

Let us consider the matrix $S\in\mathbb{K}^{n\times n}$ as a matrix in
$\left(  \mathbb{K}\left[  t\right]  \right)  ^{n\times n}$ (since every
$n\times n$-matrix in $\mathbb{K}^{n\times n}$ can be considered as a matrix
in $\left(  \mathbb{K}\left[  t\right]  \right)  ^{n\times n}$).

Similarly, let us consider the matrix $B\in\mathbb{K}^{n\times n}$ as a matrix
in $\left(  \mathbb{K}\left[  t\right]  \right)  ^{n\times n}$. Then,%
\begin{align*}
\underbrace{\widetilde{A}}_{=tI_{n}+A}B  &  =\left(  tI_{n}+A\right)
B=t\underbrace{I_{n}B}_{=B=BI_{n}}+\underbrace{AB}_{=BA}=\underbrace{tBI_{n}%
}_{=B\cdot tI_{n}}+\,BA\\
&  =B\cdot tI_{n}+BA=B\underbrace{\left(  tI_{n}+A\right)  }_{=\widetilde{A}%
}=B\widetilde{A}.
\end{align*}
Hence, Lemma \ref{lem.det.AS+B.reg} (applied to $\mathbb{K}\left[  t\right]  $
and $\widetilde{A}$ instead of $\mathbb{K}$ and $A$) yields
\begin{equation}
\det\left(  \widetilde{A}S+B\right)  =\det\left(  S\widetilde{A}+B\right)  .
\label{pf.thm.det.AS+B.1}%
\end{equation}

Proposition \ref{prop.functor.det} \textbf{(a)} (applied to $\mathbb{K}\left[
t\right]  $, $\mathbb{K}$, $\varepsilon$ and $\widetilde{A}S+B$ instead of
$\mathbb{L}$, $\mathbb{M}$, $f$ and $A$) yields
\[
\varepsilon\left(  \det\left(  \widetilde{A}S+B\right)  \right)  =\det\left(
\varepsilon^{n\times n}\left(  \widetilde{A}S+B\right)  \right)  .
\]
In view of%
\begin{align*}
\varepsilon^{n\times n}\left(  \widetilde{A}S+B\right)   &
=\underbrace{\varepsilon^{n\times n}\left(  \widetilde{A}S\right)
}_{\substack{=\varepsilon^{n\times n}\left(  \widetilde{A}\right)
\cdot\varepsilon^{n\times n}\left(  S\right)  \\\text{(by Proposition
\ref{prop.functor.clas} \textbf{(b)}}\\\text{(applied to }\mathbb{K}\left[
t\right]  \text{, }\mathbb{K}\text{, }\varepsilon\text{, }n\text{, }n\text{,
}\widetilde{A}\text{ and }S\\\text{instead of }\mathbb{L}\text{, }%
\mathbb{M}\text{, }f\text{, }m\text{, }p\text{, }A\text{ and }B\text{))}%
}}+\,\varepsilon^{n\times n}\left(  B\right) \\
&  \ \ \ \ \ \ \ \ \ \ \ \ \ \ \ \ \ \ \ \ \left(
\begin{array}
[c]{c}%
\text{since the map }\varepsilon^{n\times n}\text{ is a homomorphism}\\
\text{of additive groups}%
\end{array}
\right) \\
&  =\underbrace{\varepsilon^{n\times n}\left(  \widetilde{A}\right)  }%
_{=A}\cdot\underbrace{\varepsilon^{n\times n}\left(  S\right)  }%
_{\substack{=S\\\text{(by (\ref{pf.thm.det.AS+B.FF})}\\\text{(applied to
}F=S\text{))}}}+\underbrace{\varepsilon^{n\times n}\left(  B\right)
}_{\substack{=B\\\text{(by (\ref{pf.thm.det.AS+B.FF})}\\\text{(applied to
}F=B\text{))}}}\\
&  =AS+B,
\end{align*}
this becomes%
\begin{align}
\varepsilon\left(  \det\left(  \widetilde{A}S+B\right)  \right)   &
=\det\left(  \underbrace{\varepsilon^{n\times n}\left(  \widetilde{A}%
S+B\right)  }_{=AS+B}\right) \nonumber\\
&  =\det\left(  AS+B\right)  . \label{pf.thm.det.AS+B.2}%
\end{align}

\begin{vershort}
Similarly,%
\[
\varepsilon\left(  \det\left(  S\widetilde{A}+B\right)  \right)  =\det\left(
SA+B\right)  .
\]

\end{vershort}

\begin{verlong}
Proposition \ref{prop.functor.det} \textbf{(a)} (applied to $\mathbb{K}\left[
t\right]  $, $\mathbb{K}$, $\varepsilon$ and $S\widetilde{A}+B$ instead of
$\mathbb{L}$, $\mathbb{M}$, $f$ and $A$) yields
\[
\varepsilon\left(  \det\left(  S\widetilde{A}+B\right)  \right)  =\det\left(
\varepsilon^{n\times n}\left(  S\widetilde{A}+B\right)  \right)  .
\]
In view of%
\begin{align*}
\varepsilon^{n\times n}\left(  S\widetilde{A}+B\right)   &
=\underbrace{\varepsilon^{n\times n}\left(  S\widetilde{A}\right)
}_{\substack{=\varepsilon^{n\times n}\left(  S\right)  \cdot\varepsilon
^{n\times n}\left(  \widetilde{A}\right)  \\\text{(by Proposition
\ref{prop.functor.clas} \textbf{(b)}}\\\text{(applied to }\mathbb{K}\left[
t\right]  \text{, }\mathbb{K}\text{, }\varepsilon\text{, }n\text{, }n\text{,
}S\text{ and }\widetilde{A}\\\text{instead of }\mathbb{L}\text{, }%
\mathbb{M}\text{, }f\text{, }m\text{, }p\text{, }A\text{ and }B\text{))}%
}}+\varepsilon^{n\times n}\left(  B\right) \\
&  \ \ \ \ \ \ \ \ \ \ \left(
\begin{array}
[c]{c}%
\text{since the map }\varepsilon^{n\times n}\text{ is a homomorphism}\\
\text{of additive groups}%
\end{array}
\right) \\
&  =\underbrace{\varepsilon^{n\times n}\left(  S\right)  }%
_{\substack{=S\\\text{(by (\ref{pf.thm.det.AS+B.FF})}\\\text{(applied to
}F=S\text{))}}}\cdot\underbrace{\varepsilon^{n\times n}\left(  \widetilde{A}%
\right)  }_{=A}+\underbrace{\varepsilon^{n\times n}\left(  B\right)
}_{\substack{=B\\\text{(by (\ref{pf.thm.det.AS+B.FF})}\\\text{(applied to
}F=B\text{))}}}\\
&  =SA+B,
\end{align*}
this becomes%
\[
\varepsilon\left(  \det\left(  S\widetilde{A}+B\right)  \right)  =\det\left(
\underbrace{\varepsilon^{n\times n}\left(  S\widetilde{A}+B\right)  }%
_{=SA+B}\right)  =\det\left(  SA+B\right)  .
\]

\end{verlong}

Comparing this with%
\[
\varepsilon\left(  \underbrace{\det\left(  S\widetilde{A}+B\right)
}_{\substack{=\det\left(  \widetilde{A}S+B\right)  \\\text{(by
(\ref{pf.thm.det.AS+B.1}))}}}\right)  =\varepsilon\left(  \det\left(
\widetilde{A}S+B\right)  \right)  =\det\left(  AS+B\right)
\ \ \ \ \ \ \ \ \ \ \left(  \text{by (\ref{pf.thm.det.AS+B.2})}\right)  ,
\]
we obtain $\det\left(  AS+B\right)  =\det\left(  SA+B\right)  $. This proves
Theorem \ref{thm.det.AS+B}.
\end{proof}

\subsection{Another application of the strategy: block matrices}

The same strategy (replacing $A\in\mathbb{K}^{n\times n}$ by $tI_{n}%
+A\in\left(  \mathbb{K}\left[  t\right]  \right)  ^{n\times n}$) turns out to
be useful in proving a formula for determinants of block matrices with a
certain property.

We will use \cite[Definition 6.89]{detnotes} in this section. Roughly
speaking, this definition says that if $n$, $n^{\prime}$, $m$ and $m^{\prime}$
are four nonnegative integers, and if $A\in\mathbb{K}^{n\times m}$,
$B\in\mathbb{K}^{n\times m^{\prime}}$, $C\in\mathbb{K}^{n^{\prime}\times m}$
and $D\in\mathbb{K}^{n^{\prime}\times m^{\prime}}$ are four matrices, then
$\left(
\begin{array}
[c]{cc}%
A & B\\
C & D
\end{array}
\right)  $ shall denote the $\left(  n+n^{\prime}\right)  \times\left(
m+m^{\prime}\right)  $-matrix obtained by \textquotedblleft gluing the
matrices $A$, $B$, $C$ and $D$ together\textquotedblright\ in the way the
notation suggests (i.e., the matrix $B$ is glued to the right edge of $A$, and
then the matrices $C$ and $D$ are glued to the bottom edges of $A$ and $B$,
respectively). For example, if $n=2$, $n^{\prime}=2$, $m=2$ and $m^{\prime}%
=2$, and if%
\begin{align}
A  &  =\left(
\begin{array}
[c]{cc}%
a_{1,1} & a_{1,2}\\
a_{2,1} & a_{2,2}%
\end{array}
\right)  ,\ \ \ \ \ \ \ \ \ \ B=\left(
\begin{array}
[c]{cc}%
b_{1,1} & b_{1,2}\\
b_{2,1} & b_{2,2}%
\end{array}
\right)  ,\label{eq.block2x2.ex2x2.1}\\
C  &  =\left(
\begin{array}
[c]{cc}%
c_{1,1} & c_{1,2}\\
c_{2,1} & c_{2,2}%
\end{array}
\right)  ,\ \ \ \ \ \ \ \ \ \ \text{and }D=\left(
\begin{array}
[c]{cc}%
d_{1,1} & d_{1,2}\\
d_{2,1} & d_{2,2}%
\end{array}
\right)  , \label{eq.block2x2.ex2x2.2}%
\end{align}
then%
\[
\left(
\begin{array}
[c]{cc}%
A & B\\
C & D
\end{array}
\right)  =\left(
\begin{array}
[c]{cccc}%
a_{1,1} & a_{1,2} & b_{1,1} & b_{1,2}\\
a_{2,1} & a_{2,2} & b_{2,1} & b_{2,2}\\
c_{1,1} & c_{1,2} & d_{1,1} & d_{1,2}\\
c_{2,1} & c_{2,2} & d_{2,1} & d_{2,2}%
\end{array}
\right)  .
\]
There are more general versions of this \textquotedblleft gluing
operation\textquotedblright\ that allow for more than four matrices; but we
will only concern ourselves with the case of four matrices.

We are aiming to prove the following theorem:

\begin{theorem}
\label{thm.block2x2.detAD-CB}Let $n\in\mathbb{N}$. Let $A$, $B$, $C$ and $D$
be four $n\times n$-matrices such that $AC=CA$. Then, the $\left(  2n\right)
\times\left(  2n\right)  $-matrix $\left(
\begin{array}
[c]{cc}%
A & B\\
C & D
\end{array}
\right)  $ satisfies%
\[
\det\left(
\begin{array}
[c]{cc}%
A & B\\
C & D
\end{array}
\right)  =\det\left(  AD-CB\right)  .
\]

\end{theorem}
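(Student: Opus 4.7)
My plan is to follow exactly the same two-step strategy that the paper has been using for Theorem \ref{thm.adj.adj(AB)}, Theorem \ref{thm.adj.adjadj}, Theorem \ref{thm.jacobi-complement} and Theorem \ref{thm.det.AS+B}: first prove the identity under the extra assumption that $\det A$ is a regular element of $\mathbb{K}$, and then lift this special case to the general case by replacing $A$ with $\widetilde{A}=tI_n+A\in(\mathbb{K}[t])^{n\times n}$ and applying the evaluation homomorphism $\varepsilon:\mathbb{K}[t]\to\mathbb{K}$ from Proposition \ref{prop.tInA.ev}.

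For the regular case, the key observation is the block-matrix identity
\[
\begin{pmatrix} I_n & 0_{n\times n} \\ -C & A \end{pmatrix}\begin{pmatrix} A & B \\ C & D \end{pmatrix}=\begin{pmatrix} A & B \\ -CA+AC & -CB+AD \end{pmatrix}=\begin{pmatrix} A & B \\ 0_{n\times n} & AD-CB \end{pmatrix},
\]
where the zero block in the lower-left corner appears precisely because $AC=CA$. The left factor is block lower-triangular with diagonal blocks $I_n$ and $A$, and the right-hand side is block upper-triangular with diagonal blocks $A$ and $AD-CB$. Using the standard block-triangular determinant formula (which is proved in \cite{detnotes} and can also be derived directly from the Laplace expansion), taking determinants of both sides gives
\[
\det A\cdot\det\begin{pmatrix} A & B \\ C & D \end{pmatrix}=\det A\cdot\det(AD-CB).
\]
When $\det A$ is regular, Lemma \ref{lem.regular.cancel} cancels the factor $\det A$ on both sides and yields the claim. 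This becomes the analogue of Lemma \ref{lem.det.AS+B.reg}.

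To handle the general case I would introduce $\widetilde{A}=tI_n+A\in(\mathbb{K}[t])^{n\times n}$, observe that $\widetilde{A}C=(tI_n+A)C=tC+AC=tC+CA=C(tI_n+A)=C\widetilde{A}$ (so the commutation hypothesis survives), and recall from Corollary \ref{cor.tInA.reg} that $\det\widetilde{A}$ is regular in $\mathbb{K}[t]$. The regular case just proved, applied over $\mathbb{K}[t]$ to $\widetilde{A},B,C,D$, gives
\[
\det\begin{pmatrix}\widetilde{A} & B \\ C & D\end{pmatrix}=\det(\widetilde{A}D-CB)\qquad\text{in }\mathbb{K}[t].
\]
Applying the ring homomorphism $\varepsilon$ to this polynomial identity, and using Proposition \ref{prop.functor.det}(a) together with the fact that $\varepsilon^{n\times n}$ fixes matrices with entries in $\mathbb{K}$ and that $\varepsilon^{n\times n}(\widetilde{A})=A$ (Proposition \ref{prop.tInA.ev}(c)), the left side collapses to $\det\begin{pmatrix}A & B \\ C & D\end{pmatrix}$ and the right side to $\det(AD-CB)$.

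The main obstacle is really just bookkeeping rather than mathematics: one has to be comfortable that $\varepsilon^{2n\times 2n}$ commutes with the block-concatenation construction (so that $\varepsilon^{2n\times 2n}\bigl(\begin{smallmatrix}\widetilde{A} & B \\ C & D\end{smallmatrix}\bigr)=\bigl(\begin{smallmatrix}A & B \\ C & D\end{smallmatrix}\bigr)$), which is immediate from Definition \ref{def.f-mat} because $\varepsilon^{2n\times 2n}$ simply applies $\varepsilon$ entrywise. One should also verify the block upper/lower triangular determinant identity, but this is classical and is freely available from \cite{detnotes}; it could alternatively be deduced from the Leibniz formula (\ref{eq.det}) by noting that any permutation contributing a nonzero term must preserve the block partition of the index set.
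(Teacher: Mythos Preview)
Your proof is correct and follows the paper's two-step strategy (regular case first, then the $tI_n+A$ trick). The extension to the general case via $\widetilde{A}=tI_n+A$ and the evaluation map $\varepsilon$ is handled exactly as in the paper, including the use of Lemma \ref{lem.functor.block2x2} to push $\varepsilon^{(n+n)\times(n+n)}$ through the block structure.

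The only genuine difference is in how you handle the regular case. The paper invokes Proposition \ref{prop.block2x2.VB+WD} (a Schur-complement-type result quoted from \cite{detnotes}) with $V=C$ and $W=-A$, obtaining $\det(-A)\cdot\det\bigl(\begin{smallmatrix}A&B\\C&D\end{smallmatrix}\bigr)=\det A\cdot\det(CB-AD)$, and then juggles signs via $\det(-A)=(-1)^n\det A$ to reach $\det A\cdot\det\bigl(\begin{smallmatrix}A&B\\C&D\end{smallmatrix}\bigr)=\det A\cdot\det(AD-CB)$. You instead write down the explicit block factorization
\[
\begin{pmatrix}I_n&0\\-C&A\end{pmatrix}\begin{pmatrix}A&B\\C&D\end{pmatrix}=\begin{pmatrix}A&B\\0&AD-CB\end{pmatrix}
\]
and apply the block-triangular determinant formula to each side. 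Your route is a bit more direct and avoids the sign bookkeeping, at the cost of appealing to the block-triangular determinant identity (which, as you note, is standard and available in \cite{detnotes}); the paper's route hides that identity inside the black box of Proposition \ref{prop.block2x2.VB+WD}. Both arrive at the same cancellable equation $\det A\cdot\det\bigl(\begin{smallmatrix}A&B\\C&D\end{smallmatrix}\bigr)=\det A\cdot\det(AD-CB)$.
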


Theorem \ref{thm.block2x2.detAD-CB} appears, e.g., in \cite[(14)]{Silvest}.
Our proof of this theorem will closely follow \cite[proof of Lemma 2]%
{Silvest}. We will use the following obvious lemma:

\begin{lemma}
\label{lem.functor.block2x2}Let $\mathbb{L}$ and $\mathbb{M}$ be two
commutative rings. Let $f:\mathbb{L}\rightarrow\mathbb{M}$ be any map. Let
$n$, $n^{\prime}$, $m$ and $m^{\prime}$ be four nonnegative integers. Let
$A\in\mathbb{L}^{n\times m}$, $B\in\mathbb{L}^{n\times m^{\prime}}$,
$C\in\mathbb{L}^{n^{\prime}\times m}$ and $D\in\mathbb{L}^{n^{\prime}\times
m^{\prime}}$ be four matrices. Then,
\[
f^{\left(  n+n^{\prime}\right)  \times\left(  m+m^{\prime}\right)  }\left(
\left(
\begin{array}
[c]{cc}%
A & B\\
C & D
\end{array}
\right)  \right)  =\left(
\begin{array}
[c]{cc}%
f^{n\times m}\left(  A\right)  & f^{n\times m^{\prime}}\left(  B\right) \\
f^{n^{\prime}\times m}\left(  C\right)  & f^{n^{\prime}\times m^{\prime}%
}\left(  D\right)
\end{array}
\right)  .
\]

\end{lemma}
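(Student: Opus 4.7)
The plan is to prove Lemma \ref{lem.functor.block2x2} by a direct entry-by-entry comparison, since the assertion is essentially definitional: both sides apply $f$ to every entry of every block. First I would write the four matrices explicitly as $A=\left(a_{i,j}\right)_{1\leq i\leq n,\ 1\leq j\leq m}$, $B=\left(b_{i,j}\right)_{1\leq i\leq n,\ 1\leq j\leq m'}$, $C=\left(c_{i,j}\right)_{1\leq i\leq n',\ 1\leq j\leq m}$ and $D=\left(d_{i,j}\right)_{1\leq i\leq n',\ 1\leq j\leq m'}$. By the definition of the block matrix from \cite[Definition 6.89]{detnotes}, the matrix $\left(\begin{array}{cc} A & B \\ C & D \end{array}\right) \in \mathbb{L}^{(n+n')\times(m+m')}$ has $(i,j)$-entry equal to $a_{i,j}$ if $i\leq n$ and $j\leq m$, equal to $b_{i,j-m}$ if $i\leq n$ and $j>m$, equal to $c_{i-n,j}$ if $i>n$ and $j\leq m$, and equal to $d_{i-n,j-m}$ if $i>n$ and $j>m$.

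Next I would apply the definition of $f^{(n+n')\times(m+m')}$ (Definition \ref{def.f-mat}), which replaces each entry by its image under $f$. So the $(i,j)$-entry of $f^{(n+n')\times(m+m')}\left(\left(\begin{array}{cc} A & B \\ C & D \end{array}\right)\right)$ is $f(a_{i,j})$, $f(b_{i,j-m})$, $f(c_{i-n,j})$, or $f(d_{i-n,j-m})$ in the respective four regions of the index set $\{1,\ldots,n+n'\}\times\{1,\ldots,m+m'\}$.

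Finally, applying Definition \ref{def.f-mat} again to each of the four blocks, I would note that $f^{n\times m}(A)=\left(f(a_{i,j})\right)_{1\leq i\leq n,\ 1\leq j\leq m}$, and likewise for $B$, $C$, $D$. Assembling these four matrices into the block matrix $\left(\begin{array}{cc} f^{n\times m}(A) & f^{n\times m'}(B) \\ f^{n'\times m}(C) & f^{n'\times m'}(D) \end{array}\right)$ via \cite[Definition 6.89]{detnotes} produces an $(n+n')\times(m+m')$-matrix whose $(i,j)$-entry is exactly the same expression in the same four regions. The two matrices therefore have equal entries and are equal, which is the claim.

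There is no real obstacle here; the only thing to be careful about is the bookkeeping of index shifts ($j-m$ for columns in the right half and $i-n$ for rows in the bottom half), and the fact that both the outer $f^{(n+n')\times(m+m')}$ and the inner $f^{n\times m}$, $f^{n\times m'}$, $f^{n'\times m}$, $f^{n'\times m'}$ act by the same rule: apply $f$ to each scalar entry. The result is therefore a tautology at the level of entries, and no hypothesis on $f$ beyond being a map of sets is needed.
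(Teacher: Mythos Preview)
Your proposal is correct and follows essentially the same approach as the paper's own proof: both write out the four matrices explicitly, invoke the block-matrix definition from \cite[Definition 6.89]{detnotes} to describe the $(i,j)$-entry in each of the four index regions, apply Definition \ref{def.f-mat} on both sides, and conclude by entrywise comparison. The paper's (long-version) proof is just a more verbose rendering of exactly this argument.
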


\begin{example}
For this example, set $n=2$ and $n^{\prime}=2$ and $m=2$ and $m^{\prime}=2$,
and let the $2\times2$-matrices $A$, $B$, $C$ and $D$ be given by
(\ref{eq.block2x2.ex2x2.1}) and (\ref{eq.block2x2.ex2x2.2}). Then, Lemma
\ref{lem.functor.block2x2} says that%
\[
f^{4\times4}\left(  \left(
\begin{array}
[c]{cc}%
A & B\\
C & D
\end{array}
\right)  \right)  =\left(
\begin{array}
[c]{cc}%
f^{2\times2}\left(  A\right)  & f^{2\times2}\left(  B\right) \\
f^{2\times2}\left(  C\right)  & f^{2\times2}\left(  D\right)
\end{array}
\right)  .
\]
Both the left and the right hand side of this equality are easily seen to
equal
\[
\left(
\begin{array}
[c]{cccc}%
f\left(  a_{1,1}\right)  & f\left(  a_{1,2}\right)  & f\left(  b_{1,1}\right)
& f\left(  b_{1,2}\right) \\
f\left(  a_{2,1}\right)  & f\left(  a_{2,2}\right)  & f\left(  b_{2,1}\right)
& f\left(  b_{2,2}\right) \\
f\left(  c_{1,1}\right)  & f\left(  c_{1,2}\right)  & f\left(  d_{1,1}\right)
& f\left(  d_{1,2}\right) \\
f\left(  c_{2,1}\right)  & f\left(  c_{2,2}\right)  & f\left(  d_{2,1}\right)
& f\left(  d_{2,2}\right)
\end{array}
\right)  .
\]

\end{example}

\begin{verlong}
\begin{proof}
[Proof of Lemma \ref{lem.functor.block2x2}.]Write the $n\times m$-matrix $A$
in the form $A=\left(  a_{i,j}\right)  _{1\leq i\leq n,\ 1\leq j\leq m}$.

Write the $n\times m^{\prime}$-matrix $B$ in the form $B=\left(
b_{i,j}\right)  _{1\leq i\leq n,\ 1\leq j\leq m^{\prime}}$.

Write the $n^{\prime}\times m$-matrix $C$ in the form $C=\left(
c_{i,j}\right)  _{1\leq i\leq n^{\prime},\ 1\leq j\leq m}$.

Write the $n^{\prime}\times m^{\prime}$-matrix $D$ in the form $D=\left(
d_{i,j}\right)  _{1\leq i\leq n^{\prime},\ 1\leq j\leq m^{\prime}}$.

Now, we have $A=\left(  a_{i,j}\right)  _{1\leq i\leq n,\ 1\leq j\leq m}$ and
$B=\left(  b_{i,j}\right)  _{1\leq i\leq n,\ 1\leq j\leq m^{\prime}}$ and
\newline$C=\left(  c_{i,j}\right)  _{1\leq i\leq n^{\prime},\ 1\leq j\leq m}$
and $D=\left(  d_{i,j}\right)  _{1\leq i\leq n^{\prime},\ 1\leq j\leq
m^{\prime}}$. Thus, the definition of the block matrix $\left(
\begin{array}
[c]{cc}%
A & B\\
C & D
\end{array}
\right)  $\ \ \ \ \footnote{or, more precisely, the equality \cite[(436)]%
{detnotes} (applied to $\mathbb{L}$ instead of $\mathbb{K}$)} yields%
\[
\left(
\begin{array}
[c]{cc}%
A & B\\
C & D
\end{array}
\right)  =\left(
\begin{cases}
a_{i,j}, & \text{if }i\leq n\text{ and }j\leq m;\\
b_{i,j-m}, & \text{if }i\leq n\text{ and }j>m;\\
c_{i-n,j}, & \text{if }i>n\text{ and }j\leq m;\\
d_{i-n,j-m}, & \text{if }i>n\text{ and }j>m
\end{cases}
\right)  _{1\leq i\leq n+n^{\prime},\ 1\leq j\leq m+m^{\prime}}.
\]
Hence, the definition of the map $f^{\left(  n+n^{\prime}\right)
\times\left(  m+m^{\prime}\right)  }$ yields%
\begin{align}
&  f^{\left(  n+n^{\prime}\right)  \times\left(  m+m^{\prime}\right)  }\left(
\left(
\begin{array}
[c]{cc}%
A & B\\
C & D
\end{array}
\right)  \right) \nonumber\\
&  =\left(  f\left(
\begin{cases}
a_{i,j}, & \text{if }i\leq n\text{ and }j\leq m;\\
b_{i,j-m}, & \text{if }i\leq n\text{ and }j>m;\\
c_{i-n,j}, & \text{if }i>n\text{ and }j\leq m;\\
d_{i-n,j-m}, & \text{if }i>n\text{ and }j>m
\end{cases}
\right)  \right)  _{1\leq i\leq n+n^{\prime},\ 1\leq j\leq m+m^{\prime}%
}\nonumber\\
&  =\left(
\begin{cases}
f\left(  a_{i,j}\right)  , & \text{if }i\leq n\text{ and }j\leq m;\\
f\left(  b_{i,j-m}\right)  , & \text{if }i\leq n\text{ and }j>m;\\
f\left(  c_{i-n,j}\right)  , & \text{if }i>n\text{ and }j\leq m;\\
f\left(  d_{i-n,j-m}\right)  , & \text{if }i>n\text{ and }j>m
\end{cases}
\right)  _{1\leq i\leq n+n^{\prime},\ 1\leq j\leq m+m^{\prime}}
\label{pf.lem.functor.block2x2.L=}%
\end{align}
(since each $i\in\left\{  1,2,\ldots,n+n^{\prime}\right\}  $ and $j\in\left\{
1,2,\ldots,m+m^{\prime}\right\}  $ satisfy \newline$f\left(
\begin{cases}
a_{i,j}, & \text{if }i\leq n\text{ and }j\leq m;\\
b_{i,j-m}, & \text{if }i\leq n\text{ and }j>m;\\
c_{i-n,j}, & \text{if }i>n\text{ and }j\leq m;\\
d_{i-n,j-m}, & \text{if }i>n\text{ and }j>m
\end{cases}
\right)  =%
\begin{cases}
f\left(  a_{i,j}\right)  , & \text{if }i\leq n\text{ and }j\leq m;\\
f\left(  b_{i,j-m}\right)  , & \text{if }i\leq n\text{ and }j>m;\\
f\left(  c_{i-n,j}\right)  , & \text{if }i>n\text{ and }j\leq m;\\
f\left(  d_{i-n,j-m}\right)  , & \text{if }i>n\text{ and }j>m
\end{cases}
$).

On the other hand, the definition of the map $f^{n\times m}$ yields
$f^{n\times m}\left(  A\right)  =\left(  f\left(  a_{i,j}\right)  \right)
_{1\leq i\leq n,\ 1\leq j\leq m}$ (since $A=\left(  a_{i,j}\right)  _{1\leq
i\leq n,\ 1\leq j\leq m}$). The definition of the map $f^{n\times m^{\prime}}$
yields $f^{n\times m^{\prime}}\left(  B\right)  =\left(  f\left(
b_{i,j}\right)  \right)  _{1\leq i\leq n,\ 1\leq j\leq m^{\prime}}$ (since
$B=\left(  b_{i,j}\right)  _{1\leq i\leq n,\ 1\leq j\leq m^{\prime}}$). The
definition of the map $f^{n^{\prime}\times m}$ yields $f^{n^{\prime}\times
m}\left(  C\right)  =\left(  f\left(  c_{i,j}\right)  \right)  _{1\leq i\leq
n^{\prime},\ 1\leq j\leq m}$ (since $C=\left(  c_{i,j}\right)  _{1\leq i\leq
n^{\prime},\ 1\leq j\leq m}$). The definition of the map $f^{n^{\prime}\times
m^{\prime}}$ yields $f^{n^{\prime}\times m^{\prime}}\left(  D\right)  =\left(
f\left(  d_{i,j}\right)  \right)  _{1\leq i\leq n^{\prime},\ 1\leq j\leq
m^{\prime}}$ (since $D=\left(  d_{i,j}\right)  _{1\leq i\leq n^{\prime
},\ 1\leq j\leq m^{\prime}}$).

Now, we have%
\begin{align*}
f^{n\times m}\left(  A\right)   &  =\left(  f\left(  a_{i,j}\right)  \right)
_{1\leq i\leq n,\ 1\leq j\leq m}\ \ \ \ \ \ \ \ \ \ \text{and}\\
f^{n\times m^{\prime}}\left(  B\right)   &  =\left(  f\left(  b_{i,j}\right)
\right)  _{1\leq i\leq n,\ 1\leq j\leq m^{\prime}}%
\ \ \ \ \ \ \ \ \ \ \text{and}\\
f^{n^{\prime}\times m}\left(  C\right)   &  =\left(  f\left(  c_{i,j}\right)
\right)  _{1\leq i\leq n^{\prime},\ 1\leq j\leq m}%
\ \ \ \ \ \ \ \ \ \ \text{and}\\
f^{n^{\prime}\times m^{\prime}}\left(  D\right)   &  =\left(  f\left(
d_{i,j}\right)  \right)  _{1\leq i\leq n^{\prime},\ 1\leq j\leq m^{\prime}}.
\end{align*}
Hence, the definition of the block matrix $\left(
\begin{array}
[c]{cc}%
f^{n\times m}\left(  A\right)  & f^{n\times m^{\prime}}\left(  B\right) \\
f^{n^{\prime}\times m}\left(  C\right)  & f^{n^{\prime}\times m^{\prime}%
}\left(  D\right)
\end{array}
\right)  $\ \ \ \ \footnote{or, more precisely, the equality \cite[(436)]%
{detnotes} (applied to $\mathbb{M}$, $f^{n\times m}\left(  A\right)  $,
$f\left(  a_{i,j}\right)  $, $f^{n\times m^{\prime}}\left(  B\right)  $,
$f\left(  b_{i,j}\right)  $, $f^{n^{\prime}\times m}\left(  C\right)  $,
$f\left(  c_{i,j}\right)  $, $f^{n^{\prime}\times m^{\prime}}\left(  D\right)
$ and $f\left(  d_{i,j}\right)  $ instead of $\mathbb{K}$, $A$, $a_{i,j}$,
$B$, $b_{i,j}$, $C$, $c_{i,j}$, $D$ and $d_{i,j}$)} yields%
\begin{align*}
&  \left(
\begin{array}
[c]{cc}%
f^{n\times m}\left(  A\right)  & f^{n\times m^{\prime}}\left(  B\right) \\
f^{n^{\prime}\times m}\left(  C\right)  & f^{n^{\prime}\times m^{\prime}%
}\left(  D\right)
\end{array}
\right) \\
&  =\left(
\begin{cases}
f\left(  a_{i,j}\right)  , & \text{if }i\leq n\text{ and }j\leq m;\\
f\left(  b_{i,j-m}\right)  , & \text{if }i\leq n\text{ and }j>m;\\
f\left(  c_{i-n,j}\right)  , & \text{if }i>n\text{ and }j\leq m;\\
f\left(  d_{i-n,j-m}\right)  , & \text{if }i>n\text{ and }j>m
\end{cases}
\right)  _{1\leq i\leq n+n^{\prime},\ 1\leq j\leq m+m^{\prime}}.
\end{align*}
Comparing this with (\ref{pf.lem.functor.block2x2.L=}), we obtain%
\[
f^{\left(  n+n^{\prime}\right)  \times\left(  m+m^{\prime}\right)  }\left(
\left(
\begin{array}
[c]{cc}%
A & B\\
C & D
\end{array}
\right)  \right)  =\left(
\begin{array}
[c]{cc}%
f^{n\times m}\left(  A\right)  & f^{n\times m^{\prime}}\left(  B\right) \\
f^{n^{\prime}\times m}\left(  C\right)  & f^{n^{\prime}\times m^{\prime}%
}\left(  D\right)
\end{array}
\right)  .
\]
This proves Lemma \ref{lem.functor.block2x2}.
\end{proof}
\end{verlong}

Next, let us recall a result from \cite{detnotes} (a version of the Schur
complement theorem):

\begin{proposition}
\label{prop.block2x2.VB+WD}Let $n\in\mathbb{N}$ and $m\in\mathbb{N}$. Let
$A\in\mathbb{K}^{n\times n}$, $B\in\mathbb{K}^{n\times m}$, $C\in
\mathbb{K}^{m\times n}$ and $D\in\mathbb{K}^{m\times m}$. Furthermore, let
$W\in\mathbb{K}^{m\times m}$ and $V\in\mathbb{K}^{m\times n}$ be such that
$VA=-WC$. Then,%
\[
\det W\cdot\det\left(
\begin{array}
[c]{cc}%
A & B\\
C & D
\end{array}
\right)  =\det A\cdot\det\left(  VB+WD\right)  .
\]

\end{proposition}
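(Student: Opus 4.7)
The plan is to use the classical trick of multiplying the block matrix on the left by a carefully chosen block lower-triangular matrix that exploits the hypothesis $VA = -WC$. Specifically, I would introduce the $(n+m)\times(n+m)$-matrix
\[
L = \left(\begin{array}{cc} I_n & 0_{n\times m} \\ V & W \end{array}\right) \in \mathbb{K}^{(n+m)\times(n+m)}
\]
and compute the product $L\cdot\left(\begin{array}{cc} A & B \\ C & D \end{array}\right)$ using the standard block multiplication formula (cf.\ \cite[Proposition 6.90 or the corresponding rule for block matrix products]{detnotes}). The $(2,1)$-block of this product equals $VA + WC$, which vanishes by the hypothesis $VA = -WC$; hence the product simplifies to the block upper-triangular matrix $\left(\begin{array}{cc} A & B \\ 0_{m\times n} & VB+WD \end{array}\right)$.

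Next I would take determinants of both sides of the equality
\[
L\cdot\left(\begin{array}{cc} A & B \\ C & D \end{array}\right) = \left(\begin{array}{cc} A & B \\ 0_{m\times n} & VB+WD \end{array}\right).
\]
Using multiplicativity of the determinant, the left-hand side becomes $\det L \cdot \det\left(\begin{array}{cc} A & B \\ C & D \end{array}\right)$. For the right-hand side and for $\det L$ itself, I would invoke the block-triangular determinant formula from \cite{detnotes} (the determinant of a block-triangular square matrix is the product of the determinants of the diagonal blocks). This gives $\det L = \det(I_n)\cdot\det W = \det W$ and $\det\left(\begin{array}{cc} A & B \\ 0_{m\times n} & VB+WD \end{array}\right) = \det A \cdot \det(VB+WD)$. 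Combining these two equalities yields exactly the desired identity
\[
\det W \cdot \det\left(\begin{array}{cc} A & B \\ C & D \end{array}\right) = \det A \cdot \det(VB+WD).
\]

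The proof is essentially a two-line manipulation once the right auxiliary matrix $L$ is introduced, so the main obstacle is bookkeeping rather than insight: one has to make sure all four block sizes are compatible (checking that $V\in\mathbb{K}^{m\times n}$ and $W\in\mathbb{K}^{m\times m}$ match the row-and-column counts of $A,B,C,D$) and that the block multiplication and block-triangular determinant results being cited apply in exactly this situation. In the verlong version it may be worth spelling out the block product entry-by-entry, or at least citing the specific lemmas from \cite{detnotes} (for block matrix multiplication and for $\det\left(\begin{smallmatrix} X & Y \\ 0 & Z \end{smallmatrix}\right) = \det X\cdot\det Z$). No regularity or $tI_n$-trick is needed here, since the identity is a polynomial identity in the entries of the matrices and falls out directly from the block computation.
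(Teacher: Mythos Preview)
Your proof is correct and is the standard argument for this result. The paper itself does not prove Proposition~\ref{prop.block2x2.VB+WD} but merely cites \cite[Exercise 6.35]{detnotes}; your block-triangular multiplication trick with $L=\left(\begin{smallmatrix} I_n & 0 \\ V & W \end{smallmatrix}\right)$ is precisely the intended approach there, so there is nothing to compare.
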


Proposition \ref{prop.block2x2.VB+WD} appears (with proof) in \cite[Exercise
6.35]{detnotes}, so we will not prove it here.

Let us next prove the particular case of Theorem \ref{thm.block2x2.detAD-CB}
in which we assume $\det A$ to be regular:

\begin{lemma}
\label{lem.block2x2.detAD-CB.reg}Let $n\in\mathbb{N}$. Let $A$, $B$, $C$ and
$D$ be four $n\times n$-matrices such that $AC=CA$. Assume that the element
$\det A$ of $\mathbb{K}$ is regular. Then, the $\left(  2n\right)
\times\left(  2n\right)  $-matrix $\left(
\begin{array}
[c]{cc}%
A & B\\
C & D
\end{array}
\right)  $ satisfies%
\[
\det\left(
\begin{array}
[c]{cc}%
A & B\\
C & D
\end{array}
\right)  =\det\left(  AD-CB\right)  .
\]

\end{lemma}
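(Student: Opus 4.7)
The plan is to apply Proposition \ref{prop.block2x2.VB+WD} with a clever choice of the auxiliary matrices $V$ and $W$, and then cancel a factor of $\det A$ using the regularity assumption.

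First, I will take $W = A$ and $V = -C$. The hypothesis of Proposition \ref{prop.block2x2.VB+WD} requires $VA = -WC$, which in our case reads $-CA = -AC$; this is exactly the commutation relation $AC = CA$ that we are given. So the proposition applies (with $m = n$) and yields
\[
\det A \cdot \det\left(\begin{array}{cc} A & B \\ C & D \end{array}\right) = \det A \cdot \det\left((-C)B + AD\right) = \det A \cdot \det\left(AD - CB\right).
\]

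Since $\det A$ is a regular element of $\mathbb{K}$ by assumption, Lemma \ref{lem.regular.cancel} (applied to $\mathbb{A} = \mathbb{K}$, $a = \det A$, $b = \det\left(\begin{smallmatrix} A & B \\ C & D \end{smallmatrix}\right)$ and $c = \det(AD - CB)$) allows us to cancel $\det A$ from both sides, giving the desired equality. There is no real obstacle here: the whole argument is a one-line application of Proposition \ref{prop.block2x2.VB+WD} followed by cancellation. The only thing to double-check is the sign bookkeeping in choosing $V = -C$ (so that $VB + WD = -CB + AD = AD - CB$), and that the commutation hypothesis $AC = CA$ is precisely what makes $VA = -WC$ hold.
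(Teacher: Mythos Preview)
Your proof is correct and follows the same overall strategy as the paper (apply Proposition \ref{prop.block2x2.VB+WD}, then cancel $\det A$ using regularity), but your choice of auxiliary matrices is cleaner. The paper takes $W=-A$ and $V=C$, which forces it to deal with $\det(-A)=(-1)^n\det A$ on the left and $\det(CB-AD)=(-1)^n\det(AD-CB)$ on the right, and then argue that the two $(-1)^n$ factors cancel. Your choice $W=A$, $V=-C$ lands directly on $\det A\cdot\det(AD-CB)$ with no sign bookkeeping at all. So the two proofs are essentially the same, but yours is a slight streamlining of the paper's version.
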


\begin{proof}
[Proof of Lemma \ref{lem.block2x2.detAD-CB.reg}.]The matrix $\left(
\begin{array}
[c]{cc}%
A & B\\
C & D
\end{array}
\right)  $ is an $\left(  n+n\right)  \times\left(  n+n\right)  $-matrix (by
its definition), i.e., a $\left(  2n\right)  \times\left(  2n\right)  $-matrix
(since $n+n=2n$).

We have $CA=-\left(  -A\right)  C$ (since $-\left(  -A\right)  C=AC=CA$).
Thus, Proposition \ref{prop.block2x2.VB+WD} (applied to $V=C$ and $W=-A$)
yields%
\begin{align}
\det\left(  -A\right)  \cdot\det\left(
\begin{array}
[c]{cc}%
A & B\\
C & D
\end{array}
\right)   &  =\det A\cdot\det\left(  \underbrace{CB+\left(  -A\right)
D}_{=CB-AD}\right) \nonumber\\
&  =\det A\cdot\det\left(  CB-AD\right)  .
\label{pf.lem.block2x2.detAD-CB.reg.1}%
\end{align}
But (\ref{pf.lem.adj.adjadj.reg.lam1}) (applied to $-1$ and $CB-AD$ instead of
$\lambda$ and $C$) yields
\begin{equation}
\det\left(  \left(  -1\right)  \left(  CB-AD\right)  \right)  =\left(
-1\right)  ^{n}\det\left(  CB-AD\right)  .
\label{pf.lem.block2x2.detAD-CB.reg.2}%
\end{equation}
Also, (\ref{pf.lem.adj.adjadj.reg.lam1}) (applied to $-1$ and $-A$ instead of
$\lambda$ and $C$) yields $\det\left(  \left(  -1\right)  \left(  -A\right)
\right)  =\left(  -1\right)  ^{n}\det\left(  -A\right)  $. In view of $\left(
-1\right)  \left(  -A\right)  =A$, this rewrites as $\det A=\left(  -1\right)
^{n}\det\left(  -A\right)  $. Hence,%
\begin{align*}
&  \underbrace{\det A}_{=\left(  -1\right)  ^{n}\det\left(  -A\right)  }%
\cdot\det\left(
\begin{array}
[c]{cc}%
A & B\\
C & D
\end{array}
\right) \\
&  =\left(  -1\right)  ^{n}\underbrace{\det\left(  -A\right)  \cdot\det\left(
\begin{array}
[c]{cc}%
A & B\\
C & D
\end{array}
\right)  }_{\substack{=\det A\cdot\det\left(  CB-AD\right)  \\\text{(by
(\ref{pf.lem.block2x2.detAD-CB.reg.1}))}}}\\
&  =\left(  -1\right)  ^{n}\det A\cdot\det\left(  CB-AD\right)  =\det
A\cdot\underbrace{\left(  -1\right)  ^{n}\det\left(  CB-AD\right)
}_{\substack{=\det\left(  \left(  -1\right)  \left(  CB-AD\right)  \right)
\\\text{(by (\ref{pf.lem.block2x2.detAD-CB.reg.2}))}}}\\
&  =\det A\cdot\det\left(  \underbrace{\left(  -1\right)  \left(
CB-AD\right)  }_{=AD-CB}\right)  =\det A\cdot\det\left(  AD-CB\right)  .
\end{align*}
Lemma \ref{lem.regular.cancel} (applied to $\mathbb{K}$, $\det A$,
$\det\left(
\begin{array}
[c]{cc}%
A & B\\
C & D
\end{array}
\right)  $ and $\det\left(  AD-CB\right)  $ instead of $\mathbb{A}$, $a$, $b$
and $c$) thus yields $\det\left(
\begin{array}
[c]{cc}%
A & B\\
C & D
\end{array}
\right)  =\det\left(  AD-CB\right)  $ (since the element $\det A$ of
$\mathbb{K}$ is regular). This proves Lemma \ref{lem.block2x2.detAD-CB.reg}.
\end{proof}

We are now ready to prove Theorem \ref{thm.block2x2.detAD-CB}:

\begin{proof}
[Proof of Theorem \ref{thm.block2x2.detAD-CB}.]The matrix $\left(
\begin{array}
[c]{cc}%
A & B\\
C & D
\end{array}
\right)  $ is a $\left(  2n\right)  \times\left(  2n\right)  $-matrix. (This
is proven in the same way as in our proof of Lemma
\ref{lem.block2x2.detAD-CB.reg}.)

Define the $\mathbb{K}$-algebra homomorphism $\varepsilon:\mathbb{K}\left[
t\right]  \rightarrow\mathbb{K}$ as in Proposition \ref{prop.tInA.ev}. Thus,
$\varepsilon$ is a ring homomorphism. Hence, Proposition
\ref{prop.functor.clas} \textbf{(a)} (applied to $\mathbb{L}=\mathbb{K}\left[
t\right]  $, $\mathbb{M}=\mathbb{K}$ and $m=n$) shows that the map
$\varepsilon^{n\times n}:\left(  \mathbb{K}\left[  t\right]  \right)
^{n\times n}\rightarrow\mathbb{K}^{n\times n}$ is a homomorphism of additive groups.

Recall that every $n\times n$-matrix in $\mathbb{K}^{n\times n}$ can be
considered as a matrix in $\left(  \mathbb{K}\left[  t\right]  \right)
^{n\times n}$. In other words, for each $F\in\mathbb{K}^{n\times n}$, we can
consider $F$ as a matrix in $\left(  \mathbb{K}\left[  t\right]  \right)
^{n\times n}$; therefore, $\varepsilon^{n\times n}\left(  F\right)  $ is
well-defined. We have
\begin{equation}
\varepsilon^{n\times n}\left(  F\right)  =F\ \ \ \ \ \ \ \ \ \ \text{for every
}F\in\mathbb{K}^{n\times n}. \label{pf.thm.block2x2.detAD-CB.FF}%
\end{equation}

\begin{vershort}
\noindent(In fact, the proof of (\ref{pf.thm.block2x2.detAD-CB.FF}) is
identical with the proof of (\ref{pf.lem.adj.poly.-A.short.eF}) we gave above.)
\end{vershort}

\begin{verlong}
\noindent(In fact, the proof of (\ref{pf.thm.block2x2.detAD-CB.FF}) is
identical with the proof of (\ref{pf.lem.adj.poly.-A.eF}) we gave above.)
\end{verlong}

Let $\widetilde{A}$ be the matrix $tI_{n}+A\in\left(  \mathbb{K}\left[
t\right]  \right)  ^{n\times n}$. Thus, $\widetilde{A}=tI_{n}+A$. Applying the
map $\varepsilon^{n\times n}$ to both sides of this equality, we find
$\varepsilon^{n\times n}\left(  \widetilde{A}\right)  =\varepsilon^{n\times
n}\left(  tI_{n}+A\right)  =A$ (by Proposition \ref{prop.tInA.ev} \textbf{(c)}).

Corollary \ref{cor.tInA.reg} shows that the element $\det\left(
tI_{n}+A\right)  $ of $\mathbb{K}\left[  t\right]  $ is regular. In other
words, the element $\det\widetilde{A}$ of $\mathbb{K}\left[  t\right]  $ is
regular (since $\widetilde{A}=tI_{n}+A$).

Let us consider the matrix $B\in\mathbb{K}^{n\times n}$ as a matrix in
$\left(  \mathbb{K}\left[  t\right]  \right)  ^{n\times n}$ (since every
$n\times n$-matrix in $\mathbb{K}^{n\times n}$ can be considered as a matrix
in $\left(  \mathbb{K}\left[  t\right]  \right)  ^{n\times n}$). Similarly,
let us consider the matrices $C$ and $D$ as matrices in $\left(
\mathbb{K}\left[  t\right]  \right)  ^{n\times n}$.

\begin{vershort}
Notice that (\ref{pf.thm.block2x2.detAD-CB.FF}) (applied to $F=B$) yields
$\varepsilon^{n\times n}\left(  B\right)  =B$. Similarly, $\varepsilon
^{n\times n}\left(  C\right)  =C$ and $\varepsilon^{n\times n}\left(
D\right)  =D$.
\end{vershort}

\begin{verlong}
Notice that (\ref{pf.thm.block2x2.detAD-CB.FF}) (applied to $F=B$) yields
$\varepsilon^{n\times n}\left(  B\right)  =B$. Also,
(\ref{pf.thm.block2x2.detAD-CB.FF}) (applied to $F=C$) yields $\varepsilon
^{n\times n}\left(  C\right)  =C$. Also, (\ref{pf.thm.block2x2.detAD-CB.FF})
(applied to $F=D$) yields $\varepsilon^{n\times n}\left(  D\right)  =D$.
\end{verlong}

Now,%
\begin{align*}
\underbrace{\widetilde{A}}_{=tI_{n}+A}C  &  =\left(  tI_{n}+A\right)
C=t\underbrace{I_{n}C}_{=C=CI_{n}}+\underbrace{AC}_{=CA}=\underbrace{tCI_{n}%
}_{=C\cdot tI_{n}}+\,CA\\
&  =C\cdot tI_{n}+CA=C\underbrace{\left(  tI_{n}+A\right)  }_{=\widetilde{A}%
}=C\widetilde{A}.
\end{align*}
Thus, Lemma \ref{lem.block2x2.detAD-CB.reg} (applied to $\mathbb{K}\left[
t\right]  $ and $\widetilde{A}$ instead of $\mathbb{K}$ and $A$) yields
\[
\det\left(
\begin{array}
[c]{cc}%
\widetilde{A} & B\\
C & D
\end{array}
\right)  =\det\left(  \widetilde{A}D-CB\right)
\]
(since the element $\det\widetilde{A}$ of $\mathbb{K}\left[  t\right]  $ is
regular). Applying the map $\varepsilon$ to both sides of this equality, we
find%
\begin{equation}
\varepsilon\left(  \det\left(
\begin{array}
[c]{cc}%
\widetilde{A} & B\\
C & D
\end{array}
\right)  \right)  =\varepsilon\left(  \det\left(  \widetilde{A}D-CB\right)
\right)  . \label{pf.thm.block2x2.detAD-CB.2}%
\end{equation}
But Proposition \ref{prop.functor.det} \textbf{(a)} (applied to $\mathbb{K}%
\left[  t\right]  $, $\mathbb{K}$, $\varepsilon$ and $\widetilde{A}D-CB$
instead of $\mathbb{L}$, $\mathbb{M}$, $f$ and $A$) yields
\[
\varepsilon\left(  \det\left(  \widetilde{A}D-CB\right)  \right)  =\det\left(
\varepsilon^{n\times n}\left(  \widetilde{A}D-CB\right)  \right)  .
\]
In view of%
\begin{align*}
\varepsilon^{n\times n}\left(  \widetilde{A}D-CB\right)   &
=\underbrace{\varepsilon^{n\times n}\left(  \widetilde{A}D\right)
}_{\substack{=\varepsilon^{n\times n}\left(  \widetilde{A}\right)
\cdot\varepsilon^{n\times n}\left(  D\right)  \\\text{(by Proposition
\ref{prop.functor.clas} \textbf{(b)}}\\\text{(applied to }\mathbb{K}\left[
t\right]  \text{, }\mathbb{K}\text{, }\varepsilon\text{, }n\text{, }n\text{,
}\widetilde{A}\text{ and }D\\\text{instead of }\mathbb{L}\text{, }%
\mathbb{M}\text{, }f\text{, }m\text{, }p\text{, }A\text{ and }B\text{))}%
}}-\underbrace{\varepsilon^{n\times n}\left(  CB\right)  }%
_{\substack{=\varepsilon^{n\times n}\left(  C\right)  \cdot\varepsilon
^{n\times n}\left(  B\right)  \\\text{(by Proposition \ref{prop.functor.clas}
\textbf{(b)}}\\\text{(applied to }\mathbb{K}\left[  t\right]  \text{,
}\mathbb{K}\text{, }\varepsilon\text{, }n\text{, }n\text{, }C\text{ and
}B\\\text{instead of }\mathbb{L}\text{, }\mathbb{M}\text{, }f\text{, }m\text{,
}p\text{, }A\text{ and }B\text{))}}}\\
&  \ \ \ \ \ \ \ \ \ \ \left(
\begin{array}
[c]{c}%
\text{since the map }\varepsilon^{n\times n}\text{ is a homomorphism}\\
\text{of additive groups}%
\end{array}
\right) \\
&  =\underbrace{\varepsilon^{n\times n}\left(  \widetilde{A}\right)  }%
_{=A}\cdot\underbrace{\varepsilon^{n\times n}\left(  D\right)  }%
_{=D}-\underbrace{\varepsilon^{n\times n}\left(  C\right)  }_{=C}%
\cdot\underbrace{\varepsilon^{n\times n}\left(  B\right)  }_{\substack{=B}}\\
&  =AD-CB,
\end{align*}
this becomes%
\begin{align}
\varepsilon\left(  \det\left(  \widetilde{A}D-CB\right)  \right)   &
=\det\left(  \underbrace{\varepsilon^{n\times n}\left(  \widetilde{A}%
D-CB\right)  }_{=AD-CB}\right) \nonumber\\
&  =\det\left(  AD-CB\right)  . \label{pf.thm.block2x2.detAD-CB.3}%
\end{align}

But Proposition \ref{prop.functor.det} \textbf{(a)} (applied to $\mathbb{K}%
\left[  t\right]  $, $\mathbb{K}$, $\varepsilon$, $n+n$ and $\left(
\begin{array}
[c]{cc}%
\widetilde{A} & B\\
C & D
\end{array}
\right)  $ instead of $\mathbb{L}$, $\mathbb{M}$, $f$, $n$ and $A$) yields%
\begin{equation}
\varepsilon\left(  \det\left(
\begin{array}
[c]{cc}%
\widetilde{A} & B\\
C & D
\end{array}
\right)  \right)  =\det\left(  \varepsilon^{\left(  n+n\right)  \times\left(
n+n\right)  }\left(  \left(
\begin{array}
[c]{cc}%
\widetilde{A} & B\\
C & D
\end{array}
\right)  \right)  \right)  . \label{pf.thm.block2x2.detAD-CB.4}%
\end{equation}

On the other hand, Lemma \ref{lem.functor.block2x2} (applied to $\mathbb{K}%
\left[  t\right]  $, $\mathbb{K}$, $\varepsilon$, $n$, $n$, $n$ and
$\widetilde{A}$ instead of $\mathbb{L}$, $\mathbb{M}$, $f$, $n^{\prime}$, $m$,
$m^{\prime}$ and $A$) yields
\[
\varepsilon^{\left(  n+n\right)  \times\left(  n+n\right)  }\left(  \left(
\begin{array}
[c]{cc}%
\widetilde{A} & B\\
C & D
\end{array}
\right)  \right)  =\left(
\begin{array}
[c]{cc}%
\varepsilon^{n\times n}\left(  \widetilde{A}\right)  & \varepsilon^{n\times
n}\left(  B\right) \\
\varepsilon^{n\times n}\left(  C\right)  & \varepsilon^{n\times n}\left(
D\right)
\end{array}
\right)  =\left(
\begin{array}
[c]{cc}%
A & B\\
C & D
\end{array}
\right)
\]
(since $\varepsilon^{n\times n}\left(  \widetilde{A}\right)  =A$ and
$\varepsilon^{n\times n}\left(  B\right)  =B$ and $\varepsilon^{n\times
n}\left(  C\right)  =C$ and $\varepsilon^{n\times n}\left(  D\right)  =D$).
Taking determinants on both sides of this equality, we find%
\[
\det\left(  \varepsilon^{\left(  n+n\right)  \times\left(  n+n\right)
}\left(  \left(
\begin{array}
[c]{cc}%
\widetilde{A} & B\\
C & D
\end{array}
\right)  \right)  \right)  =\det\left(
\begin{array}
[c]{cc}%
A & B\\
C & D
\end{array}
\right)  .
\]
Hence,%
\begin{align*}
\det\left(
\begin{array}
[c]{cc}%
A & B\\
C & D
\end{array}
\right)   &  =\det\left(  \varepsilon^{\left(  n+n\right)  \times\left(
n+n\right)  }\left(  \left(
\begin{array}
[c]{cc}%
\widetilde{A} & B\\
C & D
\end{array}
\right)  \right)  \right) \\
&  =\varepsilon\left(  \det\left(
\begin{array}
[c]{cc}%
\widetilde{A} & B\\
C & D
\end{array}
\right)  \right)  \ \ \ \ \ \ \ \ \ \ \left(  \text{by
(\ref{pf.thm.block2x2.detAD-CB.4})}\right) \\
&  =\varepsilon\left(  \det\left(  \widetilde{A}D-CB\right)  \right)
\ \ \ \ \ \ \ \ \ \ \left(  \text{by (\ref{pf.thm.block2x2.detAD-CB.2}%
)}\right) \\
&  =\det\left(  AD-CB\right)  \ \ \ \ \ \ \ \ \ \ \left(  \text{by
(\ref{pf.thm.block2x2.detAD-CB.3})}\right)  .
\end{align*}
This completes the proof of Theorem \ref{thm.block2x2.detAD-CB}.
\end{proof}

Theorem \ref{thm.block2x2.detAD-CB} has an analogue for $m\times m$ block
matrices (see \cite[Theorem 1]{Silvest} or \cite[Chapter II, Lemma 5.30 and
Exercise 26]{LomQui16}), which too is proved using our \textquotedblleft%
$tI_{n}+A$\textquotedblright\ trick.

\subsection{The trace of the adjugate}

The following neat result follows so easily from Theorem \ref{thm.adj.poly}
and Theorem \ref{thm.TCH} that it would be strange not to mention it:

\begin{theorem}
\label{thm.adj.Tr}Let $n\in\mathbb{N}$. Let $A\in\mathbb{K}^{n\times n}$. For
every $j\in\mathbb{Z}$, define an element $c_{j}\in\mathbb{K}$ by
$c_{j}=\left[  t^{n-j}\right]  \chi_{A}$. Then,
\[
\operatorname*{Tr}\left(  \operatorname*{adj}A\right)  =\left(  -1\right)
^{n-1}c_{n-1}=\left(  -1\right)  ^{n-1}\left[  t^{1}\right]  \chi_{A}.
\]

\end{theorem}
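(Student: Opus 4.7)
The plan is to combine the two heavy-lifting results already proven in the excerpt, namely Theorem \ref{thm.adj.poly} (expressing $\operatorname{adj} A$ as a polynomial in $A$) and Theorem \ref{thm.TCH} (the trace Cayley--Hamilton identity). The identity essentially falls out by taking the trace of the formula for $\operatorname{adj} A$ and then using Theorem \ref{thm.TCH} at $k = n-1$ to evaluate the resulting expression. First I would dispose of the degenerate case $n = 0$, where both sides equal $0$ (the trace of any $0\times 0$ matrix is $0$, and $c_{-1} = [t^{1}]\chi_{A} = [t^{1}](1) = 0$ since $\chi_{A} = \det(tI_{0}) = 1$). Hence we may assume $n \geq 1$.

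Under this assumption, Theorem \ref{thm.adj.poly} gives
\[
\operatorname{adj} A \;=\; (-1)^{n-1} \sum_{i=0}^{n-1} c_{n-1-i}\, A^{i}.
\]
Applying $\operatorname{Tr}$, using $\mathbb{K}$-linearity of the trace and splitting off the $i = 0$ term (with $A^{0} = I_{n}$ and $\operatorname{Tr}(I_{n}) = n$), I obtain
\[
\operatorname{Tr}(\operatorname{adj} A) \;=\; (-1)^{n-1}\left( n\, c_{n-1} + \sum_{i=1}^{n-1} \operatorname{Tr}(A^{i})\, c_{n-1-i} \right).
\]

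Next I invoke Theorem \ref{thm.TCH} at $k = n-1 \in \mathbb{N}$, which reads
\[
(n-1)\, c_{n-1} + \sum_{i=1}^{n-1} \operatorname{Tr}(A^{i})\, c_{n-1-i} \;=\; 0,
\]
so the inner sum equals $-(n-1)\, c_{n-1}$. Plugging this back gives
\[
\operatorname{Tr}(\operatorname{adj} A) \;=\; (-1)^{n-1}\bigl( n\, c_{n-1} - (n-1)\, c_{n-1} \bigr) \;=\; (-1)^{n-1}\, c_{n-1},
\]
and the definition of $c_{n-1}$ says $c_{n-1} = [t^{n-(n-1)}]\chi_{A} = [t^{1}]\chi_{A}$, proving the theorem.

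There is no real obstacle here: the argument is a two-line calculation once the two cited theorems are in hand. The only point requiring mild care is the boundary case $n = 1$, where the sum $\sum_{i=1}^{n-1}$ is empty and Theorem \ref{thm.TCH} (for $k = 0$) degenerates to $0 = 0$; the computation still yields $\operatorname{Tr}(\operatorname{adj} A) = c_{0} = 1$, which matches the fact that $\operatorname{adj}$ of a $1 \times 1$ matrix is $(1)$.
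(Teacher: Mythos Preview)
Your proof is correct and follows essentially the same approach as the paper: handle $n=0$ separately, then for $n\geq 1$ take the trace of the formula from Theorem \ref{thm.adj.poly}, split off the $i=0$ term, and use Theorem \ref{thm.TCH} at $k=n-1$ to collapse the remaining sum. The paper's argument is identical in structure and detail.
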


In other words, the trace of the adjugate $\operatorname*{adj}A$ of an
$n\times n$-matrix is the coefficient of $t$ in the characteristic polynomial
$\chi_{A}$.

\begin{proof}
[Proof of Theorem \ref{thm.adj.Tr}.]The definition of $c_{n-1}$ yields
$c_{n-1}=\left[  t^{n-\left(  n-1\right)  }\right]  \chi_{A}=\left[
t^{1}\right]  \chi_{A}$ (since $n-\left(  n-1\right)  =1$).

It is easy to see that Theorem \ref{thm.adj.Tr} holds for $n=0$%
\ \ \ \ \footnote{\textit{Proof.} Assume that $n=0$. Thus, $1>0=n$. But
Corollary \ref{cor.chiA} \textbf{(a)} yields that $\chi_{A}\in\mathbb{K}%
\left[  t\right]  $ is a polynomial of degree $\leq n$ in $t$. Hence, $\left[
t^{m}\right]  \chi_{A}=0$ for every integer $m>n$. Applying this to $m=1$, we
obtain $\left[  t^{1}\right]  \chi_{A}=0$ (since $1>n$). Also,
$\operatorname*{adj}A$ is an $n\times n$-matrix, and thus a $0\times0$-matrix
(since $n=0$). Hence, $\operatorname*{Tr}\left(  \operatorname*{adj}A\right)
=0$ (since the trace of a $0\times0$-matrix is $0$). Comparing this with
$\left(  -1\right)  ^{n-1}\underbrace{c_{n-1}}_{=\left[  t^{1}\right]
\chi_{A}=0}=0$, we obtain $\operatorname*{Tr}\left(  \operatorname*{adj}%
A\right)  =\left(  -1\right)  ^{n-1}\underbrace{c_{n-1}}_{=\left[
t^{1}\right]  \chi_{A}}=\left(  -1\right)  ^{n-1}\left[  t^{1}\right]
\chi_{A}$. Hence, we have proven Theorem \ref{thm.adj.Tr} under the assumption
that $n=0$.}. Thus, for the rest of this proof, we can WLOG assume that we
don't have $n=0$. Assume this. Hence, $n\neq0$, so that $n\geq1$ (since
$n\in\mathbb{N}$). Therefore, $n-1\in\mathbb{N}$.

Thus, Theorem \ref{thm.TCH} (applied to $k=n-1$) yields%
\[
\left(  n-1\right)  c_{n-1}+\sum_{i=1}^{n-1}\operatorname*{Tr}\left(
A^{i}\right)  c_{n-1-i}=0.
\]
Subtracting $\left(  n-1\right)  c_{n-1}$ from both sides of this equation, we
obtain%
\begin{equation}
\sum_{i=1}^{n-1}\operatorname*{Tr}\left(  A^{i}\right)  c_{n-1-i}=-\left(
n-1\right)  c_{n-1}. \label{pf.thm.adj.Tr.1}%
\end{equation}

But Theorem \ref{thm.adj.poly} yields%
\[
\operatorname*{adj}A=\left(  -1\right)  ^{n-1}\sum_{i=0}^{n-1}c_{n-1-i}A^{i}.
\]
Applying the map $\operatorname*{Tr}:\mathbb{K}^{n\times n}\rightarrow
\mathbb{K}$ to both sides of this equality, we obtain%
\begin{align}
\operatorname*{Tr}\left(  \operatorname*{adj}A\right)   &  =\operatorname*{Tr}%
\left(  \left(  -1\right)  ^{n-1}\sum_{i=0}^{n-1}c_{n-1-i}A^{i}\right)
\nonumber\\
&  =\left(  -1\right)  ^{n-1}\sum_{i=0}^{n-1}c_{n-1-i}\operatorname*{Tr}%
\left(  A^{i}\right)  \label{pf.thm.adj.Tr.2}%
\end{align}
(since the map $\operatorname*{Tr}:\mathbb{K}^{n\times n}\rightarrow
\mathbb{K}$ is $\mathbb{K}$-linear). But $n-1\geq0$ (since $n\geq1$);
therefore, $0\in\left\{  0,1,\ldots,n-1\right\}  $. Hence, we can split off
the addend for $i=0$ from the sum $\sum_{i=0}^{n-1}c_{n-1-i}\operatorname*{Tr}%
\left(  A^{i}\right)  $. We thus obtain
\begin{align*}
\sum_{i=0}^{n-1}c_{n-1-i}\operatorname*{Tr}\left(  A^{i}\right)   &
=\underbrace{c_{n-1-0}}_{=c_{n-1}}\operatorname*{Tr}\left(  \underbrace{A^{0}%
}_{=I_{n}}\right)  +\sum_{i=1}^{n-1}\underbrace{c_{n-1-i}\operatorname*{Tr}%
\left(  A^{i}\right)  }_{=\operatorname*{Tr}\left(  A^{i}\right)  c_{n-1-i}}\\
&  =c_{n-1}\underbrace{\operatorname*{Tr}\left(  I_{n}\right)  }%
_{=n}+\underbrace{\sum_{i=1}^{n-1}\operatorname*{Tr}\left(  A^{i}\right)
c_{n-1-i}}_{\substack{=-\left(  n-1\right)  c_{n-1}\\\text{(by
(\ref{pf.thm.adj.Tr.1}))}}}=c_{n-1}n+\left(  -\left(  n-1\right)
c_{n-1}\right) \\
&  =\underbrace{\left(  n-\left(  n-1\right)  \right)  }_{=1}c_{n-1}=c_{n-1}.
\end{align*}
Hence, (\ref{pf.thm.adj.Tr.2}) becomes%
\[
\operatorname*{Tr}\left(  \operatorname*{adj}A\right)  =\left(  -1\right)
^{n-1}\underbrace{\sum_{i=0}^{n-1}c_{n-1-i}\operatorname*{Tr}\left(
A^{i}\right)  }_{=c_{n-1}}=\left(  -1\right)  ^{n-1}\underbrace{c_{n-1}%
}_{=\left[  t^{1}\right]  \chi_{A}}=\left(  -1\right)  ^{n-1}\left[
t^{1}\right]  \chi_{A}.
\]
This proves Theorem \ref{thm.adj.Tr}.
\end{proof}

\subsection{Yet another application to block matrices}

Let us show one further formula for determinants of certain block matrices
that can be proved using our \textquotedblleft replace $A$ by $tI_{n}%
+A$\textquotedblright\ strategy.

We will again use \cite[Definition 6.89]{detnotes} in this section. We shall
furthermore use the following notation:

\begin{definition}
\label{def.sol.det.rk1upd.scalar}If $B$ is any $1\times1$-matrix, then
$\operatorname*{ent}B$ will denote the $\left(  1,1\right)  $-th entry of $B$.
(This entry is, of course, the only entry of $B$. Thus, the $1\times1$-matrix
$B$ satisfies $B=\left(
\begin{array}
[c]{c}%
\operatorname*{ent}B
\end{array}
\right)  $.)
\end{definition}

We now claim the following:

\begin{theorem}
\label{thm.block.puqv}Let $n\in\mathbb{N}$ and $m\in\mathbb{N}$. Let
$A\in\mathbb{K}^{n\times n}$ and $D\in\mathbb{K}^{m\times m}$ be two square
matrices. Let $p\in\mathbb{K}^{n\times1}$ and $q\in\mathbb{K}^{m\times1}$ be
two column vectors. Let $v\in\mathbb{K}^{1\times m}$ and $u\in\mathbb{K}%
^{1\times n}$ be two row vectors. Then,%
\[
\det\left(
\begin{array}
[c]{cc}%
A & pv\\
qu & D
\end{array}
\right)  =\det A\cdot\det D-\operatorname*{ent}\left(  u\left(
\operatorname*{adj}A\right)  p\right)  \cdot\operatorname*{ent}\left(
v\left(  \operatorname*{adj}D\right)  q\right)  .
\]

\end{theorem}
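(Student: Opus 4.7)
The plan is to first treat the special case where $\det A$ is a regular element of $\mathbb{K}$, and then reduce the general case to it via the $tI_{n}+A$ trick already employed in the proofs of Theorem \ref{thm.adj.adj(AB)}, Theorem \ref{thm.det.AS+B}, and Theorem \ref{thm.block2x2.detAD-CB}. The regular case will be handled by combining the Schur-complement formula (Proposition \ref{prop.block2x2.VB+WD}) with a ``matrix determinant lemma'' proved along the way.

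The preliminary ingredient I need is the identity
\[
\det(M + qv) = \det M + \operatorname{ent}\bigl(v \cdot \operatorname{adj}(M) \cdot q\bigr)
\]
for any $m \times m$ matrix $M$, column $q \in \mathbb{K}^{m \times 1}$, and row $v \in \mathbb{K}^{1 \times m}$. I will prove this by introducing a fresh indeterminate $s$ and viewing $\det(M + s \cdot qv) \in \mathbb{K}[s]$. Expanding multilinearly in the columns, any contribution involving $s^{k}$ with $k \geq 2$ is the determinant of a matrix with at least two columns proportional to $q$ and so vanishes; hence the polynomial has degree $\leq 1$ and may be written as $\det M + c \cdot s$ for some $c \in \mathbb{K}$. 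Applying Theorem \ref{thm.deriv.ddet} to the differentiation operator $\partial$ and then evaluating at $s = 0$ identifies $c$ with $\operatorname{Tr}\bigl((qv) \cdot \operatorname{adj}(M)\bigr)$; by Lemma \ref{lem.Tr(AB)} (cyclicity of trace) this equals $\operatorname{Tr}\bigl(v \operatorname{adj}(M) q\bigr) = \operatorname{ent}\bigl(v \operatorname{adj}(M) q\bigr)$, since $v \operatorname{adj}(M) q$ is a $1 \times 1$ matrix. Substituting $s = 1$ yields the lemma.

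For the regular case, assume $\det A$ is regular in $\mathbb{K}$ and apply Proposition \ref{prop.block2x2.VB+WD} with $V = -qu \cdot \operatorname{adj}(A)$ and $W = \det A \cdot I_{m}$. The hypothesis $VA = -W C$ is immediate from $A \cdot \operatorname{adj}(A) = \det A \cdot I_{n}$ (Theorem \ref{thm.adj.inverse}). Using $\det W = (\det A)^{m}$ together with the simplification $qu \cdot \operatorname{adj}(A) \cdot pv = \operatorname{ent}\bigl(u \operatorname{adj}(A) p\bigr) \cdot qv$ (because $u \operatorname{adj}(A) p$ is a $1 \times 1$ matrix), the proposition gives
\[
(\det A)^{m} \cdot \det\left(
\begin{array}[c]{cc}
A & pv\\
qu & D
\end{array}
\right) = \det A \cdot \det\bigl(\det A \cdot D - \operatorname{ent}(u \operatorname{adj}(A) p) \cdot qv\bigr).
\]
Writing $\alpha = \operatorname{ent}(u \operatorname{adj}(A) p)$, the matrix determinant lemma applied with $\det A \cdot D$ in place of $M$ -- together with $\operatorname{adj}(\det A \cdot D) = (\det A)^{m-1} \operatorname{adj}(D)$ from Proposition \ref{prop.adj.lamA} (for $m \geq 1$) -- rewrites the right-hand side as $(\det A)^{m+1} \det D - \alpha (\det A)^{m} \operatorname{ent}(v \operatorname{adj}(D) q)$. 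Cancelling the regular factor $(\det A)^{m}$ via Lemma \ref{lem.regular.mat-cancel} (a product of regular elements is regular) yields the theorem; the degenerate cases $n = 0$ and $m = 0$ are verified by direct inspection.

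To deduce the general case, introduce $\widetilde{A} = tI_{n} + A \in (\mathbb{K}[t])^{n \times n}$; Corollary \ref{cor.tInA.reg} ensures $\det \widetilde{A}$ is regular in $\mathbb{K}[t]$. Applying the regular case already proved, now over the ring $\mathbb{K}[t]$ with $\widetilde{A}$ in place of $A$, yields the identity for $\left(\begin{array}[c]{cc} \widetilde{A} & pv\\ qu & D \end{array}\right)$. The evaluation homomorphism $\varepsilon:\mathbb{K}[t] \to \mathbb{K}$ from Proposition \ref{prop.tInA.ev} applied to both sides -- combined with Proposition \ref{prop.functor.det} (so $\varepsilon$ commutes with $\det$ and $\operatorname{adj}$), Proposition \ref{prop.functor.clas} (so $\varepsilon^{n \times n}$ commutes with matrix products), Lemma \ref{lem.functor.block2x2} (so it commutes with the block construction), and Proposition \ref{prop.tInA.ev} \textbf{(c)} (giving $\varepsilon^{n \times n}(\widetilde{A}) = A$) -- recovers the identity for $A$. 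The main obstacle I anticipate is the degree-$\leq 1$ argument inside the matrix determinant lemma; once that is in hand, the remainder is a routine imitation of the strategy used in the preceding subsections.
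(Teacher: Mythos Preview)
Your overall strategy matches the paper's exactly: establish the regular case via Proposition~\ref{prop.block2x2.VB+WD} with $V = -qu\operatorname{adj}(A)$ and $W = (\det A)I_m$, apply the matrix determinant lemma to the resulting $\det(\det A\cdot D - \alpha\,qv)$, cancel the regular factor $(\det A)^m$, and then descend to arbitrary $A$ via the $tI_n + A$ trick and the evaluation homomorphism $\varepsilon$.

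The one genuine difference is your treatment of the matrix determinant lemma. The paper simply invokes it as Theorem~\ref{thm.sol.det.rk1upd.claim}, citing \cite{detnotes}. You instead give a self-contained proof: the polynomial $\det(M + s\,qv)\in\mathbb{K}[s]$ has degree $\leq 1$ by column multilinearity (any $s^{\geq 2}$ term has two columns proportional to $q$), and its linear coefficient is read off from Theorem~\ref{thm.deriv.ddet} as $\operatorname{Tr}(qv\cdot\operatorname{adj}M) = \operatorname{ent}(v\operatorname{adj}(M)q)$. This is a nice reuse of machinery already developed in the paper and makes the argument more internally self-sufficient, at the cost of a short extra step. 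Two small citation corrections: the regularity of $(\det A)^m$ is Lemma~\ref{lem.reg.power}, and the scalar cancellation is Lemma~\ref{lem.regular.cancel} (not Lemma~\ref{lem.regular.mat-cancel}, which is the matrix version).
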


\begin{example}
Let us see what Theorem \ref{thm.block.puqv} says in the case when $n=2$ and
$m=2$. Indeed, let $n=2$ and $m=2$ and%
\begin{align*}
A  &  =\left(
\begin{array}
[c]{cc}%
a_{1,1} & a_{1,2}\\
a_{2,1} & a_{2,2}%
\end{array}
\right)  \ \ \ \ \ \ \ \ \ \ \text{and}\ \ \ \ \ \ \ \ \ \ D=\left(
\begin{array}
[c]{cc}%
d_{1,1} & d_{1,2}\\
d_{2,1} & d_{2,2}%
\end{array}
\right)  \ \ \ \ \ \ \ \ \ \ \text{and}\\
p  &  =\left(
\begin{array}
[c]{c}%
p_{1}\\
p_{2}%
\end{array}
\right)  \ \ \ \ \ \ \ \ \ \ \text{and}\ \ \ \ \ \ \ \ \ \ q=\left(
\begin{array}
[c]{c}%
q_{1}\\
q_{2}%
\end{array}
\right)  \ \ \ \ \ \ \ \ \ \ \text{and}\\
v  &  =\left(
\begin{array}
[c]{cc}%
v_{1} & v_{2}%
\end{array}
\right)  \ \ \ \ \ \ \ \ \ \ \text{and}\ \ \ \ \ \ \ \ \ \ u=\left(
\begin{array}
[c]{cc}%
u_{1} & u_{2}%
\end{array}
\right)  .
\end{align*}
Then,
\[
pv=\left(
\begin{array}
[c]{cc}%
p_{1}v_{1} & p_{1}v_{2}\\
p_{2}v_{1} & p_{2}v_{2}%
\end{array}
\right)  \ \ \ \ \ \ \ \ \ \ \text{and}\ \ \ \ \ \ \ \ \ \ qu=\left(
\begin{array}
[c]{cc}%
q_{1}u_{1} & q_{1}u_{2}\\
q_{2}u_{1} & q_{2}u_{2}%
\end{array}
\right)  .
\]
Hence,%
\[
\left(
\begin{array}
[c]{cc}%
A & pv\\
qu & D
\end{array}
\right)  =\left(
\begin{array}
[c]{cccc}%
a_{1,1} & a_{1,2} & p_{1}v_{1} & p_{1}v_{2}\\
a_{2,1} & a_{2,2} & p_{2}v_{1} & p_{2}v_{2}\\
q_{1}u_{1} & q_{1}u_{2} & d_{1,1} & d_{1,2}\\
q_{2}u_{1} & q_{2}u_{2} & d_{2,1} & d_{2,2}%
\end{array}
\right)  .
\]
Hence, the claim of Theorem \ref{thm.block.puqv} rewrites as follows in our
case:
\begin{align*}
&  \det\left(
\begin{array}
[c]{cccc}%
a_{1,1} & a_{1,2} & p_{1}v_{1} & p_{1}v_{2}\\
a_{2,1} & a_{2,2} & p_{2}v_{1} & p_{2}v_{2}\\
q_{1}u_{1} & q_{1}u_{2} & d_{1,1} & d_{1,2}\\
q_{2}u_{1} & q_{2}u_{2} & d_{2,1} & d_{2,2}%
\end{array}
\right) \\
&  =\det\left(
\begin{array}
[c]{cc}%
a_{1,1} & a_{1,2}\\
a_{2,1} & a_{2,2}%
\end{array}
\right)  \cdot\det\left(
\begin{array}
[c]{cc}%
d_{1,1} & d_{1,2}\\
d_{2,1} & d_{2,2}%
\end{array}
\right) \\
&  \ \ \ \ \ \ \ \ \ \ -\operatorname*{ent}\left(  \left(
\begin{array}
[c]{cc}%
u_{1} & u_{2}%
\end{array}
\right)  \left(  \operatorname*{adj}\left(
\begin{array}
[c]{cc}%
a_{1,1} & a_{1,2}\\
a_{2,1} & a_{2,2}%
\end{array}
\right)  \right)  \left(
\begin{array}
[c]{c}%
p_{1}\\
p_{2}%
\end{array}
\right)  \right) \\
&  \ \ \ \ \ \ \ \ \ \ \ \ \ \ \ \ \ \ \ \ \cdot\operatorname*{ent}\left(
\left(
\begin{array}
[c]{cc}%
v_{1} & v_{2}%
\end{array}
\right)  \left(  \operatorname*{adj}\left(
\begin{array}
[c]{cc}%
d_{1,1} & d_{1,2}\\
d_{2,1} & d_{2,2}%
\end{array}
\right)  \right)  \left(
\begin{array}
[c]{c}%
q_{1}\\
q_{2}%
\end{array}
\right)  \right)  .
\end{align*}

\end{example}

In order to prove Theorem \ref{thm.block.puqv}, we will need one standard
result (known as the \textit{matrix determinant lemma}):

\begin{theorem}
\label{thm.sol.det.rk1upd.claim}Let $n\in\mathbb{N}$. Let $u$ be a column
vector with $n$ entries, and let $v$ be a row vector with $n$ entries. (Thus,
$uv$ is an $n\times n$-matrix, whereas $vu$ is a $1\times1$-matrix.) Let $A$
be an $n\times n$-matrix. Then,%
\[
\det\left(  A+uv\right)  =\det A+\operatorname*{ent}\left(  v\left(
\operatorname*{adj}A\right)  u\right)  .
\]

\end{theorem}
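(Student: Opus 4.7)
The plan is to prove this identity using only the multilinearity and alternating properties of the determinant together with Laplace expansion along a column --- all of which hold over any commutative ring (see \cite{detnotes}). No recourse to the ``$tI_{n}+A$'' trick is needed, because the identity can be extracted directly from the column structure of $A+uv$.

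First I would denote the columns of $A$ by $A_{1},\ldots,A_{n}$ and write $v=\left(  v_{1},v_{2},\ldots,v_{n}\right)  $ as a row vector. The $j$-th column of $uv$ is then $v_{j}u$, so the $j$-th column of $A+uv$ is $A_{j}+v_{j}u$. Expanding $\det\left(  A+uv\right)  $ by multilinearity in each column yields
\[
\det\left(  A+uv\right)  =\sum_{S\subseteq\left\{  1,2,\ldots,n\right\}  }\det\left(  A^{\left(  S\right)  }\right)  ,
\]
where $A^{\left(  S\right)  }$ denotes the matrix whose $j$-th column is $v_{j}u$ for $j\in S$ and $A_{j}$ for $j\notin S$. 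I would next observe that any summand with $\left\vert S\right\vert \geq2$ vanishes: factoring out $\prod_{j\in S}v_{j}$ leaves a matrix with at least two columns both equal to $u$, whose determinant is zero by the alternating property of $\det$ (valid over any commutative ring by the standard pairing argument in the Leibniz formula). Hence only the $S=\varnothing$ and singleton $S=\left\{  j\right\}  $ terms contribute; the $S=\varnothing$ term is $\det A$, so the task reduces to identifying $\sum_{j=1}^{n}\det\left(  A^{\left(  \left\{  j\right\}  \right)  }\right)  $ with $\operatorname*{ent}\left(  v\left(  \operatorname*{adj}A\right)  u\right)  $.

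For each $j$, multilinearity in the $j$-th column gives $\det\left(  A^{\left(  \left\{  j\right\}  \right)  }\right)  =v_{j}\det\left(  M_{j}\right)  $, where $M_{j}$ is the matrix obtained from $A$ by replacing its $j$-th column with $u$. Since $\left(  M_{j}\right)  _{\sim i,\sim j}=A_{\sim i,\sim j}$, Laplace expansion of $\det\left(  M_{j}\right)  $ along column $j$ gives $\det\left(  M_{j}\right)  =\sum_{i=1}^{n}u_{i}\left(  -1\right)  ^{i+j}\det\left(  A_{\sim i,\sim j}\right)  $. By Definition \ref{def.adj}, the $\left(  j,i\right)  $-entry of $\operatorname*{adj}A$ equals $\left(  -1\right)  ^{i+j}\det\left(  A_{\sim i,\sim j}\right)  $ (note the transpose in the definition), so $\det\left(  M_{j}\right)  =\sum_{i=1}^{n}\left(  \operatorname*{adj}A\right)  _{j,i}u_{i}$. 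Summing over $j$ gives
\[
\det\left(  A+uv\right)  =\det A+\sum_{j=1}^{n}\sum_{i=1}^{n}v_{j}\left(  \operatorname*{adj}A\right)  _{j,i}u_{i},
\]
and the double sum is precisely the unique entry of the $1\times1$ matrix $v\cdot\operatorname*{adj}\left(  A\right)  \cdot u$, i.e., $\operatorname*{ent}\left(  v\left(  \operatorname*{adj}A\right)  u\right)  $, which proves the claim.

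The only real technical obstacle is the bookkeeping around the transpose built into the definition of $\operatorname*{adj}$: it is the $\left(  j,i\right)  $-entry of $\operatorname*{adj}A$, not the $\left(  i,j\right)  $-entry, that matches the cofactor $\left(  -1\right)  ^{i+j}\det\left(  A_{\sim i,\sim j}\right)  $ arising from Laplace expansion along column $j$. Once this index-flip is tracked carefully, the rest of the argument is routine manipulation of sums.
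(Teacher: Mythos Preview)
Your proof is correct. The paper does not actually prove this theorem; it simply cites \cite[Theorem 7.262]{detnotes} and moves on. Your argument --- multilinear expansion in columns, killing all terms with two or more replaced columns via the alternating property, and then identifying the surviving singleton terms with the entries of $v\left(\operatorname*{adj}A\right)u$ via Laplace expansion --- is the standard direct route and is exactly the kind of proof one finds in the cited reference. Your handling of the index transpose in Definition~\ref{def.adj} is correct: the $\left(j,i\right)$-entry of $\operatorname*{adj}A$ is $\left(-1\right)^{i+j}\det\left(A_{\sim i,\sim j}\right)$, which matches the cofactor arising from expansion along column $j$.
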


See \cite[Theorem 7.262]{detnotes} for a proof of Theorem
\ref{thm.sol.det.rk1upd.claim}.

We will furthermore need the following three trivial lemmas:

\begin{lemma}
\label{lem.ent.scalp}Let $C\in\mathbb{K}^{1\times1}$ be an $1\times1$-matrix.
Let $\lambda\in\mathbb{K}$. Then, $\operatorname*{ent}\left(  \lambda
C\right)  =\lambda\operatorname*{ent}C$.
\end{lemma}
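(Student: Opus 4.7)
The plan is to unfold the definitions directly. By Definition \ref{def.sol.det.rk1upd.scalar}, the $1\times 1$-matrix $C$ can be written as $C = \left(\operatorname*{ent} C\right)$, where $\operatorname*{ent} C$ is its unique entry. First I would use this to rewrite $\lambda C = \lambda\left(\operatorname*{ent} C\right) = \left(\lambda \operatorname*{ent} C\right)$ by the definition of scalar multiplication of matrices (which acts entrywise). Then applying Definition \ref{def.sol.det.rk1upd.scalar} once more to the $1\times 1$-matrix $\lambda C$ gives $\operatorname*{ent}(\lambda C) = \lambda \operatorname*{ent} C$, which is the claim.

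There is really no obstacle here: the statement is a tautology once both $\operatorname*{ent}$ and scalar multiplication are spelled out. The only stylistic decision is whether to present the argument as a one-line computation or to split off the intermediate identity $\lambda C = \left(\lambda \operatorname*{ent} C\right)$ as a separate display; I would opt for the former to match the level of detail given to comparably trivial lemmas earlier in the paper (such as Lemma \ref{lem.adj.lam}, proved in one sentence by reduction to Lemma \ref{lem.adj.linear}).
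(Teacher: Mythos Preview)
Your proposal is correct and essentially identical to the paper's own proof: write $C=\left(\operatorname*{ent}C\right)$, deduce $\lambda C=\left(\lambda\operatorname*{ent}C\right)$ from the entrywise definition of scalar multiplication, and read off $\operatorname*{ent}\left(\lambda C\right)=\lambda\operatorname*{ent}C$. There is nothing to add.
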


\begin{verlong}
\begin{proof}
[Proof of Lemma \ref{lem.ent.scalp}.]Since $C$ is a $1\times1$-matrix, we have
$C=\left(
\begin{array}
[c]{c}%
\operatorname*{ent}C
\end{array}
\right)  $ (since $\operatorname*{ent}C$ denotes the $\left(  1,1\right)  $-th
entry of $C$). Hence, $\lambda C=\lambda\left(
\begin{array}
[c]{c}%
\operatorname*{ent}C
\end{array}
\right)  =\left(
\begin{array}
[c]{c}%
\lambda\operatorname*{ent}C
\end{array}
\right)  $. Now, the definition of $\operatorname*{ent}\left(  \lambda
C\right)  $ yields
\begin{align*}
\operatorname*{ent}\left(  \lambda C\right)   &  =\left(  \text{the }\left(
1,1\right)  \text{-th entry of }\lambda C\right) \\
&  =\lambda\operatorname*{ent}C\ \ \ \ \ \ \ \ \ \ \left(  \text{since
}\lambda C=\left(
\begin{array}
[c]{c}%
\lambda\operatorname*{ent}C
\end{array}
\right)  \right)  .
\end{align*}
This proves Lemma \ref{lem.ent.scalp}.
\end{proof}
\end{verlong}

\begin{lemma}
\label{lem.ent.prod}Let $B$ and $C$ be two $1\times1$-matrices. Then,
$\operatorname*{ent}\left(  BC\right)  =\operatorname*{ent}B\cdot
\operatorname*{ent}C$.
\end{lemma}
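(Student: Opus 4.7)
The plan is to unwind the definitions: a $1\times 1$-matrix $X$ can always be written as $X = \bigl(\operatorname{ent} X\bigr)$, where $\operatorname{ent} X$ is its sole entry. Applying this to both $B$ and $C$, I would write $B = \bigl(\operatorname{ent} B\bigr)$ and $C = \bigl(\operatorname{ent} C\bigr)$.

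Next, I would compute $BC$ directly from the definition of the product of two matrices: since $B \in \mathbb{K}^{1\times 1}$ and $C \in \mathbb{K}^{1\times 1}$, the product $BC$ is the $1 \times 1$-matrix whose sole entry is $\operatorname{ent} B \cdot \operatorname{ent} C$. That is, $BC = \bigl(\operatorname{ent} B \cdot \operatorname{ent} C\bigr)$. Reading off the $(1,1)$-entry of both sides and applying the definition of $\operatorname{ent}(BC)$ then yields $\operatorname{ent}(BC) = \operatorname{ent} B \cdot \operatorname{ent} C$, as desired.

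There is no genuine obstacle here; the only thing to be careful about is distinguishing the $1 \times 1$-matrix $B$ from its scalar entry $\operatorname{ent} B$, and being explicit that the formula for matrix multiplication in the $1 \times 1$ case reduces to a single product in $\mathbb{K}$. In the \texttt{vershort} environment this can be dispatched in one or two sentences; in the \texttt{verlong} environment it is worth writing out the chain $BC = \bigl(\operatorname{ent} B\bigr)\bigl(\operatorname{ent} C\bigr) = \bigl(\operatorname{ent} B \cdot \operatorname{ent} C\bigr)$ explicitly to avoid any ambiguity.
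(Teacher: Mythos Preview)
Your proposal is correct and follows essentially the same approach as the paper's proof: write $B=\bigl(\operatorname{ent}B\bigr)$ and $C=\bigl(\operatorname{ent}C\bigr)$, compute $BC$ from the definition of matrix multiplication (the paper writes the sole entry as $\sum_{k=1}^{1}\operatorname{ent}B\cdot\operatorname{ent}C$, you skip straight to $\operatorname{ent}B\cdot\operatorname{ent}C$), and read off the $(1,1)$-entry.
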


\begin{verlong}
\begin{proof}
[Proof of Lemma \ref{lem.ent.prod}.]Since $B$ is a $1\times1$-matrix, we have
$B=\left(
\begin{array}
[c]{c}%
\operatorname*{ent}B
\end{array}
\right)  $ (since $\operatorname*{ent}B$ denotes the $\left(  1,1\right)  $-th
entry of $B$). Similarly, $C=\left(
\begin{array}
[c]{c}%
\operatorname*{ent}C
\end{array}
\right)  $. Thus,
\[
BC=\left(
\begin{array}
[c]{c}%
\operatorname*{ent}B
\end{array}
\right)  \left(
\begin{array}
[c]{c}%
\operatorname*{ent}C
\end{array}
\right)  =\left(
\begin{array}
[c]{c}%
\sum_{k=1}^{1}\operatorname*{ent}B\cdot\operatorname*{ent}C
\end{array}
\right)
\]
(by the definition of the product of two matrices). Now, the definition of
$\operatorname*{ent}\left(  BC\right)  $ yields%
\begin{align*}
\operatorname*{ent}\left(  BC\right)   &  =\left(  \text{the }\left(
1,1\right)  \text{-th entry of }BC\right) \\
&  =\sum_{k=1}^{1}\operatorname*{ent}B\cdot\operatorname*{ent}%
C\ \ \ \ \ \ \ \ \ \ \left(  \text{since }BC=\left(
\begin{array}
[c]{c}%
\sum_{k=1}^{1}\operatorname*{ent}B\cdot\operatorname*{ent}C
\end{array}
\right)  \right) \\
&  =\operatorname*{ent}B\cdot\operatorname*{ent}C.
\end{align*}
This proves Lemma \ref{lem.ent.prod}.
\end{proof}
\end{verlong}

\begin{lemma}
\label{lem.ent.map}Let $\mathbb{L}$ and $\mathbb{M}$ be rings. Let
$f:\mathbb{L}\rightarrow\mathbb{M}$ be any map. Let $B\in\mathbb{L}^{1\times
1}$. Then, $\operatorname*{ent}\left(  f^{1\times1}\left(  B\right)  \right)
=f\left(  \operatorname*{ent}B\right)  $.
\end{lemma}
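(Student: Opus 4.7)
The plan is to unpack both sides of the claimed equality using nothing more than the definitions of $\operatorname*{ent}$ and of the map $f^{1\times 1}$ (from Definition \ref{def.f-mat}). Since $B \in \mathbb{L}^{1\times 1}$ is a $1\times 1$-matrix, I would first invoke the convention from Definition \ref{def.sol.det.rk1upd.scalar} to rewrite $B$ as the single-entry matrix $B = \left( \operatorname*{ent} B \right)$; this is the only nontrivial move, and it is really just a bookkeeping step.

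Next I would apply $f^{1\times 1}$ to this expression. The definition of $f^{n\times m}$ says that $f^{n\times m}$ acts entrywise, so in the $1\times 1$ case we get $f^{1\times 1}(B) = f^{1\times 1}\!\left( \left( \operatorname*{ent} B \right) \right) = \left( f\!\left( \operatorname*{ent} B \right) \right)$. Applying $\operatorname*{ent}$ to both sides and using the convention that $\operatorname*{ent}$ reads off the unique entry of a $1\times 1$-matrix then yields $\operatorname*{ent}\!\left( f^{1\times 1}(B) \right) = f\!\left( \operatorname*{ent} B \right)$, which is exactly the claim.

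There is no real obstacle here: the lemma is a tautology once one writes out the two definitions side by side. The only thing to watch is to keep the two layers of notation (the matrix bracket versus the entry) straight, so that one does not accidentally identify a $1\times 1$-matrix with its single entry before applying $f^{1\times 1}$ (which would still give the same answer, but would skip the explicit use of Definition \ref{def.f-mat}). In a formal write-up, I would simply display the chain of equalities
\[
\operatorname*{ent}\!\left( f^{1\times 1}(B) \right)
= \operatorname*{ent}\!\left( f^{1\times 1}\!\left( \left( \operatorname*{ent} B \right) \right) \right)
= \operatorname*{ent}\!\left( \left( f(\operatorname*{ent} B) \right) \right)
= f(\operatorname*{ent} B),
\]
annotated with the three justifications above, and this completes the proof.
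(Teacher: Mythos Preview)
Your proposal is correct and follows essentially the same approach as the paper's own proof: both write $B = \left(\operatorname*{ent} B\right)$, apply the entrywise definition of $f^{1\times 1}$, and then read off the entry. There is nothing to add.
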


\begin{verlong}
\begin{proof}
[Proof of Lemma \ref{lem.ent.map}.]Since $B$ is a $1\times1$-matrix, we have
$B=\left(
\begin{array}
[c]{c}%
\operatorname*{ent}B
\end{array}
\right)  $ (since $\operatorname*{ent}B$ denotes the $\left(  1,1\right)  $-th
entry of $B$). Thus, $f^{1\times1}\left(  B\right)  =f^{1\times1}\left(
\left(
\begin{array}
[c]{c}%
\operatorname*{ent}B
\end{array}
\right)  \right)  =\left(
\begin{array}
[c]{c}%
f\left(  \operatorname*{ent}B\right)
\end{array}
\right)  $ (by the definition of $f^{1\times1}$). Now, the definition of
$\operatorname*{ent}\left(  f^{1\times1}\left(  B\right)  \right)  $ yields%
\begin{align*}
\operatorname*{ent}\left(  f^{1\times1}\left(  B\right)  \right)   &  =\left(
\text{the }\left(  1,1\right)  \text{-th entry of }f^{1\times1}\left(
B\right)  \right) \\
&  =f\left(  \operatorname*{ent}B\right)  \ \ \ \ \ \ \ \ \ \ \left(
\text{since }f^{1\times1}\left(  B\right)  =\left(
\begin{array}
[c]{c}%
f\left(  \operatorname*{ent}B\right)
\end{array}
\right)  \right)  .
\end{align*}
This proves Lemma \ref{lem.ent.map}.
\end{proof}
\end{verlong}

Finally, we will need a simple property of regular elements in a commutative ring:

\begin{lemma}
\label{lem.reg.power}Let $\mathbb{A}$ be a commutative ring. Let $a$ be a
regular element of $\mathbb{A}$. Let $m\in\mathbb{N}$. Then, $a^{m}$ is a
regular element of $\mathbb{A}$.
\end{lemma}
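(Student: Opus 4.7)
The plan is to prove Lemma \ref{lem.reg.power} by straightforward induction on $m$, using only the definition of \textquotedblleft regular\textquotedblright\ (Definition \ref{def.regular}).

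For the induction base ($m=0$), I would note that $a^{0}=1$, and that $1$ is regular because any $x\in\mathbb{A}$ satisfying $1\cdot x=0$ automatically satisfies $x=0$. Thus $a^{0}$ is regular, as desired.

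For the induction step, fix $m\in\mathbb{N}$ and assume that $a^{m}$ is regular. I would show that $a^{m+1}=a\cdot a^{m}$ is regular. To this end, let $x\in\mathbb{A}$ satisfy $a^{m+1}x=0$. Rewriting this as $a\left(a^{m}x\right)=0$ and using the fact that $a$ is regular (applied to the element $a^{m}x$), I conclude that $a^{m}x=0$. Then, using the induction hypothesis that $a^{m}$ is regular (applied to $x$), I obtain $x=0$. This shows that every $x\in\mathbb{A}$ satisfying $a^{m+1}x=0$ satisfies $x=0$; in other words, $a^{m+1}$ is regular. The induction is complete.

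There is no real obstacle here: the argument is essentially a two-line application of the definition. The only subtle point worth spelling out is the associativity rearrangement $a^{m+1}x=a\left(a^{m}x\right)$, which is what allows the regularity of $a$ to be invoked before the regularity of $a^{m}$.
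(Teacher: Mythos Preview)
Your proof is correct and uses essentially the same idea as the paper: peel off one factor of $a$ at a time using the regularity of $a$. The only organizational difference is that you induct on $m$, whereas the paper fixes $m$ and, for a given $x$ with $a^{m}x=0$, proves $a^{m-i}x=0$ for all $i\in\{0,1,\ldots,m\}$ by induction on $i$; both arguments unwind to the same sequence of cancellations.
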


\begin{proof}
[Proof of Lemma \ref{lem.reg.power}.]The element $a$ is regular. In other
words,%
\begin{equation}
\text{every }x\in\mathbb{A}\text{ satisfying }ax=0\text{ satisfies }x=0
\label{pf.lem.reg.power.ass}%
\end{equation}
(by the definition of \textquotedblleft regular\textquotedblright).

Now, let $x\in\mathbb{A}$ satisfy $a^{m}x=0$. We shall show that $x=0$.
Indeed, we shall first prove that%
\begin{equation}
a^{m-i}x=0\ \ \ \ \ \ \ \ \ \ \text{for each }i\in\left\{  0,1,\ldots
,m\right\}  \text{.} \label{pf.lem.reg.power.1}%
\end{equation}

[\textit{Proof of (\ref{pf.lem.reg.power.1}):} We proceed by induction on $i$:

\textit{Induction base:} We have $a^{m-0}x=a^{m}x=0$ (by assumption). Hence,
(\ref{pf.lem.reg.power.1}) holds for $i=0$.

\textit{Induction step:} Let $j\in\left\{  1,2,\ldots,m\right\}  $. Assume
that (\ref{pf.lem.reg.power.1}) holds for $i=j-1$. We must show that
(\ref{pf.lem.reg.power.1}) holds for $i=j$ as well.

We have assumed that (\ref{pf.lem.reg.power.1}) holds for $i=j-1$. In other
words, $a^{m-\left(  j-1\right)  }x=0$. In other words, $a^{m-j+1}x=0$ (since
$m-\left(  j-1\right)  =m-j+1$). In other words, $aa^{m-j}x=0$ (since
$a^{m-j+1}=aa^{m-j}$). Hence, (\ref{pf.lem.reg.power.ass}) (applied to
$a^{m-j}x$ instead of $x$) yields $a^{m-j}x=0$. In other words,
(\ref{pf.lem.reg.power.1}) holds for $i=j$. This completes the induction step.
Thus, (\ref{pf.lem.reg.power.1}) is proved by induction.] \medskip

Now, (\ref{pf.lem.reg.power.1}) (applied to $i=m$) yields $a^{m-m}x=0$. Since
$\underbrace{a^{m-m}}_{=a^{0}=1}x=x$, this rewrites as $x=0$.

Forget that we fixed $x$. We thus have shown that every $x\in\mathbb{A}$
satisfying $a^{m}x=0$ satisfies $x=0$. In other words, the element $a^{m}$ of
$\mathbb{A}$ is regular (by the definition of \textquotedblleft
regular\textquotedblright). This proves Lemma \ref{lem.reg.power}.
\end{proof}

Now, we can approach the proof of Theorem \ref{thm.block.puqv} using the same
technique as various theorems proved above. We begin by proving it in the case
when $\det A$ is regular:

\begin{lemma}
\label{lem.block.puqv-reg}Let $n\in\mathbb{N}$ and $m\in\mathbb{N}$. Let
$A\in\mathbb{K}^{n\times n}$ and $D\in\mathbb{K}^{m\times m}$ be two square
matrices. Assume that the element $\det A$ of $\mathbb{K}$ is regular. Let
$p\in\mathbb{K}^{n\times1}$ and $q\in\mathbb{K}^{m\times1}$ be two column
vectors. Let $v\in\mathbb{K}^{1\times m}$ and $u\in\mathbb{K}^{1\times n}$ be
two row vectors. Then,%
\[
\det\left(
\begin{array}
[c]{cc}%
A & pv\\
qu & D
\end{array}
\right)  =\det A\cdot\det D-\operatorname*{ent}\left(  u\left(
\operatorname*{adj}A\right)  p\right)  \cdot\operatorname*{ent}\left(
v\left(  \operatorname*{adj}D\right)  q\right)  .
\]

\end{lemma}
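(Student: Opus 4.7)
The strategy is to apply the Schur-complement-type formula from Proposition \ref{prop.block2x2.VB+WD} to the block matrix $\left(\begin{smallmatrix} A & pv \\ qu & D \end{smallmatrix}\right)$, with a clever choice of $V$ and $W$ built out of $\operatorname{adj}A$, and then to evaluate the resulting determinant via the matrix determinant lemma (Theorem \ref{thm.sol.det.rk1upd.claim}). Since $\det A$ is regular, powers of $\det A$ that arise along the way can ultimately be cancelled.

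Concretely, I would choose $W := \det A \cdot I_m$ and $V := -qu \cdot \operatorname{adj}A$. The relation $\operatorname{adj}A \cdot A = \det A \cdot I_n$ from Theorem \ref{thm.adj.inverse} gives $VA = -qu \cdot \operatorname{adj}A \cdot A = -\det A \cdot qu = -W \cdot qu$, so Proposition \ref{prop.block2x2.VB+WD} (applied with $B = pv$ and $C = qu$) yields
\[
(\det A)^m \cdot \det\!\begin{pmatrix} A & pv \\ qu & D \end{pmatrix} = \det A \cdot \det\bigl(V \cdot pv + W \cdot D\bigr).
\]
Writing $\alpha := \operatorname{ent}(u(\operatorname{adj}A)p) \in \mathbb{K}$ and using the associativity identity $qu \cdot \operatorname{adj}A \cdot pv = q \cdot (u(\operatorname{adj}A)p) \cdot v = \alpha \cdot qv$ (which just reflects that $u(\operatorname{adj}A)p$ is a $1\times 1$ matrix identified with the scalar $\alpha$), the right-hand side becomes $\det A \cdot \det(\det A \cdot D - \alpha qv)$.

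Next I would treat $\det A \cdot D - \alpha qv = \det A \cdot D + (-\alpha q) v$ as a rank-one update. Theorem \ref{thm.sol.det.rk1upd.claim} (applied to $\det A \cdot D$, column vector $-\alpha q$, row vector $v$) gives
\[
\det(\det A \cdot D - \alpha qv) = \det(\det A \cdot D) + \operatorname{ent}\bigl(v \cdot \operatorname{adj}(\det A \cdot D) \cdot (-\alpha q)\bigr).
\]
Combining $\det(\det A \cdot D) = (\det A)^m \det D$ with Proposition \ref{prop.adj.lamA} (giving $\operatorname{adj}(\det A \cdot D) = (\det A)^{m-1}\operatorname{adj}D$) and setting $\beta := \operatorname{ent}(v(\operatorname{adj}D)q)$, the right-hand side simplifies to $(\det A)^m \det D - \alpha\beta (\det A)^{m-1}$. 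Plugging back in, I obtain
\[
(\det A)^m \cdot \det\!\begin{pmatrix} A & pv \\ qu & D \end{pmatrix} = (\det A)^m \bigl(\det A \cdot \det D - \alpha\beta\bigr).
\]
Since $\det A$ is regular, Lemma \ref{lem.reg.power} makes $(\det A)^m$ regular, and Lemma \ref{lem.regular.cancel} cancels it from both sides to give the desired identity.

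The main obstacle is the edge case $m = 0$, where Proposition \ref{prop.adj.lamA} does not apply. In that degenerate situation the block matrix collapses to $A$, we have $\det D = 1$, and $\operatorname{ent}(v(\operatorname{adj}D)q) = 0$ as an empty sum, so the claimed identity reduces to the trivially true $\det A = \det A \cdot 1 - \alpha \cdot 0$; I would dispose of this case separately at the start of the proof. A secondary nuisance is keeping the bookkeeping between $1 \times 1$ matrices and their $\operatorname{ent}$-scalars consistent, which is routine but needs Lemmas \ref{lem.ent.scalp}, \ref{lem.ent.prod}, and (in the functorial proofs nearby) \ref{lem.ent.map}.
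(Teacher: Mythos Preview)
Your proposal is correct and follows essentially the same route as the paper: apply Proposition~\ref{prop.block2x2.VB+WD} with $W=\det A\cdot I_m$ and $V=-qu\cdot\operatorname{adj}A$, then evaluate the resulting determinant via the matrix determinant lemma and Proposition~\ref{prop.adj.lamA}, and finally cancel $(\det A)^m$ using regularity. The paper's proof differs only cosmetically (it packages the use of Proposition~\ref{prop.adj.lamA} as the identity $\lambda\operatorname{adj}(\lambda D)=\lambda^m\operatorname{adj}D$ and handles the $m=0$ case there rather than at the outset).
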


\begin{proof}
[Proof of Lemma \ref{lem.block.puqv-reg}.]Set $\lambda=\det A$. Thus, the
element $\lambda$ of $\mathbb{K}$ is regular (since the element $\det A$ of
$\mathbb{K}$ is regular). Hence, Lemma \ref{lem.reg.power} (applied to
$\mathbb{A}=\mathbb{K}$ and $a=\lambda$) shows that $\lambda^{m}$ is a regular
element of $\mathbb{K}$.

It is furthermore easy to see that%
\begin{equation}
\lambda\operatorname*{adj}\left(  \lambda D\right)  =\lambda^{m}%
\operatorname*{adj}D. \label{pf.lem.block.puqv-reg.ladjlD}%
\end{equation}

[\textit{Proof of (\ref{pf.lem.block.puqv-reg.ladjlD}):} If $m=0$, then
(\ref{pf.lem.block.puqv-reg.ladjlD}) holds for trivial
reasons\footnote{\textit{Proof.} Assume that $m=0$. The matrices
$\lambda\operatorname*{adj}\left(  \lambda D\right)  $ and $\lambda
^{m}\operatorname*{adj}D$ are $m\times m$-matrices (since $D$ is an $m\times
m$-matrix). In other words, the matrices $\lambda\operatorname*{adj}\left(
\lambda D\right)  $ and $\lambda^{m}\operatorname*{adj}D$ are $0\times
0$-matrices (since $m=0$). Hence, these two matrices $\lambda
\operatorname*{adj}\left(  \lambda D\right)  $ and $\lambda^{m}%
\operatorname*{adj}D$ are equal (since there exists only one $0\times
0$-matrix, and therefore any two $0\times0$-matrices are equal). In other
words, $\lambda\operatorname*{adj}\left(  \lambda D\right)  =\lambda
^{m}\operatorname*{adj}D$. Thus, we have proved
(\ref{pf.lem.block.puqv-reg.ladjlD}) under the assumption that $m=0$.}. Thus,
for the rest of this proof, we WLOG assume that we don't have $m=0$. Hence,
$m$ is a positive integer (since $m\in\mathbb{N}$). Therefore, Proposition
\ref{prop.adj.lamA} (applied to $m$ and $D$ instead of $n$ and $A$) yields
$\operatorname*{adj}\left(  \lambda D\right)  =\lambda^{m-1}%
\operatorname*{adj}D$. Hence, $\lambda\underbrace{\operatorname*{adj}\left(
\lambda D\right)  }_{=\lambda^{m-1}\operatorname*{adj}D}=\underbrace{\lambda
\lambda^{m-1}}_{=\lambda^{m}}\operatorname*{adj}D=\lambda^{m}%
\operatorname*{adj}D$. This proves (\ref{pf.lem.block.puqv-reg.ladjlD}).]
\medskip

Define two matrices $W\in\mathbb{K}^{m\times m}$ and $V\in\mathbb{K}^{m\times
n}$ by%
\[
W=\lambda I_{m}\ \ \ \ \ \ \ \ \ \ \text{and}%
\ \ \ \ \ \ \ \ \ \ V=-qu\operatorname*{adj}A.
\]
Then,%
\begin{align*}
\underbrace{V}_{=-qu\operatorname*{adj}A}A  &
=-qu\underbrace{\operatorname*{adj}A\cdot A}_{\substack{=\det A\cdot
I_{n}\\\text{(by Theorem \ref{thm.adj.inverse})}}}=-qu\underbrace{\det
A}_{\substack{=\lambda\\\text{(since }\lambda=\det A\text{)}}}\cdot
\,I_{n}=-qu\lambda I_{n}\\
&  =-\lambda quI_{n}=-\lambda qu=-Wqu
\end{align*}
(since $-\underbrace{W}_{=\lambda I_{m}}qu=-\lambda I_{m}qu=-\lambda qu$).
Hence, Proposition \ref{prop.block2x2.VB+WD} (applied to $B=pv$ and $C=qu$)
yields
\[
\det W\cdot\det\left(
\begin{array}
[c]{cc}%
A & pv\\
qu & D
\end{array}
\right)  =\det A\cdot\det\left(  Vpv+WD\right)  .
\]
In view of%
\begin{align*}
\det\underbrace{W}_{=\lambda I_{m}}  &  =\det\left(  \lambda I_{m}\right)
=\lambda^{m}\underbrace{\det\left(  I_{m}\right)  }_{=1}%
\ \ \ \ \ \ \ \ \ \ \left(
\begin{array}
[c]{c}%
\text{by (\ref{pf.lem.adj.adjadj.reg.lam1}), applied to }m\text{ and }I_{m}\\
\text{instead of }n\text{ and }C
\end{array}
\right) \\
&  =\lambda^{m}%
\end{align*}
and $\det A=\lambda$, we can rewrite this as%
\begin{equation}
\lambda^{m}\cdot\det\left(
\begin{array}
[c]{cc}%
A & pv\\
qu & D
\end{array}
\right)  =\lambda\cdot\det\left(  Vpv+WD\right)  .
\label{pf.lem.block.puqv-reg.3}%
\end{equation}

Let us define two $1\times1$-matrices $B$ and $C$ by
\[
B=u\left(  \operatorname*{adj}A\right)  p\in\mathbb{K}^{1\times1}%
\ \ \ \ \ \ \ \ \ \ \text{and}\ \ \ \ \ \ \ \ \ \ C=v\cdot\operatorname*{adj}%
\left(  \lambda D\right)  \cdot\left(  -q\right)  \in\mathbb{K}^{1\times1}.
\]

Next, we observe that%
\begin{align}
&  \underbrace{V}_{=-qu\operatorname*{adj}A}pv+\underbrace{W}_{=\lambda I_{m}%
}D\nonumber\\
&  =\underbrace{\left(  -qu\operatorname*{adj}A\right)  pv}_{=\left(
-q\right)  u\left(  \operatorname*{adj}A\right)  pv}+\,\lambda
\underbrace{I_{m}D}_{=D}=\left(  -q\right)  \underbrace{u\left(
\operatorname*{adj}A\right)  p}_{\substack{=B\\\text{(since }B=u\left(
\operatorname*{adj}A\right)  p\text{)}}}v+\lambda D=\left(  -q\right)
Bv+\lambda D\nonumber\\
&  =\lambda D+\left(  -q\right)  Bv=\lambda D+\left(  -q\right)  \left(
Bv\right)  . \label{pf.lem.block.puqv-reg.4}%
\end{align}
Note that $\lambda D\in\mathbb{K}^{m\times m}$ and $-q\in\mathbb{K}^{m\times
1}$ and $Bv\in\mathbb{K}^{1\times m}$. Hence, Theorem
\ref{thm.sol.det.rk1upd.claim} (applied to $m$, $-q$, $Bv$ and $D$ instead of
$n$, $u$, $v$ and $A$) yields%
\begin{align*}
&  \det\left(  \lambda D+\left(  -q\right)  \left(  Bv\right)  \right) \\
&  =\underbrace{\det\left(  \lambda D\right)  }_{\substack{=\lambda^{m}\det
D\\\text{(by (\ref{pf.lem.adj.adjadj.reg.lam1}), applied to }m\text{ and
}D\\\text{instead of }n\text{ and }A\text{)}}}+\operatorname*{ent}\left(
\underbrace{\left(  Bv\right)  \cdot\operatorname*{adj}\left(  \lambda
D\right)  \cdot\left(  -q\right)  }_{=B\left(  v\cdot\operatorname*{adj}%
\left(  \lambda D\right)  \cdot\left(  -q\right)  \right)  }\right) \\
&  =\lambda^{m}\det D+\operatorname*{ent}\left(  B\underbrace{\left(
v\cdot\operatorname*{adj}\left(  \lambda D\right)  \cdot\left(  -q\right)
\right)  }_{\substack{=C\\\text{(since }C=v\cdot\operatorname*{adj}\left(
\lambda D\right)  \cdot\left(  -q\right)  \text{)}}}\right)  =\lambda^{m}\det
D+\underbrace{\operatorname*{ent}\left(  BC\right)  }%
_{\substack{=\operatorname*{ent}B\cdot\operatorname*{ent}C\\\text{(by Lemma
\ref{lem.ent.prod})}}}\\
&  =\lambda^{m}\det D+\operatorname*{ent}B\cdot\operatorname*{ent}C.
\end{align*}
In view of (\ref{pf.lem.block.puqv-reg.4}), we can rewrite this as%
\[
\det\left(  Vpv+WD\right)  =\lambda^{m}\det D+\operatorname*{ent}%
B\cdot\operatorname*{ent}C.
\]
Thus, (\ref{pf.lem.block.puqv-reg.3}) becomes%
\begin{align}
\lambda^{m}\cdot\det\left(
\begin{array}
[c]{cc}%
A & pv\\
qu & D
\end{array}
\right)   &  =\lambda\cdot\underbrace{\det\left(  Vpv+WD\right)  }%
_{=\lambda^{m}\det D+\operatorname*{ent}B\cdot\operatorname*{ent}C}\nonumber\\
&  =\lambda\cdot\left(  \lambda^{m}\det D+\operatorname*{ent}B\cdot
\operatorname*{ent}C\right) \nonumber\\
&  =\lambda\lambda^{m}\det D+\underbrace{\lambda\operatorname*{ent}%
B\cdot\operatorname*{ent}C}_{=\lambda\operatorname*{ent}C\cdot
\operatorname*{ent}B}\nonumber\\
&  =\lambda\lambda^{m}\det D+\lambda\operatorname*{ent}C\cdot
\operatorname*{ent}B\nonumber\\
&  =\lambda\lambda^{m}\det D-\left(  -\lambda\right)  \operatorname*{ent}%
C\cdot\operatorname*{ent}B. \label{pf.lem.block.puqv-reg.6}%
\end{align}
However, Lemma \ref{lem.ent.scalp} (applied to $-\lambda$ instead of $\lambda
$) yields $\operatorname*{ent}\left(  \left(  -\lambda\right)  C\right)
=\left(  -\lambda\right)  \operatorname*{ent}C$. Thus,%
\begin{align*}
\left(  -\lambda\right)  \operatorname*{ent}C  &  =\operatorname*{ent}\left(
\left(  -\lambda\right)  \underbrace{C}_{=v\cdot\operatorname*{adj}\left(
\lambda D\right)  \cdot\left(  -q\right)  }\right)  =\operatorname*{ent}%
\left(  \underbrace{\left(  -\lambda\right)  v\cdot\operatorname*{adj}\left(
\lambda D\right)  \cdot\left(  -q\right)  }_{\substack{=\lambda v\cdot
\operatorname*{adj}\left(  \lambda D\right)  \cdot q\\=v\cdot\lambda
\operatorname*{adj}\left(  \lambda D\right)  \cdot q}}\right) \\
&  =\operatorname*{ent}\left(  v\cdot\underbrace{\lambda\operatorname*{adj}%
\left(  \lambda D\right)  }_{\substack{=\lambda^{m}\operatorname*{adj}%
D\\\text{(by (\ref{pf.lem.block.puqv-reg.ladjlD}))}}}\cdot q\right)
=\operatorname*{ent}\left(  \underbrace{v\cdot\left(  \lambda^{m}%
\operatorname*{adj}D\right)  \cdot q}_{=\lambda^{m}\cdot v\left(
\operatorname*{adj}D\right)  q}\right) \\
&  =\operatorname*{ent}\left(  \lambda^{m}\cdot v\left(  \operatorname*{adj}%
D\right)  q\right)  =\lambda^{m}\operatorname*{ent}\left(  v\left(
\operatorname*{adj}D\right)  q\right)
\end{align*}
(by Lemma \ref{lem.ent.scalp}, applied to $\lambda^{m}$ and $v\left(
\operatorname*{adj}D\right)  q$ instead of $\lambda$ and $C$). Thus,
(\ref{pf.lem.block.puqv-reg.6}) becomes%
\begin{align*}
\lambda^{m}\cdot\det\left(
\begin{array}
[c]{cc}%
A & pv\\
qu & D
\end{array}
\right)   &  =\lambda\lambda^{m}\det D-\underbrace{\left(  -\lambda\right)
\operatorname*{ent}C}_{=\lambda^{m}\operatorname*{ent}\left(  v\left(
\operatorname*{adj}D\right)  q\right)  }\cdot\operatorname*{ent}B\\
&  =\lambda\lambda^{m}\det D-\lambda^{m}\operatorname*{ent}\left(  v\left(
\operatorname*{adj}D\right)  q\right)  \cdot\operatorname*{ent}B\\
&  =\lambda^{m}\cdot\left(  \lambda\det D-\operatorname*{ent}\left(  v\left(
\operatorname*{adj}D\right)  q\right)  \cdot\operatorname*{ent}B\right)  .
\end{align*}
Since $\lambda^{m}$ is a regular element of $\mathbb{K}$, we can thus conclude
that%
\[
\det\left(
\begin{array}
[c]{cc}%
A & pv\\
qu & D
\end{array}
\right)  =\lambda\det D-\operatorname*{ent}\left(  v\left(
\operatorname*{adj}D\right)  q\right)  \cdot\operatorname*{ent}B
\]
(by Lemma \ref{lem.regular.cancel}, applied to $\mathbb{A}=\mathbb{K}$ and
$a=\lambda^{m}$ and $b=\det\left(
\begin{array}
[c]{cc}%
A & pv\\
qu & D
\end{array}
\right)  $ and $c=\lambda\det D-\operatorname*{ent}\left(  v\left(
\operatorname*{adj}D\right)  q\right)  \cdot\operatorname*{ent}B$). In view of
$\lambda=\det A$ and $B=u\left(  \operatorname*{adj}A\right)  p$, we can
rewrite this as
\begin{align*}
\det\left(
\begin{array}
[c]{cc}%
A & pv\\
qu & D
\end{array}
\right)   &  =\det A\cdot\det D-\operatorname*{ent}\left(  v\left(
\operatorname*{adj}D\right)  q\right)  \cdot\operatorname*{ent}\left(
u\left(  \operatorname*{adj}A\right)  p\right) \\
&  =\det A\cdot\det D-\operatorname*{ent}\left(  u\left(  \operatorname*{adj}%
A\right)  p\right)  \cdot\operatorname*{ent}\left(  v\left(
\operatorname*{adj}D\right)  q\right)  .
\end{align*}
This proves Lemma \ref{lem.block.puqv-reg}.
\end{proof}

We can now derive Theorem \ref{thm.block.puqv} from Lemma
\ref{lem.block.puqv-reg} by the same recipe as before:

\begin{proof}
[Proof of Theorem \ref{thm.block.puqv}.]Define the $\mathbb{K}$-algebra
homomorphism $\varepsilon:\mathbb{K}\left[  t\right]  \rightarrow\mathbb{K}$
as in Proposition \ref{prop.tInA.ev}. Thus, $\varepsilon$ is a ring homomorphism.

Recall that every $n\times n$-matrix in $\mathbb{K}^{n\times n}$ can be
considered as a matrix in $\left(  \mathbb{K}\left[  t\right]  \right)
^{n\times n}$. More generally, every $k\times\ell$-matrix in $\mathbb{K}%
^{k\times\ell}$ (for any nonnegative integers $k$ and $\ell$) can be
considered as a matrix in $\left(  \mathbb{K}\left[  t\right]  \right)
^{k\times\ell}$. In other words, for each $F\in\mathbb{K}^{k\times\ell}$, we
can consider $F$ as a matrix in $\left(  \mathbb{K}\left[  t\right]  \right)
^{k\times\ell}$; therefore, $\varepsilon^{k\times\ell}\left(  F\right)  $ is
well-defined. We have
\begin{equation}
\varepsilon^{k\times\ell}\left(  F\right)  =F\ \ \ \ \ \ \ \ \ \ \text{for
every }F\in\mathbb{K}^{k\times\ell}. \label{pf.thm.block.puqv.FF}%
\end{equation}

\begin{vershort}
\noindent(In fact, the proof of (\ref{pf.thm.block.puqv.FF}) is analogous to
the proof of (\ref{pf.lem.adj.poly.-A.short.eF}) we gave above.)
\end{vershort}

\begin{verlong}
\noindent\footnote{\textit{Proof of (\ref{pf.thm.block.puqv.FF}):} Let
$F\in\mathbb{K}^{k\times\ell}$. Write the $k\times\ell$-matrix $F$ in the form
$F=\left(  f_{i,j}\right)  _{1\leq i\leq k,\ 1\leq j\leq\ell}$. Then,
$f_{i,j}\in\mathbb{K}$ for every $\left(  i,j\right)  \in\left\{
1,2,\ldots,n\right\}  ^{2}$ (since $F\in\mathbb{K}^{k\times\ell}$). Hence,%
\begin{equation}
\varepsilon\left(  f_{i,j}\right)  =f_{i,j}\ \ \ \ \ \ \ \ \ \ \text{for every
}\left(  i,j\right)  \in\left\{  1,2,\ldots,n\right\}  ^{2}%
\end{equation}
(by (\ref{pf.lem.adj.poly.-A.eu}) (applied to $u=f_{i,j}$)).
\par
Applying the map $\varepsilon^{k\times\ell}$ to both sides of the equality
$F=\left(  f_{i,j}\right)  _{1\leq i\leq k,\ 1\leq j\leq\ell}$, we obtain%
\begin{align*}
\varepsilon^{k\times\ell}\left(  F\right)   &  =\varepsilon^{k\times\ell
}\left(  \left(  f_{i,j}\right)  _{1\leq i\leq k,\ 1\leq j\leq\ell}\right)
=\left(  \underbrace{\varepsilon\left(  f_{i,j}\right)  }_{\substack{=f_{i,j}%
\\\text{(by (\ref{pf.lem.adj.poly.-A.eF.pf.1}))}}}\right)  _{1\leq i\leq
k,\ 1\leq j\leq\ell}\ \ \ \ \ \ \ \ \ \ \left(  \text{by the definition of the
map }\varepsilon^{k\times\ell}\right) \\
&  =\left(  f_{i,j}\right)  _{1\leq i\leq k,\ 1\leq j\leq\ell}=F.
\end{align*}
This proves (\ref{pf.thm.block.puqv.FF}).}
\end{verlong}

Let $\widetilde{A}$ be the matrix $tI_{n}+A\in\left(  \mathbb{K}\left[
t\right]  \right)  ^{n\times n}$. Thus, $\widetilde{A}=tI_{n}+A$. Applying the
map $\varepsilon^{n\times n}$ to both sides of this equality, we find
$\varepsilon^{n\times n}\left(  \widetilde{A}\right)  =\varepsilon^{n\times
n}\left(  tI_{n}+A\right)  =A$ (by Proposition \ref{prop.tInA.ev} \textbf{(c)}).

Corollary \ref{cor.tInA.reg} shows that the element $\det\left(
tI_{n}+A\right)  $ of $\mathbb{K}\left[  t\right]  $ is regular. In other
words, the element $\det\widetilde{A}$ of $\mathbb{K}\left[  t\right]  $ is
regular (since $\widetilde{A}=tI_{n}+A$).

Let us consider the matrix $A\in\mathbb{K}^{n\times n}$ as a matrix in
$\left(  \mathbb{K}\left[  t\right]  \right)  ^{n\times n}$ (since every
$n\times n$-matrix in $\mathbb{K}^{n\times n}$ can be considered as a matrix
in $\left(  \mathbb{K}\left[  t\right]  \right)  ^{n\times n}$). Similarly,
let us consider the matrices $D\in\mathbb{K}^{m\times m}$, $p\in
\mathbb{K}^{n\times1}$, $q\in\mathbb{K}^{m\times1}$, $v\in\mathbb{K}^{1\times
m}$ and $u\in\mathbb{K}^{1\times n}$ as matrices in $\left(  \mathbb{K}\left[
t\right]  \right)  ^{m\times m}$, $\left(  \mathbb{K}\left[  t\right]
\right)  ^{n\times1}$, $\left(  \mathbb{K}\left[  t\right]  \right)
^{m\times1}$, $\left(  \mathbb{K}\left[  t\right]  \right)  ^{1\times m}$ and
$\left(  \mathbb{K}\left[  t\right]  \right)  ^{1\times n}$, respectively.

\begin{vershort}
Notice that (\ref{pf.thm.block.puqv.FF}) (applied to $k=n$ and $\ell=n$ and
$F=A$) yields $\varepsilon^{n\times n}\left(  A\right)  =A$. Similarly,
$\varepsilon^{m\times m}\left(  D\right)  =D$ and $\varepsilon^{n\times
1}\left(  p\right)  =p$ and $\varepsilon^{m\times1}\left(  q\right)  =q$ and
$\varepsilon^{1\times m}\left(  v\right)  =v$ and $\varepsilon^{1\times
n}\left(  u\right)  =u$ and $\varepsilon^{n\times m}\left(  pv\right)  =pv$
and $\varepsilon^{m\times n}\left(  qu\right)  =qu$.
\end{vershort}

\begin{verlong}
Notice that (\ref{pf.thm.block.puqv.FF}) (applied to $F=A$) yields
$\varepsilon^{n\times n}\left(  A\right)  =A$. Also,
(\ref{pf.thm.block.puqv.FF}) (applied to $k=m$ and $\ell=m$ and $F=D$) yields
$\varepsilon^{m\times m}\left(  D\right)  =D$. Also,
(\ref{pf.thm.block.puqv.FF}) (applied to $k=n$ and $\ell=1$ and $F=p$) yields
$\varepsilon^{n\times1}\left(  p\right)  =p$. Also,
(\ref{pf.thm.block.puqv.FF}) (applied to $k=m$ and $\ell=1$ and $F=q$) yields
$\varepsilon^{m\times1}\left(  q\right)  =q$. Also,
(\ref{pf.thm.block.puqv.FF}) (applied to $k=1$ and $\ell=m$ and $F=v$) yields
$\varepsilon^{1\times m}\left(  v\right)  =v$. Also, Also,
(\ref{pf.thm.block.puqv.FF}) (applied to $k=1$ and $\ell=n$ and $F=u$) yields
$\varepsilon^{1\times n}\left(  u\right)  =u$. Also,
(\ref{pf.thm.block.puqv.FF}) (applied to $k=n$ and $\ell=m$ and $F=pv$) yields
$\varepsilon^{n\times m}\left(  pv\right)  =pv$. Also,
(\ref{pf.thm.block.puqv.FF}) (applied to $k=m$ and $\ell=n$ and $F=qu$) yields
$\varepsilon^{m\times n}\left(  qu\right)  =qu$.
\end{verlong}

In the proof of Theorem \ref{thm.adj.adjadj}, we have already shown that
\[
\varepsilon^{n\times n}\left(  \operatorname*{adj}\widetilde{A}\right)
=\operatorname*{adj}A\ \ \ \ \ \ \ \ \ \ \text{and}%
\ \ \ \ \ \ \ \ \ \ \varepsilon\left(  \det\widetilde{A}\right)  =\det A.
\]

Next, we claim that%
\begin{equation}
\varepsilon\left(  \operatorname*{ent}\left(  u\left(  \operatorname*{adj}%
\widetilde{A}\right)  p\right)  \right)  =\operatorname*{ent}\left(  u\left(
\operatorname*{adj}A\right)  p\right)  . \label{pf.thm.block.puqv.ent1}%
\end{equation}

\begin{vershort}
[\textit{Proof of (\ref{pf.thm.block.puqv.ent1}):} Define the $1\times
1$-matrix $B=u\left(  \operatorname*{adj}\widetilde{A}\right)  p\in\left(
\mathbb{K}\left[  t\right]  \right)  ^{1\times1}$. Then, Lemma
\ref{lem.ent.map} (applied to $\mathbb{L}=\mathbb{K}\left[  t\right]  $ and
$\mathbb{M}=\mathbb{K}$ and $f=\varepsilon$) yields $\operatorname*{ent}%
\left(  \varepsilon^{1\times1}\left(  B\right)  \right)  =\varepsilon\left(
\operatorname*{ent}B\right)  $.

However, from $B=u\left(  \operatorname*{adj}\widetilde{A}\right)  p=u\left(
\left(  \operatorname*{adj}\widetilde{A}\right)  p\right)  $, we obtain%
\begin{align*}
\varepsilon^{1\times1}\left(  B\right)   &  =\varepsilon^{1\times1}\left(
u\left(  \left(  \operatorname*{adj}\widetilde{A}\right)  p\right)  \right) \\
&  =\underbrace{\varepsilon^{1\times n}\left(  u\right)  }_{=u}\cdot
\underbrace{\varepsilon^{n\times1}\left(  \left(  \operatorname*{adj}%
\widetilde{A}\right)  p\right)  }_{\substack{=\varepsilon^{n\times n}\left(
\operatorname*{adj}\widetilde{A}\right)  \cdot\varepsilon^{n\times1}\left(
p\right)  \\\text{(by Theorem \ref{prop.functor.clas} \textbf{(c)})}%
}}\ \ \ \ \ \ \ \ \ \ \left(  \text{by Theorem \ref{prop.functor.clas}
\textbf{(c)}}\right) \\
&  =u\cdot\underbrace{\varepsilon^{n\times n}\left(  \operatorname*{adj}%
\widetilde{A}\right)  }_{=\operatorname*{adj}A}\cdot\underbrace{\varepsilon
^{n\times1}\left(  p\right)  }_{=p}=u\left(  \operatorname*{adj}A\right)  p.
\end{align*}
In view of this, we can rewrite the equality $\operatorname*{ent}\left(
\varepsilon^{1\times1}\left(  B\right)  \right)  =\varepsilon\left(
\operatorname*{ent}B\right)  $ (which we have proved in the previous
paragraph) as
\[
\operatorname*{ent}\left(  u\left(  \operatorname*{adj}A\right)  p\right)
=\varepsilon\left(  \operatorname*{ent}B\right)  =\varepsilon\left(
\operatorname*{ent}\left(  u\left(  \operatorname*{adj}\widetilde{A}\right)
p\right)  \right)
\]
(since $B=u\left(  \operatorname*{adj}\widetilde{A}\right)  p$). This proves
(\ref{pf.thm.block.puqv.ent1}).] \medskip
\end{vershort}

\begin{verlong}
[\textit{Proof of (\ref{pf.thm.block.puqv.ent1}):} Define the $1\times
1$-matrix $B=u\left(  \operatorname*{adj}\widetilde{A}\right)  p\in\left(
\mathbb{K}\left[  t\right]  \right)  ^{1\times1}$. Then, Lemma
\ref{lem.ent.map} (applied to $\mathbb{L}=\mathbb{K}\left[  t\right]  $ and
$\mathbb{M}=\mathbb{K}$ and $f=\varepsilon$) yields%
\begin{equation}
\operatorname*{ent}\left(  \varepsilon^{1\times1}\left(  B\right)  \right)
=\varepsilon\left(  \operatorname*{ent}B\right)  .
\label{pf.thm.block.puqv.ent1.pf.1}%
\end{equation}

However, applying the map $\varepsilon^{1\times1}$ to both sides of the
equality $B=u\left(  \operatorname*{adj}\widetilde{A}\right)  p=u\left(
\left(  \operatorname*{adj}\widetilde{A}\right)  p\right)  $, we obtain%
\begin{align*}
\varepsilon^{1\times1}\left(  B\right)   &  =\varepsilon^{1\times1}\left(
u\left(  \left(  \operatorname*{adj}\widetilde{A}\right)  p\right)  \right)
=\underbrace{\varepsilon^{1\times n}\left(  u\right)  }_{=u}\cdot
\underbrace{\varepsilon^{n\times1}\left(  \left(  \operatorname*{adj}%
\widetilde{A}\right)  p\right)  }_{\substack{=\varepsilon^{n\times n}\left(
\operatorname*{adj}\widetilde{A}\right)  \cdot\varepsilon^{n\times1}\left(
p\right)  \\\text{(by Theorem \ref{prop.functor.clas} \textbf{(c)}%
,}\\\text{applied to }\mathbb{K}\left[  t\right]  \text{, }\mathbb{K}\text{,
}\varepsilon\text{, }n\text{, }n\text{, }1\text{, }\operatorname*{adj}%
\widetilde{A}\text{ and }p\\\text{instead of }\mathbb{L}\text{, }%
\mathbb{M}\text{, }f\text{, }n\text{, }m\text{, }p\text{, }A\text{ and
}B\text{)}}}\\
&  \ \ \ \ \ \ \ \ \ \ \ \ \ \ \ \ \ \ \ \ \left(
\begin{array}
[c]{c}%
\text{by Theorem \ref{prop.functor.clas} \textbf{(c)},}\\
\text{applied to }\mathbb{K}\left[  t\right]  \text{, }\mathbb{K}\text{,
}\varepsilon\text{, }1\text{, }n\text{, }1\text{, }u\text{ and }\left(
\operatorname*{adj}\widetilde{A}\right)  p\\
\text{instead of }\mathbb{L}\text{, }\mathbb{M}\text{, }f\text{, }n\text{,
}m\text{, }p\text{, }A\text{ and }B
\end{array}
\right) \\
&  =u\cdot\underbrace{\varepsilon^{n\times n}\left(  \operatorname*{adj}%
\widetilde{A}\right)  }_{=\operatorname*{adj}A}\cdot\underbrace{\varepsilon
^{n\times1}\left(  p\right)  }_{=p}=u\left(  \operatorname*{adj}A\right)  p.
\end{align*}
In view of this, we can rewrite (\ref{pf.thm.block.puqv.ent1.pf.1}) as
\[
\operatorname*{ent}\left(  u\left(  \operatorname*{adj}A\right)  p\right)
=\varepsilon\left(  \operatorname*{ent}B\right)  =\varepsilon\left(
\operatorname*{ent}\left(  u\left(  \operatorname*{adj}\widetilde{A}\right)
p\right)  \right)
\]
(since $B=u\left(  \operatorname*{adj}\widetilde{A}\right)  p$). This proves
(\ref{pf.thm.block.puqv.ent1}).] \medskip
\end{verlong}

Furthermore, we have $\varepsilon\left(  \lambda\right)  =\lambda$ for each
$\lambda\in\mathbb{K}$ (by the definition of $\varepsilon$). Applying this to
$\lambda=\det D$, we find%
\[
\varepsilon\left(  \det D\right)  =\det D.
\]

Also, $v\left(  \operatorname*{adj}D\right)  q\in\mathbb{K}^{1\times1}$ and
thus $\operatorname*{ent}\left(  v\left(  \operatorname*{adj}D\right)
q\right)  \in\mathbb{K}$. Recall again that we have $\varepsilon\left(
\lambda\right)  =\lambda$ for each $\lambda\in\mathbb{K}$. Applying this to
$\lambda=\operatorname*{ent}\left(  v\left(  \operatorname*{adj}D\right)
q\right)  $, we obtain%
\begin{equation}
\varepsilon\left(  \operatorname*{ent}\left(  v\left(  \operatorname*{adj}%
D\right)  q\right)  \right)  =\operatorname*{ent}\left(  v\left(
\operatorname*{adj}D\right)  q\right)  \label{pf.thm.block.puqv.ent2}%
\end{equation}
(since $\operatorname*{ent}\left(  v\left(  \operatorname*{adj}D\right)
q\right)  \in\mathbb{K}$).

Now, Lemma \ref{lem.block.puqv-reg} (applied to $\mathbb{K}\left[  t\right]  $
and $\widetilde{A}$ instead of $\mathbb{K}$ and $A$) yields
\[
\det\left(
\begin{array}
[c]{cc}%
\widetilde{A} & pv\\
qu & D
\end{array}
\right)  =\det\widetilde{A}\cdot\det D-\operatorname*{ent}\left(  u\left(
\operatorname*{adj}\widetilde{A}\right)  p\right)  \cdot\operatorname*{ent}%
\left(  v\left(  \operatorname*{adj}D\right)  q\right)
\]
(since the element $\det\widetilde{A}$ of $\mathbb{K}\left[  t\right]  $ is
regular). Applying the map $\varepsilon$ to both sides of this equality, we
find%
\begin{align}
&  \varepsilon\left(  \det\left(
\begin{array}
[c]{cc}%
\widetilde{A} & pv\\
qu & D
\end{array}
\right)  \right) \nonumber\\
&  =\varepsilon\left(  \det\widetilde{A}\cdot\det D-\operatorname*{ent}\left(
u\left(  \operatorname*{adj}\widetilde{A}\right)  p\right)  \cdot
\operatorname*{ent}\left(  v\left(  \operatorname*{adj}D\right)  q\right)
\right) \nonumber\\
&  =\underbrace{\varepsilon\left(  \det\widetilde{A}\right)  }_{=\det A}%
\cdot\underbrace{\varepsilon\left(  \det D\right)  }_{=\det D}%
-\underbrace{\varepsilon\left(  \operatorname*{ent}\left(  u\left(
\operatorname*{adj}\widetilde{A}\right)  p\right)  \right)  }%
_{\substack{=\operatorname*{ent}\left(  u\left(  \operatorname*{adj}A\right)
p\right)  \\\text{(by (\ref{pf.thm.block.puqv.ent1}))}}}\cdot
\underbrace{\varepsilon\left(  \operatorname*{ent}\left(  v\left(
\operatorname*{adj}D\right)  q\right)  \right)  }%
_{\substack{=\operatorname*{ent}\left(  v\left(  \operatorname*{adj}D\right)
q\right)  \\\text{(by (\ref{pf.thm.block.puqv.ent2}))}}}\nonumber\\
&  \ \ \ \ \ \ \ \ \ \ \ \ \ \ \ \ \ \ \ \ \left(  \text{since }%
\varepsilon\text{ is a ring homomorphism}\right) \nonumber\\
&  =\det A\cdot\det D-\operatorname*{ent}\left(  u\left(  \operatorname*{adj}%
A\right)  p\right)  \cdot\operatorname*{ent}\left(  v\left(
\operatorname*{adj}D\right)  q\right)  . \label{pf.thm.block.puqv.5}%
\end{align}

However, $\left(
\begin{array}
[c]{cc}%
\widetilde{A} & pv\\
qu & D
\end{array}
\right)  $ is an $\left(  n+m\right)  \times\left(  n+m\right)  $-matrix in
$\left(  \mathbb{K}\left[  t\right]  \right)  ^{\left(  n+m\right)
\times\left(  n+m\right)  }$. Thus, Proposition \ref{prop.functor.det}
\textbf{(a)} (applied to $\mathbb{K}\left[  t\right]  $, $\mathbb{K}$,
$\varepsilon$, $n+m$ and $\left(
\begin{array}
[c]{cc}%
\widetilde{A} & pv\\
qu & D
\end{array}
\right)  $ instead of $\mathbb{L}$, $\mathbb{M}$, $f$, $n$ and $A$) yields
\begin{equation}
\varepsilon\left(  \det\left(
\begin{array}
[c]{cc}%
\widetilde{A} & pv\\
qu & D
\end{array}
\right)  \right)  =\det\left(  \varepsilon^{\left(  n+m\right)  \times\left(
n+m\right)  }\left(
\begin{array}
[c]{cc}%
\widetilde{A} & pv\\
qu & D
\end{array}
\right)  \right)  . \label{pf.thm.block.puqv.6}%
\end{equation}

On the other hand, Lemma \ref{lem.functor.block2x2} (applied to $\mathbb{K}%
\left[  t\right]  $, $\mathbb{K}$, $\varepsilon$, $m$, $n$, $m$,
$\widetilde{A}$, $pv$ and $qu$ instead of $\mathbb{L}$, $\mathbb{M}$, $f$,
$n^{\prime}$, $m$, $m^{\prime}$, $A$, $B$ and $C$) yields
\[
\varepsilon^{\left(  n+m\right)  \times\left(  n+m\right)  }\left(  \left(
\begin{array}
[c]{cc}%
\widetilde{A} & pv\\
qu & D
\end{array}
\right)  \right)  =\left(
\begin{array}
[c]{cc}%
\varepsilon^{n\times n}\left(  \widetilde{A}\right)  & \varepsilon^{n\times
m}\left(  pv\right) \\
\varepsilon^{m\times n}\left(  qu\right)  & \varepsilon^{m\times m}\left(
D\right)
\end{array}
\right)  =\left(
\begin{array}
[c]{cc}%
A & pv\\
qu & D
\end{array}
\right)
\]
(since $\varepsilon^{n\times n}\left(  \widetilde{A}\right)  =A$ and
$\varepsilon^{n\times m}\left(  pv\right)  =pv$ and $\varepsilon^{m\times
n}\left(  qu\right)  =qu$ and $\varepsilon^{m\times m}\left(  D\right)  =D$).
Taking determinants on both sides of this equality, we find%
\[
\det\left(  \varepsilon^{\left(  n+m\right)  \times\left(  n+m\right)
}\left(  \left(
\begin{array}
[c]{cc}%
\widetilde{A} & pv\\
qu & D
\end{array}
\right)  \right)  \right)  =\det\left(
\begin{array}
[c]{cc}%
A & pv\\
qu & D
\end{array}
\right)  .
\]
Hence,%
\begin{align*}
\det\left(
\begin{array}
[c]{cc}%
A & pv\\
qu & D
\end{array}
\right)   &  =\det\left(  \varepsilon^{\left(  n+m\right)  \times\left(
n+m\right)  }\left(  \left(
\begin{array}
[c]{cc}%
\widetilde{A} & pv\\
qu & D
\end{array}
\right)  \right)  \right) \\
&  =\varepsilon\left(  \det\left(
\begin{array}
[c]{cc}%
\widetilde{A} & pv\\
qu & D
\end{array}
\right)  \right)  \ \ \ \ \ \ \ \ \ \ \left(  \text{by
(\ref{pf.thm.block.puqv.6})}\right) \\
&  =\det A\cdot\det D-\operatorname*{ent}\left(  u\left(  \operatorname*{adj}%
A\right)  p\right)  \cdot\operatorname*{ent}\left(  v\left(
\operatorname*{adj}D\right)  q\right)  \ \ \ \ \ \ \ \ \ \ \left(  \text{by
(\ref{pf.thm.block.puqv.5})}\right)  .
\end{align*}
This completes the proof of Theorem \ref{thm.block.puqv}.
\end{proof}

Note that Theorem \ref{thm.block.puqv} generalizes the following known fact
(e.g., \cite[Exercise 6.60 \textbf{(a)}]{detnotes}):

\begin{corollary}
\label{cor.block.bordered}Let $n\in\mathbb{N}$. Let $u\in\mathbb{K}^{n\times
1}$ be a column vector with $n$ entries, and let $v\in\mathbb{K}^{1\times n}$
be a row vector with $n$ entries. (Thus, $uv$ is an $n\times n$-matrix,
whereas $vu$ is a $1\times1$-matrix.) Let $h\in\mathbb{K}$. Let $H$ be the
$1\times1$-matrix $\left(
\begin{array}
[c]{c}%
h
\end{array}
\right)  \in\mathbb{K}^{1\times1}$. Let $A\in\mathbb{K}^{n\times n}$ be an
$n\times n$-matrix. Then,%
\[
\det\left(
\begin{array}
[c]{cc}%
A & u\\
v & H
\end{array}
\right)  =h\det A-\operatorname*{ent}\left(  v\left(  \operatorname*{adj}%
A\right)  u\right)  .
\]

\end{corollary}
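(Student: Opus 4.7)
The plan is to obtain Corollary \ref{cor.block.bordered} as a direct specialization of Theorem \ref{thm.block.puqv} to the case $m = 1$. Since the notation clashes (the theorem uses $u, v$ for row vectors and $p, q$ for column vectors, whereas the corollary uses $u$ for a column and $v$ for a row), I would carefully rename when invoking the theorem. Specifically, I apply Theorem \ref{thm.block.puqv} with its $D$ equal to $H$, with its $p$ equal to the corollary's column vector $u$, with its $v$ equal to the $1\times 1$ matrix $(1)$, with its $q$ equal to the $1\times 1$ matrix $(1)$, and with its $u$ equal to the corollary's row vector $v$. Under this substitution, the theorem's $pv$ reduces to the corollary's $u$, and the theorem's $qu$ reduces to the corollary's $v$, so the block matrix on the left-hand side of Theorem \ref{thm.block.puqv} coincides exactly with the block matrix appearing in the corollary.

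Next I would simplify the right-hand side produced by Theorem \ref{thm.block.puqv}. Since $\det H = h$, the first term becomes $\det A \cdot h = h \det A$. The factor $\operatorname{ent}(u(\operatorname{adj} A)p)$ (theorem's notation) becomes $\operatorname{ent}(v(\operatorname{adj} A)u)$ (corollary's notation), which is exactly the term that should appear. It remains to verify that the other factor $\operatorname{ent}(v(\operatorname{adj} D)q) = \operatorname{ent}\bigl((1)\cdot \operatorname{adj}(H)\cdot (1)\bigr)$ equals $1$. For this, I would compute $\operatorname{adj}(H)$ directly from Definition \ref{def.adj}: with $n = 1$, the sole entry of $\operatorname{adj}(H)$ is $(-1)^{1+1}\det(H_{\sim 1,\sim 1})$, where $H_{\sim 1,\sim 1}$ is the (empty) $0 \times 0$ matrix, whose determinant is $1$ by the usual convention. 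Thus $\operatorname{adj}(H) = (1)$, and the entire factor collapses to $1$.

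Putting these simplifications together, Theorem \ref{thm.block.puqv} specializes to
\[
\det\left(\begin{array}{cc} A & u \\ v & H \end{array}\right) = h\det A - \operatorname{ent}(v(\operatorname{adj} A)u)\cdot 1,
\]
which is precisely the claim of Corollary \ref{cor.block.bordered}. There is no real obstacle here; the proof is essentially bookkeeping, and the only step requiring a moment's thought is the evaluation of the $1\times 1$ adjugate via the convention $\det(\text{empty matrix}) = 1$.
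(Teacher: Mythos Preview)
Your proposal is correct and is essentially the same as the paper's proof: both specialize Theorem \ref{thm.block.puqv} to $m=1$ with $D=H$, the theorem's $p$ set to the corollary's $u$, the theorem's $u$ set to the corollary's $v$, and the theorem's $q$ and $v$ both equal to the $1\times 1$ matrix $(1)$ (which the paper writes as $I_1$), then simplify using $\det H = h$ and $\operatorname{adj} H = (1)$ via the empty-determinant convention.
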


\begin{proof}
[Proof of Corollary \ref{cor.block.bordered} (sketched).]Consider the
$1\times1$ identity matrix $I_{1}=\left(
\begin{array}
[c]{c}%
1
\end{array}
\right)  $. Then, $I_{1}$ is both a row vector and a column vector, and we
have $u=uI_{1}$ and $v=I_{1}v$. Moreover, the adjugate of any $1\times
1$-matrix is $\left(
\begin{array}
[c]{c}%
1
\end{array}
\right)  $ (since the determinant of a $0\times0$-matrix is defined to be
$1$). Thus, in particular, $\operatorname*{adj}H=\left(
\begin{array}
[c]{c}%
1
\end{array}
\right)  =I_{1}$. Furthermore, from $H=\left(
\begin{array}
[c]{c}%
h
\end{array}
\right)  $, we obtain $\det H=h$. Now, from $u=uI_{1}$ and $v=I_{1}v$, we
obtain
\begin{align*}
\det\left(
\begin{array}
[c]{cc}%
A & u\\
v & H
\end{array}
\right)   &  =\det\left(
\begin{array}
[c]{cc}%
A & uI_{1}\\
I_{1}v & H
\end{array}
\right) \\
&  =\det A\cdot\underbrace{\det H}_{=h}-\operatorname*{ent}\left(  v\left(
\operatorname*{adj}A\right)  u\right)  \cdot\operatorname*{ent}\left(
I_{1}\underbrace{\left(  \operatorname*{adj}H\right)  }_{=I_{1}}I_{1}\right)
\\
&  \ \ \ \ \ \ \ \ \ \ \ \ \ \ \ \ \ \ \ \ \left(
\begin{array}
[c]{c}%
\text{by Theorem \ref{thm.block.puqv}, applied to }1\text{, }H\text{,
}u\text{, }I_{1}\text{, }I_{1}\text{ and }v\\
\text{instead of }m\text{, }D\text{, }p\text{, }q\text{, }v\text{ and }u
\end{array}
\right) \\
&  =\underbrace{\det A\cdot h}_{=h\det A}-\operatorname*{ent}\left(  v\left(
\operatorname*{adj}A\right)  u\right)  \cdot\operatorname*{ent}%
\underbrace{\left(  I_{1}\cdot I_{1}\cdot I_{1}\right)  }_{=I_{1}}\\
&  =h\det A-\operatorname*{ent}\left(  v\left(  \operatorname*{adj}A\right)
u\right)  \cdot\underbrace{\operatorname*{ent}\left(  I_{1}\right)  }_{=1}\\
&  =h\det A-\operatorname*{ent}\left(  v\left(  \operatorname*{adj}A\right)
u\right)  .
\end{align*}
This proves Corollary \ref{cor.block.bordered}.
\end{proof}

Another particular case of Theorem \ref{thm.block.puqv} is the following:

\begin{corollary}
\label{cor.block.01}Let $n$ and $m$ be two positive integers. Let
$A\in\mathbb{K}^{n\times n}$ and $D\in\mathbb{K}^{m\times m}$ be two square
matrices. Let $B$ be the $n\times m$-matrix whose $\left(  n,1\right)  $-th
entry is $1$ and whose all other entries are $0$. Let $C$ be the $m\times
n$-matrix whose $\left(  1,n\right)  $-th entry is $1$ and whose all other
entries are $0$. Then,%
\[
\det\left(
\begin{array}
[c]{cc}%
A & B\\
C & D
\end{array}
\right)  =\det A\cdot\det D-\det\left(  A_{\sim n,\sim n}\right)  \cdot
\det\left(  D_{\sim1,\sim1}\right)  .
\]
(Recall that we are using the notations from Definition
\ref{def.submatrix.minor}.)
\end{corollary}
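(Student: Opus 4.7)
The plan is to identify the matrices $B$ and $C$ as rank-one products $pv$ and $qu$, and then apply Theorem \ref{thm.block.puqv} directly. Specifically, let $p \in \mathbb{K}^{n \times 1}$ be the column vector whose $n$-th entry is $1$ and whose other entries are $0$, and let $v \in \mathbb{K}^{1 \times m}$ be the row vector whose $1$st entry is $1$ and whose other entries are $0$. Then a direct calculation shows $pv = B$ (the only nonzero entry of $pv$ is in position $(n,1)$, and equals $1$). Analogously, let $q \in \mathbb{K}^{m \times 1}$ be the column vector with a $1$ in position $1$ (zeros elsewhere), and let $u \in \mathbb{K}^{1 \times n}$ be the row vector with a $1$ in position $n$ (zeros elsewhere); then $qu = C$.

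With these substitutions, Theorem \ref{thm.block.puqv} yields
\[
\det\left(
\begin{array}{cc}
A & B \\ C & D
\end{array}
\right) = \det A \cdot \det D - \operatorname{ent}\left(u (\operatorname{adj} A) p\right) \cdot \operatorname{ent}\left(v (\operatorname{adj} D) q\right).
\]
So it remains to evaluate the two scalar factors. Here the key observation is that for any vectors $u$ and $p$ of the chosen form, $\operatorname{ent}(u M p)$ picks out a single entry of $M$: namely, $\operatorname{ent}(u M p) = M_{n,n}$ when $u$ has its $1$ in position $n$ and $p$ has its $1$ in position $n$. Thus $\operatorname{ent}\left(u (\operatorname{adj} A) p\right) = (\operatorname{adj} A)_{n,n}$, and similarly $\operatorname{ent}\left(v (\operatorname{adj} D) q\right) = (\operatorname{adj} D)_{1,1}$.

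Finally, invoking the definition of the adjugate (Definition \ref{def.adj}), we have
\[
(\operatorname{adj} A)_{n,n} = (-1)^{n+n} \det(A_{\sim n, \sim n}) = \det(A_{\sim n, \sim n}),
\]
and similarly $(\operatorname{adj} D)_{1,1} = (-1)^{1+1} \det(D_{\sim 1, \sim 1}) = \det(D_{\sim 1, \sim 1})$. Substituting these in completes the proof. There is no real obstacle here; the mild bookkeeping step is verifying the equality $\operatorname{ent}(uMp) = M_{n,n}$ from the definition of matrix multiplication, which one can write out explicitly using that only one term in each of the sums $\sum_k u_k M_{k,j}$ and $\sum_j (\cdot)_j p_j$ survives.
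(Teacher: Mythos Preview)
Your proof is correct and follows essentially the same approach as the paper's own proof: both factor $B = pv$ and $C = qu$ using standard basis vectors, apply Theorem \ref{thm.block.puqv}, and then identify $\operatorname*{ent}\left(u\left(\operatorname*{adj}A\right)p\right)$ and $\operatorname*{ent}\left(v\left(\operatorname*{adj}D\right)q\right)$ with the appropriate diagonal entries of the adjugates via the definition of $\operatorname*{adj}$.
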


\begin{example}
Let us see what Corollary \ref{cor.block.01} says in the case when $n=2$ and
$m=3$. Indeed, let $n=2$ and $m=3$ and%
\[
A=\left(
\begin{array}
[c]{cc}%
a_{1,1} & a_{1,2}\\
a_{2,1} & a_{2,2}%
\end{array}
\right)  \ \ \ \ \ \ \ \ \ \ \text{and}\ \ \ \ \ \ \ \ \ \ D=\left(
\begin{array}
[c]{ccc}%
d_{1,1} & d_{1,2} & d_{1,3}\\
d_{2,1} & d_{2,2} & d_{2,3}\\
d_{3,1} & d_{3,2} & d_{3,3}%
\end{array}
\right)  .
\]
Then, the matrices $B$ and $C$ defined in Corollary \ref{cor.block.01} are%
\[
B=\left(
\begin{array}
[c]{ccc}%
0 & 0 & 0\\
1 & 0 & 0
\end{array}
\right)  \ \ \ \ \ \ \ \ \ \ \text{and}\ \ \ \ \ \ \ \ \ \ C=\left(
\begin{array}
[c]{cc}%
0 & 1\\
0 & 0\\
0 & 0
\end{array}
\right)  .
\]
Hence,%
\[
\left(
\begin{array}
[c]{cc}%
A & B\\
C & D
\end{array}
\right)  =\left(
\begin{array}
[c]{ccccc}%
a_{1,1} & a_{1,2} & 0 & 0 & 0\\
a_{2,1} & a_{2,2} & 1 & 0 & 0\\
0 & 1 & d_{1,1} & d_{1,2} & d_{1,3}\\
0 & 0 & d_{2,1} & d_{2,2} & d_{2,3}\\
0 & 0 & d_{3,1} & d_{3,2} & d_{3,3}%
\end{array}
\right)  .
\]
Therefore, the claim of Corollary \ref{cor.block.01} rewrites as follows in
our case:
\begin{align*}
&  \det\left(
\begin{array}
[c]{ccccc}%
a_{1,1} & a_{1,2} & 0 & 0 & 0\\
a_{2,1} & a_{2,2} & 1 & 0 & 0\\
0 & 1 & d_{1,1} & d_{1,2} & d_{1,3}\\
0 & 0 & d_{2,1} & d_{2,2} & d_{2,3}\\
0 & 0 & d_{3,1} & d_{3,2} & d_{3,3}%
\end{array}
\right) \\
&  =\det\left(
\begin{array}
[c]{cc}%
a_{1,1} & a_{1,2}\\
a_{2,1} & a_{2,2}%
\end{array}
\right)  \cdot\det\left(
\begin{array}
[c]{ccc}%
d_{1,1} & d_{1,2} & d_{1,3}\\
d_{2,1} & d_{2,2} & d_{2,3}\\
d_{3,1} & d_{3,2} & d_{3,3}%
\end{array}
\right)  -\det\left(
\begin{array}
[c]{c}%
a_{1,1}%
\end{array}
\right)  \cdot\det\left(
\begin{array}
[c]{cc}%
d_{2,2} & d_{2,3}\\
d_{3,2} & d_{3,3}%
\end{array}
\right)  .
\end{align*}

\end{example}

\begin{verlong}
Our proof of Corollary \ref{cor.block.01} will rely on two lemmas:

\begin{lemma}
\label{lem.entvBu}Let $n\in\mathbb{N}$ and $m\in\mathbb{N}$. Let $u=\left(
\begin{array}
[c]{c}%
u_{1}\\
u_{2}\\
\vdots\\
u_{m}%
\end{array}
\right)  \in\mathbb{K}^{m\times1}$ be a column vector, and let $v=\left(
\begin{array}
[c]{cccc}%
v_{1} & v_{2} & \cdots & v_{n}%
\end{array}
\right)  \in\mathbb{K}^{1\times n}$ be a row vector. Let $B=\left(
b_{i,j}\right)  _{1\leq i\leq n,\ 1\leq j\leq m}\in\mathbb{K}^{n\times m}$ be
an $n\times m$-matrix. Then,%
\[
\operatorname*{ent}\left(  vBu\right)  =\sum_{i=1}^{n}\ \ \sum_{j=1}^{m}%
u_{j}v_{i}b_{i,j}.
\]

\end{lemma}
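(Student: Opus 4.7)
The plan is to directly compute the product $vBu$ by two successive matrix multiplications, read off its sole entry, and then reindex the resulting double sum.

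First, observe that $vB$ is the product of a $1\times n$-matrix and an $n\times m$-matrix, so it is a $1\times m$-matrix. By the definition of matrix multiplication together with $v=(v_1,v_2,\ldots,v_n)$ and $B=(b_{i,j})_{1\leq i\leq n,\ 1\leq j\leq m}$, its $(1,j)$-th entry (for $j\in\{1,2,\ldots,m\}$) is $\sum_{i=1}^{n}v_i b_{i,j}$. Thus
\[
vB = \left(\sum_{i=1}^{n} v_i b_{i,j}\right)_{1\leq k\leq 1,\ 1\leq j\leq m}.
\]

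Next, multiply on the right by the column vector $u$. Since $vB$ is $1\times m$ and $u$ is $m\times 1$, the product $vBu = (vB)u$ is a $1\times 1$-matrix whose unique entry (and hence, by Definition \ref{def.sol.det.rk1upd.scalar}, its $\operatorname{ent}$) is obtained by summing the componentwise products:
\[
\operatorname{ent}(vBu) \;=\; \sum_{j=1}^{m} \left(\sum_{i=1}^{n} v_i b_{i,j}\right) u_j.
\]
Interchanging the two finite sums and reordering the factors in the commutative ring $\mathbb{K}$ gives
\[
\operatorname{ent}(vBu) \;=\; \sum_{i=1}^{n}\ \sum_{j=1}^{m} u_j v_i b_{i,j},
\]
which is the desired identity.

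There is no real obstacle here; the only things to be careful about are the unambiguous bracketing $vBu = (vB)u$ (one could equally well verify $vBu = v(Bu)$, giving the same answer by associativity of matrix multiplication), and the appeal to commutativity of $\mathbb{K}$ when writing the summand as $u_j v_i b_{i,j}$. The entire proof is a one-line unwinding of the definitions, and a one-sentence vershort proof would suffice.
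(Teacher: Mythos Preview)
Your proof is correct. The paper does not actually prove this lemma; it merely cites \cite[Proposition 7.263]{detnotes} and states that the result is identical to that proposition up to notation. Your direct computation of $(vB)u$ entry by entry is the natural argument and is almost certainly what the cited reference does as well.
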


Lemma \ref{lem.entvBu} is precisely \cite[Proposition 7.263]{detnotes} (except
that the column vector $\left(
\begin{array}
[c]{c}%
u_{1}\\
u_{2}\\
\vdots\\
u_{m}%
\end{array}
\right)  $ is written as $\left(  u_{1},u_{2},\ldots,u_{m}\right)  ^{T}$, and
that the row vector $\left(
\begin{array}
[c]{cccc}%
v_{1} & v_{2} & \cdots & v_{n}%
\end{array}
\right)  $ is written as $\left(  v_{1},v_{2},\ldots,v_{n}\right)  $ in
\cite[Proposition 7.263]{detnotes}). Thus, we will not prove it here.

\begin{lemma}
\label{lem.uv}Let $n\in\mathbb{N}$ and $m\in\mathbb{N}$. Let $u=\left(
\begin{array}
[c]{c}%
u_{1}\\
u_{2}\\
\vdots\\
u_{m}%
\end{array}
\right)  \in\mathbb{K}^{m\times1}$ be a column vector, and let $v=\left(
\begin{array}
[c]{cccc}%
v_{1} & v_{2} & \cdots & v_{n}%
\end{array}
\right)  \in\mathbb{K}^{1\times n}$ be a row vector. Then,%
\[
uv=\left(  u_{i}v_{j}\right)  _{1\leq i\leq m,\ 1\leq j\leq n}.
\]

\end{lemma}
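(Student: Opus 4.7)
The plan is to derive the equality directly from the definition of matrix multiplication, as this lemma is essentially a notational unpacking.

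First I would rewrite the column vector $u$ and the row vector $v$ in the standard doubly-indexed matrix notation used throughout the paper. Namely, define $a_{i,k} := u_i$ for $(i,k) \in \{1,2,\ldots,m\} \times \{1\}$ and $b_{k,j} := v_j$ for $(k,j) \in \{1\} \times \{1,2,\ldots,n\}$, so that $u = (a_{i,k})_{1\leq i\leq m,\ 1\leq k\leq 1}$ and $v = (b_{k,j})_{1\leq k\leq 1,\ 1\leq j\leq n}$.

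Next I would apply the definition of the product of an $m \times 1$ matrix with a $1 \times n$ matrix, which yields
\[
uv = \left( \sum_{k=1}^{1} a_{i,k} b_{k,j} \right)_{1\leq i\leq m,\ 1\leq j\leq n}.
\]
Since the inner sum has only the single term $k=1$, it simplifies to $a_{i,1} b_{1,j} = u_i v_j$, giving $uv = (u_i v_j)_{1\leq i\leq m,\ 1\leq j\leq n}$ as claimed.

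There is no substantive obstacle here: the entire content of the lemma is the observation that the sum $\sum_{k=1}^{1}$ in the matrix product formula collapses to a single product when one of the factors has inner dimension $1$. The only care required is to keep the index ranges straight (the column vector is $m \times 1$, so the outer index runs over $\{1,\ldots,m\}$, while the row vector is $1 \times n$, so the other outer index runs over $\{1,\ldots,n\}$), which matches the indexing on the right-hand side.
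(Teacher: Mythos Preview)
Your proposal is correct and follows essentially the same approach as the paper: rewrite $u$ and $v$ in doubly-indexed matrix form, apply the definition of the matrix product, and observe that the inner sum $\sum_{k=1}^{1}$ collapses to a single term $u_i v_j$. The only cosmetic difference is that you introduce auxiliary names $a_{i,k}$ and $b_{k,j}$, whereas the paper writes the entries directly as $u_i$ and $v_j$.
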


\begin{proof}
[Proof of Lemma \ref{lem.uv}.]We have $u=\left(
\begin{array}
[c]{c}%
u_{1}\\
u_{2}\\
\vdots\\
u_{m}%
\end{array}
\right)  =\left(  u_{i}\right)  _{1\leq i\leq m,\ 1\leq j\leq1}$ and
$v=\left(
\begin{array}
[c]{cccc}%
v_{1} & v_{2} & \cdots & v_{n}%
\end{array}
\right)  =\left(  v_{j}\right)  _{1\leq i\leq1,\ 1\leq j\leq n}$. Hence, the
definition of the product of two matrices yields%
\[
uv=\left(  \underbrace{\sum_{k=1}^{1}u_{i}v_{j}}_{=u_{i}v_{j}}\right)  _{1\leq
i\leq m,\ 1\leq j\leq n}=\left(  u_{i}v_{j}\right)  _{1\leq i\leq m,\ 1\leq
j\leq n}.
\]
This proves Lemma \ref{lem.uv}.
\end{proof}
\end{verlong}

\begin{proof}
[Proof of Corollary \ref{cor.block.01} (sketched).]What follows is by far not
the easiest proof of Corollary \ref{cor.block.01}, but it puts the corollary
in the context of Theorem \ref{thm.block.puqv}.

We let $p\in\mathbb{K}^{n\times1}$ be the column vector whose $n$-th entry is
$1$ and whose all other entries are $0$.

We let $q\in\mathbb{K}^{m\times1}$ be the column vector whose $1$-st entry is
$1$ and whose all other entries are $0$.

We let $v\in\mathbb{K}^{1\times m}$ be the row vector whose $1$-st entry is
$1$ and whose all other entries are $0$.

We let $u\in\mathbb{K}^{1\times n}$ be the row vector whose $n$-th entry is
$1$ and whose all other entries are $0$.

Thus,%
\begin{align*}
p  &  =\left(
\begin{array}
[c]{c}%
0\\
0\\
\vdots\\
0\\
1
\end{array}
\right)  \in\mathbb{K}^{n\times1},\ \ \ \ \ \ \ \ \ \ q=\left(
\begin{array}
[c]{c}%
1\\
0\\
0\\
\vdots\\
0
\end{array}
\right)  \in\mathbb{K}^{m\times1},\\
v  &  =\left(
\begin{array}
[c]{ccccc}%
1 & 0 & 0 & \cdots & 0
\end{array}
\right)  \in\mathbb{K}^{1\times m},\ \ \ \ \ \ \ \ \ \ u=\left(
\begin{array}
[c]{ccccc}%
0 & 0 & \cdots & 0 & 1
\end{array}
\right)  \in\mathbb{K}^{1\times n}.
\end{align*}

\begin{vershort}
Now, it is easy to see (using just the definition of a product of two
matrices) that the following four claims hold:
\end{vershort}

\begin{verlong}
Now, it is easy to see that the following four claims hold\footnote{Indeed,
Claim 1 and Claim 2 follow easily from Lemma \ref{lem.uv}, whereas Claim 3 and
Claim 4 follow easily from Lemma \ref{lem.entvBu}.}:
\end{verlong}

\begin{statement}
\textit{Claim 1:} We have $qu=C$.
\end{statement}

\begin{statement}
\textit{Claim 2:} We have $pv=B$.
\end{statement}

\begin{statement}
\textit{Claim 3:} Every $m\times m$-matrix $Y$ satisfies%
\begin{equation}
\operatorname*{ent}\left(  vYq\right)  =\left(  \text{the }\left(  1,1\right)
\text{-th entry of }Y\right)  . \label{pf.cor.block.01.entvYq=}%
\end{equation}

\end{statement}

\begin{statement}
\textit{Claim 4:} Every $n\times n$-matrix $X$ satisfies%
\begin{equation}
\operatorname*{ent}\left(  uXp\right)  =\left(  \text{the }\left(  n,n\right)
\text{-th entry of }X\right)  . \label{pf.cor.block.01.entuXp=}%
\end{equation}

\end{statement}

Now, Theorem \ref{thm.block.puqv} yields%
\[
\det\left(
\begin{array}
[c]{cc}%
A & pv\\
qu & D
\end{array}
\right)  =\det A\cdot\det D-\operatorname*{ent}\left(  u\left(
\operatorname*{adj}A\right)  p\right)  \cdot\operatorname*{ent}\left(
v\left(  \operatorname*{adj}D\right)  q\right)  .
\]
In view of $pv=B$ and $qu=C$ and%
\begin{align*}
\operatorname*{ent}\left(  u\left(  \operatorname*{adj}A\right)  p\right)   &
=\left(  \text{the }\left(  n,n\right)  \text{-th entry of }%
\operatorname*{adj}A\right) \\
&  \ \ \ \ \ \ \ \ \ \ \ \ \ \ \ \ \ \ \ \ \left(  \text{by
(\ref{pf.cor.block.01.entuXp=}), applied to }X=\operatorname*{adj}A\right) \\
&  =\underbrace{\left(  -1\right)  ^{n+n}}_{\substack{=\left(  -1\right)
^{2n}=1\\\text{(since }2n\text{ is even)}}}\det\left(  A_{\sim n,\sim
n}\right)  \ \ \ \ \ \ \ \ \ \ \left(  \text{by the definition of an
adjugate}\right) \\
&  =\det\left(  A_{\sim n,\sim n}\right)
\end{align*}
and%
\begin{align*}
\operatorname*{ent}\left(  v\left(  \operatorname*{adj}D\right)  q\right)   &
=\left(  \text{the }\left(  1,1\right)  \text{-th entry of }%
\operatorname*{adj}D\right) \\
&  \ \ \ \ \ \ \ \ \ \ \ \ \ \ \ \ \ \ \ \ \left(  \text{by
(\ref{pf.cor.block.01.entvYq=}), applied to }Y=\operatorname*{adj}D\right) \\
&  =\underbrace{\left(  -1\right)  ^{1+1}}_{=1}\det\left(  D_{\sim1,\sim
1}\right)  \ \ \ \ \ \ \ \ \ \ \left(  \text{by the definition of an
adjugate}\right) \\
&  =\det\left(  D_{\sim1,\sim1}\right)  ,
\end{align*}
we can rewrite this as%
\[
\det\left(
\begin{array}
[c]{cc}%
A & B\\
C & D
\end{array}
\right)  =\det A\cdot\det D-\det\left(  A_{\sim n,\sim n}\right)  \cdot
\det\left(  D_{\sim1,\sim1}\right)  .
\]
This proves Corollary \ref{cor.block.01}.
\end{proof}

\section{\label{sec.further}Further remarks}

We end by mentioning some related results about traces of matrices.

\subsection{Almkvist's theorem}

It is well-known from linear algebra (see, e.g., \cite[8.16 and, implicitly,
8.18 (c)]{Axler25}) that when $\mathbb{K}$ is a field, any nilpotent $n\times
n$-matrix $A\in\mathbb{K}^{n\times n}$ satisfies $A^{n}=0$ and
$\operatorname*{Tr}A=0$. For general commutative rings $\mathbb{K}$, neither
of these two claims is true; for example, the $1\times1$-matrix $A=\left(
\begin{array}
[c]{c}%
\overline{2}%
\end{array}
\right)  $ over $\mathbb{K}=\mathbb{Z}/8$ is nilpotent, but neither $A^{1}$
nor $\operatorname*{Tr}A$ vanishes. Thus, we cannot expect Corollary
\ref{cor.nilpcrit} to have a straightforward converse. However, some things in
this direction can be said. In 1973, Gert Almkvist proved (as part of a more
extensive result \cite[Theorem 1.7 (i)]{Almkvi73}) the following fact:

\begin{theorem}
\label{thm.almkvist}Let $n\in\mathbb{N}$ and $k\in\mathbb{N}$. Let
$A\in\mathbb{K}^{n\times n}$ be a matrix such that $A^{k+1}=0$. Then,%
\[
\left(  \operatorname*{Tr}A\right)  ^{nk+1}=0.
\]

\end{theorem}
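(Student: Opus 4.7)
The plan is to exploit the out-of-range instances of the trace Cayley-Hamilton theorem in combination with the method of universal identities. Let me write $p_j = \operatorname{Tr}(A^j)$ and $c_j = [t^{n-j}] \chi_A$. The hypothesis $A^{k+1} = 0$ gives $p_j = 0$ for all $j \geq k+1$, while Corollary \ref{cor.chiA} gives $c_j = 0$ for all $j > n$. Consequently, Theorem \ref{thm.TCH} applied at each $j \in \{n+1, n+2, \ldots, n+k\}$ collapses to the relation
\[
\sum_{i=\max(1,\, j-n)}^{\min(j,\,k)} p_i\, c_{j-i} = 0.
\]
Listed in order of decreasing $j$, these $k$ equations form an upper-triangular linear system in the unknowns $p_1, \ldots, p_k$, with $c_n$ down the diagonal. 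Starting from $p_k c_n = 0$ (the equation at $j = n+k$) and back-substituting, I would obtain by induction on $k-i$ the identity $p_i\, c_n^{\,k-i+1} = 0$ for every $i \in \{1,2,\ldots,k\}$. Specializing to $i = 1$ yields $p_1 c_n^{\,k} = 0$, which is $\operatorname{Tr}(A)\cdot(\det A)^k = 0$ up to sign.

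To bootstrap this to the desired $p_1^{\,nk+1} = 0$, I would pass to the universal setting: let $R = \mathbb{Z}[x_{ij}]$ with $A = (x_{ij})$, let $\mathfrak{I} \subseteq R$ be the ideal generated by all $n^2$ entries of $A^{k+1}$, and work in the quotient $\bar R = R/\mathfrak{I}$. It suffices to prove $(\operatorname{Tr} A)^{nk+1} \in \mathfrak{I}$, because this purely algebraic identity would then specialize (via $x_{ij} \mapsto a_{ij}$) to give the theorem for any $\mathbb{K}$ and $A \in \mathbb{K}^{n \times n}$ with $A^{k+1} = 0$. Within $\bar R$, the in-range instances $j = 1, 2, \ldots, n$ of Theorem \ref{thm.TCH} let me express $c_n$ iteratively through $c_1 = -p_1$ and the recursion $\ell c_\ell = -\sum_{i=1}^{\min(\ell,k)} p_i c_{\ell-i}$. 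Raising the resulting expression for $c_n$ to the $k$-th power and combining with $p_1 c_n^{\,k} = 0$, I would reduce $p_1^{\,nk+1}$ to a $\mathbb{Z}$-linear combination of monomials each carrying a surplus factor $p_j$ with $j \geq 2$; these leftover monomials should in turn be annihilated using the relations in $\mathfrak{I}$ coming from the \emph{non-trace} entries of $A^{k+1}$.

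The main obstacle I anticipate is obtaining the sharp exponent $nk+1$ rather than a weaker identity of the form $N\cdot(\operatorname{Tr} A)^{nk+1} = 0$ with a spurious integer factor $N$. Even in the small case $n = k = 2$, the trace Cayley-Hamilton relations in isolation yield only $2 p_1^{\,5} = 0$, and the missing factor of $2$ must come from the non-trace components of $A^{3} = 0$. Making this work in general will require a careful combinatorial argument, likely through the classical Newton-style determinantal formula expressing $p_m$ as an integer polynomial in $c_1, \ldots, c_m$ without denominators, so that the $\mathbb{Z}$-coefficients can be tracked exactly through the reduction. Should this direct route prove too unwieldy, I would fall back on the original argument of Almkvist \cite{Almkvi73}, where the sharp bound emerges naturally from the $\lambda$-ring structure on the Grothendieck ring of endomorphisms.
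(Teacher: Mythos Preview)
Your approach is genuinely different from the paper's, and it stalls exactly where you say it does.

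The derivation of $p_i\,c_n^{\,k-i+1}=0$ from the out-of-range trace Cayley--Hamilton identities is correct and pleasant. But the passage from $p_1 c_n^{\,k}=0$ to $p_1^{\,nk+1}=0$ is never carried out; you offer only a hope that ``a careful combinatorial argument'' will close it, plus a fallback to citing Almkvist. That is the gap. Incidentally, your worry about the $n=k=2$ case is misplaced: if you keep $c_2$ as a ring element rather than eliminating it via $c_2=\tfrac12(p_1^2-p_2)$, then the relations $p_1^2=2c_2+p_2$, $p_1 c_2=p_1 p_2$, $p_2 c_2=0$ give $p_1^3=3p_1 p_2$, hence $p_1^4=3p_1^2 p_2=3p_2^2$, hence $p_1^5=3p_1 p_2^2=3(p_1 c_2)p_2=3p_1(p_2 c_2)=0$ with no spurious factor of~$2$. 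So that small case is actually fine; what is missing is a general mechanism, and the calculations grow rapidly (already $n=3$, $k=2$ is unpleasant).

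The paper bypasses all of this with a direct determinantal identity (Proposition~\ref{prop.almkvist-lem}):
\[
(\operatorname{Tr} A)^m \;=\; \sum_{\substack{(i_1,\ldots,i_n)\in\mathbb{N}^n;\\ i_1+\cdots+i_n=m}} \dbinom{m}{i_1,\ldots,i_n}\det\bigl(M_{i_1,\ldots,i_n}\bigr),
\]
where $M_{i_1,\ldots,i_n}$ is the $n\times n$-matrix whose $j$-th row is $\operatorname{row}_j(A^{i_j})$. This is proved by an easy induction on $m$, the inductive step resting on the one-line lemma $\sum_{j=1}^n \det(B_j') = (\operatorname{Tr} A)\cdot\det B$, where $B_j'$ is $B$ with its $j$-th row replaced by $\operatorname{row}_j(BA)$. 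Taking $m=nk+1$, pigeonhole forces some $i_j\ge k+1$ in every summand, whence $\operatorname{row}_j(A^{i_j})=0$ and every determinant vanishes. The sharp exponent falls out immediately, with no universal ring, no Newton identities, and no integer bookkeeping. This determinantal expansion is the key idea you are missing.
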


This does not easily follow from our above methods. However, Almkvist gave an
elegant proof using exterior algebra, which was nicely exposed by Zeilberger
in \cite{Zeilbe} (and again by Lombardi and Quitt\'{e} in \cite[Chapter II,
Exercise 2 part 2b]{LomQui16}). With some work, this proof can be rewritten to
avoid any use of exterior algebra; as its core is a simple identity combining
determinants and traces. To state it, we need two pieces of notation:

\begin{itemize}
\item If $m\in\mathbb{N}$ and $i_{1},i_{2},\ldots,i_{n}\in\mathbb{N}$ satisfy
$i_{1}+i_{2}+\cdots+i_{n}=m$, then the \emph{multinomial coefficient}
$\dbinom{m}{i_{1},i_{2},\ldots,i_{n}}$ shall be defined as the number
$\dfrac{m!}{i_{1}!i_{2}!\cdots i_{n}!}$. This number is well-known to be an
integer (see \cite[Exercise 3.1 and Remark 3.10]{detnotes}).

\item If $B$ is a matrix and $j$ is a positive integer, then
$\operatorname{row}_{j}B$ shall denote the $j$-th row of the matrix $B$
(assuming, of course, that $B$ has at least $j$ rows).
\end{itemize}

\begin{verlong}
We recall a well-known property of matrix multiplication (see, e.g.,
\cite[\S 3C, Exercise 8]{Axler25}):

\begin{lemma}
\label{lem.row.rowAB}Let $A$ and $B$ be two matrices such that the product
$AB$ is well-defined (i.e., the matrix $A$ has as many columns as $B$ has
rows). Let $j$ be a positive integer such that the matrix $A$ has at least $j$
rows. Then,%
\[
\operatorname{row}_{j}\left(  AB\right)  =\left(  \operatorname{row}%
_{j}A\right)  \cdot B.
\]

\end{lemma}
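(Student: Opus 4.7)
The plan is to unfold both sides directly from the definition of matrix multiplication and verify that they are the same $1 \times p$ matrix entry by entry. This is a purely mechanical computation; there is no conceptual obstacle, and the main point is simply to keep the indices straight.

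First I would set up notation: say $A \in \mathbb{K}^{n \times m}$ and $B \in \mathbb{K}^{m \times p}$ (the hypothesis that $AB$ is well-defined forces the inner dimensions to match), and write $A = (a_{i,k})_{1 \leq i \leq n,\ 1 \leq k \leq m}$ and $B = (b_{k,\ell})_{1 \leq k \leq m,\ 1 \leq \ell \leq p}$. The hypothesis that $A$ has at least $j$ rows means $1 \leq j \leq n$, so the $j$-th row $\operatorname{row}_j A = (a_{j,k})_{1 \leq k \leq m} \in \mathbb{K}^{1 \times m}$ is defined.

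Next I would compute the left-hand side. By the definition of the product of matrices, we have
\[
AB = \left( \sum_{k=1}^{m} a_{i,k} b_{k,\ell} \right)_{1 \leq i \leq n,\ 1 \leq \ell \leq p}.
\]
Extracting the $j$-th row gives the $1 \times p$ matrix
\[
\operatorname{row}_j (AB) = \left( \sum_{k=1}^{m} a_{j,k} b_{k,\ell} \right)_{1 \leq \ell \leq p}.
\]
For the right-hand side, I would view $\operatorname{row}_j A = (a_{j,k})_{1 \leq k \leq m}$ as a $1 \times m$ matrix and apply the same definition of the product, obtaining
\[
(\operatorname{row}_j A) \cdot B = \left( \sum_{k=1}^{m} a_{j,k} b_{k,\ell} \right)_{1 \leq \ell \leq p}.
\]
Comparing the two expressions yields $\operatorname{row}_j (AB) = (\operatorname{row}_j A) \cdot B$, completing the proof.

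The only sticking point is bookkeeping of the index $i = j$ when extracting the $j$-th row; once this is handled, no further ideas are needed. Because the lemma is so elementary, I would most likely present it in the short ``vershort'' style (one or two sentences invoking the definition of the matrix product) and leave the fully indexed calculation to the ``verlong'' version.
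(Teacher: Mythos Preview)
Your proof is correct. The paper itself does not prove this lemma at all: it merely states it as ``a well-known property of matrix multiplication'' and cites \cite[\S 3C, Exercise 8]{Axler25}, so your direct entrywise verification is exactly the sort of argument one would supply if a proof were required.
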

\end{verlong}

Now we are ready to state the identity crucial for proving Theorem
\ref{thm.almkvist}:

\begin{proposition}
\label{prop.almkvist-lem}Let $n\in\mathbb{N}$. Let $A\in\mathbb{K}^{n\times
n}$ be a matrix. For any $n$ integers $i_{1},i_{2},\ldots,i_{n}\in\mathbb{N}$,
we let $M_{i_{1},i_{2},\ldots,i_{n}}$ denote the $n\times n$-matrix whose rows
are
\[
\operatorname*{row}\nolimits_{1}\left(  A^{i_{1}}\right)
,\ \operatorname*{row}\nolimits_{2}\left(  A^{i_{2}}\right)  ,\ \ldots
,\ \operatorname*{row}\nolimits_{n}\left(  A^{i_{n}}\right)  .
\]

Then, for each $m\in\mathbb{N}$, we have%
\[
\left(  \operatorname*{Tr}A\right)  ^{m}=\sum_{\substack{\left(  i_{1}%
,i_{2},\ldots,i_{n}\right)  \in\mathbb{N}^{n};\\i_{1}+i_{2}+\cdots+i_{n}%
=m}}\dbinom{m}{i_{1},i_{2},\ldots,i_{n}}\det\left(  M_{i_{1},i_{2}%
,\ldots,i_{n}}\right)  .
\]

\end{proposition}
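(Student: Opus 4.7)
I will prove Proposition \ref{prop.almkvist-lem} by induction on $m$, reducing the inductive step to a single key lemma: \emph{for any two $n\times n$-matrices $N$ and $A$ over $\mathbb{K}$, if we write $N^{[k]}$ for the matrix obtained from $N$ by replacing its $k$-th row $\operatorname{row}_k N$ with $\operatorname{row}_k N\cdot A$, then}
$$\sum_{k=1}^n \det(N^{[k]}) = \operatorname{Tr}(A)\cdot \det N.$$
This formulation is tailor-made for the induction, since specializing $N = M_{i_1,\ldots,i_n}$ makes $N^{[k]}$ equal to $M_{i_1,\ldots,i_k+1,\ldots,i_n}$ (because $\operatorname{row}_k(A^{i_k})\cdot A = \operatorname{row}_k(A^{i_k+1})$), which lines up perfectly with incrementing one coordinate of the composition.

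The real content sits in the key lemma, which I would prove using the matrix determinant lemma (Theorem \ref{thm.sol.det.rk1upd.claim}). Writing $e_k\in\mathbb{K}^{n\times 1}$ for the standard column basis vector, I would observe that
$$N^{[k]} = N + e_k\cdot \operatorname{row}_k(N)\cdot(A-I_n),$$
which exhibits the difference $N^{[k]}-N$ as a rank-one matrix $uv$ with $u=e_k$ and $v=\operatorname{row}_k(N)(A-I_n)$. Theorem \ref{thm.sol.det.rk1upd.claim} then yields
$$\det(N^{[k]}) = \det N + \operatorname{ent}\bigl(\operatorname{row}_k(N)(A-I_n)\operatorname{adj}(N)\,e_k\bigr).$$
Using $N\cdot\operatorname{adj}(N)=\det N\cdot I_n$ (Theorem \ref{thm.adj.inverse}), the bracketed expression collapses to $(NA\operatorname{adj}(N))_{k,k}-\det N$, so that $\det(N^{[k]})=(NA\operatorname{adj}(N))_{k,k}$. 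Summing over $k$ produces $\operatorname{Tr}(NA\operatorname{adj}(N))$, and the cyclic property of the trace (immediate from Lemma \ref{lem.Tr(AB)}) together with $\operatorname{adj}(N)\cdot N=\det N\cdot I_n$ rewrites this as $\det N\cdot\operatorname{Tr}(A)$, completing the lemma.

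With the key lemma in place, the induction is mostly bookkeeping. The base case $m=0$ is immediate, since the only composition is $(0,\ldots,0)$ with $M_{0,\ldots,0}=I_n$. For the inductive step, I would multiply the identity for $m$ by $\operatorname{Tr}(A)$, apply the key lemma term-by-term to convert each $\operatorname{Tr}(A)\cdot \det M_{i_1,\ldots,i_n}$ into $\sum_k \det M_{i_1,\ldots,i_k+1,\ldots,i_n}$, swap the two summations, reindex via $i_k'=i_k+1$, and recognize the resulting coefficient through the multinomial Pascal-type identity
$$\dbinom{m+1}{i_1',\ldots,i_n'}=\sum_{k\,:\,i_k'\geq 1}\dbinom{m}{i_1',\ldots,i_k'-1,\ldots,i_n'}.$$
The main obstacle is the key lemma itself: there is no transparent combinatorial reason the diagonal entries of $NA\operatorname{adj}(N)$ should sum across $k$ to $\det N\cdot\operatorname{Tr}(A)$, and the matrix determinant lemma is the tool that makes this work cleanly. (A conceptually nicer but nonelementary route goes through the exterior power $\Lambda^n\mathbb{K}^n$, where the total derivative operator $v_1\wedge\cdots\wedge v_n \mapsto \sum_k v_1\wedge\cdots\wedge(v_kA)\wedge\cdots\wedge v_n$ acts on the rank-one module as multiplication by $\operatorname{Tr}(A)$; but the matrix-determinant-lemma approach above stays entirely within the tools already developed in this paper.)
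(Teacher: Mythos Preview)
Your proof is correct, and its overall architecture matches the paper's exactly: induction on $m$, the same key lemma (the paper's Lemma \ref{lem.almkvist-lem2BA}), the observation that $N^{[k]}=M_{i_1,\ldots,i_k+1,\ldots,i_n}$ when $N=M_{i_1,\ldots,i_n}$ (the paper's Lemma \ref{lem.almkvist-lem3}), and the multinomial Pascal recurrence (the paper's Lemma \ref{lem.multinom.rec}).

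The one genuine difference is how you prove the key lemma. The paper expands each $\det(B_j')$ along its $j$-th row, obtaining $\det(B_j')=\sum_{k,q}(-1)^{j+q}B_{j,k}A_{k,q}\det(B_{\sim j,\sim q})$, and then reassembles the triple sum over $j,k,q$ using Laplace expansion along columns (the $q=k$ terms give $A_{k,k}\det B$, the $q\neq k$ terms vanish by the ``wrong column'' identity). Your route via the matrix determinant lemma is slicker: the identity $\det(N^{[k]})=(NA\operatorname{adj} N)_{k,k}$ is a nice intermediate result, and the final step $\operatorname{Tr}(NA\operatorname{adj} N)=\operatorname{Tr}(A\cdot\operatorname{adj} N\cdot N)=\det N\cdot\operatorname{Tr} A$ makes the role of the adjugate transparent. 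The paper's argument is more self-contained (it uses only Laplace expansion), while yours leans on Theorem \ref{thm.sol.det.rk1upd.claim} but rewards you with a cleaner computation and a structural explanation via trace cyclicity.
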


\begin{vershort}
Once proved, Proposition \ref{prop.almkvist-lem} implies Theorem
\ref{thm.almkvist} almost immediately, since any $n$-tuple $\left(
i_{1},i_{2},\ldots,i_{n}\right)  \in\mathbb{N}^{n}$ satisfying $i_{1}%
+i_{2}+\cdots+i_{n}=m=nk+1$ will contain at least one entry $i_{j}\geq k+1$
(by the pigeonhole principle, if you wish). Proposition
\ref{prop.almkvist-lem}, in turn, can be proved by induction on $m$ using the
following three lemmas:
\end{vershort}

\begin{verlong}
Once proved, Proposition \ref{prop.almkvist-lem} will imply Theorem
\ref{thm.almkvist} almost immediately, since any $n$-tuple $\left(
i_{1},i_{2},\ldots,i_{n}\right)  \in\mathbb{N}^{n}$ satisfying $i_{1}%
+i_{2}+\cdots+i_{n}=m=nk+1$ will contain at least one entry $i_{j}\geq k+1$
(see below for the details). Proposition \ref{prop.almkvist-lem}, in turn, can
be easily derived by induction on $m$ using the following three lemmas:
\end{verlong}

\begin{noncompile}
The following lemma and its proof, while fully written up, is NOT USED in this work:

\begin{lemma}
\label{lem.almkvist-lem2}Let $n\in\mathbb{N}$. Let $A$ and $B$ be two matrices
in $\mathbb{K}^{n\times n}$. For each $j\in\left\{  1,2,\ldots,n\right\}  $,
let $B_{j}^{\prime}$ be the $n\times n$-matrix obtained from $B$ by replacing
the $j$-th row of $B$ by the $j$-th row of $AB$. Then,%
\[
\sum_{j=1}^{n}\det\left(  B_{j}^{\prime}\right)  =\left(  \operatorname*{Tr}%
A\right)  \cdot\det B.
\]

\end{lemma}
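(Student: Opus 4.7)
The plan is to exploit the multilinearity and alternating property of the determinant as a function of its rows. Write $A = (a_{i,j})_{1 \le i,j \le n}$. The key observation (immediate from the definition of matrix multiplication, or equivalently from Lemma~\ref{lem.row.rowAB}) is that
\[
\operatorname{row}_j(AB) = \sum_{k=1}^n a_{j,k}\operatorname{row}_k B.
\]
By definition, $B_j'$ is obtained from $B$ by replacing its $j$-th row with this expression. Multilinearity of the determinant in the $j$-th row then yields
\[
\det(B_j') = \sum_{k=1}^n a_{j,k}\det(B_{j,k}),
\]
where $B_{j,k}$ denotes the matrix obtained from $B$ by overwriting its $j$-th row with a copy of the $k$-th row of $B$ (so $B_{j,j} = B$).

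Next I would argue that $\det(B_{j,k}) = 0$ whenever $k \ne j$, because $B_{j,k}$ then has two equal rows (its $j$-th and $k$-th rows both coincide with $\operatorname{row}_k B$), and the determinant over a commutative ring is an alternating function of the rows. Consequently only the $k = j$ summand survives, and $\det(B_j') = a_{j,j}\det B$.

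Summing over $j$ then gives
\[
\sum_{j=1}^n \det(B_j') = \sum_{j=1}^n a_{j,j}\det B = (\operatorname{Tr} A)\cdot\det B,
\]
which is the desired identity. There is no genuine obstacle in this approach; the one point worth flagging is that the vanishing of $\det(B_{j,k})$ for $k \ne j$ relies on the determinant being alternating as a function of rows over an arbitrary commutative ring $\mathbb{K}$ (rather than just a field), but this is standard and can be cited from \cite{detnotes}.
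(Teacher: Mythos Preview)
Your proof is correct. Both your argument and the paper's arrive at the same intermediate step $\det(B_j') = a_{j,j}\det B$, but by different means. The paper constructs an auxiliary matrix $X$ obtained from $I_n$ by replacing its $p$-th row with the $p$-th row of $A$, proves $B_p' = XB$, computes $\det X = a_{p,p}$ directly from the Leibniz formula (supported by a small permutation lemma), and then invokes multiplicativity of the determinant. You instead expand $\operatorname{row}_j(AB) = \sum_k a_{j,k}\operatorname{row}_k B$ and apply multilinearity plus the alternating property of the determinant in its rows. Your route is shorter and avoids the auxiliary matrix and permutation argument; the paper's route trades those for a clean factorization $B_p' = XB$ and uses only multiplicativity rather than multilinearity. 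The one point you flag --- that the alternating property holds over any commutative ring --- is indeed standard (e.g., \cite[Exercise~6.7]{detnotes}) and causes no trouble.
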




Our proof of Lemma \ref{lem.almkvist-lem2} shall rely on a simple
combinatorial lemma:

\begin{lemma}
\label{lem.perm.diff-fix-pot}Let $P$ be any set. Let $\sigma$ be a permutation
of $P$. Assume that $\sigma\neq\operatorname*{id}$. Let $p\in P$. Then, there
exists some $k\in P\setminus\left\{  p\right\}  $ such that $\sigma\left(
k\right)  \neq k$.
\end{lemma}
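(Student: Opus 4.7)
The plan is to prove Lemma \ref{lem.perm.diff-fix-pot} by a short case distinction. Since $\sigma \neq \operatorname{id}$, there exists at least one element $m \in P$ with $\sigma(m) \neq m$. I would split into two cases depending on whether $m = p$ or not.

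In the easy case $m \neq p$, we simply take $k = m$: then $k \in P \setminus \{p\}$ and $\sigma(k) = \sigma(m) \neq m = k$, so we are done.

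The interesting case is when the only witness we have to $\sigma \neq \operatorname{id}$ is $p$ itself, i.e.\ $\sigma(p) \neq p$. Here I would set $k := \sigma(p)$. First, $k \neq p$, for otherwise $\sigma(p) = p$, contradicting our assumption. Second, $\sigma(k) \neq k$: indeed, if we had $\sigma(k) = k$, then $\sigma(\sigma(p)) = \sigma(p)$, and injectivity of the permutation $\sigma$ would force $\sigma(p) = p$, again a contradiction. So $k \in P \setminus \{p\}$ satisfies $\sigma(k) \neq k$, as required.

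I do not expect any real obstacle — the only subtlety is remembering to use injectivity of $\sigma$ to rule out $\sigma(\sigma(p)) = \sigma(p)$ in the second case. A small stylistic choice is whether to phrase this as a case distinction or to unify both cases by always defining $k := \sigma(p)$ if $\sigma(p) \neq p$ and $k := m$ otherwise; the two-case presentation above is cleaner and matches the informal intuition.
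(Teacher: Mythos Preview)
Your proof is correct and uses essentially the same idea as the paper's: both arguments hinge on the observation that if $\sigma(p)\neq p$, then $k=\sigma(p)$ is a non-fixed point different from $p$, with injectivity of $\sigma$ ruling out $\sigma(\sigma(p))=\sigma(p)$. The only cosmetic difference is that the paper phrases this as a proof by contradiction (assuming $\sigma(k)=k$ for all $k\in P\setminus\{p\}$ and deriving a contradiction), whereas you give a direct case distinction; your version is slightly more constructive but the mathematical content is identical.
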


\begin{proof}
[Proof of Lemma \ref{lem.perm.diff-fix-pot}.]Assume the contrary. Thus,
\begin{equation}
\sigma\left(  k\right)  =k\ \ \ \ \ \ \ \ \ \ \text{for each }k\in
P\setminus\left\{  p\right\}  . \label{pf.lem.perm.diff-fix-pot.1}%
\end{equation}

But we have $\sigma\neq\operatorname*{id}$. In other words, there exists some
$x\in P$ such that $\sigma\left(  x\right)  \neq\operatorname*{id}\left(
x\right)  $. Consider this $x$. Hence, $\sigma\left(  x\right)  \neq
\operatorname*{id}\left(  x\right)  =x$. If we had $x\in P\setminus\left\{
p\right\}  $, then (\ref{pf.lem.perm.diff-fix-pot.1}) (applied to $k=x$) would
yield $\sigma\left(  x\right)  =x$, which would contradict $\sigma\left(
x\right)  \neq x$. Hence, we cannot have $x\in P\setminus\left\{  p\right\}
$. Thus, $x\notin P\setminus\left\{  p\right\}  $, so that $x\in
P\setminus\left(  P\setminus\left\{  p\right\}  \right)  \subseteq\left\{
p\right\}  $. In other words, $x=p$.

Now, $\sigma\left(  x\right)  \neq x=p$. Hence, $\sigma\left(  x\right)  \in
P\setminus\left\{  p\right\}  $. Therefore, (\ref{pf.lem.perm.diff-fix-pot.1})
(applied to $k=\sigma\left(  x\right)  $) yields $\sigma\left(  \sigma\left(
x\right)  \right)  =\sigma\left(  x\right)  $. But the map $\sigma$ is a
permutation, thus has an inverse $\sigma^{-1}$. Applying this inverse
$\sigma^{-1}$ to both sides of the equality $\sigma\left(  \sigma\left(
x\right)  \right)  =\sigma\left(  x\right)  $, we obtain $\sigma\left(
x\right)  =x$. This contradicts $\sigma\left(  x\right)  \neq x$. This
contradiction shows that our assumption was false. Thus, Lemma
\ref{lem.perm.diff-fix-pot} is proved.
\end{proof}

\begin{proof}
[Proof of Lemma \ref{lem.almkvist-lem2}.]Write the $n\times n$-matrix $A$ as
$A=\left(  a_{i,j}\right)  _{1\leq i\leq n,\ 1\leq j\leq n}$. Then, by the
definition of the trace, we have%
\begin{equation}
\operatorname*{Tr}A=a_{1,1}+a_{2,2}+\cdots+a_{n,n}=\sum_{j=1}^{n}a_{j,j}.
\label{pf.lem.almkvist-lem2.tr}%
\end{equation}

Let $p\in\left\{  1,2,\ldots,n\right\}  $. Then, $B_{p}^{\prime}$ is the
$n\times n$-matrix obtained from $B$ by replacing the $p$-th row of $B$ by the
$p$-th row of $AB$ (since this is how $B_{p}^{\prime}$ was defined). Thus,%
\begin{equation}
\operatorname*{row}\nolimits_{p}\left(  B_{p}^{\prime}\right)
=\operatorname*{row}\nolimits_{p}\left(  AB\right)
\label{pf.lem.almkvist-lem2.ro1}%
\end{equation}
and%
\begin{equation}
\operatorname*{row}\nolimits_{k}\left(  B_{p}^{\prime}\right)
=\operatorname*{row}\nolimits_{k}B\ \ \ \ \ \ \ \ \ \ \text{for each }%
k\in\left\{  1,2,\ldots,n\right\}  \setminus\left\{  p\right\}  .
\label{pf.lem.almkvist-lem2.ro2}%
\end{equation}

Let $X$ be the $n\times n$-matrix obtained from the identity matrix $I_{n}$ by
replacing the $p$-th row of $I_{n}$ by the $p$-th row of $A$. Thus,%
\begin{equation}
\operatorname*{row}\nolimits_{p}X=\operatorname*{row}\nolimits_{p}A
\label{pf.lem.almkvist-lem2.ro3}%
\end{equation}
and%
\begin{equation}
\operatorname*{row}\nolimits_{k}X=\operatorname*{row}\nolimits_{k}\left(
I_{n}\right)  \ \ \ \ \ \ \ \ \ \ \text{for each }k\in\left\{  1,2,\ldots
,n\right\}  \setminus\left\{  p\right\}  . \label{pf.lem.almkvist-lem2.ro4}%
\end{equation}

Write the $n\times n$-matrix $X$ as $X=\left(  x_{i,j}\right)  _{1\leq i\leq
n,\ 1\leq j\leq n}$. Then, its $p$-th row is given by $\operatorname{row}%
_{p}X=\left(
\begin{array}
[c]{cccc}%
x_{p,1} & x_{p,2} & \cdots & x_{p,n}%
\end{array}
\right)  $. Hence, for each $q\in\left\{  1,2,\ldots,n\right\}  $, we have%
\[
\left(  \text{the }q\text{-th entry of }\operatorname{row}_{p}X\right)
=x_{p,q}%
\]
and therefore%
\begin{align}
x_{p,q}  &  =\left(  \text{the }q\text{-th entry of }\operatorname{row}%
_{p}X\right) \nonumber\\
&  =\left(  \text{the }q\text{-th entry of }\operatorname{row}_{p}A\right)
\ \ \ \ \ \ \ \ \ \ \left(  \text{by (\ref{pf.lem.almkvist-lem2.ro3})}\right)
\nonumber\\
&  =a_{p,q} \label{pf.lem.almkvist-lem2.xpq=}%
\end{align}
(since the vector $\operatorname{row}_{p}A$ is the $p$-th row of $A=\left(
a_{i,j}\right)  _{1\leq i\leq n,\ 1\leq j\leq n}$, and thus its $q$-th entry
is $a_{p,q}$). Moreover, if $k\in\left\{  1,2,\ldots,n\right\}  \setminus
\left\{  p\right\}  $ and $q\in\left\{  1,2,\ldots,n\right\}  $ are arbitrary,
then%
\begin{align}
x_{k,q}  &  =\left(  \text{the }q\text{-th entry of }\operatorname{row}%
_{k}X\right) \nonumber\\
&  \ \ \ \ \ \ \ \ \ \ \ \ \ \ \ \ \ \ \ \ \left(
\begin{array}
[c]{c}%
\text{since }X=\left(  x_{i,j}\right)  _{1\leq i\leq n,\ 1\leq j\leq n}\text{
and}\\
\text{thus }\operatorname{row}_{k}X=\left(
\begin{array}
[c]{cccc}%
x_{k,1} & x_{k,2} & \cdots & x_{k,n}%
\end{array}
\right)  \text{,}\\
\text{so that the }q\text{-th entry of }\operatorname{row}_{k}X\text{ is
}x_{k,q}%
\end{array}
\right) \nonumber\\
&  =\left(  \text{the }q\text{-th entry of }\operatorname{row}_{k}\left(
I_{n}\right)  \right)  \ \ \ \ \ \ \ \ \ \ \left(  \text{since
(\ref{pf.lem.almkvist-lem2.ro4}) yields }\operatorname*{row}\nolimits_{k}%
X=\operatorname*{row}\nolimits_{k}\left(  I_{n}\right)  \right) \nonumber\\
&  =\left(  \text{the }\left(  k,q\right)  \text{-th entry of }I_{n}\right)
\ \ \ \ \ \ \ \ \ \ \left(  \text{since }\operatorname{row}_{k}\left(
I_{n}\right)  \text{ is the }k\text{-th row of the matrix }I_{n}\right)
\nonumber\\
&  =%
\begin{cases}
1, & \text{if }k=q;\\
0, & \text{if }k\neq q
\end{cases}
\label{pf.lem.almkvist-lem2.xkq=}%
\end{align}
(by the definition of the identity matrix $I_{n}$).

Now, we claim the following:

\begin{statement}
\textit{Claim 1:} We have $\det X=a_{p,p}$.
\end{statement}

\begin{proof}
[Proof of Claim 1.]Recall that $X=\left(  x_{i,j}\right)  _{1\leq i\leq
n,\ 1\leq j\leq n}$. Thus, (\ref{eq.det}) (applied to $X$ and $x_{i,j}$
instead of $A$ and $a_{i,j}$) shows that%
\begin{align}
\det X  &  =\sum_{\sigma\in S_{n}}\left(  -1\right)  ^{\sigma}\prod_{i=1}%
^{n}x_{i,\sigma\left(  i\right)  }\nonumber\\
&  =\left(  -1\right)  ^{\operatorname*{id}}\prod_{i=1}^{n}%
x_{i,\operatorname*{id}\left(  i\right)  }+\sum_{\substack{\sigma\in
S_{n};\\\sigma\neq\operatorname*{id}}}\left(  -1\right)  ^{\sigma}\prod
_{i=1}^{n}x_{i,\sigma\left(  i\right)  } \label{pf.lem.almkvist-lem2.c1.pf.1}%
\end{align}
(here, we have split off the addend for $\sigma=\operatorname*{id}$ from the
sum). Note that the identity permutation $\operatorname*{id}\in S_{n}$ has
sign $\left(  -1\right)  ^{\operatorname*{id}}=1$.

Now, let $\sigma\in S_{n}$ be such that $\sigma\neq\operatorname*{id}$. We
shall show that $\prod_{i=1}^{n}x_{i,\sigma\left(  i\right)  }=0$. Indeed,
Lemma \ref{lem.perm.diff-fix-pot} (applied to $P=\left\{  1,2,\ldots
,n\right\}  $) shows that there exists some $k\in\left\{  1,2,\ldots
,n\right\}  \setminus\left\{  p\right\}  $ such that $\sigma\left(  k\right)
\neq k$ (since $\sigma\in S_{n}$ is a permutation of $\left\{  1,2,\ldots
,n\right\}  $ satisfying $\sigma\neq\operatorname*{id}$). Consider this $k$.
Applying (\ref{pf.lem.almkvist-lem2.xkq=}) to $q=\sigma\left(  k\right)  $, we
obtain%
\[
x_{k,\sigma\left(  k\right)  }=%
\begin{cases}
1, & \text{if }k=\sigma\left(  k\right)  ;\\
0, & \text{if }k\neq\sigma\left(  k\right)
\end{cases}
\ \ =0\ \ \ \ \ \ \ \ \ \ \left(  \text{because }k\neq\sigma\left(  k\right)
\text{ (since }\sigma\left(  k\right)  \neq k\text{)}\right)  .
\]
But $x_{k,\sigma\left(  k\right)  }$ is a factor of the product $\prod
_{i=1}^{n}x_{i,\sigma\left(  i\right)  }$. Hence, the product $\prod_{i=1}%
^{n}x_{i,\sigma\left(  i\right)  }$ has a factor equal to $0$ (namely, the
factor $x_{k,\sigma\left(  k\right)  }=0$). Therefore, this whole product must
be $0$. In other words, $\prod_{i=1}^{n}x_{i,\sigma\left(  i\right)  }=0$.

Forget that we fixed $\sigma$. We thus have shown that if $\sigma\in S_{n}$ is
any permutation satisfying $\sigma\neq\operatorname*{id}$, then%
\begin{equation}
\prod_{i=1}^{n}x_{i,\sigma\left(  i\right)  }=0.
\label{pf.lem.almkvist-lem2.c1.pf.2}%
\end{equation}

Next, let us compute the product $\prod_{i=1}^{n}x_{i,\operatorname*{id}%
\left(  i\right)  }$. We have%
\begin{equation}
\prod_{i=1}^{n}\underbrace{x_{i,\operatorname*{id}\left(  i\right)  }%
}_{\substack{=x_{i,i}\\\text{(since }\operatorname*{id}\left(  i\right)
=i\text{)}}}=\prod_{i=1}^{n}x_{i,i}=x_{p,p}\cdot\prod_{i\in\left\{
1,2,\ldots,n\right\}  \setminus\left\{  p\right\}  }x_{i,i}
\label{pf.lem.almkvist-lem2.c1.pf.3}%
\end{equation}
(here, we have split off the factor for $i=p$ from the product). But
(\ref{pf.lem.almkvist-lem2.xpq=}) (applied to $q=p$) yields $x_{p,p}=a_{p,p}$.
Moreover, for each $i\in\left\{  1,2,\ldots,n\right\}  \setminus\left\{
p\right\}  $, we have%
\begin{align}
x_{i,i}  &  =%
\begin{cases}
1, & \text{if }i=i;\\
0, & \text{if }i\neq i
\end{cases}
\ \ \ \ \ \ \ \ \ \ \left(  \text{by (\ref{pf.lem.almkvist-lem2.xkq=}),
applied to }k=i\text{ and }q=i\right) \nonumber\\
&  =1 \label{pf.lem.almkvist-lem2.c1.pf.4}%
\end{align}
(since $i=i$). Hence, (\ref{pf.lem.almkvist-lem2.c1.pf.3}) becomes%
\[
\prod_{i=1}^{n}x_{i,\operatorname*{id}\left(  i\right)  }=\underbrace{x_{p,p}%
}_{=a_{p,p}}\cdot\prod_{i\in\left\{  1,2,\ldots,n\right\}  \setminus\left\{
p\right\}  }\underbrace{x_{i,i}}_{\substack{=1\\\text{(by
(\ref{pf.lem.almkvist-lem2.c1.pf.4}))}}}=a_{p,p}\cdot\underbrace{\prod
_{i\in\left\{  1,2,\ldots,n\right\}  \setminus\left\{  p\right\}  }1}%
_{=1}=a_{p,p}.
\]

Now, (\ref{pf.lem.almkvist-lem2.c1.pf.1}) becomes%
\[
\det X=\underbrace{\left(  -1\right)  ^{\operatorname*{id}}}_{=1}%
\underbrace{\prod_{i=1}^{n}x_{i,\operatorname*{id}\left(  i\right)  }%
}_{=a_{p,p}}+\sum_{\substack{\sigma\in S_{n};\\\sigma\neq\operatorname*{id}%
}}\left(  -1\right)  ^{\sigma}\underbrace{\prod_{i=1}^{n}x_{i,\sigma\left(
i\right)  }}_{\substack{=0\\\text{(by (\ref{pf.lem.almkvist-lem2.c1.pf.2}))}%
}}=a_{p,p}+\underbrace{\sum_{\substack{\sigma\in S_{n};\\\sigma\neq
\operatorname*{id}}}\left(  -1\right)  ^{\sigma}0}_{=0}=a_{p,p}.
\]
This proves Claim 1.
\end{proof}

\begin{statement}
\textit{Claim 2:} We have $B_{p}^{\prime}=XB$.
\end{statement}

\begin{proof}
[Proof of Claim 2.]We must prove that the two $n\times n$-matrices
$B_{p}^{\prime}$ and $XB$ are equal. Clearly, it suffices to show that each
row of the former matrix equals the corresponding row of the latter. In other
words, it suffices to show that each $k\in\left\{  1,2,\ldots,n\right\}  $
satisfies $\operatorname{row}_{k}\left(  B_{p}^{\prime}\right)
=\operatorname{row}_{k}\left(  XB\right)  $.

So let us do this. Fix $k\in\left\{  1,2,\ldots,n\right\}  $. We must show
that $\operatorname{row}_{k}\left(  B_{p}^{\prime}\right)  =\operatorname{row}%
_{k}\left(  XB\right)  $.

Comparing%
\begin{align*}
\operatorname{row}_{p}\left(  B_{p}^{\prime}\right)   &  =\operatorname{row}%
_{p}\left(  AB\right)  \ \ \ \ \ \ \ \ \ \ \left(  \text{by
(\ref{pf.lem.almkvist-lem2.ro1})}\right) \\
&  =\left(  \operatorname{row}_{p}A\right)  \cdot B\ \ \ \ \ \ \ \ \ \ \left(
\text{by Lemma \ref{lem.row.rowAB}, applied to }j=p\right)
\end{align*}
with%
\begin{align*}
\operatorname{row}_{p}\left(  XB\right)   &  =\left(  \operatorname{row}%
_{p}X\right)  \cdot B\ \ \ \ \ \ \ \ \ \ \left(  \text{by Lemma
\ref{lem.row.rowAB}, applied to }p\text{ and }X\text{ instead of }j\text{ and
}A\right) \\
&  =\left(  \operatorname{row}_{p}A\right)  \cdot B\ \ \ \ \ \ \ \ \ \ \left(
\text{by (\ref{pf.lem.almkvist-lem2.ro3})}\right)  ,
\end{align*}
we obtain $\operatorname{row}_{p}\left(  B_{p}^{\prime}\right)
=\operatorname{row}_{p}\left(  XB\right)  $. If $k=p$, then this already
achieves our goal of proving that $\operatorname{row}_{k}\left(  B_{p}%
^{\prime}\right)  =\operatorname{row}_{k}\left(  XB\right)  $. Hence, for the
rest of this proof, we can WLOG assume that $k\neq p$.

Assume this. Thus, $k\in\left\{  1,2,\ldots,n\right\}  \setminus\left\{
p\right\}  $ (since $k\neq p$). Hence,
\begin{align*}
\operatorname{row}_{k}\left(  B_{p}^{\prime}\right)   &  =\operatorname{row}%
_{k}B\ \ \ \ \ \ \ \ \ \ \left(  \text{by (\ref{pf.lem.almkvist-lem2.ro2}%
)}\right) \\
&  =\operatorname{row}_{k}\left(  I_{n}B\right)  \ \ \ \ \ \ \ \ \ \ \left(
\text{since }B=I_{n}B\right) \\
&  =\left(  \operatorname{row}_{k}\left(  I_{n}\right)  \right)  \cdot
B\ \ \ \ \ \ \ \ \ \ \left(  \text{by Lemma \ref{lem.row.rowAB}, applied to
}k\text{ and }I_{n}\text{ instead of }j\text{ and }A\right)
\end{align*}
and%
\begin{align*}
\operatorname{row}_{k}\left(  XB\right)   &  =\left(  \operatorname{row}%
_{k}X\right)  \cdot B\ \ \ \ \ \ \ \ \ \ \left(  \text{by Lemma
\ref{lem.row.rowAB}, applied to }k\text{ and }X\text{ instead of }j\text{ and
}A\right) \\
&  =\left(  \operatorname{row}_{k}\left(  I_{n}\right)  \right)  \cdot B\\
&  \ \ \ \ \ \ \ \ \ \ \ \ \ \ \ \ \ \ \ \ \left(  \text{since }%
\operatorname{row}_{k}X=\operatorname{row}_{k}\left(  I_{n}\right)  \text{ (by
(\ref{pf.lem.almkvist-lem2.ro4}), since }k\in\left\{  1,2,\ldots,n\right\}
\setminus\left\{  p\right\}  \text{)}\right)  .
\end{align*}
Comparing these two equalities, we obtain $\operatorname{row}_{k}\left(
B_{p}^{\prime}\right)  =\operatorname{row}_{k}\left(  XB\right)  $, which is
precisely what we needed to show. Thus, the proof of Claim 2 is complete.
\end{proof}

Now, \cite[Theorem 6.23]{detnotes} (applied to $X$ instead of $A$) shows that
$\det\left(  XB\right)  =\left(  \det X\right)  \cdot\det B$. But Claim 2
shows that $B_{p}^{\prime}=XB$. Hence,%
\begin{equation}
\det\left(  B_{p}^{\prime}\right)  =\det\left(  XB\right)
=\underbrace{\left(  \det X\right)  }_{\substack{=a_{p,p}\\\text{(by Claim
1)}}}\cdot\,\det B=a_{p,p}\det B. \label{pf.lem.almkvist-lem2.at}%
\end{equation}

Forget that we fixed $p$. We thus have proved (\ref{pf.lem.almkvist-lem2.at})
for each $p\in\left\{  1,2,\ldots,n\right\}  $. Now,%
\[
\sum_{j=1}^{n}\underbrace{\det\left(  B_{j}^{\prime}\right)  }%
_{\substack{=a_{j,j}\det B\\\text{(by (\ref{pf.lem.almkvist-lem2.at}), applied
to }p=j\text{)}}}=\underbrace{\sum_{j=1}^{n}a_{j,j}}%
_{\substack{=\operatorname*{Tr}A\\\text{(by (\ref{pf.lem.almkvist-lem2.tr}))}%
}}\det B=\left(  \operatorname*{Tr}A\right)  \cdot\det B.
\]
This proves Lemma \ref{lem.almkvist-lem2}.
\end{proof}
\end{noncompile}

\begin{lemma}
\label{lem.almkvist-lem2BA}Let $n\in\mathbb{N}$. Let $A$ and $B$ be two
matrices in $\mathbb{K}^{n\times n}$. For each $j\in\left\{  1,2,\ldots
,n\right\}  $, let $B_{j}^{\prime}$ be the $n\times n$-matrix obtained from
$B$ by replacing the $j$-th row of $B$ by the $j$-th row of $BA$. Then,%
\[
\sum_{j=1}^{n}\det\left(  B_{j}^{\prime}\right)  =\left(  \operatorname*{Tr}%
A\right)  \cdot\det B.
\]

\end{lemma}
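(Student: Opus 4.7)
The plan is to introduce the polynomial $g(t) := \det(B + tBA) \in \mathbb{K}[t]$ (viewing $A$ and $B$ as elements of $(\mathbb{K}[t])^{n\times n}$) and to compute its coefficient $[t^{1}]g$ in two independent ways; comparing the two results will give the identity.

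For the first computation, I would factor $B + tBA = B(I_{n} + tA)$ in $(\mathbb{K}[t])^{n\times n}$, so that $g(t) = \det B\cdot\det(I_{n} + tA)$. To evaluate $[t^{1}]\det(I_{n} + tA)$, I would apply the permutation formula (\ref{eq.det}) directly to the matrix $I_{n} + tA = (\delta_{i,j} + ta_{i,j})_{i,j}$: the coefficient of $t^{1}$ in $\prod_{i=1}^{n}(\delta_{i,\sigma(i)} + ta_{i,\sigma(i)})$ vanishes unless $\sigma$ fixes at least $n-1$ elements of $\{1,\dots,n\}$, which (since $\sigma$ is a permutation) forces $\sigma = \operatorname{id}$; this contribution sums to $\sum_{i=1}^{n} a_{i,i} = \operatorname{Tr} A$. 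Hence $[t^{1}]g = (\operatorname{Tr} A)\cdot\det B$.

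For the second computation, I would again appeal to (\ref{eq.det}), this time applied to $B + tBA$ itself. Writing $B = (b_{i,j})$ and $BA = ((BA)_{i,j})$, the $(i,j)$-th entry of $B + tBA$ is $b_{i,j} + t(BA)_{i,j}$, so each summand $\prod_{i}(b_{i,\sigma(i)} + t(BA)_{i,\sigma(i)})$ contributes to $[t^{1}]$ by choosing a single index $k$ at which to pick the factor $t(BA)_{k,\sigma(k)}$ and the factor $b_{i,\sigma(i)}$ at every other index. Interchanging the order of summation over $k$ and over $\sigma$, the inner sum over $\sigma$ is precisely the expansion via (\ref{eq.det}) of $\det(B_{k}^{\prime})$, since $B_{k}^{\prime}$ is built from $B$ by replacing its $k$-th row with the $k$-th row of $BA$. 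Therefore $[t^{1}]g = \sum_{k=1}^{n}\det(B_{k}^{\prime})$.

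Equating the two expressions for $[t^{1}]g$ yields $\sum_{j=1}^{n}\det(B_{j}^{\prime}) = (\operatorname{Tr} A)\cdot\det B$, as claimed. I do not anticipate any real obstacle: both computations of $[t^{1}]g$ are essentially bookkeeping with the permutation formula, and the factorization $B + tBA = B(I_{n} + tA)$ is immediate. The only point that needs some care is the combinatorial observation that only the identity permutation contributes to $[t^{1}]\det(I_{n} + tA)$, but this is a one-line pigeonhole argument.
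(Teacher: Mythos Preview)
Your proof is correct, but it takes a genuinely different route from the paper's.

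The paper argues via Laplace expansion: expand each $\det(B_{j}^{\prime})$ along its $j$-th row, obtaining
\[
\det(B_{j}^{\prime})=\sum_{k=1}^{n}\sum_{q=1}^{n}(-1)^{j+q}B_{j,k}A_{k,q}\det(B_{\sim j,\sim q}),
\]
then sums over $j$ and recognizes the resulting triple sum as a combination of Laplace expansions along columns of $B$ (the diagonal terms $k=q$ reconstituting $\det B$, the off-diagonal terms vanishing by the ``wrong-column'' Laplace identity). This is a direct computation with cofactors.

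Your approach instead introduces the auxiliary polynomial $g(t)=\det(B+tBA)\in\mathbb{K}[t]$ and reads off the linear coefficient in two ways: once via the factorization $B(I_{n}+tA)$ and the observation $[t^{1}]\det(I_{n}+tA)=\operatorname*{Tr}A$, and once via the multilinearity of $\det$ in the rows of $B+tBA$. This is more in the spirit of the paper's overall methodology (polynomial matrices, coefficient extraction) and arguably cleaner, since it avoids the cofactor bookkeeping entirely. The paper's route, on the other hand, is more self-contained in that it invokes only the Laplace expansion identities already quoted elsewhere and does not require the auxiliary computation of $[t^{1}]\det(I_{n}+tA)$.
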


\begin{vershort}
\begin{proof}
[Proof idea for Lemma \ref{lem.almkvist-lem2BA}.]Hint: Laplace expansion (more
precisely, \cite[Theorem 6.82 \textbf{(a)} and \textbf{(b)} and Proposition
6.96 \textbf{(b)}]{detnotes}). More specifically, expand each $\det\left(
B_{j}^{\prime}\right)  $ along the $j$-th row, showing that%
\[
\det\left(  B_{j}^{\prime}\right)  =\sum_{k=1}^{n}\ \ \sum_{q=1}^{n}\left(
-1\right)  ^{j+q}B_{j,k}A_{k,q}\det\left(  B_{\sim j,\sim q}\right)
\ \ \ \ \ \ \ \ \ \ \text{for each }j\in\left\{  1,2,\ldots,n\right\}
\]
(where $X_{i,j}$ denotes the $\left(  i,j\right)  $-th entry of any matrix
$X$). Then, sum over all $j$, and recombine the resulting triple sums into
Laplace expansions (this time along columns).
\end{proof}
\end{vershort}

\begin{verlong}
\begin{proof}
[Proof of Lemma \ref{lem.almkvist-lem2BA}.]Recall Definition
\ref{def.submatrix.minor}. For any matrix $X$ and any positive integers $i$
and $j$, we let $X_{i,j}$ denote the $\left(  i,j\right)  $-th entry of $X$
(assuming that such an entry exists). We will use the following three basic
properties of determinants:

\begin{itemize}
\item For any $n\times n$-matrix $X$ and any $p\in\left\{  1,2,\ldots
,n\right\}  $, we have%
\begin{equation}
\det X=\sum_{q=1}^{n}\left(  -1\right)  ^{p+q}X_{p,q}\det\left(  X_{\sim
p,\sim q}\right)  . \label{pf.lem.almkvist-lem2BA.lap-row}%
\end{equation}
(This is \cite[Theorem 6.82 \textbf{(a)}]{detnotes}, and is known as Laplace
expansion along the $p$-th row.)

\item For any $n\times n$-matrix $X$ and any $q\in\left\{  1,2,\ldots
,n\right\}  $, we have%
\[
\det X=\sum_{p=1}^{n}\left(  -1\right)  ^{p+q}X_{p,q}\det\left(  X_{\sim
p,\sim q}\right)  .
\]
(This is \cite[Theorem 6.82 \textbf{(b)}]{detnotes}, and is known as Laplace
expansion along the $q$-th column.) In other words, for any $n\times n$-matrix
$X$ and any $q\in\left\{  1,2,\ldots,n\right\}  $, we have%
\begin{equation}
\sum_{p=1}^{n}\left(  -1\right)  ^{p+q}X_{p,q}\det\left(  X_{\sim p,\sim
q}\right)  =\det X. \label{pf.lem.almkvist-lem2BA.lap-col}%
\end{equation}

\item For any $n\times n$-matrix $X$ and any $q,r\in\left\{  1,2,\ldots
,n\right\}  $ satisfying $q\neq r$, we have%
\[
0=\sum_{p=1}^{n}\left(  -1\right)  ^{p+q}X_{p,r}\det\left(  X_{\sim p,\sim
q}\right)  .
\]
(This is \cite[Proposition 6.96 \textbf{(b)}]{detnotes}, and follows easily
from Laplace expansion.) In other words, for any $n\times n$-matrix $X$ and
any $q,r\in\left\{  1,2,\ldots,n\right\}  $ satisfying $q\neq r$, we have%
\begin{equation}
\sum_{p=1}^{n}\left(  -1\right)  ^{p+q}X_{p,r}\det\left(  X_{\sim p,\sim
q}\right)  =0. \label{pf.lem.almkvist-lem2BA.lap-col0}%
\end{equation}

\end{itemize}

Let us also note that, by the definition of the trace of a matrix, we have%
\begin{equation}
\operatorname*{Tr}A=A_{1,1}+A_{2,2}+\cdots+A_{n,n}=\sum_{k=1}^{n}A_{k,k}.
\label{pf.lem.almkvist-lem2BA.Tr}%
\end{equation}

Fix $j\in\left\{  1,2,\ldots,n\right\}  $. Then, the $j$-th row of the matrix
$B_{j}^{\prime}$ equals the $j$-th row of $BA$ (by the definition of
$B_{j}^{\prime}$). In other words,%
\begin{equation}
\left(  B_{j}^{\prime}\right)  _{j,q}=\left(  BA\right)  _{j,q}%
\ \ \ \ \ \ \ \ \ \ \text{for each }q\in\left\{  1,2,\ldots,n\right\}
\label{pf.lem.almkvist-lem2BA.1}%
\end{equation}
(since the $j$-th row of an $n\times n$-matrix $X$ consists of the entries
$X_{j,q}$ for $q\in\left\{  1,2,\ldots,n\right\}  $).

Moreover, the matrix $B_{j}^{\prime}$ agrees with the matrix $B$ in all its
rows except for the $j$-th row (by the definition of $B_{j}^{\prime}$). Thus,
if we remove the $j$-th rows from the matrices $B_{j}^{\prime}$ and $B$, then
these matrices become equal. Hence, the submatrix $\left(  B_{j}^{\prime
}\right)  _{\sim j,\sim q}$ of $B_{j}^{\prime}$ equals the corresponding
submatrix $B_{\sim j,\sim q}$ of $B$ (since these submatrices are obtained by
removing the $j$-th row and the $q$-th column from $B_{j}^{\prime}$ and $B$,
and as we just saw, the removal of the $j$-th rows renders the matrices
$B_{j}^{\prime}$ and $B$ equal). In other words, $\left(  B_{j}^{\prime
}\right)  _{\sim j,\sim q}=B_{\sim j,\sim q}$.

Now, (\ref{pf.lem.almkvist-lem2BA.lap-row}) (applied to $p=j$ and
$X=B_{j}^{\prime}$) yields%
\begin{align}
\det\left(  B_{j}^{\prime}\right)   &  =\sum_{q=1}^{n}\left(  -1\right)
^{j+q}\underbrace{\left(  B_{j}^{\prime}\right)  _{j,q}}_{\substack{=\left(
BA\right)  _{j,q}\\\text{(by (\ref{pf.lem.almkvist-lem2BA.1}))}}%
}\det\underbrace{\left(  \left(  B_{j}^{\prime}\right)  _{\sim j,\sim
q}\right)  }_{\substack{=B_{\sim j,\sim q}}}\nonumber\\
&  =\sum_{q=1}^{n}\left(  -1\right)  ^{j+q}\underbrace{\left(  BA\right)
_{j,q}}_{\substack{=\sum_{k=1}^{n}B_{j,k}A_{k,q}\\\text{(by the definition of
the product}\\\text{of two matrices)}}}\det\left(  B_{\sim j,\sim q}\right)
\nonumber\\
&  =\sum_{q=1}^{n}\left(  -1\right)  ^{j+q}\sum_{k=1}^{n}B_{j,k}A_{k,q}%
\det\left(  B_{\sim j,\sim q}\right) \nonumber\\
&  =\sum_{k=1}^{n}\ \ \sum_{q=1}^{n}\left(  -1\right)  ^{j+q}B_{j,k}%
A_{k,q}\det\left(  B_{\sim j,\sim q}\right)  .
\label{pf.lem.almkvist-lem2BA.d}%
\end{align}

Forget that we fixed $j$. We thus have proved (\ref{pf.lem.almkvist-lem2BA.d})
for each $j\in\left\{  1,2,\ldots,n\right\}  $. Now, summing
(\ref{pf.lem.almkvist-lem2BA.d}) over all $j\in\left\{  1,2,\ldots,n\right\}
$, we obtain%
\begin{align}
\sum_{j=1}^{n}\det\left(  B_{j}^{\prime}\right)   &  =\sum_{j=1}^{n}%
\ \ \sum_{k=1}^{n}\ \ \sum_{q=1}^{n}\left(  -1\right)  ^{j+q}B_{j,k}%
A_{k,q}\det\left(  B_{\sim j,\sim q}\right) \nonumber\\
&  =\sum_{k=1}^{n}\ \ \sum_{q=1}^{n}A_{k,q}\sum_{j=1}^{n}\left(  -1\right)
^{j+q}B_{j,k}\det\left(  B_{\sim j,\sim q}\right)  .
\label{pf.lem.almkvist-lem2BA.5}%
\end{align}
But each $k\in\left\{  1,2,\ldots,n\right\}  $ satisfies%
\begin{align}
&  \sum_{q=1}^{n}A_{k,q}\sum_{j=1}^{n}\left(  -1\right)  ^{j+q}B_{j,k}%
\det\left(  B_{\sim j,\sim q}\right) \nonumber\\
&  =\sum_{q=1}^{n}A_{k,q}\sum_{p=1}^{n}\left(  -1\right)  ^{p+q}B_{p,k}%
\det\left(  B_{\sim p,\sim q}\right) \nonumber\\
&  \ \ \ \ \ \ \ \ \ \ \ \ \ \ \ \ \ \ \ \ \left(
\begin{array}
[c]{c}%
\text{here, we have renamed the}\\
\text{summation index }j\text{ as }p\text{ in the inner sum}%
\end{array}
\right) \nonumber\\
&  =A_{k,k}\underbrace{\sum_{p=1}^{n}\left(  -1\right)  ^{p+k}B_{p,k}%
\det\left(  B_{\sim p,\sim k}\right)  }_{\substack{=\det B\\\text{(by
(\ref{pf.lem.almkvist-lem2BA.lap-col}), applied to }X=B\text{ and }%
q=k\text{)}}}\nonumber\\
&  \ \ \ \ \ \ \ \ \ \ +\sum_{\substack{q\in\left\{  1,2,\ldots,n\right\}
;\\q\neq k}}A_{k,q}\underbrace{\sum_{p=1}^{n}\left(  -1\right)  ^{p+q}%
B_{p,k}\det\left(  B_{\sim p,\sim q}\right)  }_{\substack{=0\\\text{(by
(\ref{pf.lem.almkvist-lem2BA.lap-col0}), applied to }X=B\text{ and
}r=k\text{)}}}\nonumber\\
&  \ \ \ \ \ \ \ \ \ \ \ \ \ \ \ \ \ \ \ \ \left(
\begin{array}
[c]{c}%
\text{here, we have split off the addend}\\
\text{for }q=k\text{ from the outer sum}%
\end{array}
\right) \nonumber\\
&  =A_{k,k}\det B+\underbrace{\sum_{\substack{q\in\left\{  1,2,\ldots
,n\right\}  ;\\q\neq k}}A_{k,q}0}_{=0}=A_{k,k}\det B.
\label{pf.lem.almkvist-lem2BA.f}%
\end{align}
Thus, (\ref{pf.lem.almkvist-lem2BA.5}) becomes%
\begin{align*}
\sum_{j=1}^{n}\det\left(  B_{j}^{\prime}\right)   &  =\sum_{k=1}%
^{n}\ \ \underbrace{\sum_{q=1}^{n}A_{k,q}\sum_{j=1}^{n}\left(  -1\right)
^{j+q}B_{j,k}\det\left(  B_{\sim j,\sim q}\right)  }_{\substack{=A_{k,k}\det
B\\\text{(by (\ref{pf.lem.almkvist-lem2BA.f}))}}}\\
&  =\underbrace{\sum_{k=1}^{n}A_{k,k}}_{\substack{=\operatorname*{Tr}%
A\\\text{(by (\ref{pf.lem.almkvist-lem2BA.Tr}))}}}\det B=\left(
\operatorname*{Tr}A\right)  \cdot\det B.
\end{align*}
This proves Lemma \ref{lem.almkvist-lem2BA}.
\end{proof}
\end{verlong}

\begin{lemma}
\label{lem.multinom.rec}Let $m\in\mathbb{N}$ and $i_{1},i_{2},\ldots,i_{n}%
\in\mathbb{N}$ be such that $i_{1}+i_{2}+\cdots+i_{n}=m>0$. Then,
\[
\dbinom{m}{i_{1},i_{2},\ldots,i_{n}}=\sum_{\substack{j\in\left\{
1,2,\ldots,n\right\}  ;\\i_{j}\geq1}}\dbinom{m-1}{i_{1},i_{2},\ldots
,i_{j-1},i_{j}-1,i_{j+1},i_{j+2},\ldots,i_{n}}.
\]

\end{lemma}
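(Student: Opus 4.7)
The plan is to prove this identity by direct algebraic manipulation from the definition of the multinomial coefficient. The identity is a classical fact about multinomial coefficients, and since the paper defines $\dbinom{m}{i_1,i_2,\ldots,i_n}$ as $\dfrac{m!}{i_1! i_2! \cdots i_n!}$, there is no need to appeal to any combinatorial interpretation.

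First, I would fix notation by observing that, for each $j \in \{1,2,\ldots,n\}$ with $i_j \geq 1$, the tuple $(i_1,\ldots,i_{j-1},i_j-1,i_{j+1},\ldots,i_n)$ consists of nonnegative integers whose sum is $m-1$, so the multinomial coefficient on the right-hand side is well-defined and equals
\[
\frac{(m-1)!}{i_1! \cdots i_{j-1}! \cdot (i_j - 1)! \cdot i_{j+1}! \cdots i_n!}
= \frac{(m-1)!}{i_1! i_2! \cdots i_n!} \cdot i_j,
\]
where the second equality uses $i_j! = i_j \cdot (i_j-1)!$ (valid because $i_j \geq 1$).

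Next, I would sum this over all $j \in \{1,2,\ldots,n\}$ with $i_j \geq 1$, factoring out the common factor $\dfrac{(m-1)!}{i_1! i_2! \cdots i_n!}$. The remaining sum is $\sum_{j : i_j \geq 1} i_j$, and since the terms with $i_j = 0$ contribute nothing, this equals $\sum_{j=1}^{n} i_j = m$. Hence the right-hand side simplifies to
\[
\frac{(m-1)!}{i_1! i_2! \cdots i_n!} \cdot m = \frac{m!}{i_1! i_2! \cdots i_n!} = \dbinom{m}{i_1,i_2,\ldots,i_n},
\]
which is the left-hand side.

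There is no real obstacle here; the only mild subtlety is handling the case $i_j = 0$ (which is excluded from the sum on the right but could cause a division by $(i_j-1)! = (-1)!$ if carelessly included), and the observation that these excluded terms are exactly the ones which would have contributed $0$ anyway, so extending the sum back to all $j$ is legitimate in the step $\sum_{j : i_j \geq 1} i_j = \sum_{j=1}^{n} i_j$.
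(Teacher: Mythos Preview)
Your proof is correct and follows essentially the same approach as the paper: both compute each summand on the right-hand side explicitly via the relation $(i_j-1)! = i_j!/i_j$, factor out the common term, and then use $\sum_{j:\,i_j\geq 1} i_j = m$ to reach the left-hand side. The only cosmetic difference is that the paper writes each summand as $\dfrac{i_j}{m}\dbinom{m}{i_1,\ldots,i_n}$ whereas you write it as $\dfrac{(m-1)!}{i_1!\cdots i_n!}\cdot i_j$, but these are the same computation.
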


\begin{vershort}
\begin{proof}
[Proof idea for Lemma \ref{lem.multinom.rec}.]This is an easy computation
using the fact that $\left(  k-1\right)  !=k!/k$ for each positive integer $k$.
\end{proof}
\end{vershort}

\begin{verlong}
\begin{proof}
[Proof of Lemma \ref{lem.multinom.rec}.]For each $j\in\left\{  1,2,\ldots
,n\right\}  $ that satisfies $i_{j}<1$, we have%
\begin{equation}
i_{j}=0 \label{pf.lem.multinom.rec.0}%
\end{equation}
(because from $i_{1},i_{2},\ldots,i_{n}\in\mathbb{N}$, we obtain $i_{j}%
\in\mathbb{N}=\left\{  0,1,2,\ldots\right\}  $, and thus the only way we can
have $i_{j}<1$ is by having $i_{j}=0$). Next, by assumption, we have%
\begin{align}
m  &  =i_{1}+i_{2}+\cdots+i_{n}=\sum_{j\in\left\{  1,2,\ldots,n\right\}
}i_{j}=\sum_{\substack{j\in\left\{  1,2,\ldots,n\right\}  ;\\i_{j}\geq1}%
}i_{j}+\sum_{\substack{j\in\left\{  1,2,\ldots,n\right\}  ;\\i_{j}%
<1}}\underbrace{i_{j}}_{\substack{=0\\\text{(by (\ref{pf.lem.multinom.rec.0}%
))}}}\nonumber\\
&  \ \ \ \ \ \ \ \ \ \ \ \ \ \ \ \ \ \ \ \ \left(
\begin{array}
[c]{c}%
\text{since each }j\in\left\{  1,2,\ldots,n\right\}  \text{ satisfies}\\
\text{either }i_{j}\geq1\text{ or }i_{j}<1\text{ (but not both)}%
\end{array}
\right) \nonumber\\
&  =\sum_{\substack{j\in\left\{  1,2,\ldots,n\right\}  ;\\i_{j}\geq1}%
}i_{j}+\underbrace{\sum_{\substack{j\in\left\{  1,2,\ldots,n\right\}
;\\i_{j}<1}}0}_{=0}=\sum_{\substack{j\in\left\{  1,2,\ldots,n\right\}
;\\i_{j}\geq1}}i_{j}. \label{pf.lem.multinom.rec.2}%
\end{align}

Now we claim:

\begin{statement}
\textit{Claim 1:} Let $j\in\left\{  1,2,\ldots,n\right\}  $ be such that
$i_{j}\geq1$. Then,%
\[
\dbinom{m-1}{i_{1},i_{2},\ldots,i_{j-1},i_{j}-1,i_{j+1},i_{j+2},\ldots,i_{n}%
}=\dfrac{i_{j}}{m}\cdot\dbinom{m}{i_{1},i_{2},\ldots,i_{n}}.
\]

\end{statement}

\begin{proof}
[Proof of Claim 1.]It is well-known that each positive integer $k$ satisfies
$\left(  k-1\right)  !=k!/k$. Hence, $\left(  m-1\right)  !=m!/m$ (since
$m>0$) and $\left(  i_{j}-1\right)  !=i_{j}!/i_{j}$ (since $i_{j}\geq1>0$).
Now, the definition of $\dbinom{m-1}{i_{1},i_{2},\ldots,i_{j-1},i_{j}%
-1,i_{j+1},i_{j+2},\ldots,i_{n}}$ yields%
\begin{align}
&  \dbinom{m-1}{i_{1},i_{2},\ldots,i_{j-1},i_{j}-1,i_{j+1},i_{j+2}%
,\ldots,i_{n}}\nonumber\\
&  =\dfrac{\left(  m-1\right)  !}{i_{1}!i_{2}!\cdots i_{j-1}!\left(
i_{j}-1\right)  !i_{j+1}!i_{j+2}!\cdots i_{n}!}\nonumber\\
&  =\dfrac{m!/m}{i_{1}!i_{2}!\cdots i_{j-1}!\left(  i_{j}!/i_{j}\right)
i_{j+1}!i_{j+2}!\cdots i_{n}!}\nonumber\\
&  \ \ \ \ \ \ \ \ \ \ \ \ \ \ \ \ \ \ \ \ \left(  \text{since }\left(
m-1\right)  !=m!/m\text{ and }\left(  i_{j}-1\right)  !=i_{j}!/i_{j}\right)
\nonumber\\
&  =\dfrac{i_{j}}{m}\cdot\dfrac{m!}{i_{1}!i_{2}!\cdots i_{j-1}!i_{j}%
!i_{j+1}!i_{j+2}!\cdots i_{n}!}\nonumber\\
&  =\dfrac{i_{j}}{m}\cdot\dfrac{m!}{i_{1}!i_{2}!\cdots i_{n}!}
\label{pf.lem.multinom.rec.c1.pf.2}%
\end{align}
(since $i_{1}!i_{2}!\cdots i_{j-1}!i_{j}!i_{j+1}!i_{j+2}!\cdots i_{n}%
!=i_{1}!i_{2}!\cdots i_{n}!$).

However, the definition of $\dbinom{m}{i_{1},i_{2},\ldots,i_{n}}$ yields
$\dbinom{m}{i_{1},i_{2},\ldots,i_{n}}=\dfrac{m!}{i_{1}!i_{2}!\cdots i_{n}!}$.
In view of this, we can rewrite (\ref{pf.lem.multinom.rec.c1.pf.2}) as%
\[
\dbinom{m-1}{i_{1},i_{2},\ldots,i_{j-1},i_{j}-1,i_{j+1},i_{j+2},\ldots,i_{n}%
}=\dfrac{i_{j}}{m}\cdot\dbinom{m}{i_{1},i_{2},\ldots,i_{n}}.
\]
This proves Claim 1.
\end{proof}

Now,%
\begin{align*}
&  \sum_{\substack{j\in\left\{  1,2,\ldots,n\right\}  ;\\i_{j}\geq
1}}\underbrace{\dbinom{m-1}{i_{1},i_{2},\ldots,i_{j-1},i_{j}-1,i_{j+1}%
,i_{j+2},\ldots,i_{n}}}_{\substack{=\dfrac{i_{j}}{m}\cdot\dbinom{m}%
{i_{1},i_{2},\ldots,i_{n}}\\\text{(by Claim 1)}}}\\
&  =\sum_{\substack{j\in\left\{  1,2,\ldots,n\right\}  ;\\i_{j}\geq1}%
}\dfrac{i_{j}}{m}\cdot\dbinom{m}{i_{1},i_{2},\ldots,i_{n}}=\dfrac{1}{m}%
\dbinom{m}{i_{1},i_{2},\ldots,i_{n}}\cdot\underbrace{\sum_{\substack{j\in
\left\{  1,2,\ldots,n\right\}  ;\\i_{j}\geq1}}i_{j}}_{\substack{=m\\\text{(by
(\ref{pf.lem.multinom.rec.2}))}}}\\
&  =\dfrac{1}{m}\dbinom{m}{i_{1},i_{2},\ldots,i_{n}}\cdot m=\dbinom{m}%
{i_{1},i_{2},\ldots,i_{n}}.
\end{align*}
This proves Lemma \ref{lem.multinom.rec}.
\end{proof}
\end{verlong}

\begin{lemma}
\label{lem.almkvist-lem3}Let $n\in\mathbb{N}$. Let $A\in\mathbb{K}^{n\times
n}$ be a matrix. For any $n$ integers $i_{1},i_{2},\ldots,i_{n}\in\mathbb{N}$,
we let $M_{i_{1},i_{2},\ldots,i_{n}}$ denote the $n\times n$-matrix whose rows
are
\[
\operatorname*{row}\nolimits_{1}\left(  A^{i_{1}}\right)
,\ \operatorname*{row}\nolimits_{2}\left(  A^{i_{2}}\right)  ,\ \ldots
,\ \operatorname*{row}\nolimits_{n}\left(  A^{i_{n}}\right)  .
\]

Let $i_{1},i_{2},\ldots,i_{n}\in\mathbb{N}$. Then,%
\[
\sum_{j=1}^{n}\det\left(  M_{i_{1},i_{2},\ldots,i_{j-1},i_{j}+1,i_{j+1}%
,i_{j+2},\ldots,i_{n}}\right)  =\left(  \operatorname*{Tr}A\right)  \cdot
\det\left(  M_{i_{1},i_{2},\ldots,i_{n}}\right)  .
\]

\end{lemma}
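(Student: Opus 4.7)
The plan is to reduce Lemma \ref{lem.almkvist-lem3} directly to Lemma \ref{lem.almkvist-lem2BA} by recognizing the matrices $M_{i_1,i_2,\ldots,i_{j-1},i_j+1,i_{j+1},i_{j+2},\ldots,i_n}$ as the row-replaced matrices appearing in that lemma. Set $B := M_{i_1,i_2,\ldots,i_n}$, so that $\operatorname{row}_j B = \operatorname{row}_j(A^{i_j})$ for every $j \in \{1,2,\ldots,n\}$. The core observation is then that, for each $j$, the matrix $M_{i_1,i_2,\ldots,i_{j-1},i_j+1,i_{j+1},i_{j+2},\ldots,i_n}$ agrees with $B$ in all rows except the $j$-th, and its $j$-th row equals $\operatorname{row}_j(BA)$.

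Here the first part is immediate from the definition (only the $j$-th index has changed), and the second part follows from the computation
\[
\operatorname{row}_j(A^{i_j+1}) = \operatorname{row}_j(A^{i_j} \cdot A) = \operatorname{row}_j(A^{i_j}) \cdot A = \operatorname{row}_j(B) \cdot A = \operatorname{row}_j(BA),
\]
where the third equality uses $\operatorname{row}_j B = \operatorname{row}_j(A^{i_j})$ and the fourth is Lemma \ref{lem.row.rowAB} applied to $B$ and $A$. Thus, in the notation of Lemma \ref{lem.almkvist-lem2BA}, we have
\[
M_{i_1,i_2,\ldots,i_{j-1},i_j+1,i_{j+1},i_{j+2},\ldots,i_n} = B_j'
\]
for every $j \in \{1,2,\ldots,n\}$.

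Applying Lemma \ref{lem.almkvist-lem2BA} to this $B$ now yields
\[
\sum_{j=1}^n \det\left(M_{i_1,i_2,\ldots,i_{j-1},i_j+1,i_{j+1},i_{j+2},\ldots,i_n}\right) = \sum_{j=1}^n \det\left(B_j'\right) = (\operatorname{Tr} A)\cdot \det B = (\operatorname{Tr} A)\cdot \det\left(M_{i_1,i_2,\ldots,i_n}\right),
\]
which is exactly the claim. No obstacle is anticipated; the only minor bookkeeping is verifying the row identification above, and the heavy lifting has already been done in Lemma \ref{lem.almkvist-lem2BA} (via Laplace expansion).
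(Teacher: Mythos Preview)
Your proof is correct and follows essentially the same approach as the paper: set $B = M_{i_1,\ldots,i_n}$, verify via $\operatorname{row}_j(A^{i_j+1}) = \operatorname{row}_j(A^{i_j})\cdot A = \operatorname{row}_j(B)\cdot A = \operatorname{row}_j(BA)$ that each $M_{i_1,\ldots,i_{j-1},i_j+1,i_{j+1},\ldots,i_n}$ is precisely the matrix $B_j'$ from Lemma~\ref{lem.almkvist-lem2BA}, and then apply that lemma. The only trivial remark is that the second equality in your chain also uses Lemma~\ref{lem.row.rowAB} (applied to $A^{i_j}$ and $A$), which you did not explicitly cite.
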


\begin{vershort}
\begin{proof}
[Proof idea for Lemma \ref{lem.almkvist-lem3}.]Apply Lemma
\ref{lem.almkvist-lem2BA} to $B=M_{i_{1},i_{2},\ldots,i_{n}}$, and show that
each of the matrices $B_{j}^{\prime}$ is precisely $M_{i_{1},i_{2}%
,\ldots,i_{j-1},i_{j}+1,i_{j+1},i_{j+2},\ldots,i_{n}}$.
\end{proof}
\end{vershort}

\begin{verlong}
\begin{proof}
[Proof of Lemma \ref{lem.almkvist-lem3}.]Define the $n\times n$-matrix
$B:=M_{i_{1},i_{2},\ldots,i_{n}}\in\mathbb{K}^{n\times n}$. For each
$j\in\left\{  1,2,\ldots,n\right\}  $, let $B_{j}^{\prime}$ be the $n\times
n$-matrix obtained from $B$ by replacing the $j$-th row of $B$ by the $j$-th
row of $BA$. Then, Lemma \ref{lem.almkvist-lem2BA} yields
\begin{equation}
\sum_{j=1}^{n}\det\left(  B_{j}^{\prime}\right)  =\left(  \operatorname*{Tr}%
A\right)  \cdot\det B. \label{pf.lem.almkvist-lem3.1}%
\end{equation}

Now, we claim the following:

\begin{statement}
\textit{Claim 1:} For each $j\in\left\{  1,2,\ldots,n\right\}  $, we have
$B_{j}^{\prime}=M_{i_{1},i_{2},\ldots,i_{j-1},i_{j}+1,i_{j+1},i_{j+2}%
,\ldots,i_{n}}$.
\end{statement}

\begin{proof}
[Proof of Claim 1.]Let $j\in\left\{  1,2,\ldots,n\right\}  $. Then,
$B_{j}^{\prime}$ is defined as the $n\times n$-matrix obtained from $B$ by
replacing the $j$-th row of $B$ by the $j$-th row of $BA$. Hence,
\begin{equation}
\operatorname*{row}\nolimits_{j}\left(  B_{j}^{\prime}\right)
=\operatorname*{row}\nolimits_{j}\left(  BA\right)  ,
\label{pf.lem.almkvist-lem3.c1.pf.1}%
\end{equation}
and%
\begin{equation}
\operatorname{row}_{k}\left(  B_{j}^{\prime}\right)  =\operatorname{row}%
_{k}B\ \ \ \ \ \ \ \ \ \ \text{for each }k\in\left\{  1,2,\ldots,n\right\}
\setminus\left\{  j\right\}  . \label{pf.lem.almkvist-lem3.c1.pf.2}%
\end{equation}

However, $B=M_{i_{1},i_{2},\ldots,i_{n}}$ is the $n\times n$-matrix whose rows
are
\[
\operatorname*{row}\nolimits_{1}\left(  A^{i_{1}}\right)
,\ \operatorname*{row}\nolimits_{2}\left(  A^{i_{2}}\right)  ,\ \ldots
,\ \operatorname*{row}\nolimits_{n}\left(  A^{i_{n}}\right)
\]
(by the definition of $M_{i_{1},i_{2},\ldots,i_{n}}$). Thus,%
\begin{equation}
\operatorname{row}_{k}B=\operatorname{row}_{k}\left(  A^{i_{k}}\right)
\ \ \ \ \ \ \ \ \ \ \text{for each }k\in\left\{  1,2,\ldots,n\right\}  .
\label{pf.lem.almkvist-lem3.c1.pf.4}%
\end{equation}

Now, let $\left(  p_{1},p_{2},\ldots,p_{n}\right)  $ denote the $n$-tuple
$\left(  i_{1},i_{2},\ldots,i_{j-1},i_{j}+1,i_{j+1},i_{j+2},\ldots
,i_{n}\right)  \in\mathbb{N}^{n}$. Then,
\begin{equation}
p_{j}=i_{j}+1, \label{pf.lem.almkvist-lem3.c1.pf.5}%
\end{equation}
and%
\begin{equation}
p_{k}=i_{k}\ \ \ \ \ \ \ \ \ \ \text{for each }k\in\left\{  1,2,\ldots
,n\right\}  \setminus\left\{  j\right\}  .
\label{pf.lem.almkvist-lem3.c1.pf.6}%
\end{equation}
From $\left(  p_{1},p_{2},\ldots,p_{n}\right)  =\left(  i_{1},i_{2}%
,\ldots,i_{j-1},i_{j}+1,i_{j+1},i_{j+2},\ldots,i_{n}\right)  $, we also
obtain
\begin{equation}
M_{p_{1},p_{2},\ldots,p_{n}}=M_{i_{1},i_{2},\ldots,i_{j-1},i_{j}%
+1,i_{j+1},i_{j+2},\ldots,i_{n}}. \label{pf.lem.almkvist-lem3.c1.pf.7}%
\end{equation}

Furthermore, $M_{p_{1},p_{2},\ldots,p_{n}}$ is the $n\times n$-matrix whose
rows are
\[
\operatorname*{row}\nolimits_{1}\left(  A^{p_{1}}\right)
,\ \operatorname*{row}\nolimits_{2}\left(  A^{p_{2}}\right)  ,\ \ldots
,\ \operatorname*{row}\nolimits_{n}\left(  A^{p_{n}}\right)
\]
(by the definition of $M_{p_{1},p_{2},\ldots,p_{n}}$). Thus,%
\begin{equation}
\operatorname{row}_{k}\left(  M_{p_{1},p_{2},\ldots,p_{n}}\right)
=\operatorname{row}_{k}\left(  A^{p_{k}}\right)  \ \ \ \ \ \ \ \ \ \ \text{for
each }k\in\left\{  1,2,\ldots,n\right\}  .
\label{pf.lem.almkvist-lem3.c1.pf.8}%
\end{equation}

From (\ref{pf.lem.almkvist-lem3.c1.pf.1}), we obtain%
\[
\operatorname*{row}\nolimits_{j}\left(  B_{j}^{\prime}\right)
=\operatorname*{row}\nolimits_{j}\left(  BA\right)  =\left(
\operatorname{row}_{j}B\right)  \cdot A
\]
(by Lemma \ref{lem.row.rowAB}, applied to $B$ and $A$ instead of $A$ and $B$).
In view of%
\[
\operatorname{row}_{j}B=\operatorname{row}_{j}\left(  A^{i_{j}}\right)
\ \ \ \ \ \ \ \ \ \ \left(  \text{by (\ref{pf.lem.almkvist-lem3.c1.pf.4}),
applied to }k=j\right)  ,
\]
we can rewrite this as%
\[
\operatorname*{row}\nolimits_{j}\left(  B_{j}^{\prime}\right)
=\operatorname{row}_{j}\left(  A^{i_{j}}\right)  \cdot A.
\]
Comparing this with%
\begin{align*}
\operatorname{row}_{j}\left(  M_{p_{1},p_{2},\ldots,p_{n}}\right)   &
=\operatorname{row}_{j}\left(  A^{p_{j}}\right)  \ \ \ \ \ \ \ \ \ \ \left(
\text{by (\ref{pf.lem.almkvist-lem3.c1.pf.8}), applied to }k=j\right) \\
&  =\operatorname{row}_{j}\left(  A^{i_{j}+1}\right)
\ \ \ \ \ \ \ \ \ \ \left(  \text{by (\ref{pf.lem.almkvist-lem3.c1.pf.5}%
)}\right) \\
&  =\operatorname{row}_{j}\left(  A^{i_{j}}A\right)
\ \ \ \ \ \ \ \ \ \ \left(  \text{since }A^{i_{j}+1}=A^{i_{j}}A\right) \\
&  =\operatorname{row}_{j}\left(  A^{i_{j}}\right)  \cdot
A\ \ \ \ \ \ \ \ \ \ \left(
\begin{array}
[c]{c}%
\text{by Lemma \ref{lem.row.rowAB}, applied to }A^{i_{j}}\text{ and }A\\
\text{instead of }A\text{ and }B
\end{array}
\right)  ,
\end{align*}
we obtain%
\begin{equation}
\operatorname*{row}\nolimits_{j}\left(  B_{j}^{\prime}\right)
=\operatorname{row}_{j}\left(  M_{p_{1},p_{2},\ldots,p_{n}}\right)  .
\label{pf.lem.almkvist-lem3.c1.pf.9}%
\end{equation}

We have to prove that $B_{j}^{\prime}=M_{i_{1},i_{2},\ldots,i_{j-1}%
,i_{j}+1,i_{j+1},i_{j+2},\ldots,i_{n}}$. In view of
(\ref{pf.lem.almkvist-lem3.c1.pf.7}), this means that we have to prove that
$B_{j}^{\prime}=M_{p_{1},p_{2},\ldots,p_{n}}$. In other words, we must prove
that the two $n\times n$-matrices $B_{j}^{\prime}$ and $M_{p_{1},p_{2}%
,\ldots,p_{n}}$ are equal. Clearly, it suffices to show that each row of the
former matrix equals the corresponding row of the latter matrix. In other
words, it suffices to show that each $k\in\left\{  1,2,\ldots,n\right\}  $
satisfies%
\begin{equation}
\operatorname*{row}\nolimits_{k}\left(  B_{j}^{\prime}\right)
=\operatorname{row}_{k}\left(  M_{p_{1},p_{2},\ldots,p_{n}}\right)  .
\label{pf.lem.almkvist-lem3.c1.pf.gl}%
\end{equation}

So let us prove this. Let $k\in\left\{  1,2,\ldots,n\right\}  $. We must prove
(\ref{pf.lem.almkvist-lem3.c1.pf.gl}). If $k=j$, then this follows immediately
from (\ref{pf.lem.almkvist-lem3.c1.pf.9}). Thus, we WLOG assume that $k\neq
j$. Hence, $k\in\left\{  1,2,\ldots,n\right\}  \setminus\left\{  j\right\}  $,
so that $p_{k}=i_{k}$ (by (\ref{pf.lem.almkvist-lem3.c1.pf.6})), and thus
$i_{k}=p_{k}$. Furthermore, from $k\in\left\{  1,2,\ldots,n\right\}
\setminus\left\{  j\right\}  $, we obtain
\begin{align*}
\operatorname{row}_{k}\left(  B_{j}^{\prime}\right)   &  =\operatorname{row}%
_{k}B\ \ \ \ \ \ \ \ \ \ \left(  \text{by (\ref{pf.lem.almkvist-lem3.c1.pf.2}%
)}\right) \\
&  =\operatorname{row}_{k}\left(  A^{i_{k}}\right)
\ \ \ \ \ \ \ \ \ \ \left(  \text{by (\ref{pf.lem.almkvist-lem3.c1.pf.4}%
)}\right) \\
&  =\operatorname{row}_{k}\left(  A^{p_{k}}\right)
\ \ \ \ \ \ \ \ \ \ \left(  \text{since }i_{k}=p_{k}\right) \\
&  =\operatorname{row}_{k}\left(  M_{p_{1},p_{2},\ldots,p_{n}}\right)
\ \ \ \ \ \ \ \ \ \ \left(  \text{by (\ref{pf.lem.almkvist-lem3.c1.pf.8}%
)}\right)  .
\end{align*}
Thus, (\ref{pf.lem.almkvist-lem3.c1.pf.gl}) is proved. As we explained, this
completes the proof of Claim 1.
\end{proof}

Now, Claim 1 shows that $B_{j}^{\prime}=M_{i_{1},i_{2},\ldots,i_{j-1}%
,i_{j}+1,i_{j+1},i_{j+2},\ldots,i_{n}}$ for each $j\in\left\{  1,2,\ldots
,n\right\}  $. Thus, we can rewrite (\ref{pf.lem.almkvist-lem3.1}) as%
\begin{align*}
\sum_{j=1}^{n}\det\left(  M_{i_{1},i_{2},\ldots,i_{j-1},i_{j}+1,i_{j+1}%
,i_{j+2},\ldots,i_{n}}\right)   &  =\left(  \operatorname*{Tr}A\right)
\cdot\det\underbrace{B}_{=M_{i_{1},i_{2},\ldots,i_{n}}}\\
&  =\left(  \operatorname*{Tr}A\right)  \cdot\det\left(  M_{i_{1},i_{2}%
,\ldots,i_{n}}\right)  .
\end{align*}
This proves Lemma \ref{lem.almkvist-lem3}.
\end{proof}
\end{verlong}

\begin{vershort}
\begin{proof}
[Proof idea for Proposition \ref{prop.almkvist-lem}.]Induct on $m$. In the
induction step (from $m-1$ to $m$), start by substituting Lemma
\ref{lem.multinom.rec} into the right hand side. Interchange the summation
signs and substitute $i_{j}$ for $i_{j}-1$. Then use Lemma
\ref{lem.almkvist-lem3} to simplify the inner sum $\sum_{j=1}^{n}\det\left(
M_{i_{1},i_{2},\ldots,i_{j-1},i_{j}+1,i_{j+1},i_{j+2},\ldots,i_{n}}\right)  $.
\end{proof}
\end{vershort}

\begin{verlong}
\begin{proof}
[Proof of Proposition \ref{prop.almkvist-lem}.]We induct on $m$.

\textit{Base case:} Let us prove Proposition \ref{prop.almkvist-lem} for
$m=0$. Indeed, there is only one $n$-tuple $\left(  i_{1},i_{2},\ldots
,i_{n}\right)  \in\mathbb{N}^{n}$ satisfying $i_{1}+i_{2}+\cdots+i_{n}=0$,
namely the $n$-tuple $\left(  0,0,\ldots,0\right)  $ (since a sum of
nonnegative integers cannot be $0$ unless all its summands are $0$). Thus, the
sum%
\[
\sum_{\substack{\left(  i_{1},i_{2},\ldots,i_{n}\right)  \in\mathbb{N}%
^{n};\\i_{1}+i_{2}+\cdots+i_{n}=0}}\dbinom{0}{i_{1},i_{2},\ldots,i_{n}}%
\det\left(  M_{i_{1},i_{2},\ldots,i_{n}}\right)
\]
has only one addend, and simplifies as follows:%
\begin{align}
&  \sum_{\substack{\left(  i_{1},i_{2},\ldots,i_{n}\right)  \in\mathbb{N}%
^{n};\\i_{1}+i_{2}+\cdots+i_{n}=0}}\dbinom{0}{i_{1},i_{2},\ldots,i_{n}}%
\det\left(  M_{i_{1},i_{2},\ldots,i_{n}}\right) \nonumber\\
&  =\underbrace{\dbinom{0}{0,0,\ldots,0}}_{=1}\det\left(  M_{0,0,\ldots
,0}\right) \nonumber\\
&  =\det\left(  M_{0,0,\ldots,0}\right)  . \label{pf.prop.almkvist-lem.IB.2}%
\end{align}

But the matrix $M_{0,0,\ldots,0}$ is the $n\times n$-matrix whose rows are
\[
\operatorname*{row}\nolimits_{1}\left(  A^{0}\right)  ,\ \operatorname*{row}%
\nolimits_{2}\left(  A^{0}\right)  ,\ \ldots,\ \operatorname*{row}%
\nolimits_{n}\left(  A^{0}\right)
\]
(by the definition of $M_{0,0,\ldots,0}$). In other words, it is the $n\times
n$-matrix whose rows equal the respective rows of the identity matrix $A^{0}$.
Hence, this matrix $M_{0,0,\ldots,0}$ simply equals $A^{0}$. That is,
$M_{0,0,\ldots,0}=A^{0}=I_{n}$. Therefore, $\det\left(  M_{0,0,\ldots
,0}\right)  =\det\left(  I_{n}\right)  =1$. Comparing this with $\left(
\operatorname*{Tr}A\right)  ^{0}=1$, we obtain
\[
\left(  \operatorname*{Tr}A\right)  ^{0}=\det\left(  M_{0,0,\ldots,0}\right)
=\sum_{\substack{\left(  i_{1},i_{2},\ldots,i_{n}\right)  \in\mathbb{N}%
^{n};\\i_{1}+i_{2}+\cdots+i_{n}=0}}\dbinom{0}{i_{1},i_{2},\ldots,i_{n}}%
\det\left(  M_{i_{1},i_{2},\ldots,i_{n}}\right)
\]
(by (\ref{pf.prop.almkvist-lem.IB.2})). This proves Proposition
\ref{prop.almkvist-lem} for $m=0$. Thus, the base case is complete.

\textit{Induction step:} Let $m$ be a positive integer. Assume (as the
induction hypothesis) that Proposition \ref{prop.almkvist-lem} holds for $m-1$
instead of $m$. We must prove that Proposition \ref{prop.almkvist-lem} holds
for $m$ as well.

We have assumed that Proposition \ref{prop.almkvist-lem} holds for $m-1$
instead of $m$. In other words, we have%
\begin{equation}
\left(  \operatorname*{Tr}A\right)  ^{m-1}=\sum_{\substack{\left(  i_{1}%
,i_{2},\ldots,i_{n}\right)  \in\mathbb{N}^{n};\\i_{1}+i_{2}+\cdots+i_{n}%
=m-1}}\dbinom{m-1}{i_{1},i_{2},\ldots,i_{n}}\det\left(  M_{i_{1},i_{2}%
,\ldots,i_{n}}\right)  . \label{pf.prop.almkvist-lem.IH}%
\end{equation}

Now,%
\begin{align}
&  \sum_{\substack{\left(  i_{1},i_{2},\ldots,i_{n}\right)  \in\mathbb{N}%
^{n};\\i_{1}+i_{2}+\cdots+i_{n}=m}}\underbrace{\dbinom{m}{i_{1},i_{2}%
,\ldots,i_{n}}}_{\substack{=\sum_{\substack{j\in\left\{  1,2,\ldots,n\right\}
;\\i_{j}\geq1}}\dbinom{m-1}{i_{1},i_{2},\ldots,i_{j-1},i_{j}-1,i_{j+1}%
,i_{j+2},\ldots,i_{n}}\\\text{(by Lemma \ref{lem.multinom.rec})}}}\det\left(
M_{i_{1},i_{2},\ldots,i_{n}}\right) \nonumber\\
&  =\underbrace{\sum_{\substack{\left(  i_{1},i_{2},\ldots,i_{n}\right)
\in\mathbb{N}^{n};\\i_{1}+i_{2}+\cdots+i_{n}=m}}\ \ \sum_{\substack{j\in
\left\{  1,2,\ldots,n\right\}  ;\\i_{j}\geq1}}}_{=\sum_{j\in\left\{
1,2,\ldots,n\right\}  }\ \ \sum_{\substack{\left(  i_{1},i_{2},\ldots
,i_{n}\right)  \in\mathbb{N}^{n};\\i_{1}+i_{2}+\cdots+i_{n}=m;\\i_{j}\geq1}%
}}\dbinom{m-1}{i_{1},i_{2},\ldots,i_{j-1},i_{j}-1,i_{j+1},i_{j+2},\ldots
,i_{n}}\det\left(  M_{i_{1},i_{2},\ldots,i_{n}}\right) \nonumber\\
&  =\sum_{j\in\left\{  1,2,\ldots,n\right\}  }\ \ \sum_{\substack{\left(
i_{1},i_{2},\ldots,i_{n}\right)  \in\mathbb{N}^{n};\\i_{1}+i_{2}+\cdots
+i_{n}=m;\\i_{j}\geq1}}\dbinom{m-1}{i_{1},i_{2},\ldots,i_{j-1},i_{j}%
-1,i_{j+1},i_{j+2},\ldots,i_{n}}\det\left(  M_{i_{1},i_{2},\ldots,i_{n}%
}\right)  . \label{pf.prop.almkvist-lem.4}%
\end{align}

However, for each $j\in\left\{  1,2,\ldots,n\right\}  $, we have%
\begin{align}
&  \sum_{\substack{\left(  i_{1},i_{2},\ldots,i_{n}\right)  \in\mathbb{N}%
^{n};\\i_{1}+i_{2}+\cdots+i_{n}=m;\\i_{j}\geq1}}\dbinom{m-1}{i_{1}%
,i_{2},\ldots,i_{j-1},i_{j}-1,i_{j+1},i_{j+2},\ldots,i_{n}}\det\left(
M_{i_{1},i_{2},\ldots,i_{n}}\right) \nonumber\\
&  =\sum_{\substack{\left(  i_{1},i_{2},\ldots,i_{n}\right)  \in\mathbb{N}%
^{n};\\i_{1}+i_{2}+\cdots+i_{n}=m;\\i_{j}-1\in\mathbb{N}}}\dbinom{m-1}%
{i_{1},i_{2},\ldots,i_{j-1},i_{j}-1,i_{j+1},i_{j+2},\ldots,i_{n}}%
\det\underbrace{\left(  M_{i_{1},i_{2},\ldots,i_{n}}\right)  }%
_{\substack{=M_{i_{1},i_{2},\ldots,i_{j-1},i_{j},i_{j+1},i_{j+2},\ldots,i_{n}%
}\\=M_{i_{1},i_{2},\ldots,i_{j-1},\left(  i_{j}-1\right)  +1,i_{j+1}%
,i_{j+2},\ldots,i_{n}}\\\text{(since }i_{j}=\left(  i_{j}-1\right)
+1\text{)}}}\nonumber\\
&  \ \ \ \ \ \ \ \ \ \ \ \ \ \ \ \ \ \ \ \ \left(
\begin{array}
[c]{c}%
\text{here, we have replaced the condition \textquotedblleft}i_{j}%
\geq1\text{\textquotedblright\ under the}\\
\text{summation sign by the equivalent condition \textquotedblleft}i_{j}%
-1\in\mathbb{N}\text{\textquotedblright}%
\end{array}
\right) \nonumber\\
&  =\sum_{\substack{\left(  i_{1},i_{2},\ldots,i_{n}\right)  \in\mathbb{N}%
^{n};\\\left(  i_{1}+i_{2}+\cdots+i_{n}\right)  -1=m-1;\\i_{j}-1\in\mathbb{N}%
}}\dbinom{m-1}{i_{1},i_{2},\ldots,i_{j-1},i_{j}-1,i_{j+1},i_{j+2},\ldots
,i_{n}}\det\left(  M_{i_{1},i_{2},\ldots,i_{j-1},\left(  i_{j}-1\right)
+1,i_{j+1},i_{j+2},\ldots,i_{n}}\right) \nonumber\\
&  \ \ \ \ \ \ \ \ \ \ \ \ \ \ \ \ \ \ \ \ \left(
\begin{array}
[c]{c}%
\text{here, we have replaced the condition \textquotedblleft}i_{1}%
+i_{2}+\cdots+i_{n}=m\text{\textquotedblright}\\
\text{under the summation sign by the equivalent}\\
\text{condition \textquotedblleft}\left(  i_{1}+i_{2}+\cdots+i_{n}\right)
-1=m-1\text{\textquotedblright}\\
\text{(these two conditions are equivalent because the latter}\\
\text{can be obtained from the former by subtracting }-1\text{)}%
\end{array}
\right) \nonumber\\
&  =\sum_{\substack{\left(  i_{1},i_{2},\ldots,i_{n}\right)  \in\mathbb{N}%
^{n};\\i_{1}+i_{2}+\cdots+i_{j-1}+\left(  i_{j}-1\right)  +i_{j+1}%
+i_{j+2}+\cdots+i_{n}=m-1;\\i_{j}-1\in\mathbb{N}}}\dbinom{m-1}{i_{1}%
,i_{2},\ldots,i_{j-1},i_{j}-1,i_{j+1},i_{j+2},\ldots,i_{n}}\nonumber\\
&  \ \ \ \ \ \ \ \ \ \ \det\left(  M_{i_{1},i_{2},\ldots,i_{j-1},\left(
i_{j}-1\right)  +1,i_{j+1},i_{j+2},\ldots,i_{n}}\right) \nonumber\\
&  \ \ \ \ \ \ \ \ \ \ \ \ \ \ \ \ \ \ \ \ \left(
\begin{array}
[c]{c}%
\text{here, we have rewritten the expression }\left(  i_{1}+i_{2}+\cdots
+i_{n}\right)  -1\\
\text{under the summation sign}\\
\text{as }i_{1}+i_{2}+\cdots+i_{j-1}+\left(  i_{j}-1\right)  +i_{j+1}%
+i_{j+2}+\cdots+i_{n}%
\end{array}
\right) \nonumber\\
&  =\sum_{\substack{\left(  k_{1},k_{2},\ldots,k_{n}\right)  \in\mathbb{N}%
^{n};\\k_{1}+k_{2}+\cdots+k_{n}=m-1}}\dbinom{m-1}{k_{1},k_{2},\ldots,k_{n}%
}\det\left(  M_{k_{1},k_{2},\ldots,k_{j-1},k_{j}+1,k_{j+1},k_{j+2}%
,\ldots,k_{n}}\right) \nonumber\\
&  \ \ \ \ \ \ \ \ \ \ \ \ \ \ \ \ \ \ \ \ \left(
\begin{array}
[c]{c}%
\text{here, we have substituted }\left(  k_{1},k_{2},\ldots,k_{n}\right) \\
\text{for }\left(  i_{1},i_{2},\ldots,i_{j-1},i_{j}-1,i_{j+1},i_{j+2}%
,\ldots,i_{n}\right)  \text{ in our sum}\\
\text{(because the condition \textquotedblleft}i_{j}-1\in\mathbb{N}%
\text{\textquotedblright\ ensures}\\
\text{that }\left(  i_{1},i_{2},\ldots,i_{j-1},i_{j}-1,i_{j+1},i_{j+2}%
,\ldots,i_{n}\right)  \in\mathbb{N}^{n}\text{)}%
\end{array}
\right) \nonumber\\
&  =\sum_{\substack{\left(  i_{1},i_{2},\ldots,i_{n}\right)  \in\mathbb{N}%
^{n};\\i_{1}+i_{2}+\cdots+i_{n}=m-1}}\dbinom{m-1}{i_{1},i_{2},\ldots,i_{n}%
}\det\left(  M_{i_{1},i_{2},\ldots,i_{j-1},i_{j}+1,i_{j+1},i_{j+2}%
,\ldots,i_{n}}\right)  \label{pf.prop.almkvist-lem.5}%
\end{align}
(here, we have renamed the summation index $\left(  k_{1},k_{2},\ldots
,k_{n}\right)  $ as $\left(  i_{1},i_{2},\ldots,i_{n}\right)  $).

Thus, our above computation (\ref{pf.prop.almkvist-lem.4}) becomes%
\begin{align*}
&  \sum_{\substack{\left(  i_{1},i_{2},\ldots,i_{n}\right)  \in\mathbb{N}%
^{n};\\i_{1}+i_{2}+\cdots+i_{n}=m}}\dbinom{m}{i_{1},i_{2},\ldots,i_{n}}%
\det\left(  M_{i_{1},i_{2},\ldots,i_{n}}\right) \\
&  =\underbrace{\sum_{j\in\left\{  1,2,\ldots,n\right\}  }}_{=\sum_{j=1}^{n}%
}\ \ \underbrace{\sum_{\substack{\left(  i_{1},i_{2},\ldots,i_{n}\right)
\in\mathbb{N}^{n};\\i_{1}+i_{2}+\cdots+i_{n}=m;\\i_{j}\geq1}}\dbinom
{m-1}{i_{1},i_{2},\ldots,i_{j-1},i_{j}-1,i_{j+1},i_{j+2},\ldots,i_{n}}%
\det\left(  M_{i_{1},i_{2},\ldots,i_{n}}\right)  }_{\substack{=\sum
_{\substack{\left(  i_{1},i_{2},\ldots,i_{n}\right)  \in\mathbb{N}^{n}%
;\\i_{1}+i_{2}+\cdots+i_{n}=m-1}}\dbinom{m-1}{i_{1},i_{2},\ldots,i_{n}}%
\det\left(  M_{i_{1},i_{2},\ldots,i_{j-1},i_{j}+1,i_{j+1},i_{j+2},\ldots
,i_{n}}\right)  \\\text{(by (\ref{pf.prop.almkvist-lem.5}))}}}\\
&  =\sum_{j=1}^{n}\ \ \sum_{\substack{\left(  i_{1},i_{2},\ldots,i_{n}\right)
\in\mathbb{N}^{n};\\i_{1}+i_{2}+\cdots+i_{n}=m-1}}\dbinom{m-1}{i_{1}%
,i_{2},\ldots,i_{n}}\det\left(  M_{i_{1},i_{2},\ldots,i_{j-1},i_{j}%
+1,i_{j+1},i_{j+2},\ldots,i_{n}}\right) \\
&  =\sum_{\substack{\left(  i_{1},i_{2},\ldots,i_{n}\right)  \in\mathbb{N}%
^{n};\\i_{1}+i_{2}+\cdots+i_{n}=m-1}}\dbinom{m-1}{i_{1},i_{2},\ldots,i_{n}%
}\underbrace{\sum_{j=1}^{n}\det\left(  M_{i_{1},i_{2},\ldots,i_{j-1}%
,i_{j}+1,i_{j+1},i_{j+2},\ldots,i_{n}}\right)  }_{\substack{=\left(
\operatorname*{Tr}A\right)  \cdot\det\left(  M_{i_{1},i_{2},\ldots,i_{n}%
}\right)  \\\text{(by Lemma \ref{lem.almkvist-lem3})}}}\\
&  =\sum_{\substack{\left(  i_{1},i_{2},\ldots,i_{n}\right)  \in\mathbb{N}%
^{n};\\i_{1}+i_{2}+\cdots+i_{n}=m-1}}\dbinom{m-1}{i_{1},i_{2},\ldots,i_{n}%
}\left(  \operatorname*{Tr}A\right)  \cdot\det\left(  M_{i_{1},i_{2}%
,\ldots,i_{n}}\right) \\
&  =\left(  \operatorname*{Tr}A\right)  \cdot\underbrace{\sum
_{\substack{\left(  i_{1},i_{2},\ldots,i_{n}\right)  \in\mathbb{N}^{n}%
;\\i_{1}+i_{2}+\cdots+i_{n}=m-1}}\dbinom{m-1}{i_{1},i_{2},\ldots,i_{n}}%
\det\left(  M_{i_{1},i_{2},\ldots,i_{n}}\right)  }_{\substack{=\left(
\operatorname*{Tr}A\right)  ^{m-1}\\\text{(by (\ref{pf.prop.almkvist-lem.IH}%
))}}}\\
&  =\left(  \operatorname*{Tr}A\right)  \cdot\left(  \operatorname*{Tr}%
A\right)  ^{m-1}=\left(  \operatorname*{Tr}A\right)  ^{m}.
\end{align*}
In other words,%
\[
\left(  \operatorname*{Tr}A\right)  ^{m}=\sum_{\substack{\left(  i_{1}%
,i_{2},\ldots,i_{n}\right)  \in\mathbb{N}^{n};\\i_{1}+i_{2}+\cdots+i_{n}%
=m}}\dbinom{m}{i_{1},i_{2},\ldots,i_{n}}\det\left(  M_{i_{1},i_{2}%
,\ldots,i_{n}}\right)  .
\]
Thus, Proposition \ref{prop.almkvist-lem} holds for our $m$. This completes
the induction step, and with it the proof of Proposition
\ref{prop.almkvist-lem}.
\end{proof}
\end{verlong}

\begin{vershort}
\begin{proof}
[Proof idea for Theorem \ref{thm.almkvist}.]Apply Proposition
\ref{prop.almkvist-lem} to $m=nk+1$. Observe that each $n$-tuple $\left(
i_{1},i_{2},\ldots,i_{n}\right)  \in\mathbb{N}^{n}$ satisfying $i_{1}%
+i_{2}+\cdots+i_{n}=nk+1$ has at least one entry $i_{j}\geq k+1$, and thus the
matrix $M_{i_{1},i_{2},\ldots,i_{n}}$ has a zero row (since $A^{k+1}=0$
entails $A^{i_{j}}=0$).
\end{proof}
\end{vershort}

\begin{verlong}
\begin{proof}
[Proof of Theorem \ref{thm.almkvist}.]For any $n$ integers $i_{1},i_{2}%
,\ldots,i_{n}\in\mathbb{N}$, we define an $n\times n$-matrix $M_{i_{1}%
,i_{2},\ldots,i_{n}}$ as in Proposition \ref{prop.almkvist-lem}. We claim the following:

\begin{statement}
\textit{Claim 1:} Let $i_{1},i_{2},\ldots,i_{n}\in\mathbb{N}$ be such that
$i_{1}+i_{2}+\cdots+i_{n}=nk+1$. Then, $\det\left(  M_{i_{1},i_{2}%
,\ldots,i_{n}}\right)  =0$.
\end{statement}

\begin{proof}
[Proof of Claim 1.]If each $j\in\left\{  1,2,\ldots,n\right\}  $ satisfied the
inequality $i_{j}\leq k$, then the sum of these $n$ inequalities would be
saying that%
\[
i_{1}+i_{2}+\cdots+i_{n}\leq\underbrace{k+k+\cdots+k}_{n\text{ times}}=nk,
\]
which would contradict $i_{1}+i_{2}+\cdots+i_{n}=nk+1>nk$. Hence, not every
$j\in\left\{  1,2,\ldots,n\right\}  $ can satisfy the inequality $i_{j}\leq
k$. In other words, there exists some $j\in\left\{  1,2,\ldots,n\right\}  $
that satisfies $i_{j}>k$. Consider this $j$.

From $i_{j}>k$, we obtain $i_{j}\geq k+1$ (since $i_{j}$ and $k$ are
integers). Hence, $A^{i_{j}}=\underbrace{A^{k+1}}_{=0}A^{i_{j}-\left(
k+1\right)  }=0$. Therefore, $\operatorname{row}_{j}\left(  A^{i_{j}}\right)
=\operatorname{row}_{j}0=0$.

But $M_{i_{1},i_{2},\ldots,i_{n}}$ is defined as the $n\times n$-matrix whose
rows are
\[
\operatorname*{row}\nolimits_{1}\left(  A^{i_{1}}\right)
,\ \operatorname*{row}\nolimits_{2}\left(  A^{i_{2}}\right)  ,\ \ldots
,\ \operatorname*{row}\nolimits_{n}\left(  A^{i_{n}}\right)  .
\]
Thus, in particular, the $j$-th row of this matrix $M_{i_{1},i_{2}%
,\ldots,i_{n}}$ is $\operatorname{row}_{j}\left(  A^{i_{j}}\right)  $. In
other words, the $j$-th row of this matrix $M_{i_{1},i_{2},\ldots,i_{n}}$ is
$0$ (since $\operatorname{row}_{j}\left(  A^{i_{j}}\right)  =0$). In other
words, the $j$-th row of the matrix $M_{i_{1},i_{2},\ldots,i_{n}}$ consists of
zeroes. Therefore, the determinant of this matrix equals $0$ (because if a row
of a square matrix consists of zeroes, then the determinant of this matrix
equals $0$). In other words, $\det\left(  M_{i_{1},i_{2},\ldots,i_{n}}\right)
=0$. This proves Claim 1.
\end{proof}

Now, Proposition \ref{prop.almkvist-lem} (applied to $m=nk+1$) yields%
\begin{align*}
\left(  \operatorname*{Tr}A\right)  ^{nk+1}  &  =\sum_{\substack{\left(
i_{1},i_{2},\ldots,i_{n}\right)  \in\mathbb{N}^{n};\\i_{1}+i_{2}+\cdots
+i_{n}=nk+1}}\dbinom{nk+1}{i_{1},i_{2},\ldots,i_{n}}\underbrace{\det\left(
M_{i_{1},i_{2},\ldots,i_{n}}\right)  }_{\substack{=0\\\text{(by Claim 1)}}}\\
&  =\sum_{\substack{\left(  i_{1},i_{2},\ldots,i_{n}\right)  \in\mathbb{N}%
^{n};\\i_{1}+i_{2}+\cdots+i_{n}=nk+1}}\dbinom{nk+1}{i_{1},i_{2},\ldots,i_{n}%
}0=0.
\end{align*}
This proves Theorem \ref{thm.almkvist}.
\end{proof}
\end{verlong}

We leave it to the reader to prove the following \textquotedblleft
supplemental law\textquotedblright\ to Theorem \ref{thm.almkvist}:

\begin{theorem}
\label{thm.almkvist-1}Let $n\in\mathbb{N}$ and $k\in\mathbb{N}$. Let
$A\in\mathbb{K}^{n\times n}$ be a matrix such that $A^{k+1}=0$. Then,%
\[
\left(  \operatorname*{Tr}A\right)  ^{nk}=\dfrac{\left(  nk\right)  !}{k!^{n}%
}\left(  \det A\right)  ^{k}.
\]

\end{theorem}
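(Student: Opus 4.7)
The plan is to apply Proposition \ref{prop.almkvist-lem} to $m = nk$ and then use the hypothesis $A^{k+1} = 0$ to annihilate all but one term of the resulting sum. Specifically, Proposition \ref{prop.almkvist-lem} (with the matrices $M_{i_1,i_2,\ldots,i_n}$ defined there) yields
\[
\left(\operatorname{Tr} A\right)^{nk} = \sum_{\substack{(i_1,i_2,\ldots,i_n) \in \mathbb{N}^n;\\ i_1+i_2+\cdots+i_n = nk}} \binom{nk}{i_1,i_2,\ldots,i_n} \det\left(M_{i_1,i_2,\ldots,i_n}\right).
\]

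Next, I would observe (just as in the proof of Theorem \ref{thm.almkvist}) that whenever some $i_j \geq k+1$, the matrix $A^{i_j}$ vanishes (since $A^{i_j} = A^{k+1} A^{i_j-(k+1)} = 0$), which forces the $j$-th row of $M_{i_1,\ldots,i_n}$ to be zero and hence kills its determinant. Consequently, the sum reduces to those tuples $(i_1,i_2,\ldots,i_n) \in \mathbb{N}^n$ with $i_1+i_2+\cdots+i_n = nk$ and every $i_j \leq k$. A simple pigeonhole argument then shows that the only such tuple is $(k,k,\ldots,k)$: if any $i_j < k$, then the constraint $\sum_\ell i_\ell = nk$ together with $i_\ell \leq k$ for all $\ell$ would force $\sum_\ell i_\ell < nk$, a contradiction.

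Therefore the sum collapses to the single surviving term
\[
\left(\operatorname{Tr} A\right)^{nk} = \binom{nk}{k,k,\ldots,k} \det\left(M_{k,k,\ldots,k}\right).
\]
By the definition of the multinomial coefficient, $\binom{nk}{k,k,\ldots,k} = \dfrac{(nk)!}{k!^n}$. Moreover, the matrix $M_{k,k,\ldots,k}$ has rows $\operatorname{row}_1(A^k), \operatorname{row}_2(A^k), \ldots, \operatorname{row}_n(A^k)$, which is to say $M_{k,k,\ldots,k}$ equals $A^k$ itself, so $\det(M_{k,k,\ldots,k}) = \det(A^k) = (\det A)^k$. Combining these gives exactly the claimed identity. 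The only mildly delicate step is the pigeonhole collapse, but it is essentially a one-line argument; the bulk of the work has already been done in establishing Proposition \ref{prop.almkvist-lem}, so no further obstacles are expected.
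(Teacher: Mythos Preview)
Your proof is correct and is precisely the approach the paper intends: the paper leaves Theorem \ref{thm.almkvist-1} to the reader immediately after establishing Proposition \ref{prop.almkvist-lem} and proving Theorem \ref{thm.almkvist} by the same collapse-the-sum argument, so your proposal fills in exactly what was expected. The pigeonhole step (all $i_j \leq k$ with $\sum i_j = nk$ forces $i_j = k$ for all $j$) and the identification $M_{k,k,\ldots,k} = A^k$ are both sound.
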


\begin{verlong}
\begin{proof}
[Proof of Theorem \ref{thm.almkvist-1}.]In the following proof, the expression
\textquotedblleft$k,k,\ldots,k$\textquotedblright\ shall always mean
\textquotedblleft$\underbrace{k,k,\ldots,k}_{n\text{ times}}$%
\textquotedblright. Note that $\left(  k,k,\ldots,k\right)  $ is an $n$-tuple
$\left(  i_{1},i_{2},\ldots,i_{n}\right)  \in\mathbb{N}^{n}$ satisfying
$i_{1}+i_{2}+\cdots+i_{n}=nk$ (since $k+k+\cdots+k=nk$).

For any $n$ integers $i_{1},i_{2},\ldots,i_{n}\in\mathbb{N}$, we define an
$n\times n$-matrix $M_{i_{1},i_{2},\ldots,i_{n}}$ as in Proposition
\ref{prop.almkvist-lem}. We claim the following:

\begin{statement}
\textit{Claim 1:} Let $i_{1},i_{2},\ldots,i_{n}\in\mathbb{N}$ be such that
$i_{1}+i_{2}+\cdots+i_{n}=nk$ and $\left(  i_{1},i_{2},\ldots,i_{n}\right)
\neq\left(  k,k,\ldots,k\right)  $. Then, $\det\left(  M_{i_{1},i_{2}%
,\ldots,i_{n}}\right)  =0$.
\end{statement}

\begin{proof}
[Proof of Claim 1.]We shall first show that there exists some $j\in\left\{
1,2,\ldots,n\right\}  $ that satisfies $i_{j}>k$.

Indeed, assume the contrary. Thus, there is no such $j$. In other words, each
$j\in\left\{  1,2,\ldots,n\right\}  $ satisfies $i_{j}\leq k$. Moreover, at
least one $j\in\left\{  1,2,\ldots,k\right\}  $ must satisfy $i_{j}\neq k$
(since $\left(  i_{1},i_{2},\ldots,i_{n}\right)  \neq\left(  k,k,\ldots
,k\right)  $) and therefore $i_{j}<k$ (since the preceding sentence yields
$i_{j}\leq k$).

Thus we have shown that

\begin{itemize}
\item each $j\in\left\{  1,2,\ldots,n\right\}  $ satisfies $i_{j}\leq k$;

\item at least one $j\in\left\{  1,2,\ldots,k\right\}  $ satisfies $i_{j}<k$.
\end{itemize}

In other words, the weak inequality $i_{j}\leq k$ is satisfied for each
$j\in\left\{  1,2,\ldots,n\right\}  $, and at least one of these $n$
inequalities is strict. Thus, summing these $n$ inequalities together, we
obtain the strict inequality%
\[
i_{1}+i_{2}+\cdots+i_{n}<\underbrace{k+k+\cdots+k}_{n\text{ times}}=nk.
\]
This contradicts $i_{1}+i_{2}+\cdots+i_{n}=nk$. This contradiction shows that
our assumption was false.

Hence, we have shown that there exists some $j\in\left\{  1,2,\ldots
,n\right\}  $ that satisfies $i_{j}>k$. Consider this $j$.

From $i_{j}>k$, we obtain $i_{j}\geq k+1$ (since $i_{j}$ and $k$ are
integers). Hence, $A^{i_{j}}=\underbrace{A^{k+1}}_{=0}A^{i_{j}-\left(
k+1\right)  }=0$. Therefore, $\operatorname{row}_{j}\left(  A^{i_{j}}\right)
=\operatorname{row}_{j}0=0$.

But $M_{i_{1},i_{2},\ldots,i_{n}}$ is defined as the $n\times n$-matrix whose
rows are
\[
\operatorname*{row}\nolimits_{1}\left(  A^{i_{1}}\right)
,\ \operatorname*{row}\nolimits_{2}\left(  A^{i_{2}}\right)  ,\ \ldots
,\ \operatorname*{row}\nolimits_{n}\left(  A^{i_{n}}\right)  .
\]
Thus, in particular, the $j$-th row of this matrix $M_{i_{1},i_{2}%
,\ldots,i_{n}}$ is $\operatorname{row}_{j}\left(  A^{i_{j}}\right)  $. In
other words, the $j$-th row of this matrix $M_{i_{1},i_{2},\ldots,i_{n}}$ is
$0$ (since $\operatorname{row}_{j}\left(  A^{i_{j}}\right)  =0$). In other
words, the $j$-th row of the matrix $M_{i_{1},i_{2},\ldots,i_{n}}$ consists of
zeroes. Therefore, the determinant of this matrix equals $0$ (because if a row
of a square matrix consists of zeroes, then the determinant of this matrix
equals $0$). In other words, $\det\left(  M_{i_{1},i_{2},\ldots,i_{n}}\right)
=0$. This proves Claim 1.
\end{proof}

\begin{statement}
\textit{Claim 2:} We have $\det\left(  M_{k,k,\ldots,k}\right)  =\left(  \det
A\right)  ^{k}$.
\end{statement}

\begin{proof}
[Proof of Claim 2.]Recall that $M_{k,k,\ldots,k}$ is defined as the $n\times
n$-matrix whose rows are
\[
\operatorname*{row}\nolimits_{1}\left(  A^{k}\right)  ,\ \operatorname*{row}%
\nolimits_{2}\left(  A^{k}\right)  ,\ \ldots,\ \operatorname*{row}%
\nolimits_{n}\left(  A^{k}\right)  .
\]
In other words, it is the $n\times n$-matrix whose rows are precisely the
corresponding rows of $A^{k}$. Thus, it is simply the $n\times n$-matrix
$A^{k}$. That is, we have $M_{k,k,\ldots,k}=A^{k}$. Hence,%
\[
\det\left(  M_{k,k,\ldots,k}\right)  =\det\left(  A^{k}\right)  =\left(  \det
A\right)  ^{k}%
\]
(by \cite[Corollary 6.25 \textbf{(b)}]{detnotes}, applied to $B=A$). This
proves Claim 2.
\end{proof}

\begin{statement}
\textit{Claim 3:} We have
\[
\dbinom{nk}{k,k,\ldots,k}=\dfrac{\left(  nk\right)  !}{k!^{n}}.
\]

\end{statement}

\begin{proof}
[Proof of Claim 3.]The definition of $\dbinom{nk}{k,k,\ldots,k}$ yields
$\dbinom{nk}{k,k,\ldots,k}=\dfrac{\left(  nk\right)  !}{k!k!\cdots k!}%
=\dfrac{\left(  nk\right)  !}{k!^{n}}$ (since $k!k!\cdots k!=k!^{n}$). This
proves Claim 3.
\end{proof}

Now, Proposition \ref{prop.almkvist-lem} (applied to $m=nk$) yields%
\begin{align*}
\left(  \operatorname*{Tr}A\right)  ^{nk}  &  =\sum_{\substack{\left(
i_{1},i_{2},\ldots,i_{n}\right)  \in\mathbb{N}^{n};\\i_{1}+i_{2}+\cdots
+i_{n}=nk}}\dbinom{nk}{i_{1},i_{2},\ldots,i_{n}}\det\left(  M_{i_{1}%
,i_{2},\ldots,i_{n}}\right) \\
&  =\underbrace{\dbinom{nk}{k,k,\ldots,k}}_{\substack{=\dfrac{\left(
nk\right)  !}{k!^{n}}\\\text{(by Claim 3)}}}\underbrace{\det\left(
M_{k,k,\ldots,k}\right)  }_{\substack{=\left(  \det A\right)  ^{k}\\\text{(by
Claim 2)}}}\\
&  \ \ \ \ \ \ \ \ \ \ +\sum_{\substack{\left(  i_{1},i_{2},\ldots
,i_{n}\right)  \in\mathbb{N}^{n};\\i_{1}+i_{2}+\cdots+i_{n}=nk;\\\left(
i_{1},i_{2},\ldots,i_{n}\right)  \neq\left(  k,k,\ldots,k\right)  }%
}\dbinom{nk}{i_{1},i_{2},\ldots,i_{n}}\underbrace{\det\left(  M_{i_{1}%
,i_{2},\ldots,i_{n}}\right)  }_{\substack{=0\\\text{(by Claim 1)}}}\\
&  \ \ \ \ \ \ \ \ \ \ \ \ \ \ \ \ \ \ \ \ \left(
\begin{array}
[c]{c}%
\text{here, we have split off the addend for }\left(  i_{1},i_{2},\ldots
,i_{n}\right)  =\left(  k,k,\ldots,k\right) \\
\text{from the sum (since }\left(  k,k,\ldots,k\right)  \text{ is an
}n\text{-tuple }\left(  i_{1},i_{2},\ldots,i_{n}\right)  \in\mathbb{N}^{n}\\
\text{satisfying }i_{1}+i_{2}+\cdots+i_{n}=nk\text{)}%
\end{array}
\right) \\
&  =\dfrac{\left(  nk\right)  !}{k!^{n}}\left(  \det A\right)  ^{k}%
+\underbrace{\sum_{\substack{\left(  i_{1},i_{2},\ldots,i_{n}\right)
\in\mathbb{N}^{n};\\i_{1}+i_{2}+\cdots+i_{n}=nk;\\\left(  i_{1},i_{2}%
,\ldots,i_{n}\right)  \neq\left(  k,k,\ldots,k\right)  }}\dbinom{nk}%
{i_{1},i_{2},\ldots,i_{n}}0}_{=0}\\
&  =\dfrac{\left(  nk\right)  !}{k!^{n}}\left(  \det A\right)  ^{k}.
\end{align*}
This proves Theorem \ref{thm.almkvist-1}.
\end{proof}
\end{verlong}

\begin{corollary}
\label{cor.almkvist-2}Let $n\in\mathbb{N}$ and $k\in\mathbb{N}$. Let
$A\in\mathbb{K}^{n\times n}$ be a matrix such that $A^{k+1}=0$. Then,%
\[
\left(  \operatorname*{Tr}A\right)  ^{nk}A=0.
\]

\end{corollary}

\begin{verlong}
\begin{proof}
[Proof of Corollary \ref{cor.almkvist-2}.]Theorem \ref{thm.almkvist-1} yields%
\[
\left(  \operatorname*{Tr}A\right)  ^{nk}=\dfrac{\left(  nk\right)  !}{k!^{n}%
}\left(  \det A\right)  ^{k}.
\]
Thus,%
\begin{equation}
\left(  \operatorname*{Tr}A\right)  ^{nk}A=\dfrac{\left(  nk\right)  !}%
{k!^{n}}\left(  \det A\right)  ^{k}A.\label{pf.cor.almkvist-2.1}%
\end{equation}
However, \cite[Corollary 6.25 \textbf{(b)}]{detnotes} (applied to $B=A$)
yields $\det\left(  A^{k}\right)  =\left(  \det A\right)  ^{k}$. Furthermore,
Theorem \ref{thm.adj.inverse} (applied to $A^{k}$ instead of $A$) yields
\[
A^{k}\cdot\operatorname*{adj}\left(  A^{k}\right)  =\operatorname*{adj}\left(
A^{k}\right)  \cdot A^{k}=\underbrace{\det\left(  A^{k}\right)  }_{=\left(
\det A\right)  ^{k}}\cdot\,I_{n}=\left(  \det A\right)  ^{k}\cdot I_{n}.
\]
Thus, $\det\left(  A^{k}\right)  \cdot I_{n}=\operatorname*{adj}\left(
A^{k}\right)  \cdot A^{k}$. Finally,%
\[
\left(  \det A\right)  ^{k}\underbrace{A}_{=I_{n}A}=\underbrace{\left(  \det
A\right)  ^{k}\cdot I_{n}}_{=\operatorname*{adj}\left(  A^{k}\right)  \cdot
A^{k}}A=\operatorname*{adj}\left(  A^{k}\right)  \cdot\underbrace{A^{k}%
A}_{=A^{k+1}=0}=0.
\]
Thus, (\ref{pf.cor.almkvist-2.1}) becomes%
\[
\left(  \operatorname*{Tr}A\right)  ^{nk}A=\dfrac{\left(  nk\right)  !}%
{k!^{n}}\underbrace{\left(  \det A\right)  ^{k}A}_{=0}=0.
\]
This proves Corollary \ref{cor.almkvist-2}.
\end{proof}
\end{verlong}

\subsection{$\operatorname*{Tr}\left(  A^{p}\right)  $ in characteristic $p$}

Another folklore property of traces (see, e.g.,
\url{https://math.stackexchange.com/questions/3044716/} ) is the following:

\begin{theorem}
\label{thm.TrAp}Let $p$ be a prime number. Assume that $p=0$ in $\mathbb{K}$.

Let $n\in\mathbb{N}$. Let $A\in\mathbb{K}^{n\times n}$ be a matrix. Then,
\[
\operatorname*{Tr}\left(  A^{p}\right)  =\left(  \operatorname*{Tr}A\right)
^{p}.
\]

\end{theorem}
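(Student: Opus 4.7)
The plan is to invoke Proposition \ref{prop.almkvist-lem} with $m = p$, and then to exploit the fact that in characteristic $p$, most multinomial coefficients $\binom{p}{i_1, \ldots, i_n}$ vanish. Specifically, applying the proposition gives
\[
(\operatorname{Tr} A)^p \;=\; \sum_{\substack{(i_1, \ldots, i_n) \in \mathbb{N}^n;\\ i_1 + i_2 + \cdots + i_n = p}} \binom{p}{i_1, i_2, \ldots, i_n} \det\left( M_{i_1, i_2, \ldots, i_n} \right),
\]
where $M_{i_1, \ldots, i_n}$ is the matrix whose $k$-th row is $\operatorname{row}_k(A^{i_k})$.

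Next, I would observe that since $p$ is prime, the multinomial coefficient $\binom{p}{i_1, \ldots, i_n} = \frac{p!}{i_1! \cdots i_n!}$ is divisible by $p$ whenever every $i_j$ is strictly less than $p$: indeed, in this case $p \nmid i_j!$ for any $j$, while $p \mid p!$. Since $p = 0$ in $\mathbb{K}$, all such terms vanish. The only surviving $n$-tuples $(i_1, \ldots, i_n)$ with $i_1 + \cdots + i_n = p$ are therefore the $n$ tuples where some single entry $i_j$ equals $p$ and all other entries equal $0$; for these, the multinomial coefficient equals $1$. Hence modulo the characteristic assumption,
\[
(\operatorname{Tr} A)^p \;=\; \sum_{j=1}^{n} \det\left( M_{0, \ldots, 0, p, 0, \ldots, 0} \right),
\]
where the $p$ sits in the $j$-th slot.

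The key auxiliary computation is then: for each $j$, the matrix $M_{0, \ldots, 0, p, 0, \ldots, 0}$ has its $j$-th row equal to $\operatorname{row}_j(A^p)$ and every other row $k$ equal to $\operatorname{row}_k(A^0) = \operatorname{row}_k(I_n)$, i.e., to the $k$-th standard row vector $e_k$. Its determinant equals the $(j,j)$-entry of $A^p$. This is easy to see by Laplace expansion along the $j$-th row: for each $\ell \neq j$, the submatrix $\left( M_{0, \ldots, 0, p, 0, \ldots, 0} \right)_{\sim j, \sim \ell}$ contains the row $e_\ell$ with its $\ell$-th entry removed, which is a zero row, so the corresponding minor vanishes; only the $\ell = j$ term survives, and the minor there is the determinant of an $(n-1) \times (n-1)$ identity matrix, namely $1$.

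Combining these observations yields
\[
(\operatorname{Tr} A)^p \;=\; \sum_{j=1}^{n} (A^p)_{j,j} \;=\; \operatorname{Tr}\left( A^p \right),
\]
which is the claim. The only non-routine ingredient is Proposition \ref{prop.almkvist-lem} itself (stated earlier and whose proof is only sketched in the paper); the rest of the argument is a short combinatorial observation about multinomial coefficients modulo a prime together with an elementary Laplace expansion.
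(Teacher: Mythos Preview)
Your proof is correct and follows precisely one of the two approaches the paper itself suggests (namely, deriving the result from Proposition \ref{prop.almkvist-lem}); the paper leaves the details to the reader, and you have filled them in accurately. The divisibility argument for the multinomial coefficients and the Laplace-expansion computation of $\det(M_{0,\ldots,0,p,0,\ldots,0}) = (A^p)_{j,j}$ are both sound.
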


There are easy and direct ways to prove this using a cyclic group action (see
\cite[\S 2]{Steinl17} for a proof in the particular case $\mathbb{K}%
=\mathbb{Z}/p$, which is easily adapted to the general case\footnote{Note that
this particular case takes the simpler form $\operatorname*{Tr}\left(
A^{p}\right)  =\operatorname*{Tr}A$ (in $\mathbb{Z}/p$), since Fermat's Little
Theorem ensures that $b^{p}=b$ for each $b\in\mathbb{Z}/p$. In the general
case, Fermat's Little Theorem no longer applies, so the Freshman's Dream
formula $\left(  u+v\right)  ^{p}=u^{p}+v^{p}$ must be used instead.}). But it
can also be derived from Corollary \ref{cor.TCH.c1} or from Proposition
\ref{prop.almkvist-lem}. We leave the details to the reader.

\subsection{Another formula for derivations of $\det A$}

Besides Theorem \ref{thm.deriv.ddet}, there is another formula for the image
of a determinant under a derivation:

\begin{theorem}
\label{thm.deriv.ddet2}Let $\mathbb{L}$ be a commutative $\mathbb{K}$-algebra.
Let $f:\mathbb{L}\rightarrow\mathbb{L}$ be a $\mathbb{K}$-derivation. Let
$n\in\mathbb{N}$. Let $A=\left(  a_{i,j}\right)  _{1\leq i\leq n,\ 1\leq j\leq
n}\in\mathbb{L}^{n\times n}$. For each $k\in\left\{  1,2,\ldots,n\right\}  $,
let $A_{k}^{\prime}$ be the matrix $\left(  f^{\delta_{k,i}}\left(
a_{i,j}\right)  \right)  _{1\leq i\leq n,\ 1\leq j\leq n}\in\mathbb{L}%
^{n\times n}$ (where $\delta_{k,i}$ is the Kronecker delta, i.e., the number
$1$ if $k=i$ and the number $0$ if $k\neq i$). Then,%
\[
f\left(  \det A\right)  =\sum_{k=1}^{n}\det A_{k}^{\prime}.
\]

\end{theorem}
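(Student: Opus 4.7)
The plan is to unwind the determinant as a signed sum of products over permutations, apply the derivation $f$ using the generalized Leibniz rule (Proposition \ref{prop.deriv.leibniz}), and then repackage the result as the sum of determinants on the right-hand side. No new machinery beyond the definition of $\det$ in (\ref{eq.det}) and Proposition \ref{prop.deriv.leibniz} should be needed.

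First I would note that, for each $k \in \{1, 2, \ldots, n\}$, applying (\ref{eq.det}) to $\mathbb{L}$ and $A'_k = \left( f^{\delta_{k,i}}(a_{i,j}) \right)_{1 \le i \le n,\ 1 \le j \le n}$ in place of $\mathbb{K}$ and $A$ yields
\[
\det A'_k = \sum_{\sigma \in S_n} (-1)^\sigma \prod_{i=1}^n f^{\delta_{k,i}}\!\left( a_{i, \sigma(i)} \right).
\]
Next I would apply (\ref{eq.det}) to $\det A$ itself and then apply the $\mathbb{K}$-linear map $f$, obtaining $f(\det A) = \sum_{\sigma \in S_n} (-1)^\sigma f\!\left( \prod_{i=1}^n a_{i, \sigma(i)} \right)$. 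For each fixed $\sigma$, Proposition \ref{prop.deriv.leibniz} (applied to $a_i = a_{i, \sigma(i)}$) rewrites $f\!\left( \prod_{i=1}^n a_{i, \sigma(i)} \right)$ as $\sum_{k=1}^n a_{1,\sigma(1)} \cdots a_{k-1,\sigma(k-1)} f(a_{k,\sigma(k)}) a_{k+1,\sigma(k+1)} \cdots a_{n,\sigma(n)}$, which is precisely $\sum_{k=1}^n \prod_{i=1}^n f^{\delta_{k,i}}(a_{i,\sigma(i)})$ by the definition of the Kronecker delta (the $k$-th factor gets one application of $f$, the others none).

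Substituting this in and swapping the two summation signs, I obtain
\[
f(\det A) = \sum_{k=1}^n \sum_{\sigma \in S_n} (-1)^\sigma \prod_{i=1}^n f^{\delta_{k,i}}\!\left( a_{i, \sigma(i)} \right) = \sum_{k=1}^n \det A'_k,
\]
where the second equality is just the identity for $\det A'_k$ established at the start. This completes the proof.

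There is no real obstacle here: the argument is entirely formal. The only mildly subtle point is the bookkeeping involved in recognizing the inner sum $\sum_{\sigma \in S_n} (-1)^\sigma \prod_{i=1}^n f^{\delta_{k,i}}(a_{i,\sigma(i)})$ as the determinant of $A'_k$, which requires correctly identifying how $A'_k$ was built (namely by applying $f$ only to the entries of the $k$-th row). Note that the commutativity of $\mathbb{L}$ is used implicitly when we reorder the factors in the product (specifically when identifying the Leibniz expansion with $\prod_i f^{\delta_{k,i}}(a_{i,\sigma(i)})$), so this hypothesis on $\mathbb{L}$ is essential.
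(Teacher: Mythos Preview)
Your proof is correct; the paper itself leaves the proof of this theorem to the reader, and your argument via the Leibniz formula and Proposition~\ref{prop.deriv.leibniz} is exactly the straightforward route one would expect. One minor remark: in identifying $a_{1,\sigma(1)} \cdots a_{k-1,\sigma(k-1)} f(a_{k,\sigma(k)}) a_{k+1,\sigma(k+1)} \cdots a_{n,\sigma(n)}$ with $\prod_{i=1}^n f^{\delta_{k,i}}(a_{i,\sigma(i)})$, no reordering of factors is actually required (the factors already appear in the order $i=1,2,\ldots,n$), so commutativity is not used at that specific step, though it is of course needed for the determinant formula (\ref{eq.det}) to make sense in the first place.
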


We leave the (easy) proof to the reader.

\begin{noncompile}
[Additional material that could find its way into this paper:

-- The $\operatorname*{Tr}\left(  A^{p}\right)  \equiv\left(
\operatorname*{Tr}A\right)  ^{p}$ congruence.

-- Arnold's congruences (require $\psi$-structure on $\mathbb{K}$).

-- Another formula for derivations on determinants: $f\left(  \det\left(
a_{i,j}\right)  \right)  =\sum_{k=1}^{n}\det\left(  f^{\delta_{k,i}}%
a_{i,j}\right)  $.

-- Proof of Theorem \ref{thm.almkvist-1}.]

See also todo1.
\end{noncompile}

\end{document}